\def\bfomega{\boldsymbol{\omega}}
\def\bfeta{\boldsymbol{\eta}}
\def\bfb{\boldsymbol{b}}
\def\bfp{\boldsymbol{p}}
\def\AA{\mathord{\mathbf{A}}}
\def\CC{\mathord{\mathbf{C}}}
\def\RR{\mathord{\mathbf{R}}}
\def\QQ{\mathord{\mathbf{Q}}}
\def\ZZ{\mathord{\mathbf{Z}}}
\def\NN{\mathord{\mathbf{N}}}
\def\PP{\mathord{\mathbf{P}}}
\def\GG{\mathord{\mathbf{G}}}
\def\transp{^{\mathord{\textsf{T}}}}
\def\opp{{\rm op}}
\def\Sets{\textsf{Set}}
\def\Sch{\textsf{Sch}}
\def\Man{\textsf{Man}}
\def\SmVar{\textsf{SmVar}}
\def\minus{\setminus}
\def\Sym{\mathop{\rm Sym}}
\def\Sp{\mathop{\rm Sp}}
\def\GSp{\mathop{\rm GSp}}
\def\Hom{\mathop{\rm Hom}}
\def\End{\mathop{\rm End}}
\def\Aut{\mathop{\rm Aut}}
\def\Der{\mathop{\rm Der}}
\def\Gal{\mathop{\rm Gal}}
\def\GL{\mathop{\rm GL}}
\def\Res{\mathop{\rm Res}}
\def\rank{\mathop{\rm rank}}
\def\Im{\mathop{\rm Im}}
\def\Re{\mathop{\rm Re}}
\def\Lie{\mathop{\rm Lie}}
\def\MT{\mathop{\rm MT}}
\def\Tr{\mathop{\rm Tr}}
\def\Spec{\mathop{\rm Spec}}
\def\Proj{\mathop{\rm Proj}}
\def\Frac{\mathop{\rm Frac}}
\def\Pic{\mathop{\rm Pic }}
\def\SL{\mathop{\rm SL}}
\def\dlog{\mathop{\rm dlog}}
\def\Et{\mathop{\text{Ét}}}
\def\trdeg{\text{\rm trdeg}}
\def\id{\text{\rm id}}
\def\an{\text{\rm an}}
\def\dR{\text{\rm dR}}
\def\et{\text{\rm ét}}
\def\et{\text{\rm ét}}
\def\std{\text{\rm std}}
\def\comp{\text{\rm comp}}
\def\defeq{\coloneqq}
\def\eqdef{=\vcentcolon}
\def\tensor{\otimes}
\def\to{\longrightarrow}
\def\mapsto{\longmapsto}
\newtheorem{theorem}{Theorem}[section]
\newtheorem{prop}[theorem]{Proposition}
\newtheorem{conj}[theorem]{Conjecture}
\newtheorem{lemma}[theorem]{Lemma}
\newtheorem{coro}[theorem]{Corollary}
\theoremstyle{definition}
\newtheorem{ex}[theorem]{Example}
\newtheorem{defi}[theorem]{Definition}
\newtheorem{obs}[theorem]{Remark}
\numberwithin{equation}{section}
\title[Higher Ramanujan equations]{Higher Ramanujan equations \\and periods of abelian varieties}
\author{Tiago J. Fonseca}
\address{Mathematical Institute, University of Oxford, Oxford, United Kingdom}
\email{tiago.jardimdafonseca@maths.ox.ac.uk}
\subjclass[2010]{14D23, 14K99, 37F75, 11J81, 11G18}
\keywords{Moduli of abelian varieties; algebraic differential equations; integrality; values of derivatives of modular functions; Grothendieck's period conjecture; functional transcendence}
\date{}
\begin{document}
\maketitle

\begin{abstract}
  We describe higher dimensional generalizations of Ramanujan's classical differential relations satisfied by the Eisenstein series $E_2$, $E_4$, $E_6$. Such ``higher Ramanujan equations'' are given geometrically in terms of vector fields living on certain moduli stacks classifying abelian schemes equipped with suitable frames of their first de Rham cohomology. These vector fields are canonically constructed by means of the Gauss-Manin connection and the Kodaira-Spencer isomorphism. Using Mumford's theory of degenerating families of abelian varieties, we construct remarkable solutions of these differential equations generalizing $(E_2,E_4,E_6)$, which are also shown to be defined over $\ZZ$.

This geometric framework taking account of integrality issues is mainly motivated by questions in Transcendental Number Theory regarding an extension of Nesterenko's celebrated theorem on the algebraic independence of values of Eisenstein series. In this direction, we discuss the precise relation between periods of abelian varieties and the values of the above referred solutions of the higher Ramanujan equations, thereby linking the study of such differential equations to Grothendieck's Period Conjecture. Working in the complex analytic category, we prove ``functional'' transcendence results, such as the Zariski-density of every leaf of the holomorphic foliation induced by the higher Ramanujan equations.
\end{abstract}

\tableofcontents

\section{Introduction}

\subsection{Motivation}

The \emph{higher Ramanujan equations} are higher dimensional generalizations of the classical Ramanujan differential relations between the Eisenstein series
\begin{align*}
E_{2}(q) = 1 - 24 \sum_{n=1}^{\infty}\frac{n q^n}{1-q^n}\text{, }\ \ E_{4}(q) = 1+240 \sum_{n=1}^{\infty}\frac{n^3 q^n}{1-q^n}\text{, } \ \ 
E_6(q) = 1-504 \sum_{n=1}^{\infty}\frac{n^5 q^n}{1-q^n}\text{.}
\end{align*}
In 1916 \cite{ramanujan16} Ramanujan proved that these formal series satisfy the system of algebraic differential equations
\begin{align} \label{rameq} \tag{R}
\theta E_2 = \frac{E_2^2 - E_4}{12}\text{, }\ \ \theta E_4 = \frac{E_2E_4 - E_6}{3}\text{, } \ \ \theta E_6 = \frac{E_2E_6 - E_4^2}{2}
\text{,}
\end{align}
where $\theta \defeq q\frac{d}{dq}$. The study of equivalent forms of such differential equations actually predates Ramanujan. To the best of our knowledge, Jacobi \cite{jacobi48} was the first to prove in 1848  that his \emph{Thetanullwerte} satisfy a third order algebraic differential equation. Equivalent differential equations were also introduced by Darboux in 1878 and subsequently studied by Halphen and Brioschi; see the introduction of \cite{guillot07} and the references therein.

Further, in 1911,  Chazy \cite{chazy11} considered a differential equation\footnote{In Chazy's original notation (cf. \cite{chazy11} (4)) the equation he considered is written as $y''' = 2yy'' -3(y')^2$. If derivatives in this equation are with respect to a variable $t$, equation (\ref{chazyeq}) is obtained from this one by the change of variables $q=e^{2t}$.} satisfied by the Eisenstein series $E_2$ which plays an important role in his classification of differential equations of third order:
\begin{align*} \label{chazyeq} \tag{C}
\theta^3E_2 = E_2\theta^2E_2 - \frac{3}{2}(\theta E_2)^2\text{.}
\end{align*}
We refer to \cite{ohyama95} for a thorough study of Jacobi's, Halphen's, and Chazy's equations, and the relations between them. Note that Ramanujan's and Chazy's equations concern level 1 (quasi)modular forms, whereas the equations of Jacobi and Halphen involve level 2 (quasi)modular forms.\footnote{The reader might also be familiar with the fact that the $j$-invariant $j = 1728\frac{E_4^3}{E_4^3-E_6^2}$ (as any other elliptic modular function) satisfies an algebraic differential equation of the third order; this follows immediately from the Ramanujan equations, which show that the ring of quasimodular forms $\QQ[E_2,E_4,E_6]$ is closed under $\theta$ (cf. \cite{zagier08}).}

 A higher dimensional generalization of Jacobi's equation concerning \emph{Thetanullwerte} of complex abelian varieties of dimension $2$ was first given by Ohyama \cite{ohyama96} in 1996, and for any dimension by Zudilin \cite{zudilin00} in 2000; see also Bertrand-Zudilin \cite{BZ03}. In another direction, differential equations related to Hilbert modular forms were studied by Resnikoff \cite{resnikoff72} in 1972, and by Pellarin \cite{pellarin05} in 2005.

This paper grew out from our attempt to obtain a more conceptual understanding of the Ramanujan equations and of their higher dimensional extensions, aiming to shed some light on their arithmetic and geometric properties. A key motivation for this program is the crucial role played by the original Ramanujan equations (\ref{rameq}) and by the integrality properties of the series $E_2$, $E_4$, $E_6$ in Nesterenko's celebrated result on the transcendence of their values, when regarded as holomorphic functions on the complex unit disc $\mathbf{D} = \{q \in \CC \mid |q|<1\}$:

\begin{theorem}[Nesterenko \cite{nesterenko96}, 1996]\label{intro-thmnesterenko}
For every $q \in \mathbf{D}\setminus\{0\}$,
\begin{align*}
\trdeg_{\QQ}\QQ(q,E_2(q),E_4(q),E_6(q)) \ge 3\text{.}
\end{align*}
\end{theorem}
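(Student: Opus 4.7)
The plan is to apply a quantitative criterion for algebraic independence, such as the one due to Philippon, which reduces the transcendence statement to a diophantine construction. Fix $q \in \mathbf{D}\setminus\{0\}$ and set $\xi \defeq (q, E_2(q), E_4(q), E_6(q)) \in \CC^4$. The criterion asserts that $\trdeg_{\QQ}\QQ(\xi) \ge 3$ provided one can exhibit, for every sufficiently large integer $N$, a nonzero polynomial $P_N \in \ZZ[X_0, X_1, X_2, X_3]$ of total degree at most $d(N)$ and logarithmic height at most $h(N)$ such that
\[
0 < |P_N(\xi)| \le \exp(-c N),
\]
with $d(N)$, $h(N)$, and $N$ in a balanced asymptotic regime.

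First I would build $P_N$ by a Thue--Siegel argument. Choose integer parameters $L$ and $D$, and look for a nonzero polynomial of bidegree $(L,D)$ in $X_0$ and $(X_1,X_2,X_3)$ whose evaluation $P(q, E_2(q), E_4(q), E_6(q))$, viewed as a formal series in $q$, vanishes at $q=0$ up to some order $T$. Because $E_2$, $E_4$, $E_6$ have integer Fourier coefficients, the vanishing conditions form an integer linear system whose coefficients can be effectively bounded; Siegel's lemma then yields a nonzero $P = P_N$ with controllably small integer coefficients. Evaluating at $q$ and using the uniform convergence of the $q$-expansion on compact subsets of $\mathbf{D}$, one obtains the required upper bound $|P_N(\xi)| \le \exp(-c N)$.

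The main obstacle, and the heart of the proof, is the complementary \emph{multiplicity estimate} guaranteeing $P_N(\xi) \neq 0$. One must show that there exists an absolute constant $C$ such that no nonzero $P$ of bidegree $(L,D)$ can vanish, along the trajectory traced by $(q, E_2(q), E_4(q), E_6(q))$, to order greater than $C L D^3$. Here the Ramanujan equations (\ref{rameq}) play a decisive role: they imply that $\QQ[E_2, E_4, E_6]$ is stable under $\theta$, so the vanishing of $P(q, E_2(q), E_4(q), E_6(q))$ to high order translates into the simultaneous vanishing of iterated derivatives $v^k P$ at a single point, where $v$ is the algebraic vector field on $\AA^4$ with components $X_0$, $(X_1^2-X_2)/12$, $(X_1X_2 - X_3)/3$, and $(X_1 X_3 - X_2^2)/2$. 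The estimate thus becomes a statement about the degrees of $v$-invariant subvarieties of $\AA^4$, which is the genuinely difficult step, and which Nesterenko established through a delicate analysis ruling out proper invariant subvarieties of low degree.

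Once the multiplicity estimate is in place, the analytic upper bound and the algebraic lower bound combine via Philippon's criterion to yield $\trdeg_{\QQ}\QQ(\xi) \ge 3$. I expect the bulk of the remaining technical work to lie in the careful bookkeeping of the parameters $L$, $D$, $T$, $N$ and of the arithmetic heights, so as to verify the hypotheses of the chosen algebraic independence criterion. Conceptually, however, everything rests on the interplay between the integrality of the Fourier coefficients of the Eisenstein series and the algebraicity of the differential relations they satisfy, precisely the two properties that the rest of this paper seeks to extend to higher dimensions.
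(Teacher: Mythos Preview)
The paper does not prove this theorem: it is quoted in the introduction as a result of Nesterenko \cite{nesterenko96}, with no argument supplied, and serves only as motivation for the constructions that follow. There is therefore nothing in the paper to compare your proposal against.

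That said, your outline is a faithful summary of Nesterenko's original strategy: an auxiliary polynomial built via Siegel's lemma from the integral $q$-expansions, an analytic upper bound from the Schwarz lemma, a multiplicity (zero) estimate exploiting the stability of $\QQ[E_2,E_4,E_6]$ under $\theta$ via the Ramanujan relations, and a conclusion through Philippon's criterion. The one point to flag is that your description of the multiplicity estimate is slightly imprecise: Nesterenko's result is not quite that no nonzero $P$ can vanish to order exceeding $CLD^3$ along the trajectory, but rather that if the auxiliary function vanishes at $q=0$ to order $M$, then $M \le C(L+D)D^3$ (a bound on the \emph{$q$-adic} order of vanishing, not on the order of contact at an interior point). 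The argument does reduce to controlling $v$-invariant subvarieties, and the key input is what Nesterenko calls the $D$-property --- essentially that the only $v$-invariant prime divisor of $\Spec \QQ[q,E_2,E_4,E_6]$ over the locus $q(E_4^3-E_6^2)=0$ is $\{q=0\}$. This is the property the present paper generalizes in Theorem~\ref{intro-thmzdensity}.
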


Note that Zudilin's work on \emph{Thetanullwerte} \cite{zudilin00} and Pellarin's study of the differential properties of Hilbert modular forms \cite{pellarin05} were also motivated by this same algebraic independence result.

In contrast with the concrete methods of Ohyama, Resnikoff, Zudilin, Bertrand, and Pellarin, relying on modular functions and their derivatives, we follow a geometric approach initially based on Movasati's reinterpretation of the Ramanujan equations as a vector field living on a suitable moduli space of elliptic curves (see \cite{movasati08}, \cite{movasati12}).\footnote{One may argue that this point of view is already contained, although not explicitly in the form of a vector field on a moduli space, in the concept of Serre derivative of modular forms (\cite{serre72} 1.4) and in its geometric interpretation in terms of the Gauss-Manin connection given by Deligne (\cite{katz73} A1.4).} Namely, we construct by purely algebraic methods some higher dimensional avatars of the system (\ref{rameq}), involving suitable moduli spaces of abelian varieties enjoying  remarkable smoothness properties over $\ZZ$. The definition of such moduli spaces presupposes the choice of a PEL moduli problem of abelian varieties, and we work out this theory in the Siegel and the Hilbert-Blumenthal cases. 

Another distinguishing feature of our approach lies in our emphasis on integrality phenomena. Accordingly, it is imperative to work in ``level 1'', although it should be clear that we can also include higher level structures in the picture. This introduces certain representability issues, and naturally leads to the use of (Deligne-Mumford) algebraic stacks. As we shall explain below, the appearance of stacks is not a serious problem, since it is possible to recover a purely scheme-theoretic situation (preserving integrality) if needed.

Besides the construction of the higher Ramanujan equations and the study of some of their geometric properties, we take Nesterenko's theorem as a guiding example to explore the deep connections between such differential equations and the vast landscape of problems in the theory of transcendental numbers pertaining to Grothendieck's Period Conjecture, specially in relation with \emph{periods of abelian varieties}. We also discuss future directions, and speculate on possible applications of our constructions to transcendental number theory, such as the algebraic independence of $\pi$, $\Gamma(1/5)$, and $\Gamma(2/5)$.

\subsection{Higher Ramanujan equations over $\ZZ$; Siegel case}\label{subsec-introhresiegel}

We now explain our main results regarding the construction of the higher Ramanujan equations attached to a Siegel moduli problem. This suffices for the purposes of this introduction, since their Hilbert-Blumenthal counterparts are obtained through a similar yoga. 

Fix an integer $g\ge 1$. Let $k$ be a field, and $(X,\lambda)$ be a principally polarized abelian variety over $k$ of dimension $g$ (here, $\lambda$ denotes a suitable isomorphism from $X$ onto the dual abelian variety $X^t$). The first algebraic de Rham cohomology $H^1_{\dR}(X/k)$ is a $k$-vector space of dimension $2g$ endowed with a canonical subspace $F^1(X/k)\cong H^0(X,\Omega^1_{X/k})$ of dimension $g$ --- the Hodge filtration --- and a non-degenerate alternating $k$-bilinear form 
\begin{align*}
\langle \ , \ \rangle_{\lambda} : H^1_{\dR}(X/k) \times H^1_{\dR}(X/k) \to k
\end{align*}
induced by the principal polarization $\lambda$. By a \emph{symplectic-Hodge basis} of $(X,\lambda)$, we mean a basis $b=(\omega_1,\ldots,\omega_g,\eta_1,\ldots,\eta_g)$ of the $k$-vector space $H^1_{\dR}(X/k)$, such that
\begin{enumerate}
   \item each $\omega_i$ is in $F^1(X/k)$, and
   \item $b$ is symplectic with respect to $\langle \ , \ \rangle_{\lambda}$, that is, $\langle \omega_i,\omega_j \rangle_{\lambda} = \langle \eta_i,\eta_j \rangle_{\lambda} =0$ and $\langle \omega_i, \eta_j \rangle_{\lambda}=\delta_{ij}$ for every $1\le i, j\le g$. 
\end{enumerate} 
The above notions generalize to abelian schemes over arbitrary base schemes (see Paragraph \ref{shb}). We may thus consider a moduli stack $\mathcal{B}_g$ over $\Spec \ZZ$ classifying principally polarized abelian varieties of dimension $g$ equipped with a symplectic-Hodge basis.

Let $\mathcal{A}_g$ denote the moduli stack of $g$-dimensional principally polarized abelian varieties, and $P_g$ denote the Siegel parabolic subgroup of $\Sp_{2g}$.  Then, the stack $\mathcal{B}_g$ can be regarded as a ``principal $P_g$-bundle'' over $\mathcal{A}_g$ via the canonical forgetful map $\mathcal{B}_g \to \mathcal{A}_g$. We shall deduce from this that $\mathcal{B}_g$ is a smooth Deligne-Mumford stack over $\Spec \ZZ$ of relative dimension $2g^2 + g$ (Theorem \ref{smoothdmstack}).

The Deligne-Mumford stack $\mathcal{B}_g$ is not representable by a scheme, or even an algebraic space. Nevertheless, we have the following representability theorem.

\begin{theorem}[see Theorem \ref{repr}]
The Deligne-Mumford stack $\mathcal{B}_g\tensor \ZZ[1/2]$ is representable by a smooth quasi-affine scheme $B_g$ over $\ZZ[1/2]$ of relative dimension $2g^2+g$.
\end{theorem}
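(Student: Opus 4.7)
The plan is to proceed in three steps: rigidify the moduli problem to reduce to an algebraic space, promote this algebraic space to a scheme using level covers, and finally establish quasi-affineness.

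\emph{Step 1 (Trivial inertia).} Over a geometric point $\Spec k$ with $\mathrm{char}(k) \neq 2$, the automorphism group of an object $(X,\lambda,b)$ is
\[
\Aut(X,\lambda,b) \;=\; \ker\bigl(\Aut(X,\lambda) \to \GL(H^1_{\dR}(X/k))\bigr),
\]
since fixing $b$ amounts to acting as the identity on $H^1_{\dR}$. I would show this kernel is trivial. Any $f$ in it has finite order, as $\Aut(X,\lambda)$ is finite. In characteristic $0$, combining the de Rham/\'etale comparison with Serre's lemma (any finite-order automorphism of an abelian variety acting trivially on $\ell^n$-torsion for some $\ell^n \geq 3$ is the identity) gives $f=\id$. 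In characteristic $p \neq 2$, the action of $f$ on $H^1_{\dR}(X/k)$ and on $H^1_{\et}(X_{\bar k},\QQ_\ell)$ (any $\ell \neq p$) share the same characteristic polynomial; triviality of the former then forces all eigenvalues on the latter to equal $1$, and semisimplicity of $f$ (finite order) gives triviality on $H^1_{\et}$, hence on $T_\ell X$, and Serre's lemma concludes. The role of inverting $2$ is transparent from $-1 \in \Aut(X,\lambda)$, which acts as $-\id$ on $H^1_{\dR}$ and is trivial there precisely when $\mathrm{char}(k)=2$. This yields that $\mathcal{B}_g \otimes \ZZ[1/2]$ has trivial inertia and is an algebraic space.

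\emph{Step 2 (From algebraic space to scheme).} Fix an odd integer $N \geq 3$. Mumford's moduli $\mathcal{A}_{g,N}$ of principally polarized $g$-dimensional abelian varieties with full symplectic level-$N$ structure is a quasi-projective fine moduli scheme over $\ZZ[1/N]$. The base change $\mathcal{B}_{g,N} \defeq \mathcal{B}_g \times_{\mathcal{A}_g} \mathcal{A}_{g,N}$ is a $P_g$-torsor over $\mathcal{A}_{g,N}$, hence a scheme (torsors for affine groups over schemes are schemes). Over $\ZZ[1/2N]$, Step~1 makes the Galois action of $\Sp_{2g}(\ZZ/N)$ on $\mathcal{B}_{g,N}$ free, and standard descent using an equivariant ample line bundle coming from the Hodge bundle produces the quotient as a scheme. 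Varying $N$ over coprime odd integers yields a scheme structure on $\mathcal{B}_g \otimes \ZZ[1/2]$.

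\emph{Step 3 (Quasi-affineness).} A symplectic-Hodge basis trivializes the Hodge subbundle $F^1$, and in particular the Hodge line bundle $\det F^1$. Since $\det F^1$ extends to an ample line bundle on a Baily--Borel compactification of $\mathcal{A}_g$, the complement of the zero section in its total space is quasi-affine; combined with the affineness of the torsor map $\mathcal{B}_g \to \mathcal{A}_g$, this exhibits $\mathcal{B}_g \otimes \ZZ[1/2]$ as quasi-affine over $\ZZ[1/2]$. Smoothness and the relative dimension $2g^2+g$ are inherited from Theorem~\ref{smoothdmstack}. The main obstacle lies in Step~1, where handling $\Aut(X,\lambda)$-elements of order divisible by the residue characteristic may require crystalline or Dieudonn\'e-theoretic inputs; Step~3 also demands some care to pass from trivialization of $\det F^1$ to quasi-affineness in the arithmetic setting.
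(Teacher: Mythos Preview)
Your three-step architecture matches the paper's proof closely: rigidity to kill inertia, descent from a level cover to get a scheme, and ampleness of the Hodge determinant to get quasi-affineness. Steps~2 and~3 are essentially the paper's arguments (the paper uses a single level-4 cover with group $\GL_g(\ZZ/4\ZZ)$ rather than gluing over coprime odd levels, but this is a minor variation).

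The substantive gap is in Step~1, precisely where you flag it. Your characteristic-$p$ argument does not go through as stated. The characteristic polynomial of $f$ on $H^1_{\dR}(X/k)$ lives in $k[T]$, while on $H^1_{\et}(X_{\bar k},\QQ_\ell)$ it is the image in $\QQ_\ell[T]$ of an integral polynomial $P_f\in\ZZ[T]$; these agree only in the sense that the de Rham one is $P_f \bmod p$. Triviality on $H^1_{\dR}$ gives $P_f\equiv (T-1)^{2g}\pmod p$, which does \emph{not} force the $\ell$-adic eigenvalues to be $1$: for instance $\Phi_p(T)\equiv (T-1)^{p-1}\pmod p$, so an automorphism of order $p$ can act unipotently, even trivially, on $H^1_{\dR}$ while acting nontrivially on $H^1_{\et}$. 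Your argument is salvageable only for automorphisms of order prime to $p$ (reduction of $n$-th roots of unity mod $p$ is injective when $\gcd(n,p)=1$), which is not enough.

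The paper closes this gap exactly via the Dieudonn\'e-theoretic input you anticipate: Oda's theorem gives a natural isomorphism of $D(k)$-modules $H^1_{\dR}(X/k)\cong M(X[p])$, so triviality of $f^*$ on $H^1_{\dR}$ forces triviality on the Dieudonn\'e module of $X[p]$; faithfulness of the Dieudonn\'e functor then yields $f|_{X[p]}=\id$, and Serre's lemma (using $p\ge 3$) finishes. This is the missing ingredient, and once inserted your outline becomes the paper's proof.
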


This also answers a question of Movasati (see Paragraph \ref{subsec-movasati} below). The representability of $\mathcal{B}_g\tensor \ZZ[1/2]$ by a scheme relies essentially on a theorem of Oda (\cite{oda69} Corollary 5.11) relating $H^1_{\dR}(X/k)$ to the Dieudonné module associated to the $p$-torsion subscheme $X[p]$ when $k$ is a perfect field of characteristic $p$. 

Next, we study the tangent bundle $T_{\mathcal{B}_g/\ZZ}$. We show that the \emph{Gauss-Manin connection} induces a canonical horizontal structure on $T_{\mathcal{B}_g/\ZZ}$ with respect to $\mathcal{B}_g \to \mathcal{A}_g$. Namely, if $\nabla$ denotes the Gauss-Manin connection on the de Rham cohomology of the universal abelian scheme over $\mathcal{B}_g$, and $b=(\omega_1,\ldots,\omega_g,\eta_1,\ldots,\eta_g)$ denotes the universal symplectic-Hodge basis over $\mathcal{B}_g$, then we have the following result.

\begin{theorem}[see Theorem \ref{thm-vertbg} and Definition \ref{defi-ramubbundsiegel}]
  Let $\mathcal{R}_g$ be the subsheaf of $T_{\mathcal{B}_g/\ZZ}$ given by the vector fields $v$ such that $\nabla_v\eta_j=0$ for every $1\le j \le g$. Then $\mathcal{R}_g$ is an integrable subbundle of $T_{\mathcal{B}_g/\ZZ}$ such that
  $$
  T_{\mathcal{B}_g/\mathcal{A}_g} \oplus \mathcal{R}_g = T_{\mathcal{B}_g/\ZZ}\text{.}
  $$ 
\end{theorem}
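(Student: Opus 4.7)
The plan is to compute the connection matrix of $\nabla$ in the universal symplectic-Hodge basis and identify $\mathcal{R}_g$ with the canonical complement of $\mathfrak{p}_g = \Lie(P_g)$ in $\mathfrak{sp}_{2g}$ sitting inside the tangent bundle of $\mathcal{B}_g$. Write the Gauss--Manin connection in the universal frame $b = (\omega_1, \ldots, \omega_g, \eta_1, \ldots, \eta_g)$ as
\[
\nabla \omega_i = \sum_k \mathcal{A}_{ki}\,\omega_k + \sum_k \mathcal{C}_{ki}\,\eta_k, \qquad \nabla \eta_i = \sum_k \mathcal{B}_{ki}\,\omega_k + \sum_k \mathcal{D}_{ki}\,\eta_k,
\]
with connection $1$-form $\Theta = \left(\begin{smallmatrix} \mathcal{A} & \mathcal{B} \\ \mathcal{C} & \mathcal{D} \end{smallmatrix}\right)$. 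The horizontality of the pairing $\langle\cdot,\cdot\rangle_\lambda$ under $\nabla$ forces $\Theta$ to land in $\mathfrak{sp}_{2g}$: the blocks $\mathcal{B}$ and $\mathcal{C}$ are symmetric and $\mathcal{D} = -\mathcal{A}\transp$.

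The key step is showing that the evaluation map $\Theta : T_{\mathcal{B}_g/\ZZ} \to \mathfrak{sp}_{2g}\otimes_\ZZ \mathcal{O}_{\mathcal{B}_g}$ is an isomorphism (both sides locally free of rank $2g^2+g$). Since $\mathcal{B}_g \to \mathcal{A}_g$ is a principal $P_g$-bundle and $\nabla$ is pulled back from $\mathcal{A}_g$, for a vertical $v \in T_{\mathcal{B}_g/\mathcal{A}_g}$ the matrix $\Theta(v)$ equals the Maurer--Cartan form applied to $v$; this has $\mathcal{C}(v) = 0$, lies in $\mathfrak{p}_g$, and identifies $T_{\mathcal{B}_g/\mathcal{A}_g} \cong \mathfrak{p}_g\otimes \mathcal{O}_{\mathcal{B}_g}$. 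Dually, for a horizontal lift of $\bar v \in T_{\mathcal{A}_g}$, Deligne's interpretation of Kodaira--Spencer (cf.\ \cite{katz73} A1.4) identifies the $\mathcal{C}$-component of $\Theta(v)$ with $\mathrm{KS}(\bar v) \in \Sym^2 \omega^\vee$, and $\mathrm{KS}$ is an isomorphism. A rank count then shows $\Theta$ is itself an $\mathcal{O}_{\mathcal{B}_g}$-linear isomorphism.

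Under $\Theta$, the subsheaf $\mathcal{R}_g$ is cut out by $\mathcal{B}(v) = \mathcal{D}(v) = 0$; using $\mathcal{D} = -\mathcal{A}\transp$ this becomes $\mathcal{A}(v) = \mathcal{B}(v) = 0$. Hence $\mathcal{R}_g$ corresponds to the sub-bundle of $\mathfrak{sp}_{2g}\otimes \mathcal{O}_{\mathcal{B}_g}$ of matrices whose only nonzero block is a symmetric $\mathcal{C}$, which is a linear complement of $\mathfrak{p}_g$ in $\mathfrak{sp}_{2g}$. This immediately yields that $\mathcal{R}_g$ is a locally free sub-bundle of rank $g(g+1)/2$ together with the direct-sum decomposition $T_{\mathcal{B}_g/\mathcal{A}_g} \oplus \mathcal{R}_g = T_{\mathcal{B}_g/\ZZ}$.

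Integrability follows formally from flatness of Gauss--Manin: for $v_1, v_2 \in \mathcal{R}_g$, the vanishing of curvature gives $\nabla_{[v_1,v_2]} = [\nabla_{v_1}, \nabla_{v_2}]$, so $\nabla_{[v_1,v_2]}\eta_j = \nabla_{v_1}(\nabla_{v_2}\eta_j) - \nabla_{v_2}(\nabla_{v_1}\eta_j) = 0$, whence $[v_1, v_2] \in \mathcal{R}_g$. The main technical hurdle is the second step, where one must carefully match the block structure of $\Theta$ against the Maurer--Cartan form of the principal $P_g$-bundle $\mathcal{B}_g \to \mathcal{A}_g$ and against the Kodaira--Spencer map; once this structural dictionary is in place, both transversality and integrability are essentially formal.
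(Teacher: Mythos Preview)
Your argument is correct, but it follows a somewhat different route from the paper's. The paper does not work with the full $\mathfrak{sp}_{2g}$-valued form $\Theta$; instead it only uses the map
\[
P:T_{\mathcal{B}_g/\ZZ}\longrightarrow \pi_g^*\mathcal{H}_g^{\oplus g},\qquad v\longmapsto (\nabla_v\eta_1,\ldots,\nabla_v\eta_g),
\]
which in your notation is precisely the second block-column $(\mathcal{B},\mathcal{D})$ of $\Theta$, expressed frame-independently. To show that $P$ restricts to an isomorphism $T_{\mathcal{B}_g/\mathcal{A}_g}\stackrel{\sim}{\to}\mathcal{S}_g$, the paper embeds $\mathcal{B}_g$ as a locally closed substack of the total space $\mathcal{V}_g=\mathbf{V}((\mathcal{H}_g^{\oplus g})^\vee)$ via $(X,\lambda,b)\mapsto(X,\lambda,\eta_1,\ldots,\eta_g)$, and then recognizes $P$ as the restriction of the canonical projection $P_\nabla:T_{\mathcal{V}_g/\ZZ}\to\tilde{\pi}_g^*\mathcal{H}_g^{\oplus g}$ attached to the connection (Lemma~\ref{lemma-projconnection}). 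Injectivity on the vertical then comes from the immersion, and surjectivity onto $\mathcal{S}_g$ from a rank count $\dim P_g = g(3g+1)/2 = \mathrm{rank}\,\mathcal{S}_g$. In particular, the paper does \emph{not} invoke the Kodaira--Spencer isomorphism at this stage; it enters only later, in Section~\ref{ssec-hrvf}, to trivialize $\mathcal{R}_g$.

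Your approach instead establishes the stronger claim that the full $\Theta$ is an isomorphism $T_{\mathcal{B}_g/\ZZ}\stackrel{\sim}{\to}\mathfrak{sp}_{2g}\otimes\mathcal{O}_{\mathcal{B}_g}$, using Maurer--Cartan on the vertical and the Kodaira--Spencer isomorphism on the quotient, and then reads off the splitting from the Lie-algebraic decomposition $\mathfrak{sp}_{2g}=\mathfrak{p}_g\oplus\mathfrak{u}^-$. This is perfectly valid, and it has the advantage of making the Lie-theoretic picture of Section~\ref{gpinterpret} visible from the outset; the cost is that you import the Kodaira--Spencer isomorphism earlier than needed. Your integrability argument via flatness of $\nabla$ is essentially identical to the paper's Remark~\ref{rem-integrablehorizontal}.
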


We then explain how the deformation theory of abelian schemes canonically yields a global trivialization $(v_{ij})_{1\le i \le j \le g}$ of $\mathcal{R}_g$; these are the \emph{higher Ramanujan vector fields} (see Section \ref{ramvecfields} for precise statements). Alternatively, these vector fields may be characterized by the following formulas.

\begin{prop}[see Proposition \ref{caracchamps} and Remark \ref{rem-rephrase}]
For every $1\le i \le j \le g$ we have
\begin{enumerate}
   \item $\nabla_{v_{ij}}\omega_i = \eta_j$, $\nabla_{v_{ij}}\omega_j=\eta_i$, and $\nabla_{v_{ij}}\omega_k = 0$ for every  $k\not\in \{i,j\}$, 
  \item $\nabla_{v_{ij}}\eta_k = 0$ for every $1\le k\le g$,
\end{enumerate}
and these equations completely determine $v_{ij}$.
\end{prop}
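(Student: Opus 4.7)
The plan is to reduce the characterization of the $v_{ij}$ to the Kodaira--Spencer isomorphism, exploiting the horizontal splitting $T_{\mathcal{B}_g/\mathcal{A}_g} \oplus \mathcal{R}_g = T_{\mathcal{B}_g/\ZZ}$ established above. First, observe that condition (2) is by definition equivalent to $v \in \mathcal{R}_g$, so the proposition reduces to showing that the morphism of $\mathcal{O}_{\mathcal{B}_g}$-modules
$$
\Phi : \mathcal{R}_g \longrightarrow \Hom_{\mathcal{O}_{\mathcal{B}_g}}\bigl(F^1,\, H^1_{\dR}/F^1\bigr), \qquad v \longmapsto \bigl(\omega \mapsto \overline{\nabla_v \omega}\bigr),
$$
is injective, and that $v_{ij}$ is mapped by $\Phi$ to the homomorphism $\omega_k \mapsto \delta_{ki}\,\bar{\eta}_j + \delta_{kj}\,\bar{\eta}_i$.

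The key step is to identify $\Phi$ with (the pullback along $p : \mathcal{B}_g \to \mathcal{A}_g$ of) the classical Kodaira--Spencer isomorphism on $\mathcal{A}_g$. Because $\mathcal{R}_g$ is a horizontal complement to the vertical tangent bundle, the differential of $p$ restricts to an isomorphism $\mathcal{R}_g \xrightarrow{\sim} p^{*}T_{\mathcal{A}_g/\ZZ}$; composing with the Kodaira--Spencer isomorphism $T_{\mathcal{A}_g/\ZZ} \xrightarrow{\sim} \Sym^2 \underline{\omega}^{\vee}$ induced by the Gauss--Manin connection identifies $\Phi$ with the inclusion $p^{*}\Sym^2 \underline{\omega}^{\vee} \hookrightarrow p^{*}\Hom(\underline{\omega}, \underline{\omega}^{\vee})$. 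That the image lies in the symmetric part is forced by horizontality of the polarization pairing under $\nabla$: differentiating $\langle \omega_i, \omega_j\rangle_\lambda = 0$ shows that the matrix $(\beta_{ij}(v))$ defined by $\overline{\nabla_v \omega_i} = \sum_k \beta_{ik}(v)\,\bar{\eta}_k$ is symmetric. In particular $\Phi$ is injective, giving the uniqueness clause of the proposition.

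For the existence part, I would appeal to the construction of the higher Ramanujan vector fields in Section \ref{ramvecfields}: the $v_{ij}$ are defined precisely as the preimages under $\Phi$ of the canonical symmetric sections $\omega_i^{\vee} \cdot \omega_j^{\vee}$ of $p^{*}\Sym^2 \underline{\omega}^{\vee}$ furnished by the universal basis $(\omega_1, \dots, \omega_g)$ of $F^1$. Translating this description through the polarization-induced isomorphism $H^1_{\dR}/F^1 \cong \underline{\omega}^{\vee}$, under which $\bar{\eta}_k$ corresponds to $\omega_k^{\vee}$, yields equations (1) directly.

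The step I expect to be the most delicate is keeping straight the symmetrization convention in the inclusion $\Sym^2 \underline{\omega}^{\vee} \hookrightarrow \underline{\omega}^{\vee} \otimes \underline{\omega}^{\vee}$: one must decide whether $\omega_i^{\vee} \cdot \omega_j^{\vee}$ lifts to $\omega_i^{\vee} \otimes \omega_j^{\vee} + \omega_j^{\vee} \otimes \omega_i^{\vee}$ (and to $2\,(\omega_i^{\vee})^{\otimes 2}$ on the diagonal $i=j$), or to half of this, and then track the resulting factor of $2$ on the diagonal carefully so that (1) comes out exactly as stated rather than up to an integer multiple.
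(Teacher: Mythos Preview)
Your approach is essentially the paper's own: reduce (2) to membership in $\mathcal{R}_g$, identify the restriction of the Kodaira--Spencer map to $\mathcal{R}_g$ with an isomorphism, and then read off (1) from the \emph{definition} of $v_{ij}$ as the preimage of the appropriate symmetric tensor. Your uniqueness argument is correct.

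There is, however, a genuine gap in the existence part. Your map $\Phi$ takes values in $\Hom(F^1, H^1_{\dR}/F^1)$, so computing $\Phi(v_{ij})$ only determines $\nabla_{v_{ij}}\omega_k$ \emph{modulo $F^1$}. You then assert this ``yields equations (1) directly,'' but (1) requires the exact equality $\nabla_{v_{ij}}\omega_i = \eta_j$ in $H^1_{\dR}$, not merely $\overline{\nabla_{v_{ij}}\omega_i} = \bar{\eta}_j$. You must still show that $\nabla_{v_{ij}}\omega_k$ has no component along $F^1$, and the paper treats this as a separate step. The cleanest fix combines (2) with horizontality of the polarization pairing: since $\langle\omega_k,\eta_l\rangle = \delta_{kl}$ is constant and $\nabla_{v_{ij}}\eta_l = 0$, one gets
\[
0 = v_{ij}\langle\omega_k,\eta_l\rangle = \langle\nabla_{v_{ij}}\omega_k,\eta_l\rangle + \langle\omega_k,\nabla_{v_{ij}}\eta_l\rangle = \langle\nabla_{v_{ij}}\omega_k,\eta_l\rangle,
\]
forcing $\nabla_{v_{ij}}\omega_k$ into the span of $\eta_1,\ldots,\eta_g$.

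Your worry about factors of $2$ on the diagonal is a non-issue under the paper's conventions. The paper works with the second divided power $\Gamma^2$ (the submodule of symmetric tensors inside the tensor square), not the symmetric quotient $\Sym^2$, and defines $v_{ii}$ to correspond to $\mathbf{e}_i\otimes\mathbf{e}_i$ while $v_{ij}$ (for $i<j$) corresponds to $\mathbf{e}_i\otimes\mathbf{e}_j + \mathbf{e}_j\otimes\mathbf{e}_i$. No rescaling is needed, and (1) comes out exactly as stated.
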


Next, we explain in Section \ref{sec-intsol} how to construct a particular \emph{integral solution of the higher Ramanujan equations}. Namely, for $1\le i\le j\le g$, let $q_{ij}$ be a formal variable, and consider the ring
$$
\ZZ(\!(q_{ij})\!) \defeq \ZZ[\![q_{11},\ldots,q_{gg}]\!][(q_{11}\cdots q_{gg})^{-1}]\text{.}
$$
We obtain from Mumford's classical construction of degenerating families of abelian varieties \cite{mumford72}, a principally polarized abelian scheme $(\hat{X}_g,\hat{\lambda}_g)$ over $\ZZ(\!(q_{ij})\!)$ which can be formally represented by the quotient
$$
\hat{X}_g = \mathbf{G}_m^g/\langle (q_{1j},\ldots,q_{gj}) \mid 1\le j \le g \rangle \text{,}
$$
and admits a canonical trivialization of $F^1(\hat{X}_g/\ZZ(\!(q_{ij})\!))\cong H^0(\hat{X}_g,\Omega^1_{\hat{X}_g/\ZZ(\!(q_{ij})\!)})$ given by
$$
\hat{\omega}_j=\frac{dt_j}{t_j}\text{, } \ \ \ 1\le j \le g\text{,}
$$
where $t_1,\ldots,t_g$ denote the coordinates on $\mathbf{G}_m^g$.

\begin{theorem}[see Theorem \ref{thm-intsolsiegel}]\label{intro-thm-instsolsiegel}
  Let $\nabla$ be the Gauss-Manin connection on $H^1_{\dR}(\hat{X}_g/\ZZ(\!(q_{ij})\!))$ and, for $1\le k \le g$, define
$$
\hat{\eta}_k = \nabla_{q_{kk}\frac{\partial }{\partial q_{kk}}}\hat{\omega}_k\text{.}
$$
Then:
\begin{enumerate}
 \item the $2g$-uple $\hat{b}_g=(\hat{\omega}_1,\ldots,\hat{\omega}_g,\hat{\eta}_1,\ldots,\hat{\eta}_g)$ is a symplectic-Hodge basis of $(\hat{X}_g,\hat{\lambda}_g)$, and
 \item the morphism
 $$
 \hat{\varphi}_g: \Spec \ZZ(\!(q_{ij})\!)\to \mathcal{B}_g\text{,}
 $$
 associated to $\hat{b}_g$ by the universal property of $\mathcal{B}_g$, satisfies the differential equations
 $$
q_{ij}\frac{\partial \hat{\varphi}_{g}}{\partial q_{ij}} = v_{ij}\circ\hat{\varphi}_g
$$
for every $1\le i \le j \le g$.
\end{enumerate}
\end{theorem}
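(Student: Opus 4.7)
The plan is to verify (1) and (2) by explicit computation with Mumford's construction, leveraging the explicit Kodaira--Spencer description at the toric boundary of $\mathcal{A}_g$ and the characterization of $v_{ij}$ from the preceding proposition.

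First, I would compute the Kodaira--Spencer map of Mumford's degeneration in the coordinates $q_{ij}$. The statement $\hat{\omega}_j \in F^1$ is immediate, since $dt_j/t_j$ is translation-invariant on $\mathbf{G}_m^g$ and therefore descends to a global section of $\Omega^1_{\hat X_g/\ZZ(\!(q_{ij})\!)}$. The key input is that $\Spec \ZZ(\!(q_{ij})\!)$ maps to the formal neighborhood of a maximally degenerate cusp of $\mathcal{A}_g$ and that the $q_{ij}$'s are precisely Mumford's ``Tate'' coordinates on this formal scheme; under the Kodaira--Spencer isomorphism $T_{\mathcal{A}_g/\ZZ} \cong \Sym^2 (F^1)^\vee$ this means $q_{ij}\partial/\partial q_{ij}$ maps to the symmetric tensor $\tfrac{1}{1+\delta_{ij}}(\hat{\omega}_i^\vee\cdot\hat{\omega}_j^\vee)$. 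Granting this, the class $\nabla_{q_{ij}\partial/\partial q_{ij}}\hat{\omega}_k$ is controlled modulo $F^1$, and in particular $\hat{\eta}_k \bmod F^1 = KS(q_{kk}\partial/\partial q_{kk})(\hat{\omega}_k)$ yields a linearly independent family complementary to the $\hat{\omega}_j$'s, so $\hat{b}_g$ is a basis.

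Next, for the symplectic property, $\langle \hat{\omega}_i,\hat{\omega}_j\rangle_{\hat\lambda}=0$ is automatic, since $F^1$ is Lagrangian under the polarization pairing. For $\langle \hat{\omega}_i,\hat{\eta}_j\rangle_{\hat\lambda}=\delta_{ij}$ and $\langle \hat{\eta}_i,\hat{\eta}_j\rangle_{\hat\lambda}=0$, I would combine the Kodaira--Spencer formula above with the Griffiths--Grothendieck compatibility $d\langle \alpha,\beta\rangle_{\hat\lambda} = \langle \nabla\alpha,\beta\rangle_{\hat\lambda} + \langle \alpha,\nabla\beta\rangle_{\hat\lambda}$. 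Differentiating $\langle \hat{\omega}_i,\hat{\omega}_k\rangle_{\hat\lambda}=0$ along $q_{jk}\partial/\partial q_{jk}$ and using the explicit Kodaira--Spencer outputs from the previous step pins down the off-diagonal pairings; the normalization that $\langle \hat{\omega}_i,\hat{\eta}_j\rangle_{\hat\lambda}$ equals exactly $\delta_{ij}$ (rather than a multiple) reflects the self-dual nature of Mumford's coordinates and can be checked by reducing to the one-dimensional Tate curve case. The vanishing $\langle \hat{\eta}_i,\hat{\eta}_j\rangle_{\hat\lambda}=0$ then follows from the symmetry of the period matrix implicit in the same Kodaira--Spencer identification.

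For assertion (2), by the proposition characterizing $v_{ij}$, I need to verify under $\hat{\varphi}_g^*$ the four families of identities $\nabla_{q_{ij}\partial/\partial q_{ij}}\hat{\omega}_i = \hat{\eta}_j$, $\nabla_{q_{ij}\partial/\partial q_{ij}}\hat{\omega}_j = \hat{\eta}_i$ (for $i \le j$), $\nabla_{q_{ij}\partial/\partial q_{ij}}\hat{\omega}_k = 0$ for $k\notin\{i,j\}$, and $\nabla_{q_{ij}\partial/\partial q_{ij}}\hat{\eta}_k = 0$ for all $k$. The first three are essentially the content of the Kodaira--Spencer computation of the first paragraph, once one reads off the $F^1$-lift: the class $\nabla_{q_{ij}\partial/\partial q_{ij}}\hat{\omega}_k$ has the prescribed image modulo $F^1$, and its $F^1$-part must vanish in order for $\hat{b}_g$ to remain symplectic (which cross-checks with the pairings computed in the previous paragraph).

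The delicate remaining step, and the main obstacle, is the horizontality $\nabla_{q_{ij}\partial/\partial q_{ij}}\hat{\eta}_k = 0$. When $\{i,j\}\cap\{k\}=\emptyset$, flatness of $\nabla$ and $[q_{ij}\partial/\partial q_{ij},q_{kk}\partial/\partial q_{kk}]=0$ reduce this to $\nabla_{q_{kk}\partial/\partial q_{kk}}\nabla_{q_{ij}\partial/\partial q_{ij}}\hat{\omega}_k=0$, handled by the vanishing of the inner derivative; however, the cases with $k\in\{i,j\}$ do not bootstrap from the identities above. The cleanest way to dispatch these is to identify $\hat{\eta}_k$ with the Betti class dual to the ``toric'' topological cycle $\{|t_k|=1\}\subset \mathbf{G}_m^g/\langle q_{\cdot j}\rangle$ of the uniformization, which is by construction flat for Gauss--Manin; more algebraically, one uses that Mumford's construction equips $H^1_{\dR}(\hat X_g)$ with a canonical filtration and integral structure relative to the boundary, with respect to which the $\hat{\eta}_k$'s lift the ``horizontal'' generators of $\mathrm{gr}\, H^1_{\dR}$ coming from the torus part $\mathbf{G}_m^g$.
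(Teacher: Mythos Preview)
Your proposal attempts a direct algebraic attack via the Kodaira--Spencer map at the boundary, whereas the paper takes a completely different route: it proves the \emph{analytic} analogue first (Theorem~\ref{theoremsolution}) and then deduces the integral statement by identifying $\hat{\varphi}_g$ with the $q$-expansion of the analytic solution $\varphi_g:\mathbf{H}_g\to B_g(\CC)$ (Paragraph~\ref{subsec-compatsiegel}). Concretely, over $\mathbf{H}_g$ the paper works with the $C^{\infty}$ frame $(dz_k,d\bar z_k)$, computes $\nabla_{\theta_{ij}}\bfomega_k$ explicitly (Proposition~\ref{lemme1}), and observes that $\bfeta_k$ is the Betti class $E_g(\gamma_k,\ \cdot\ )$ under the comparison isomorphism (Corollary~\ref{caraceta}), whence $\nabla\bfeta_k=0$ is immediate (Corollary~\ref{corohor}). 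The symplectic normalization $\langle\bfomega_i,\bfeta_j\rangle_{E_g}=\delta_{ij}$ is then a short direct computation with $E_g$ (proof of Theorem~\ref{theoremsolution}(1)); no reduction to $g=1$ is needed. Since the objects $\hat b_g$ are already defined over $\ZZ(\!(q_{ij})\!)$ by Mumford's construction, checking the symplectic-Hodge and Ramanujan conditions after analytification suffices.

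Your outline is not wrong, but it is circuitous and has soft spots precisely where the paper's analytic method is clean. First, the normalization $\langle\hat\omega_j,\hat\eta_j\rangle_{\hat\lambda}=1$ does not follow from differentiating $\langle\hat\omega_i,\hat\omega_j\rangle=0$ (which, being the derivative of an identically zero alternating pairing on $F^1$, gives you at best the off-diagonal vanishing once you know $\nabla_{\theta_{jj}}\hat\omega_i\in F^1$ for $i\neq j$); your ``reduce to the one-dimensional Tate curve'' is not a valid specialization in $\ZZ(\!(q_{ij})\!)$ since all $q_{ij}$ are inverted, and in any case the diagonal value genuinely encodes the choice of polarization in Mumford's construction. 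Second, you correctly isolate the horizontality $\nabla\hat\eta_k=0$ as the crux, but your resolution---identifying $\hat\eta_k$ with the Betti dual of the toric $1$-cycle---is exactly the paper's analytic step (Corollary~\ref{caraceta}); your alternative via a weight filtration on log de~Rham cohomology is plausible but would require substantially more input (essentially the log Gauss--Manin theory of \cite{FC90}) than you sketch. In short, the paper's approach buys you the key horizontality and the pairing normalization in one explicit computation on $\mathbf{H}_g$, avoiding the boundary Kodaira--Spencer analysis entirely; your approach front-loads that analysis only to fall back on the same analytic identification at the decisive moment.
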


In spite of the above result being purely algebraic, we shall actually prove it via analytic methods in Section \ref{sec-analhre}.

At this point, let us briefly remark that it is possible to pass to a scheme-theoretic picture by considering the ring of global sections $\Gamma(\mathcal{B}_g,\mathcal{O}_{\mathcal{B}_g})$. Namely, the higher Ramanujan vector fields ``extend'' to derivations of $\Gamma(\mathcal{B}_g,\mathcal{O}_{\mathcal{B}_g})$, so that the composition of $\hat{\varphi}_g$ with the canonical map $\mathcal{B}_g \to \Spec \Gamma(\mathcal{B}_g,\mathcal{O}_{\mathcal{B}_g})$ still satisfies the higher Ramanujan equations. Since $\mathcal{B}_g\tensor \ZZ[1/2]$ is representable by a quasi-affine scheme, little information is lost when replacing $\mathcal{B}_g$ by $\Spec \Gamma(\mathcal{B}_g,\mathcal{O}_{\mathcal{B}_g})$.

When $g=1$, we shall recall how $B_1$ may be identified, by means of the classical theory of elliptic curves, with an open subscheme of $\AA^3_{\ZZ[1/2]} = \Spec \ZZ[1/2,b_2,b_4,b_6]$. Under this isomorphism, the vector field $v_{11}$ gets identified with
\begin{align*}
2b_4\frac{\partial}{\partial b_2} + 3b_6\frac{\partial}{\partial b_4} + (b_2b_6-b_4^2)\frac{\partial}{\partial b_6}
\end{align*} 
(which is, up to scaling, the vector field associated to Chazy's equation (\ref{chazyeq})), and
$$
\hat{\varphi}_1 = (E_2,\frac{1}{2}\theta E_2, \frac{1}{6}\theta^2E_2)\text{.}
$$
We also show that $B_1\tensor \ZZ[1/6]$ may be identified with the open subscheme $\Spec \ZZ[1/6,e_2,e_4,e_6,(e_4^3-e_6^2)^{-1}]$ of $\AA^3_{\ZZ[1/6]}$, and that, under this isomorphism, the vector field $v_{11}$ gets identified with the ``original'' vector field associated to the Ramanujan equations (\ref{rameq}):
\begin{align} \label{eq-introramvectorfield}
 v=\frac{e_2^2-e_4}{12}\frac{\partial}{\partial e_2} + \frac{e_2e_4-e_6}{3}\frac{\partial}{\partial e_4} + \frac{e_2e_6-e_4^2}{2}\frac{\partial}{\partial e_6}\text{.}
\end{align}
Naturally, under this identification, we have
$$
\hat{\varphi}_1 = (E_2,E_4,E_6)\text{.}
$$

\begin{obs}\label{rem-conditionatinfinity}
  One might remark that our theory in $g=1$ yields a curve $\hat{\varphi}_1$ with coefficients in $\ZZ(\!(q)\!)$, while Eisenstein series are actually regular at $q=0$, i.e., $E_{2k} \in \ZZ[\![q]\!]$. To remedy this (with $g$ arbitrary), one must work more generally with semi-abelian schemes, with logarithmic de Rham cohomology, and with smooth toroidal compactifications of $\mathcal{A}_g$, as developed in \cite{FC90}. In this paper, we shall not elaborate further on this point.
\end{obs}

\subsection{Interlude: Grothendieck's Period Conjecture}

As explained above, questions in Transcendental Number Theory constitute our main source of motivation for the study of these higher dimensional analogs of Ramanujan's equations. In order to fully motivate the precise statements of our next results, we now digress into a discussion of periods of abelian varieties and Grothendieck's conjecture on the algebraic relations between them.

Let $X$ be an abelian variety defined over a subfield $k\subset \CC$. By a \emph{period} of $X$ over $k$, we mean any complex number of the form
\begin{align*}
\int_{\gamma}\alpha
\end{align*}
where $\alpha$ is an element of the first algebraic de Rham cohomology $H^1_{\dR}(X/k)$ and $\gamma \in H_1(X(\CC),\ZZ)$ is the class of a singular 1-cycle. We define the \emph{field of periods} $\mathcal{P}(X/k)$ as the smallest subfield of $\CC$ containing $k$ and all the periods of $X$ over $k$. Equivalently, $\mathcal{P}(X/k)$ may be regarded as the field of rationality of the comparison isomorphism
\begin{align*}
H_{\dR}^1(X/k)\tensor_k \CC \stackrel{\sim}{\to}H^1(X(\CC), \CC) = \Hom (H_1(X(\CC),\ZZ),\CC) \text{.} 
\end{align*}
A central problem in the theory of transcendental numbers is to determine, or simply to estimate, the transcendence degree over $\mathbf{Q}$ of the field of periods $\mathcal{P}(X/k)$.

In a first approach, one might observe that any algebraic cycle in some power $X^n=X\times_k\cdots \times_k X$ of $X$ induces an algebraic relation between its periods (cf. \cite{DMOS82} Proposition I.1.6). Broadly speaking, Grothendieck conjectured that \emph{every} algebraic relation between periods of an abelian variety can be ``explained'' through algebraic cycles on its powers.

A convenient way of giving a precise formulation for Grothendieck's conjecture for abelian varieties is by means of Mumford-Tate groups. Let $X$ be a complex abelian variety, and denote by $H$ the $\QQ$-Hodge structure of weight 1 with underlying $\QQ$-vector space given by $H^1(X(\CC),\QQ)$, and Hodge filtration $F^1H$ given by $H^0(X,\Omega^1_{X/\CC})\subset H^1_{\dR}(X/\CC) \cong H^1(X(\CC),\QQ)\tensor_{\QQ}\CC$. The decomposition $H_{\CC} = F^1H \oplus \overline{F^1H}$ corresponds to the morphism of real algebraic groups
\begin{align*}
h : \CC^{\times} \to \GL(H_{\RR})\text{,}
\end{align*}
where $h(z)$ acts on $F^1H$ by a homothety of ratio $z^{-1}$, and on $\overline{F^1H}$ by a homothety of ratio $\bar{z}^{-1}$. The \emph{Mumford-Tate group} $\MT(X)$ of $X$ is defined as the smallest $\QQ$-algebraic subgroup of $\GL(H)$ such that $h$ factors through $\MT(X)_{\RR}$. It can also be interpreted as the smallest $\QQ$-algebraic subgroup of $\GL(H)\times \GG_{m,\QQ}$ fixing all Hodge classes in twisted mixed tensor powers of the $\QQ$-Hodge structure $H$ (cf. \cite{DMOS82} I.3).

The following formulation of \emph{Grothendieck's Period Conjecture} (GPC) for abelian varieties is a specialization of the ``Generalized Period Conjecture'' proposed by André (\cite{andre04} 23.4.1; see also \cite{lang66} Historical Note pp. 40-44 and \cite{grothendieck66} footnote 10).

\begin{conj}[Grothendieck-André]
For any abelian variety $X$ over a subfield $k\subset \CC$, we have
\begin{align*}
\trdeg_{\QQ}\mathcal{P}(X/k) \stackrel{\text{?}}{\ge} \dim \MT(X_{\CC})\text{.}
\end{align*} 
\end{conj}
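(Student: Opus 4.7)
No complete proof of this conjecture exists, and even the case of a single non-CM elliptic curve is open (the CM case being Chudnovsky's theorem). The purpose of the machinery set up in this paper — the stacks $\mathcal{B}_g$, the higher Ramanujan vector fields $(v_{ij})$, and the integral solution $\hat{\varphi}_g$ of Theorem \ref{intro-thm-instsolsiegel} — is not to settle GPC but to place it in a geometric framework suitable for a Nesterenko-style attack, and the ``proof'' I would attempt is really a reduction.

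The first step is to express $\mathcal{P}(X/k)$ in terms of $\mathcal{B}_g$. A choice of symplectic-Hodge basis of $(X,\lambda)$ together with a symplectic basis of $H_1(X(\CC),\ZZ)$ selects a point $P\in \mathcal{B}_g(\CC)$ above $[X]\in\mathcal{A}_g(\CC)$; by construction, the coordinates of $P$ in any affine chart are comparison periods of $X$, so that $\mathcal{P}(X/k)$ is the residue field of $P$ in $\mathcal{B}_{g,\CC}$. One then realises $P$ as a value of $\hat{\varphi}_g$ (equivalently, of the analytic uniformization by Siegel upper half-space $\mathfrak{H}_g \to \mathcal{B}_g(\CC)$), thereby rewriting the field of periods as the field generated over $k$ by the values at one complex point of the integral solution of the higher Ramanujan equations.

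The second step is to reinterpret $\dim\MT(X_{\CC})$ as the dimension of a bi-algebraic subvariety of $\mathcal{B}_{g,\CC}$: the Mumford-Tate locus of $[X]$ in $\mathcal{A}_{g,\CC}$ pulls back along the ``$P_g$-bundle'' $\mathcal{B}_g \to \mathcal{A}_g$ to an algebraic subvariety which should be the Zariski closure of the orbit of $P$ under the joint flow of the $v_{ij}$. The inequality of GPC would then follow from two inputs: (a) a \emph{functional} Ax-Schanuel-type theorem for the foliation defined by the higher Ramanujan vector fields, asserting that an algebraic relation on $\hat{\varphi}_g$ forces the Mumford-Tate constraint (the Zariski-density of leaves advertised in the abstract is a very weak shadow of this); and (b) a \emph{Diophantine} step upgrading (a) at individual points, in the spirit of Nesterenko's algebraic independence criterion.

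The main obstacle is (b). In Nesterenko's treatment of $g=1$, this is a Philippon-type multiplicity estimate applied to the algebraic ODE (\ref{rameq}) viewed as a derivation of $\ZZ[e_2,e_4,e_6]$; its higher-dimensional analogue for the derivations $v_{ij}$ on $\Gamma(\mathcal{B}_g,\mathcal{O}_{\mathcal{B}_g})$ is not presently available and seems to require substantially new ideas. The integrality of $\hat{\varphi}_g$ provided by Theorem \ref{intro-thm-instsolsiegel} is exactly what would make such an estimate exploitable; so the paper's contribution can be seen as putting input (a) and the arithmetic side of (b) in place, while leaving the hardest analytic-Diophantine step for future work.
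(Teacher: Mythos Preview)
You are right that this is a conjecture and that the paper does not prove it; the paper states it as such and explicitly notes that even the non-CM elliptic curve case is open. So there is no ``paper's proof'' to compare against, and your opening sentence is the correct assessment.

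Your sketch of the paper's reduction, however, misstates the first step. A point of $\mathcal{B}_g(\CC)$ records only a symplectic-Hodge basis (no integral basis of $H_1$), and its residue field in $B_{g,\QQ}$ is \emph{not} the field of periods. What the paper actually proves (Theorem~\ref{trdeg}) is that $\mathcal{P}(\mathbf{X}_{g,\tau}/\QQ(j_g(\tau)))$ is a finite extension of $\QQ(2\pi i,\tau,\varphi_g(\tau))$, the residue field of a point in $\AA^1_{\QQ}\times\Sym_{g,\QQ}\times B_{g,\QQ}$. The extra generators $2\pi i$ and $\tau$ are not decorative: the period matrix lives in $\GSp_{2g}$, and Lemma~\ref{isomorphismgsp} splits $\GSp_{2g}^*$ as $\GG_m\times\Sym_g\times P_g$, with the three factors contributing $2\pi i$, $\tau$, and (after the finite orbit map of Lemma~\ref{lemma-finsurj}) $\varphi_g(\tau)$ respectively. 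Your identification would bound $\trdeg_{\QQ}\mathcal{P}(X/k)$ by $\dim B_g=2g^2+g$, whereas the generic Mumford--Tate dimension is $\dim\GSp_{2g}=2g^2+g+1$.

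Two smaller points. You write $\hat{\varphi}_g$ where you mean the analytic map $\varphi_g:\mathbf{H}_g\to B_g(\CC)$; the hatted object is its formal $q$-expansion over $\ZZ(\!(q_{ij})\!)$ and does not take complex values. And your second step --- pulling back the Mumford--Tate locus along $\mathcal{B}_g\to\mathcal{A}_g$ and identifying it with a flow orbit --- is not something the paper does or claims: the leaves of the higher Ramanujan foliation on $B_g(\CC)$ all have dimension $g(g+1)/2$ irrespective of $\MT$, and the paper's Zariski-density result (Theorem~\ref{densite}) concerns individual leaves in $B_{g,\CC}$, not any bi-algebraic comparison with Mumford--Tate loci.
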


It follows from Deligne \cite{deligne80} (cf. \cite{DMOS82} Corollary I.6.4) that we always have the upper bound
\begin{align*}
\trdeg_{\QQ}\mathcal{P}(X/k) \le \dim \MT(X_{\CC}) + \trdeg_{\QQ}k\text{.}
\end{align*}
In particular, if $k$ is contained in the field of algebraic numbers $\overline{\QQ}\subset \CC$ --- the case originally considered by Grothendieck --- the above conjectural inequality becomes the conjectural equality
\begin{align*}
\trdeg_{\QQ}\mathcal{P}(X/k) \stackrel{\text{?}}{=} \dim \MT(X_{\CC})\text{.}
\end{align*} 
In the case $\dim X = 1$, the Mumford-Tate group of a complex elliptic curve may be easily computed. Its dimension only depends on the existence or not of complex multiplication, and GPC predicts  that
\begin{align*}
\trdeg_{\QQ}\mathcal{P}(X/k) \stackrel{\text{?}}{\ge} \left\{\begin{array}{cl}
                                     2 & \text{if }X_{\CC}\text{ has complex multiplication}\\
4& \text{otherwise}\text{.}
                                    \end{array}
                              \right.
\end{align*}
Even in this minimal case, GPC is not yet established in full generality --- only the complex multiplication case is understood; see below. Nevertheless, an approach that has been proved fruitful for obtaining non-trivial lower bounds in the direction of GPC relies on a \emph{modular description} of the fields of periods of elliptic curves, which we now recall.

Let $E$ be a complex elliptic curve and let $j\in \CC$ be its $j$-invariant. Then $E$ admits a model
\begin{align*}
E: \ \ y^2 = 4x^3 -g_2x-g_3
\end{align*}
with $g_2,g_3 \in \QQ(j)$, and we can consider the algebraic differential forms defined over $\QQ(j)$
\begin{align*}
\omega \defeq \frac{dx}{y}\text{, } \ \ \ \eta \defeq x\frac{dx}{y}\text{.}
\end{align*} 
They form a (symplectic-Hodge) basis of the first algebraic de Rham cohomology $H^1_{\dR}(E/\QQ(j))$. If $(\gamma,\delta)$ is any basis of the first singular homology group $H_1(E(\CC),\ZZ)$, we may consider the periods
\begin{align*}
\omega_1 = \int_\gamma \omega\text{, }\ \ \omega_2 = \int_{\delta}\omega\text{, }\ \ \eta_1= \int_{\gamma}\eta\text{, }\ \ \eta_2=\int_{\delta}\eta\text{.}
\end{align*}
We may assume moreover that the basis $(\gamma,\delta)$ is oriented, in the sense that their topological intersection product $\gamma \cdot \delta =1$.

The field of periods of $E$ is given by
\begin{align*}
\mathcal{P}(E/\QQ(j)) = \QQ(j,\omega_1,\omega_2,\eta_1,\eta_2)\text{.}
\end{align*}
Now, observe that $\omega_1\neq 0$ and let
\begin{align*}
\tau \defeq \frac{\omega_2}{\omega_1}\text{.}
\end{align*}
As the basis $(\gamma,\delta)$ of $H_1(E(\CC),\ZZ)$ is oriented, the complex number $\tau$ is in the Poincaré upper half-plane $\mathbf{H}$. By the classical theory of modular forms, we have
\begin{align*}
E_2(\tau) = 12 \left(\frac{\omega_1}{2\pi i} \right)\left(\frac{\eta_1}{2\pi i} \right)\text{, }\ \ E_4(\tau) = 12g_2\left(\frac{\omega_1}{2\pi i} \right)^4\text{, }\ \ E_6(\tau)= -216g_3\left(\frac{\omega_1}{2\pi i} \right)^6\text{.}
\end{align*}
Here, we see the Eisenstein series $E_{2k}$ as analytic functions on $\mathbf{H}$ via the change of variables $q=e^{2\pi i \tau}$. 

Finally, Legendre's period relation and the definition of $j$ show that $\mathcal{P}(E/\QQ(j))$ is a finite extension of the field $\QQ(2\pi i, \tau, E_2(\tau),E_4(\tau),E_6(\tau))$, and we obtain in particular
\begin{align} \label{modular}
\trdeg_{\QQ}\mathcal{P}(E/\QQ(j)) = \trdeg_{\QQ}\QQ(2\pi i, \tau, E_2(\tau),E_4(\tau),E_6(\tau))\text{.}
\end{align}

In this way, the problem of estimating the transcendence degree of fields of periods of elliptic curves translates into the problem of estimating the transcendence degree of values of some analytic functions. Accordingly, the theorem of Nesterenko stated above asserts that, for any $\tau \in \mathbf{H}$,
\begin{align*}
\trdeg_{\QQ}\QQ(e^{2\pi i\tau}, E_2(\tau),E_4(\tau),E_6(\tau)) \ge 3\text{.}
\end{align*}
As an immediate consequence, we obtain
\begin{align*}
\trdeg_{\QQ}\QQ(2\pi i, \tau, E_2(\tau),E_4(\tau),E_6(\tau)) \ge \trdeg_{\QQ} \QQ(E_2(\tau),E_4(\tau),E_6(\tau))\ge 2
\end{align*}
for any $\tau \in \mathbf{H}$. Equivalently, by equation (\ref{modular}), for any complex elliptic curve $E$, we obtain the uniform bound
\begin{align*}
\trdeg_{\QQ}\mathcal{P}(E/\QQ(j)) \ge 2\text{,}
\end{align*}
which is sharp when $E$ has complex multiplication. This last result had already been previously established by Chudnovsky (cf. \cite{chudnovsky80}) via elliptic methods.\footnote{We should also point out that the modular parameter $e^{2\pi i \tau}$, ignored in our discussion, can also be seen as a period. Namely, it is a period of a certain $1$-motive naturally attached to $E$. We refer to \cite{bertolin02} (cf. \cite{andre04} 23.4.3) for further discussion on these matters.}

\subsection{Analytic higher Ramanujan equations, periods of abelian varieties, and transcendence}

In this paper, we also generalize the modular description (\ref{modular}). For this, we consider a complex analytic avatar of $\hat{\varphi}_g$: an analytic map
$$
\varphi_g: \mathbf{H}_g \to B_g(\CC)\text{,}
$$
parametrized in the Siegel upper half-space
\begin{align*}
    \mathbf{H}_g \defeq\{\tau=(\tau_{kl})_{1\le k,l\le g} \in M_{g\times g}(\CC) \mid \tau\transp = \tau\text{, }\Im \tau > 0\}\text{,}
\end{align*}
which, loosely speaking, coincides with $\hat{\varphi}_g$ through the change of variables $q_{kl} = e^{2\pi i \tau_{kl}}$. For instance, under the above identification of $B_1\tensor \ZZ[1/6]$ with an open subscheme of $\AA^3_{\ZZ[1/6]}$, the analytic map $\varphi_1: \mathbf{H}_1=\mathbf{H} \to B_1(\CC)$ is given by
  $$
  \tau \mapsto (E_2(\tau),E_4(\tau),E_6(\tau))\text{.}
  $$
  In other words, $\hat{\varphi}_g$ should be regarded as the ``$q$-expansion'' of $\varphi_g$.
  
  Now, for any $\tau \in \mathbf{H}_g$, let $X_{\tau}$ be the complex abelian variety given by the (polarizable) complex torus $\CC^g/(\ZZ^g + \tau \ZZ^g)$. It admits a canonical principal polarization $\lambda_{\tau}$ induced by the Riemann form
  \begin{align*}
    \CC^g \times \CC^g &\to \RR\\
    (v,w)&\mapsto \Im(\overline{v}\transp (\Im \tau)^{-1}w)\text{.}  
  \end{align*}
 Let $k_{\tau}$ be the field of definition of $(X_{\tau},\lambda_{\tau})$; formally, $k_\tau$ is the residue field of the point in the (coarse) moduli space of principally polarized abelian varieties $A_g$ given by the isomorphism class of $(X_{\tau},\lambda_{\tau})$.

\begin{theorem}[see Theorem \ref{trdeg}]
For any $\tau \in \mathbf{H}_g$, the field of periods $\mathcal{P}(X_{\tau}/k_{\tau})$ is a finite extension of $\QQ(2\pi i , \tau, \varphi_g(\tau))$.
\end{theorem}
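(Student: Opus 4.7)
The plan is to compare, at fixed $\tau \in \mathbf{H}_g$, two symplectic-Hodge bases of $(X_\tau,\lambda_\tau)$: an \emph{algebraic} one $b=(\omega_1,\ldots,\omega_g,\eta_1,\ldots,\eta_g)$ of $H^1_{\dR}(X_\tau/k_\tau)$ defined over $k_\tau$, and the \emph{analytic} one $b_\tau^{\an}$ attached to $\varphi_g(\tau)\in B_g(\CC)$. The periods of the first generate $\mathcal{P}(X_\tau/k_\tau)$ over $k_\tau$; the second will have an explicit period matrix depending only on $2\pi i$ and $\tau$. The two are related by a unique matrix $M\in P_g(\CC)$ via $b_\tau^{\an}=b\cdot M$, and the whole argument turns on this comparison. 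To describe $b_\tau^{\an}$ explicitly, one uses the change of variables $q_{k\ell}=e^{2\pi i\tau_{k\ell}}$ defining $\varphi_g$ out of $\hat{\varphi}_g$: the form $\hat{\omega}_j=dt_j/t_j$ pulls back to $2\pi i\,dz_j$, where $z_1,\ldots,z_g$ are the standard coordinates on $\CC^g\to X_\tau$. Expanding $dz_j=\gamma_j^\vee+\sum_k\tau_{jk}\delta_k^\vee$ in the flat dual of the standard symplectic basis $(\gamma_k,\delta_k)_k$ of $H_1(X_\tau(\CC),\ZZ)=\ZZ^g\oplus\tau\ZZ^g$, using that the Gauss--Manin connection kills these flat sections, and invoking the symmetry of $\tau$ (so $\partial\tau_{jk}/\partial\tau_{jj}=\delta_{jk}$), one finds $\hat{\eta}_j\mapsto \delta_j^\vee$. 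Hence $b_\tau^{\an}$ has explicit period matrix with entries $2\pi i\,\delta_{jk}$, $2\pi i\,\tau_{jk}$, $0$, $\delta_{jk}$.

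For the inclusion $\mathcal{P}(X_\tau/k_\tau)\subset \QQ(2\pi i,\tau,\varphi_g(\tau))^{\mathrm{alg}}$: since the forgetful map sends $\varphi_g(\tau)$ to $(X_\tau,\lambda_\tau)\in\mathcal{A}_g$, one has $k_\tau\subset \QQ(\varphi_g(\tau))$, and $M$ is determined by the coordinates of $b\in B_g(k_\tau)$ and $\varphi_g(\tau)\in B_g(\CC)$ in an affine chart of $B_g$, so its entries lie in a finite extension of $\QQ(\varphi_g(\tau))$. Inverting yields each period of $b$ as a polynomial in $2\pi i$, the $\tau_{jk}$ and the entries of $M^{-1}$. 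For the reverse inclusion: the linear system $b_\tau^{\an}=b\cdot M$ determines $M$ uniquely in terms of the periods of $b$ and $b_\tau^{\an}$; pairing the relation $\omega_j^{\an}=\sum_i A_{ij}\omega_i$ with $\gamma_k$ gives $A=2\pi i\,P^{-1}$ for $P=(\int_{\gamma_k}\omega_i)_{ik}$ (and similarly for the other blocks of $M$), so $M\in P_g(\mathcal{P}(X_\tau/k_\tau))$ and consequently $\QQ(\varphi_g(\tau))\subset \mathcal{P}(X_\tau/k_\tau)$. The identity $\tau_{jk}=(2\pi i)^{-1}\sum_i A_{ij}\int_{\delta_k}\omega_i$ then places $\tau$ in $\mathcal{P}(X_\tau/k_\tau)$; and $2\pi i$ lies in $\mathcal{P}(X_\tau/k_\tau)$ by the generalized Legendre relation (the principal polarization identifies the de Rham symplectic pairing with $2\pi i$ times the topological intersection pairing). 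Finiteness of the extension follows from $\mathcal{P}(X_\tau/k_\tau)$ being finitely generated over $\QQ$.

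The main obstacle is the explicit identification of $b_\tau^{\an}$: translating Mumford's formal construction of $\hat{\varphi}_g$ to the analytic picture through $q_{k\ell}=e^{2\pi i\tau_{k\ell}}$, and verifying that $\hat{\eta}_j$ specializes to $\delta_j^\vee$. The subtle point is that $\hat{\eta}_j$ is defined using only the \emph{diagonal} derivative $q_{jj}\,\partial/\partial q_{jj}$; the fact that the off-diagonal entries $\tau_{jk}$ ($k\ne j$) do not contribute to $\nabla_{\partial/\partial\tau_{jj}}\,dz_j$ relies crucially on the symmetric-matrix constraint defining $\mathbf{H}_g$. Once this analytic description is established, the remainder is linear-algebraic bookkeeping plus the Legendre relation.
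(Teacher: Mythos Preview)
Your approach is essentially the paper's: compute the period matrix of the analytic basis $\bfb_{g,\tau}$ explicitly (the paper does this in Corollary~\ref{caraceta} and Lemma~\ref{psi}, obtaining $\Omega_1=2\pi i\,\mathbf{1}_g$, $\Omega_2=2\pi i\,\tau$, $N_1=0$, $N_2=\mathbf{1}_g$, exactly your values), relate it to an algebraic basis $b$ over $k_\tau$ by an element of $P_g(\CC)$, and use the Legendre relation $\nu(P)=2\pi i$ together with the decomposition $\GSp_{2g}^*\cong\GG_m\times\Sym_g\times P_g$ (Lemma~\ref{isomorphismgsp}) to read off the field of periods.

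One step is imprecise. You write that ``$M$ is determined by the coordinates of $b\in B_g(k_\tau)$ and $\varphi_g(\tau)\in B_g(\CC)$ in an affine chart of $B_g$''. This is not literally true: the equation $b_\tau^{\an}=b\cdot M$ only makes sense after fixing an isomorphism between the $k_\tau$-model and the analytic torus, and different choices (differing by an element of $\Aut(X_\tau,\lambda_\tau)$) give different $M$'s. What is true, and what the paper isolates as Lemma~\ref{lemma-finsurj}, is that the orbit map $P_{g,k_\tau}\to\varpi_g^{-1}([(X,\lambda)])$, $p\mapsto[(X,\lambda,b\cdot p)]$, is \emph{finite} and surjective, because the stabilizer is the image of the finite group $\Aut(X_\tau,\lambda_\tau)$ in $P_g$. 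Since $M$ lies in the fiber over $\varphi_g(\tau)$, its residue field is finite over $\QQ(\varphi_g(\tau))$. Your conclusion (``finite extension'') is correct, but the justification should invoke this finiteness rather than a determination from coordinates.
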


Here, $\QQ(2\pi i , \tau, \varphi_g(\tau))$ is defined as the residue field in $\AA_{\QQ}^1\times_{\QQ} \Sym_{g,\QQ}\times_{\QQ} B_{g,\QQ}$ of the complex point $(2\pi i, \tau,\varphi_g(\tau))$,  where $\Sym_g$ denotes the group scheme of symmetric matrices of order $g\times g$.

It follows from the above theorem that 
\begin{align*}
\trdeg_{\QQ}\mathcal{P}(X_{\tau}/k_{\tau}) = \trdeg_{\QQ}\QQ(2\pi i , \tau, \varphi_g(\tau))\text{.}
\end{align*}
This generalized modular description raises the question of whether it is possible to adapt Nesterenko's methods to this higher dimensional setting; see Paragraph \ref{subsec-introconjecture} below. This problem leads us to the study of the \emph{higher Ramanujan foliation}, namely, the holomorphic foliation on $B_g(\CC)$ generated by the higher Ramanujan vector fields. We prove the following result.

\begin{theorem}[see Theorem \ref{densite}]\label{intro-thmzdensity}
Every leaf of the higher Ramanujan foliation on $B_g(\CC)$ is Zariski-dense in $B_{g,\CC}$.
\end{theorem}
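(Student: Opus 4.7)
My plan is to combine the algebraic invariance of the Zariski closure of a leaf with an analytic parameterization of the leaves via the Siegel uniformization, and conclude with a Borel-density argument applied to the monodromy of the Gauss--Manin local system.

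Let $L \subset B_g(\CC)$ be a leaf and let $V \subset B_{g,\CC}$ be its Zariski closure; replacing $V$ by an irreducible component through a fixed point of $L$ if necessary, I may assume $V$ is irreducible. As each higher Ramanujan vector field $v_{ij}$ is algebraic on $B_{g,\CC}$ and tangent to the analytic submanifold $L$, continuity forces $v_{ij}$ to be tangent to the smooth locus $V^{\mathrm{sm}}$ as well. Hence $V$ is invariant under the foliation, and the task reduces to showing $V = B_{g,\CC}$.

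To parameterize the leaves I pass to the Siegel upper half-space $\mathbf{H}_g$, which uniformizes $\mathcal{A}_g(\CC)^{\mathrm{an}}$. Over $\mathbf{H}_g$ the Gauss--Manin local system of $H^1_{\dR}$ of the universal principally polarized abelian variety is canonically trivialized by the Betti lattice, yielding an analytic identification of the pullback of $\mathcal{B}_g(\CC)$ with the subvariety of $\mathbf{H}_g \times \Sp_{2g}(\CC)$ given by the pairs $(\tau,M)$ for which the first $g$ columns of $M$ span the Hodge filtration $F^1 H^1_{\dR}(X_\tau) \subset \CC^{2g}$. In this trivialization the condition $\nabla_v \eta_j = 0$ defining $\mathcal{R}_g$ becomes the constancy of the $\eta$-columns of $M$ as $\tau$ varies, while the $\omega$-columns are then determined uniquely by Hodge membership together with symplectic duality to the $\eta$'s. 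Consequently the leaves in the universal cover are labelled by the $\Sp_{2g,\CC}$-homogeneous algebraic variety $\mathcal{I}$ whose points are symplectic bases of Lagrangian subspaces of $\CC^{2g}$, and whose $\Sp_{2g,\CC}$-action has stabilizer equal to the unipotent radical of the Siegel parabolic $P_g$. Pulling $V$ back through the uniformization gives an analytic set $\tilde V = \bigsqcup_{\Lambda \in S}\tilde L_\Lambda$ for some $S \subset \mathcal{I}$, which is $\Sp_{2g}(\ZZ)$-invariant because $V$ descends to the quotient and the deck transformations act on the leaves via the natural symplectic representation on $\mathcal{I}$.

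The decisive step is to convert this monodromy invariance into algebraic invariance. Fixing any $\tau_0 \in \mathbf{H}_g$, the evaluation map $\phi_{\tau_0}: \mathcal{I} \dashrightarrow B_{g,[\tau_0],\CC}$ sending $\Lambda$ to the symplectic-Hodge frame of $H^1_{\dR}(X_{\tau_0})$ with $\eta$-part $\Lambda$ is an \emph{algebraic} birational isomorphism: given the fixed subspace $F^1(\tau_0)$, the $\omega$-part is recovered from $\Lambda$ by solving a linear system on $F^1(\tau_0)$, establishing algebraicity on the open transversality locus $\{\Lambda \cap F^1(\tau_0)=0\}$. Hence $S = \phi_{\tau_0}^{-1}(V \cap B_{g,[\tau_0],\CC})$ is an algebraic subset of $\mathcal{I}$. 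By Borel's density theorem, $\Sp_{2g}(\ZZ)$ is Zariski-dense in $\Sp_{2g,\CC}$, so any algebraic $\Sp_{2g}(\ZZ)$-invariant subset of $\mathcal{I}$ is automatically $\Sp_{2g,\CC}$-invariant; $\mathcal{I}$ being a single $\Sp_{2g,\CC}$-orbit, this leaves only the empty set or all of $\mathcal{I}$. Since $S$ is nonempty (it contains the datum of $L$), we conclude $S = \mathcal{I}$, whence $\tilde V$ is the full preimage and $V = B_{g,\CC}$.

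The main obstacle is verifying the algebraicity of the evaluation map $\phi_{\tau_0}$, which is what permits Zariski-closure properties to transfer across the transcendental Siegel uniformization $\mathbf{H}_g \times \Sp_{2g}(\CC) \dashrightarrow B_{g,\CC}$; special care is also needed in comparing the $\Sp_{2g}(\ZZ)$-action on the monodromy side with its action on $\mathcal{I}$. Once these points are settled, the Borel-density conclusion is standard.
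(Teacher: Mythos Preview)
Your approach is correct in its essentials and shares with the paper the same decisive ingredient, the Zariski-density of $\Sp_{2g}(\ZZ)$ in $\Sp_{2g,\CC}$; the paper in fact proves this by an elementary argument (Lemma~\ref{spzdense}) rather than invoking Borel density. The organization, however, is genuinely different. The paper fixes a leaf as the image of an explicit map $\varphi_\delta:U_\delta\to B_g(\CC)$ (Proposition~\ref{charactleaves}) and applies a fibration lemma over $\varpi_g:B_{g,\CC}\to A_{g,\CC}$: for each $x=j_g(\tau)\in A_g(\CC)$, Lemma~\ref{lemmeutile} and formula~(\ref{phimoduli}) exhibit the image of $\{p_{\delta\gamma,\tau}:\gamma\in\Sp_{2g}(\ZZ)\}\subset P_g(\CC)$, under the finite surjective orbit map $P_{g,\CC}\to\varpi_g^{-1}(x)$, inside the leaf; Lemma~\ref{principallemma} then shows this subset of $P_g(\CC)$ is Zariski-dense by pushing forward the density of $\Sp_{2g}(\ZZ)$ through a dominant algebraic map $h:V\subset\Sp_{2g,\CC}\to P_{g,\CC}$. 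Thus the paper works fibre by fibre and never needs the foliation-invariance of the Zariski closure. Your route instead uses that invariance to reduce everything to the leaf-space $\mathcal I\cong\Sp_{2g,\CC}/U_g$ at a single fibre. Both arguments ultimately pass through the same algebraic fibre map (your $\phi_{\tau_0}$ is essentially the paper's orbit map $f_\tau$ read through $P_g\cong U_{\tau_0}\subset\mathcal I$); yours is more structural, the paper's more explicit.

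Two small points to tighten. First, $\phi_{\tau_0}$ is not birational: the orbit map $P_{g,\CC}\to\varpi_g^{-1}([\tau_0])$ has degree $|\Aut(X_{\tau_0},\lambda_{\tau_0})|\ge 2$ (at least $\pm 1$ acts), so your map is finite dominant. This is harmless since you only use that it is algebraic. Second, and more substantively, the identity $S=\phi_{\tau_0}^{-1}(V\cap B_{g,[\tau_0]})$ holds only on the open transversality locus $U_{\tau_0}\subset\mathcal I$, so you have shown that $S\cap U_{\tau_0}$ is Zariski-closed in $U_{\tau_0}$, not that $S$ is Zariski-closed in $\mathcal I$. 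This gap is easily closed: the Zariski closure $\overline S$ is $\Sp_{2g}(\ZZ)$-invariant, hence $\overline S=\mathcal I$ by Borel density and transitivity; since for any open $U$ one has $\overline S\cap U$ equal to the closure of $S\cap U$ in $U$, closedness of $S\cap U_{\tau_0}$ in $U_{\tau_0}$ forces $U_{\tau_0}\subset S$, and then $\Sp_{2g}(\ZZ)$-invariance (every orbit meets $U_{\tau_0}$, again by Borel density) gives $S=\mathcal I$.
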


This property of a foliation plays an important role, at least in the case in which leaves are one dimensional (where it implies Nesterenko's $D$-property), in the ``multiplicity estimates'' appearing in applications of differential equations to transcendental number theory (cf. \cite{binyamini14}, \cite{nesterenko89}, \cite{nesterenko96}).

The Zariski-density of the image of $\varphi_g : \mathbf{H}_g \to B_g(\CC)$ in $B_{g,\CC}$ also implies the \emph{a priori} stronger result that its graph
\begin{align*}
\{(\tau, \varphi_g(\tau)) \in {\Sym}_g(\CC)\times B_g(\CC) \mid \tau \in \mathbf{H}_g\}
\end{align*}
is Zariski-dense in $\Sym_{g,\CC} \times_{\CC} B_{g,\CC}$.  This can be interpreted as a ``functional version'' of GPC: roughly, it says that there is no algebraic relation simultaneously satisfied  by the periods of every (principally polarized) abelian variety other than the relations given by the polarization data.\footnote{Such ``functional version'' is an example of a statement that must hold if GPC is true. This follows from the existence of $\tau \in \mathbf{H}^g\cap \Sym_g(\overline{\QQ})$ such that $\dim \text{MT}(X_{\tau}) = 2g^2+g+1$ (or, equivalently, $\text{MT}(X_{\tau})=\text{GSp}_{2g,\QQ}$); cf. \cite{sawin18}.}

We shall also use our Zariski-density result to establish a relation between our work and that of Bertrand and Zudilin \cite{BZ03} concerning derivatives of Siegel modular functions.

\begin{prop}[see Paragraph \ref{subsec-bertrandzudilin}]
The field of functions $\QQ(B_{g,\QQ})$, identified with a field of meromorphic functions on $\mathbf{H}_g$ via $\varphi_g$, is a finite extension of the differential field generated by the Siegel modular functions defined over $\QQ$.
\end{prop}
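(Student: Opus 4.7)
The plan is to transfer the analytic statement into an equivalent algebraic assertion about $B_{g,\QQ}$ via the differential equations satisfied by $\varphi_g$, and then establish this assertion using the Kodaira--Spencer isomorphism.

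By Theorem \ref{intro-thmzdensity} the image of $\varphi_g$ is Zariski-dense in $B_{g,\CC}$, so the pullback $\varphi_g^*$ embeds $\QQ(B_{g,\QQ})$ injectively into $\mathcal{M}(\mathbf{H}_g)$; under this embedding the $\QQ$-rational Siegel modular functions are identified with $\varphi_g^*\pi^*\QQ(\mathcal{A}_{g,\QQ})$, since $\pi\circ\varphi_g$ is the classical uniformization of $\mathcal{A}_g$. By Theorem \ref{intro-thm-instsolsiegel} applied analytically via $q_{ij}=e^{2\pi i\tau_{ij}}$, the derivations $\theta_{ij}\defeq q_{ij}\,\partial/\partial q_{ij}$ on $\mathcal{M}(\mathbf{H}_g)$ pull back through $\varphi_g^*$ to the higher Ramanujan vector fields $v_{ij}$, i.e.\ $\theta_{ij}(\varphi_g^* f)=\varphi_g^*(v_{ij}f)$. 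Letting $L\subseteq\QQ(B_{g,\QQ})$ be the smallest subfield containing $\pi^*\QQ(\mathcal{A}_{g,\QQ})$ and stable under the $v_{ij}$'s, one therefore has $\varphi_g^* L=K$, where $K$ is the differential field generated over $\QQ$ by the Siegel modular functions under the $\theta_{ij}$'s. The proposition thus reduces to the algebraic claim that $\QQ(B_{g,\QQ})/L$ is a finite algebraic extension; as $\QQ(B_{g,\QQ})$ is finitely generated over $\QQ$, it suffices to show that $\trdeg_\QQ L=2g^2+g$.

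To prove this, work in an étale neighborhood of a generic point of $\mathcal{A}_g$ where the Hodge bundle is trivialized by a reference symplectic-Hodge basis $(\omega_k^0,\eta_k^0)$. The fiber coordinates of $\pi\colon B_g\to\mathcal{A}_g$ are then matrices $(A,B)$ with $A\in\GL_g$ and $A\transp B$ symmetric, determined by $\omega_i=\sum_k A_{ki}\omega_k^0$ and $\eta_i=\sum_k B_{ki}\omega_k^0+\sum_k((A\transp)^{-1})_{ki}\eta_k^0$. Combining Proposition \ref{caracchamps} with the Kodaira--Spencer isomorphism $T_{\mathcal{A}_g}\cong\Sym^2\omega^{\vee}$ yields, for $f\in\QQ(\mathcal{A}_{g,\QQ})$, a formula of the shape
\begin{align*}
v_{ij}(\pi^* f)=\sum_{k\le l}c_{ij,kl}(A^{-1})\,\pi^*\bigl(\partial_{t_{(k,l)}}f\bigr),
\end{align*}
where the $t_{(k,l)}$ are étale coordinates on $\mathcal{A}_g$ adapted to the Kodaira--Spencer basis $(\omega_k^0)^{\vee}\cdot(\omega_l^0)^{\vee}$ of $\Sym^2\omega^{\vee}$ and each $c_{ij,kl}$ is an explicit bilinear monomial in the entries of $A^{-1}$. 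Choosing $f_{(k,l)}\in\QQ(\mathcal{A}_{g,\QQ})$ so that the Jacobian $(\partial_{t_{(m,n)}}f_{(k,l)})$ is non-singular at the generic point exhibits every $c_{ij,kl}(A^{-1})$ as an element of $L$, placing the entries of $A$ in the algebraic closure $\bar L$ of $L$ in $\QQ(B_{g,\QQ})$. A second iteration $v_{kl}v_{ij}(\pi^* f)$, computed via the Gauss--Manin connection in the frame $(\omega_k^0,\eta_k^0)$, expresses $v_{kl}(A_{pq})$ as a rational function of $A$, $B$ and the connection matrix, and so places the entries of $B$ in $\bar L$ as well. Hence $\bar L$ contains $\pi^*\QQ(\mathcal{A}_{g,\QQ})$ together with all fiber coordinates of $\pi$, whence $\bar L=\QQ(B_{g,\QQ})$ and the desired finiteness follows.

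The main technical obstacle is the second-order computation: one must verify that, as $(k,l)$ and $(p,q)$ vary, the expressions $v_{kl}(A_{pq})$ genuinely recover every one of the $g(g+1)/2$ independent entries of $B$, which ultimately rests on the explicit form of the Gauss--Manin connection in the chosen frame. An appealing shortcut via Theorem \ref{intro-thmzdensity} alone---arguing that a strict differential subfield $L$ would give a dominant rational map $B_{g,\QQ}\dashrightarrow Y$ whose fibers must trap every leaf of the higher Ramanujan foliation, contradicting Zariski-density---succeeds only when the $v_{ij}$'s descend to zero vector fields on $Y$, so the direct Kodaira--Spencer computation appears to be indispensable.
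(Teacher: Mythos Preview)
Your reduction is correct and matches the paper exactly: both identify $\QQ(B_{g,\QQ})$ with a field $L_g$ of meromorphic functions on $\mathbf{H}_g$ via the Zariski-density of $\varphi_g$, observe that $L_g$ is stable under the $\theta_{ij}$ because $\theta_{ij}(\varphi_g^*f)=\varphi_g^*(v_{ij}f)$, and reduce to showing that the differential subfield $M_g$ generated by the Siegel modular functions has transcendence degree $2g^2+g$ over $\QQ$.

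The difference lies in how this transcendence degree is established. The paper does \emph{not} compute it directly: it simply quotes the theorem of Bertrand--Zudilin \cite{BZ03}, which asserts precisely that $\trdeg_k M_g = 2g^2+g$. Since $L_g$ is finitely generated of transcendence degree $\dim B_{g,k}=2g^2+g$, the containment $M_g\subset L_g$ forces $L_g/M_g$ to be finite. Your Kodaira--Spencer/Gauss--Manin argument, by contrast, is an attempt to \emph{reprove} Bertrand--Zudilin's result from scratch in the moduli-theoretic language of the paper. The first-order step (recovering the entries of $A$ up to algebraic closure from the symmetric products $(A^{-1})_{ik}(A^{-1})_{jl}+(A^{-1})_{il}(A^{-1})_{jk}$) is plausible, but the second-order step---extracting all independent entries of $B$ from the $v_{kl}(A_{pq})$---is exactly the ``main technical obstacle'' you flag and do not complete. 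That obstacle is essentially the substance of \cite{BZ03}; so as written, your proof has a genuine gap at the very point where the paper invokes the external result.

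In short: the paper's proof is a two-line deduction from Bertrand--Zudilin, while yours sketches an independent derivation of that theorem but leaves its hardest step unverified. If you want a self-contained argument, you must actually carry out the second-order computation; otherwise, citing \cite{BZ03} closes the proof immediately.
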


In particular, the generalization of Mahler's result \cite{mahler69} on the algebraic independence of the holomorphic functions $\tau$, $e^{2\pi i \tau}$, $E_2(\tau)$, $E_4(\tau)$, and $E_6(\tau)$, of $\tau \in \mathbf{H}$, obtained by Bertrand and Zudilin \cite{BZ01} in the context of Siegel modular functions, also holds in our context: the set
\begin{align*}
\{(\tau, q(\tau), \varphi_g(\tau)) \in {\Sym}_g(\CC)\times{\Sym}_g(\CC)\times B_g(\CC) \mid \tau \in \mathbf{H}_g\}
\end{align*}
is Zariski-dense in $\Sym_{g,\CC} \times_{\CC}\Sym_{g,\CC}\times_{\CC} B_{g,\CC}$, where $q(\tau) \defeq (e^{2\pi i \tau_{kl}})_{1\le k,l\le g}$.

Our proof of Theorem \ref{intro-thmzdensity} will rely on a characterization of the leaves of the higher Ramanujan foliation in terms of an action by $\Sp_{2g}(\CC)$. In fact, from the complex analytic viewpoint, the complex manifold $B_g(\CC)$ and the higher Ramanujan vector fields admit a simple description in terms of Lie groups.

Namely, we shall explain in Section \ref{gpinterpret} how to realize $B_g(\CC)$ as a domain (in the analytic topology) of the quotient manifold $\Sp_{2g}(\ZZ)\backslash \Sp_{2g}(\CC)$. 

\begin{theorem}[see Theorem \ref{unifhrvf}]
  Under this identification:
  \begin{enumerate}
    \item The vector field $v_{kl}$ is induced by the left invariant holomorphic vector field on $\Sp_{2g}(\CC)$ associated to
\begin{align*}
\frac{1}{2\pi i} \left(\begin{array}{cc} 0 & \mathbf{E}^{kl} \\ 0 & 0  \end{array}\right)\in \Lie {\Sp}_{2g}(\CC)\text{.}
\end{align*}
  \item The map $\varphi_g : \mathbf{H}_g \to B_g(\CC)$ is given by
\begin{align*}
  \tau \mapsto \left[\left(\begin{array}{cc} \mathbf{1}_g & \tau \\
                                             0 & \mathbf{1}_g
                     \end{array}\right)\right] \in {\Sp}_{2g}(\ZZ)\backslash {\Sp}_{2g}(\CC)\text{.}
\end{align*}
  \end{enumerate}
\end{theorem}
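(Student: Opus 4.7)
The plan is to establish (2) first, by directly computing a period matrix, and then to derive (1) by differentiating (2) and invoking the analytic version of Theorem \ref{intro-thm-instsolsiegel}. Throughout, I would rely on the identification of $B_g(\CC)$ with a domain in $\Sp_{2g}(\ZZ)\backslash\Sp_{2g}(\CC)$ set up in Section \ref{gpinterpret}: the class attached to $(X,\lambda,b)\in B_g(\CC)$ is the class of the period matrix of $b$ relative to a symplectic $\ZZ$-basis of $H_1(X(\CC),\ZZ)$, after the $(2\pi i)$-normalization of the second block of rows dictated by Legendre's relation (which is precisely what brings the matrix into $\Sp_{2g}(\CC)$).

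For (2), I would fix $\tau\in\mathbf{H}_g$, take $X_\tau=\CC^g/(\ZZ^g+\tau\ZZ^g)$, and evaluate the Riemann form $E(v,w)=\Im(\overline{v}\transp(\Im\tau)^{-1}w)$ on the lattice generators to check that $(e_1,\ldots,e_g,\tau e_1,\ldots,\tau e_g)$ is a symplectic $\ZZ$-basis of $H_1(X_\tau(\CC),\ZZ)$. The translation-invariant forms $\omega_k\defeq dz_k$ span the Hodge filtration, with periods $\int_{e_j}\omega_k=\delta_{jk}$ and $\int_{\tau e_j}\omega_k=\tau_{kj}$, which yields the first row block $(\mathbf{1}_g,\tau)$. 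For the second block, I would use the analytic reformulation $\eta_k=\frac{1}{2\pi i}\nabla_{\partial/\partial\tau_{kk}}\omega_k$ of the definition in Theorem \ref{intro-thm-instsolsiegel} (via $q_{kk}=e^{2\pi i\tau_{kk}}$), combined with the observation that $e_j$ and $\tau e_j$, regarded as sections of the local system $R_1\pi_*\ZZ$ over $\mathbf{H}_g$, are flat (as they correspond to constant elements of $\ZZ^{2g}$). The characteristic property $\frac{\partial}{\partial\tau_{kl}}\int_\gamma\alpha=\int_\gamma\nabla_{\partial/\partial\tau_{kl}}\alpha$ for flat $\gamma$ then yields $\int_{e_j}\eta_k=0$ and $\int_{\tau e_j}\eta_k=\frac{1}{2\pi i}\delta_{jk}$, and the built-in $(2\pi i)$-rescaling produces the second block $(0,\mathbf{1}_g)$.

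For (1), set $M_\tau\defeq\bigl(\begin{smallmatrix}\mathbf{1}_g & \tau \\ 0 & \mathbf{1}_g\end{smallmatrix}\bigr)$. By Theorem \ref{intro-thm-instsolsiegel} (in analytic form) and the relation $q_{kl}\partial/\partial q_{kl}=\frac{1}{2\pi i}\partial/\partial\tau_{kl}$, one has $v_{kl}(\varphi_g(\tau))=\frac{1}{2\pi i}(\varphi_g)_*\partial/\partial\tau_{kl}$. Differentiating the formula from (2) produces the tangent vector $\partial M_\tau/\partial\tau_{kl}=\bigl(\begin{smallmatrix} 0 & \mathbf{E}^{kl} \\ 0 & 0\end{smallmatrix}\bigr)$ at $M_\tau$, where $\mathbf{E}^{kl}$ denotes the symmetric matrix with entries $1$ at positions $(k,l)$ and $(l,k)$. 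A direct multiplication shows $M_\tau\cdot\bigl(\begin{smallmatrix} 0 & \mathbf{E}^{kl} \\ 0 & 0\end{smallmatrix}\bigr)=\bigl(\begin{smallmatrix} 0 & \mathbf{E}^{kl} \\ 0 & 0\end{smallmatrix}\bigr)$, so this tangent vector is precisely the value at $M_\tau$ of the left-invariant vector field associated to $\bigl(\begin{smallmatrix} 0 & \mathbf{E}^{kl} \\ 0 & 0\end{smallmatrix}\bigr)\in\Lie\Sp_{2g}(\CC)$. Scaling by $\frac{1}{2\pi i}$ yields the desired description of $v_{kl}$ on the image of $\varphi_g$, and since both sides are holomorphic vector fields on $B_g(\CC)$, the identification extends globally by analytic continuation.

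The main obstacle will be keeping the $2\pi i$ factors consistent across the period-map identification, the analytic change of variables $q_{kl}=e^{2\pi i\tau_{kl}}$, and the Gauss-Manin normalization of $\eta_k$; a secondary subtlety is that derivatives $\partial/\partial\tau_{kl}$ for $k<l$ perturb both $\tau_{kl}$ and $\tau_{lk}$ simultaneously (since $\tau$ is symmetric), which is what the symmetry of $\mathbf{E}^{kl}$ encodes.
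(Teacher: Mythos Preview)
Your computation for (2) is essentially the paper's Lemma \ref{psi}, up to normalization: the paper uses $\bfomega_k=2\pi i\,dz_k$ (not $dz_k$), which is what makes $\bfb_g$ symplectic for $\langle\,,\,\rangle_{E_g}$ and what makes the modified period matrix $\Pi$ land in $\Sp_{2g}(\CC)$. With your normalization the entries come out scaled by $\frac{1}{2\pi i}$ and the resulting matrix is not symplectic. You flag the $2\pi i$ bookkeeping as the main obstacle, and indeed once you use the paper's $\bfomega_k$, $\bfeta_k$ your period computation is exactly Corollary \ref{caraceta} plus the obvious $\int_{\gamma_l}\bfomega_k$, $\int_{\delta_l}\bfomega_k$.

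For (1), however, your final step does not work. You verify $v_{kl}=V_{kl}$ along the image of $\varphi_g$ and then say ``the identification extends globally by analytic continuation''. But $\varphi_g(\mathbf{H}_g)$ is a closed complex submanifold of $B_g(\CC)$ of complex dimension $g(g+1)/2$, strictly smaller than $\dim B_g(\CC)=2g^2+g$ (Corollary \ref{ramleafclosed}); two holomorphic vector fields agreeing on such a submanifold need not agree anywhere else. You cannot invoke algebraic Zariski-density either: $V_{kl}$ is a priori only holomorphic, and in any case Theorem \ref{densite} is proved \emph{using} Theorem \ref{unifhrvf}, so that route would be circular.

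The paper's fix is to abandon the single solution $\varphi_g$ and work with an \emph{arbitrary} local solution $u:U\to B_g(\CC)$ of the higher Ramanujan equations. Since through every point of $B_g(\CC)$ there passes such a solution, it suffices to show that any lift $h=\Pi\circ\tilde{u}:U\to\Sp_{2g}(\CC)$ satisfies $\theta_{kl}h=\tilde V_{kl}\circ h$. Using the characterization of solutions (Proposition \ref{equivalences}), one sees that the block entries $N_1,N_2$ are constant and $\frac{1}{2\pi i}\Omega_j-N_j\tau$ are constant, so $h(\tau)=s\,\psi(\tau)$ for some fixed $s\in\Sp_{2g}(\CC)$; left-invariance of $\tilde V_{kl}$ then finishes the argument. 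Your differentiation of $M_\tau$ is exactly this computation in the special case $s=\mathbf{1}_{2g}$; to make your approach complete you must let $s$ vary.
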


In the above statement, $\mathbf{E}^{kl}$ is the symmetric matrix of order $g\times g$ whose entry in the $k$th row and $l$th column (resp. $l$th row and $k$th column) is 1, and whose all other entries are 0, and $\mathbf{1}_g$ denotes the identity matrix of order $g\times g$.

 This result enables us to obtain every leaf of the higher Ramanujan foliation as the image of a holomorphic map $\varphi_{\delta}: U_{\delta} \to B_g(\CC)$ defined on some explicitly defined open subset $U_{\delta}\subset \mathbf{H}_g$ obtained from $\varphi_g$ via a ``twist'' by some element $\delta\in \Sp_{2g}(\CC)$.

In the case $g=1$, the above twisting procedure may be illustrated as follows. Let 
\begin{align*}
\delta = \left(\begin{array}{cc}a & b \\ c& d\end{array} \right) \in {\SL}_2(\CC)\text{,}
\end{align*}
let $U_{\delta} = \{\tau \in \mathbf{H} \mid c\tau +d \neq 0\}$, and define a holomorphic map $\varphi_{\delta}:U_{\delta} \to B_1(\CC)\subset \CC^3$ by
\begin{align*}
\varphi_{\delta}(\tau) = \left((c\tau+d)^2E_2(\tau) + \frac{12c}{2\pi i}(c\tau+d), (c\tau +d)^4E_4(\tau), (c\tau+d)^6E_6(\tau)  \right)
\end{align*}
 Then one may easily check that $\varphi_{\delta}$ satisfy the differential equation
\begin{align*}
\frac{1}{2\pi i}\frac{d\varphi_{\delta}}{d\tau} = (c\tau+d)^{-2}v\circ \varphi_{\delta}
\end{align*}
where $v$ is the classical Ramanujan vector field defined by (\ref{eq-introramvectorfield}).

\subsection{The Hilbert-Blumenthal case and an algebraic independence conjecture} \label{subsec-introconjecture}

Parallel to the above geometric generalization of the Ramanujan equations in terms of a Siegel moduli problem, we may develop similar theories concerning polarized abelian varieties with extra endomorphism structure, which has the effect of producing moduli spaces with fewer dimensions. This might be advantageous for applications to transcendental numbers, which should necessarily take ``special subvarieties'' into account, as we shall explain below. 

To illustrate this point, we consider abelian varieties with \emph{real multiplication}. Namely, let $F$ be a totally real number field of degree $g\ge 1$, and denote by $R$ its ring of integers. Then, an $R$-multiplication (with Rapoport's condition) on a principally polarized abelian variety $(X,\lambda)$ is a morphism of rings $m:R \to \End_kX$ invariant by the Rosatti involution defined by $\lambda$, and for which $F^1(X/k)$ becomes a free $k\tensor_{\ZZ}R$-module of rank 1. The moduli problem of principally polarized abelian varieties endowed with an $R$-multiplication is an example of a Hilbert-Blumenthal moduli problem.

Accordingly, we shall also consider a smooth Deligne-Mumford moduli stack $\mathcal{B}_F$ over $\Spec \ZZ$ of relative dimension $3g$, classifying principally polarized abelian varieties with an $R$-multiplication and a symplectic-Hodge basis ``compatible'' with it. Here, we also have that $\mathcal{B}_F\tensor \ZZ[1/2]$ is representable by a quasi-affine smooth scheme $B_F$ over $\ZZ[1/2]$.

As in the Siegel case, we shall also construct a family of higher Ramanujan vector fields on $\mathcal{B}_F$, and a canonical analytic solution
$$
\varphi_F : \mathbf{H}^g \to B_F(\CC)
$$
with integral ``$q$-expansion'' $\hat{\varphi}_F$ (see Paragraphs \ref{subsec-ramsubbundrm}, \ref{subsec-hrvfhb}, \ref{subsec-integralsolhb}, \ref{subsec-ahrerm}, and \ref{subsec-compatibilityhb} for precise statements). Moreover, we shall also establish a precise relation between the values of $\varphi_F$ with fields of periods of principally polarized abelian varieties with $R$-multiplication (Theorem \ref{trdegrm}). 

\begin{obs}
The Siegel and Hilbert-Blumenthal higher Ramanujan equations are constructed by a similar procedure, and satisfy various natural compatibilities (see Remarks \ref{rem-relationbfbg}, \ref{rem-compatsieglhbhre}, and \ref{rem-relationphianal}). This observation hints to the existence of an underlying theory of higher Ramanujan equations attached to more general Shimura varieties (cf. Section \ref{gpinterpret}). We refer to Movasati \cite{movasati13} for a Hodge-theoretic approach to these questions, which also allows to consider examples unrelated with abelian varieties (cf. Scholium \ref{subsec-movasati} below). 
\end{obs}

In the case of abelian surfaces, we formulate the following algebraic independence conjecture.

\begin{conj}\label{conj-Fquadratic0}
 Let $F$ be a real quadratic number field. Then, for every $\tau\in \mathbf{H}^2\minus {\rm HZ}_F$, we have
 $$
 \trdeg_{\QQ}\QQ(\varphi_F(\tau))\stackrel{?}{\ge} 3\text{.}
 $$
\end{conj}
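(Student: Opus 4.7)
The plan is to follow the high-level architecture of Nesterenko's proof of the algebraic independence of $E_2(q)$, $E_4(q)$, $E_6(q)$, adapted to the Hilbert-Blumenthal setting via the geometric formalism developed above. The three pillars of Nesterenko's method are (i) a Philippon-type algebraic independence criterion, (ii) arithmetic input coming from the integrality of the $q$-expansion and the growth of its Taylor coefficients, and (iii) a multiplicity estimate (Nesterenko's so-called ``$D$-property'') for polynomials evaluated along the solutions of the Ramanujan vector field. Our geometric set-up provides native analogs for (ii) and (iii): the integral expansion $\hat{\varphi}_F$ supplies the arithmetic input, while the higher Ramanujan vector fields on $B_F$ should play the role of the $D$-property.

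The first step would be to upgrade the qualitative Zariski-density statement (the Hilbert-Blumenthal counterpart of Theorem \ref{intro-thmzdensity}) into a genuine \emph{multiplicity estimate}: given a nonzero polynomial $P$ on $B_{F,\CC}$ of bounded degree, effectively bound the order of vanishing of $P\circ\varphi_F$ at a point $\tau\in \mathbf{H}^g$ in terms of $\deg P$. The Zariski density only ensures that no such $P$ vanishes identically; what is needed is a quantitative, effective version. In the Siegel case with $g=1$ Nesterenko obtained this from the $D$-property of a single vector field, but for $F$ real quadratic we have $g=2$ commuting higher Ramanujan vector fields spanning a two-dimensional integrable subbundle of $T_{B_F/\ZZ}$, so one must establish a several-variable zero estimate for the leaves of the higher Ramanujan foliation, in the spirit of Nesterenko--Philippon and the more recent work of Binyamini.

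Once this multiplicity estimate is in hand, the second step is to combine it with the arithmetic growth bounds inherited from the integrality of $\hat{\varphi}_F$ and feed the result into a Philippon-style criterion for algebraic independence, applied to a well-chosen sequence of ``rational'' points in $\mathbf{H}^g$ approaching $\tau$. Here the hypothesis $\tau\notin {\rm HZ}_F$ plays its essential role: on the Hirzebruch--Zagier divisors, the abelian surface $X_\tau$ acquires additional endomorphisms (splitting up to isogeny, or quaternionic multiplication), its Mumford--Tate group drops, and new algebraic relations appear among the coordinates of $\varphi_F(\tau)$ that would destroy the multiplicity estimate at that specific point. Off these divisors, $X_\tau$ has generic Mumford--Tate group inside the RM Shimura datum, so the estimate can be applied and yields the conjectured bound $\trdeg_{\QQ}\QQ(\varphi_F(\tau))\ge 3$.

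The central obstacle, and the reason this remains only a conjecture, is the several-variable multiplicity estimate. Even in the Siegel $g=2$ case the analogous statement is open, and multi-variable generalizations of the $D$-property are a notorious bottleneck in transcendence theory. A promising avenue is to exploit the Lie-theoretic description of $B_g(\CC)$ and of the higher Ramanujan vector fields provided by Theorem \ref{unifhrvf}, realizing the leaves of the foliation as orbits of a unipotent subgroup of $\Sp_{2g}(\CC)$, and reducing the zero estimate to a statement about polynomial functions on a homogeneous space of $\Sp_{2g}(\CC)$ amenable to algebraic group-theoretic techniques. Secondary difficulties include making Philippon's criterion compatible with the several-variable analytic map $\varphi_F$ (choosing the right auxiliary sequence of approximations), and verifying that the complement of ${\rm HZ}_F$ is indeed the correct locus of applicability rather than a smaller open subset cut out by further exceptional subvarieties.
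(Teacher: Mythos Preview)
The statement under consideration is a \emph{conjecture}, and the paper does not contain a proof of it. The paper introduces it as Conjecture \ref{conj-Fquadratic0} (restated later as Conjecture \ref{conj-Fquadratic}) and then devotes the surrounding discussion not to proving it but to explaining its relationship with Grothendieck's Period Conjecture for CM abelian surfaces, and to remarking that ``a natural strategy to attack [it] would consist in adapting Nesterenko's method'' with a reference to \cite{fonseca18} for a first step in that program.

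Your proposal is therefore not really a proof attempt but a strategy outline, and you say as much yourself (``the reason this remains only a conjecture''). As a strategy it is essentially the one the paper gestures at: adapt Nesterenko's architecture (Philippon criterion, arithmetic input from integrality of $\hat{\varphi}_F$, multiplicity estimate), with the multiplicity estimate being the central missing piece. Your identification of the obstacles is accurate: the several-variable zero estimate for leaves of the higher Ramanujan foliation on $B_F(\CC)$ is genuinely open, and the role of the hypothesis $\tau\notin{\rm HZ}_F$ is correctly tied to the drop in Mumford--Tate group along Hirzebruch--Zagier divisors (this is made precise in the paper in Paragraph \ref{subsec-gpccm}). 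So there is no gap to flag in the sense of a flawed argument; there is simply no argument yet, on either side, and your write-up is an honest account of where the problem stands.
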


Here, ${\rm HZ}_{F}$ is a countable union of certain special divisors of $\mathbf{H}^2$, first introduced and studied by Hirzebruch and Zagier (see Paragraph \ref{subsec-hirzebruchzagier}), classifying abelian surfaces with quaternionic multiplication.

The above statement is a higher dimensional analog of the uniform bound
$$
\trdeg_{\QQ} \QQ(E_2(\tau),E_4(\tau),E_6(\tau))\ge 2
$$
for $\tau\in \mathbf{H}$, which can be obtained, as explained above, as a corollary of Nesterenko's theorem. Correspondingly, we shall prove that Conjecture \ref{conj-Fquadratic0} implies Grothendieck's Period Conjecture for complex multiplication abelian surfaces; for instance, by considering the Jacobian of the curve $y^2=1-x^5$, we see that such conjecture for $F=\QQ(\sqrt{5})$ contains the classical conjecture on the algebraic independence of $\pi$, $\Gamma(1/5)$, and $\Gamma(2/5)$ (see Paragraph \ref{subsec-gpccm}).

A natural strategy to attack Conjecture \ref{conj-Fquadratic0} would consist in adapting Nesterenko's method to prove Theorem \ref{intro-thmnesterenko} to our geometric context, and in generalizing it in ``two variables''. A first step in this program was taken in \cite{fonseca18}, where we show that Nesterenko's method, still in one variable, can be cast in purely geometric terms, not relying on the Taylor expansion of explicitly defined analytic functions.

\subsection{Scholia}

\subsubsection{}\label{subsec-movasati}

As acknowledged above, our definition of the moduli stack $\mathcal{B}_g$ was inspired by Movasati's point of view on the Ramanujan vector field in terms of the Gauss-Manin connection on the de Rham cohomology of the universal elliptic curve (cf. \cite{movasati12} 4.2), which corresponds to the case $g=1$ of our construction.

After I completed a first version this article, H. Movasati has kindly indicated to me that a number of our results and constructions has some overlap with his article \cite{movasati13}. In this work, he considers complex analytic spaces $U$ classifying lattices in maximal totally real subspaces of some given complex vector space $V_0$ (i.e., subgroups of $V_0$ generated by a $\CC$-basis of $V_0$) satisfying suitable compatibility conditions with a fixed Hodge filtration $F^{\bullet}_0$ on $V_0$, and a fixed polarization $\psi_0$; these spaces come equipped with a natural analytic right action of the complex algebraic group
$$
G_0 = \{g\in \GL(V_0) \mid gF_0^i=F^i_0 \text{ for every }i\text{, and }g^*\psi_0=\psi_0\}\text{.}
$$
For the particular case where $V_0 = \CC^{2g}$,
$$
F_0^{\bullet} = (F^0_0=V_0\supset F^1_0=\CC^{g}\times\{0\}\supset F^2_0=0)\text{,}
$$
and $\psi_0$ is the standard (complex) symplectic form (\cite{movasati13} 5.1), the space $U$ becomes the analytic moduli space $B_g(\CC)$, investigated in the present article. Of course, the algebraic group $G_0$ coincides with our $P_g$, and the action of $G_0$ on $U$ gets identified with the action of $P_g$ on $B_g(\CC)$ under $U\cong B_g(\CC)$. 

In \cite{movasati13} 3.2, Movasati also describes $U$ as a quotient $\Gamma_{\ZZ}\backslash P$, where $P$ is the space of ``period matrices'' and $\Gamma_{\ZZ}$ is some explicitly defined discrete group. In our particular case, $P$ may be identified with our $\mathbf{B}_g$ (cf. Proposition \ref{prop1}) and $\Gamma_{\ZZ}=\Sp_{2g}(\ZZ)$. Moreover, the map $\mathbf{H}_g \to P$ defined in \cite{movasati13} p. 584 coincides with our $\varphi_g: \mathbf{H}_g \to B_g(\CC)$ constructed via the universal property of $B_g(\CC)$.

In his article, Movasati explicitly states the problem of algebraizing $U$ --- i.e., of finding the algebraic variety $T$ over $\overline{\QQ}$, in his notation --- and the action of $G_0$. This is solved ``by definition'' in our construction, where $T$ is here called $B_{g,\overline{\QQ}}$. Note that our methods also yield that $B_{g,\overline{\QQ}}$ is quasi-affine, which was previously conjectured by Movasati. On his web page\footnote{See ``What is a Siegel quasi-modular form?'' in \url{http://w3.impa.br/~hossein/WikiHossein/WikiHossein.html}.}, Movasati also indicates a construction
of what we call ``higher Ramanujan vector fields'' with slightly different normalizations.

\subsubsection{} \label{subsubsec-intronearlyhol}

The moduli stacks $\mathcal{B}_g$, or variants of it, have also appeared elsewhere in the literature in different contexts, most notably in relation with sheaf theoretic reformulations of Shimura's theory of \emph{nearly holomorphic modular forms}, as in Urban \cite{urban14} and Liu \cite{liu18}.

For instance, in \cite{liu18}, Paragraph 2.1, the parabolic subgroup $\mathbf{Q}$ of $\GSp_{2g}$, and the $\mathbf{Q}$-torsor $T_{\mathcal{H}}^{\times}$, used in the definition of \emph{automorphic sheaves} are ``up to similitude'' versions  of our $P_g$ and $\mathcal{B}_g$. Moreover, the definition of the polynomial $q$-expansions in \cite{liu18}, Paragraph 2.6, involves the construction of $(\omega_{\rm can},\delta_{\rm can})$, which coincides with our $\hat{b}_g$ (see Theorem \ref{intro-thm-instsolsiegel} above). In \cite{liu18}, it is stated that $(\omega_{\rm can},\delta_{\rm can})$ belongs to $T_{\mathcal{H}}^{\times}$, and that this can be checked analytically; this is proved in details in Section \ref{sec-analhre} below.  

The connections between the present work and the theory of nearly holomorphic modular forms should come as no surprise. Indeed, in the case $g=1$, recall that the differential ring of quasimodular forms is isomorphic to the differential ring of nearly holomorphic modular forms endowed with the Maass-Shimura differential operator (cf. \cite{zagier08} 5). Using the results of \cite{urban14}, this can be explained geometrically as follows.

To fix ideas, we ignore the ``condition at infinity'', i.e., we work with ``weakly holomorphic forms'', although \cite{urban14} does consider it; otherwise, see Remark \ref{rem-conditionatinfinity} above. Let $\mathcal{H}$ be the first de Rham cohomology of the universal elliptic curve over $\mathcal{A}_{1,\CC}$, and let $\mathcal{F}$ be its Hodge subbundle. It is shown in \cite{urban14} that the ring of nearly holomorphic modular forms is isomorphic to $H^0(\mathcal{A}_{1,\CC},\Sym \mathcal{H})$, and that the Maass-Shimura operator corresponds to the $\CC$-derivation $\partial$ on this ring induced by Gauss-Manin connection on $\mathcal{H}$ together with the Kodaira-Spencer isomorphism $\Omega^1_{\mathcal{A}_{1,\CC}/\CC} \cong \Sym^2 \mathcal{F}$. On the other hand, $H^0(\mathcal{A}_{1,\CC},\Sym \mathcal{H})$ can be shown to be isomorphic to $H^0(B_{1,\CC},\mathcal{O}_{B_{1,\CC}})$, with $\partial$ being induced by the Ramanujan vector field $v_{11}$ on $B_{1,\CC}$ (see also \cite{movasati12} Sections 6 and 7).

\subsection{Acknowledgments}

This work started as part of my PhD thesis under the supervision of Jean-Benoît Bost, at Université Paris-Sud, Orsay, and was supported by a public grant as part of the FMJH project. It was completed during a postdoctoral stay at the Max-Planck-Institut für Mathematik, Bonn.

I am grateful to Jean-Benoît Bost for introducing me to Nesterenko's theorem and its related open problems, for his encouragement, and for his crucial comments and suggestions on this paper. I thank Hossein Movasati for his kind remarks on the historical development of this subject, and for making me better acquainted with his work. I am greatly indebted to Daniel Bertrand for his interest and for clarifying some aspects related to derivatives of modular functions. It is a pleasure to acknowledge that I have also benefited from remarks of Yves André, Emmanuel Ullmo, and Javier Fresán.

\section*{Terminology and conventions} \label{terminology}

\subsection{} \label{vectorbundles} By a \emph{vector bundle} over a scheme $U$ we mean a locally free sheaf $\mathcal{E}$ over $U$ of finite rank. A \emph{line bundle} is a vector bundle of rank $1$. A \emph{subbundle} of $\mathcal{E}$ is a subsheaf $\mathcal{F}$ of $\mathcal{E}$ such that $\mathcal{F}$ and $\mathcal{E}/\mathcal{F}$ are also vector bundles, that is, $\mathcal{F}$ is locally a direct factor of $\mathcal{E}$. If $\mathcal{E}$ has constant rank $r$, by a \emph{basis} of $\mathcal{E}$ over $U$ we mean an ordered family of $r$ global sections of $\mathcal{E}$ that generate this sheaf as an $\mathcal{O}_U$-module. The \emph{dual} of a vector bundle $\mathcal{E}$ is the vector bundle $\mathcal{E}^{\vee} \defeq \mathcal{H}om_{\mathcal{O}_U}(\mathcal{E},\mathcal{O}_U)$.

\subsection{} Let $U$ be a scheme. By an \emph{abelian scheme} over $U$, we mean a proper and smooth group scheme $p:X \to U$ over $U$ with geometrically connected fibers. The group law of $X$ over $U$ is commutative (cf. \cite{GIT94} Corollary 6.5) and will be denoted additively. A \emph{morphism of abelian schemes} over $U$ is a morphism of $U$-group schemes.

 When $p$ is projective, the relative Picard functor $\Pic_{X/U}$ is representable by a group scheme over $U$ (\cite{BLR90} Chapter 8). Then, the open group subscheme $X^t$ of $\Pic_{X/U}$, whose geometric points correspond to line bundles some power of which are algebraically equivalent to zero, is a projective abelian scheme over $U$, called the \emph{dual abelian scheme}; we denote its structural morphism by $p^t:X^t \to U$. There is a canonical biduality isomorphism $X \stackrel{\sim}{\to} X^{tt}$ (cf. \cite{BLR90} 8.4 Theorem 5). The formation of both the dual abelian scheme and the biduality isomorphism is compatible with every base change in $U$. The universal line bundle over $X\times_U X^t$, the so-called \emph{Poincaré line bundle}, will be denoted by $\mathcal{P}_{X/U}$.

A \emph{principal polarization} on a projective abelian scheme $X$ over $U$ is an isomorphism of $U$-group schemes $\lambda : X \to X^t$ satisfying the equivalent conditions (cf. \cite{GIT94} 6.2 and \cite{DP94} 1.4)
\begin{enumerate}
    \item $\lambda$ is symmetric (i.e. $\lambda=\lambda^t$ under the biduality isomorphism $X\cong X^{tt}$) and  $(\id_X,\lambda)^*\mathcal{P}_{X/U}$ is relatively ample over $U$.
   \item Étale locally over $U$, $\lambda$ is \emph{induced by a line bundle on $X$} (cf. \cite{GIT94} Definition 6.2) relatively ample over $U$. 
\end{enumerate}
A \emph{principally polarized abelian scheme} over $U$ is a couple $(X,\lambda)$, where $X$ is a projective abelian scheme over $U$ and $\lambda$ is a principal polarization on $X$.\label{symb:ppas}

\subsection{} If $X\to S$ is a smooth morphism of schemes, the dual $\mathcal{O}_X$-module of the sheaf of relative differentials $\Omega^1_{X/S}$ (i.e. the sheaf of $\mathcal{O}_S$-derivations of $\mathcal{O}_X$) is denoted by $T_{X/S}$.  It is a vector bundle over $X$ whose rank is given by the relative dimension of $X \to S$. If $S=\Spec R$ is affine, we denote $T_{X/S}=T_{X/R}$.

 The \emph{Lie bracket} $[ \ , \ ] : T_{X/S} \times T_{X/S} \to T_{X/S}$ is defined on derivations by $[\theta_1,\theta_2] = \theta_1\circ \theta_2 - \theta_2\circ \theta_1$. 

If $S$ is a scheme, and $f: X \to Y$ is a morphism of smooth $S$-schemes, then there is a canonical morphism of $\mathcal{O}_X$-modules $f^*\Omega^1_{Y/S} \to \Omega^1_{X/S}$. Further, as $Y\to S$ is smooth, the canonical morphism of $\mathcal{O}_X$-modules  $f^*T_{Y/S} \to (f^*\Omega^1_{Y/S})^{\vee}$ is an isomorphism. We denote by
\begin{align*}
Df: T_{X/S} \to f^*T_{Y/S}
\end{align*}
the dual $\mathcal{O}_X$-morphism of $f^*\Omega^1_{Y/S} \to \Omega^1_{X/S}$ after the identification $(f^*\Omega^1_{Y/S})^{\vee}\cong f^*T_{Y/S}$. If $f$ is smooth, we have an exact sequence of vector bundles over $X$
\begin{align*}
0 \to T_{X/Y} \to T_{X/S} \stackrel{Df}{\to} f^*T_{Y/S} \to 0\text{.}
\end{align*}

\subsection{} If $U$ is any scheme, the category of $U$-schemes (resp. $U$-group schemes) is denoted by $\Sch_{/U}$ (resp. $\textsf{GpSch}_{/U}$). The category of sets is denoted by $\Sets$. If $\textsf{C}$ is any category, its opposite category is denoted by $\textsf{C}^{\opp}$.

\subsection{} \label{stacks}

We shall use the language of \emph{categories fibered in groupoids} and the elements of the theory of \emph{Deligne-Mumford stacks} (\cite{DM69} Paragraph 4). We follow the same conventions and terminology of \cite{olsson16}. In particular, if $S$ is a scheme, whenever we talk about a \emph{stack} over the category of $S$-schemes $\Sch_{/S}$ (cf. \cite{olsson16} Definition 4.6.1), or simply a stack over $S$ (or an $S$-stack), we shall always assume that $\Sch_{/S}$ is endowed with the \emph{étale topology}.

In view of \cite{olsson16} Corollary 8.3.5, by an \emph{algebraic space} over a scheme $S$ we mean a Deligne-Mumford stack $\mathcal{X}$ over $S$ such that for any $S$-scheme $U$ the fiber category $\mathcal{X}(U)$ is discrete (i.e. any automorphism is the identity).

The \emph{étale site} of a Deligne-Mumford stack $\mathcal{X}$ is denoted by $\text{Ét}(\mathcal{X})$ (cf. \cite{olsson16} Paragraph 9.1). We recall that the objects of the underlying category of $\Et(\mathcal{X})$ are \emph{étale schemes over $\mathcal{X}$}, that is, pairs $(U,u)$ where $U$ is an $S$-scheme and $u: U \to \mathcal{X}$ is an étale $S$-morphism; morphisms are given by couples $(f,f^b): (U',u')\to (U,u)$, where $f:U'\to U$ is an $S$-morphism and $f^b :u' \to u\circ f$ is an isomorphism of functors $U' \to \mathcal{X}$. Coverings in $\Et(\mathcal{X})$ are given by families of morphisms $\{(f_i,f_i^{b}):(U_i,u_i) \to (U,u)\}_{i\in I}$ such that $\{f_i:U_i \to U\}_{i\in I}$ is an étale covering of $U$.

The structural sheaf on $\Et(\mathcal{X})$, which to any $(U,u)$ associates the ring $\Gamma(U,\mathcal{O}_U)$, is denoted by $\mathcal{O}_{\mathcal{X}_{\et}}$. We recall that an $\mathcal{O}_{\mathcal{X}_{\et}}$-module $\mathcal{F}$ is said to be \emph{quasi-coherent} if $u^*\mathcal{F}$ is a quasi-coherent $\mathcal{O}_U$-module for any object $(U,u)$ of $\Et(\mathcal{X})$.

By a \emph{vector bundle} over a Deligne-Mumford stack $\mathcal{X}$, we mean a locally free $\mathcal{O}_{\mathcal{X}_{\et}}$-module of finite rank. We define subbundles, bases, and duals as in \ref{vectorbundles}.

\subsection{} \label{tangentstacks}  Sheaves of differentials and tangent sheaves can also be defined for Deligne-Mumford stacks. If $\mathcal{X}$ is a Deligne-Mumford stack over $S$, we define a presheaf of $\mathcal{O}_{\mathcal{X}_{\et}}$-modules $\Omega^1_{\mathcal{X}/S}$ on $\Et(\mathcal{X})$ by 
\begin{align*}
\Gamma((U,u),\Omega^1_{\mathcal{X}/S}) \defeq \Gamma(U,\Omega^1_{U/S})
\end{align*}
for any étale scheme $(U,u)$ over $\mathcal{X}$; restriction maps are defined in the obvious way. Since, for any étale morphism of $S$-schemes $f:U' \to U$, the induced morphism $f^*\Omega^1_{U/S} \to \Omega^1_{U'/S}$ is an isomorphism of $\mathcal{O}_{U'}$-modules, and for any $S$-scheme $U$ the sheaf $\Omega^1_{U/S}$ is a quasi-coherent $\mathcal{O}_U$-module, we see that $\Omega^1_{\mathcal{X}/S}$ is in fact a quasi-coherent sheaf over $\mathcal{X}$ (cf. \cite{olsson16} Lemma 4.3.3). Note that $u^*\Omega^1_{\mathcal{X}/S} = \Omega^1_{U/S}$ for any étale scheme $(U,u)$ over $\mathcal{X}$.

Let $\varphi: \mathcal{X} \to \mathcal{Y}$ be a morphism of Deligne-Mumford stacks over $S$. If $\varphi$ is representable by schemes, then there exists a unique morphism of $\mathcal{O}_{\mathcal{Y}}$-modules $\Omega^1_{\mathcal{Y}/S} \to \varphi_*\Omega^1_{\mathcal{X}/S}$ inducing, for any étale scheme $(V,v)$ over $\mathcal{Y}$, the canonical morphism $\Omega^1_{V/S} \to \varphi'_*\Omega^1_{U/S}$, where $(U,u)$ (resp. $\varphi' :U \to V$) denotes the étale scheme over $\mathcal{X}$ (resp. the morphism of $S$-schemes) obtained from $(V,v)$ (resp. $\varphi$) by base change. If, moreover, $\varphi$ is quasi-compact and quasi-separated, by adjointness (cf. \cite{olsson16} Proposition 9.3.6), we obtain a morphism of $\mathcal{O}_{\mathcal{X}_{\et}}$-modules
\begin{align}\label{pullbackmorphism}
\varphi^*\Omega^1_{\mathcal{Y}/S} \to \Omega^1_{\mathcal{X}/S}\text{.}
\end{align}
We then define a quasi-coherent $\mathcal{O}_{\mathcal{X}_{\et}}$-module
\begin{align*}
\Omega^1_{\mathcal{X}/\mathcal{Y}} \defeq \text{coker} (\varphi^*\Omega^1_{\mathcal{Y}/S} \to \Omega^1_{\mathcal{X}/S})\text{.}
\end{align*}

Recall that a Deligne-Mumford stack $\mathcal{X}$ over $S$ is \emph{smooth} if there exists a surjective étale $S$-morphism $u:U \to \mathcal{X}$ such that $U$ is smooth over $S$ (see \cite{DM69} page 100). In this case,  $\Omega^1_{\mathcal{X}/S}$ is a vector bundle over $\mathcal{X}$. We define $T_{\mathcal{X}/S}$ as the dual $\mathcal{O}_{\mathcal{X}_{\et}}$-module of $\Omega^1_{\mathcal{X}/S}$. If $\varphi: \mathcal{X} \to \mathcal{Y}$ is a morphism of smooth Deligne-Mumford stacks over $S$ representable by smooth schemes, then  $\Omega^1_{\mathcal{X}/\mathcal{Y}}$ is a vector bundle over $\mathcal{X}$, and its dual is denoted by $T_{\mathcal{X}/\mathcal{Y}}$. Moreover, in this case, the morphism in (\ref{pullbackmorphism}) is injective and induces a surjective morphism of $\mathcal{O}_{\mathcal{X}_{\et}}$-modules $D\varphi : T_{\mathcal{X}/S} \to \varphi^*T_{\mathcal{Y}/S}$. We thus obtain an exact sequence of quasi-coherent $\mathcal{O}_{\mathcal{X}_{\et}}$-modules
\begin{align*}
0 \to T_{\mathcal{X}/\mathcal{Y}} \to T_{\mathcal{X}/S} \stackrel{D\varphi}{\to}\varphi^*T_{\mathcal{Y}/S} \to 0\text{.} 
\end{align*}

\subsection{} Let $M$ be a complex manifold. Every holomorphic vector bundle $\pi: V \to M$ may be seen as a (commutative) relative complex Lie group over $M$. We shall occasionally identify $V$ with its corresponding locally free sheaf of $\mathcal{O}_M$-modules of holomorphic sections of $\pi$.

\subsection{} If $R$ is any ring, we denote the constant sheaf with values in $R$ over some complex manifold $M$ by $R_M$. A \emph{local system} of $R$-modules over $M$ is a locally constant sheaf $L$ of $R$-modules over $M$. The \emph{dual} of $L$ is denoted by  $L^{\vee} \defeq \mathcal{H}om_{R}(L,R_M)$.

The \emph{étalé space} of a local system of $R$-modules $L$ over $M$ will be denoted by $E(L)$; this is a topological covering space over $M$ whose fiber at each $p\in M$ is naturally identified to $L_p$. 

\subsection{} \label{notationmatrices}

Let $m,n\ge 1$ be integers. The set of matrices of order $m\times n$ over a ring  $R$ is denoted by $M_{m\times n }(R)$. We shall frequently adopt a block notation for elements in $M_{2n \times 2n}(R)$:
$$
\left(\begin{array}{cc}
A & B \\
C & D
\end{array}\right) = ( A \ B \ ; \ C \ D)\text{,}
$$
where $A,B,C,D \in M_{n\times n}(R)$.

The transpose of a matrix $M \in M_{m\times n}(R)$ is denoted by $M\transp \in M_{n\times m}(R)$. For $1\le i \le n$, $\textbf{e}_i \in M_{n\times 1}(R)$ denotes for the column vector whose entry in the $i$th line is 1, and all the others are 0. The identity matrix in $M_{n\times n}(R)$ is denoted by $\mathbf{1}_n$. For every $1\le i \le j \le n$, we denote by $\textbf{E}^{ij}$ the unique symmetric matrix $(\textbf{E}^{ij}_{kl})_{1\le k,l\le n} \in M_{n\times n}(R)$ such that
\begin{align*}
\textbf{E}^{ij}_{kl} = \begin{cases}
          1 & \text{if }  (k,l)=(i,j) \text{ or } (k,l)=(j,i)\\
          0 & \text{otherwise}\text{.}
          \end{cases}
\end{align*}

\label{symb:symgp}
The \emph{symmetric group} $\Sym_n$ is the subgroup scheme of $M_{n\times n}$ consisting of symmetric matrices. The \emph{symplectic group} $\Sp_{2n}$ is defined as the subgroup scheme of $\GL_{2n}$ such that for every affine scheme $V=\Spec R$ \label{symb:symplectic}
\begin{align*}
{\Sp}_{2g}(V)  =  \{M \in {\GL}_{2n}(R) \mid MJM\transp =J\}
\end{align*}
where
\begin{align*}
J\defeq \left(\begin{array}{cc}
                  0 & \mathbf{1}_n \\
                  -\mathbf{1}_n& 0
                  \end{array}\right)\text{.}
\end{align*}

\begin{obs} \label{eqsympl}
As $J^2=-\mathbf{1}_{2n}$, the condition $MJM\transp = J$ is equivalent to $M^{-1} = -JM\transp J$; thus $MJM\transp = J$ if and only if $M\transp J M = J$. In particular, if we write
\begin{align*}
M = \left(\begin{array}{cc}
                  A & B \\
                  C & D
                  \end{array}\right) \in M_{2n\times 2n}(R)
\end{align*}
for some $A,B,C,D\in M_{n\times n}(R)$, then $M$ is in $\Sp_{2n}(R)$ if and only if one of the following two conditions is satisfied
\begin{enumerate}
   \item $AB\transp = BA\transp$, $CD\transp= DC\transp$, and $AD\transp -BC\transp = \mathbf{1}_n$.
   \item $A\transp C = C\transp A\text{, } B\transp D = D\transp B\text{, and } A\transp D - C\transp B = \mathbf{1}_n$.
\end{enumerate}
\end{obs}
Finally, the \emph{Siegel parabolic subgroup} $P_n$ of $\Sp_{2n}$ consists of matrices $(A \ B \ ; \ C \ D )$ in $\Sp_{2n}$ such that $C=0$. \label{symb:parabolicsiegel}

\subsection{} \label{notationresidue}

 Let $K$ be a subfield of $\CC$ and $X$ be an algebraic variety over $K$ (i.e. a reduced separated scheme of finite type over $K$). For any complex point $\overline{x}: \Spec \CC \to X$, if $x \in X$ denotes the point in the image of $\overline{x}$, and $k(x)$ denotes its residue field, we put
\begin{align*}
K(\overline{x}) \defeq k(x)\text{,}
\end{align*}
and we call it the \emph{field of definition} of $\overline{x}$ in $X$. Let us remark that
\begin{align*}
\trdeg_{K}K(\overline{x}) = \min \{\dim Y \mid Y \text{ is an integral closed $K$-subscheme of }X \text{ such that }\overline{x}\in Y(\CC)\}\text{.}
\end{align*}

\clearpage

\mbox{}

\nomenclature[0]{$\langle \ , \ \rangle_{\lambda}$}{symplectic form on $H^1_{\dR}(X/U)$ induced by a principal polarization $\lambda:X \to X^t$ of an abelian scheme $X$ over $U$, page \pageref{symb:sympl}}

% \nomenclature{$(X,\lambda)_{/U}$}{principally polarized abelian scheme over $U$, page \pageref{symb:ppas}}

% \nomenclature{$(X,\lambda,m)_{/U}$}{principally polarized abelian scheme with $R$-multiplication over $U$, page \pageref{symb:ppasrm}} 

\nomenclature[X]{$(\hat{X}_g,\hat{\lambda}_g)$}{principally polarized abelian scheme of relative dimension $g$ over $\Spec \ZZ(\kern-.2em(q_{ij})\kern-.2em)$ given by Mumford's construction, page \pageref{symb:hatXg}}

\nomenclature[X]{$(\hat{X}_F,\hat{\lambda}_F,\hat{m}_F)$}{principally polarized abelian scheme with $R$-multiplication over $\Spec \ZZ(\kern-.2em(q^{r_i})\kern-.2em)$ given by Mumford's construction, page \pageref{symb:hatXF}}

\nomenclature[X]{$(\mathbf{X}_g,E_g)$}{``universal'' principally polarized complex torus of relative dimension $g$ over $\mathbf{H}_g$, page \pageref{symb:Xg}}

\nomenclature[X]{$(\mathbf{X}_F,E_F,m_F)$}{``universal'' principally polarized complex torus with $R$-multiplication over $\mathbf{H}^g$, page \pageref{symb:XFEFmF}} 

\nomenclature[A]{$\mathcal{A}_F$}{moduli stack over $\Spec \ZZ$ of principally polarized abelian schemes with $R$-multiplication, page \pageref{symb:AgAF}}

\nomenclature[A]{$\mathcal{A}_g$}{moduli stack over $\Spec \ZZ$ of principally polarized abelian schemes of relative dimension $g$, page \pageref{symb:AgAF}}

\nomenclature[A]{$A_F$}{coarse moduli scheme over $\Spec \ZZ$ of principally polarized abelian varieties with $R$-multiplication, page \pageref{symb:coarseAF}}

\nomenclature[A]{$A_g$}{coarse moduli scheme over $\Spec \ZZ$ of principally polarized abelian varieties of dimension $g$, page \pageref{symb:coarseAg}} 

\nomenclature[B]{$b_F$}{universal symplectic-Hodge basis over $\mathcal{B}_F$, page \pageref{symb:universalbF}}

\nomenclature[B]{$b_g$}{universal symplectic-Hodge basis over $\mathcal{B}_g$, page \pageref{symb:universalbg}}

\nomenclature[B]{$\mathcal{B}_F$}{moduli stack over $\Spec \ZZ$ of principally polarized abelian schemes with $R$-multiplication endowed with a symplectic-Hodge basis (see Definition \ref{defi-shbasisrm}), page \pageref{symb:BF}}

\nomenclature[B]{$\mathcal{B}_g$}{moduli stack over $\Spec \ZZ$ of principally polarized abelian schemes of relative dimension $g$ endowed with a symplectic-Hodge basis (see Definition \ref{defi-shb}), page \pageref{symb:Bg}}

\nomenclature[B]{$B_F$}{smooth quasi-affine scheme over $\Spec \ZZ[1/2]$ representing $\mathcal{B}_F\tensor \ZZ[1/2]$, page \pageref{symb:straightBgBF}}

\nomenclature[B]{$B_g$}{smooth quasi-affine scheme over $\Spec \ZZ[1/2]$ representing $\mathcal{B}_g\tensor \ZZ[1/2]$, page \pageref{symb:straightBgBF}}

\nomenclature[C]{$\comp$}{comparison isomorphism between de Rham and Betti cohomology, page \pageref{symb:comp}}

\nomenclature[D]{$D$}{different ideal of a totally real number field $F$ of degree $g$ over $\QQ$, page \pageref{symb:totrealfield}}

\nomenclature[F]{$F$}{totally real number field of degree $g$ over $\QQ$, page \pageref{symb:totrealfield}}

\nomenclature[F]{$F^1(X/U)$}{Hodge subbundle of $H^1_{\dR}(X/U)$ for an abelian scheme $X$ over $U$, page \pageref{symb:hodgesubbundle}}

\nomenclature[F]{$\mathcal{F}_F$}{Hodge subbundle of $\mathcal{H}_F$, page \pageref{symb:FF}}

\nomenclature[F]{$\mathcal{F}_g$}{Hodge subbundle of $\mathcal{H}_g$, page \pageref{symb:F_g}}

\nomenclature[FI]{$\varphi_F$}{analytic solution of the higher Ramanujan equations over $B_F(\CC)$ defined on $\mathbf{H}^g$, page \pageref{symb:phiF}} 

\nomenclature[FI]{$\varphi_g$}{analytic solution of the higher Ramanujan equations over $B_g(\CC)$ defined on $\mathbf{H}_g$, page \pageref{symb:phig}}

\nomenclature[FI]{$\hat{\varphi}_F$}{solution of the higher Ramanujan equations over $\mathcal{B}_F$ defined on $\Spec \ZZ(\kern-.2em(q^{r_i})\kern-.2em)$, page \pageref{symb:hatphiF}}

\nomenclature[FI]{$\hat{\varphi}_g$}{solution of the higher Ramanujan equations over $\mathcal{B}_g$ defined on $\Spec \ZZ(\kern-.2em(q_{ij})\kern-.2em)$, page \pageref{symb:hatphig}} 

\nomenclature[H]{$H^i_{\dR}(X/U)$}{$i$th algebraic de Rham cohomology sheaf of an abelian scheme $X$ over $U$, page \pageref{symb:derhamcohom}}

\nomenclature[H]{$\mathcal{H}^i_{\dR}(X/M)$}{$i$th analytic de Rham cohomology sheaf of a complex torus $X$ over $M$, page \pageref{symb:analyticderham}} 

\nomenclature[H]{$\mathcal{H}_F$}{vector bundle over $\mathcal{A}_F$ given by the first de Rham cohomology of the ``universal abelian scheme'' over $\mathcal{A}_F$, page \pageref{symb:HF}}

\nomenclature[H]{$\mathcal{H}_g$}{vector bundle over $\mathcal{A}_g$ given by the first de Rham cohomology of the ``universal abelian scheme'' over $\mathcal{A}_g$, page \pageref{symb:Hg}}

\nomenclature[H]{$\mathbf{H}_g$}{Siegel upper half-space, page \pageref{symb:siegelspace}} 

\nomenclature[H]{$\mathbf{H}^g$}{$g$th Cartesian power of the Poincaré upper half-plane $\mathbf{H}$, page \pageref{symb:powerH}} 

\nomenclature[J]{$j_F$}{``uniformization map'' from $\mathbf{H}^g$ to $A_F(\CC)$, page \pageref{symb:jF}}

\nomenclature[J]{$j_g$}{``uniformization map'' from $\mathbf{H}_g$ to $A_g(\CC)$, page \pageref{symb:jg}} 

\nomenclature[P]{$P_F$}{parabolic subgroup scheme of $\Res_{R/\ZZ}\Aut_{(M,\Psi)}$ fixing the Lagrangian $R\oplus 0 \subset M$ (see Paragraph \ref{subsec-shbrm}), page \pageref{symb:PF}}

\nomenclature[P]{$P_g$}{parabolic Siegel subgroup of the symplectic group $\Sp_{2g}$, page \pageref{symb:parabolicsiegel}}

\nomenclature[P]{$\mathcal{P}(X/k)$}{field of periods of an abelian variety $X$ over $k\subset \CC$, page \pageref{symb:PXk}} 

\nomenclature[PI]{$\pi_F$}{forgetful functor $\mathcal{B}_F \to \mathcal{A}_F$, page \pageref{symb:piF}}

\nomenclature[PI]{$\pi_g$}{forgetful functor $\mathcal{B}_g \to \mathcal{A}_g$, page \pageref{symb:pig}}

\nomenclature[PS]{$\Psi_{\lambda}$}{$\mathcal{O}_U\tensor R$-bilinear form on $H^1_{\dR}(X/U)$ with values in $\mathcal{O}_U\tensor D^{-1}$ satisfying $\Tr \Psi_{\lambda} = \langle \ , \ \rangle_{\lambda}$, page \pageref{symb:psilambda}}

\nomenclature[R]{$\mathcal{R}_F$}{Ramanujan subbundle of $T_{\mathcal{B}_F/\ZZ}$, page \pageref{symb:RF}}

\nomenclature[R]{$\mathcal{R}_g$}{Ramanujan subbundle of $T_{\mathcal{B}_g/\ZZ}$, page \pageref{symb:Rg}}

\nomenclature[R]{$R_1\pi_*\ZZ_X$}{dual of the local system of abelian groups $R^1\pi_*\ZZ_X$ over $M$, where $\pi:X \to M$ is a complex torus over a complex manifold $M$, page \pageref{symb:R1ZX}}
 
\nomenclature[R]{$R$}{ring of integers of a totally real number field $F$ of degree $g$ over $\QQ$, page \pageref{symb:totrealfield}}

\nomenclature[S]{$\Sp_{2g}$}{symplectic group scheme of order $2g$ over $\Spec \ZZ$, page \pageref{symb:symplectic}}

\nomenclature[S]{$\Sym_g$}{additive group scheme over $\Spec \ZZ$ of symmetric matrices of orger $g$, page \pageref{symb:symgp}}

\nomenclature[T]{$\Tr$}{trace map $\Tr_{F/\QQ}:F \to \QQ$, page \pageref{symb:trace}}

\nomenclature[TH]{$\theta_{ij}$}{either the derivation $q_{ij}\frac{\partial}{\partial q_{ij}}$ of the ring $\ZZ(\kern-.2em(q_{ij})\kern-.2em)$ or the holomorphic vector field $\frac{1}{2\pi i}\frac{\partial}{\partial \tau_{ij}}$ over $\mathbf{H}_g$, pages \pageref{symb:thetaijalg} and \pageref{symb:thetaijanal}} 

\nomenclature[TH]{$\theta^{r_i}$}{either the derivation $q^{r_i}\frac{\partial}{\partial q^{r_i}}$ of the ring $\ZZ(\kern-.2em(q^{r_i})\kern-.2em)$ or the holomorphic vector field $\frac{1}{2\pi i}\sum_{j=1}^g\sigma_j(x_i)\frac{\partial}{\partial \tau_j}$ over $\mathbf{H}^g$, pages \pageref{symb:thetarialg} and \pageref{symb:thetarianal}}

\nomenclature[V]{$v_F$}{higher Ramanujan vector field over $\mathcal{B}_F$, page \pageref{symb:vF}}

\nomenclature[V]{$v^{r_i}$}{vector field $v_F(1\tensor x_i)$ over $\mathcal{B}_F$, page \pageref{symb:vri}} 

\nomenclature[V]{$v_{ij}$}{higher Ramanujan vector field over $\mathcal{B}_g$, page \pageref{symb:vg}} 

\nomenclature[Z]{$\ZZ(\kern-.2em(q_{ij})\kern-.2em)$}{ring of formal Laurent power series over $\ZZ$ in the variables $q_{ij}$, for $1\le i\le j \le g$, page \pageref{symb:Zqij}}

\nomenclature[Z]{$\ZZ(\kern-.2em(q^{r_i})\kern-.2em)$}{ring of formal Laurent power series over $\ZZ$ in the variables $q^{r_1},\ldots, q^{r_g}$, page \pageref{symb:Zqri}} 

\printnomenclature[0.95in] 

\newpage

\part{The arithmetic theory of the higher Ramanujan equations}

\section{Symplectic vector bundles over schemes} \label{sec-sympl}

In this section we develop (or recall) some preliminary general material on vector bundles over schemes endowed with a symplectic bilinear form with values in some line bundle.

We fix once and for all a scheme $U$, and a line bundle $\mathcal{L}$ over $U$.

\subsection{Symplectic vector bundles}

 Let $\mathcal{E}$ be a vector bundle over $U$. An $\mathcal{O}_U$-bilinear form with values in $\mathcal{L}$
\begin{align*}
\langle \ , \ \rangle : \mathcal{E} \tensor_{\mathcal{O}_{U}} \mathcal{E} \longrightarrow \mathcal{L}
\end{align*}
is said to be
\begin{enumerate}
    \item \emph{perfect} if the $\mathcal{O}_U$-morphism $e \mapsto \langle \ \ , e \rangle$ from $\mathcal{E}$ to $\mathcal{L}\tensor_{\mathcal{O}_U}\mathcal{E}^{\vee}$ is an isomorphism,
    \item \emph{alternating} if $\langle \ , \ \rangle$ factors through $\mathcal{E}\tensor_{\mathcal{O}_U}\mathcal{E} \to \bigwedge^2 \mathcal{E}$, i.e., if $\langle e, e \rangle =0$ for every section $e$ of $\mathcal{E}$.
\end{enumerate} 

\begin{defi} \label{defisymplbundle}
An \emph{$\mathcal{L}$-valued symplectic form} over $\mathcal{E}$ is a perfect alternating $\mathcal{O}_U$-bilinear form over $\mathcal{E}$ with values in $\mathcal{L}$. An \emph{$\mathcal{L}$-symplectic vector bundle} over $U$ is a couple $(\mathcal{E},\langle \ , \ \rangle)$, where $\mathcal{E}$ is a vector bundle over $U$ and $\langle \ , \ \rangle$ is an $\mathcal{L}$-valued symplectic form over $\mathcal{E}$.
\end{defi}

When $\mathcal{L}=\mathcal{O}_U$, we write simply \emph{symplectic form} and \emph{symplectic vector bundle}.

By considering $\mathcal{O}_U$-linear morphisms preserving the $\mathcal{L}$-valued symplectic forms, we obtain a category of $\mathcal{L}$-symplectic vector bundles over $U$.

% If $U=\Spec R$ is affine, we prefer to consider the module of global sections $E=\Gamma(U,\mathcal{E})$ in place of $\mathcal{E}$, and we shall also say that $E$ is a vector bundle over $R$.

\subsection{Lagrangian subbundles}

 Let $(\mathcal{E}, \langle \ , \ \rangle)$ be an $\mathcal{L}$-valued symplectic vector bundle over $U$ and $\mathcal{F}$ be a subbundle of $\mathcal{E}$. We denote by $\mathcal{F}^{\perp}$ the subsheaf of $\mathcal{E}$ consisting of those sections $e$ of $\mathcal{E}$ such that $\langle f , e \rangle = 0$ for every section $f$ of $\mathcal{F}$.

\begin{lemma} \label{exactseq}
We have an exact sequence of $\mathcal{O}_U$-modules
\begin{align*}
0 \longrightarrow  \mathcal{F}^{\perp} \longrightarrow \mathcal{E} &\longrightarrow \mathcal{L}\tensor_{\mathcal{O}_U} \mathcal{F}^{\vee} \longrightarrow 0\\
e &\mapsto\langle \ \ , e \rangle|_{\mathcal{F}}
\end{align*}
In particular, $\mathcal{F}^{\perp}$ is a subbundle of $\mathcal{E}$ of rank ${\rm rank}( \mathcal{E}) - {\rm rank}(\mathcal{F})$.
\end{lemma}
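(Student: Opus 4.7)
The plan is to reduce everything to the perfectness of $\langle \ , \ \rangle$ together with the local-freeness of the quotient $\mathcal{E}/\mathcal{F}$, which makes the dual inclusion behave as expected.

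First I would rewrite the map $e \mapsto \langle \ \ , e \rangle|_{\mathcal{F}}$ as a composition. Let $\iota: \mathcal{F} \hookrightarrow \mathcal{E}$ be the inclusion of the subbundle, and let
$$
\Phi: \mathcal{E} \stackrel{\sim}{\to} \mathcal{L}\tensor_{\mathcal{O}_U}\mathcal{E}^{\vee}\text{, } \ \ e \longmapsto \langle \ \ , e\rangle
$$
be the isomorphism provided by the perfectness of the $\mathcal{L}$-valued symplectic form. Then the morphism in the statement is exactly the composition
$$
\mathcal{E}\stackrel{\Phi}{\longrightarrow}\mathcal{L}\tensor_{\mathcal{O}_U}\mathcal{E}^{\vee}\stackrel{\id_{\mathcal{L}}\tensor\,\iota^{\vee}}{\longrightarrow}\mathcal{L}\tensor_{\mathcal{O}_U}\mathcal{F}^{\vee}\text{,}
$$
and by definition its kernel is the subsheaf $\mathcal{F}^{\perp}$.

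Next I would verify surjectivity. Because $\mathcal{F}$ is a subbundle, the quotient $\mathcal{E}/\mathcal{F}$ is also a vector bundle, so the short exact sequence $0 \to \mathcal{F} \to \mathcal{E} \to \mathcal{E}/\mathcal{F}\to 0$ splits locally on $U$. Dualizing yields a locally split short exact sequence $0 \to (\mathcal{E}/\mathcal{F})^{\vee}\to \mathcal{E}^{\vee}\stackrel{\iota^{\vee}}{\to} \mathcal{F}^{\vee}\to 0$; in particular $\iota^{\vee}$ is surjective. Since tensoring with the line bundle $\mathcal{L}$ preserves exactness, $\id_{\mathcal{L}}\tensor \iota^{\vee}$ is surjective as well, and so is its composition with the isomorphism $\Phi$. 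This gives the displayed short exact sequence.

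Finally, to conclude that $\mathcal{F}^{\perp}$ is a subbundle of the claimed rank, I would use that in the short exact sequence
$$
0 \to \mathcal{F}^{\perp}\to \mathcal{E}\to \mathcal{L}\tensor_{\mathcal{O}_U}\mathcal{F}^{\vee}\to 0
$$
the rightmost term is a vector bundle. Hence the sequence splits locally, so $\mathcal{F}^{\perp}$ is locally a direct summand of $\mathcal{E}$, i.e.\ a subbundle; comparing ranks fiberwise (using that $\mathcal{L}$ has rank $1$) gives $\rank(\mathcal{F}^{\perp}) = \rank(\mathcal{E})-\rank(\mathcal{F})$. The only point requiring mild care is really just bookkeeping, the identification of the map with $\Phi$ followed by $\id_{\mathcal{L}}\tensor\iota^{\vee}$; I expect no genuine obstacle beyond this.
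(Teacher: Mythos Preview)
Your proof is correct and follows essentially the same approach as the paper: both identify $\mathcal{F}^{\perp}$ as the kernel by definition and obtain surjectivity by combining perfectness of $\langle\ ,\ \rangle$ with the local splitting of $\mathcal{F}\hookrightarrow\mathcal{E}$. You make the factorization through $\Phi$ and $\iota^{\vee}$ explicit and also spell out why $\mathcal{F}^{\perp}$ is a subbundle, whereas the paper is more terse; but the argument is the same.
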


\begin{proof}
The sequence $0 \longrightarrow  \mathcal{F}^{\perp} \longrightarrow \mathcal{E} \longrightarrow \mathcal{L}\tensor_{\mathcal{O}_U} \mathcal{F}^{\vee}$ is exact by definition. To see that $\mathcal{E} \to \mathcal{L}\tensor_{\mathcal{O}_U} \mathcal{F}^{\vee}$ defined above is surjective, one may work locally. In this case, $\mathcal{F}$ is a direct factor of $\mathcal{E}$, and thus any $\mathcal{O}_U$-linear map $\mathcal{F}\to \mathcal{L}$ can be extended to $\mathcal{E}$; we conclude by using that $\langle \ , \ \rangle$ is perfect.
\end{proof}

\begin{defi} \label{isolagr}
A subbundle $\mathcal{F}$ of $\mathcal{E}$ is said to be \emph{isotropic} with respect to $\langle \ , \ \rangle$ if $\mathcal{F} \subset \mathcal{F}^{\perp}$. An isotropic subbundle of $\mathcal{E}$ such that $\mathcal{F}=\mathcal{F}^{\perp}$ is said to be a \emph{Lagrangian subbundle}.
\end{defi}

The next result easily follows from Lemma \ref{exactseq}.

\begin{coro} \label{corosympl}
Let $\mathcal{F}$ be an isotropic subbundle of $\mathcal{E}$. Then $2 \rank( \mathcal{F}) \le \rank( \mathcal{E})$. Moreover, $\mathcal{F}$ is Lagrangian if and only if $2 \rank(\mathcal{F})=\rank(\mathcal{E})$. \hfill $\blacksquare$
\end{coro}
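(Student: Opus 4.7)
The plan is to deduce everything directly from Lemma \ref{exactseq}, which yields $\rank(\mathcal{F}^{\perp}) = \rank(\mathcal{E}) - \rank(\mathcal{F})$ (interpreting the rank function as locally constant on $U$, so that we may argue connected component by connected component if needed).

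First I would treat the inequality. Since $\mathcal{F}$ is isotropic, by definition $\mathcal{F} \subset \mathcal{F}^{\perp}$ as subsheaves of $\mathcal{E}$, so $\rank(\mathcal{F}) \le \rank(\mathcal{F}^{\perp}) = \rank(\mathcal{E}) - \rank(\mathcal{F})$, which rearranges to $2\rank(\mathcal{F}) \le \rank(\mathcal{E})$.

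Next I would handle the Lagrangian equivalence. The direction ($\Rightarrow$) is immediate: if $\mathcal{F} = \mathcal{F}^{\perp}$, then the rank formula from Lemma \ref{exactseq} gives $\rank(\mathcal{F}) = \rank(\mathcal{E}) - \rank(\mathcal{F})$. For ($\Leftarrow$), suppose $2\rank(\mathcal{F}) = \rank(\mathcal{E})$. Then $\rank(\mathcal{F}) = \rank(\mathcal{F}^{\perp})$, and since $\mathcal{F}$ is isotropic we have a monomorphism of vector bundles $\mathcal{F} \hookrightarrow \mathcal{F}^{\perp}$ of equal rank. For every $u \in U$, the induced map on fibers $\mathcal{F} \otimes k(u) \to \mathcal{F}^{\perp} \otimes k(u)$ is an injection of $k(u)$-vector spaces of the same dimension, hence an isomorphism. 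By Nakayama's lemma applied to the finitely generated coherent sheaf $\mathcal{F}^{\perp}/\mathcal{F}$, this sheaf vanishes, so $\mathcal{F} = \mathcal{F}^{\perp}$.

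There is no real obstacle here; the argument is pure bookkeeping with ranks of vector bundles plus one application of Nakayama's lemma to upgrade fiberwise equality to an equality of subsheaves. The only mild point of care is that the rank identity from Lemma \ref{exactseq} should be applied on each connected component of $U$ (or under the tacit assumption that $\mathcal{E}$, and hence $\mathcal{F}$ and $\mathcal{F}^{\perp}$, have constant rank).
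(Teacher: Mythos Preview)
Your proof is correct and follows exactly the approach the paper intends: the paper does not spell out any argument, merely stating that the result ``easily follows from Lemma \ref{exactseq},'' and your write-up is a faithful unpacking of that. One tiny simplification: since both $\mathcal{F}$ and $\mathcal{F}^{\perp}$ are \emph{subbundles} of $\mathcal{E}$ (the latter by Lemma \ref{exactseq}), the quotient $\mathcal{F}^{\perp}/\mathcal{F}$ is automatically a vector bundle of rank $\rank(\mathcal{F}^{\perp})-\rank(\mathcal{F})=0$, so it vanishes without needing to invoke Nakayama's lemma explicitly.
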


The next lemma shows that Lagrangian subbundles exist locally for the Zariski topology over $U$. This implies in particular that the rank of every symplectic vector bundle is even.

\begin{lemma} \label{locallagrangian}
Let $(\mathcal{E},\langle \ , \ \rangle)$ be an $\mathcal{L}$-valued symplectic vector bundle over $U$, and assume that $U$ is the spectrum of a local ring. Then there exists a Lagrangian subbundle of $\mathcal{E}$.
\end{lemma}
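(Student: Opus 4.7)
The plan is to proceed by induction on the rank of $\mathcal{E}$, after trivializing $\mathcal{L}$ and reducing to a statement about free modules with a perfect alternating form.

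First, since $U=\Spec R$ is the spectrum of a local ring, the line bundle $\mathcal{L}$ is free of rank one, so after choosing a generator we may assume $\mathcal{L}=\mathcal{O}_U$ and work with a usual symplectic form. Likewise, $\mathcal{E}$ corresponds to a free $R$-module $E$, and the pairing is an alternating $R$-bilinear form $\langle \ ,\ \rangle : E\times E\to R$ which induces an isomorphism $E\stackrel{\sim}{\to}E^{\vee}$. Any subbundle of $\mathcal{E}$ that is a direct summand of $E$ will automatically be a subbundle in the sense of \ref{vectorbundles}, so it suffices to exhibit a free $R$-submodule $L\subset E$ which is a direct summand, satisfies $L\subset L^{\perp}$, and has rank equal to half the rank of $E$ (the equality $L=L^{\perp}$ then follows from Corollary \ref{corosympl}, which also forces $\rank E$ to be even).

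Next, I would build a \emph{hyperbolic pair}. Fix an $R$-basis $v_1,\ldots,v_m$ of $E$ and consider the Gram matrix $G=(\langle v_i,v_j\rangle)_{i,j}$. Since $\langle \ ,\ \rangle$ is perfect, $G$ is invertible over $R$; reducing modulo the maximal ideal $\mathfrak{m}\subset R$ we see that $G$ cannot have all entries in $\mathfrak{m}$, so some $\langle v_i,v_j\rangle$ is a unit $u\in R^{\times}$. Set $e_1=v_i$ and $f_1=u^{-1}v_j$, so that $\langle e_1,f_1\rangle = 1$ and both vectors are part of a basis of $E$. The submodule $H=Re_1\oplus Rf_1$ is then free of rank two, and the restriction of $\langle \ ,\ \rangle$ to $H$ is perfect (its Gram matrix is $\bigl(\begin{smallmatrix}0 & 1\\ -1 & 0\end{smallmatrix}\bigr)$, which has determinant $1$).

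Now I would apply Lemma \ref{exactseq} to $\mathcal{F}=H$: the orthogonal $W=H^{\perp}$ is a subbundle of rank $\rank(E)-2$, and the explicit formula $v = \langle v,f_1\rangle e_1 - \langle v,e_1\rangle f_1 + w$ shows that $E = H\oplus W$. Since the pairing is perfect on $E$ and on $H$, a standard diagram chase (or direct verification using the decomposition above) shows that its restriction to $W$ is again a perfect alternating form; in particular $(W,\langle \ ,\ \rangle|_W)$ is an $\mathcal{O}_U$-symplectic vector bundle over $U$ of strictly smaller rank. By induction (the rank $0$ case being trivial), there is a Lagrangian subbundle $L_W\subset W$, necessarily free of rank $\tfrac12\rank W$. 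Set
\[
L \defeq Re_1\oplus L_W \subset E.
\]
By construction $L$ is a direct summand of $E$ (hence a subbundle of $\mathcal{E}$), free of rank $1+\tfrac12\rank W = \tfrac12\rank E$, and isotropic since $\langle e_1,e_1\rangle=0$, $L_W$ is isotropic in $W$, and $\langle e_1,L_W\rangle\subset \langle e_1, W\rangle=0$ by definition of $W$. By Corollary \ref{corosympl}, $L$ is Lagrangian, completing the induction.

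The step I expect to be the only non-formal point is the construction of the hyperbolic pair $(e_1,f_1)$: this is exactly where the hypothesis that $R$ is local is used, via the observation that an invertible matrix over a local ring must have at least one unit entry. Everything else is formal manipulation of perfect alternating forms and free direct summands.
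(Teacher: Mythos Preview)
Your proof is correct and follows the classical Witt-decomposition route: find a hyperbolic pair, split off the hyperbolic plane, and recurse on its orthogonal complement. The paper takes a shorter, non-constructive path: it orders the set of isotropic subbundles by inclusion and shows that any maximal element is Lagrangian, arguing by contraposition that if $\mathcal{F}\subsetneq\mathcal{F}^{\perp}$ then (using that over a local ring $\mathcal{F}^{\perp}/\mathcal{F}$ is free) one can pick $e_1\in\mathcal{F}^{\perp}$ with $\mathcal{F}\oplus\mathcal{O}_Ue_1$ a strictly larger isotropic subbundle. Both arguments exploit the local hypothesis in essentially the same place (to produce a unit or a free complement), but the paper's approach avoids the bookkeeping of verifying that the restricted form on $W=H^{\perp}$ is again perfect and setting up the induction; your approach, on the other hand, is more constructive and yields a symplectic basis as a by-product, which is closer in spirit to Proposition~\ref{exisunic}.
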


\begin{proof}
Let $S$ be the set of isotropic subbundles of $\mathcal{E}$ ordered by inclusion. It is sufficient to prove that every maximal element in $S$ is Lagrangian (maximal elements always exist: consider the rank).

We proceed by contraposition. Let $\mathcal{F}$ be an element of $S$ that is not Lagrangian. As $U$ is local, and both $\mathcal{F}$ and $\mathcal{F}^{\perp}$ are subbundles $\mathcal{E}$ (cf. Lemma \ref{exactseq}), there exists an integer $k\ge 1$ and global sections $e_1,\ldots,e_k$ of $\mathcal{F}^{\perp}$ such that
\begin{align*}
\mathcal{F}^{\perp} = \mathcal{F} \oplus \mathcal{O}_Ue_1 \oplus \cdots \oplus \mathcal{O}_Ue_k\text{.}
\end{align*}
 In particular, $\mathcal{F} \oplus \mathcal{O}_Ue_1$ is an element of $S$ strictly containing $\mathcal{F}$; thus, $\mathcal{F}$ is not maximal.
\end{proof}

\begin{obs}
  The same statement (and the same proof) holds for every scheme $U$ over which any vector bundle is trivializable, e.g., $U$ the spectrum of a principal ideal domain or of a polynomial ring over a field.
\end{obs}

\subsection{Symplectic bases} 

In what follows, we take $\mathcal{L}=\mathcal{O}_U$. Let $(\mathcal{E}, \langle \ , \ \rangle)$ be a symplectic vector bundle of constant rank $2n$ over $U$.

\begin{defi} \label{defsymplbasis}
A \emph{symplectic basis} of $(\mathcal{E},\langle \ , \ \rangle)$ over $U$ is a basis of $\mathcal{E}$ over $U$ of the form $(e_1,\ldots,e_n,f_1,\ldots,f_n)$ with $\langle e_i, e_j \rangle = \langle f_i, f_j \rangle =0$ and $\langle e_i,f_j\rangle = \delta_{ij}$ for all $1\le i,j\le n$.
\end{defi}

\begin{obs}
 Equivalently, if the trivial vector bundle $\mathcal{O}^{2n}_U$ is given the \emph{standard} symplectic form
$$
\langle v, w \rangle_{\std} \defeq v\transp \left(\begin{array}{cc}
                                                    0 & \mathbf{1}_g \\
                                                    \mathbf{1}_g & 0
                                                   \end{array}
 \right) w\text{,}
$$
then a symplectic basis of $(\mathcal{E},\langle \ , \ \rangle)$ can be regarded as an isomorphism of symplectic vector bundles $(\mathcal{O}^{2n}_U,\langle \ , \ \rangle_{\std}) \stackrel{\sim}{\to} (\mathcal{E},\langle \ , \ \rangle)$. This point of view turns out to be useful when dealing with symplectic vector bundles with real multiplication; see Section \ref{sec-abrm} below.

\end{obs}

 As Lagrangian subbundles exist locally by Lemma \ref{locallagrangian}, the next proposition implies in particular that symplectic bases also exist locally.

\begin{prop} \label{exisunic}
Let $U$ be an affine scheme, $(\mathcal{E},\langle \ , \ \rangle)$ be a symplectic vector bundle over $U$, and $\mathcal{E}_0$ be a Lagrangian subbundle of $\mathcal{E}$. Then
\begin{enumerate}
    \item Every basis $(e_1,\ldots,e_n)$ of $\mathcal{E}_0$ over $U$ can be completed to a symplectic basis $(e_1,\ldots,e_n,f_1,\ldots,f_n)$ of $\mathcal{E}$ over $U$.
    \item If $\mathcal{F}$ is a Lagrangian subbundle of $\mathcal{E}$ such that $\mathcal{E}_0\oplus \mathcal{F}=\mathcal{E}$, and $(f_1,\ldots,f_n)$ is a basis of $\mathcal{F}$ over $U$, then there exists a unique basis $(e_1,\ldots,e_n)$ of $\mathcal{E}_0$ over $U$ such that $(e_1,\ldots,e_n,f_1,\ldots,f_n)$ is a symplectic basis of $\mathcal{E}$ over $U$.
\end{enumerate}
\end{prop}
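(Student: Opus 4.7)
The plan is to prove (2) first via the perfect duality between complementary Lagrangian subbundles, and then to derive (1) by a splitting argument that uses the affineness of $U$.

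For (2), I would exploit the fact that the symplectic form restricts to a perfect pairing $\mathcal{E}_0 \otimes_{\mathcal{O}_U} \mathcal{F} \to \mathcal{O}_U$. Applying Lemma \ref{exactseq} to $\mathcal{E}_0$ (with $\mathcal{L} = \mathcal{O}_U$), and using that $\mathcal{E}_0^{\perp} = \mathcal{E}_0$ because $\mathcal{E}_0$ is Lagrangian, one obtains a short exact sequence
\[
0 \to \mathcal{E}_0 \to \mathcal{E} \to \mathcal{E}_0^{\vee} \to 0\text{.}
\]
The composition $\mathcal{F} \hookrightarrow \mathcal{E} \twoheadrightarrow \mathcal{E}_0^{\vee}$ is injective (as $\mathcal{F} \cap \mathcal{E}_0 = 0$), hence an isomorphism by a rank count. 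Given the basis $(f_1, \ldots, f_n)$ of $\mathcal{F}$, the basis $(e_1, \ldots, e_n)$ of $\mathcal{E}_0$ dual to its image in $\mathcal{E}_0^{\vee}$ under this isomorphism is the unique basis satisfying $\langle e_i, f_j \rangle = \delta_{ij}$; the remaining relations $\langle e_i, e_j \rangle = 0$ and $\langle f_i, f_j \rangle = 0$ hold automatically by the Lagrangian condition.

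For (1), given the basis $(e_1, \ldots, e_n)$ of $\mathcal{E}_0$, I would invoke the same short exact sequence. Since $U$ is affine and $\mathcal{E}_0^{\vee}$ is a projective $\mathcal{O}_U(U)$-module, the sequence splits globally, and the dual basis of $\mathcal{E}_0^{\vee}$ lifts through any such splitting to sections $f_1', \ldots, f_n'$ of $\mathcal{E}$ satisfying $\langle e_i, f_j' \rangle = \delta_{ij}$. These $f_j'$ need not span a Lagrangian subbundle, but one corrects them by setting $f_j \defeq f_j' + \sum_k c_{kj} e_k$: this preserves the pairings $\langle e_i, f_j \rangle = \delta_{ij}$ (because $\mathcal{E}_0$ is isotropic), and a direct expansion yields $\langle f_i, f_j \rangle = a_{ij} + c_{ji} - c_{ij}$, where $a_{ij} \defeq \langle f_i', f_j' \rangle$. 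Since $a_{ij} = -a_{ji}$ by the alternating property, the system $c_{ij} - c_{ji} = a_{ij}$ admits explicit solutions over $\ZZ$ (e.g., $c_{ij} \defeq a_{ij}$ for $i < j$ and $c_{ij} \defeq 0$ otherwise), which completes the construction.

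The only non-formal ingredient is the splitting of the short exact sequence, which rests squarely on the affineness of $U$; the rest is elementary symplectic linear algebra. I would emphasize that the explicit choice of the $c_{ij}$ avoids any need to invert $2$, so the argument remains valid in every characteristic --- an essential feature in view of the integrality constraints driving the construction of the moduli stacks $\mathcal{B}_g$ and $\mathcal{B}_F$ over $\Spec \ZZ$ in the sequel.
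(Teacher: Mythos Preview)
Your proof is correct and follows essentially the same approach as the paper's: both arguments use the exact sequence of Lemma \ref{exactseq} to identify a complementary Lagrangian with $\mathcal{E}_0^{\vee}$, lift the dual basis via affineness of $U$, and correct the resulting $f_j'$ by elements of $\mathcal{E}_0$ determined by an antisymmetric matrix. The only cosmetic differences are that you prove (2) before (1) and make the choice of correction coefficients $c_{ij}$ explicit, whereas the paper simply asserts the existence of a matrix $B$ with $A = B - B\transp$ (your explicit formula has the pleasant side effect of confirming that no division by $2$ is needed).
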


\begin{proof}
Consider the surjective morphism of $\mathcal{O}_U$-modules (cf. Lemma \ref{exactseq})
\begin{align*}
\mathcal{E} &\to \mathcal{E}_0^{\vee}\\
         e &\mapsto \langle \ \ , e\rangle|_{\mathcal{E}_0}\text{.}
\end{align*}
Since $U$ is affine, there exists a sequence $(f_1',\ldots,f_n')$ of global sections of $\mathcal{E}$ lifting the dual basis of $(e_1,\ldots,e_n)$ in $\mathcal{E}_0^{\vee}$, so that $\langle e_i,f_j'\rangle = \delta_{ij}$ for every $1\le i,j\le n$. As $\mathcal{E}_0$ is an isotropic subbundle of $\mathcal{E}$, to prove (1) it is sufficient to show the existence of global sections $g_j$ of $\mathcal{E}_0$ such that 
\begin{align*}
f_j \defeq f_j' + g_j
\end{align*}
satisfy $\langle f_i,f_j\rangle = 0$ for every $1\le i,j\le n$.

Since the bilinear form $\langle \ , \ \rangle$ is alternating, $A \defeq (\langle f_i',f_j'\rangle)_{1\le i,j\le n}$ is an antisymmetric matrix in $M_{n\times n}(\mathcal{O}_U(U))$. Thus, there exists a matrix $B = (b_{ij})_{1\le i,j \le g}$ in $M_{n\times n}(\mathcal{O}_U(U))$ such that $A = B - B\transp$. We put
\begin{align*}
g_i \defeq \sum_{j=1}^n b_{ij}e_j\text{,}
\end{align*}
hence
\begin{align*}
\langle f_i,f_j \rangle  = \langle f_i',f_j'\rangle + \langle g_i,f_j' \rangle - \langle g_j,f_i' \rangle = \langle f_i',f_j'\rangle + b_{ij} - b_{ji} = 0\text{.} 
\end{align*}

We now proceed to the proof of (2). As $\mathcal{F}$ is an isotropic subbundle of $\mathcal{E}$ satisfying $\mathcal{E}_0 \oplus \mathcal{F} = \mathcal{E}$, and since $\langle \ , \ \rangle$ is perfect, the morphism of $\mathcal{O}_U$-modules
\begin{align*}
\mathcal{F} &\to \mathcal{E}_0^{\vee}\\
        f &\mapsto \langle \ \ , f\rangle|_{\mathcal{E}_0}  
\end{align*}
is injective, thus an isomorphism since $\mathcal{F}$ and $\mathcal{E}_0^{\vee}$ have equal rank. The existence and unicity of $(e_1,\ldots,e_n)$ follows from remarking that $(e_1,\ldots,e_n, f_1,\ldots,f_n)$ is a symplectic basis of $\mathcal{E}$ over $U$ if and only if $(e_1,\ldots,e_n)$ is the basis of $\mathcal{E}_0$ over $U$ dual to the basis $(\langle \ \ , f_1\rangle|_{\mathcal{E}_0}, \ldots, \langle \ \ , f_n\rangle|_{\mathcal{E}_0})$ of $\mathcal{E}_0^{\vee}$.
\end{proof}

\section{Symplectic-Hodge bases of principally polarized abelian schemes} \label{shb}

We start this section by recalling the definition of the de Rham cohomology of an abelian scheme and its main properties. We next recall how to associate to a principal polarization on an abelian scheme a symplectic form, as defined in Section \ref{sec-sympl}, on its first de Rham cohomology. This leads us to the definition of \emph{symplectic-Hodge bases}. 

\subsection{De Rham cohomology of abelian schemes}

Let $p:X \to U$ be an abelian scheme of relative dimension $g$.

 Recall that, for any integer $i\ge0$, the \emph{$i$-th de Rham cohomology} sheaf of $\mathcal{O}_U$-modules associated to $p$ is defined as the $i$-th left hyperderived functor of $p_*$ applied to the complex of relative differential forms $\Omega^{\bullet}_{X/U}$: \label{symb:derhamcohom}
\begin{align*}
H^i_{\dR}(X/U) \defeq \mathbf{R}^ip_*\Omega^{\bullet}_{X/U}\text{.}
\end{align*}
If $\varphi:X \to Y$ is a morphism of abelian schemes over $U$, we denote by $\varphi^* : H^i_{\dR}(Y/U) \to H^i_{\dR}(X/U)$ the induced $\mathcal{O}_U$-morphism on de Rham cohomology.

One can prove that there is a canonical isomorphism given by cup product
\begin{align*}
{\bigwedge}^i H^1_{\dR}(X/U) \stackrel{\sim}{\longrightarrow} H^i_{\dR}(X/U)\text{,} 
\end{align*}
and that $H^1_{\dR}(X/U)$ is a vector bundle over $U$ of rank $2g$. Moreover, the canonical $\mathcal{O}_U$-morphism $p_*\Omega^1_{X/U} \to H^1_{\dR}(X/U)$ induces an isomorphism of $p_*\Omega^1_{X/U}$ with a rank $g$ subbundle of $H^1_{\dR}(X/U)$, its \emph{Hodge subbundle} $F^1(X/U)$. It fits into a canonical exact sequence of $\mathcal{O}_U$-modules:\label{symb:hodgesubbundle}
\begin{align} \label{hodgefiltr}
%\tag{H}
0 \longrightarrow F^1(X/U) \longrightarrow H^1_{\dR}(X/U) \longrightarrow R^1p_*\mathcal{O}_X \longrightarrow 0\text{.}
\end{align}
The formation of $H^1_{\dR}(X/U)$, $F^1(X/U)$, $R^1p_*\mathcal{O}_X$, and the above exact sequence is compatible with every base change in $U$.

 For a proof of all these facts, the reader may consult  \cite{BBM82} 2.5.

\subsection{Symplectic form associated to a principal polarization} \label{shb2}

 Let $p:X \to U$ be a projective abelian scheme of relative dimension $g$, and $\lambda :X \to X^t$ be a principal polarization.  In this paragraph, we recall how to associate to $\lambda$ a canonical \emph{symplectic} $\mathcal{O}_U$-bilinear form \label{symb:sympl} 
\begin{align*}
\langle \ , \ \rangle_{\lambda} : H^1_{\dR}(X/U) \tensor_{\mathcal{O}_U} H^1_{\dR}(X/U) \longrightarrow \mathcal{O}_U\text{.}
\end{align*}

Recall that to any line bundle $\mathcal{L}$ on $X$ we can associate its \emph{first Chern class in de Rham cohomology} $c_{1,\dR}(\mathcal{L})$, namely the global section of $H^2_{\dR}(X/U)$ given by the image of the class of the line bundle $\mathcal{L}$ under the morphism of $\mathcal{O}_U$-modules
\begin{align*}
R^1p_*\mathcal{O}_X^{\times} \to \mathbf{R}^1p_*\Omega^{\bullet}_{X/U}[1]\cong H^2_{\dR}(X/U)
\end{align*}
induced by $\dlog : \mathcal{O}_{X}^{\times} \to \Omega^{\bullet}_{X/U}[1]$.\footnote{We adopt the same sign conventions of \cite{BBM82} 0.3 for the differentials of the shifted complex $\Omega^{\bullet}_{X/U}[1]$ and for the isomorphism $\mathbf{R}^1p_*\Omega^{\bullet}_{X/U}[1]\cong H^2_{\dR}(X/U)$.}

We apply the above construction to the Poincaré line bundle $\mathcal{P}_{X/U}$ on the projective abelian scheme $X\times_U X^t$ over $U$. Let 
\begin{align*}
\phi_{X/U} : H^1_{\dR}(X/U)^{\vee} \longrightarrow H^1_{\dR}(X^t/U)
\end{align*}
be the morphism of  $\mathcal{O}_U$-modules given by the image of $c_{1,\dR}(\mathcal{P}_{X/U})$ in the Künneth component  $H^1_{\dR}(X/U) \tensor_{\mathcal{O}_U}H^1_{\dR}(X^t/U)$ of $H^2_{\dR}(X/U)$. By \cite{BBM82} 5.1.3.1, $\phi_{X/U}$ is in fact an isomorphism.

\begin{obs}[cf. \cite{BBM82} (5.1.3.3)]
  The isomorphisms $\phi_{X/U}$ are natural in the following sense. If $\varphi:X\to Y$ is a morphism of projective abelian schemes over $U$, then the diagram of $\mathcal{O}_U$-modules
  $$
  \begin{tikzcd}
    H^1_{\dR}(X/U)^{\vee} \arrow{d}[swap]{(\varphi^*)^{\vee}}\arrow{r}{\phi_{X/U}} & H^1_{\dR}(X^t/U)\arrow{d}{(\varphi^t)^*}\\
    H^1_{\dR}(Y/U)^{\vee} \arrow{r}[swap]{\phi_{Y/U}} & H^1_{\dR}(Y^t/U)
  \end{tikzcd}
  $$
  commutes.
\end{obs}

Consider the  isomorphism of $\mathcal{O}_U$-modules 
\begin{align*}
\lambda^*: H^1_{\dR}(X^t/U) \to H^1_{\dR}(X/U)
\end{align*}
induced by the principal polarization $\lambda : X \to X^t$. For any sections $\gamma$ and $\delta$ of $H^1_{\dR}(X/U)^{\vee}$, we set
\begin{align*}
E_{\lambda}^{\dR}(\gamma,\delta) \defeq \delta\circ\lambda^*\circ \phi_{X/U}(\gamma)\text{.} 
\end{align*}
%alors E_{\lambda}^{\dR}(\gamma, \ ) = \lambda^*\circ \phi_{X/U}(\gamma)

It is clear that $E_{\lambda}^{\dR}$ defines an $\mathcal{O}_U$-bilinear form over $H^1_{\dR}(X/U)^{\vee}$. Since $\phi_{X/U}$ is an isomorphism, $E_{\lambda}^{\dR}$ is perfect. By duality, we can thus define a perfect bilinear form $\langle \ , \ \rangle_{\lambda}$ over $H^1_{\dR}(X/U)$ via
\begin{align*}
\langle E_{\lambda}^{\dR}( \gamma , \ ), E_{\lambda}^{\dR}( \delta , \ )\rangle_{\lambda} \defeq E_{\lambda}^{\dR}(\gamma,\delta)\text{,}
\end{align*}
where we identified $H^1_{\dR}(X/U)^{\vee \vee}$ with $H^1_{\dR}(X/U)$. 
%alors \langle \ , \beta \rangle_{\lambda} = (\lambda^*\circ \phi_{X/U})^{-1}\beta

\begin{lemma} \label{alternating}
The perfect bilinear form $\langle \ , \ \rangle_{\lambda}$ is alternating, thus symplectic.
\end{lemma}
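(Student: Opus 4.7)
The plan is to reduce the alternating property to the fact that the first Chern class in de Rham cohomology of any line bundle on an abelian scheme lives in the wedge square $\bigwedge^2 H^1_{\dR}$, by identifying $E_{\lambda}^{\dR}$ with the bilinear form on $H^1_{\dR}(X/U)^{\vee}$ obtained from $c_{1,\dR}((\id_X,\lambda)^*\mathcal{P}_{X/U})\in H^2_{\dR}(X/U)$. This avoids any direct manipulation of the symmetry $\lambda=\lambda^t$ (and works uniformly over $\ZZ$, where "alternating" is genuinely stronger than "antisymmetric").

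First, I would unwind the definition of $\phi_{X/U}$ using the Künneth decomposition
\begin{align*}
H^2_{\dR}(X\times_U X^t/U)\cong {\bigwedge}^2H^1_{\dR}(X/U)\ \oplus\ \bigl(H^1_{\dR}(X/U)\tensor H^1_{\dR}(X^t/U)\bigr)\ \oplus\ {\bigwedge}^2H^1_{\dR}(X^t/U)\text{,}
\end{align*}
which follows from the cup product isomorphism $\bigwedge^{\bullet}H^1_{\dR}\cong H^{\bullet}_{\dR}$ applied to $X\times_U X^t$: by construction, $\phi_{X/U}(\gamma)$ is the image of $\gamma$ under the $\mathcal{O}_U$-morphism $H^1_{\dR}(X/U)^{\vee}\to H^1_{\dR}(X^t/U)$ determined by the middle Künneth component of $c_{1,\dR}(\mathcal{P}_{X/U})$; if we formally write that component as $\sum_i \alpha_i\tensor \beta_i$, then $\phi_{X/U}(\gamma)=\sum_i\gamma(\alpha_i)\beta_i$.

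Second, I would apply the naturality of Chern classes to the closed immersion $(\id_X,\lambda):X\to X\times_U X^t$. Since $(\id_X,\lambda)^*(\pi_1^*\alpha\cup\pi_2^*\beta)=\alpha\cup\lambda^*\beta$ and the wedge terms $\bigwedge^2 H^1_{\dR}(X/U)$ and $\bigwedge^2 H^1_{\dR}(X^t/U)$ map respectively to $\bigwedge^2H^1_{\dR}(X/U)$ and $0$, we obtain
\begin{align*}
(\id_X,\lambda)^* c_{1,\dR}(\mathcal{P}_{X/U}) = c_{1,\dR}\bigl((\id_X,\lambda)^*\mathcal{P}_{X/U}\bigr) = \tfrac{1}{2}\bigl(\text{symmetric terms}\bigr)+\sum_i \alpha_i\wedge \lambda^*\beta_i
\end{align*}
(where the wedge is inside the canonical isomorphism $H^2_{\dR}(X/U)\cong\bigwedge^2 H^1_{\dR}(X/U)$). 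Pairing this class with $\gamma\tensor\delta\in (H^1_{\dR}(X/U)^{\vee})^{\tensor 2}$ through the wedge product gives $\sum_i\gamma(\alpha_i)\,\delta(\lambda^*\beta_i)=\delta(\lambda^*\phi_{X/U}(\gamma))=E_{\lambda}^{\dR}(\gamma,\delta)$, so $E_{\lambda}^{\dR}$ is precisely the bilinear form on $H^1_{\dR}(X/U)^{\vee}$ associated to a class in $\bigwedge^2 H^1_{\dR}(X/U)$; as such, it is alternating, that is $E_{\lambda}^{\dR}(\gamma,\gamma)=0$ for every local section $\gamma$.

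Finally, since $L\defeq\lambda^*\circ\phi_{X/U}$ is an isomorphism, every local section $v$ of $H^1_{\dR}(X/U)$ is of the form $L(\gamma)=E_{\lambda}^{\dR}(\gamma,\ )$ for a unique $\gamma$; by the defining relation of $\langle\ ,\ \rangle_{\lambda}$ one then has $\langle v,v\rangle_{\lambda}=E_{\lambda}^{\dR}(\gamma,\gamma)=0$, so $\langle\ ,\ \rangle_{\lambda}$ is alternating, and (being already perfect) is symplectic. The main technical obstacle will be the bookkeeping in the Künneth identification of $c_{1,\dR}(\mathcal{P}_{X/U})$ and the verification that the pullback by $(\id_X,\lambda)$ really lands in the wedge square with no wrong signs; this is essentially a careful application of the graded commutativity of cup product and the normalization conventions of \cite{BBM82} 0.3 already cited in the excerpt.
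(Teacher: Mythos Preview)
Your overall strategy coincides with the paper's: both reduce to the observation that $E_{\lambda}^{\dR}$ is the bilinear form attached to a first Chern class living in $H^2_{\dR}(X/U)\cong\bigwedge^2 H^1_{\dR}(X/U)$. The paper, however, passes \'etale locally to a line bundle $\mathcal{L}$ on $X$ that \emph{induces} $\lambda$ and then asserts (with reference to \cite{DP94}) the identity $E_{\lambda}^{\dR}(\gamma,\delta)=(\gamma\wedge\delta)(c_{1,\dR}(\mathcal{L}))$, whereas you use the global bundle $(\id_X,\lambda)^*\mathcal{P}_{X/U}$.

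There is a genuine gap in your pairing computation, and it is precisely the characteristic~$2$ obstruction you claim to avoid. Writing the middle K\"unneth component of $c_{1,\dR}(\mathcal{P}_{X/U})$ as $\sum_i\alpha_i\otimes\beta_i$ (the outer components vanish since $\mathcal{P}_{X/U}$ is rigidified along both zero sections, so your ``$\tfrac{1}{2}(\text{symmetric terms})$'' is spurious), the pullback by $(\id_X,\lambda)$ is $\sum_i\alpha_i\wedge\lambda^*\beta_i\in\bigwedge^2H^1_{\dR}(X/U)$. But the alternating form attached to this class, namely $(\gamma,\delta)\mapsto(\gamma\wedge\delta)\bigl(\sum_i\alpha_i\wedge\lambda^*\beta_i\bigr)$, computes to
\[
\sum_i\bigl(\gamma(\alpha_i)\,\delta(\lambda^*\beta_i)-\delta(\alpha_i)\,\gamma(\lambda^*\beta_i)\bigr)=E_{\lambda}^{\dR}(\gamma,\delta)-E_{\lambda}^{\dR}(\delta,\gamma),
\]
not to $E_{\lambda}^{\dR}(\gamma,\delta)$ as you write. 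Since $E-E^{\mathrm{swap}}$ is alternating for \emph{any} bilinear form $E$, your argument is tautological at this step and yields no information about $E_{\lambda}^{\dR}$ itself.

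The underlying reason is that, when $\lambda$ is (\'etale locally) induced by $\mathcal{L}$, one has $(\id_X,\lambda)^*\mathcal{P}_{X/U}$ algebraically equivalent to $\mathcal{L}^{\otimes 2}$, so $c_{1,\dR}((\id_X,\lambda)^*\mathcal{P}_{X/U})=2\,c_{1,\dR}(\mathcal{L})$. Your bundle is the ``square root missing'' one: it gives $2E_{\lambda}^{\dR}$ rather than $E_{\lambda}^{\dR}$, and over a base with $2$-torsion this does not let you conclude $E_{\lambda}^{\dR}(\gamma,\gamma)=0$. The paper's passage to the \'etale-local $\mathcal{L}$ is exactly what extracts that square root and makes the identification $E_{\lambda}^{\dR}(\gamma,\delta)=(\gamma\wedge\delta)(c_{1,\dR}(\mathcal{L}))$ hold on the nose.
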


\begin{proof}
It suffices to prove that $E_{\lambda}^{\dR}$ is alternating. Since $\lambda$ is a polarization, it is étale locally over $U$ induced by a line bundle $\mathcal{L}$ over $X$ relatively ample over $U$. We consider the first Chern class $c_{1,\dR}(\mathcal{L})$ in $H_{\dR}^2(X/U) \cong \bigwedge^2 H_{\dR}^1(X/U)$. Then, one can verify that $E_{\lambda}^{\dR}$ defined above coincides with the alternating form 
\begin{align*}
(\gamma, \delta) \mapsto \gamma \wedge \delta ( c_{1,\dR}(\mathcal{L}))\text{.}
\end{align*}
We refer to \cite{DP94}, Section 1, for further details. 
\end{proof}

Thus we obtain a symplectic vector bundle $(H^1_{\dR}(X/U),\langle \ , \ \rangle_{\lambda})$ over $U$ in the sense of Definition \ref{defisymplbundle}.

\begin{lemma} \label{f1lagrangian}
$F^1(X/U)$ is a Lagrangian subbundle of $H^1_{\dR}(X/U)$ with respect to the symplectic form $\langle \ , \ \rangle_{\lambda}$.
\end{lemma}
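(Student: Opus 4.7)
My plan is to reduce to isotropy via Corollary \ref{corosympl}, then to transport the question along the isomorphism $\tilde{E} \defeq \lambda^*\circ\phi_{X/U}: H^1_{\dR}(X/U)^\vee \stackrel{\sim}{\to} H^1_{\dR}(X/U)$ (which, by definition of $\langle \ , \ \rangle_\lambda$, identifies $E^{\dR}_\lambda$ with $\langle \ , \ \rangle_\lambda$), and finally to recognize the relevant preimage as the annihilator of the Hodge subbundle. Since $H^1_{\dR}(X/U)$ has rank $2g$ and $F^1(X/U)$ has rank $g$, Corollary \ref{corosympl} shows that the Lagrangian property of $F^1(X/U)$ amounts to its isotropy for $\langle \ , \ \rangle_\lambda$; via $\tilde{E}$, this in turn is equivalent to the isotropy of $\tilde{E}^{-1}(F^1(X/U)) \subset H^1_{\dR}(X/U)^\vee$ for $E^{\dR}_\lambda$.

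The crux will be the identification
\[
\tilde{E}^{-1}(F^1(X/U)) = \mathop{\rm Ann}(F^1(X/U))\text{.}
\]
Since $\lambda$ is a morphism of abelian schemes, pullback on de Rham cohomology respects the Hodge filtration, and so $\lambda^*(F^1(X^t/U)) = F^1(X/U)$; the claim therefore reduces to $\phi_{X/U}^{-1}(F^1(X^t/U)) = \mathop{\rm Ann}(F^1(X/U))$. Here the geometric input enters: $\phi_{X/U}$ is induced by the Künneth $(1,1)$-component of $c_{1,\dR}(\mathcal{P}_{X/U})$, and since $c_{1,\dR}$ of any line bundle factors through $\dlog:\mathcal{O}^\times\to\Omega^{\geq 1}[1]$, one has $c_{1,\dR}(\mathcal{P}_{X/U})\in F^1H^2_{\dR}(X\times_U X^t/U)$. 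The Künneth formula for the Hodge filtration then places this $(1,1)$-component inside
\[
F^1H^1_{\dR}(X/U)\otimes H^1_{\dR}(X^t/U) + H^1_{\dR}(X/U)\otimes F^1H^1_{\dR}(X^t/U)\text{,}
\]
whose first summand is killed by any $\gamma \in \mathop{\rm Ann}(F^1(X/U))$; hence $\phi_{X/U}(\gamma)\in F^1(X^t/U)$, and a rank comparison (both sides having rank $g$) upgrades the inclusion to an equality.

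With the identification in hand, the isotropy is automatic: for $\gamma,\delta\in\mathop{\rm Ann}(F^1(X/U))$, the definition gives $E^{\dR}_\lambda(\gamma,\delta) = \delta(\tilde{E}(\gamma))$, and by the preceding step $\tilde{E}(\gamma)\in F^1(X/U)$, which is annihilated by $\delta$. The main obstacle is thus the Hodge-compatibility of $\phi_{X/U}$; everything else is purely formal. This obstacle may be sidestepped by working étale locally via the proof of Lemma \ref{alternating}: there one has $E^{\dR}_\lambda(\gamma,\delta) = (\gamma\wedge\delta)(c_{1,\dR}(\mathcal{L}))$ for an ample line bundle $\mathcal{L}$ inducing $\lambda$, and the inclusion $c_{1,\dR}(\mathcal{L})\in F^1H^1_{\dR}(X/U)\wedge H^1_{\dR}(X/U)$ directly yields that $\mathop{\rm Ann}(F^1(X/U))$ is isotropic.
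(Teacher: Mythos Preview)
Your proposal is correct and follows essentially the same route as the paper's proof: both reduce to isotropy via Corollary \ref{corosympl} and then appeal to the compatibility of $\phi_{X/U}$ with the Hodge filtration, i.e., the identification of $\phi_{X/U}((R^1p_*\mathcal{O}_X)^{\vee})$ with $F^1(X^t/U)$. The only difference is cosmetic: the paper simply cites \cite{BBM82} Lemme~5.1.4 for this compatibility, whereas you unpack its content (the Chern class of the Poincar\'e bundle lies in $F^1H^2_{\dR}$, and K\"unneth respects the Hodge filtration), and you also supply a second, more direct argument via the local formula $E^{\dR}_\lambda(\gamma,\delta)=(\gamma\wedge\delta)(c_{1,\dR}(\mathcal{L}))$ from the proof of Lemma~\ref{alternating}.
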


\begin{proof}
Since the rank of $H^1_{\dR}(X/U)$ is $2g$, and $F^1(X/U)$ is a rank $g$ subbundle of $H^1_{\dR}(X/U)$, it suffices to prove that $F^1(X/U)$ is isotropic with respect to $\langle \ , \ \rangle_{\lambda}$ (cf. Corollary \ref{corosympl}). This follows immediately from the compatibility of $\phi_{X/U}$ with the exact sequence (\ref{hodgefiltr}), that is, from the existence of canonical morphisms $\phi_{X/U}^0$ and $\phi_{X/U}^1$ making the diagram
%$$
%  \raisebox{-0.5\height}{\includegraphics{rameq1-d1.pdf}}
%$$
 $$
 \begin{tikzcd}
 0 \arrow{r} & (R^1p_*\mathcal{O}_X)^{\vee} \arrow{r} \arrow{d}{\phi_{X/U}^0} & H^1_{\dR}(X/U)^{\vee} \arrow{r} \arrow{d}{\phi_{X/U}} & F^1(X/U)^{\vee} \arrow{r} \arrow{d}{\phi_{X/U}^1} & 0 \\
 0 \arrow{r} & F^1(X^t/U) \arrow{r} & H^1_{\dR}(X^t/U) \arrow{r} & R^1p^t_*\mathcal{O}_{X^t} \arrow{r} & 0   
 \end{tikzcd}
 $$
commute (\cite{BBM82} Lemme 5.1.4; the morphisms $\phi_{X/U}^0$ and $\phi_{X/U}^1$ are uniquely determined by this commutative diagram, and are isomorphisms).
\end{proof}

\begin{obs} \label{remarkbasechange}
It is clear from the above construction that the formation of the symplectic form $\langle \ , \ \rangle_{\lambda}$ is compatible with base change. Namely, if $f:U' \to U$ is a morphism of schemes, and $(X',\lambda')$ denotes the principally polarized abelian scheme over $U'$ obtained by base change via $f$, then $f^*\langle \ , \ \rangle_{\lambda}$ coincides with $\langle \ , \ \rangle_{\lambda'}$ under the base change isomorphism $f^*H^1_{\dR}(X/U) \stackrel{\sim}{\to} H^1_{\dR}(X'/U')$. 
\end{obs}

\subsection{Symplectic-Hodge bases of $H^1_{\dR}(X/U)$} 

Let $U$ be a scheme and $(X,\lambda)$ be a principally polarized abelian scheme over $U$ of relative dimension $g$.

\begin{defi} \label{defi-shb}
 A \emph{symplectic-Hodge basis} of $(X,\lambda)_{/U}$ is a $2g$-uple $b=(\omega_1,\ldots,\omega_g,\eta_1,\ldots,\eta_g)$ global sections of $H^1_{\dR}(X/U)$ such that:
\begin{enumerate}
   \item $\omega_1,\ldots,\omega_g$ are sections of $F^1(X/U)$, and
   \item $b$ is a symplectic basis of $(H^1_{\dR}(X/U),\langle \ , \ \rangle_{\lambda})$ (Definition \ref{defsymplbasis}).
\end{enumerate} 
\end{defi}

Note that symplectic-Hodge bases may not exist globally, but such bases always exist locally for the Zariski topology over $U$ by Proposition \ref{exisunic}.

\section{Abelian schemes with real multiplication}\label{sec-abrm}

In this section, we introduce notation and analogs of the above basic notions for principally polarized abelian schemes with \emph{real multiplication}.

From now on, we fix a totally real number field $F$ of degree $g$ over $\QQ$, and we denote its ring of integers by $R$. Recall that the inverse different ideal $D^{-1}\subset F$ is a fractional ideal of $F$ which can be identified with the $\ZZ$-dual of $R$ via the trace form.\label{symb:totrealfield}

Tensor products without subscripts are taken over $\ZZ$.

\subsection{Symplectic vector bundles with real multiplication}\label{ssec-svbrm}

Let $U$ be a scheme and  $\mathcal{M}$ be a quasi-coherent $\mathcal{O}_U$-module. An \emph{$R$-multiplication} on $\mathcal{M}$ is a ring morphism $R \to \End_{\mathcal{O}_U}(\mathcal{M})$; giving such a ring morphism amounts to giving $\mathcal{M}$ the structure of an $\mathcal{O}_U\tensor R$-module compatible with its structure of $\mathcal{O}_U$-module via $\mathcal{O}_U\to \mathcal{O}_U\tensor R$.

\begin{obs}\label{rem-equiv-r-mult}
 Consider the natural projection $f: U_R \defeq U\tensor_{\ZZ}R \to U$. Observe that ${f}_*\mathcal{O}_{U_R} = \mathcal{O}_U\tensor R$. Since $f$ is finite, thus affine, the functor
 $$
 \mathcal{F}\mapsto {f}_*\mathcal{F}
 $$
 induces an equivalence between the category of quasi-coherent $\mathcal{O}_{U_R}$-modules and the category of quasi-coherent $\mathcal{O}_U$-modules with $R$-multiplication (i.e., quasi-coherent $\mathcal{O}_U\tensor R$-modules; cf. \cite{EGAII} Proposition 1.4.3). 
\end{obs}

Following \cite{rapoport78} and \cite{DP94}, we denote the $\mathcal{O}_U\tensor R$-dual of a quasi-coherent $\mathcal{O}_U$-module with $R$-multiplication $\mathcal{M}$ by
$$
\mathcal{M}^* \defeq \mathcal{H}om_{\mathcal{O}_U\tensor R}(\mathcal{M},\mathcal{O}_U\tensor R)\text{.}
$$
The trace map $\Tr \defeq \Tr_{F/\QQ}: F \to \QQ$ induces an isomorphism of quasi-coherent $\mathcal{O}_U$-modules with $R$-multiplication\label{symb:trace}
\begin{align}\label{eq-duality}
\Tr: \mathcal{M}^{*}\tensor_RD^{-1} \stackrel{\sim}{\to} \mathcal{M}^{\vee}\text{.}
\end{align}

\begin{obs}\label{rem-dualityrelation}
  The above duality relation comes from the following general fact (cf. \cite{DP94} 2.11).  Let $A$ be a commutative ring, $M$ be an $A\tensor R$-module, and $N$ be an $A$-module. Then the trace map induces an $A\tensor R$-isomorphism 
  $$
\Tr : {\Hom}_{A\tensor R}(M,N\tensor D^{-1}) \stackrel{\sim}{\to} {\Hom}_A(M,N)\text{.}
$$
\end{obs}

\begin{obs}
 In the light of Remark \ref{rem-equiv-r-mult}, we may interpret (\ref{eq-duality}) as a version of the Serre-Grothendieck duality for the \emph{finite} morphism $f$. For a quasi-coherent $\mathcal{O}_U$-module $\mathcal{G}$, we define a quasi-coherent $\mathcal{O}_{U_R}$-module $f^!\mathcal{G}$ by $f_*f^!\mathcal{G} = \mathcal{H}om_{\mathcal{O}_U}(\mathcal{O}_{U}\tensor R,\mathcal{G}) = \mathcal{G}\tensor \Hom_{\ZZ}(R,\ZZ)$. We then have natural isomorphisms $f_*\mathcal{H}om_{\mathcal{O}_{U_R}}(\mathcal{F},f^!\mathcal{G}) \stackrel{\sim}{\to} \mathcal{H}om_{\mathcal{O}_U}(f_*\mathcal{F},\mathcal{G})$ for any \emph{quasi}-coherent sheaves $\mathcal{F}$ on $U_R$ and $\mathcal{G}$ on $U$.
\end{obs}

By a \emph{vector bundle with $R$-multiplication} over $U$, we mean a quasi-coherent sheaf with $R$-multiplication $\mathcal{E}$ over $U$ which is, locally over $U$, a free $\mathcal{O}_U\tensor R$-module of finite rank. Equivalently, under the notation of Remark \ref{rem-equiv-r-mult}, $\mathcal{E}$ is given by the direct image of a vector bundle over $U_R$. Clearly, $\mathcal{E}$ is also a vector bundle over $U$ and we have
$$
{\rank}_{\mathcal{O}_U}\mathcal{E} = g\cdot {\rank}_{\mathcal{O}_U\tensor R}\mathcal{E}\text{.}
$$
By the \emph{rank} of a symplectic vector bundle with $R$-multiplication, we mean its rank as a locally free $\mathcal{O}_U\tensor R$-module.

We say that an $\mathcal{O}_U$-bilinear form $\langle \ , \ \rangle$ on the vector bundle with $R$-multiplication $\mathcal{E}$ over $U$ is \emph{compatible with the $R$-multiplication} if it factors through
$$
\mathcal{E}\tensor_{\mathcal{O}_U\tensor R}\mathcal{E} \to \mathcal{O}_U\text{.}
$$
In this case, it follows from (\ref{eq-duality}) that there exists a unique $\mathcal{O}_U\tensor R$-bilinear form
$$
\Psi : \mathcal{E}\tensor_{\mathcal{O}_U\tensor R}\mathcal{E} \to \mathcal{O}_U\tensor D^{-1}
$$
such that
$$
\langle \ ,\ \rangle = \Tr \Psi\text{.}
$$
If, moreover, $\langle \ , \ \rangle$ is symplectic, then $\Psi$ is perfect and alternating --- that is, if $\mathcal{E}=f_*\mathcal{F}$ under the notation of Remark \ref{rem-equiv-r-mult}, then $\Psi$ is given by the direct image of a $\mathcal{O}_{U_R}\tensor_RD^{-1}$-valued symplectic form on $\mathcal{F}$. The couple $(\mathcal{E}, \Psi)$ is then said to be a \emph{symplectic vector bundle with $R$-multiplication} over $U$.

\subsection{Principally polarized abelian schemes with real multiplication}

Let $(X,\lambda)$ be a principally polarized abelian scheme over some scheme $U$. Then $\lambda$ defines a Rosatti involution $\varphi \mapsto \lambda^{-1}\circ \varphi^{t}\circ \lambda$ on the ring of abelian scheme endomorphisms $\End_U(X)$. We denote by $\End_U(X)^{\lambda}$ the subset of $\End_U(X)$ of elements fixed by the Rosatti involution.\label{symb:ppasrm}

\begin{defi}\label{defi-ppasrm}
  A \emph{principally polarized abelian scheme with $R$-multiplication} over $U$ is a triple $(X,\lambda,m)$, where $(X,\lambda)$ is a principally polarized abelian scheme over $U$, and $m: R \to \End_U(X)$ is a ring morphism such that:
  \begin{enumerate}
  \item $m(R) \subset \End_U(X)^{\lambda}$, and
    \item $m$ gives $F^1(X/U)$ the structure of a vector bundle with $R$-multiplication of rank 1 over $U$.
    \end{enumerate}
    A morphism of principally polarized abelian schemes with $R$-multiplication is a morphism of principally polarized abelian schemes commuting with the $R$-multiplications.
\end{defi}

The condition (2) above, which implies in particular that $X$ is of relative dimension $g$ over $U$, is due to Rapoport (cf. \cite{rapoport78} Definition 1.1); it is automatically satisfied whenever the discriminant of $R$ is invertible in $U$ (\cite{DP94} Corollaire 2.9).

\begin{obs}\label{rem-isogeny}
 For any non-zero $r\in R$, the endomorphism $m(r):X\to X$ is an isogeny over $U$, i.e., surjective and quasi-finite --- which, in this case, is equivalent to finite and locally free. Indeed, if $N(r)\in \ZZ\setminus\{0\}$ denotes the norm of $r\in R$, then there exists $s\in R$ such that $rs=N(r)$. Thus, that  $m(r)$ is an isogeny follows easily from the fact that and the composition $m(r)\circ m(s) : X \to X$ is the multiplication by $N(r)$, which is an isogeny itself. In particular, $m$ is always injective.
\end{obs}

For a principally polarized abelian scheme with $R$-multiplication $(X,\lambda, m)$ over $U$, it follows from \cite{rapoport78}, Lemme 1.3, that $H^1_{\dR}(X/U)$ is a rank 2 vector bundle with $R$-multiplication over $U$. Since the image of $m$ lies in $\End_U(X)^{\lambda}\subset \End_U(X)$, we may check using the explicit construction given in Paragraph \ref{shb2} that the symplectic form $\langle \ , \ \rangle_{\lambda}$ is compatible with the $R$-multiplication. We denote by \label{symb:psilambda}
$$
\Psi_{\lambda} : H^1_{\dR}(X/U)\tensor_{\mathcal{O}_U\tensor R}H^1_{\dR}(X/U)\to \mathcal{O}_U \tensor D^{-1}
$$
the unique (perfect alternating) $\mathcal{O}_U\tensor R$-bilinear form for which
$$
\Tr \Psi_{\lambda} = \langle \ , \ \rangle_{\lambda}\text{,}
$$
so that $(H^1_{\dR}(X/U),\Psi_{\lambda})$ is a rank 2  symplectic vector bundle with $R$-multiplication over $U$.

Note that any rank 1 subbundle with $R$-multiplication of $H^1_{\dR}(X/U)$ is isotropic for $\Psi_{\lambda}$; this applies in particular to $F^{1}(X/U)$.

\subsection{Symplectic-Hodge bases}\label{subsec-shbrm}

Consider the rank 2 projective $R$-module $M \defeq R \oplus D^{-1}$ endowed with the standard $D^{-1}$-valued symplectic form
\begin{align*}
\Psi : M \times M &\to D^{-1}\\
       ((r,x),(r',x'))&\mapsto rx'-r'x\text{.}
\end{align*}
For any scheme $U$, we obtain a rank 2 symplectic vector bundle with $R$-multiplication 
$$
(\mathcal{O}_U\tensor M , 1\tensor \Psi)
$$
over $U$.

\begin{defi}\label{defi-shbasisrm}
  Let $U$ be a scheme and $(X,\lambda,m)$ be a principally polarized abelian scheme with $R$-multiplication over $U$. A \emph{symplectic-Hodge basis} of $(X,\lambda,m)_{/U}$ is an isomorphism of symplectic vector bundles with $R$-multiplication over $U$
  $$
   b: (\mathcal{O}_U\tensor M,1\tensor \Psi) \stackrel{\sim}{\to} (H^1_{\dR}(X/U),\Psi_{\lambda})
   $$
 sending $\mathcal{O}_U\tensor (R\oplus 0) \subset \mathcal{O}_U\tensor M$ to $F^1(X/U)\subset H^1_{\dR}(X/U)$.
\end{defi}

Note that $1\tensor \Psi$ induces an $\mathcal{O}_U\tensor R$-isomorphism 
$$
{\bigwedge}^2_{\mathcal{O}_U\tensor R}\mathcal{O}_U\tensor M\stackrel{\sim}{\to} \mathcal{O}_U\tensor D^{-1}
$$
trivializing the $\mathcal{O}_U\tensor R$-module of alternating $\mathcal{O}_U\tensor R$-bilinear forms over $\mathcal{O}_U\tensor M$ with values in $\mathcal{O}_U\tensor D^{-1}$:
\begin{align}\tag{$*$}
\mathcal{O}_U\tensor R \stackrel{\sim}{\to} {\Hom}_{\mathcal{O}_U\tensor R}\left({\bigwedge}_{\mathcal{O}_U\tensor R}^2\mathcal{O}_U\tensor M,\mathcal{O}_U\tensor D^{-1}\right).
\end{align}
A symplectic-Hodge basis $b$ of $(X,\lambda,m)_{/U}$ may be seen as an $\mathcal{O}_U\tensor R$-isomorphism 
$$
b=(\omega,\eta) : \mathcal{O}_U\tensor M\cong (\mathcal{O}_U\tensor R)\oplus (\mathcal{O}_U\tensor D^{-1}) \stackrel{\sim}{\to} H^1_{\dR}(X/U)
$$
such that
\begin{enumerate}
 \item $\omega:\mathcal{O}_U\tensor R\to H^1_{\dR}(X/U)$ factors through $F^1(X/U)\subset H^1_{\dR}(X/U)$, and
 \item $\Psi_{\lambda}(\omega,\eta)=1$.
\end{enumerate}
Here, $\Psi_{\lambda}(\omega,\eta)$ is regarded as the element of $\mathcal{O}_U\tensor R$ mapping to $b^*\Psi_{\lambda}$ via $(*)$.

Equivalently, if we regard $\Psi_{\lambda}$ as an alternating $\mathcal{O}_U\tensor R$-bilinear form
$$
\Psi_{\lambda}: H^1_{\dR}(X/U)\tensor_{\mathcal{O}_U\tensor R} H^1_{\dR}(X/U)\tensor_R D \to \mathcal{O}_U\tensor R\text{,}
$$
then a symplectic-Hodge basis of $(X,\lambda,m)_{/U}$ is a couple $b=(\omega,\eta)$, where $\omega$ is a global section of $F^1(X/U)\subset H^1_{\dR}(X/U)$ generating it as an $\mathcal{O}_U\tensor R$-module, $\eta$ is a global section of $H^1_{\dR}(X/U)\tensor_R D$ whose image in $(H^1_{\dR}(X/U)/F^1(X/U))\tensor_R D$ generates it as an $\mathcal{O}_U\tensor R$-module, and $\Psi_{\lambda}(\omega,\eta)=1$.

\begin{obs}\label{remshb}
 Since $\Psi_{\lambda}$ is perfect, if $\omega$ is an $\mathcal{O}_U\tensor R$-trivialization of $F^1(X/U)$, and $\eta$ is any global section of $H^1_{\dR}(X/U)\tensor_R D$ satisfying $\Psi_{\lambda}(\omega,\eta)=1$, then $b=(\omega,\eta)$ is a symplectic-Hodge basis.
\end{obs}

\begin{obs}\label{remshb2}
 If $\eta$ is a global section of $H^1_{\dR}(X/U)\tensor_R D$ whose image in $(H^1_{\dR}(X/U)/F^1(X/U))\tensor_R D$ generates it as an $\mathcal{O}_U\tensor R$-module, then there exists a unique $\mathcal{O}_U\tensor R$-trivialization $\omega$ of $F^1(X/U)$ such that $(\omega,\eta)$ is a symplectic-Hodge basis.
\end{obs}

\section{The moduli stacks $\mathcal{B}_g$ and $\mathcal{B}_F$} \label{modulistack}

In this section we define for every integer $g\ge 1$ (resp. for every totally real number field $F$) a category $\mathcal{B}_g$ (resp. $\mathcal{B}_F$) fibered in groupoids over the category of schemes $\Sch_{/\ZZ}$ classifying principally polarized abelian schemes of relative dimension $g$ (resp. principally polarized abelian schemes with $R$-multiplication) endowed with a symplectic-Hodge basis.

Using classical results on moduli stacks of abelian schemes, we then prove that $\mathcal{B}_g \to \Spec \ZZ$ (resp. $\mathcal{B}_F\to \Spec \ZZ$) is a smooth Deligne-Mumford stack over $\Spec \ZZ$ of relative dimension $2g^2+g$ (resp. $3g$).

%The main point in proving this result will be to remark that for any principally polarized abelian scheme $(X,\lambda)$ of relative dimension $g$ over an affine scheme $U=\Spec R$, there is a natural free and transitive right action of the Siegel parabolic subgroup $P_g(R)$ of $\Sp_{2g}(R)$, consisting of ``upper triangular matrices'', on the set of symplectic-Hodge bases of $(X,\lambda)_{/U}$.   

\subsection{The moduli stacks $\mathcal{A}_g$ and $\mathcal{A}_F$} 

Let $g\ge 1$ be an integer (resp. $F$ be a totally real number field of degree $g$ with ring of integers $R$). To fix ideas and notation we recall the definition of the moduli stack of principally polarized abelian schemes of relative dimension $g$ (resp. principally polarized abelian schemes with $R$-multiplication). 

For any scheme $S$, we define a category fibered in groupoids $\mathcal{A}_{g,S} \to \Sch_{/S}$ (resp. $\mathcal{A}_{F,S}\to \Sch_{/S}$) as follows.\label{symb:AgAF}
\begin{enumerate}[(i)]
    \item An object of $\mathcal{A}_{g,S}$ (resp. $\mathcal{A}_{F,S}$) is given by an $S$-scheme $U$ and a principally polarized abelian scheme $(X,\lambda)$ of relative dimension $g$ (resp. a principally polarized abelian scheme with $R$-multiplication $(X,\lambda,m)$) over $U$; when $U$ is not clear in the context, we shall incorporate it in the notation by writing $(X,\lambda)_{/U}$. A morphism $(X,\lambda)_{/U} \to (Y,\mu)_{/V}$ (resp. $(X,\lambda,m)_{/U} \to (Y,\mu,n)_{/V}$) in $\mathcal{A}_{g,S}$ (resp. $\mathcal{A}_{F,S}$), denoted $\varphi_{/f}$, is given by a Cartesian diagram of $S$-schemes
%$$
%  \raisebox{-0.5\height}{\includegraphics{rameq1-d2.pdf}}
%$$
  $$
 \begin{tikzcd}
   X \arrow{r}{\varphi}\arrow{d} & Y \arrow{d}\\
   U \arrow{r}[swap]{f} & V \arrow[draw=none]{ul}[description]{\square}
 \end{tikzcd}
 $$
preserving the identity sections of the abelian schemes and identifying $\lambda$ with the pullback of $\mu$ by $f:U \to V$ (and satisfying $n(r)\circ \varphi = \varphi \circ m(r)$ for every $r\in R$, in the case of $R$-multiplication). We shall occasionally denote $\varphi_{/f}$ simply by $\varphi$ when there will be no danger of confusion. We may also denote $(X,\lambda) = (Y,\mu)\times_UV$ (resp. $(X,\lambda,m) = (Y,\mu,n)\times_UV$). 
 \item The structural functor $\mathcal{A}_{g,S} \to \Sch_{/S}$ (resp. $\mathcal{A}_{F,S}\to \Sch_{/S}$) is given by sending an object $(X,\lambda)_{/U}$ of $\mathcal{A}_{g,S}$ (resp. $(X,\lambda,m)_{/U}$ of $\mathcal{A}_{F,S}$) to the $S$-scheme $U$, and a morphism $\varphi_{/f}$ to $f$.
\end{enumerate}
 
If $S=\Spec \Lambda$ is affine, then we denote $\mathcal{A}_{g,S}\eqdef\mathcal{A}_{g,\Lambda}$ (resp. $\mathcal{A}_{F,S}\eqdef\mathcal{A}_{F,\Lambda}$). When $\Lambda=\ZZ$, we simply drop it from notation.

Recall that the category of $S$-schemes can be seen as a subcategory of the 2-category of categories fibered in groupoids over $\Sch_{/S}$ by sending each $S$-scheme $U$ to the category $\Sch_{/U}$ endowed with its natural functor $\Sch_{/U} \to \Sch_{/S}$. In the sequel, we shall adopt the standard convention of denoting $\Sch_{/U}$ simply by $U$ when working in the context of categories fibered in groupoids. Then $\mathcal{A}_{g,S}$ (resp. $\mathcal{A}_{F,S}$) is canonically equivalent to $\mathcal{A}_g\times_{\ZZ}S$ (resp. $\mathcal{A}_F\times_{\ZZ} S$) as categories fibered in groupoids over $S$.

% We summarize the main properties of $\mathcal{A}_{g,S}$ (resp. $\mathcal{A}_{K,S}$) we are going to use in the form of the next theorem.

\begin{theorem} \label{DMstack}
For any scheme $S$, $\mathcal{A}_{g,S}$ (resp. $\mathcal{A}_{F,S}$) is a smooth Deligne-Mumford stack over $S$ of relative dimension $g(g+1)/2$ (resp. $g$). %Moreover, $\mathcal{A}_g$ admits a quasi-projective coarse moduli space $A_g$ over $\ZZ$.
\end{theorem}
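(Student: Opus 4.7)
The plan is to reduce everything to the case $S=\Spec \ZZ$. Since by construction $\mathcal{A}_{g,S}$ is equivalent to $\mathcal{A}_g\times_{\ZZ} S$ (and similarly for $\mathcal{A}_F$), and because the properties of being a Deligne-Mumford stack, of smoothness, and of relative dimension are all stable under base change, it suffices to prove the theorem for $\mathcal{A}_g$ and $\mathcal{A}_F$ over $\Spec \ZZ$. From this point on, I would proceed in four steps, each of which appeals to a classical result.

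First, the stack property (effective descent along étale, or even fppf, covers). Polarizations, $R$-multiplications, and isomorphisms between them are all additional structures on the underlying abelian schemes, which themselves satisfy effective fppf descent (this is a classical consequence of the theory of faithfully flat descent for proper smooth group schemes with section, together with the descent of line bundles up to isomorphism). Hence $\mathcal{A}_g$ and $\mathcal{A}_F$ are stacks over $\Sch_{/\ZZ}$ for the étale topology.

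Second, the Deligne-Mumford property, which amounts to showing that the diagonal is representable, finite and unramified. For two principally polarized abelian schemes $(X,\lambda)_{/U}$ and $(Y,\mu)_{/U}$, the functor $\Isom_U((X,\lambda),(Y,\mu))$ is representable by a closed subscheme of the Hom-scheme of abelian schemes $\Hom_U(X,Y)$, whose representability by a disjoint union of quasi-projective $U$-schemes is classical (Grothendieck, Mumford). Finiteness and unramifiedness over $U$ follow from the fact that the automorphism group scheme of a principally polarized abelian variety is finite and étale: indeed, any such automorphism acts faithfully on the $\ell$-adic Tate module for any prime $\ell$ invertible on the base while preserving the Riemann form, which exhibits it as a subgroup scheme of a finite étale group scheme. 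The case of $\mathcal{A}_F$ is deduced formally, since its diagonal is a closed substack of the diagonal of $\mathcal{A}_g$.

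Third, the existence of a smooth étale atlas, via level structures. For any integer $n\ge 3$, Mumford's rigidity theorem guarantees that the moduli problem of principally polarized abelian schemes of dimension $g$ equipped with a full symplectic level-$n$ structure is representable by a smooth quasi-projective scheme $A_{g,n}$ over $\Spec \ZZ[1/n]$, and the natural forgetful morphism $A_{g,n}\to \mathcal{A}_g\otimes \ZZ[1/n]$ is étale and surjective (in fact Galois with group $\Sp_{2g}(\ZZ/n\ZZ)$). Covering $\Spec \ZZ$ by the two open subschemes $\Spec \ZZ[1/3]$ and $\Spec \ZZ[1/4]$ and using $A_{g,3}$ and $A_{g,4}$ respectively yields a smooth surjective étale cover of $\mathcal{A}_g$ by a scheme. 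The analog in the Hilbert-Blumenthal case is due to Rapoport \cite{rapoport78}, using Rapoport's condition to handle arbitrary residue characteristic.

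Fourth, the smoothness and the dimension count. Once the existence of a smooth atlas by schemes has been established, smoothness and the relative dimension are read off from the classical deformation theory of abelian schemes (Grothendieck-Messing/Serre-Tate). The formal deformation space of a principally polarized abelian variety over a field is formally smooth of dimension $g(g+1)/2$, as it is governed by the choice of a Lagrangian subspace of a fixed $2g$-dimensional symplectic space. In the Hilbert-Blumenthal case, Rapoport's condition forces deformations to preserve the freeness of $F^1$ over $R$, reducing this dimension to $g$.

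The main technical obstacle is really in the third step: one must glue information from the level covers $A_{g,3}$ and $A_{g,4}$ (or their Hilbert-Blumenthal analogs) defined on the two opens $\Spec \ZZ[1/3]$ and $\Spec \ZZ[1/4]$ of $\Spec \ZZ$, in order to get a smooth atlas over all of $\Spec \ZZ$. Everything else is standard material that I would simply cite, for instance from \cite{DM69}, \cite{GIT94}, \cite{FC90} for $\mathcal{A}_g$, and from \cite{rapoport78}, \cite{DP94} for $\mathcal{A}_F$.
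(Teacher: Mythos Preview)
Your proposal is correct and follows the standard route; the paper itself does not give a proof of this theorem but simply cites references (\cite{GIT94} Theorem 7.9 and \cite{olsson12} Theorem 2.1.11 for the Deligne-Mumford property, \cite{oort71} Proposition 2.4.1 for smoothness and dimension, and \cite{rapoport78} Th\'eor\`eme 1.20 for the Hilbert-Blumenthal case). Your four-step outline is precisely an unpacking of what those references contain, so the approaches coincide.
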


A proof that $\mathcal{A}_{g,S}$ is a Deligne-Mumford stack over $S$ is essentially contained in \cite{GIT94} Theorem 7.9 (cf. \cite{olsson12} Theorem 2.1.11). Smoothness and relative dimension are obtained by a theorem of Grothendieck (cf. \cite{oort71} Proposition 2.4.1). The case of real multiplication is treated in \cite{rapoport78} Théorème 1.20; in Rapoport's notation, our $\mathcal{A}_F$ corresponds to $\mathcal{M}^{L}$ with $L=R$. %By a theorem of Keel and Mori (cf. \cite{olsson16} Theorem 11.1.2), $\mathcal{A}_g$ admits a coarse moduli space $A_g$ over $\ZZ$ in the category of algebraic spaces; it follows from \cite{moret-bailly85}, VII, Théorème 4.2, that $A_g$ is actually a quasi-projective scheme over $\ZZ$.

\begin{obs}
The stack $\mathcal{A}_g$ is often called a \emph{Siegel moduli stack}, whereas $\mathcal{A}_F$  is known as a \emph{Hilbert-Blumenthal moduli stack}.
\end{obs}

\begin{obs}
Beware that there is a fundamental difference between the moduli stack $\mathcal{A}_g$ and the \emph{coarse moduli scheme} $A_g$ (see page \pageref{symb:coarseAg}), often referred in the literature simply as ``the moduli space of principally polarized abelian varieties of dimension $g$'' (and similarly for the case of real multiplication). Even over $\CC$, the moduli stack $\mathcal{A}_{g,\CC}$ is \emph{not} representable by a scheme (or an algebraic space). Let us also remark that, while $\mathcal{A}_{g,\CC}$ is smooth over $\Spec \CC$ in the sense of Deligne-Mumford stacks for every $g\ge 1$ ,the coarse moduli scheme $A_{g,\CC}$ is not a smooth scheme over $\Spec \CC$ for $g\ge 3$ (see \cite{oort76}).
\end{obs}

\subsection{Definition of the moduli stacks $\mathcal{B}_g$ and $\mathcal{B}_F$} \label{defbg}

We first treat the Siegel case. Let $\varphi_{/f}: (X,\lambda)_{/U} \to (Y,\mu)_{/V}$ be a morphism in $\mathcal{A}_{g}$. By the compatibility with base change of the symplectic forms induced by principal polarizations (Remark \ref{remarkbasechange}), the pullback $\varphi^*b$ of every symplectic-Hodge basis $b$ of $(Y,\mu)_{/V}$ is a symplectic-Hodge basis of $(X,\lambda)_{/U}$. We can thus define a functor
\begin{align*}
 \underline{B}_g : \mathcal{A}_{g}^{\opp} \longrightarrow \Sets 
\end{align*}
that sends every object $(X,\lambda)_{/U}$ of $\mathcal{A}_{g}$  to the set of symplectic-Hodge bases of $(X,\lambda)_{/U}$, and whose action on morphisms is given by pullbacks as above. 

From the functor $\underline{B}_g$, we form a category fibered in groupoids \label{symb:Bg}
\begin{align*}
\mathcal{B}_g \to \Spec \ZZ 
\end{align*}
 as follows.
\begin{enumerate}[(i)]
      \item  An object of $\mathcal{B}_g$ is a ``triple" $(X,\lambda,b)_{/U}$  where $(X,\lambda)_{/U}$  is an object of $\mathcal{A}_{g}$ and $b \in \underline{B}_g(X,\lambda)$. An arrow $(X,\lambda,b)_{/U} \to (Y,\mu,c)_{/V}$ is given by a morphism $\varphi_{/f}: (X,\lambda)_{/U} \to (Y,\mu)_{/V}$ in $\mathcal{A}_g$ such that $b=\varphi^*c$. We denote by \label{symb:pig}
\begin{align*}
\pi_g: \mathcal{B}_g \to \mathcal{A}_{g}
\end{align*}
     the forgetful functor $(X,\lambda,b)_{/U}\mapsto (X,\lambda)_{/U}$.
     \item The structural functor $\mathcal{B}_g \to \Spec \ZZ$ is defined as the composition of $\pi_g$ with the structural functor $\mathcal{A}_{g} \to \Spec \ZZ$.
\end{enumerate}

Analogously, in the Hilbert-Blumenthal case, we consider a functor \label{symb:BF}
$$
\underline{B}_F : \mathcal{A}_{F}^{\opp} \longrightarrow \Sets 
$$
sending a principally polarized abelian scheme with $R$-multiplication to the set of its symplectic-Hodge basis (Definition \ref{defi-shbasisrm}), and we derive from it a category fibered in groupoids
$$
\mathcal{B}_F\to \Spec \ZZ
$$
whose objects over a scheme $U$ are given by ``quadruples" $(X,\lambda,m,b)_{/U}$. We denote by\label{symb:piF}
$$
\pi_F: \mathcal{B}_F\to \mathcal{A}_F
$$
the natural forgetful functor.

\begin{obs}[Relating $\mathcal{B}_F$ with $\mathcal{B}_g$]\label{rem-relationbfbg}
  Consider the canonical morphism of stacks $f:\mathcal{A}_F \to \mathcal{A}_g$ given by the forgetful functor.  Let $(x_1,\ldots,x_g)$ be a $\ZZ$-basis of $D^{-1}$, and $(r_1,\ldots,r_g)$ be the corresponding dual $\ZZ$-basis of $R$, so that
  $$
  t\defeq (r_1,\ldots,r_g,x_1,\ldots,x_g) : (\ZZ^{2g}, \langle \ , \ \rangle_{\text{std}}) \stackrel{\sim}{\to} (M=R\oplus D^{-1}, \Tr \Psi)
  $$
  is an isomorphism of symplectic $\ZZ$-modules (notation as in Paragraph \ref{subsec-shbrm}). Then it is easy to check that $t$ induces a morphism of stacks
  \begin{align*}
  f_{t}:\mathcal{B}_F &\to \mathcal{B}_g\\
         (X,\lambda,m,b)_{/U}&\mapsto (X,\lambda,b\circ \theta_U)_{/U}
  \end{align*}
  making the diagram
 \begin{equation}\label{eq-cdft}
  \begin{tikzcd}
    \mathcal{B}_F \arrow{r}{f_{t}}\arrow{d}[swap]{\pi_g} & \mathcal{B}_g \arrow{d}{\pi_F} \\
    \mathcal{A}_F \arrow{r}[swap]{f} & \mathcal{A}_g
  \end{tikzcd}
 \end{equation}
  commute.
\end{obs}

The rest of this section is devoted to the proof of the next theorem.

\begin{theorem} \label{smoothdmstack}
The category fibered in groupoids $\mathcal{B}_g\to \Spec \ZZ$ (resp. $\mathcal{B}_F\to \Spec \ZZ$) is a smooth Deligne-Mumford stack over $\Spec \ZZ$ of relative dimension $2g^2+g$ (resp. $3g$).
\end{theorem}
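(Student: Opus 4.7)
My plan is to realize $\pi_g : \mathcal{B}_g \to \mathcal{A}_g$ as a Zariski-locally trivial $P_g$-torsor, and analogously $\pi_F : \mathcal{B}_F \to \mathcal{A}_F$ as a $P_F$-torsor, and then to invoke Theorem \ref{DMstack}. The main steps will be: (i) build a simply transitive action of $P_g$ on symplectic-Hodge bases; (ii) verify Zariski-local existence of such bases; (iii) compute $\dim P_g$ and combine with the known relative dimension of $\mathcal{A}_g$; (iv) run the parallel argument in the real-multiplication setting.

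For step (i), a symplectic-Hodge basis $b$ of $(X,\lambda)_{/U}$ can be viewed as an isomorphism of symplectic vector bundles $b : \mathcal{O}_U^{2g} \stackrel{\sim}{\to} H^1_{\dR}(X/U)$ sending the standard Lagrangian $\mathcal{O}_U^g \oplus 0$ onto $F^1(X/U)$. Precomposition $b \mapsto b\circ M$ by any $M \in P_g(U)$ yields another such isomorphism, since by definition $P_g$ is the stabilizer in $\Sp_{2g}$ of $\mathcal{O}_U^g \oplus 0$; conversely, any two symplectic-Hodge bases differ by a unique such $M$, giving a free and transitive action. Step (ii) then follows from Lemma \ref{f1lagrangian}, which identifies $F^1(X/U)$ as a Lagrangian subbundle of $H^1_{\dR}(X/U)$, together with Proposition \ref{exisunic}, which produces symplectic-Hodge bases over any affine open of $U$.

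Steps (i)--(ii) together make $\pi_g$ into a Zariski-locally trivial $P_g$-torsor; since $P_g$ is a smooth affine $\ZZ$-group scheme, $\pi_g$ is representable by schemes and smooth of relative dimension $\dim P_g$. A direct parameterization of elements of $P_g$ in the form $M = (A \ B \ ; \ 0 \ (A\transp)^{-1})$ with $A \in \GL_g$ and $BA\transp \in \Sym_g$ gives $\dim P_g = g^2 + g(g+1)/2 = g(3g+1)/2$. Combined with Theorem \ref{DMstack}, which states that $\mathcal{A}_g$ is a smooth Deligne-Mumford stack of relative dimension $g(g+1)/2$ over $\Spec \ZZ$, step (iii) yields that $\mathcal{B}_g$ is itself a smooth Deligne-Mumford stack (the DM property descends through the representable morphism $\pi_g$, and smoothness composes) of relative dimension $g(3g+1)/2 + g(g+1)/2 = 2g^2 + g$ over $\Spec \ZZ$.

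The Hilbert-Blumenthal case is formally parallel, with $P_g$ replaced by the parabolic $P_F \subset \Res_{R/\ZZ}\Aut_{(M,\Psi)}$ stabilizing the Lagrangian $R \oplus 0 \subset M$, and Remarks \ref{remshb}--\ref{remshb2} playing the role of Proposition \ref{exisunic} for local existence. Since $\Aut_{(M,\Psi)}$ is an $R$-form of $\SL_2$, one computes $\dim \Res_{R/\ZZ}\Aut_{(M,\Psi)} = 3g$ and $\dim P_F = 2g$, yielding total relative dimension $2g + g = 3g$. The main technical obstacle I anticipate is the $R$-equivariant analog of local existence: one must ensure that symplectic-Hodge bases in the sense of Definition \ref{defi-shbasisrm} exist Zariski-locally on $U$. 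This should reduce, via the equivalence of Remark \ref{rem-equiv-r-mult} between $\mathcal{O}_U\tensor R$-modules and quasi-coherent sheaves on $U_R$, to the local existence statement for a rank-$2$ symplectic bundle over $U_R$, which is again Proposition \ref{exisunic}.
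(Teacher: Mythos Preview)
Your proposal is correct and follows essentially the same approach as the paper: realize $\pi_g$ (resp.\ $\pi_F$) as a Zariski-locally trivial torsor under the smooth affine group scheme $P_g$ (resp.\ $P_F$), deduce that $\pi_g$ is representable by smooth affine schemes of the expected relative dimension, and conclude by invoking Theorem~\ref{DMstack} together with the fact that a representable-by-schemes morphism to a Deligne-Mumford stack yields a Deligne-Mumford stack. The only step you leave implicit, which the paper does spell out, is the preliminary verification that $\mathcal{B}_g$ is a \emph{stack} for the \'etale topology (i.e., the sheaf condition), which follows from the quasi-coherence of $H^1_{\dR}(X/U)$; this is routine but worth a sentence.
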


\subsection{Siegel parabolic subgroup and proof of Theorem \ref{smoothdmstack} for $\mathcal{B}_g$} \label{uppertriang}

Fix a scheme $U$ and an object $(X,\lambda)$ of $\mathcal{A}_{g}$ lying over $U$. Then we can define a functor
\begin{align*}
\underline{B}_{(X,\lambda)} : \Sch_{/U}^{\opp} \longrightarrow \Sets
\end{align*}  
that sends a $U$-scheme $U'$ to the set $\underline{B}_g((X,\lambda)\times_UU')$. It is clear that this functor defines a sheaf for the Zariski topology over $\Sch_{/U}$.

Let us now consider the \emph{symplectic group} $\Sp_{2g}$, namely the smooth affine group scheme over $\Spec \ZZ$ of relative dimension $2g^2+g$ such that for every affine scheme $V=\Spec \Lambda$
\begin{align*}
{\Sp}_{2g}(V) = \left.\left\{\left(\begin{array}{cc}
                           A & B \\
                           C & D
                          \end{array} \right) \in M_{2g\times 2g}(\Lambda) \ \right| \begin{aligned} &\ \ \ \ \ \ \ \ \ \ \ \   A,B,C,D \in M_{g\times g}(\Lambda) \text{ satisfy  } \\ &AB\transp = BA\transp\text{, }  CD\transp= DC\transp\text{, and } AD\transp -BC\transp = \mathbf{1}_g \end{aligned}\right\}\text{.}
 \end{align*}

The \emph{Siegel parabolic subgroup} $P_g$ of $\Sp_{2g}$ is defined as the subgroup scheme of $\Sp_{2g}$ such that, for every affine scheme $V=\Spec \Lambda$,
\begin{align*} P_g(V) = 
 \left.\left\{\left(\begin{array}{cc}
                           A & B \\
                           0 & (A\transp)^{-1}
                          \end{array} \right) \in M_{2g\times 2g}(\Lambda) \ \right|\  A \in {\GL}_g(\Lambda)\text{ and } B\in M_{g\times g}(\Lambda)\text{ satisfy }AB\transp=BA\transp \right\}\text{.}
\end{align*}
Note that $P_g$ is a smooth affine group scheme over $\Spec \ZZ$ of relative dimension $g(3g+1)/2$.

Let $(X,\lambda,b)$ be an object of $\mathcal{B}_g$ lying over $V=\Spec \Lambda$ and consider $b = (\  \omega \ \  \eta\  )$ as a row vector of order $2g$ with coefficients in the $R$-module $H^1_{\dR}(X/V)$. For any
\begin{align*}
p=\left(\begin{array}{cc}
                           A & B \\
                           0 & (A\transp)^{-1}
                          \end{array} \right) \in P_g(V)
\end{align*}
it easy to check that
\begin{align*}
b\cdot p \defeq (\begin{array}{cc}\omega A &  \omega B + \eta (A\transp)^{-1}\end{array})
\end{align*}
is a symplectic-Hodge basis of $(X,\lambda)_{/V}$. This defines a right action of $P_g(V)$ on $\underline{B}_g(X,\lambda)$:
\begin{align*}
\underline{B}_g(X,\lambda) \times P_g(V) \longrightarrow \underline{B}_g(X,\lambda)\text{.}
\end{align*}
 Moreover, it is clear that if $V'\subset V$ is an affine open subscheme of $V$, then the natural diagram
%$$
%\raisebox{-0.5\height}{\includegraphics{rameq1-d3.pdf}}
%$$
 $$
 \begin{tikzcd}
 \underline{B}_g(X,\lambda) \times P_g(V) \arrow{r}\arrow{d}& \underline{B}_g(X,\lambda)\arrow{d}\\
 \underline{B}_g(X',\lambda') \times P_g(V') \arrow{r}& \underline{B}_g(X',\lambda')
 \end{tikzcd}
 $$
commutes, where $(X',\lambda')=(X,\lambda)\times_VV'$.

Thus, for any scheme $U$, and any object $(X,\lambda)$ of $\mathcal{A}_{g}$ lying over $U$, we obtain a right action of the $U$-group scheme $P_{g,U} = P_g\times_{\ZZ}U$ on $\underline{B}_{(X,\lambda)}$.

\begin{lemma} \label{torsor}
The Zariski sheaf $\underline{B}_{(X,\lambda)}$ over $\mathsf{Sch}_{/U}$ is a right Zariski $P_{g,U}$-torsor for the above action.
\end{lemma}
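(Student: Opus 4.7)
The plan is to verify the two defining properties of a Zariski torsor: local non-emptiness, and simple transitivity of the group action. Concretely, around every point of $U$ I aim to find an affine open $V\subset U$ on which $\underline{B}_{(X,\lambda)}(V)\neq \emptyset$ and on which $P_g(V)$ acts simply transitively on $\underline{B}_{(X,\lambda)}(V)$. Since $\underline{B}_{(X,\lambda)}$ is already a Zariski sheaf on $\Sch_{/U}$ and the action is compatible with Zariski restriction (as recorded in the commutative diagram displayed just before the lemma), this will suffice.

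For local non-emptiness, I would shrink $U$ to an affine open $V$ on which $F^1(X/V)$ is a free $\mathcal{O}_V$-module of rank $g$. By Lemma \ref{f1lagrangian} this subbundle is Lagrangian in $(H^1_{\dR}(X/V),\langle \ , \ \rangle_{\lambda})$, so Proposition \ref{exisunic}(1) extends any basis $(\omega_1,\ldots,\omega_g)$ of $F^1(X/V)$ to a symplectic basis $(\omega_1,\ldots,\omega_g,\eta_1,\ldots,\eta_g)$ of $H^1_{\dR}(X/V)$, which is by definition a symplectic-Hodge basis of $(X,\lambda)_{/V}$.

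For simple transitivity, I would take two symplectic-Hodge bases $b=(\omega,\eta)$ and $b'=(\omega',\eta')$ over such a $V$. Because $b$ is an $\mathcal{O}_V$-basis of $H^1_{\dR}(X/V)$, there is a unique invertible matrix $M=(A\ B\ ;\ C\ D)$ in $\GL_{2g}(\Gamma(V,\mathcal{O}_V))$ with $b'=b\cdot M$; that is, $\omega' = \omega A + \eta C$ and $\eta' = \omega B + \eta D$. The requirement that each $\omega'_i$ lie in $F^1(X/V)$, combined with the direct sum decomposition $H^1_{\dR}(X/V) = \bigoplus_j\mathcal{O}_V\omega_j \oplus \bigoplus_j\mathcal{O}_V\eta_j$ in which $F^1(X/V) = \bigoplus_j\mathcal{O}_V\omega_j$, forces $C=0$; invertibility of $M$ then yields $A,D\in \GL_g(\Gamma(V,\mathcal{O}_V))$. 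Finally, the condition that $b'$ be symplectic is the matrix identity $MJM\transp = J$ defining $\Sp_{2g}$, which under $C=0$ reduces, by Remark \ref{eqsympl}, exactly to $D=(A\transp)^{-1}$ and $AB\transp = BA\transp$; that is, $M\in P_g(V)$. Together with the observation (already made when defining the action) that any $p\in P_g(V)$ sends a symplectic-Hodge basis to one, this proves that $P_g(V)$ acts simply transitively on $\underline{B}_{(X,\lambda)}(V)$, and the torsor property follows. The only substantive check is matching the Hodge-plus-symplectic constraints on the change-of-basis matrix with the defining equations of $P_g$, which falls out directly from Remark \ref{eqsympl}.
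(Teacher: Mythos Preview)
Your proof is correct and follows essentially the same approach as the paper: local non-emptiness via Proposition \ref{exisunic} (as the paper notes just after Definition \ref{defi-shb}), and the ``routine computation'' of simple transitivity, which you spell out in detail while the paper leaves it implicit. The reduction of the symplectic-plus-Hodge constraints on the change-of-basis matrix to the defining equations of $P_g$ via Remark \ref{eqsympl} is exactly the computation the paper has in mind.
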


\begin{proof}
If $V$ is any affine scheme over $U$ such that $\underline{B}_{(X,\lambda)}(V)$ is non-empty, a routine computation shows that the action of $P_g(V)$ on $\underline{B}_{(X,\lambda)}(V)$ is free and transitive. Moreover, it was already remarked above that symplectic-Hodge bases exist locally for the Zariski topology.
\end{proof}

Since $P_{g,U}$ is affine, smooth, and of relative dimension $g(3g+1)/2$ over $U$, Lemma \ref{torsor} immediately implies the following.

\begin{coro} \label{relrepr0}
For every scheme $U$, and every object $(X,\lambda)$ of $\mathcal{A}_{g}$ lying over $U$, the functor $\underline{B}_{(X,\lambda)}$ is representable by a smooth affine $U$-scheme $B(X,\lambda)$ of relative dimension $g(3g+1)/2$. \hfill $\blacksquare$
\end{coro}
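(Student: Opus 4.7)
The plan is to deduce the corollary as a short formal consequence of Lemma \ref{torsor}. By that lemma, $\underline{B}_{(X,\lambda)}$ is a right Zariski $P_{g,U}$-torsor, so by definition there is a Zariski open cover $\{U_i\}_{i\in I}$ of $U$ together with isomorphisms $\underline{B}_{(X,\lambda)}|_{U_i} \simeq P_{g,U_i}$ of right $P_{g,U_i}$-sheaves. Comparing trivializations on the intersections $U_i\cap U_j$ yields a Zariski $1$-cocycle with values in $P_g$, namely a family $(g_{ij})$ with $g_{ij}\in P_g(U_i\cap U_j)$ satisfying the usual cocycle identities on triple intersections. First I would invoke the standard principle that affine morphisms satisfy effective Zariski descent: using this cocycle one glues the affine $U_i$-schemes $P_{g,U_i}$ to obtain an affine $U$-scheme $B(X,\lambda)$, and by construction $B(X,\lambda)$ represents $\underline{B}_{(X,\lambda)}$.

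Once representability is granted, the remaining assertions follow immediately from the analogous properties of $P_{g,U}$. Smoothness, affineness over $U$, and the relative dimension $g(3g+1)/2$ are all Zariski-local on the base; since $P_g \to \Spec\ZZ$ is a smooth affine group scheme of relative dimension $g(3g+1)/2$ (as recalled in Paragraph~\ref{uppertriang}), so is $P_{g,U}\to U$ by base change, and these properties transfer to $B(X,\lambda)\to U$ through the Zariski-local isomorphisms $B(X,\lambda)\times_UU_i\simeq P_{g,U_i}$.

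The substantive content was already packaged into Lemma \ref{torsor}, whose proof relied on the local existence of symplectic-Hodge bases (Proposition \ref{exisunic}) together with the free-and-transitive nature of the $P_g$-action on their set. For this reason I expect no serious obstacle at this step, and indeed the author signals as much by appending $\blacksquare$ directly to the statement. The one caveat worth underlining is that the argument works in the \emph{Zariski} (not merely \'etale) topology only because symplectic-Hodge bases are Zariski-locally available; this Zariski-local existence is exactly what Proposition \ref{exisunic} grants and is what permits us to bypass any recourse to algebraic-space descent at this stage.
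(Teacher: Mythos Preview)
Your argument is correct and is exactly the paper's approach: the paper simply records that since $P_{g,U}$ is affine, smooth, and of relative dimension $g(3g+1)/2$ over $U$, Lemma~\ref{torsor} immediately implies the corollary. Your proof merely unpacks this one-liner by spelling out Zariski descent for affine schemes and the local transfer of smoothness and relative dimension, which is fine but not a different route.
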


\begin{obs} \label{relrepr1}
With the notation of the above corollary, recall that the principally polarized abelian scheme $(X,\lambda)$ over $U$ corresponds to a morphism $U \to \mathcal{A}_g$, so that $B(X,\lambda)$ is a scheme representing $\mathcal{B}_g\times_{\mathcal{A}_g}U$.
\end{obs} 

\begin{proof}[Proof of Theorem \ref{smoothdmstack} for $\mathcal{B}_g$]
Recall that for any scheme $U$ and any abelian scheme $X$ over $U$, $H^1_{\dR}(X/U)$ is a quasi-coherent sheaf over $U$, and that any quasi-coherent sheaf over $U$ induces a sheaf over $\Sch_{/U}$ endowed with the fppf topology (\cite{olsson16} Lemma 4.3.3). Since the étale topology is coarser than the fppf topology, this shows in particular that $H^1_{\dR}(X/U)$ induces a sheaf over $\Sch_{/U}$ endowed with the étale topology; this immediately implies that $\mathcal{B}_g\to \Spec \ZZ$ is a stack over $\Spec \ZZ$. 

It follows in particular from Corollary \ref{relrepr0} that the morphism $\pi_g : \mathcal{B}_g \to \mathcal{A}_{g}$ is representable by smooth schemes (Remark \ref{relrepr1}). Hence, as $\mathcal{A}_{g}\to \Spec \ZZ$ is a Deligne-Mumford stack over $\Spec \ZZ$, the same holds for $\mathcal{B}_g\to \Spec \ZZ$ (\cite{olsson16} Proposition 10.2.2). The smoothness of $\mathcal{B}_g \to \Spec \ZZ$ follows by composition from that of $\mathcal{A}_{g} \to \Spec \ZZ$ and that of $\pi_g$. Finally, we can compute the relative dimension of $\mathcal{B}_g \to \Spec \ZZ$ as the sum of that of  $\mathcal{A}_{g} \to \Spec \ZZ$ and that of $\pi_g$:
\begin{align*}
\frac{g(g+1)}{2} + \frac{g(3g +1)}{2} = 2g^2+g\text{.}
\end{align*}  
\end{proof}

\subsection{Proof of Theorem \ref{smoothdmstack} for $\mathcal{B}_F$}\label{subsec-proofsmothdmstackrm}

% We merely explain the main modifications in the above argument to obtain Theorem \ref{smoothdmstack} for $\mathcal{B}_K$. 

Let $M$ and $\Psi$ be as in Paragraph \ref{subsec-shbrm}, and consider the affine group scheme $\Aut_{(M,\Psi)}$ over $\Spec R$ of $R$-automorphisms of $M$ preserving $\Psi$. It contains a (Borel) subgroup scheme $\Aut_{(M,\Psi,R\oplus 0)}$ of those automorphisms fixing the Lagrangian $R\oplus 0 \subset M=R\oplus D^{-1}$. We set \label{symb:PF}
$$
P_F \defeq {\Res}_{R/\ZZ} {\Aut}_{(M,\Psi,R\oplus 0)}\text{.}
$$
This is a smooth affine group scheme of relative dimension $2g$ over $\Spec \ZZ$. If $V=\Spec \Lambda$ is an affine scheme, then 
$$
P_F(V) = \left\{\left.\left(\begin{array}{cc}
                       a & b \\
                       0  & a^{-1}
                      \end{array}\right)\in {\SL}_2(\Lambda \tensor F) \right| a\in (\Lambda\tensor R)^{\times}\text{, }b\in \Lambda\tensor D
 \right\}
$$
where $D\subset R$ denotes the different ideal.

Arguing as above, for an object $(X,\lambda,m)_{/U}$ of $\mathcal{B}_F$, we see that the Zariski sheaf
$$
\underline{B}_{(X,\lambda,m)}: \Sch_{/U}\to \Sets
$$
sending an $U$-scheme $U'$ to $\underline{B}_F((X,\lambda,m)\times_UU')$ is a right Zariski $P_{F,U}$-torsor. This implies that $\pi_F: \mathcal{B}_{F}\to\mathcal{A}_F$ is relatively representable by smooth affine schemes of relative dimension $2g$. We conclude, as in the proof for $\mathcal{B}_g$, with an application of Theorem \ref{DMstack}. \hfill $\blacksquare$

\section{The tangent bundles of $\mathcal{B}_g$ and $\mathcal{B}_F$; higher Ramanujan vector fields} \label{ramvecfields}

This section is devoted the study of the tangent bundles $T_{\mathcal{B}_g/\ZZ}$ and $T_{\mathcal{B}_F/\ZZ}$.

We shall first explain how the Gauss-Manin connection on the first de Rham cohomology of abelian schemes induces a canonical decomposition
$$
T_{\mathcal{B}_g/\ZZ}=T_{\mathcal{B}_g/\mathcal{A}_g} \oplus \mathcal{R}_g \text{ (resp. }T_{\mathcal{B}_F/\ZZ}=T_{\mathcal{B}_F/\mathcal{A}_F} \oplus \mathcal{R}_F\text{);} 
$$
$\mathcal{R}_g\subset T_{\mathcal{B}_g/\ZZ}$ and $\mathcal{R}_F\subset T_{\mathcal{B}_F/\ZZ}$ are called \emph{Ramanujan subbundles}.

Then, we show that the deformation theory of abelian varieties, in the guise of the Kodaira-Spencer morphism, allows us to canonically trivialize the Ramanujan subbundles. These trivializations are the \emph{higher Ramanujan vector fields}.

\subsection{Horizontal subbundles and linear connections}

We briefly review Ehresmann's point of view on connections over vector bundles. In the context of differential geometry, this is standard material; for a more general discussion in the algebraic setting, we refer to \cite{bost13} 6.1.

Let $S$ be a scheme, $X$ be a smooth $S$-scheme, and $\pi:E \to X$ be a smooth scheme over $X$. 

\begin{defi}
 A subbundle $\mathcal{F}$ of $T_{E/S}$ is said to be \emph{horizontal} (with respect to $\pi:E \to X$) if $T_{E/S} = T_{E/X}\oplus \mathcal{F}$.
\end{defi}

As $T_{E/X}=\ker (T\pi:T_{E/S}\to \pi^*T_{X/S})$, a horizontal subbundle is a splitting of the exact sequence
\begin{align*}
0 \longrightarrow T_{E/X} \longrightarrow T_{E/S} \stackrel{T\pi}{\longrightarrow}  \pi^*T_{X/S} \to 0\text{.}
\end{align*}
In particular, $T\pi$ restricts to an isomorphism $\mathcal{F}\stackrel{\sim}{\to}\pi^*T_{X/S}$.

Assume now that $\mathcal{E}$ is a vector bundle over $X$, and that $\pi:E=\mathbf{V}(\mathcal{E}^{\vee})\to X$ is its associated space over $X$. Then, to any  $\mathcal{O}_S$-linear connection on $\mathcal{E}$
$$
\nabla : \mathcal{E} \to \mathcal{E}\tensor_{\mathcal{O}_X}\Omega^1_{X/S}
$$
there is attached a canonical horizontal subbundle of $T_{E/S}$.

Indeed, observe first that there is a canonical identification
\begin{align}\label{eq-isomcantan}
T_{E/X}\stackrel{\sim}{\to} \pi^*\mathcal{E} 
\end{align}
given by the dual of $\pi^*\mathcal{E}^{\vee} \stackrel{\sim}{\to} \Omega^1_{E/X}$, defined locally (on $X$) by $1\tensor f \mapsto df$ (cf. \cite{EGAIV4} Corollaire 16.4.9).

\begin{lemma}\label{lemma-projconnection}
 Let $e\in \Gamma (E,\pi^*\mathcal{E})$ be the ``universal section" of $\pi^*\mathcal{E}$, and $\pi^*\nabla$ be the pullback of $\nabla$ to $\pi^*\mathcal{E}$. The $\mathcal{O}_E$-morphism
 \begin{align*}
  P_{\nabla}:T_{E/S} &\to \pi^*\mathcal{E}\\
  \theta &\mapsto (\pi^*\nabla)_{\theta}e
 \end{align*}
 restricts to the isomorphism (\ref{eq-isomcantan}) on $T_{E/X}\subset T_{E/S}$.\hfill $\blacksquare$
\end{lemma}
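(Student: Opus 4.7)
The plan is to verify the claim by a direct local computation after trivializing $\mathcal{E}$. Since both $P_\nabla$ and the canonical identification (\ref{eq-isomcantan}) are $\mathcal{O}_E$-linear morphisms of vector bundles on $E$, the statement is Zariski-local on $X$, and I would begin by restricting to an affine open $U=\Spec A\subset X$ on which $\mathcal{E}$ admits a basis $(e_1,\ldots,e_r)$. This trivialization produces fiber coordinates $(t_1,\ldots,t_r)$ on $E|_U=\Spec A[t_1,\ldots,t_r]$, where $t_i$ is by definition the function corresponding to $\pi^*e_i^\vee$ under $\pi^*\mathcal{E}^\vee\to \mathcal{O}_E$. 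In particular, the isomorphism $\pi^*\mathcal{E}^\vee\stackrel{\sim}{\to}\Omega^1_{E/X}$ defining (\ref{eq-isomcantan}) sends $\pi^*e_i^\vee$ to $dt_i$, so its dual sends $\partial/\partial t_i$ to $\pi^*e_i$.

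Next, I would compute $(\pi^*\nabla)_\theta e$ in these coordinates for a vertical $\theta\in T_{E/X}$. Writing the universal section as $e=\sum_i t_i\,\pi^*e_i$ and the connection matrix as $\nabla e_i=\sum_j \omega_{ji}\otimes e_j$ with $\omega_{ji}\in \Omega^1_{U/S}$, the Leibniz rule for $\pi^*\nabla$ produces a sum of two terms: the ``vertical'' term $\sum_i dt_i\otimes \pi^*e_i$ and the ``horizontal'' correction $\sum_{i,j} t_i\,\pi^*\omega_{ji}\otimes \pi^*e_j$. Contracting with $\theta$, the horizontal correction vanishes, since $\pi^*\omega_{ji}$ is a section of $\pi^*\Omega^1_{X/S}\subset \Omega^1_{E/S}$ and $\theta$ annihilates every function pulled back from $X$. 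Hence $P_\nabla(\theta)=\sum_i \theta(t_i)\,\pi^*e_i$, which coincides with the image of $\theta=\sum_i \theta(t_i)\,\partial/\partial t_i$ under (\ref{eq-isomcantan}).

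There is no real obstacle in this argument; the whole content lies in the observation that the correction term contributed by $\nabla$ lies in $\pi^*\Omega^1_{X/S}\otimes \pi^*\mathcal{E}$ and is therefore killed by vertical contraction, so that the restriction of $P_\nabla$ to $T_{E/X}$ is independent of the choice of connection and agrees with the canonical identification. The only point requiring minor care is keeping track of conventions --- the side on which one-forms appear in $\pi^*\nabla$, and the explicit description of the isomorphism $\pi^*\mathcal{E}^\vee\stackrel{\sim}{\to}\Omega^1_{E/X}$ via $1\otimes f\mapsto df$ --- but no substantial input beyond the setup of the lemma is needed.
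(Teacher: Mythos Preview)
Your argument is correct. The paper does not supply a proof of this lemma at all: the black square is placed immediately after the statement, indicating that the verification is left to the reader. Your local computation---trivializing $\mathcal{E}$, writing the universal section as $e=\sum_i t_i\,\pi^*e_i$, and observing that the connection-matrix term lies in $\pi^*\Omega^1_{X/S}\otimes\pi^*\mathcal{E}$ and is therefore annihilated by vertical vectors---is exactly the routine check the paper is tacitly invoking, and it goes through without difficulty.
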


It follows that the subbundle $\ker P_{\nabla}\subset T_{E/S}$ is horizontal: under the identification (\ref{eq-isomcantan}), $P_{\nabla}$ becomes a projection of $T_{E/S}$ onto the subbundle $T_{E/X}$. This is the horizontal subbundle attached to $\nabla$.

\begin{obs}\label{rem-integrablehorizontal}
  If $\nabla$ is integrable, then $\ker P_{\nabla}$ is an integrable subbundle of $T_{E/S}$.
  %and $T\pi : \ker P_{\nabla} \to \pi^*T_{X/S}$ is an isomorphism of Lie algebras over $\mathcal{O}_E$.
\end{obs}

It is not difficult to transpose the above considerations to the case of smooth Deligne-Mumford stacks (cf. \ref{tangentstacks}).

\subsection{The Ramanujan subbundle $\mathcal{R}_g\subset T_{\mathcal{B}_g/\ZZ}$}

\subsubsection{} \label{sssec-univgm}

Fix a base scheme $S$ and let $p:X \to U$ be a projective abelian scheme, with $U$ a \emph{smooth} $S$-scheme. Then there is defined an integrable $S$-connection over the de Rham cohomology sheaves (\cite{KO68}; see also \cite{katz70}), the \emph{Gauss-Manin connection}
\begin{align} \label{GMcon}
\nabla : H^i_{\dR}(X/U) \longrightarrow  H^i_{\dR}(X/U)\tensor_{\mathcal{O}_U} \Omega^1_{U/S}\text{,}
\end{align}
whose formation is compatible with every base change $U'\to U$, where $U'$ is a smooth $S$-scheme.

\label{symb:Hg}

We next construct a ``universal" version of Gauss-Manin connection over $\mathcal{A}_g$. Consider the presheaf $\mathcal{H}_g$ of $\mathcal{O}_{\mathcal{A}_{g,\et}}$-modules on $\Et(\mathcal{A}_g)$ defined as follows. Let $(U,u)$ be an étale scheme over $\mathcal{A}_g$, and $(X,\lambda)$ be the principally polarized abelian scheme over $U$  corresponding to $u:U \to \mathcal{A}_g$. We put
\begin{align*}
\Gamma((U,u),\mathcal{H}_g)\defeq \Gamma(U,H^1_{\dR}(X/U))
\end{align*}
If $(f,f^b):(U',u') \to (U,u)$ is a morphism in $\Et(\mathcal{A}_g)$, the restriction map is given by the base change morphism $f^*H^1_{\dR}(X/U) \to H^1_{\dR}(X'/U')$, where $(X',\lambda') = (X,\lambda)\times_UU'$. As the base change morphism is actually an isomorphism (i.e., the formation of $H^1_{\dR}(X/U)$ is compatible with base change), and $H^1_{\dR}(X/U)$ is quasi-coherent, $\mathcal{H}_g$ is a quasi-coherent sheaf over $\mathcal{A}_g$ (cf. \ref{stacks} and \cite{olsson16} Lemma 4.3.3). We finally remark that $\mathcal{H}_g$ is actually a vector bundle of rank $2g$ over $\mathcal{A}_g$.

\begin{obs}
The sheaf $\mathcal{H}_g$ should be thought as the first de Rham cohomology of the ``universal abelian scheme'' over $\mathcal{A}_g$. % Indeed, if $X_g$ denotes the universal abelian scheme over $B_g$ (cf. Theorem \ref{repr}), then, under the isomorphism $\mathcal{B}_{g,\ZZ[1/2]} \cong B_g$, the sheaf $\mathcal{H}_g$ gets identified with $H^1_{\dR}(X_g/B_g)$, and $\mathcal{F}_g$ with $F^1(X_g/B_g)$.
\end{obs}

For any scheme $S$, let $\mathcal{H}_{g,S}$ be the vector bundle over $\mathcal{A}_{g,S}$ obtained from $\mathcal{H}_g$ by the base change $\mathcal{A}_{g,S}\to \mathcal{A}_g$. Since the formation of the Gauss-Manin connection is compatible with base change, we have an $S$-connection on $\mathcal{H}_{g,S}$
\begin{align*}
\nabla : \mathcal{H}_{g,S} \to \mathcal{H}_{g,S}\tensor_{\mathcal{O}_{\mathcal{A}_{g,S,\et}}}\Omega^1_{\mathcal{A}_{g,S}/S}
\end{align*}
defined by (\ref{GMcon}) over every étale $S$-scheme $(U,u)$ over $\mathcal{A}_{g,s}$ as above.

\subsubsection{}

Consider the morphism of coherent $\mathcal{O}_{\mathcal{B}_{g,\et}}$-modules
\begin{align}\label{eq-morpairing}
\pi_g^*\mathcal{H}_g^{\oplus g} \longrightarrow M_{g\times g}(\mathcal{O}_{\mathcal{B}_{g,\et}})
\end{align}
given on an étale scheme $(U,u)$ over $\mathcal{B}_g$ corresponding to $(X,\lambda,b)_{/U}$, $b=(\omega_1,\ldots,\omega_g,\eta_1,\ldots,\eta_g)$, by
\begin{align*}
H^1_{\dR}(X/U)^{\oplus g} &\to M_{g\times g}(\mathcal{O}_U)\\
(\alpha_1,\ldots,\alpha_g)&\mapsto (\langle \alpha_i,\eta_j\rangle_{\lambda})_{1\le i,j\le g}\text{,} 
\end{align*}
and let $\mathcal{S}_g$ be the subbundle of $\pi_g^*\mathcal{H}_g^{\oplus g}$ defined as the inverse image of the subbundle of symmetric matrices $\Sym_g(\mathcal{O}_{\mathcal{B}_{g,\et}})\subset M_{g\times g}(\mathcal{O}_{\mathcal{B}_{g,\et}})$ by (\ref{eq-morpairing}).

\begin{obs} \label{rem-ranksg}
Note that (\ref{eq-morpairing}) is surjective: for a given matrix $(a_{ij})_{1\le i,j\le g}$ in $M_{g\times g}(\mathcal{O}_U)$, take $\alpha_i \defeq \sum_{j=1}^ga_{ij}\omega_j$. In particular, $\mathcal{S}_g$ is a subbundle of $\pi_g^*\mathcal{H}_g^{\oplus g}$ of rank $g^2 + g(g+1)/2 = g(3g+1)/2$.
\end{obs}

\begin{theorem}\label{thm-vertbg}
 Consider the morphism of quasi-coherent $\mathcal{O}_{\mathcal{B}_{g,\et}}$-modules
\begin{align*}
P:T_{\mathcal{B}_g/\ZZ} \longrightarrow \pi_g^*\mathcal{H}_g^{\oplus g}
\end{align*}
defined by
\begin{align*}
T_{U/\ZZ} &\to H^1_{\dR}(X/U)^{\oplus g}\\
\theta&\mapsto (\nabla_{\theta}\eta_1, \ldots,\nabla_{\theta}\eta_g)
\end{align*}
for every étale scheme $(U,u)$ over $\mathcal{B}_g$ corresponding to the object $(X,\lambda,b)_{/U}$ of $\mathcal{B}_g(U)$, where $b=(\omega_1,\ldots,\omega_g,\eta_1,\ldots,\eta_g)$, and $\nabla$ denotes the Gauss-Manin connection on $H^1_{\dR}(X/U)$. Then the morphism $P$
\begin{enumerate}
 \item factors through $\mathcal{S}_g\subset \pi_g^*\mathcal{H}_g^{\oplus g}$, and
 \item restricts to an isomorphism $P:T_{\mathcal{B}_g/\mathcal{A}_g}\stackrel{\sim}{\to} \mathcal{S}_g$. 
\end{enumerate}
\end{theorem}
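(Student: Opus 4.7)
The plan is to prove (1) via horizontality of the polarization pairing under the Gauss--Manin connection, and (2) étale-locally using the $P_g$-torsor structure of $\pi_g$ established in Lemma \ref{torsor}.

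For (1), I would invoke the standard horizontality of $\langle \ , \ \rangle_{\lambda}$ for the connection induced by $\nabla$ on $\mathcal{H}_g^{\vee}\tensor \mathcal{H}_g^{\vee}$. This compatibility follows, for instance, analytically from the local constancy of the associated Betti pairing, or algebraically from the fact that $\langle \ , \ \rangle_{\lambda}$ is built via cup product from $c_{1,\dR}(\mathcal{P}_{X/U})$, each of whose constituents is parallel. The resulting Leibniz rule
\begin{equation*}
\theta\,\langle s,t\rangle_{\lambda} = \langle \nabla_{\theta}s,t\rangle_{\lambda}+\langle s,\nabla_{\theta}t\rangle_{\lambda},
\end{equation*}
applied to $s=\eta_i$, $t=\eta_j$, combined with $\langle\eta_i,\eta_j\rangle_{\lambda}=0$ (symplectic basis) and antisymmetry of $\langle \ , \ \rangle_{\lambda}$, yields $\langle\nabla_{\theta}\eta_i,\eta_j\rangle_{\lambda}=\langle\nabla_{\theta}\eta_j,\eta_i\rangle_{\lambda}$, so that $P$ factors through $\mathcal{S}_g$.

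For (2), I work étale-locally. Pick $U \to \mathcal{A}_g$ étale trivializing $\mathcal{H}_g$ via a frame $e=(e_1,\ldots,e_{2g})$. By Lemma \ref{torsor}, we may identify $\mathcal{B}_g\times_{\mathcal{A}_g}U \cong U\times P_g$, and at $(u,p)\in U\times P_g$ the universal symplectic-Hodge basis equals $(\omega,\eta)=e(u)\cdot p$. The right $P_g$-action induces, via fundamental vector fields, an isomorphism
\begin{equation*}
T_{\mathcal{B}_g/\mathcal{A}_g}|_{U\times P_g}\cong \mathcal{O}_{U\times P_g}\tensor \Lie(P_g), \qquad X \mapsto \theta_X,
\end{equation*}
where $\theta_X(u,p) = \left.\frac{d}{dt}\right|_{t=0}(u,p\exp(tX))$. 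Since the Gauss--Manin connection matrix in the constant frame $e$ is pulled back from $U$, it vanishes on vertical tangent vectors; hence $\nabla$ restricted to $T_{\mathcal{B}_g/\mathcal{A}_g}$ reduces to differentiation of coordinates in $e$. Writing $X = (A \ B \ ; \ 0 \ -A\transp) \in \Lie(P_g)$ with $B=B\transp$, differentiating $(e\cdot p)\exp(tX)$ at $t=0$ yields
\begin{equation*}
\nabla_{\theta_X}\omega_j = \sum_i A_{ij}\omega_i, \qquad \nabla_{\theta_X}\eta_k = \sum_i B_{ik}\omega_i - \sum_l A_{kl}\eta_l.
\end{equation*}

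Injectivity of $P|_{T_{\mathcal{B}_g/\mathcal{A}_g}}$ is immediate from these formulas. For surjectivity, expand any $(\alpha_k)\in \mathcal{S}_g$ as $\alpha_k = \sum_i (b_{ik}\omega_i + c_{ik}\eta_i)$; the $\mathcal{S}_g$-condition $\langle \alpha_k,\eta_m\rangle_{\lambda}=b_{mk}$ symmetric in $(k,m)$ precisely means $B\defeq(b_{ij})$ is symmetric, and setting $A_{kl}\defeq-c_{lk}$ produces $X\in\Lie(P_g)$ with $P(\theta_X)=(\alpha_k)$. As source and target both have rank $g(3g+1)/2$ (cf. Remark \ref{rem-ranksg}), $P$ restricts to an isomorphism onto $\mathcal{S}_g$. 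The main technical obstacle is formalizing the identification $T_{\mathcal{B}_g/\mathcal{A}_g}\cong \mathcal{O}_{\mathcal{B}_g}\tensor \Lie(P_g)$ via the torsor structure in the stacky context, together with the observation that the pulled-back Gauss--Manin connection is trivial along vertical directions in a trivializing frame; once this reduction is in place, the rest is the direct matrix calculation above.
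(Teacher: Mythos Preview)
Your argument is correct. Part (1) is exactly the paper's proof. Part (2), however, takes a genuinely different route.

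The paper does not use the $P_g$-torsor structure directly for (2). Instead, it constructs an auxiliary space: the total space $\mathcal{V}_g = \mathbf{V}((\mathcal{H}_g^{\oplus g})^{\vee})$ of the vector bundle $\mathcal{H}_g^{\oplus g}$ over $\mathcal{A}_g$, and an immersion $i_g:\mathcal{B}_g\hookrightarrow \mathcal{V}_g$ sending $(X,\lambda,b)$ to $(X,\lambda,\eta_1,\ldots,\eta_g)$ (Lemma~\ref{immersionlemma}). By Lemma~\ref{lemma-projconnection}, the Gauss--Manin connection on $\mathcal{H}_g^{\oplus g}$ yields a map $P_\nabla:T_{\mathcal{V}_g/\ZZ}\to\tilde\pi_g^*\mathcal{H}_g^{\oplus g}$ which is already known to restrict to an \emph{isomorphism} $T_{\mathcal{V}_g/\mathcal{A}_g}\stackrel{\sim}{\to}\tilde\pi_g^*\mathcal{H}_g^{\oplus g}$. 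The map $P$ in the statement is just the pullback of $P_\nabla$ along $i_g$; since $i_g$ is an immersion over $\mathcal{A}_g$, the restriction $P:T_{\mathcal{B}_g/\mathcal{A}_g}\to\mathcal{S}_g$ is automatically injective, and equality of ranks (Remark~\ref{rem-ranksg}) finishes.

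Your approach trades this geometric picture for an explicit Lie-algebra computation via fundamental vector fields on the $P_g$-torsor. Both are valid. The paper's route sidesteps precisely the ``technical obstacle'' you flag---formalizing the identification $T_{\mathcal{B}_g/\mathcal{A}_g}\cong\mathcal{O}_{\mathcal{B}_g}\otimes\Lie P_g$ on a Deligne--Mumford stack---by reducing instead to the tangent bundle of a vector-bundle space, whose structure is already packaged in Lemma~\ref{lemma-projconnection}. It also gives integrability of the horizontal complement $\mathcal{R}_g=\ker P$ for free (Remark~\ref{rem-rgint}), since $\mathcal{R}_g$ is the restriction of the horizontal subbundle of an integrable connection. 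Your approach, on the other hand, makes the vertical action transparent in coordinates and would adapt readily to any parabolic-torsor situation; a minor point is that your frame $e$ should be chosen to be a symplectic-Hodge basis (not an arbitrary frame of $\mathcal{H}_g$) so that it actually trivializes the $P_g$-torsor.
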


%The notation $P_{\nabla}$ will be justified in the proof of the theorem.

\label{symb:Rg}
\begin{defi}\label{defi-ramubbundsiegel}
With the above notation, the \emph{Ramanujan subbundle} of $T_{\mathcal{B}_g/\ZZ}$ is the horizontal subbundle with respect to $\pi_g:\mathcal{B}_g \to \mathcal{A}_g$ defined by $\mathcal{R}_g\defeq \ker P$.
\end{defi}

We now proceed to the proof of Theorem \ref{thm-vertbg}.

\subsubsection{}

Consider the associated space of the vector bundle $\mathcal{H}_g^{\oplus g}$ (cf. \cite{olsson16} 10.2)
\begin{align*}
\mathcal{V}_g \defeq \mathbf{V}((\mathcal{H}_g^{\oplus g})^{\vee})=\underline{\Spec}_{\mathcal{A}_g}\mathcal{S}ym((\mathcal{H}_g^{\oplus g})^{\vee})\text{.}
\end{align*}
This is a Deligne-Mumford stack over $\Spec \ZZ$ whose objects lying over a scheme $U$ are given by ``$(g+2)$-uples''
\begin{align*}
(X, \lambda, \alpha_1,\ldots,\alpha_g)_{/U}\text{,}
\end{align*}
where $(X,\lambda)_{/U}$ is an object of $\mathcal{A}_{g}(U)$, and $\alpha_i$ is a global section of $H^1_{\dR}(X/U)$ for every $1\le i\le g$. Note that the forgetful functor
\begin{align*}
\tilde{\pi}_g : \mathcal{V}_g \to \mathcal{A}_{g}
\end{align*}
defines a morphism of stacks representable by smooth affine schemes. 

We define a morphism of stacks
\begin{align*}
i_g : \mathcal{B}_g \to \mathcal{V}_g
\end{align*}
 as follows. Let $(X,\lambda,b)_{/U}$ be an object of $\mathcal{B}_g$ and denote $b=(\omega_1,\ldots,\omega_g,\eta_1,\ldots,\eta_g)$. Then $i_g$ sends $(X,\lambda,b)_{/U}$ to the object
\begin{align*}
(X,\lambda,\eta_1,\ldots,\eta_g)_{/U}
\end{align*}
of $\mathcal{V}_g$. The action of $i_g$ on morphisms is evident. Note that the diagram of morphisms of stacks
%$$
%\raisebox{-0.5\height}{\includegraphics{rameq1-d6.pdf}}
%$$
 $$
 \begin{tikzcd}[column sep=tiny]
 \mathcal{B}_g\arrow{rd}[swap]{\pi_g} \arrow{rr}{i_g} & & \mathcal{V}_g \arrow{dl}{\tilde{\pi}_g}\\
 & \mathcal{A}_g
 \end{tikzcd}
 $$
is (strictly) commutative.

\begin{lemma} \label{immersionlemma}
The morphism $i_g: \mathcal{B}_g\to \mathcal{V}_g$ is an immersion of stacks.
\end{lemma}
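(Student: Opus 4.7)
The plan is to verify that $i_g$ is an immersion by working étale-locally on $\mathcal{A}_g$, where both $\mathcal{B}_g$ and $\mathcal{V}_g$ become representable by schemes, and exhibit the map explicitly as a closed immersion into an open subscheme.

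First, I would observe that $i_g$ is representable by schemes: for any scheme $T \to \mathcal{V}_g$, composing with $\tilde\pi_g$ gives $T \to \mathcal{A}_g$, and writing $B_T \defeq \mathcal{B}_g\times_{\mathcal{A}_g}T$ and $V_T \defeq \mathcal{V}_g \times_{\mathcal{A}_g}T$ (both schemes), we have $\mathcal{B}_g\times_{\mathcal{V}_g}T = B_T \times_{V_T} T$. Since being an immersion is étale-local on source and target, it suffices to check the statement after pulling back along a suitable étale cover $U \to \mathcal{A}_g$ by a scheme. By Lemma \ref{locallagrangian} and Proposition \ref{exisunic} (and refining $U$ if necessary), we may assume that the principally polarized abelian scheme $(X,\lambda)_{/U}$ corresponding to $U\to \mathcal{A}_g$ admits a symplectic-Hodge basis $b_0=(\omega_1^0,\ldots,\omega_g^0,\eta_1^0,\ldots,\eta_g^0)$ over $U$.

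Next, I would use $b_0$ to trivialize both pulled-back stacks: by Lemma \ref{torsor}, $\mathcal{B}_g\times_{\mathcal{A}_g}U \cong P_{g,U}$ via $p \mapsto b_0\cdot p$, and $\mathcal{V}_g\times_{\mathcal{A}_g}U$ identifies with the $U$-scheme $M_{2g\times g, U}$ of $2g \times g$ matrices via $(\alpha_1,\ldots,\alpha_g) = b_0 \cdot \mathbf{m}$. Under these identifications, $i_g$ becomes the morphism of $U$-schemes
$$
P_{g,U} \longrightarrow M_{2g\times g, U}, \qquad \begin{pmatrix} A & B \\ 0 & (A\transp)^{-1} \end{pmatrix} \longmapsto \begin{pmatrix} B \\ (A\transp)^{-1} \end{pmatrix}.
$$

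Finally, I would exhibit this map as a locally closed immersion. Writing a generic element of $M_{2g\times g,U}$ as $\binom{Y}{X}$ with $X, Y \in M_{g\times g}$, let $V \subset M_{2g\times g,U}$ be the open subscheme defined by $\det X \neq 0$, and let $Z \subset V$ be the closed subscheme cut out by the equation $X\transp Y = Y\transp X$. Setting $A \defeq (X\transp)^{-1}$ and $B \defeq Y$, a direct computation shows that the relation $X\transp Y = Y\transp X$ is equivalent to the symplecticity condition $AB\transp = BA\transp$ defining $P_g$ (cf. \ref{uppertriang}), so the assignment $\binom{Y}{X} \mapsto \bigl(\begin{smallmatrix} (X\transp)^{-1} & Y \\ 0 & X \end{smallmatrix}\bigr)$ provides a two-sided inverse on $Z$ to the displayed map above. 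Thus $i_g$ identifies $P_{g,U}$ with the closed subscheme $Z$ of the open subscheme $V$, so it is a locally closed immersion of schemes; descending along the cover $U\to \mathcal{A}_g$, this shows $i_g$ is an immersion of stacks. I do not foresee any real obstacle here: the only point requiring care is the explicit unwinding of the symplectic condition into the equation $X\transp Y = Y\transp X$, which is a short matrix computation.
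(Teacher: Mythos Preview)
Your proof is correct, and the underlying content is the same as the paper's, but the paper takes a shorter and more intrinsic route. Instead of localizing \'etale-locally on $\mathcal{A}_g$ and choosing a trivializing symplectic-Hodge basis, the paper takes an arbitrary morphism $U \to \mathcal{V}_g$, corresponding to $(X,\lambda,\alpha_1,\ldots,\alpha_g)_{/U}$, and directly identifies $\mathcal{B}_g \times_{\mathcal{V}_g} U$ with the locally closed subscheme of $U$ cut out by the open condition $\overline{\alpha}_1\wedge\cdots\wedge\overline{\alpha}_g \neq 0$ (images in $H^1_{\dR}(X/U)/F^1(X/U)$) and the closed conditions $\langle \alpha_i,\alpha_j\rangle_{\lambda}=0$, appealing to Proposition \ref{exisunic}(2). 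In your coordinates these are precisely $\det X \neq 0$ and $X\transp Y = Y\transp X$, so the two arguments unwind to the same equations; the paper's version simply avoids the detour through an \'etale cover, the choice of $b_0$, and the explicit matrix parametrizations. The benefit of the paper's phrasing is that it checks the definition of ``immersion of stacks'' on the nose (fiber product over $\mathcal{V}_g$ with any test scheme) and keeps the geometry coordinate-free; the benefit of your phrasing is that it makes the identification $P_{g,U}\hookrightarrow M_{2g\times g,U}$ completely explicit.
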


\begin{proof}
Let $U$ be a scheme and $U \to \mathcal{V}_g$ be a morphism corresponding to the object $(X,\lambda, \alpha_1,\ldots,\alpha_g)_{/U}$ of $\mathcal{V}_g(U)$. Then the fiber product $\mathcal{B}_g \times_{\mathcal{V}_g} U$ can be naturally identified with the locally closed subscheme of $U$ defined by the equations
\begin{align*}
\overline{\alpha}_1\wedge \cdots \wedge \overline{\alpha}_g &\neq 0\\
\langle \alpha_i,\alpha_j \rangle_{\lambda} &= 0 \text{, }\ \ \forall i,j
\end{align*}
where $\overline{\alpha}_i$ denotes the image of $\alpha_i$ in $H^1_{\dR}(X/U)/F^1(X/U)$ (cf. Proposition \ref{exisunic} (2)).
\end{proof}

\begin{proof}[Proof of Theorem \ref{thm-vertbg}]

To prove (1), let $(U,u)$ be an étale scheme over $\mathcal{B}_g$ corresponding to the object $(X,\lambda,b)_{/U}$ of $\mathcal{B}_g(U)$, with $b=(\omega_1,\ldots,\omega_g,\eta_1,\ldots,\eta_g)$, and let $\theta$ be a section of $T_{U/\ZZ}$. As $\langle \eta_i,\eta_j\rangle_{\lambda}=0$, we obtain
\begin{align*}
0 = \nabla_{\theta}\langle \eta_i,\eta_j\rangle_{\lambda} = \langle \nabla_{\theta}\eta_i,\eta_j \rangle_{\lambda} + \langle \eta_i,\nabla_{\theta}\eta_j\rangle_{\lambda} = \langle \nabla_{\theta}\eta_i,\eta_j \rangle_{\lambda} - \langle \nabla_{\theta}\eta_j,\eta_i \rangle_{\lambda}\text{.}
\end{align*}

We now prove (2). Observe that $\mathcal{H}^{\oplus g}_g$ is endowed with an integrable connection $\nabla$ given by the sum of the ``universal" Gauss-Manin connection on each factor. As $\tilde{\pi}_g:\mathcal{V}_g\to\mathcal{A}_g$ is the space associated to $\mathcal{H}^{\oplus g}_g$, we obtain from Lemma \ref{lemma-projconnection} a morphism of $\mathcal{O}_{\mathcal{V}_{g,\et}}$-modules
 $$
 P_{\nabla}: T_{\mathcal{V}_g/\ZZ} \to \tilde{\pi}_g^*\mathcal{H}_g^{\oplus g}
 $$
 inducing an isomorphism
 $$
 T_{\mathcal{V}_g/\mathcal{A}_g} \stackrel{\sim}{\to} \tilde{\pi}_g^*\mathcal{H}_g^{\oplus g}\text{.}
 $$
 
 The morphism $P$ is simply the restriction of $P_{\nabla}$ to $T_{\mathcal{B}_g/\ZZ}$ via the immersion $i_g:\mathcal{B}_g\to \mathcal{V}_g$. In particular, as $Ti_g$ identifies $T_{\mathcal{B}_g/\mathcal{A}_g}$ with a subbundle of $i_g^*T_{\mathcal{V}_g/\mathcal{A}_g}$, the induced the morphism
 $$
 P_{\nabla}=P: T_{\mathcal{B}_g/\mathcal{A}_g}\to \mathcal{S}_g
 $$
 is injective; since both vector bundles have the same rank (cf. Remark \ref{rem-ranksg}), this must be an isomorphism.
\end{proof}

\begin{obs}\label{rem-rgint}
It follows from the above proof and from Remark \ref{rem-integrablehorizontal} that the Ramanujan subbundle $\mathcal{R}_g\subset T_{\mathcal{B}_g/\ZZ}$ is integrable.
\end{obs}

\subsection{The Ramanujan subbundle $\mathcal{R}_F\subset T_{\mathcal{B}_F/\ZZ}$}\label{subsec-ramsubbundrm}

Let $S$ be a scheme, $U$ be a smooth $S$-scheme, and $(X,\lambda,m)$ be a principally polarized abelian scheme with $R$-multiplication over $U$.

Since, for every $r\in R$, the endomorphism $m(r):X\to X$ is an isogeny (Remark \ref{rem-isogeny}), the action of $R$ on $H^1_{\dR}(X/U)$ induced by $m$ is horizontal for the Gauss-Manin connection $\nabla$ (cf. \cite{mori11} Proposition 2.2). In particular, by linearity, $\nabla$ induces a connection on $H^1_{\dR}(X/U)\tensor_RD$; by abuse, we denote it by the same symbol:
$$
\nabla : H^1_{\dR}(X/U)\tensor_R D \to (H^1_{\dR}(X/U)\tensor_R D) \tensor_{\mathcal{O}_U}\Omega^1_{U/S}\text{.}
$$

\label{symb:HF}
By the same reasoning of (\ref{sssec-univgm}), we define a universal first de Rham cohomology $\mathcal{H}_F$ over $\mathcal{A}_F$. For any scheme $S$, we denote by $\mathcal{H}_{F,S}$ the vector bundle over $\mathcal{A}_{F,S}$ obtained from $\mathcal{A}_F$ by base change. We also have a universal Gauss-Manin connection
$$
\nabla : \mathcal{H}_{F,S} \to \mathcal{H}_{F,S}\tensor_{\mathcal{O}_{\mathcal{A}_{F,S,\et}}}\Omega^1_{\mathcal{A}_{F,S}/S}\text{.}
$$
Note that the vector bundle $\mathcal{H}_{F,S}$ over $\mathcal{A}_{F,S}$ is endowed with a canonical $R$-multiplication which is horizontal for the universal Gauss-Manin connection above. In particular, we also have a connection
$$
\nabla : \mathcal{H}_{F,S}\tensor_R D \to (\mathcal{H}_{F,S}\tensor_RD)\tensor_{\mathcal{O}_{\mathcal{A}_{F,S,\et}}}\Omega^1_{\mathcal{A}_{F,S}/S}\text{.}
$$

\begin{theorem}\label{thm-isomvertbk}
 Consider the morphism of quasi-coherent $\mathcal{O}_{\mathcal{B}_{F,\et}}$-modules
\begin{align*}
P:T_{\mathcal{B}_F/\ZZ} \longrightarrow \pi_F^*\mathcal{H}_F\tensor_RD
\end{align*}
defined by
\begin{align*}
T_{U/\ZZ} &\to H^1_{\dR}(X/U)\tensor_RD\\
\theta&\mapsto \nabla_{\theta}\eta
\end{align*}
for every étale scheme $(U,u)$ over $\mathcal{B}_F$ corresponding to the object $(X,\lambda,m,b)_{/U}$ of $\mathcal{B}_F(U)$, where $b=(\omega,\eta)$, and $\nabla$ denotes the Gauss-Manin connection on $H^1_{\dR}(X/U)\tensor_RD$. Then the morphism $P$ restricts to an isomorphism
\begin{align}\label{eq-isomvertbk}
 P:T_{\mathcal{B}_F/\mathcal{A}_F}\stackrel{\sim}{\to} \pi_F^*\mathcal{H}_F\tensor_RD\text{.}
\end{align}
\end{theorem}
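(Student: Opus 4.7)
The plan is to adapt the proof of Theorem \ref{thm-vertbg} to the Hilbert-Blumenthal setting. In fact the situation is somewhat cleaner than in the Siegel case: the auxiliary moduli stack I will construct embeds $\mathcal{B}_F$ as an \emph{open} substack, whereas in the Siegel case the corresponding immersion $i_g$ was only locally closed, whence the need to single out a distinguished subbundle $\mathcal{S}_g$. This is ultimately the content of Remark \ref{remshb2}: prescribing $\eta$ already determines the whole symplectic-Hodge basis, so there is no supplementary closed condition to impose.

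Concretely, I would first introduce the associated space $\mathcal{V}_F\defeq \mathbf{V}((\mathcal{H}_F\tensor_RD)^{\vee})$ of the vector bundle $\mathcal{H}_F\tensor_RD$ over $\mathcal{A}_F$; its objects over a scheme $U$ are quadruples $(X,\lambda,m,\eta)_{/U}$, where $\eta$ is a global section of $H^1_{\dR}(X/U)\tensor_RD$. The forgetful morphism $\tilde{\pi}_F:\mathcal{V}_F\to\mathcal{A}_F$ is representable by smooth affine schemes of relative dimension $2g$, matching the $\mathcal{O}$-rank of $\mathcal{H}_F\tensor_RD$. I would then define $i_F:\mathcal{B}_F\to\mathcal{V}_F$ over $\mathcal{A}_F$ by forgetting $\omega$: $(X,\lambda,m,(\omega,\eta))_{/U}\mapsto (X,\lambda,m,\eta)_{/U}$.

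The key technical point --- and what I expect to be the main obstacle --- is showing that $i_F$ is an \emph{open} immersion. By Remark \ref{remshb2}, the fiber $\mathcal{B}_F\times_{\mathcal{V}_F}U$ over a given $(X,\lambda,m,\eta)_{/U}$ is the open subscheme of $U$ on which the image of $\eta$ in the rank-$1$ $\mathcal{O}_U\tensor R$-module $(H^1_{\dR}(X/U)/F^1(X/U))\tensor_RD$ generates it (an open condition, expressible as the non-vanishing of the induced map from $\mathcal{O}_U\tensor R$); over this locus, $\omega$ is recovered uniquely from $\eta$ via $\omega\in F^1(X/U)$ and $\Psi_\lambda(\omega,\eta)=1$. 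Verifying the representability and openness of this condition in the étale site of $\mathcal{V}_F$, and keeping track of the $\mathcal{O}_U\tensor R$-structure at each step, is the step that requires the most care.

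Once the open-immersion claim is in hand, I would invoke Lemma \ref{lemma-projconnection} applied to the Gauss-Manin connection on $\mathcal{H}_F\tensor_RD$ --- well-defined, as recalled at the beginning of Paragraph \ref{subsec-ramsubbundrm}, because each $m(r)$ is an isogeny (Remark \ref{rem-isogeny}) and therefore acts horizontally. This produces a morphism $P_\nabla:T_{\mathcal{V}_F/\ZZ}\to \tilde{\pi}_F^{*}\mathcal{H}_F\tensor_RD$ restricting to an isomorphism on $T_{\mathcal{V}_F/\mathcal{A}_F}$. Because $i_F$ is an open immersion, $Ti_F$ identifies $T_{\mathcal{B}_F/\mathcal{A}_F}$ with $i_F^{*}T_{\mathcal{V}_F/\mathcal{A}_F}$, and by construction the morphism $P$ in the statement is tautologically the pullback of $P_\nabla$ along $i_F$. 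The isomorphism (\ref{eq-isomvertbk}) then follows at once.
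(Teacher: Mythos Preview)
Your proposal is correct and follows essentially the same approach as the paper's proof: introduce $\mathcal{V}_F=\mathbf{V}((\mathcal{H}_F\tensor_RD)^{\vee})$, observe via Remark \ref{remshb2} that $i_F:\mathcal{B}_F\to\mathcal{V}_F$ is an open immersion, apply Lemma \ref{lemma-projconnection} to obtain $P_\nabla$, and conclude by restriction. Your observation that the open-immersion property makes this case cleaner than the Siegel analog (where only a locally closed immersion is available) is exactly the point.
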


The proof below is analogous to the case $g=1$ of Theorem \ref{thm-vertbg}.

\begin{proof}
 Consider the stack
 $$
 \mathcal{V}_F \defeq \mathbf{V}((\mathcal{H}_F\tensor_RD)^{\vee})\text{,}
 $$
 and denote by $\tilde{\pi}_F : \mathcal{V}_F \to \mathcal{A}_F$  the natural projection. Let $\nabla$ be the universal Gauss-Manin connection on $\mathcal{H}_{F}\tensor_R D$, and let
 $$
 P_{\nabla} : T_{\mathcal{V}_F/\ZZ} \longrightarrow \tilde{\pi}_F^*\mathcal{H}_F\tensor_RD
 $$
 be defined as in Lemma \ref{lemma-projconnection}, so that it induces an isomorphism
 $$
 T_{\mathcal{V}_F/\mathcal{A}_F} \stackrel{\sim}{\longrightarrow} \tilde{\pi}_F^*\mathcal{H}_F\tensor_RD\text{.}
 $$
 
 It follows from Remark \ref{remshb2} that the morphism
 $$
 i_F : \mathcal{B}_F \to \mathcal{V}_F
 $$
 over $\mathcal{A}_F$ given by $(X,\lambda,m,b=(\omega,\eta))_{/U}\mapsto (X,\lambda,m,\eta)_{/U}$ is an \emph{open immersion} of stacks. We conclude by remarking that the morphism $P$ is simply the restriction of the above $P_{\nabla}$ to $T_{\mathcal{B}_F/\ZZ}$ via $i_F$.
\end{proof}

\label{symb:RF}
\begin{defi}
 With the above notation, the \emph{Ramanujan subbundle} of $T_{\mathcal{B}_F/\ZZ}$ is the horizontal subbundle with respect to $\pi_F: \mathcal{B}_F \to \mathcal{A}_F$ defined by $\mathcal{R}_F\defeq \ker P$.
\end{defi}

Observe that the Ramanujan subbundle $\mathcal{R}_F\subset T_{\mathcal{B}_F/\ZZ}$ is integrable by Remark \ref{rem-integrablehorizontal}.

\begin{obs}
  The morphism $f_t: \mathcal{B}_F \to \mathcal{B}_g$ defined in Remark \ref{rem-relationbfbg} preserves the decomposition of the tangent bundles of $\mathcal{B}_F$ and $\mathcal{B}_g$ induced by the Ramanujan subbundles. Observe first that the commutativity of the diagram (\ref{eq-cdft}) implies that $Tf_t: T_{\mathcal{B}_F/\ZZ} \to f_t^*T_{\mathcal{B}_g/\ZZ}$ preserves the vertical subbundles:
  $$
   Tf_t(T_{\mathcal{B}_F/\mathcal{A}_F}) \subset f_t^*T_{\mathcal{B}_g/\mathcal{A}_g}\text{.}
   $$
   Now, it follows from the definition of the Ramanujan subbundles that $\mathcal{R}_g$ (resp. $\mathcal{R}_F$) is given by the equations $\nabla_{v}\eta_i=0$ (resp. $\nabla_v\eta_F(1\tensor x_i)=0$), for $1\le i \le g$, where $(\omega_1,\ldots,\omega_g,\eta_1,\ldots,\eta_g)$ (resp. $(\omega_F,\eta_F)$) denotes the ``universal'' symplectic Hodge basis over $\mathcal{B}_g$ (resp. $\mathcal{B}_F$). Since, by definition of $f_t$, we have $f_t^*\eta_i = \eta_F(1\tensor x_i)$, and since the formation of the Gauss-Manin connection commutes with base change, we deduce that
   $$
Tf_t(\mathcal{R}_F)\subset f_t^*\mathcal{R}_g\text{.}
   $$
\end{obs}

\subsection{Recollections on the Kodaira-Spencer morphism} \label{kodairaspencer}

\subsubsection{}

 Fix a base scheme $S$ and let $p:X \to U$ be a projective abelian scheme, with $U$ a smooth $S$-scheme. The Gauss-Manin connection on $H^1_{\dR}(X/U)$ induces a morphism
\begin{align*}
T_{U/S} &\longrightarrow \mathcal{H}om_{\mathcal{O}_S}(H_{\dR}^1(X/U),H^1_{\dR}(X/U))\\
    \theta &\longmapsto \nabla_{\theta}( \ \ )\text{.}
\end{align*}
Restricting to $F^1(X/U)$ and passing to the quotient (cf. exact sequence (\ref{hodgefiltr})), we obtain an $\mathcal{O}_{U}$-morphism 
\begin{align*}
T_{U/S} \longrightarrow &\mathcal{H}om_{\mathcal{O}_U}(F^1(X/U),R^1p_*\mathcal{O}_X)\cong F^1(X/U)^{\vee}\tensor_{\mathcal{O}_U}R^1p_*\mathcal{O}_X\text{.}
\end{align*}
Applying the inverse of the canonical isomorphism $\phi_{X^t/U}^1: F^1(X^t/U)^{\vee} \stackrel{\sim}{\to} R^1p_*\mathcal{O}_{X}$ (cf. proof of Lemma \ref{f1lagrangian}, where we identified $X$ with $X^{tt}$ via the canonical biduality isomorphism), we obtain an $\mathcal{O}_U$-morphism
\begin{align*}
\delta : T_{U/S} \longrightarrow F^1(X/U)^{\vee}\tensor_{\mathcal{O}_U}F^1(X^t/U)^{\vee}\text{.}
\end{align*}
This is, possibly up to a sign, the dual of $\rho$ defined in \cite{FC90} III.9.\footnote{With notation as in the proof of Lemma \ref{f1lagrangian}, there are two natural ways of identifying $R^1p_*\mathcal{O}_X$ with $F^1(X^t/U)^{\vee}$: one by $(\phi_{X/U}^0)^{\vee}$, and another by $\phi_{X^t/U}^1$. These produce the same isomorphisms up to a sign. In \cite{FC90} this choice is not specified.}

\subsubsection{}

 With the same notation as above, let $\lambda :X \to X^t$ be a principal polarization. The Gauss-Manin connection $\nabla$ on $H^1_{\dR}(X/U)$ is compatible with the symplectic form $\langle \ , \ \rangle_{\lambda}$ in the following sense. For every sections $\theta$ of $T_{U/S}$, and $\alpha$ and $\beta$ of $H^1_{\dR}(X/U)$, we have
\begin{align} \label{C}
\theta \langle \alpha, \beta\rangle_{\lambda} = \langle \nabla_{\theta}\alpha,\beta \rangle_{\lambda} + \langle \alpha, \nabla_{\theta}\beta \rangle_{\lambda}\text{.}
\end{align}
This can be deduced from the fact that the first Chern class in $H^2_{\dR}(X\times_UX^t/U)$ of the Poincaré line bundle $\mathcal{P}_{X/U}$ is horizontal for the Gauss-Manin connection, since it actually comes from a class in $H^2_{\dR}(X\times_UX^t/S)$.

 By composing $\delta$ with $((\lambda^*)^{\vee})^{-1} : F^1(X^t/U)^{\vee} \stackrel{\sim}{\to}F^1(X/U)^{\vee}$, we obtain a morphism 
\begin{align} \label{KSmor}
\kappa : T_{U/S} \longrightarrow F^1(X/U)^{\vee}\tensor_{\mathcal{O}_U}F^1(X/U)^{\vee}\text{.}
\end{align}
This is the \emph{Kodaira-Spencer morphism} associated to $(X,\lambda)_{/U}$ over $S$. It follows from the compatibility (\ref{C}) that $\kappa$ factors through the second divided power $\Gamma^2(F^1(X/U)^{\vee})$, i.e., the submodule of symmetric tensors in $F^1(X/U)^{\vee}\tensor_{\mathcal{O}_U}F^1(X/U)^{\vee}$.

\begin{obs} \label{explicite}
As $\phi_{X^t/U}^{\vee} = -\phi_{X/U}$ under the canonical biduality isomorphism $X\cong X^{tt}$ (cf. \cite{BBM82} Lemme 5.1.5), one may verify that the composition
\begin{align*}
R^1p_*\mathcal{O}_X \stackrel{(\phi^1_{X^t/U})^{-1}}{\to} F^1(X^t/U)^{\vee} \stackrel{((\lambda^*)^{\vee})^{-1}}{\to} F^1(X/U)^{\vee}
\end{align*}
considered above is given by the isomorphism of vector bundles $H^1_{\dR}(X/U)/F^1(X/U) \stackrel{\sim}{\to} F^1(X/U)^{\vee}$ induced by (cf. Lemma \ref{exactseq})
\begin{align*}
H^1_{\dR}(X/U) &\to H^1_{\dR}(X/U)^{\vee}\\
         \alpha & \mapsto \langle \  \ , \alpha\rangle_{\lambda}\text{.}
\end{align*}
Thus, if $(\omega_1,\ldots,\omega_g)$ is a trivialization of $F^1(X/U)$, $\kappa$ admits the following explicit description:
% \begin{align*}
%   \kappa (\theta) = \sum_{i=1}^g \langle \ \ , \eta_i \rangle_{\lambda}\tensor\langle \omega_i,\nabla_{\theta}( \ \ )\rangle_{\lambda} \text{.}
% \end{align*}
\begin{align*}
  \kappa (\theta) = \sum_{i=1}^g \omega_i^{\vee}\tensor\langle \ \ ,\nabla_{\theta}\omega_i\rangle_{\lambda} \text{.}
\end{align*}
\end{obs}

 Finally, we observe that the Kodaira-Spencer morphism is natural in the following sense. Let $U'$ be a smooth scheme over $S$ and let $F_{/f}:(X',\lambda')_{/U'} \to (X,\lambda)_{/U'}$ be a morphism in $\mathcal{A}_{g,S}$. Denote by $\kappa$ (resp. $\kappa'$) the Kodaira-Spencer morphism associated to $(X,\lambda)_{/U}$ (resp.  $(X',\lambda')_{/U'}$) over $S$. Then the diagram
%$$
%\raisebox{-0.5\height}{\includegraphics{rameq1-d5.pdf}}
%$$
 $$
 \begin{tikzcd}[column sep = huge]
 T_{U'/S} \arrow{r}{Df}\arrow{d}[swap]{\kappa'} &f^*T_{U/S}\arrow{d}{f^*\kappa} \\
 \Gamma^2(F^1(X'/U')^{\vee}) \arrow{r}[swap]{(f^*)^{\vee}\tensor(f^*)^{\vee}} & \Gamma^2(f^*F^1(X/U)^{\vee})
 \end{tikzcd}
 $$
commutes. 

\subsubsection{} We keep the above notation and we further assume that $(X,\lambda)_{/U}$ is endowed with an $R$-multiplication $m:R \to \End_U(X)^{\lambda}$. 

Since the action of $R$ on $H^1_{\dR}(X/U)$ is horizontal for the Gauss-Manin connection, we obtain an $\mathcal{O}_U$-morphism
\begin{align*}
 T_{U/S} &\to \mathcal{H}om_{\mathcal{O}_U\tensor R}(F^1(X/U),R^1p_*\mathcal{O}_X)\\
      \theta&\mapsto \nabla_{\theta}( \ \ ) \mod F^1(X/U)\text{.}
\end{align*}
By combining this with the $\mathcal{O}_U\tensor R$-isomorphism induced by $\Psi_{\lambda}$
\begin{align*}
 R^1p_*\mathcal{O}_X = H^1_{\dR}(X/U)/F^1(X/U)&\stackrel{\sim}{\to} F^1(X/U)^*\tensor_RD^{-1}\\
                 \alpha \mod F^1(X/U) &\mapsto \Psi_{\lambda} ( \ \ , \alpha)
\end{align*}
we obtain a Kodaira-Spencer morphism (of $\mathcal{O}_U$-modules)
$$
\kappa : T_{U/S} \to \Gamma^2_{\mathcal{O}_U\tensor R}(F^1(X/U)^*)\tensor_R D^{-1}
$$
associated to $(X,\lambda,m)_{/U}$ over $S$.

% By Remark \ref{explicite}, the composition of the above Kodaira-Spencer morphism with the natural map given by the duality formula (\ref{eq-duality})
% $$
% \Gamma^2_{\mathcal{O}_U\tensor R}(F^1(X/U)^*)\tensor_R D^{-1}\to \Gamma^2_{\mathcal{O}_U}(F^1(X/U)^{\vee})
% $$
% is the previously defined Kodaira-Spencer morphism (\ref{KSmor}). 

\begin{obs}\label{rem-formulaksrm}
 If $\omega$ is an $\mathcal{O}_U\tensor R$-trivialization of $F^1(X/U)$, then 
 $$
 \kappa(\theta)=  \omega^*\tensor \Psi_{\lambda}(\ , \nabla_{\theta}\omega) = \Psi_{\lambda}(\omega,\nabla_{\theta}\omega)\, \omega^*\tensor \omega^*\text{.}
 $$
\end{obs}

\begin{obs}\label{rem-formulaksrmdual}
  By the natural duality between second divided powers $\Gamma^2$ and second symmetric powers $S^2$, we get the following canonical isomorphisms (cf. Remark \ref{rem-dualityrelation})
  \begin{align*}
\Gamma^2_{\mathcal{O}_U\tensor R}(F^1(X/U)^*)\tensor_R D^{-1}\cong S_{\mathcal{O}_U\tensor R}^2(F^1(X/U))^*\tensor_R D^{-1} \stackrel{\Tr}{\cong} S^2_{\mathcal{O}_U\tensor R}(F^1(X/U))^{\vee}\text{.}
  \end{align*}
  Under these identifications, the $\mathcal{O}_U$-dual of $\kappa$ is given explicitly by
 \begin{align*}
\kappa^{\vee}: S^2_{\mathcal{O}_U\tensor R}(F^1(X/U))&\to \Omega^1_{U/S} \\
                                               \omega\tensor \omega &\mapsto \langle \omega,\nabla \omega\rangle_{\lambda}\text{.}
\end{align*}
\end{obs}

\subsection{The Kodaira-Spencer isomorphism for $\mathcal{A}_g$ and $\mathcal{A}_F$}\label{ksiso}

\subsubsection{}

Just like we defined a universal first de Rham cohomology $\mathcal{H}_g$ over $\mathcal{A}_g$, we may define a universal Hodge subbundle $\mathcal{F}_g$: for any étale scheme $(U,u)$ over $\mathcal{A}_g$ corresponding to the object $(X,\lambda)_{/U}$ of $\mathcal{A}_g(U)$ we have $u^*\mathcal{F}_g = F^1(X/U)$. \label{symb:F_g}

Let $S$ be a scheme, and denote by $\mathcal{F}_{g,S}$ the rank $g$ vector bundle over $\mathcal{A}_{g,S}$ obtained from $\mathcal{F}_g$ by base change. The naturality of the Kodaira-Spencer morphism permits us to construct a ``universal'' Kodaira-Spencer morphism
\begin{align*}
\kappa : T_{\mathcal{A}_{g,S}} \longrightarrow \Gamma^2(\mathcal{F}_{g,S}^{\vee})\text{.}
\end{align*}
We remark that $\kappa$ is actually an \emph{isomorphism} of $\mathcal{O}_{\mathcal{A}_{g,S,\et}}$-modules by \cite{FC90} Theorem 5.7.(3) (cf. \cite{lan13} 2.3.5).

Let $\mathcal{U}$ be a smooth Deligne-Mumford stack over $S$ and $u : \mathcal{U} \to \mathcal{A}_{g,S}$ be a quasi-compact and quasi-separated morphism of $S$-stacks representable by schemes. Then, the Gauss-Manin connection over $(\mathcal{U},u)$, or simply over $\mathcal{U}$ if $u$ is implicit, 
\begin{align*}
\nabla : u^*\mathcal{H}_{g,S} \to u^*\mathcal{H}_{g,S}\tensor_{\mathcal{O}_{\mathcal{U}_{\et}}} \Omega^1_{\mathcal{U}/S}
\end{align*} 
is defined by pulling back the universal Gauss-Manin connection on $\mathcal{A}_{g,S}$. Further, we may define a Kodaira-Spencer morphism over $(\mathcal{U},u)$ as the composition
\begin{align*}
\kappa_{u} : T_{\mathcal{U}/S} \stackrel{T u }{\longrightarrow} u^*T_{\mathcal{A}_{g,S}/S} \stackrel{u^*\kappa}{\longrightarrow} \Gamma^2(u^*\mathcal{F}_{g,S}^{\vee})\text{.}
\end{align*}
 
\subsubsection{}

\label{symb:FF}
Analogously, we define a Hodge subbundle $\mathcal{F}_F\subset \mathcal{H}_F$ endowed with a canonical $R$-multiplication. For any scheme $S$, we also have a ``universal" Kodaira-Spencer \emph{isomorphism} (cf. \cite{rapoport78} 1.5 and \cite{lan13} 2.3.5)
 \begin{align*}
\kappa : T_{\mathcal{A}_{F,S}} \stackrel{\sim}{\longrightarrow} \Gamma_{\mathcal{O}_{\mathcal{A}_{F,S,\et}}\tensor R}^2(\mathcal{F}_{F,S}^{*})\tensor_R D^{-1}\text{.}
\end{align*}

For a smooth Deligne-Mumford stack  $\mathcal{U}$ over  $S$  endowed with a quasi-compact and quasi-separated morphism of $S$-stacks representable by schemes $u : \mathcal{U} \to \mathcal{A}_{F,S}$, we can also associate a Gauss-Manin connection 
\begin{align*}
\nabla : u^*\mathcal{H}_{F,S} \to u^*\mathcal{H}_{F,S}\tensor_{\mathcal{O}_{\mathcal{U}_{\et}}} \Omega^1_{\mathcal{U}/S}
\end{align*} 
and a Kodaira-Spencer morphism
\begin{align*}
\kappa_{u} : T_{\mathcal{U}/S} \to \Gamma_{\mathcal{O}_{\mathcal{U}_{\et}}\tensor R}^2(u^*\mathcal{F}_{F,S}^{*})\tensor_R D^{-1}\text{.}
\end{align*}

\subsection{The higher Ramanujan vector fields on $\mathcal{B}_g$} \label{ssec-hrvf}

Recall that the Ramanujan subbundle $\mathcal{R}_g\subset T_{\mathcal{B}_g/\ZZ}$ is a horizontal subbundle with respect to $\pi_g: \mathcal{B}_g\to \mathcal{A}_g$. In particular, the tangent map
$$
T\pi_g: \mathcal{R}_g \to \pi_g^*T_{\mathcal{A}_g/\ZZ}
$$
is an isomorphism. By composing it with (the pullback by $\pi_g$ of) the Kodaira-Spencer isomorphism for $\mathcal{A}_g$, we obtain an isomorphism
\begin{align}\label{eq-ksisomramanujan}
  \kappa_{\pi_g}:\mathcal{R}_g \stackrel{\sim}{\to} \Gamma^2(\pi_g^*\mathcal{F}_g^{\vee})\text{.}
\end{align}

Consider the ``universal'' symplectic-Hodge basis over $\mathcal{B}_g$ \label{symb:universalbg}
\begin{align*}
b_g = (\omega_1,\ldots,\omega_g,\eta_1,\ldots,\eta_g)\text{;}
\end{align*}
that is, the basis of the vector bundle $\pi_g^*\mathcal{H}_g$ such that for every étale scheme $(U,u)$ over $\mathcal{B}_g$ corresponding to the object $(X,\lambda,b)_{/U}$ of $\mathcal{B}_g(U)$ we have $u^*b_g=b$. In particular, $(\omega_1,\ldots,\omega_g)$ trivializes $\pi_g^*\mathcal{F}_g$, and its dual basis induces an isomorphism
\begin{align*}
 \Gamma^2(\pi_g^*\mathcal{F}_g^{\vee}) \stackrel{\sim}{\to} \Gamma^2(\mathcal{O}_{\mathcal{B}_{g,\et}}^{\oplus g}) = \mathcal{O}_{\mathcal{B}_{g,\et}}\tensor \Gamma^2(\ZZ^g)\text{.}
\end{align*}
By composing the above isomorphism with (\ref{eq-ksisomramanujan}), we obtain
\begin{align}\label{eq-isomrg}
 \mathcal{R}_g \stackrel{\sim}{\to} \Gamma^2(\mathcal{O}_{\mathcal{B}_{g,\et}}^{\oplus g}) = \mathcal{O}_{\mathcal{B}_{g,\et}}\tensor \Gamma^2(\ZZ^g)\text{.}
\end{align}

\label{symb:vg}
\begin{defi} \label{deframvf}
For every $1\le i \le j \le g$, we define the \emph{higher Ramanujan vector field} $v_{ij}$ as being the unique global section of $\mathcal{R}_g\subset T_{\mathcal{B}_g/\ZZ}$ such that
$$
v_{ij}\mapsto \begin{cases}
               \mathbf{e}_i\tensor \mathbf{e}_i & i=j\\
               \mathbf{e}_i\tensor \mathbf{e}_j + \mathbf{e}_j\tensor \mathbf{e}_i & i< j
              \end{cases}
$$
under the isomorphism (\ref{eq-isomrg}).
\end{defi}

Alternatively, let
\begin{align*}
\langle \  , \ \rangle : \pi_g^*\mathcal{H}_g \times \pi_g^*\mathcal{H}_g \to \mathcal{O}_{\mathcal{B}_{g,\et}}
\end{align*}
be the symplectic $\mathcal{O}_{\mathcal{B}_{g,\et}}$-bilinear form given, for each étale scheme $(U,u)$ over $\mathcal{B}_g$ corresponding to the object $(X,\lambda,b)_{/U}$ of $\mathcal{B}_g(U)$, by
\begin{align*}
u^*\langle \ , \ \rangle \defeq  \langle \ , \ \rangle_{\lambda} : H^1_{\dR}(X/U) \times H^1_{\dR}(X/U) \to \mathcal{O}_{U}\text{.}
\end{align*}
This is well-defined by Remark \ref{remarkbasechange}. Then the higher Ramanujan vector fields satisfy
\begin{align*}
\kappa_{\pi_g}(v_{ij}) =  \begin{cases}
               \langle \ \ , \eta_i \rangle \tensor \langle \ \ , \eta_i \rangle & i=j\\
               \langle \ \ , \eta_i \rangle\tensor \langle \ \ , \eta_j \rangle + \langle \ \ , \eta_j \rangle\tensor \langle \ \ , \eta_i \rangle & i <j
              \end{cases}
\end{align*}

The next proposition characterizes the higher Ramanujan vector fields in terms of the ``universal" Gauss-Manin connection over $\mathcal{B}_g$ (cf. Paragraph \ref{ksiso}):
\begin{align*}
\nabla : \pi^*_g\mathcal{H}_g \longrightarrow \pi^*_{g}\mathcal{H}_g \tensor_{\mathcal{O}_{\mathcal{B}_{g,\et}}} \Omega^1_{\mathcal{B}_g/\ZZ}\text{.}
\end{align*}

\begin{prop} \label{caracchamps}
Let us regard $b_g$ as a row vector of order $2g$. Then, the higher Ramanujan vector fields are the unique global sections $v_{ij}$ of $T_{\mathcal{B}_g/\ZZ}$ such that
\begin{align*}
 \nabla_{v_{ij}}b_g = b_g\left(\begin{array}{cc}
                              0 & 0 \\
                              \mathbf{E}^{ij} & 0
                             \end{array}\right)
\end{align*}
for every $1\le i\le j\le g$.
\end{prop}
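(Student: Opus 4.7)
I would prove the proposition in two steps: first uniqueness of $v_{ij}$, then existence together with the identification of the $v_{ij}$ from Definition \ref{deframvf}.

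For uniqueness, I would start by observing that condition (2), namely $\nabla_v \eta_k = 0$ for $1 \le k \le g$, is by Definition \ref{defi-ramubbundsiegel} precisely the statement that $v$ is a section of the Ramanujan subbundle $\mathcal{R}_g \subset T_{\mathcal{B}_g/\ZZ}$. Next, I would invoke the compatibility (\ref{C}) of the Gauss-Manin connection with the symplectic form $\langle \ , \ \rangle_{\lambda}$ (pulled back to $\mathcal{B}_g$). Applied to the identity $\langle \omega_k, \eta_l\rangle = \delta_{kl}$ together with condition (2), this forces $\langle \nabla_v \omega_k, \eta_l \rangle = 0$ for all $k,l$, which means that $\nabla_v \omega_k$ has no $\omega$-component in the decomposition of $\pi_g^*\mathcal{H}_g$ determined by $b_g$; in particular, $\nabla_v \omega_k$ lies in the $\mathcal{O}_{\mathcal{B}_{g,\et}}$-submodule spanned by $\eta_1,\ldots,\eta_g$. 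Condition (1) then pins down the remaining coefficients, completely determining $\nabla_v b_g$, and hence, via the explicit formula of Remark \ref{explicite}, the element $\kappa_{\pi_g}(v) \in \Gamma^2(\pi_g^*\mathcal{F}_g^{\vee})$. Uniqueness then follows from the fact that the restriction of $\kappa_{\pi_g}$ to $\mathcal{R}_g$ is the isomorphism (\ref{eq-ksisomramanujan}).

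For existence, I would take $v_{ij}$ as in Definition \ref{deframvf}, so that by construction $v_{ij} \in \mathcal{R}_g$ and condition (2) is immediate. The compatibility argument above then shows that $\nabla_{v_{ij}}\omega_k = \sum_l b_{kl}^{(ij)} \eta_l$ for some coefficients $b_{kl}^{(ij)} \in \mathcal{O}_{\mathcal{B}_{g,\et}}$. Using $\langle \omega_m,\eta_l\rangle = \delta_{ml}$ one reads off $b_{km}^{(ij)} = \langle \omega_m, \nabla_{v_{ij}}\omega_k\rangle$, so Remark \ref{explicite} (applied universally) gives
\begin{align*}
\kappa_{\pi_g}(v_{ij}) = \sum_{k,m} b_{km}^{(ij)}\, \omega_k^{\vee}\tensor \omega_m^{\vee}.
\end{align*}
Comparing with the explicit description of $\kappa_{\pi_g}(v_{ij})$ in terms of $\langle \ ,\eta_i\rangle$ and $\langle \ ,\eta_j\rangle$ displayed immediately after Definition \ref{deframvf}, and using that $\eta_k \bmod \pi_g^*\mathcal{F}_g$ corresponds to $\omega_k^{\vee}$ under the symplectic identification $\pi_g^*\mathcal{H}_g/\pi_g^*\mathcal{F}_g \cong \pi_g^*\mathcal{F}_g^{\vee}$, one obtains exactly the values of $b_{kl}^{(ij)}$ predicted by condition (1).

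The one point that requires care, and is the main bookkeeping obstacle, is the passage from the local statements on étale charts of $\mathcal{B}_g$ (namely Remark \ref{explicite} and the symplectic-compatibility identity (\ref{C})) to their stack-theoretic counterparts over $\mathcal{B}_g$. This is however routine: the Gauss-Manin connection and the symplectic form $\langle \ , \ \rangle_{\lambda}$ are both compatible with étale base change by construction and Remark \ref{remarkbasechange}, so the local identities glue to give the desired universal statements over $\mathcal{B}_g$.
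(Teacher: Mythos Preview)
Your proposal is correct and follows essentially the same strategy as the paper: both identify condition (2) with membership in $\mathcal{R}_g$, use Remark \ref{explicite} to relate $\nabla_v\omega_k$ to $\kappa_{\pi_g}(v)$, and invoke the isomorphism (\ref{eq-ksisomramanujan}) for uniqueness. The only minor difference is that you deduce $\nabla_{v_{ij}}\omega_k \in \operatorname{span}(\eta_1,\ldots,\eta_g)$ from the compatibility (\ref{C}) applied to $\langle\omega_k,\eta_l\rangle = \delta_{kl}$, whereas the paper extracts this by evaluating the second factor of the Kodaira--Spencer identity (\ref{equiv1}) at $\eta_l$.
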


\begin{obs}\label{rem-rephrase}
 The matricial equation above is equivalent to conditions (1) and (2) below
\begin{enumerate}
   \item $\nabla_{v_{ij}}\omega_i = \eta_j$, $\nabla_{v_{ij}}\omega_j=\eta_i$, and $\nabla_{v_{ij}}\omega_k=0$ for $k \not\in\{i,j\}$.
   \item $\nabla_{v_{ij}}\eta_k =0$, for every $1\le k \le g$.
\end{enumerate}
\end{obs}

\begin{proof}[Proof of Proposition \ref{caracchamps}]
The vector fields $v_{ij}$ satisfy (2) in the above remark by definition of $\mathcal{R}_g$. Moreover, using the explicit expression of the Kodaira-Spencer morphism in Remark \ref{explicite}, we see that 
\begin{align} \label{equiv1}
\sum_{k=1}^g\langle \ \ , \eta_k \rangle \tensor \langle \ \ ,\nabla_{v_{ij}}\omega_k \rangle  =  \begin{cases}
               \langle \ \ , \eta_i\rangle  \tensor \langle \ \ , \eta_i \rangle & i=j\\
               \langle \ \ , \eta_i\rangle  \tensor \langle \ \ , \eta_j \rangle +  \langle \ \ , \eta_j\rangle  \tensor \langle \ \ , \eta_i \rangle & i <j
              \end{cases}
\end{align}
in $\Gamma^2(\pi_g^*\mathcal{F}_g^{\vee})$ for every $1\le i \le j \le g$. As $b_g$ is symplectic with respect to $\langle \ , \ \rangle$, by evaluating the second factors at $\eta_l$ for every $1\le l \le g$ in the above equation, we see that $\nabla_{v_{ij}}\omega_k$ lies in the subbundle of $\pi^*_g\mathcal{H}_g$ generated by $\eta_1,\ldots,\eta_g$, for every $1\le i\le j \le g$ and $1\le k \le g$.

Thus, to prove that the vector fields $v_{ij}$ satisfy (1), it is sufficient to prove that
\begin{align}
 \langle \omega_l, \nabla_{v_{ij}}\omega_i\rangle = \delta_{lj}\text{, }\langle \omega_l, \nabla_{v_{ij}}\omega_j\rangle = \delta_{li}\text{, and } \langle \omega_l, \nabla_{v_{ij}}\omega_k\rangle = 0\text{ for }k\not\in\{i,j\}
\end{align}  
for every $1\le l \le g$. This in turn follows immediately from (\ref{equiv1}) by evaluating the second factors at $\omega_l$.

To prove unicity, let $(w_{ij})_{1\le i \le j \le g}$ be a family of vector fields on $\mathcal{B}_g$ satisfying (1) and (2). It follows immediately from (2) that each $w_{ij}$ is a section of $\mathcal{R}_g$. Moreover, by the explicit expression of the Kodaira-Spencer morphism in Remark \ref{explicite}, the equations in (1) imply that
\begin{align*}
\kappa_{\pi_g}(w_{ij}) = \begin{cases}
               \langle \ \ , \eta_i \rangle \tensor \langle \ \ , \eta_i \rangle & i=j\\
               \langle \ \ , \eta_i \rangle\tensor \langle \ \ , \eta_j \rangle + \langle \ \ , \eta_j \rangle\tensor \langle \ \ , \eta_i \rangle & i <j 
              \end{cases}
\end{align*}
Since $\kappa_{\pi_g}: \mathcal{R}_g \to \Gamma^2(\pi_g^*\mathcal{F}_g^{\vee})$ is an isomorphism, we must have $w_{ij}=v_{ij}$.
\end{proof}

\begin{lemma}\label{corocommute1}
Let $S$ be a scheme, and $\theta$ be a section of $T_{\mathcal{B}_{g,S}/S}$ such that $\nabla_{\theta}\omega_i=\nabla_{\theta}\eta_i=0$ for every $1\le i \le g$. Then $\theta=0$.
\end{lemma}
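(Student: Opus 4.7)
The plan is to reduce the vanishing of $\theta$ to an injectivity assertion for the Kodaira--Spencer isomorphism composed with $T\pi_{g,S}$, exploiting the fact that the Ramanujan subbundle is horizontal.

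First, the hypothesis $\nabla_\theta \eta_i = 0$ for every $i$ is precisely the condition defining $\mathcal{R}_{g,S}$ (see Definition~\ref{defi-ramubbundsiegel}), so $\theta$ is a section of $\mathcal{R}_{g,S}\subset T_{\mathcal{B}_{g,S}/S}$. Next, the explicit formula for the Kodaira--Spencer morphism recalled in Remark~\ref{explicite} (applied to $\mathcal{U}=\mathcal{B}_{g,S}$ with $u=\pi_g$, and using that $(\omega_1,\ldots,\omega_g)$ is the tautological trivialization of $\pi_g^*\mathcal{F}_{g,S}$) gives
\begin{align*}
\kappa_{\pi_g}(\theta) \;=\; \sum_{i=1}^{g}\omega_i^{\vee}\otimes \langle \ \ ,\nabla_{\theta}\omega_i\rangle.
\end{align*}
The hypothesis $\nabla_\theta \omega_i = 0$ therefore forces $\kappa_{\pi_g}(\theta)=0$.

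To conclude, I would use the factorization $\kappa_{\pi_g}=\pi_g^{*}\kappa \circ T\pi_g$. Since $\kappa$ is an isomorphism on $\mathcal{A}_{g,S}$ (Paragraph~\ref{ksiso}), we deduce $T\pi_g(\theta)=0$, i.e.\ $\theta$ is a section of the vertical subbundle $T_{\mathcal{B}_{g,S}/\mathcal{A}_{g,S}}$. But by Theorem~\ref{thm-vertbg} we have the direct sum decomposition
\begin{align*}
T_{\mathcal{B}_{g,S}/S} \;=\; T_{\mathcal{B}_{g,S}/\mathcal{A}_{g,S}}\oplus \mathcal{R}_{g,S},
\end{align*}
so the membership $\theta \in \mathcal{R}_{g,S}\cap T_{\mathcal{B}_{g,S}/\mathcal{A}_{g,S}}$ forces $\theta=0$.

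No step should present any serious obstacle; the only thing to be slightly careful about is that the formula of Remark~\ref{explicite} was stated for a smooth scheme $U$ carrying a principally polarized abelian scheme, whereas here it is being applied at the level of the stack $\mathcal{B}_{g,S}$. This is harmless because the Kodaira--Spencer morphism is defined étale-locally and its formation is compatible with étale pullbacks (by the naturality recalled in Paragraph~\ref{kodairaspencer}), so one may verify the identity $\kappa_{\pi_g}(\theta)=0$ after pulling back along any étale presentation of $\mathcal{B}_{g,S}$, where the formula of Remark~\ref{explicite} applies verbatim.
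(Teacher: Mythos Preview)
Your proof is correct, and it takes a genuinely different route from the paper's. The paper writes $\theta$ in the explicit basis $(v_{ij})_{1\le i\le j\le g}$ of $\mathcal{R}_{g,S}$ and then uses the formulas of Proposition~\ref{caracchamps} (namely $\nabla_{v_{ij}}\omega_k\in\{0,\eta_i,\eta_j\}$) to inductively kill the coefficients $f_{ij}$ one row at a time. Your argument is more structural: it never invokes the Ramanujan vector fields or Proposition~\ref{caracchamps}, but goes straight through the Kodaira--Spencer isomorphism and the direct-sum decomposition of Theorem~\ref{thm-vertbg}. This buys some economy (fewer prerequisites, no computation), and in fact you could shorten it further: equation~(\ref{eq-ksisomramanujan}) already records that $\kappa_{\pi_g}$ restricts to an isomorphism on $\mathcal{R}_{g,S}$, so once you know $\theta\in\mathcal{R}_{g,S}$ and $\kappa_{\pi_g}(\theta)=0$ you are done without the detour through $T\pi_g(\theta)=0$ and the direct sum. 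The paper's explicit approach, on the other hand, has the minor advantage of being self-contained at the level of the frame $(v_{ij})$ and not appealing back to the Kodaira--Spencer machinery.
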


\begin{proof}
Let $\theta$ be as in the statement. Note that $\theta$ is in the subbundle $\mathcal{R}_{g,S}$ of $T_{\mathcal{B}_{g,S}/S}$; thus, there exist sections $(f_{ij})_{1\le i \le j \le g}$ of $\mathcal{O}_{\mathcal{B}_{g,S,\et}}$ such that
\begin{align*}
\theta = \sum_{1\le i \le j \le g}f_{ij}v_{ij}\text{.}
\end{align*}
We prove that each $f_{ij}=0$ by induction on $i$. For $i=1$, we have by Proposition \ref{caracchamps}
\begin{align*}
0 = \nabla_{\theta}\omega_1 = \sum_{1 \le i \le j \le g}f_{ij}\nabla_{v_{ij}}\omega_1 = \sum_{j=1}^gf_{1j}\eta_{j}\text{,} 
\end{align*}
thus $f_{1j}=0$ for every $1\le j \le g$. Let $2\le i_0 \le g$, and assume that $f_{ij}=0$ for every $i< i_0$ and $i\le j \le g$. From
\begin{align*}
0 = \nabla_{\theta}\omega_{i_0} = \sum_{i_0 \le i \le j \le g}f_{ij}\nabla_{v_{ij}}\omega_{i_0}= \sum_{j=i_0}^gf_{i_0j}\eta_j 
\end{align*}
we conclude that $f_{i_0j}=0$ for every $i_0\le j \le g$.
\end{proof}

Let $[ \ , \ ]$ denote the Lie bracket in $T_{\mathcal{B}_g/\ZZ}$.

\begin{coro} \label{corocommute}
 The higher Ramanujan vector fields commute. That is,
\begin{align*}
[v_{ij},v_{i'j'}] = 0
\end{align*}
for any $1\le i\le j \le g$ and $1\le i'\le j' \le g$.
\end{coro}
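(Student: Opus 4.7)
The natural strategy is to verify that $[v_{ij},v_{i'j'}]$ satisfies the vanishing hypotheses of Lemma \ref{corocommute1}, and conclude that it must be zero. The key ingredient is the \emph{integrability} (flatness) of the Gauss-Manin connection on $\pi_g^*\mathcal{H}_g$, which gives the identity
$$
\nabla_{[X,Y]} = [\nabla_X,\nabla_Y]
$$
as endomorphisms of $\pi_g^*\mathcal{H}_g$, for any pair of vector fields $X,Y$ on $\mathcal{B}_g$.

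First I would apply this identity with $X=v_{ij}$, $Y=v_{i'j'}$ to each of the sections $\omega_k$ and $\eta_k$ of $\pi_g^*\mathcal{H}_g$. By the characterization of the higher Ramanujan vector fields in Proposition \ref{caracchamps} (see Remark \ref{rem-rephrase}), we have $\nabla_{v_{i'j'}}\eta_k = 0$ for every $k$, so immediately
$$
\nabla_{[v_{ij},v_{i'j'}]}\eta_k = \nabla_{v_{ij}}\nabla_{v_{i'j'}}\eta_k - \nabla_{v_{i'j'}}\nabla_{v_{ij}}\eta_k = 0.
$$
Next, for each $k$, the section $\nabla_{v_{i'j'}}\omega_k$ is by Proposition \ref{caracchamps} either $0$ or equal to one of the $\eta_\ell$'s; since $\nabla_{v_{ij}}$ annihilates every $\eta_\ell$, we get $\nabla_{v_{ij}}\nabla_{v_{i'j'}}\omega_k = 0$. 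The symmetric argument gives $\nabla_{v_{i'j'}}\nabla_{v_{ij}}\omega_k = 0$, hence
$$
\nabla_{[v_{ij},v_{i'j'}]}\omega_k = 0.
$$

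Finally, I would invoke Lemma \ref{corocommute1} (applied with $S=\Spec \ZZ$): the vector field $\theta = [v_{ij},v_{i'j'}]$ satisfies $\nabla_\theta \omega_k = \nabla_\theta \eta_k = 0$ for every $k$, hence $\theta = 0$. This completes the proof. I expect no real obstacle here: the argument is completely formal once one records that Gauss-Manin is flat and that the Ramanujan vector fields have been pinned down by the very simple matrix equation of Proposition \ref{caracchamps}. The only mild subtlety is ensuring that the flatness identity $\nabla_{[X,Y]} = [\nabla_X,\nabla_Y]$ is available at the level of the stack $\mathcal{B}_g$, which follows by pulling back the integrability of the universal Gauss-Manin connection over $\mathcal{A}_g$ along $\pi_g$.
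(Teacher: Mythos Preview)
Your proof is correct and follows essentially the same approach as the paper's. The only cosmetic difference is that the paper first invokes the integrability of $\mathcal{R}_g$ (Remark \ref{rem-rgint}) to place $\theta=[v_{ij},v_{i'j'}]$ in $\mathcal{R}_g$, which gives $\nabla_\theta\eta_k=0$ for free by the definition of $\mathcal{R}_g$, and then checks only the $\omega_k$'s; you instead verify both $\nabla_\theta\eta_k=0$ and $\nabla_\theta\omega_k=0$ directly from flatness of $\nabla$ and Proposition \ref{caracchamps}, which is the same computation.
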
 

\begin{proof}
We already remarked that $\mathcal{R}_g$ is integrable (Remark \ref{rem-rgint}). In particular, for any $1\le i\le j \le g$ and any $1\le i'\le j'\le g$, the vector field $\theta\defeq [v_{ij},v_{i'j'}]$ is a section of $\mathcal{R}_g$. By Lemma \ref{corocommute1}, to prove that $\theta=0$, it is sufficient to prove that $\nabla_{\theta}\omega_k=0$ for every $1\le k \le g$.

We have
\begin{align*}
\nabla_{\theta} \omega_k  = \nabla_{v_{ij}}(\nabla_{v_{i'j'}}\omega_k) - \nabla_{v_{i'j'}}(\nabla_{v_{ij}}\omega_k)\text{.}
\end{align*}
It follows from Proposition \ref{caracchamps} that $\nabla_{v_{i'j'}}\omega_k$ (resp. $\nabla_{v_{ij}}\omega_k$) is an element of $\{0,\eta_1,\ldots,\eta_g\}$; hence $\nabla_{v_{ij}}(\nabla_{v_{i'j'}}\omega_k) = 0$ (resp. $\nabla_{v_{i'j'}}(\nabla_{v_{ij}}\omega_k) = 0$).
\end{proof}

% \begin{obs}
% For $1\le i\le j \le g$, let $v_{ij}^{\vee}$ be the global section of $\Omega^1_{\mathcal{B}_g/\ZZ}$ dual to $v_{ij}$. By defining $v_{ji}^{\vee} = v_{ij}^{\vee}$, we obtain a global section $v^\vee = (v_{ij}^{\vee})_{1\le i,j\le g}$ of $M_{g\times g}(\Omega^1_{\mathcal{B}_g/\ZZ})$. Then, in matricial notation (see \ref{matricialnotation}), if $\omega = (\omega_1 \  \cdots \ \omega_g)$ and $\eta = (\eta_1 \ \cdots \ \eta_g)$ are considered as row vectors of order $g$, the conditions in the corollary above are equivalent to $\nabla \omega\transp = v^{\vee}\tensor \eta\transp$ and $\nabla \eta\transp = 0$.
% \end{obs}

\subsection{The higher Ramanujan vector fields on $\mathcal{B}_F$} \label{subsec-hrvfhb}

We argue as in the Siegel case: since the Ramanujan subbundle $\mathcal{R}_F\subset T_{\mathcal{B}_F/\ZZ}$ is horizontal with respect to $\pi_F: \mathcal{B}_F\to \mathcal{A}_F$, the tangent map
$$
T\pi_F : \mathcal{R}_{F} \to\pi_F^*T_{\mathcal{A}_F/\ZZ}
$$
is an isomorphism. By composing it with the Kodaira-Spencer isomorphism for $\mathcal{A}_F$, we obtain an isomorphism
$$
\kappa_{\pi_F}:\mathcal{R}_{F} \stackrel{\sim}{\to} \Gamma^2_{\mathcal{O}_{\mathcal{A}_{F,\et}}\tensor R}(\pi_F^*\mathcal{F}^*_F)\tensor_R D^{-1}\text{.}
$$

\label{symb:universalbF}

Let $b_F\defeq (\omega_F,\eta_F)$ be the ``universal" symplectic-Hodge basis over $\mathcal{B}_F$. By duality, the trivialization of $\pi_F^*\mathcal{F}_F$ as a (rank 1) $\mathcal{O}_{\mathcal{B}_{F,\et}}\tensor R$-module given by $\omega_F$ induces a trivialization of $\pi_F^*\mathcal{F}_F^*$. As the $\ZZ$-module $\Gamma^2(\ZZ)$ may be canonically identified with $\ZZ$, we then obtain an isomorphism (of $\mathcal{O}_{\mathcal{B}_{F,\et}}$-modules)
\begin{align}\label{eq-trivrk}
 \mathcal{R}_{F} \stackrel{\sim}{\to} \mathcal{O}_{\mathcal{B}_{F,\et}}\tensor D^{-1}\text{.}
\end{align}
% Observe that any choice of $\ZZ$-basis of $D^{-1}$ determines a trivialization of $\mathcal{R}_F$. We shall not fix such basis.

\label{symb:vF}
\begin{defi}
 The \emph{higher Ramanujan vector field} over $\mathcal{B}_F$ is the $\mathcal{O}_{\mathcal{B}_{F,\et}}$-isomorphism
 $$
 v_F : \mathcal{O}_{\mathcal{B}_{F,\et}}\tensor D^{-1} \stackrel{\sim}{\to} \mathcal{R}_{F}
 $$
 given by the inverse of (\ref{eq-trivrk}).
\end{defi}

Strictly speaking, $v_F$ is not a vector field on $\mathcal{B}_F$, but for any fixed choice of $\ZZ$-basis of $D^{-1}$ it determines $g$ \emph{bona fide} vector fields trivializing $\mathcal{R}_F$.

If we endow the tangent bundle $T_{\mathcal{B}_F/\ZZ}= T_{\mathcal{B}_F/\mathcal{A}_F}\oplus \mathcal{R}_F$ with the $R$-multiplication induced by the isomorphisms (\ref{eq-isomvertbk}) and (\ref{eq-trivrk}), then $v_F$ is $\mathcal{O}_{\mathcal{B}_{F,\et}}\tensor R$-linear, and can be thought as a global section of $T_{\mathcal{B}_F/\ZZ}\tensor_R D$. 

As the Gauss-Manin connection on $\pi_F^*\mathcal{H}_F$ is $R$-linear, it induces, for any fractional ideal $I\subset F$, an $\mathcal{O}_{\mathcal{B}_{F,\et}}\tensor R$-morphism
$$
T_{\mathcal{B}_F/\ZZ}\tensor_RI \to \mathcal{H}om_R(\pi_F^*\mathcal{H}_F,\pi_F^*\mathcal{H}_F\tensor_R I)\text{.}
$$

We omit the proof of the analogous of Proposition \ref{caracchamps}.

\begin{prop}\label{caracchampshb}
 The higher Ramanujan vector field $v_F$ is the unique global section of $T_{\mathcal{B}_F/\ZZ}\tensor_RD$ such that $\nabla_{v_F}\omega = \eta$ and $\nabla_{v_F}\eta = 0$. \hfill $\blacksquare$
\end{prop}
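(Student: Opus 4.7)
The plan is to adapt the proof of Proposition~\ref{caracchamps} to the Hilbert--Blumenthal setting, replacing the rank-$g$ Hodge bundle by the rank-$1$ $\mathcal{O}_{\mathcal{B}_{F,\et}}\tensor R$-module $\pi_F^*\mathcal{F}_F$ and paying careful attention to the various twists by the different ideal $D$. First I would verify that $v_F$ satisfies the two stated equations, then deduce uniqueness from the fact that $\kappa_{\pi_F}$ is an isomorphism on $\mathcal{R}_F$.

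The identity $\nabla_{v_F}\eta = 0$ is immediate: by construction $v_F$, regarded as a section of $T_{\mathcal{B}_F/\ZZ}\tensor_R D$, lies in $\mathcal{R}_F\tensor_R D$, and $\mathcal{R}_F = \ker P$ where $P$ is the morphism $\theta \mapsto \nabla_\theta \eta$ from Theorem~\ref{thm-isomvertbk}; the claim follows by $R$-linear extension of $P$ to $T_{\mathcal{B}_F/\ZZ}\tensor_R D$. For the identity $\nabla_{v_F}\omega = \eta$, I would apply the explicit Kodaira--Spencer formula of Remark~\ref{rem-formulaksrm}, which gives $\kappa_{\pi_F}(\theta) = \Psi_\lambda(\omega,\nabla_\theta\omega)\,\omega^*\tensor\omega^*$. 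By the very definition of $v_F$ as the inverse of (\ref{eq-trivrk}), the element $\kappa_{\pi_F}(v_F)$ corresponds to $\omega^*\tensor\omega^*$ after the canonical identification of $\Gamma^2_{\mathcal{O}_{\mathcal{B}_{F,\et}}\tensor R}(\pi_F^*\mathcal{F}_F^*)\tensor_R D^{-1}$ with $\mathcal{O}_{\mathcal{B}_{F,\et}}\tensor D^{-1}$ induced by $\omega$; comparing yields $\Psi_\lambda(\omega,\nabla_{v_F}\omega) = 1$ in $\mathcal{O}_{\mathcal{B}_{F,\et}}\tensor R$. Combining horizontality of $\Psi_\lambda$ (formula~(\ref{C})) applied to the constant $\Psi_\lambda(\omega,\eta)=1$ with $\nabla_{v_F}\eta = 0$ then gives $\Psi_\lambda(\nabla_{v_F}\omega,\eta) = 0$. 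Setting $\alpha \defeq \nabla_{v_F}\omega - \eta \in \pi_F^*\mathcal{H}_F\tensor_R D$, we obtain $\Psi_\lambda(\omega,\alpha) = 0$ and $\Psi_\lambda(\alpha,\eta) = 0$ (using $\Psi_\lambda(\eta,\eta)=0$). Working locally to trivialize $D$ so that $\eta = \tilde\eta\tensor d_0$ with $(\omega,\tilde\eta)$ an $\mathcal{O}_{\mathcal{B}_{F,\et}}\tensor R$-basis of $\pi_F^*\mathcal{H}_F$, the perfectness of $\Psi_\lambda$ forces $\alpha = 0$, hence $\nabla_{v_F}\omega = \eta$ globally.

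Uniqueness is then straightforward: any section $v$ of $T_{\mathcal{B}_F/\ZZ}\tensor_R D$ satisfying the two equations must lie in $\mathcal{R}_F\tensor_R D$ (by the first), and the Kodaira--Spencer formula applied to $\nabla_v\omega = \eta$, together with $\Psi_\lambda(\omega,\eta) = 1$, forces $\kappa_{\pi_F}(v) = \omega^*\tensor\omega^*$; since $\kappa_{\pi_F}$ restricts to an isomorphism on $\mathcal{R}_F$, this determines $v$ uniquely. The main obstacle is bookkeeping rather than geometric: one must carefully track the twists by $D$, $D^{-1}$, and $D\tensor_R D$ so that each instance of $\Psi_\lambda$ and $\nabla$ lands in the module where the comparison is being made, and so that the natural pairing $D^{-1}\tensor_R D \cong R$ is applied consistently. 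Once this accounting is aligned, the argument mirrors exactly that of the Siegel case.
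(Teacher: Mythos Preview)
Your proof is correct and follows exactly the approach the paper intends: the paper omits this proof, stating only that it is analogous to that of Proposition~\ref{caracchamps}, and you have carried out precisely that analogy with the appropriate bookkeeping for the twists by $D$. One minor remark: formula~(\ref{C}) is stated for $\langle\ ,\ \rangle_\lambda$ rather than for $\Psi_\lambda$, but horizontality of $\Psi_\lambda$ follows from~(\ref{C}) together with the $R$-linearity of $\nabla$ and the uniqueness characterization $\Tr\Psi_\lambda = \langle\ ,\ \rangle_\lambda$.
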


\begin{obs}\label{rem-compatsieglhbhre}
  As an application of Propositions \ref{caracchamps} and \ref{caracchampshb}, we can compute the effect of the morphism $f_t: \mathcal{B}_F \to \mathcal{B}_g$ of Remark \ref{rem-relationbfbg} on the higher Ramanujan vector fields. Namely, one may check that the following diagram commutes
  $$
  \begin{tikzcd}
    T_{\mathcal{B}_F/\ZZ} \arrow{r}{Tf_T} & f_t^*T_{\mathcal{B}_g/\ZZ}\\
    \mathcal{O}_{\mathcal{B}_F}\tensor D^{-1} \arrow{u}{v_F} \arrow{r}& f_t^*(\mathcal{O}_{\mathcal{B}_g}\tensor \Sym_g(\ZZ)) \arrow{u}[swap]{f_t^*(v_{ij})_{1\le i \le j \le g}}
  \end{tikzcd}
  $$
  where the bottom arrow is induced by the morphism of abelian groups
  \begin{align*}
    D^{-1} &\to {\Sym}_g(\ZZ)\\
         x &\mapsto (\Tr(r_ir_jx))_{1\le i,j\le g}\text{.}
  \end{align*}
\end{obs}

\section{Integral solution of the higher Ramanujan equations} \label{sec-intsol}

In this section, we define the \emph{higher Ramanujan equations} over $\mathcal{B}_g$ and $\mathcal{B}_F$, and we construct particular solutions of such differential equations defined over $\ZZ$. The definition of these solutions is based on Mumford's construction of degenerating families of abelian varieties, which we shall not recall in detail. Besides Mumford's original paper \cite{mumford72}, the reader may consult \cite{chai89} 2.3 and \cite{FC90} III as general references.

Our main theorems here, whose statement are purely algebraic, are immediate corollaries of their analytic counterparts to be proved in Section \ref{sec-analhre}. 

\subsection{Higher Ramanujan equations over $\mathcal{B}_g$}

Let $1\le i\le j\le g$, and $q_{ij}$ be formal variables. For any commutative ring $\Lambda$, we denote the ring of formal power series in the variables $q_{ij}$ with coefficients in $\Lambda$ by 
$$
\Lambda[\![q_{ij}]\!] \defeq \Lambda [\![q_{ij}\text{ ; } 1\le i\le j\le g ]\!]\text{.}
$$
We set \label{symb:Zqij}
$$
\Lambda (\!( q_{ij} )\!) \defeq \Lambda [\![ q_{ij}]\!][(\prod_{1\le i\le j\le g}q_{ij})^{-1}]\text{.}
$$

Recall that every $\Lambda$-derivation of $\Lambda[\![q_{ij}]\!]$ is continuous for the linear topology given by the ideal generated by the $q_{ij}$, and that $\Der_{\Lambda}(\Lambda[\![q_{ij}]\!])$ is freely generated by $\frac{\partial}{\partial q_{ij}}$. In particular, as each $q_{ij}$ is invertible in $\Lambda (\!(q_{ij})\!)$, the derivations \label{symb:thetaijalg}
$$
\theta_{ij}\defeq q_{ij}\frac{\partial}{\partial q_{ij}}\text{, }\ \ \ 1\le i\le j\le g
$$
of $\Lambda (\!(q_{ij})\!)$ form a basis of the $\Lambda (\!(q_{ij})\!)$-module $\Der_{\Lambda}(\Lambda (\!(q_{ij})\!))$.

\begin{defi}
 A \emph{solution of the higher Ramanujan equations over $\mathcal{B}_g$ defined over $\Lambda$} is a $\Lambda$-morphism (of Deligne-Mumford stacks over $\Lambda$)
 $$
 \hat{\varphi} : \Spec \Lambda (\!( q_{ij})\!) \to \mathcal{B}_{g,\Lambda}
 $$
 such that
 $$
 T\hat{\varphi}(\theta_{ij}) = \hat{\varphi}^*v_{ij}\text{, } \ \ \  1\le i\le j \le g\text{.}
 $$
\end{defi}

A morphism $\hat{\varphi}:\Spec \Lambda (\!( q_{ij})\!) \to \mathcal{B}_{g,\Lambda}$ as above corresponds to a principally polarized abelian scheme $(X,\lambda)$ over $\Lambda (\!( q_{ij})\!)$ endowed with a symplectic-Hodge basis $b$. Let $\nabla$ be the Gauss-Manin connection over $H^1_{\dR}(X/\Lambda(\!(q_{ij})\!))$.

\begin{prop}\label{prop-equivrameq}
 With the above notation, $\hat{\varphi}:\Spec \Lambda (\!( q_{ij})\!) \to \mathcal{B}_{g,\Lambda}$ is a solution of the higher Ramanujan equations over $\mathcal{B}_g$ defined over $\Lambda$ if and only if
 $$
 \nabla_{\theta_{ij}}b = b\left(\begin{array}{cc}
                              0 & 0 \\
                              \mathbf{E}^{ij} & 0
                             \end{array}\right)
 $$
 for every $1\le i\le j\le g$.
\end{prop}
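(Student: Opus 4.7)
The proposition is essentially a translation, via functoriality of the Gauss–Manin connection, of the characterization of the universal vector fields $v_{ij}$ provided by Proposition \ref{caracchamps}. The core input is that $\hat{\varphi}^{*}\nabla$ (the pullback of the universal Gauss–Manin connection on $\pi_{g}^{*}\mathcal{H}_{g}$) agrees with the Gauss–Manin connection on $H^{1}_{\dR}(X/\Lambda(\!(q_{ij})\!))$, together with the identity
\[
\nabla_{T\hat{\varphi}(\theta)}(\hat{\varphi}^{*}s) \;=\; \hat{\varphi}^{*}(\nabla_{v} s)
\]
whenever $T\hat{\varphi}(\theta)=\hat{\varphi}^{*}v$; both assertions are standard, and in our setting are allowed because $\pi_{g}:\mathcal{B}_{g}\to\mathcal{A}_{g}$ is representable by schemes so that $\hat{\varphi}$ is in particular representable.

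For the forward direction, I would assume $T\hat{\varphi}(\theta_{ij})=\hat{\varphi}^{*}v_{ij}$. Pulling back the matricial identity $\nabla_{v_{ij}}b_{g}=b_{g}\bigl(0\,0;\mathbf{E}^{ij}\,0\bigr)$ of Proposition \ref{caracchamps} along $\hat{\varphi}$ and invoking the compatibility recalled above yields directly
\[
\nabla_{\theta_{ij}}b \;=\; \hat{\varphi}^{*}(\nabla_{v_{ij}}b_{g}) \;=\; b\left(\begin{array}{cc} 0 & 0 \\ \mathbf{E}^{ij} & 0 \end{array}\right),
\]
which is the desired conclusion.

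For the converse, assume the matricial identity and set
\[
w_{ij}\;\defeq\; T\hat{\varphi}(\theta_{ij}) - \hat{\varphi}^{*}v_{ij}\;\in\;\Gamma(\Spec \Lambda(\!(q_{ij})\!),\,\hat{\varphi}^{*}T_{\mathcal{B}_{g}/\Lambda}).
\]
The plan is to show $w_{ij}=0$ by mimicking the proof of Lemma \ref{corocommute1} in the pulled back setting. First, combining the hypothesis and Proposition \ref{caracchamps} yields $(\hat{\varphi}^{*}\nabla)_{w_{ij}}\eta_{k}=0$ for every $k$; this places $w_{ij}$ in $\hat{\varphi}^{*}\mathcal{R}_{g}$. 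Since $\mathcal{R}_{g}$ is trivialized over $\mathcal{B}_{g}$ by $(v_{i'j'})_{i'\le j'}$ (Definition \ref{deframvf}), one may write $w_{ij}=\sum_{i'\le j'}f_{i'j'}\,\hat{\varphi}^{*}v_{i'j'}$ for unique $f_{i'j'}\in\Lambda(\!(q_{ij})\!)$. The remaining equalities $(\hat{\varphi}^{*}\nabla)_{w_{ij}}\omega_{k}=0$, which again follow from the hypothesis and Proposition \ref{caracchamps}, combined with the formulas $\nabla_{v_{i'j'}}\omega_{i'}=\eta_{j'}$, $\nabla_{v_{i'j'}}\omega_{j'}=\eta_{i'}$, and $\nabla_{v_{i'j'}}\omega_{k}=0$ for $k\notin\{i',j'\}$, reduce everything to the elementary inductive argument in the proof of Lemma \ref{corocommute1} (using that the $\eta_{j}$ are linearly independent sections of $H^{1}_{\dR}(X/\Lambda(\!(q_{ij})\!))$), forcing each $f_{i'j'}$ to vanish.

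The main thing to be careful about is the bookkeeping between sections of $T_{\mathcal{B}_{g}/\Lambda}$, their pullbacks in $\hat{\varphi}^{*}T_{\mathcal{B}_{g}/\Lambda}$, and the image $T\hat{\varphi}(\theta_{ij})$, and in particular verifying that the decomposition $T_{\mathcal{B}_{g}/\ZZ}=T_{\mathcal{B}_{g}/\mathcal{A}_{g}}\oplus\mathcal{R}_{g}$ (Theorem \ref{thm-vertbg}), together with the trivialization of $\mathcal{R}_{g}$ by the $v_{ij}$, remains faithful after base change along $\hat{\varphi}$. Once this is in place, no genuine obstacle remains: everything else is formal from Proposition \ref{caracchamps} and the compatibility of Gauss–Manin with pullback.
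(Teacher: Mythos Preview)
Your argument is correct. The forward direction is essentially the same as the paper's, but for the converse you take a different route: you form the difference $w_{ij}=T\hat{\varphi}(\theta_{ij})-\hat{\varphi}^{*}v_{ij}$, show it lands in $\hat{\varphi}^{*}\mathcal{R}_g$, and then rerun the inductive computation of Lemma \ref{corocommute1} after pullback. The paper instead uses the \emph{injective} morphism $(\kappa_{\pi_g},P):T_{\mathcal{B}_{g,\Lambda}/\Lambda}\hookrightarrow \Gamma^{2}(\pi_{g}^{*}\mathcal{F}_{g}^{\vee})\oplus\pi_{g}^{*}\mathcal{H}_{g}^{\oplus g}$ from Theorem \ref{thm-vertbg}: since this realizes the tangent bundle as a subbundle, the equality $T\hat{\varphi}(\theta_{ij})=\hat{\varphi}^{*}v_{ij}$ can be tested after applying the pulled-back $(\kappa_{\pi_g},P)$, and both sides are then identified via Proposition \ref{caracchamps} (and the explicit Kodaira--Spencer formula of Remark \ref{explicite}) with the data encoded in the matricial identity. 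This packages both implications into one step and avoids the induction; conversely, your approach is more self-contained in that it does not invoke the Kodaira--Spencer component explicitly. One small notational point: your expression $(\hat{\varphi}^{*}\nabla)_{w_{ij}}$ is not literally a covariant derivative (since $w_{ij}$ lives in $\hat{\varphi}^{*}T_{\mathcal{B}_g}$ rather than $T_{\Spec\Lambda(\!(q_{ij})\!)}$), but as the $\mathcal{O}$-linear contraction of $\hat{\varphi}^{*}(\nabla s)$ against $w_{ij}$ it is exactly what you need, and with that reading the argument goes through.
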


\begin{proof}
 For any (formally) smooth scheme $U$ over $\Lambda$ and any object $(X,\lambda,b)_{/U}$ of $\mathcal{B}_g(U)$, with $b=(\omega_1,\ldots,\omega_g,\eta_1,\ldots,\eta_g)$, we may consider the $\mathcal{O}_U$-morphism
 \begin{align*}
 \rho:T_{U/\Lambda}&\to \Gamma^2(F^1(X/U)^{\vee})\oplus H^1_{\dR}(X/U)^{\oplus g}\\
   \theta &\mapsto (\kappa(\theta), \nabla_{\theta}\eta_1,\ldots,\nabla_{\theta}\eta_g)\text{.}
 \end{align*}
 This construction is compatible with base change in $U$; in particular, if $u: U \to \mathcal{B}_{g,\Lambda}$ is the morphism associated to $(X,\lambda,b)_{/U}$, we get a commutative diagram
 $$
 \begin{tikzcd}
  T_{U/\Lambda} \arrow{r}{Tu}\arrow{d}[swap]{\rho} & u^*T_{\mathcal{B}_{g,\Lambda}/\Lambda}\arrow{d}\\
  \Gamma^2(F^1(X/U)^{\vee})\oplus H^1_{\dR}(X/U)^{\oplus g} \arrow{r}{\sim}& u^*(\Gamma^2(\pi_{g,\Lambda}^*\mathcal{F}_{g,\Lambda}^{\vee})\oplus \pi_{g,\Lambda}^*\mathcal{H}_{g,\Lambda}^{\oplus g})
 \end{tikzcd}
 $$
 where the arrow on the right is the pullback by $u$ of the morphism
 $$
 (\kappa_{\pi_g},P): T_{\mathcal{B}_{g,\Lambda}/\Lambda}\to \Gamma^2(\pi_{g,\Lambda}^*\mathcal{F}_{g,\Lambda}^{\vee})\oplus \pi_{g,\Lambda}^*\mathcal{H}_{g,\Lambda}^{\oplus g}
 $$
 which identifies $T_{\mathcal{B}_{g,\Lambda}/\Lambda}$ with the subbundle $\Gamma^2(\pi_{g,\Lambda}^*\mathcal{F}_{g,\Lambda}^{\vee})\oplus \mathcal{S}_{g,\Lambda}$ of $\Gamma^2(\pi_{g,\Lambda}^*\mathcal{F}_{g,\Lambda}^{\vee})\oplus \pi_{g,\Lambda}^*\mathcal{H}_{g,\Lambda}^{\oplus g}$ by Theorem \ref{thm-vertbg} (cf. Paragraph \ref{ssec-hrvf}).
 
 By taking $u=\hat{\varphi} : \Spec \Lambda (\!( q_{ij})\!) \to \mathcal{B}_{g,\Lambda}$ in the above construction, we observe that $\hat{\varphi}$ is a solution of the higher Ramanujan equations if and only if
 $$
 \rho(\theta_{ij}) = \hat{\varphi}^*(\kappa_{\pi_g}(v_{ij}),P(v_{ij}))
 $$
 for every $1\le i\le j\le g$. By the definition of $\rho$, our statement now follows from  Proposition \ref{caracchamps}.
\end{proof}

\subsection{Integral solution of the higher Ramanujan equations; Siegel case}\label{subsec-mumfordsiegel}

Let $K\defeq \Frac \ZZ[\![q_{ij}]\!]$. Consider the ``period subgroup"
$$
Y\defeq \langle (q_{1j},\ldots,q_{gj}) \mid 1\le j \le g\rangle \subset \GG_m^g(K)\text{,}
$$
and let
$$
\phi: Y \to  \ZZ^g(\cong{\Hom}_{\textsf{GpSch}}(\GG_m^g,\GG_m))
$$
be the unique group isomorphism such that
$$
\phi(q_{1j},\ldots,q_{gj}) = \mathbf{e}_j\text{, }\ \ \ 1\le j \le g\text{.}
$$
Then, Mumford's construction \cite{mumford72} (cf. \cite{chai89} 2.3, \cite{FC90} V.1) canonically attaches to $(\GG_m^g,Y,\phi)$ a principally polarized semi-abelian scheme $(G,\lambda)$ over $\ZZ[\![q_{ij}]\!]$ of relative dimension $g$. The restriction of $(G,\lambda)$ to $\ZZ(\!(q_{ij})\!)$ is a principally polarized abelian scheme that we denote by $(\hat{X}_g,\hat{\lambda}_g)$. \label{symb:hatXg}

If we denote $\GG_m^g = \Spec \ZZ[t_1^{\pm 1},\ldots,t_g^{\pm 1}]$, then the Hodge subbundle $F^1(\hat{X}_g/\ZZ(\!(q_{ij})\!))$ is canonically trivialized by
$$
\hat{\omega}_k \defeq \frac{dt_k}{t_k}\text{, }\ \ \ 1\le k\le g\text{.}
$$

\begin{obs}
 For $g=1$, $\hat{X}_1$ is known as the \emph{Tate elliptic curve} over $\ZZ(\!(q)\!)$, and $\hat{\omega} = dt/t$ is its ``canonical differential form". See \cite{deligne75}, Paragraph 8, for an explicit algebraic equation of $\hat{X}_1$.
\end{obs}

\label{symb:hatphig}
\begin{theorem}\label{thm-intsolsiegel}
  Let $\nabla$ be the Gauss-Manin connection on $H^1_{\dR}(\hat{X}_g/\ZZ(\!(q_{ij})\!))$ and, for $1\le k \le g$, define
$$
\hat{\eta}_k \defeq \nabla_{\theta_{kk}}\hat{\omega}_k \in H^1_{\dR}(\hat{X}_g/\ZZ(\!(q_{ij})\!))\text{.}
$$
Then:
\begin{enumerate}
 \item The $2g$-uple $\hat{b}_g \defeq (\hat{\omega}_1,\ldots,\hat{\omega}_g,\hat{\eta}_1,\ldots,\hat{\eta}_g)$ is a symplectic-Hodge basis of $(\hat{X}_g,\hat{\lambda}_g)_{/\ZZ(\!(q_{ij})\!)}$.
 \item The morphism of Deligne-Mumford stacks
 $$
 \hat{\varphi}_g: \Spec \ZZ(\!(q_{ij})\!)\to \mathcal{B}_g
 $$
 given by $(\hat{X}_g,\hat{\lambda}_g, \hat{b}_g)_{/\ZZ(\!(q_{ij})\!)}$ is a solution of the higher Ramanujan equations over $\mathcal{B}_g$ defined over $\ZZ$.
\end{enumerate}\hfill $\blacksquare$
\end{theorem}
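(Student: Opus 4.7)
The plan is to reduce the algebraic statement to an explicit analytic computation, exploiting the fact that Mumford's construction $(\hat X_g, \hat\lambda_g)$ over $\ZZ(\!(q_{ij})\!)$, after base change to $\CC$ and analytification, recovers the restriction of the universal analytic family $\mathbf{X}_g \to \mathbf{H}_g$ to a suitable neighbourhood of the cusp at infinity, under the change of variables $q_{ij} = e^{2\pi i \tau_{ij}}$.

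First, I would carry out the computation analytically. For $\tau \in \mathbf{H}_g$, let $\alpha_1,\ldots,\alpha_g,\beta_1,\ldots,\beta_g$ be the standard symplectic $\ZZ$-basis of $H_1(X_\tau,\ZZ)$ given by the columns of $\mathbf{1}_g$ and of $\tau$, and let $\alpha_k^{*},\beta_k^{*}$ denote their $\CC$-duals in $H^1(X_\tau,\CC)$; these extend to flat sections of $H^1_{\dR}(\mathbf{X}_g/\mathbf{H}_g)$ for the Gauss--Manin connection. A direct integration on $\CC^g$ gives
\begin{align*}
\hat\omega_k \;=\; \frac{dt_k}{t_k} \;=\; 2\pi i\,dz_k \;=\; 2\pi i\,\alpha_k^{*} \;+\; 2\pi i \sum_{l=1}^{g}\tau_{kl}\,\beta_l^{*}\text{.}
\end{align*}
Under $q_{ij}=e^{2\pi i\tau_{ij}}$, the derivation $\theta_{ij}$ corresponds to $\frac{1}{2\pi i}\partial/\partial\tau_{ij}$, and using $\partial \tau_{kl}/\partial \tau_{ij} = \mathbf{E}^{ij}_{kl}$, one finds $\nabla_{\theta_{ij}}\hat\omega_k = \sum_l \mathbf{E}^{ij}_{kl}\,\beta_l^{*}$. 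In particular $\hat\eta_k = \nabla_{\theta_{kk}}\hat\omega_k = \beta_k^{*}$ is flat, so $\nabla_{\theta_{ij}}\hat\eta_k = 0$ for every $k$, and the matrix identity
\begin{align*}
\nabla_{\theta_{ij}}\hat b_g \;=\; \hat b_g\left(\begin{array}{cc} 0 & 0 \\ \mathbf{E}^{ij} & 0 \end{array}\right)
\end{align*}
follows at once. By Proposition \ref{prop-equivrameq}, this is equivalent to assertion (2).

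Second, the symplectic-Hodge property (1) follows by expressing $\langle\,,\,\rangle_\lambda$ in the basis $(\alpha_k^{*},\beta_l^{*})$: since $\hat\lambda_g$ comes from the standard unimodular form on $\ZZ^{2g}$, only $\langle \alpha_k^{*},\beta_l^{*}\rangle_\lambda$ is nontrivial (modulo a factor of $2\pi i$ built into the normalisation $\hat\omega_k = dt_k/t_k$); then $\langle\hat\omega_i,\hat\omega_j\rangle_\lambda = 0$ reduces to the symmetry $\tau_{ij}=\tau_{ji}$, while $\langle\hat\eta_i,\hat\eta_j\rangle_\lambda=0$ and $\langle\hat\omega_i,\hat\eta_j\rangle_\lambda=\delta_{ij}$ drop out directly.

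Third, to transfer these analytic identities to the algebraic setting over $\ZZ(\!(q_{ij})\!)$, I would invoke a GAGA-type comparison identifying, under $q_{ij}=e^{2\pi i\tau_{ij}}$, the algebraic Gauss--Manin connection on $H^1_{\dR}(\hat X_g/\ZZ(\!(q_{ij})\!))\otimes\CC$ with its analytic counterpart over the appropriate domain $U\subset\mathbf{H}_g$, compatibly with $\hat\omega_k$ and $\hat\lambda_g$. Since the Laurent expansion map from $\ZZ(\!(q_{ij})\!)$ into the ring of holomorphic functions on $U$ is injective, any connection-theoretic identity with coefficients a priori in $\ZZ(\!(q_{ij})\!)$ that holds analytically also holds algebraically. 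The main technical obstacle is precisely this comparison: matching Mumford's algebraic construction with the analytic universal family \emph{together with all its extra structures} (polarization, canonical differentials, Gauss--Manin connection) is nontrivial and constitutes the substance of Section \ref{sec-analhre}; once it is in place, the preceding analytic calculation closes the argument.
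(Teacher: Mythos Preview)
Your proposal is correct and follows essentially the same strategy as the paper: reduce the algebraic statement over $\ZZ(\!(q_{ij})\!)$ to an analytic computation on $\mathbf{H}_g$ via the comparison explained in Paragraph~\ref{subsec-compatsiegel}, and deduce (2) from Proposition~\ref{prop-equivrameq} (whose analytic analog is Proposition~\ref{equivalences}).

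There is one mild but genuine technical difference worth noting. You work directly in the flat dual basis $(\alpha_k^*,\beta_k^*)$ of $H^1(X_\tau,\CC)$, write $\hat\omega_k = 2\pi i\,\alpha_k^* + 2\pi i\sum_l\tau_{kl}\beta_l^*$, differentiate, and read off $\hat\eta_k = \beta_k^*$ immediately; the symplectic relations for (1) then drop out from the pairing formula $\langle\alpha_k^*,\beta_l^*\rangle = \frac{1}{2\pi i}\delta_{kl}$. The paper instead computes in the $C^\infty$ trivialization $(dz_k, d\bar z_k)$: Proposition~\ref{lemme1} determines $\nabla_{\theta_{ij}}\bfomega_k$ by the same period integrals you use but solves for its coefficients in this $C^\infty$ basis, and only afterwards (Corollary~\ref{caraceta}) recognizes $\bfeta_k$ as the horizontal section $E_g(\gamma_k,\cdot)$; the pairing in (1) is then computed via Lemma~\ref{lemme2}, which is more laborious. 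Your route is shorter because it never leaves the horizontal frame, whereas the paper's detour through $(\Im\tau)^{-1}$ and $\Im dz_l$ yields the additional explicit $C^\infty$ formula for $\bfeta_k$, which is not needed here. Both approaches correctly identify the algebraic--analytic compatibility of Section~\ref{sec-analhre} as the nontrivial input.
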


This result follows directly from its complex analytic counterpart (Theorem \ref{theoremsolution}); see Paragraph \ref{subsec-compatsiegel}.

\begin{obs}\label{rem-explicitcoords}
For concreteness, we have chosen to work with the ``coordinates'' $q_{ij}$ as above. We refer to \cite{FC90}, p. 138-139, for a discussion on how to generalize some of the above constructions to more general coordinate rings $\ZZ[\![S^2(\ZZ^g)\cap \sigma^{\vee}]\!]$ associated to a rational polyhedral cone $\sigma$ in the cone of positive definite symmetric bilinear forms on $\RR^g$.
\end{obs}

\begin{obs}
Note that Mumford's construction yields a semi-abelian scheme over $\Spec \ZZ[\![q_{ij}]\!]$ which only becomes an abelian scheme (so that it fits into our framework) after inverting $q_{ij}$. This explains why our solution $\hat{\varphi}_g$ of the higher Ramanujan equations is only defined over $\Spec \ZZ(\!(q_{ij})\!)$. See also Remark \ref{rem-conditionatinfinity}. 
\end{obs}

\subsection{Higher Ramanujan equations over $\mathcal{B}_F$}\label{subsec-hrebf}

From now on, for simplicity, we fix a $\ZZ$-basis $(x_1,\ldots,x_g)$ of $D^{-1}$ with each $x_i$ totally positive --- that is, $\sigma_j(x_i)> 0$ for every $1\le i ,j \le g$ ---, and we let $(r_1,\ldots,r_g)$ be its dual $\ZZ$-basis of $R$ with respect to the trace form.\label{symb:Zqri}

Let $q^{r_1},\ldots,q^{r_g}$ be formal variables. For any commutative ring $\Lambda$, we set
$$
\Lambda[\![q^{r_i}]\!] \defeq \Lambda[\![q^{r_1},\ldots,q^{r_g}]\!]
$$
and 
$$
\Lambda (\!(q^{r_i})\!) \defeq \Lambda[\![q^{r_1},\ldots,q^{r_g}]\!][(\prod_{i=1}^g q^{r_i})^{-1}]\text{.}
$$
For every $r \in R$, we denote
$$
q^r \defeq \prod_{i=1}^g(q^{r_i})^{\Tr(rx_i)} \in \Lambda (\!(q^{r_i})\!)\text{.} 
$$

As in the Siegel case, note that\label{symb:thetarialg}
$$
\theta^{r_i} \defeq q^{r_i}\frac{\partial}{\partial q^{r_i}}\text{, }\ \ \ 1\le i \le g
$$
form a basis of the $\Lambda(\!(q^{r_i})\!)$-module $\Der_{\Lambda}(\Lambda(\!(q^{r_i})\!))$. We consider the following isomorphism of $\Lambda(\!(q^{r_i})\!)$-modules:
\begin{align*}
  {\theta}_F: \Lambda(\!(q^{r_i})\!)\tensor D^{-1} &\to {\Der}_{\Lambda}(\Lambda(\!(q^{r_i})\!))\\
  1\tensor x &\mapsto \sum_{i=1}^g \Tr(r_ix)\theta^{r_i}\text{.}
\end{align*}

\begin{defi}
  A \emph{solution of the higher Ramanujan equations over $\mathcal{B}_F$ defined over $\Lambda$} is a $\Lambda$-morphism of (Deligne-Mumford stacks over $\Lambda$)
  \begin{align*}
   \hat{\varphi}: \Spec \Lambda(\!(q^{r_i})\!) \to \mathcal{B}_{F,\Lambda}
  \end{align*}
  such that
  \begin{align*}
   T\hat{\varphi}\circ \theta_F = \hat{\varphi}^*v_F\text{,}
  \end{align*}
  that is, such that the diagram
  $$
  \begin{tikzcd}
    \Lambda(\!(q^{r_i})\!)\tensor D^{-1} \arrow{r}{\theta_F}\arrow{d}[swap]{\cong} & \Der_{\Lambda}(\Lambda(\!(q^{r_i})\!)) \arrow{d}{T\hat{\varphi}}\\
    \hat{\varphi}^*(\mathcal{O}_{\mathcal{B}_{F,\Lambda}}\tensor D^{-1})\arrow{r}[swap]{v_F} & \hat{\varphi}^*T_{\mathcal{B}_{F,\Lambda}/\Lambda}
  \end{tikzcd}
  $$
  commutes.
\end{defi}

More concretely, if we denote $v_F(1\tensor x_i)\eqdef v^{r_i} \in \Gamma (\mathcal{B}_F, T_{\mathcal{B}_F/\ZZ})$ for every $1\le i \le g$, then $\hat{\varphi}$ is a solution of the higher Ramanujan equations over $\mathcal{B}_F$ if and only if it satisfies
$$
T\hat{\varphi}\left(\theta^{r_i} \right) = \hat{\varphi}^*v^{r_i}\text{, } \ \ \ 1\le i \le g\text{.}
$$

A morphism $\hat{\varphi}:\Spec \Lambda (\!( q^{r_i})\!) \to \mathcal{B}_{F,\Lambda}$ as above corresponds to a principally polarized abelian scheme with $R$-multiplication $(X,\lambda,m)$ over $\Lambda (\!( q^{r_i})\!)$ endowed with a symplectic-Hodge basis $b$. Let $\nabla$ be the Gauss-Manin connection over $H^1_{\dR}(X/\Lambda(\!(q^{r_i})\!))$.

\begin{prop}[cf. Proposition \ref{prop-equivrameq}]\label{prop-equivrameqrm}
 With the above notation, $\hat{\varphi}:\Spec \Lambda (\!( q^{r_i})\!) \to \mathcal{B}_{F,\Lambda}$ is a solution of the higher Ramanujan equations over $\mathcal{B}_F$ defined over $\Lambda$ if and only if
 $$
 \nabla_{\theta_F}b = b\left(\begin{array}{cc}
                              0 & 0 \\
                              1 & 0
                             \end{array}\right)\text{.}
                           $$
                           \hfill $\blacksquare$
\end{prop}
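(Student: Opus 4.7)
The proof plan is to mirror the argument of Proposition \ref{prop-equivrameq} using the Hilbert--Blumenthal counterparts of the Kodaira--Spencer and ``$P$'' morphisms. Write $U = \Spec \Lambda(\!(q^{r_i})\!)$ and let $(X,\lambda,m,b=(\omega,\eta))_{/U}$ be the object corresponding to $\hat{\varphi}$. First, I would introduce the $\mathcal{O}_U$-morphism
$$
\rho : T_{U/\Lambda} \longrightarrow \Gamma^2_{\mathcal{O}_U\tensor R}(F^1(X/U)^{*})\tensor_R D^{-1}\oplus H^1_{\dR}(X/U)\tensor_R D\text{,}\quad \theta \longmapsto (\kappa(\theta),\nabla_\theta\eta)\text{,}
$$
with $\kappa$ the Kodaira--Spencer morphism attached to $(X,\lambda,m)_{/U}$. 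By the naturality of Kodaira--Spencer (Paragraph \ref{ksiso}) and the compatibility of the Gauss--Manin connection with base change, $\rho$ coincides with the composition of $T\hat{\varphi}$ with the pullback by $\hat{\varphi}$ of
$$
(\kappa_{\pi_F},P) : T_{\mathcal{B}_{F,\Lambda}/\Lambda}\longrightarrow \Gamma^2_{\mathcal{O}\tensor R}(\pi_F^*\mathcal{F}_F^*)\tensor_R D^{-1}\oplus \pi_F^*\mathcal{H}_F\tensor_R D\text{.}
$$

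Next, I would observe that $(\kappa_{\pi_F}, P)$ is in fact an \emph{isomorphism} of $T_{\mathcal{B}_{F,\Lambda}/\Lambda}$ onto its codomain. Indeed, since by construction $\kappa_{\pi_F} = \pi_F^*\kappa \circ T\pi_F$, it vanishes on the vertical subbundle $T_{\mathcal{B}_F/\mathcal{A}_F}$, on which $P$ restricts to the isomorphism of Theorem \ref{thm-isomvertbk}; dually, $P$ vanishes on the horizontal subbundle $\mathcal{R}_F = \ker P$, on which $\kappa_{\pi_F}$ restricts to the isomorphism from Paragraph \ref{subsec-hrvfhb}. Consequently, $\hat{\varphi}$ is a solution of the higher Ramanujan equations if and only if $\rho\circ\theta_F = \hat{\varphi}^*(\kappa_{\pi_F}(v_F),P(v_F))$.

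By Proposition \ref{caracchampshb}, $P(v_F) = \nabla_{v_F}\eta = 0$; by the explicit formula of Remark \ref{rem-formulaksrm}, combined with $\nabla_{v_F}\omega = \eta$ and the defining identity $\Psi_\lambda(\omega,\eta)=1$, one gets $\kappa_{\pi_F}(v_F) = \Psi_\lambda(\omega,\nabla_{v_F}\omega)\,\omega^*\tensor \omega^* = \omega^*\tensor\omega^*$ (the residual $D$-factor from $v_F\in T\tensor_R D$ and the $D^{-1}$-factor from the codomain of $\kappa$ canceling). Substituting, $\hat{\varphi}$ is a solution if and only if $\nabla_{\theta_F}\eta = 0$ and $\Psi_\lambda(\omega, \nabla_{\theta_F}\omega) = 1$.

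It remains to show that these two identities are equivalent to the matricial one. The first identity literally matches the ``$\eta$-column'' of the matrix equation. For the ``$\omega$-column'', expand $\nabla_{\theta_F}\omega = a\omega + b\eta$ in the $\mathcal{O}_U\tensor R$-basis $(\omega,\eta)$ of $H^1_{\dR}(X/U)$ (tensored with $D$). Applying $\nabla_{\theta_F}$ to $\Psi_\lambda(\omega,\eta) = 1$ and using $\nabla_{\theta_F}\eta = 0$ yields $\Psi_\lambda(\nabla_{\theta_F}\omega,\eta) = 0$; together with the symplectic relations $\Psi_\lambda(\omega,\eta) = 1$ and $\Psi_\lambda(\eta,\eta) = 0$, this forces $a = 0$. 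The identity $\Psi_\lambda(\omega,\nabla_{\theta_F}\omega)=1$ then forces $b = 1$, whence $\nabla_{\theta_F}\omega = \eta$. The converse direction is immediate. The main obstacle is primarily notational: keeping careful track of the tensor factors by $D$ and $D^{-1}$ that appear throughout, especially when evaluating $\kappa_{\pi_F}$ on $v_F \in T_{\mathcal{B}_F/\ZZ}\tensor_R D$; the underlying linear algebra is actually simpler than in the Siegel case since $F^1$ is of rank one over $\mathcal{O}_U\tensor R$.
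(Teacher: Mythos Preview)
Your proposal is correct and follows essentially the same approach as the paper: the paper simply remarks that one should use the $\mathcal{O}_{\mathcal{B}_{F,\Lambda,\et}}$-isomorphism $(\kappa_{\pi_F},P): T_{\mathcal{B}_{F,\Lambda}}\stackrel{\sim}{\to} (\Gamma_{\mathcal{O}\tensor R}^2(\pi_{F}^*\mathcal{F}_{F}^*)\tensor_R D^{-1}) \oplus (\pi_{F}^*\mathcal{H}_{F}\tensor_R D)$ and argue as in Proposition \ref{prop-equivrameq}, which is precisely what you do. Your explicit verification that the conditions $\nabla_{\theta_F}\eta=0$ and $\Psi_\lambda(\omega,\nabla_{\theta_F}\omega)=1$ are equivalent to the matricial identity (via the compatibility of $\nabla$ with $\Psi_\lambda$) fills in the detail that the paper leaves implicit by citing Proposition \ref{caracchampshb}.
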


By considering the $\mathcal{O}_{\mathcal{B}_{F,\Lambda,\et}}$-\emph{isomorphism} (cf. Theorem \ref{thm-isomvertbk} and Paragraph \ref{ksiso})
$$
(\kappa_{\pi_F},P): T_{\mathcal{B}_{F,\Lambda}}\stackrel{\sim}{\to} (\Gamma_{\mathcal{O}_{\mathcal{B}_{F,\Lambda,\et}}\tensor R}^2(\pi_{F,\Lambda}^*\mathcal{F}_{F,\Lambda}^*)\tensor_R D^{-1}) \oplus (\pi_{F,\Lambda}^*\mathcal{H}_{F,\Lambda}\tensor_R D)
$$
 the proof of the above proposition is analogous to that of Proposition \ref{prop-equivrameq}.

\subsection{Integral solution of the higher Ramanujan equations; Hilbert-Blumenthal case}\label{subsec-integralsolhb}

Let $K\defeq \Frac \ZZ[\![q^{r_i}]\!]$. Consider the split torus $\mathbf{G}_m\tensor D^{-1}$ over $\Spec \ZZ$ defined by
$$
(\mathbf{G}_m\tensor D^{-1})(\Lambda) = \Lambda^{\times}\tensor_{\ZZ}D^{-1}
$$
for any commutative ring $\Lambda$. Note that the $\ZZ$-basis $(x_1,\ldots,x_g)$ of $D^{-1}$ induces an isomorphism of group schemes
\begin{align}\label{eq-isomsplitorus}
\mathbf{G}_m\tensor D^{-1} \stackrel{\sim}{\to} \GG_m^g
\end{align}
  given on points by
  $$
  t\tensor x \mapsto (t^{\Tr(r_1x)},\ldots,t^{\Tr(r_gx)})\text{.}
  $$

To define the period subgroup, consider the morphism of abelian groups
  \begin{align*}
    R &\to K^{\times}\\
    r &\mapsto q^r\text{.}
  \end{align*}
  By Remark \ref{rem-dualityrelation}, there exists a unique $R$-linear morphism
  $$
   \varpi: R \to K^{\times}\tensor D^{-1}
   $$
   such that $\Tr(\varpi(r))=q^r$ for every $r \in R$. Set
   $$
Y\defeq \varpi(R) \subset (\mathbf{G}_m\tensor D^{-1})(K)\text{.}
   $$
   Observe that, since $\varpi$ is injective, it induces an isomorphism of $R$ onto $Y$. We let
   $$
\phi \defeq \varpi^{-1}: Y \stackrel{\sim}{\to} R \left(\cong {\Hom}_{\textsf{GpSch}} (\GG_m\tensor D^{-1},\GG_m) \right)\text{.}
$$
Then Mumford's construction \cite{mumford72} canonically attaches to $(\mathbf{G}_m\tensor D^{-1},Y,\phi)$ a principally polarized semi-abelian scheme $(G,\lambda)$ over $\ZZ[\![q^{r_i}]\!]$ of relative dimension $g$. The restriction of $(G,\lambda)$ to $\ZZ(\!(q^{r_i})\!)$ is a principally polarized abelian scheme that we denote by $(\hat{X}_F,\hat{\lambda}_F)$. Moreover, the canonical action of $R$ on $\mathbf{G}_m\tensor D^{-1}$, which preserves the period subgroup $Y$ and is compatible with the polarization $\phi$, induces an $R$-multiplication $\hat{m}_F: R \to \End_{\ZZ(\!(q^{r_i})\!)}(\hat{X}_F)^{\hat{\lambda}_F}$; we thus obtain a principally polarized abelian scheme with $R$-multiplication $(\hat{X}_F,\hat{\lambda}_F,\hat{m}_F)$ over $\ZZ(\!(q^{r_i})\!)$. \label{symb:hatXF}

Since $\Lie \hat{X}_F$ is canonically isomorphic to $\Lie (\mathbf{G}_{m,\ZZ(\!(q^{r_i})\!)}\tensor D^{-1}) \cong \ZZ(\!(q^{r_i})\!) \tensor D^{-1}$, we obtain by duality a canonical isomorphism of $\ZZ(\!(q^{r_i})\!)\tensor R$-modules
$$
\ZZ(\!(q^{r_i})\!) \tensor R \cong F^1(\hat{X}_F/\ZZ(\!(q^{r_i})\!))\text{;}
$$
we let $\hat{\omega}_F$ be the $\ZZ(\!(q^{r_i})\!)\tensor R$-generator of $F^1(\hat{X}_F/\ZZ(\!(q^{r_i})\!))$ corresponding to the above trivialization.

\begin{obs}\label{rem-formulaomegaf}
  If we identify $\GG_m\tensor D^{-1} \stackrel{\sim}{\to} \Spec \ZZ[(t^{r_1})^{\pm 1},\ldots, (t^{r_g})^{\pm 1}]$ via (\ref{eq-isomsplitorus}), then the canonical $\ZZ(\!(q^{r_i})\!) \tensor R$-trivialization of $F^1(\hat{X}_F/\ZZ(\!(q^{r_i})\!))$ is given by
  \begin{align*}
    \ZZ(\!(q^{r_i})\!) \tensor R &\stackrel{\sim}{\to} F^1(\hat{X}_F/\ZZ(\!(q^{r_i})\!)) \\
    1\tensor r &\mapsto \sum_{i=1}^g\Tr(rx_i)\frac{dt^{r_i}}{t^{r_i}}\text{,} 
  \end{align*}
  so that 
  $$
\hat{\omega}_F = \sum_{i=1}^g\Tr(x_i)\frac{dt^{r_i}}{t^{r_i}}\text{.}
  $$
\end{obs}

\label{symb:hatphiF}
\begin{theorem}\label{thm-intsolrm}
Let $\nabla$ be the Gauss-Manin connection on $H^1_{\dR}(\hat{X}_F/\ZZ(\!(q^{r_i})\!))$ and denote
$$
\hat{\eta}_F \defeq \nabla_{\theta_F}\hat{\omega}_F \in H^1_{\dR}(\hat{X}_F/\ZZ(\!(q^{r_i})\!))\tensor D\text{.}
$$
Then:
\begin{enumerate}
 \item The couple $\hat{b}_F \defeq (\hat{\omega}_F,\hat{\eta}_F)$ is a symplectic-Hodge basis of $(\hat{X}_F,\hat{\lambda}_F,\hat{m}_F)_{/\ZZ(\!(q^{r_i})\!)}$.
 \item The morphism of Deligne-Mumford stacks
 $$
 \hat{\varphi}_F: \Spec \ZZ(\!(q^{r_i})\!)\to \mathcal{B}_F
 $$
 given by $(\hat{X}_F,\hat{\lambda}_F,\hat{m}_F, \hat{b}_F)_{/\ZZ(\!(q^{r_i})\!)}$ is a solution of the higher Ramanujan equations over $\mathcal{B}_F$ defined over $\ZZ$.
\end{enumerate}\hfill $\blacksquare$
\end{theorem}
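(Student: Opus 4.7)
The strategy is to reduce the theorem, whose statement is purely algebraic, to its complex-analytic counterpart, in parallel with the treatment of Theorem \ref{thm-intsolsiegel} to be carried out in Section \ref{sec-analhre}. The bridge from algebra to analysis is Mumford's theorem \cite{mumford72}, which identifies his formal degeneration construction with the complex-analytic uniformization of a degenerating family near the cusp of the Hilbert-Blumenthal moduli space. Concretely, after base change along $\ZZ(\!(q^{r_i})\!) \hookrightarrow \CC(\!(q^{r_i})\!)$ and the substitution $q^{r_i} \mapsto e^{2\pi i \tau_i}$, the triple $(\hat{X}_F, \hat{\lambda}_F, \hat{m}_F)$ becomes the pullback of the universal analytic family $\mathbf{X}_F = \CC^g/(R \oplus \tau R)$ on $\mathbf{H}^g$, endowed with its $R$-multiplication via the embeddings $\sigma_1, \ldots, \sigma_g : F \to \RR$ and its canonical principal polarization. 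A key point of this comparison is that $\hat{\omega}_F$ is carried to the holomorphic trivialization of $F^1(\mathbf{X}_F/\mathbf{H}^g) \cong \mathcal{O}_{\mathbf{H}^g} \tensor R$ dual under the trace pairing to the $\ZZ$-basis $(x_1, \ldots, x_g)$ of $D^{-1}$, which is consistent with the twisted formula of Remark \ref{rem-formulaomegaf}.

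Granted this identification, the two assertions of the theorem reduce to analytic claims about $\omega_F^{\an}$ and $\eta_F^{\an} \defeq \nabla_{\theta_F}\omega_F^{\an}$ on $\mathbf{H}^g$. I would introduce the Betti-flat classes $dx_j$ and $dy_j$ arising from the real coordinates $z_j = x_j + \sigma_j(\tau) y_j$ on $\CC^g/(R \oplus \tau R)$; these are horizontal for the Gauss-Manin connection. Writing $\omega_F^{\an}$ as a suitable $R$-linear combination of the forms $dx_j + \sigma_j(\tau)\, dy_j$ and differentiating along $\theta_F$, the horizontality of the Betti classes forces $\eta_F^{\an}$ to be a combination of the $dy_j$ alone, so applying $\nabla_{\theta_F}$ a second time gives $\nabla_{\theta_F}\eta_F^{\an} = 0$. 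Combined with $\nabla_{\theta_F}\omega_F^{\an} = \eta_F^{\an}$ (true by definition), Proposition \ref{prop-equivrameqrm} yields assertion (2). For (1), the remaining content is to check that the image of $\eta_F^{\an}$ in $H^1/F^1$ generates it as an $\mathcal{O} \tensor R$-module, and that $\Psi_{\hat{\lambda}_F}(\omega_F^{\an}, \eta_F^{\an}) = 1$; this last equality is a Legendre-type period relation adapted to the real-multiplication setting, and together with Remark \ref{remshb2} suffices to conclude that $\hat{b}_F$ is a symplectic-Hodge basis. Descent of these identities from $\CC(\!(q^{r_i})\!)$ back to $\ZZ(\!(q^{r_i})\!)$ is automatic by faithful flatness.

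The main obstacle will be the careful bookkeeping of the $R$-module structures and the trace-duality isomorphism $\Tr : \mathcal{M}^* \tensor_R D^{-1} \stackrel{\sim}{\to} \mathcal{M}^{\vee}$ from Paragraph \ref{ssec-svbrm}. Specifically, matching the Mumford-theoretic $\hat{\omega}_F$ with its analytic counterpart so that $\Psi_{\hat{\lambda}_F}(\omega_F^{\an}, \eta_F^{\an}) = 1$ holds in $\mathcal{O} \tensor R$ (and not merely up to a unit in $R$ or a twist by $D^{-1}$) forces the precise normalization of $\hat{\omega}_F$ exhibited in Remark \ref{rem-formulaomegaf}; this is what makes the period computation in assertion (1) substantive. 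Once these normalizations are fixed, both (1) and (2) reduce to direct Gauss-Manin computations in the analytic universal family, and the algebraic conclusion follows immediately.
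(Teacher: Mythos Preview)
Your proposal is correct and follows essentially the same route as the paper: the theorem is deduced from its analytic counterpart (Theorem~\ref{thm-defiphik}) via the Mumford comparison spelled out in Paragraph~\ref{subsec-compatibilityhb}, and the analytic computations you sketch (horizontality of Betti classes forcing $\nabla_{\theta_F}\bfeta_F=0$, plus the Legendre-type verification $\Psi(\bfomega_F,\bfeta_F)=1$) are exactly those carried out in the proof of Theorem~\ref{thm-defiphik} and Proposition~\ref{prop-dualksrm}. A few small corrections to make when you write this up: the lattice for $\mathbf{X}_F$ is $D^{-1}+\tau R$ rather than $R\oplus\tau R$ (Example~\ref{ex-ppctrm}), the substitution is $q^{r_i}\mapsto e^{2\pi i\,\Tr(r_i\tau)}$ rather than $e^{2\pi i\tau_i}$ (Paragraph~\ref{subsec-compatibilityhb}), and for part~(1) the cleaner reference is Remark~\ref{remshb} (given $\omega$ trivializing $F^1$ and $\Psi(\omega,\eta)=1$, $(\omega,\eta)$ is a symplectic-Hodge basis) rather than Remark~\ref{remshb2}.
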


As in the Siegel case, this result follows directly from its complex analytic counterpart (Theorem \ref{thm-defiphik})); see Paragraph \ref{subsec-compatibilityhb}.

\begin{obs}
Here again, we have chosen to work with explicit ``coordinates'' $q^{r_i}$ induced by a fixed $\ZZ$-basis of $D^{-1}$ (cf. Remark \ref{rem-explicitcoords}). We refer to \cite{goren02} 5.2 for an exposition on how to work with more general coordinate rings.
\end{obs}

\begin{obs}
  We have the following compatibility between $\hat{\varphi}_F$ and $\hat{\varphi}_g$. Let $(x_1,\ldots,x_g)$ be as above, and $f_t: \mathcal{B}_F \to \mathcal{B}_g$ be the corresponding morphism as defined in Remark \ref{rem-relationbfbg}. Define a morphism
  $$
  \hat{h}_t: \Spec \ZZ(\!(q^{r_i})\!) \to \Spec \ZZ(\!(q_{ij})\!)
  $$
  by
  $$
\hat{h}_t^*(q_{ij}) = q^{r_ir_j}\text{.}
$$
Then, the diagram
$$
\begin{tikzcd}
  \Spec \ZZ(\!(q^{r_i})\!) \arrow{r}{\hat{\varphi}_F}\arrow{d}[swap]{\hat{h}_t}& \mathcal{B}_F \arrow{d}{f_t}\\
  \Spec \ZZ(\!(q_{ij})\!) \arrow{r}[swap]{\hat{\varphi}_g}& \mathcal{B}_g
\end{tikzcd}
$$
commutes. This can be checked directly using the above constructions; it also follows from the corresponding complex analytic statement (see Remark \ref{rem-relationphianal}).
\end{obs}

\section{Representability of $\mathcal{B}_g$ and $\mathcal{B}_F$ by a scheme} \label{representability}

It is easy to see that if $S$ is a scheme over $\mathbf{F}_2$, then $\mathcal{B}_g\times_{\ZZ} S\to S$ is not representable. Indeed, if $(X,\lambda,b)_{/U}$ is an object of $\mathcal{B}_g$ lying over a scheme $U$ over $\mathbf{F}_2$, then the involution $[-1]: P \mapsto -P$ on $X$ defines a non-trivial automorphism $[-1]_{/\id_U}: (X,\lambda)_{/U} \to (X,\lambda)_{/U}$ in $\mathcal{A}_g(U)$ such that
\begin{align*}
[-1]^*b = -b = b\text{,}
\end{align*}
thus a non-trivial automorphism of $(X,\lambda,b)_{/U}$ in $\mathcal{B}_g(U)$. This same argument applies to $\mathcal{B}_F$.

For any commutative ring $\Lambda$, let us denote $\mathcal{B}_{g,\Lambda} \defeq \mathcal{B}_g\tensor_{\ZZ}\Lambda$ (resp. $\mathcal{B}_{F,\Lambda} \defeq \mathcal{B}_F\tensor_{\ZZ}\Lambda$). In this section we prove the following theorem.\label{symb:straightBgBF}

\begin{theorem} \label{repr}
The stack $\mathcal{B}_{g,\ZZ[1/2]}\to \Spec \ZZ[1/2]$ (resp. $\mathcal{B}_{F,\ZZ[1/2]}\to \Spec \ZZ[1/2]$) is representable by a smooth quasi-affine scheme $B_g$ (resp. $B_F$) over $\ZZ[1/2]$ of relative dimension $2g^2+g$ (resp. $3g$).
\end{theorem}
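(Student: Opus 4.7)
The plan is to proceed in three stages. First, I would show that after inverting $2$ every automorphism group in $\mathcal{B}_g(U)$ (resp.\ $\mathcal{B}_F(U)$) is trivial; second, conclude that $\mathcal{B}_{g,\ZZ[1/2]}$ (resp.\ $\mathcal{B}_{F,\ZZ[1/2]}$) is an algebraic space; third, combine a level-structure argument with the ampleness of the Hodge line bundle to upgrade to a quasi-affine scheme. Smoothness and relative dimension are inherited from Theorem \ref{smoothdmstack} and require no further work.

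For the triviality of automorphisms: given $(X,\lambda,b)_{/U}$ in $\mathcal{B}_g$ with $U$ a $\ZZ[1/2]$-scheme, an automorphism amounts to $\varphi\in\Aut_U(X,\lambda)$ fixing $b$, equivalently such that $\varphi^{*}=\id$ on $H^1_{\dR}(X/U)$. Since the inertia of $\mathcal{A}_g$ is representable by a finite unramified group scheme, it suffices to check at geometric points. Let $k$ be an algebraically closed field with $\mathrm{char}\,k\neq 2$ and $\varphi\in\Aut_k(X,\lambda)$ satisfy $\varphi^{*}=\id$ on $H^1_{\dR}(X/k)$. In characteristic $0$, embedding $k\into\CC$ and using the Betti comparison isomorphism yields $\varphi^{*}=\id$ on $H^1(X(\CC),\ZZ)$, so $\varphi=\id$ by the faithfulness of the $\Aut(X)$-action on singular homology. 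In characteristic $p\geq 3$, I would invoke Oda's theorem (\cite{oda69} Corollary 5.11), which identifies the $\End(X)$-action on $H^1_{\dR}(X/k)$ with its action on the modulo-$p$ Dieudonné module of $X[p]$; this forces trivial action on $X[p]$, and Serre's rigidity lemma ($\Aut_k(X)\to\Aut(X[n])$ is injective for $n\geq 3$) concludes $\varphi=\id$. With all automorphism groups trivial, Theorem \ref{smoothdmstack} makes $\mathcal{B}_{g,\ZZ[1/2]}$ into a smooth algebraic space over $\ZZ[1/2]$ (cf.\ \cite{olsson16} Corollary 8.3.5).

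For scheme representability and quasi-affineness, I would fix an odd $n\geq 3$ and pass to the level-$n$ cover $\mathcal{A}_{g,n,\ZZ[1/2]}$, a smooth quasi-projective scheme (by Serre's rigidity) on which the Hodge line bundle $\omega\defeq\det\mathcal{F}_g$ extends to an ample line bundle on the minimal compactification \cite{FC90}; the associated frame bundle $\omega^{\times}$ is then the complement of the vertex in the affine cone, hence a quasi-affine scheme. The level-$n$ cover $\mathcal{B}_{g,n,\ZZ[1/2]}\defeq \mathcal{B}_g\times_{\mathcal{A}_g}\mathcal{A}_{g,n,\ZZ[1/2]}$ is a $P_g$-torsor over $\mathcal{A}_{g,n,\ZZ[1/2]}$, and using the exact sequence $1\to \Sym_g\to P_g\to \GL_g\to 1$ together with the determinant $\GL_g\to \mathbf{G}_m$, one factors $\mathcal{B}_{g,n,\ZZ[1/2]}$ as an iterated torsor for the affine groups $\Sym_g$ and $\SL_g$ over $\omega^{\times}$: sending $b$ to $\omega_1\wedge\cdots\wedge\omega_g$ lands in $\omega^{\times}$, remembering the basis $(\omega_1,\ldots,\omega_g)$ adds an $\SL_g$-torsor, and remembering $b$ in full adds a $\Sym_g$-torsor. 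Since affine morphisms preserve quasi-affineness, $\mathcal{B}_{g,n,\ZZ[1/2]}$ is quasi-affine, and the free action of $\Sp_{2g}(\ZZ/n)$ then exhibits $\mathcal{B}_{g,\ZZ[1/2]}=\mathcal{B}_{g,n,\ZZ[1/2]}/\Sp_{2g}(\ZZ/n)$ as a quasi-affine scheme (by taking $G$-invariants of the ring of global sections). The Hilbert-Blumenthal case is entirely parallel: triviality of automorphisms follows from the Siegel case since the $R$-multiplication plays no role, and $B_F$ is obtained by the same torsor construction with $P_g$ replaced by $P_F$, yielding relative dimension $3g=g+2g$. The main obstacle will be Step 1, where the direct use of Betti cohomology is unavailable in positive characteristic and Oda's theorem is essential to convert $H^1_{\dR}$-rigidity into torsion-rigidity; this is precisely where the restriction to $\ZZ[1/2]$ enters, since in characteristic $2$ the involution $[-1]$ preserves $b=-b$, as noted in the preamble of the section.
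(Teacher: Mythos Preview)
Your overall strategy matches the paper's closely: prove rigidity (triviality of automorphisms) via Betti comparison in characteristic $0$ and Oda's theorem plus Serre's lemma in characteristic $p\ge 3$, deduce that $\mathcal{B}_{g,\ZZ[1/2]}$ is an algebraic space, then pass to an auxiliary level cover and exploit the ampleness of $\det\mathcal{F}_g$ to get quasi-affineness, and finally descend by the finite free group action. Your reduction to geometric points via finiteness and unramifiedness of the inertia is a clean shortcut replacing the paper's explicit use of the rigidity lemma for abelian schemes together with Krull's intersection theorem, and your factorisation of the $P_g$-torsor through $\omega^\times$ via the filtration $1\to\Sym_g\to P_g\to\GL_g\to 1$ is exactly what underlies the paper's observation that $\mathcal{B}_g\to T(\det\mathcal{F}_g)$ is affine.

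There is, however, one concrete slip. You fix an \emph{odd} $n\ge 3$ and invoke the level-$n$ moduli scheme $\mathcal{A}_{g,n,\ZZ[1/2]}$, but a full (or symplectic) level-$n$ structure is only representable by a scheme over a base where $n$ is invertible, i.e.\ over $\ZZ[1/n]$ (or $\ZZ[1/n,\zeta_n]$). For odd $n$ this does not embed in $\ZZ[1/2]$, so your level cover does not exist as a scheme over $\ZZ[1/2]$. The paper avoids this by taking $n=4$: since $\ZZ[1/4]=\ZZ[1/2]$, the level-$4$ moduli is a genuine quasi-projective scheme over $\ZZ[1/2]$, and Serre's lemma still applies because $4\ge 3$. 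With that single change your argument goes through; everything else is correct and essentially identical to the paper's proof.
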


For the sake of concision, we shall only treat in detail the case of $\mathcal{B}_g$; there should be no difficulty in translating our arguments to obtain the analogous statement for $\mathcal{B}_F$ (see Remark \ref{rem-reprbk}).

\subsection{Representability by an algebraic space}

Let $\Lambda$ be a commutative ring. The following terminology has been borrowed from \cite{KM85} 4.4.

\begin{defi} \label{defrigid}
We say that the functor $\underline{B}_g$ (cf. Paragraph \ref{defbg}) is \emph{rigid} over $\Lambda$ if, for every $\Lambda$-scheme $U$, and every object $(X,\lambda)$ of $\mathcal{A}_g$ lying over $U$, the action of $\Aut_U(X,\lambda)$ on $\underline{B}_g((X,\lambda)_{/U})$ is free. 
\end{defi}

Note that $\underline{B}_g$ is rigid over $\Lambda$ if and only if the fiber categories of $\mathcal{B}_{g,\Lambda} \to \Spec \Lambda$ are discrete. As $\mathcal{B}_g$ is a Deligne-Mumford stack over $\Spec \ZZ$, this amounts to saying that $\mathcal{B}_{g,\Lambda} \to \Spec \Lambda$ is an algebraic space over $\Spec \Lambda$ (cf. \ref{stacks}).

\begin{lemma} \label{rig0}
Let $k$ be a field of characteristic 0. Then $\underline{B}_g$ is rigid over $k$.
\end{lemma}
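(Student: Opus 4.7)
The plan is to show that if $\varphi \in \Aut_U(X,\lambda)$ satisfies $\varphi^*b = b$ for some symplectic-Hodge basis $b$ of $(X,\lambda)_{/U}$, then $\varphi = \id_X$. Since $b$ is a basis of the vector bundle $H^1_{\dR}(X/U)$, the hypothesis amounts to $\varphi^*$ acting as the identity on the entire first de Rham cohomology of $X$ over $U$. I would then proceed in three steps: reduce to geometric points of $U$; establish the desired statement over an algebraically closed field of characteristic zero; and finally upgrade the resulting fiberwise equality to a global equality of morphisms.

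For every geometric point $\bar u \colon \Spec K \to U$, the base-change compatibility of de Rham cohomology shows that $\varphi_{\bar u}^*$ acts as the identity on $H^1_{\dR}(X_{\bar u}/K)$. Moreover, $K$ is an algebraically closed field of characteristic zero, since $U$ is a $k$-scheme with $\mathrm{char}\,k = 0$. The heart of the argument is therefore the following classical assertion, which I regard as the main obstacle in the proof: for an abelian variety $Y$ over an algebraically closed field $K$ of characteristic zero, any endomorphism $\psi \in \End_K(Y)$ acting trivially on $H^1_{\dR}(Y/K)$ must equal $\id_Y$. I would establish this via the Lefschetz principle: choose a subfield $K_0 \subset K$ finitely generated over $\QQ$ over which $Y$ and $\psi$ are defined, together with an embedding $K_0 \hookrightarrow \CC$, so that we may assume $K = \CC$. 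Under the de Rham--Betti comparison, triviality of $\psi^*$ on $H^1_{\dR}(Y/\CC)$ translates into triviality of the induced action on the torsion-free $\ZZ$-module $H_1(Y(\CC),\ZZ)$. Since $Y(\CC) = \Lie Y / H_1(Y(\CC),\ZZ)$ and any endomorphism of $Y$ is uniquely determined by its action on the lattice $H_1(Y(\CC),\ZZ)$ (equivalently, by its $\CC$-linear lift to $\Lie Y$, which is $d\psi_e$), we conclude $\psi = \id_Y$.

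The fiberwise equalities $\varphi_{\bar u} = \id_{X_{\bar u}}$ for every geometric point $\bar u$ of $U$ imply that $\varphi$ and $\id_X$ agree on every geometric point of $X$. To upgrade this to the global identity $\varphi = \id_X$, I would invoke the standard fact that, for the abelian scheme $X/U$, the functor $T \mapsto \Hom_T(X_T,X_T)$ on $\Sch_{/U}$ is represented by a separated and unramified $U$-scheme, so that two sections which coincide on every geometric point of $U$ must be equal. This yields $\varphi = \id_X$, and hence the rigidity of $\underline{B}_g$ in characteristic zero.
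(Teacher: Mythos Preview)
Your proof is correct and follows essentially the same route as the paper: reduce to $\CC$ via a Lefschetz-principle/spreading-out argument, and then use the analytic uniformization of a complex torus to conclude that an endomorphism acting trivially on first cohomology is the identity. Two minor differences are worth noting. First, over $\CC$ the paper only uses that $\varphi^*$ is the identity on the Hodge piece $F^1(X/\CC)\cong H^0(X,\Omega^1_{X/\CC})$, whose dual is $\Lie X$, and then appeals directly to surjectivity of $\exp:\Lie X\to X^{\an}$; you instead pass through the de Rham--Betti comparison and the lattice $H_1(Y(\CC),\ZZ)$, which is equally valid but slightly less economical. Second, you go further than the paper does in this lemma: the paper's proof of Lemma~\ref{rig0} only treats the case where the base is $\Spec k$ itself, and postpones the passage from geometric fibers to an arbitrary base $U$ to Proposition~\ref{rigid} (using the rigidity lemma for abelian schemes and Krull's intersection theorem). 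Your final step accomplishes the same thing in one stroke via the representability of $\underline{\Hom}_U(X,X)$ by a separated unramified $U$-scheme, which is a perfectly good alternative.
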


\begin{proof}
Let $(X,\lambda,b)$ be an object of $\mathcal{B}_g$ lying over $k$ and $\varphi :X \to X$ be a $k$-automorphism of $(X,\lambda)$ such that $\varphi^*b = b$; we must show that $\varphi = \id_X$.

We claim that it is sufficient to treat the case $k=\CC$. In fact, as $X$ is of finite type over $k$, by ``elimination of Noetherian hypothesis'' (cf. \cite{EGAIV3} 8.8, 8.9, 8.10, 12.2.1, and \cite{EGAIV4} 17.7.9), there exists a subfield $k_0$ of $k$, of finite type over $\QQ$, and a principally polarized abelian variety $(X_0,\lambda_0)$ over $k_0$ endowed with a symplectic-Hodge basis $b_0$ and  a $k_0$-automorphism $\varphi_0$ of $(X_0,\lambda_0)$ satisfying $\varphi_0^*b_0=b_0$, such that $(X,\lambda,b)$ (resp. $\varphi$) is obtained from $(X_0,\lambda_0,b_0)$ (resp. $\varphi_0$) by the base change $\Spec k \to \Spec k_0$. After fixing an embedding of $k_0$ in $\CC$, we finally remark that if $\varphi_{0,\CC}$ is the identity over $X_0 \tensor_{k_0}\CC$, then the same holds for $\varphi_0$, and thus also for $\varphi$.

Let then $k=\CC$. It is sufficient to prove that the induced automorphism of complex Lie groups $\varphi^{\an} : X^{\an} \to X^{\an}$ is the identity. As $X^{\an}$ is a complex torus, the exponential $\exp : \Lie X \to X^{\an}$ is a surjective morphism of complex Lie groups. Therefore, it follows from the commutative diagram
%$$
%\raisebox{-0.5\height}{\includegraphics{rameq1-d4.pdf}}
%$$
 $$
 \begin{tikzcd}[column sep=large]
 \Lie X \arrow{r}{\Lie \varphi} \arrow{d}[swap]{\exp} & \Lie X\arrow{d}{\exp}\\
 X^{\an} \arrow{r}[swap]{\varphi^{\an}} & X^{\an}
 \end{tikzcd}
 $$
that it sufficient to prove that $\Lie \varphi = \id_{\Lie X}$. Now, if $\varphi$ preserves symplectic-Hodge basis of $(X,\lambda)$, then in particular the $\CC$-linear map $\varphi^* : H^0(X,\Omega^1_{X/\CC}) \to H^0(X,\Omega^1_{X/\CC})$ is the identity, and thus its dual $\Lie \varphi : \Lie X \to \Lie X$ is also the identity. 
\end{proof}

We now treat the case of positive characteristic. Let us briefly recall some notions in Dieudonné theory and its relation with abelian varieties.

Let $k$ be a perfect field of characteristic $p>0$. We denote by $W(k)$ the ring of Witt vectors over $k$, and by $\sigma$ the unique ring automorphism of $W(k)$ lifting the absolute Frobenius $x \mapsto x^p$ of $k$. We can then define a $W(k)$-algebra $D(k)$ generated by elements $F$ and $V$ subject to the relations
\begin{align*}
FV = VF = p\text{, }\ \ \ Fx = \sigma(x)F\text{,  }\ \ \ 
xV=V\sigma(x)
\end{align*} 
for any $x\in W(k)$.

The theory of Dieudonné (cf. \cite{oda69} Definition 3.12) provides an additive contravariant functor
\begin{align} \label{functordieudonne}
G \mapsto M(G)
\end{align} 
from the category of commutative finite $k$-group schemes of $p$-power order to the category of left $D(k)$-modules. This functor is shown to be faithful and its essential image is given by the category of left $D(k)$-modules of finite $W(k)$-length: $M(G)$ is of $W(k)$-length $r$ if and only if $G$ is of order $p^r$ (\cite{oda69} Corollary 3.16).

Let $X$ be an abelian variety over $k$ and consider the $k$-vector space $H^1_{\dR}(X/k)$ as a $W(k)$-module via the canonical map $W(k) \to k$. Then one can endow $H^1_{\dR}(X/k)$ with the structure of a $D(k)$-module, the action of $F$ (resp. $V$) being induced by the relative Frobenius on $X$ (resp. the Cartier operator in degree 1); we refer to \cite{oda69} Definition 5.3 and Definition 5.6 for further details. This construction is functorial in the sense that for any morphism $\varphi:X \to Y$ of abelian varieties over $k$, if we endow $H^1_{\dR}(X/k)$ and $H^1_{\dR}(Y/k)$ with the preceding $D(k)$-module structure, then the induced morphism on de Rham cohomology $\varphi^*: H^1_{\dR}(Y/k) \to H^1_{\dR}(X/k)$ is $D(k)$-linear.

In the next statement, for any abelian variety $X$ over $k$, we regard $H^1_{\dR}(X/k)$ with the above $D(k)$-module structure, and we denote its $p$-torsion subscheme by $X[p]$. Note that $X[p]$ is a commutative finite $k$-group scheme of order $p^{2\dim X}$.

\begin{theorem}[Oda, \cite{oda69} Corollary 5.11\footnote{Oda's theorem can be seen nowadays as part of the much more general Grothendieck-Messing theory; see for instance the introduction of B. Mazur, W. Messing, \emph{Universal Extensions and One Dimensional Crystalline Cohomology}, Lecture Notes in Mathematics 370, Springer-Verlag.}] \label{odathm}
The contravariant functors $X \mapsto M(X[p])$ and $X\mapsto H^1_{\dR}(X/k)$ from the category of abelian varieties over $k$ to the category of ($p$-torsion) $D(k)$-modules of finite $W(k)$-length are naturally equivalent.
\end{theorem}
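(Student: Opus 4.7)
My plan is to construct an explicit natural transformation between the two functors and then verify it is an isomorphism by a length-counting argument, since both target $D(k)$-modules have the same finite $W(k)$-length. Indeed, $M(X[p])$ has $W(k)$-length $2\dim X$ because $X[p]$ is a commutative finite $k$-group scheme of order $p^{2\dim X}$, while $H^1_{\dR}(X/k)$ has $k$-dimension $2\dim X$ (hence $W(k)$-length $2\dim X$, since $p$ acts by zero). So a naturally defined $D(k)$-linear map between them is an isomorphism as soon as it is either injective or surjective on every $X$.

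To construct the natural transformation, I would work via the universal vectorial extension $E(X) \to X$ of $X$ by a vector group $V(X)$. This extension has the key property that $\mathrm{Lie}(E(X))$ is canonically isomorphic to $H^1_{\dR}(X/k)^{\vee}$, fitting into an exact sequence that is Serre-dual to the Hodge filtration (see \cite{BBM82} 4.1). Passing to $p$-torsion, there is a natural filtration
\begin{align*}
0 \to V(X)[p] \to E(X)[p] \to X[p] \to 0
\end{align*}
of finite $k$-group schemes of $p$-power order. Applying the Dieudonné functor $M$ to this short exact sequence and combining with the description of $M$ on vector-group schemes (where it coincides, up to the action of $F$ and $V$, with the Lie algebra), I would identify $M(X[p])$ with a quotient canonically isomorphic to $H^1_{\dR}(X/k)$.

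Next, I would check $D(k)$-linearity. The Verschiebung $V$ on $H^1_{\dR}(X/k)$ is defined via the Cartier operator, and Frobenius $F$ is induced by the relative Frobenius morphism $F_{X/k}:X\to X^{(p)}$; these correspond under the construction above to the intrinsic $F$ and $V$ on the Dieudonné module of $X[p]$, by the naturality of the relative Frobenius and the Cartier operator along $p\colon X\to X$ (which is $V\circ F = F\circ V$ on any abelian scheme). The compatibility with $F$ should follow essentially tautologically from the functoriality of both sides under $F_{X/k}$; the compatibility with $V$ is the more delicate point, because one has to match the Cartier operator side on de Rham cohomology with the intrinsic Verschiebung on $M(X[p])$ coming from the dual isogeny/Cartier duality.

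The main obstacle I foresee is precisely this last point: setting up a single natural map that is manifestly compatible with both $F$ and $V$ simultaneously. In Oda's original framework one circumvents this by building the map at the level of the crystalline/Dieudonné crystal of $E(X)$ evaluated on the trivial PD-thickening $k\to k$, so that the action of $F$ and $V$ on $X[p]$ extends canonically to the universal extension and hence to $H^1_{\dR}(X/k)$. Once this is done, naturality in $X$ is automatic from the functoriality of $E(X)$ and of $X\mapsto X[p]$, and the theorem follows from the length comparison.
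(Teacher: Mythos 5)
The paper does not actually prove this statement: it is cited directly from Oda \cite{oda69}, Corollary 5.11, with a footnote pointing to Mazur--Messing's \emph{Universal Extensions and One Dimensional Crystalline Cohomology} for a modern viewpoint. So there is no ``paper's proof'' to compare against, and the only way to evaluate your proposal is against the standard proof in the references.

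Your length-counting reduction and the idea of passing through the universal vectorial extension $E(X)$ are the right ones --- this is exactly the Mazur--Messing/crystalline route that the footnote alludes to, so you are on the intended track conceptually. However, there is a concrete gap in the middle step. You write down a short exact sequence
$0 \to V(X)[p] \to E(X)[p] \to X[p] \to 0$
and call it a sequence of \emph{finite} $k$-group schemes of $p$-power order to which the Dieudonné functor $M$ (in the sense of \cite{oda69}, Definition 3.12, used in the paper) can be applied. But in characteristic $p$, multiplication by $p$ is the zero endomorphism of the vector group $V(X)\cong \mathbb{G}_a^g$, so $V(X)[p]=V(X)$ is not finite, and consequently $E(X)[p]$ is not finite either. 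The functor $M$ as the paper sets it up simply does not apply to these objects, and your gloss that $M$ ``coincides, up to the action of $F$ and $V$, with the Lie algebra'' on vector groups is an appeal to an extension of the Dieudonné theory that is not available within the stated framework. Moreover, even leaving finiteness aside, exactness of that sequence at $X[p]$ is not free: the snake lemma applied to $[p]$ on $0\to V(X)\to E(X)\to X\to 0$ produces a connecting map $X[p]\to V(X)/pV(X)=V(X)$ which must be shown to vanish before $E(X)[p]\to X[p]$ is surjective. So the step you identify as ``the main obstacle'' (matching $F$ and $V$) is not really the first problem; the prior construction of the comparison map itself needs to be reworked, and the correct replacement is the crystalline one you gesture at in the final paragraph: work with $H^1_{\mathrm{cris}}(X/W(k))$, identify it with the Dieudonné module of the $p$-divisible group $X[p^\infty]$ via the universal extension, and reduce modulo $p$ to obtain $M(X[p]) \cong H^1_{\dR}(X/k)$. (As a historical aside, Oda's 1969 argument predates crystalline cohomology and does not go through PD-thickenings; what you describe is the later Mazur--Messing reinterpretation, which is what the paper's footnote cites.)
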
 

\begin{lemma} \label{rigp}
Let $k$ be a perfect field of characteristic $p>2$. Then $\underline{B}_g$ is rigid over $k$.
\end{lemma}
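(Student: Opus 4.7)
My plan is as follows. Fix an object $(X,\lambda,b)$ of $\mathcal{B}_g$ over $k$ and $\varphi\in\Aut_k(X,\lambda)$ with $\varphi^*b=b$; I will show $\varphi=\id_X$. Since $b$ is a basis of $H^1_{\dR}(X/k)$, the hypothesis says that $\varphi^*$ acts as the identity on $H^1_{\dR}(X/k)$. By the natural equivalence of Theorem \ref{odathm}, together with the faithfulness of the Dieudonn\'e functor $G\mapsto M(G)$, the induced automorphism of the finite $k$-group scheme $X[p]$ is the identity. Hence $\varphi-\id_X$ kills $X[p]=\ker[p]_X$ and therefore factors through $[p]:X\to X$, which yields an equality $\varphi=\id_X+p\psi$ in $\End_k(X)$ for some $\psi\in\End_k(X)$. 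The goal becomes to show $\psi=0$.

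To this end, I recall that the positivity of the Rosati involution attached to $\lambda$ makes $\End_k(X)\tensor\QQ$ into a $\QQ$-algebra on which $\alpha\mapsto\Tr(\alpha\alpha^{\dagger})$ is positive definite; the $\dagger$-unitary elements form a compact subgroup of $(\End_k(X)\tensor\RR)^{\times}$, and its intersection with the discrete subgroup $\End_k(X)^{\times}$ --- which is precisely $\Aut_k(X,\lambda)$ --- is therefore finite. In particular $\varphi$ has finite order. Fix now an auxiliary prime $\ell\neq p$ and consider the induced $\QQ_{\ell}$-linear action of $\varphi$ on $V_{\ell}(X)\defeq T_{\ell}(X)\tensor_{\ZZ_{\ell}}\QQ_{\ell}$: being of finite order, it is semisimple with eigenvalues roots of unity in $\bar{\QQ_{\ell}}$. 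At the same time, the equality $\varphi=\id_X+p\psi$ forces each such eigenvalue $\zeta$ to be of the form $\zeta=1+p\mu$ with $\mu$ an algebraic integer, since eigenvalues of endomorphisms of abelian varieties on $\ell$-adic cohomology are algebraic integers. Thus the entire problem reduces to the following number-theoretic statement: the only root of unity $\zeta\in\bar{\QQ}$ satisfying $\zeta-1\in p\bar{\ZZ}$ is $\zeta=1$, provided $p>2$.

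This last point is the main obstacle, and the unique place where the hypothesis $p>2$ enters. The argument runs as follows. Decompose $\zeta=\zeta_1\zeta_2$ with $\zeta_1$ of $p$-power order and $\zeta_2$ of order prime to $p$; reducing modulo any prime of $\bar\ZZ$ above $p$, the Frobenius-injectivity of $p$-power roots of unity in characteristic $p$ gives $\zeta_1\equiv 1$, while prime-to-$p$ roots of unity reduce injectively to $\bar{\FF_p}^{\times}$, so $\zeta_2=1$ and $\zeta=\zeta_1$ is a $p$-power root of unity. If $\zeta$ were primitive of order $p^a$ with $a\ge 1$, the total ramification of $p$ in $\QQ(\zeta_{p^a})/\QQ$ would give the $p$-adic valuation $v_p(\zeta-1)=1/(p^{a-1}(p-1))$, which is strictly less than $1$ as soon as $p>2$, contradicting $\zeta-1\in p\bar{\ZZ}$. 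Hence $\zeta=1$.

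Granting this, all eigenvalues of $\varphi$ on $V_{\ell}(X)$ equal $1$ and $\varphi$ is semisimple, so $\varphi$ acts as the identity on $V_{\ell}(X)$. The faithfulness of the $\ell$-adic Tate module as a representation of $\End_k(X)$ (valid for every $\ell\neq p$) finally yields $\varphi=\id_X$. The breakdown at $p=2$ is explained precisely by the cyclotomic step above: the element $-1=1-2$ is a non-trivial root of unity with $-1-1\in 2\bar{\ZZ}$, corresponding to the inversion automorphism $[-1]_X$ already flagged at the beginning of this section.
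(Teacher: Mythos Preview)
Your proof is correct and agrees with the paper's up to the point where $\varphi$ is shown to act trivially on $X[p]$: both use Oda's theorem and the faithfulness of the Dieudonn\'e functor. From there, the paper simply invokes ``a lemma of Serre'' (\cite{mumford70} IV.21, Theorem~5) as a black box to conclude $\varphi=\id_X$ from $\varphi|_{X[p]}=\id$ and $p\ge 3$. What you do instead is essentially reprove that lemma inline: you use positivity of the Rosati involution to get finite order, then analyze eigenvalues on $V_\ell(X)$ as roots of unity congruent to $1$ modulo $p\bar{\ZZ}$, and finish with the cyclotomic valuation argument. So the route is not genuinely different --- it is the same skeleton with Serre's lemma unpacked --- but your version is more self-contained and makes the role of the hypothesis $p>2$ completely transparent (it is exactly the inequality $1/(p^{a-1}(p-1))<1$), whereas the paper hides this inside the citation. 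A minor quibble: the phrase ``Frobenius-injectivity of $p$-power roots of unity in characteristic $p$'' is a bit off; what you use is simply that $\bar{\FF}_p^{\times}$ has no $p$-torsion, so $p$-power roots of unity reduce to $1$.
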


\begin{proof}
Let $(X,\lambda)$ be a principally polarized abelian variety over $k$ of dimension $g$ and $\varphi :X \to X$ be a $k$-automorphism of $(X,\lambda)$.

If $\varphi$ preserves a symplectic-Hodge basis of $(X,\lambda)_{/k}$, then in particular $\varphi^* : H^1_{\dR}(X/k) \to H^1_{\dR}(X/k)$ is the identity; a fortiori, $\varphi$ induces the identity on $H^1_{\dR}(X/k)$ regarded as a $D(k)$-module. Then, by Theorem \ref{odathm}, $\varphi$ induces the identity on the $D(k)$-module $M(X[p])$. As the functor $G\mapsto M(G)$ in (\ref{functordieudonne}) is faithful, $\varphi$ restricts to the identity on the $p$-torsion subscheme $X[p]$ of $X$. As $\varphi$ preserves, in addition, the polarization $\lambda$ on $X$, and since $p\ge 3$, then necessarily $\varphi=\id_X$ by a lemma of Serre (cf. \cite{mumford70} IV.21, Theorem 5).   
\end{proof}

Recall the following version of the classical ``rigidity lemma'' for abelian schemes which follows from the arguments in the proof of Proposition 6.1 in \cite{GIT94}.

\begin{lemma} \label{rigidite}
Let $A$ be a local Artinian ring, and $X$ be an abelian scheme over $\Spec A$. If an abelian scheme endomorphism $\varphi \in \End_A(X)$ restricts to the identity on the closed fiber of $X \to \Spec A$, then $\varphi=\id_X$.\hfill $\blacksquare$
\end{lemma}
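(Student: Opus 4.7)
The plan is to reduce to showing that a homomorphism of abelian schemes which vanishes on the closed fiber must itself vanish, and then to apply Mumford's classical rigidity lemma (the geometric input to \cite{GIT94} Proposition 6.1). Set $\psi \defeq \varphi - \id_X$, where the difference is taken with respect to the group law on $X$. Since both $\varphi$ and $\id_X$ are abelian scheme endomorphisms, so is $\psi$. The hypothesis $\varphi|_{X_0} = \id_{X_0}$ translates into $\psi|_{X_0} = 0_{X_0}$, i.e., the restriction of $\psi$ to the closed fiber factors through the identity section $e_0 : \Spec k \to X_0$; in particular, the set-theoretic image of $\psi_0$ consists of the single point $e_0$.

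Next, I would invoke the rigidity lemma for proper flat morphisms with cohomologically trivial structure sheaf: the structural morphism $p : X \to \Spec A$ is proper and flat with geometrically integral fibers (being an abelian scheme), so $p_*\mathcal{O}_X = \mathcal{O}_{\Spec A}$ universally. Any $\Spec A$-morphism $\psi : X \to X$ whose image on some fiber is set-theoretically a single point must then factor through the structural morphism, i.e., there exists a section $y : \Spec A \to X$ such that $\psi = y \circ p$. Since the topological space of the local Artinian scheme $\Spec A$ consists of a single point, the hypothesis ``$\psi_0$ has one-point image'' is enough to feed into the rigidity lemma.

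Finally, the group-homomorphism nature of $\psi$ forces $y = e$: evaluating at the identity section $e : \Spec A \to X$ gives $e = \psi \circ e = (y \circ p) \circ e = y$. Hence $\psi = e \circ p$, which is the zero morphism in the category of group schemes over $\Spec A$, so $\varphi = \id_X + \psi = \id_X$. The main obstacle in writing this up cleanly is the correct citation and verification of the rigidity input; one must check that abelian schemes over $\Spec A$ satisfy the cohomological condition $p_*\mathcal{O}_X = \mathcal{O}_{\Spec A}$ universally (which follows from geometric integrality of the fibers together with flatness), and that the one-point-image condition on the unique closed fiber is sufficient to trigger the factorization. Everything else is formal manipulation with the group law.
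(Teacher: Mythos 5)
Your proof is correct and is exactly the standard unpacking of Mumford's rigidity argument that the paper cites (``follows from the arguments in the proof of Proposition 6.1 in \cite{GIT94}''): form the difference $\psi = \varphi - \id_X$ using the commutative group law, apply the rigidity lemma (proper $p$ with $p_*\mathcal{O}_X = \mathcal{O}_{\Spec A}$) on the one-point base $\Spec A$ to factor $\psi$ through $p$, and then use the homomorphism property to identify the section with the identity section. Same approach; no gap.
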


% \begin{proof}
% Let $p:X\to \Spec A$ denote the structural morphism and $e: \Spec A \to X$ the identity section. If $\varphi$ is as in the statement, then $f \defeq \varphi - \id$ restricts to the ``zero morphism'' on the closed fiber. As $\Spec A$ consisting in only one point, we obtain the set-theoretical identity $f=e \circ p$. To finish the proof, we must show that $f=e\circ p$ as morphisms of schemes.

% Let us momentarily denote the set-theoretical identity section $\Spec A \to X$ by $s$. Now, since $p$ is proper, smooth and with geometrically connected fibers, the induced morphism of sheaves $p^{\#}:\mathcal{O}_{\Spec A} \to p_*\mathcal{O}_X$ is an isomorphism, and we conclude that we can endow $s$ with a unique morphism of locally ringed spaces $s^{\#} : \mathcal{O}_X \to s_* \mathcal{O}_{\Spec A}$ such that $f=s\circ p$ as morphisms of schemes. Finally, we remark that
% \begin{align*}
% s = s \circ p \circ e = f\circ e = e
% \end{align*}  
% as morphisms of schemes.
% \end{proof}

\begin{prop} \label{rigid}
The functor $\underline{B}_g$ is rigid over $\ZZ[1/2]$.
\end{prop}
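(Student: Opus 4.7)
The plan is to reduce, via standard limit and descent arguments, to the case where $U$ is the spectrum of a complete local Noetherian ring, and then bootstrap from the residue-field case (Lemmas \ref{rig0} and \ref{rigp}) by iterated Artinian deformation using Lemma \ref{rigidite}.

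First, I would argue that the statement is of finite presentation: given $(X,\lambda,b)$ over a $\ZZ[1/2]$-scheme $U$ together with $\varphi\in \Aut_U(X,\lambda)$ satisfying $\varphi^*b=b$, the whole datum descends to a subring of finite type over $\ZZ[1/2]$, so we may assume $U$ is Noetherian. As equality of $U$-morphisms may be checked locally on $U$, we may then localize at any $u\in U$ and assume $U=\Spec R$ with $R$ local Noetherian. Since the completion $\widehat{R}$ is faithfully flat over $R$, and since the equalizer of $\varphi$ and $\id_X$ is a closed subscheme of $X$ which descends along faithfully flat morphisms, we may furthermore replace $R$ by $\widehat{R}$ and assume $R$ is a complete local Noetherian ring. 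Its residue field $k$ satisfies $\mathrm{char}(k)\neq 2$ because $R$ is a $\ZZ[1/2]$-algebra.

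Next, I would settle the case of the closed fiber. After passing to an algebraic closure $\overline{k}$, the pullback of $b$ to $X_{\overline{k}}$ is a symplectic-Hodge basis preserved by $\varphi_{\overline{k}}$. Applying Lemma \ref{rig0} (when $\mathrm{char}(k)=0$) or Lemma \ref{rigp} (when $\mathrm{char}(k)>2$) gives $\varphi_{\overline{k}}=\id_{X_{\overline{k}}}$. Faithfully flat descent along $\overline{k}/k$ then yields $\varphi_k=\id_{X_k}$. The inductive step is an iterated application of Lemma \ref{rigidite}: for each $n\geq 0$, the ring $R_n\defeq R/\mathfrak{m}^{n+1}$ is local Artinian with residue field $k$, and $\varphi_n\defeq \varphi\times_R R_n\in \Aut_{R_n}(X_{R_n},\lambda_{R_n})$ restricts to $\id_{X_k}$ on the closed fiber, hence equals $\id_{X_{R_n}}$.

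Finally, I would invoke formal GAGA: since $X\to \Spec R$ is proper and $R$ is complete local Noetherian, the canonical map
\begin{equation*}
  {\Hom}_R(X,X)\longrightarrow \varprojlim_n {\Hom}_{R_n}(X_{R_n},X_{R_n})
\end{equation*}
is injective, so the vanishing of $\varphi_n-\id_{X_{R_n}}$ for every $n$ forces $\varphi=\id_X$.

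Main obstacle: there is no serious mathematical difficulty — the proof is essentially a clean packaging of the established rigidity results via faithfully flat descent, localization, completion, and formal GAGA. The one genuine subtlety is the role of the hypothesis $2\in R^\times$: as noted at the beginning of the section, the involution $[-1]$ fixes every symplectic-Hodge basis in characteristic $2$ (because $b=-b$), so rigidity genuinely fails there, and the argument really does require access to Lemmas \ref{rig0} and \ref{rigp} at the residue field.
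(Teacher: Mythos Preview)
Your proof is correct and follows the same strategy as the paper's: reduce to the Noetherian case by limit arguments, handle residue fields via Lemmas \ref{rig0} and \ref{rigp}, and then propagate through Artinian thickenings using Lemma \ref{rigidite}. The only difference is cosmetic: rather than completing and invoking formal GAGA, the paper stays over the Noetherian base $U$ and argues that the closed locus $Z\subset U$ where $\varphi=\id$ is also open by applying Krull's intersection theorem at each local ring (the ideal of $Z$ in $\mathcal{O}_{U,u}$ lies in $\bigcap_n \mathfrak{m}_u^n=0$), hence $Z=U$.
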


\begin{proof}
Let $U$ be a $\ZZ[1/2]$-scheme, $(X,\lambda)$ be an object of $\mathcal{A}_{g}$ lying over $U$, and $\varphi$ be an automorphism of $(X,\lambda)$ in the fiber category $\mathcal{A}_g(U)$ preserving an element $b$ of $\underline{B}_g(X,\lambda)$. We must show that $\varphi=\id_X$. This being a local property over $U$, we can assume that $U$ is affine.

Suppose that $U$ is Noetherian. By Lemmas \ref{rig0} and \ref{rigp}, for every geometric point $\overline{u}$ of $U$, we have $\varphi_{X_{\overline{u}}}=\id_{X_{\overline{u}}}$. Let $Z$ be the closed subscheme of $U$ where $\varphi = \id$. Then $Z$ contains every closed point of $U$. By  Lemma \ref{rigidite}, and Krull's intersection theorem, $Z$ is also an open subscheme of $U$; hence $Z=U$, which amounts to saying that $\varphi = \id_X$.

In general, by ``elimination of Noetherian hypothesis'' (cf. \cite{EGAIV3}, 8.8, 8.9, 8.10, 12.2.1, and \cite{EGAIV4}, 17.7.9), there exists an affine Noetherian scheme $U_0$ under $U$, and a principally polarized abelian scheme $(X_0,\lambda_0)$ over $U_0$ endowed with a symplectic-Hodge basis $b_0$, and with an $U_0$-automorphism $\varphi_0$, such that $\varphi_0^*b_0=b_0$, and $(X,\lambda)$ (resp. $b$, resp. $\varphi$) is deduced from $(X_0,\lambda_0)$ (resp. $b_0$, resp. $\varphi_0$) by the base change $U\to U_0$. The preceding paragraph shows that $\varphi_0=\id_{X_0}$, hence $\varphi = \id_X$.
\end{proof}

\subsection{Representability of $\mathcal{B}_{g,\ZZ[1/2]}$ by a quasi-projective scheme $B_g$}

We briefly recollect some facts on quotients of schemes by actions of finite groups. 

Let $S$ be a scheme and $\Gamma$ be a finite constant group scheme over $S$, that is, an $S$-group scheme associated to a finite abstract group $|\Gamma|$. 

For any $S$-scheme $X$, an $S$-action of $\Gamma$ on $X$ is equivalent to a morphism of groups $|\Gamma| \to \Aut_S(X)$. If $X$ is an $S$-scheme, we say that an action of $\Gamma$ on $X$ is \emph{free} if the action of $\Gamma(U)$ on $X(U)$ is free for any $S$-scheme $U$.

The next lemma easily follows from \cite{SGA103} V and \cite{knutson71} IV.1.

\begin{lemma} \label{finiteaction}
Let $S$ be an affine Noetherian scheme and $X$ be a quasi-projective $S$-scheme equipped with an $S$-action of a finite constant group scheme $\Gamma$ over $S$. Then
\begin{enumerate}
     \item Up to isomorphism, there exists a unique quasi-projective $S$-scheme $Y$ together with a $\Gamma$-invariant finite surjective morphism $p:X \to Y$ such that the natural morphism of sheaves of rings over $Y$
\begin{align*}
\mathcal{O}_Y\to (p_*\mathcal{O}_X)^{|\Gamma|}
\end{align*}
is an isomorphism.  If $X$ is affine (resp. quasi-affine) over $S$, so is $Y$. We denote $Y\eqdef X/\Gamma$.%In particular, $p$ is a \emph{quotient} of $X$ by $\Gamma$ in the category of $S$-schemes, that is, for any $S$-scheme $Z$, every $\Gamma$-invariant morphism $X \to Z$ factors through $p$.
     %\item The morphism $p$ is finite and surjective. 
     % \item $Y$ is quasi-projective over $S$.
     \item If moreover the action of $\Gamma$ on $X$ is free, then $p$ is étale and
\begin{align*}
\Gamma \times_S X &\to X \times_Y X\\
  (\gamma,x) &\mapsto (x,\gamma\cdot x)
\end{align*}
is an isomorphism.
\end{enumerate}
\end{lemma}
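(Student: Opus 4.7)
My plan is to deduce the result by reducing to the affine case and then globalizing, invoking the standard references as indicated.

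First I would treat the case where $X=\Spec A$ is affine. One sets $Y\defeq \Spec A^{|\Gamma|}$. The key point—that the inclusion $A^{|\Gamma|}\hookrightarrow A$ is finite, so that $p:X\to Y$ is a finite surjection—rests on the classical observation that every $a\in A$ is a root of the monic polynomial
$$
\prod_{\gamma\in|\Gamma|}(T-\gamma\cdot a)\in A^{|\Gamma|}[T],
$$
which is precisely Proposition~1.1 of \cite{SGA103} V. By construction one has $\mathcal{O}_Y=(p_*\mathcal{O}_X)^{|\Gamma|}$, and uniqueness is immediate since any $\Gamma$-invariant morphism $X\to Z$ with $Z$ affine factors through $Y$. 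If $X$ is affine this gives $Y$ affine by definition; the quasi-affine case is handled by observing that a $\Gamma$-invariant quasi-affine scheme embeds $\Gamma$-equivariantly into an affine scheme, and then taking the quotient respects this embedding.

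Next, for the globalization step, I would produce a cover of $X$ by $\Gamma$-invariant affine opens. Since $X$ is quasi-projective over the Noetherian $S$ and each orbit $\Gamma\cdot x$ is finite, the classical fact that any finite set of points in a quasi-projective scheme is contained in an affine open yields an affine open $U\ni \Gamma\cdot x$; the intersection $\bigcap_{\gamma\in|\Gamma|}\gamma\cdot U$ is then a $\Gamma$-invariant affine open containing $x$. On each such chart I form the affine quotient as above and glue along intersections (themselves $\Gamma$-invariant affine) using the uniqueness from the affine case. Quasi-projectivity of $Y$ follows from descent of a sufficiently high power of the $\Gamma$-linearized ample line bundle $\bigotimes_{\gamma\in|\Gamma|}\gamma^*\mathcal{L}$ obtained from an ample $\mathcal{L}$ on $X$, as worked out in \cite{SGA103} V and \cite{knutson71} IV.1.

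For part~(2), I would work in a $\Gamma$-invariant affine chart $\Spec A\subset X$ with quotient $\Spec A^{|\Gamma|}$. Freeness of the $\Gamma$-action translates into the statement that the natural morphism of $A$-algebras
$$
A\tensor_{A^{|\Gamma|}}A\longrightarrow \prod_{\gamma\in|\Gamma|}A,\qquad a\tensor b\longmapsto \bigl(a\cdot \gamma(b)\bigr)_{\gamma\in|\Gamma|},
$$
is an isomorphism; this is the affine translation of the scheme-theoretic claim $\Gamma\times_SX\stackrel{\sim}{\to}X\times_YX$, and is again in \cite{SGA103} V. Étaleness of $p$ follows: flatness of $A$ over $A^{|\Gamma|}$ can be tested after the faithfully flat base change $A^{|\Gamma|}\to A$, where it becomes the obvious flatness of $\prod_{\gamma}A$ over $A$, while unramifiedness is immediate from the fact that $A\tensor_{A^{|\Gamma|}}A$ is a finite étale $A$-algebra.

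The only nontrivial ingredient I anticipate is the existence of $\Gamma$-invariant affine opens covering $X$, which depends on the classical but noncanonical fact that finite sets in quasi-projective schemes lie in affine opens; everything else is either formal or directly quoted from the cited sources.
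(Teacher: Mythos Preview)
The paper does not give a proof of this lemma; it simply states that it ``easily follows from \cite{SGA103} V and \cite{knutson71} IV.1.'' Your sketch spells out the standard argument found in those references (affine quotient as $\Spec A^{|\Gamma|}$, globalization via $\Gamma$-stable affine opens using quasi-projectivity, then the torsor description in the free case), so in that sense you are supplying what the paper omits rather than taking a different route.

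One step in part~(2) needs repair. You argue that flatness of $A$ over $A^{|\Gamma|}$ ``can be tested after the faithfully flat base change $A^{|\Gamma|}\to A$,'' but this is precisely the map whose flatness is in question, so the descent step is circular as written. A non-circular argument, once the isomorphism $A\tensor_{B}A\cong\prod_{\gamma}A$ (with $B=A^{|\Gamma|}$) is in hand, goes as follows: for any prime $\mathfrak p\subset B$, reducing the isomorphism modulo $\mathfrak p$ forces $\dim_{\kappa(\mathfrak p)}(A\tensor_B\kappa(\mathfrak p))=|\Gamma|$; Nakayama then gives a surjection $B_{\mathfrak p}^{|\Gamma|}\twoheadrightarrow A_{\mathfrak p}$ whose kernel $K$ satisfies $K\tensor_{B_{\mathfrak p}} A_{\mathfrak p}=0$, and surjectivity of $\Spec A\to\Spec B$ forces $K=0$. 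This is essentially what is done in \cite{SGA103}~V, so your citation ultimately covers it, but the shortcut you wrote does not stand on its own.
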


\begin{obs} \label{remfiniteaction}
Part (2) in the above lemma implies that,  when the action of $\Gamma$ on $X$ is free, then the stacky quotient $[X/\Gamma]$ (cf. \cite{olsson16} Example 8.1.12) is representable by the scheme $X/\Gamma$.
\end{obs}
 
% \begin{proof}
% Since $X$ is quasi-projective over $S$, by a homogeneous version of the prime-avoidance lemma, any finite set of points in $X$ is contained in an affine open subscheme of $X$. Therefore, 1 is a direct consequence of \cite{SGA103}, V.1.8.

%  It follows from \cite{SGA103} V.1.3 that $p$ is surjective. Moreover, as any quasi-projective morphism is of finite type, $p$ is finite by \cite{SGA103}, V.1.5. 

%  Quasi-projectiveness of $Y$ is a consequence of the following general fact: if $R$ is a ring, $n \ge 1$ is an integer, and $G$ is a finite abstract group acting linearly on $A=R[x_0,\ldots,x_n]$, then $\Proj A^G$ is projective over $R$. We refer to \cite{knutson71} IV.1 for a proof of 3.

% Finally, to prove 4, we observe that if $\Gamma$ acts freely on $X$, then for any $x\in X$, the inertia group at $x$ (that is, the subgroup of $|\Gamma|$ consisting of those elements that fix $x$ and induces the identity on the residue field $k(x)$) is trivial. Then, by \cite{SGA103} V.2.3, $p$ is étale. The last assertion is obtained by \cite{SGA103} V.2.6.  
% \end{proof}

For clarity, we split the proof of Theorem \ref{repr} for $\mathcal{B}_g$ in two parts; see Remark \ref{rem-reprbk} for $\mathcal{B}_F$.

\begin{proof}[Proof of Theorem \ref{repr}, part 1]
Recall from \cite{GIT94} Theorem 7.9 (cf. \cite{olsson12} proof of Theorem 2.1.11) that there exists a quasi-projective scheme $A$ over $\ZZ[1/2]$ endowed with an action by the constant finite group scheme $\Gamma$ over $\ZZ[1/2]$ given by $|\Gamma|= \GL_{g}(\ZZ/4\ZZ)$, and with a surjective étale morphism $A \to \mathcal{A}_{g,\ZZ[1/2]}$ inducing an isomorphism of the stacky quotient $[A/\Gamma]$ with $\mathcal{A}_{g,\ZZ[1/2]}$; namely, $A$ is the fine moduli scheme classifying of principally polarized abelian schemes with a full level 4 structure.

As the morphism of Deligne-Mumford stacks over $\Spec \ZZ$
\begin{align*}
\pi_g : \mathcal{B}_g \to \mathcal{A}_g
\end{align*}
is representable by smooth affine schemes (Remark \ref{relrepr}), the fiber product
\begin{align*}
A\times_{\mathcal{A}_{g,\ZZ[1/2]}}\mathcal{B}_{g,\ZZ[1/2]}\to A
\end{align*}
is representable by a smooth affine scheme $B$ over $A$. In particular, $B$ is affine and of finite type over $A$. Since $A$ is quasi-projective over $\ZZ[1/2]$, it follows that $B$ is a quasi-projective $\ZZ[1/2]$-scheme.

The action of $\Gamma$ on $A$ naturally induces an action of $\Gamma$ on the fiber product $B$; as $\mathcal{B}_{g,\ZZ[1/2]}$ is an algebraic space by Proposition \ref{rigid} (cf. remark following Definition \ref{defrigid}), this action is free. Moreover, by the compatibility of quotients of stacks by group actions with base change (cf. \cite{romagny05} Proposition 2.6), the second projection $B \to \mathcal{B}_{g,\ZZ[1/2]}$ induces an isomorphism of the stacky quotient $[B/\Gamma]$ with $\mathcal{B}_{g,\ZZ[1/2]}$.  Finally, by Lemma \ref{finiteaction} and Remark \ref{remfiniteaction}, we conclude that $\mathcal{B}_{g,\ZZ[1/2]}$ is representable by the quasi-projective $\ZZ[1/2]$-scheme $B/\Gamma$.
\end{proof}

\subsection{$B_g$ is quasi-affine over $\ZZ[1/2]$}

Our proof that $B_g$ is quasi-affine over $\ZZ[1/2]$ is based on the following elementary fact from algebraic geometry.

\begin{lemma}
 Let $S$ be an affine Noetherian scheme, $X$ be a separated $S$-scheme of finite type, and $\mathcal{L}$ be an \emph{ample} or \emph{anti-ample} (i.e., the dual $\mathcal{L}^{\vee}$ is ample) line bundle over $X$. Let  $T(\mathcal{L})\to X$ be the $\GG_{m,S}$-torsor associated to $\mathcal{L}$. Then $T(\mathcal{L})$ is a quasi-affine $S$-scheme.
\end{lemma}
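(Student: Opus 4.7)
The plan is to use relative very ampleness of some tensor power of $\mathcal{L}$ to realize $T(\mathcal{L}^{\otimes N})$ as an immersion into affine space, and then to bridge $T(\mathcal{L})$ and $T(\mathcal{L}^{\otimes N})$ via a finite $\mu_N$-cover.

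First, I would reduce to the ample case. The map $\phi \mapsto \phi^{-1}$, sending local trivializations of $\mathcal{L}$ to local trivializations of $\mathcal{L}^{\vee}$, yields a canonical $S$-isomorphism of schemes $T(\mathcal{L}) \cong T(\mathcal{L}^{\vee})$ (the $\GG_{m,S}$-actions differ by inversion, but this is irrelevant for the scheme structure). So we may assume $\mathcal{L}$ is $S$-ample. Since $S$ is affine Noetherian and $X$ is separated of finite type over $S$, some tensor power $\mathcal{L}^{\otimes N}$ is $S$-very ample (EGA II, 4.6.11), providing an $S$-immersion $\iota \colon X \hookrightarrow \PP^n_S$ with $\iota^*\mathcal{O}_{\PP^n_S}(1) \cong \mathcal{L}^{\otimes N}$. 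The $\GG_{m,S}$-torsor associated to $\mathcal{O}_{\PP^n_S}(1)$ is canonically the open subscheme $\AA^{n+1}_S \setminus \{0\text{-section}\}$ of $\AA^{n+1}_S$. Since the formation of $\GG_m$-torsors is compatible with pullback of line bundles, one has a canonical identification
$$
T(\mathcal{L}^{\otimes N}) = X \times_{\PP^n_S} (\AA^{n+1}_S \setminus \{0\}).
$$
As $\iota$ is an immersion, so is its base change; composing with the open inclusion into $\AA^{n+1}_S$ exhibits $T(\mathcal{L}^{\otimes N})$ as an immersion into an affine $S$-scheme, and is therefore quasi-affine over $S$.

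It then remains to deduce quasi-affineness of $T(\mathcal{L})$ from that of $T(\mathcal{L}^{\otimes N})$. The $N$-th power morphism $[N] \colon \GG_m \to \GG_m$ induces a natural morphism $T(\mathcal{L}) \to T(\mathcal{L}^{\otimes N})$ (sending a local trivialization $\phi$ of $\mathcal{L}$ to $\phi^{\otimes N}$), which, working Zariski-locally on $X$ where both torsors trivialize, appears as $(u,t) \mapsto (u, t^N)$ on $U \times \GG_{m,S}$; hence it is a $\mu_N$-torsor, and in particular finite flat, hence affine. Since the composition of an affine morphism with a quasi-affine morphism is quasi-affine, $T(\mathcal{L}) \to S$ is quasi-affine. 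The main technical point is the functorial bookkeeping: correctly identifying the fiber product with $T(\mathcal{L}^{\otimes N})$ and the $N$-th power map as a $\mu_N$-torsor; the remaining inputs (quasi-affineness of immersions into affine schemes, stability of quasi-affineness under composition with affine morphisms) are standard and require no serious argument.
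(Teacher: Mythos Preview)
Your proof is correct and follows essentially the same approach as the paper: both use the Cartesian square identifying the torsor of a very ample line bundle with the fiber product $X\times_{\PP^n_S}(\AA^{n+1}_S\setminus\{0\})$, the finite $N$-th power map $T(\mathcal{L})\to T(\mathcal{L}^{\otimes N})$ to pass from ample to very ample, and the scheme isomorphism $T(\mathcal{L})\cong T(\mathcal{L}^{\vee})$ to handle the dual case. The only difference is organizational---the paper treats the anti-ample case first (writing $\AA^{n+1}_S\setminus\{0\}$ as $T(\mathcal{O}_{\PP^n_S}(-1))$) and invokes duality at the end, whereas you reduce to the ample case at the outset.
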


\begin{proof}
 Assume first that $\mathcal{L}^{\vee}$ is very ample over $S$. Then there exists $n\in \NN$ and an $S$-immersion $i: X \to \PP^n_S=\Proj \mathcal{O}_S(S)[X_0,\ldots,X_n]$ such that $\mathcal{L}^{\vee}=i^*\mathcal{O}_{\PP^n_S}(1)$. Let $\varphi^j = i^*X_j\in \Gamma(X,\mathcal{L}^{\vee})$ for $0\le j\le n$, and denote by $p:T(\mathcal{L})\to X$ the canonical projection. The morphism of $S$-schemes
 \begin{align*}
 i_{\mathcal{L}}:  T(\mathcal{L}) &\to \AA_S^{n+1}\setminus\{0\} = T(\mathcal{O}_{\PP^n_S}(-1))\\
  \ell &\mapsto (\varphi^0_{p(\ell)}(\ell),\ldots ,\varphi^n_{p(\ell)}(\ell))
 \end{align*}
is an immersion, since it fits into the Cartesian square
 $$
 \begin{tikzcd}
 T(\mathcal{L}) \arrow{r}{i_{\mathcal{L}}}\arrow{d}[swap]{p} & \AA_S^{n+1}\setminus\{0\}\arrow{d}\\
  X \arrow{r}[swap]{i} &\PP^n_S \arrow[draw=none]{ul}[description]{\square}
 \end{tikzcd}
 $$
 Thus $T(\mathcal{L})$ is a quasi-affine $S$-scheme. 
 
 If $\mathcal{L}^{\vee}$ is only ample, then we consider some very ample tensor power $(\mathcal{L}^{\vee})^{\tensor k} = (\mathcal{L}^{\tensor k})^{\vee}$ of $\mathcal{L}^{\vee}$. Since the $k$-th power map $T(\mathcal{L})\to T(\mathcal{L}^{\tensor k})$ is a finite morphism of $S$-schemes, and $T(\mathcal{L}^{\tensor k})$ is quasi-affine over $S$ by the above reasoning, $T(\mathcal{L})$ is also quasi-affine over $S$.
 
 If $\mathcal{L}$ is ample, then $T(\mathcal{L}^{\vee})$ is a quasi-affine $S$-scheme.  By duality, $T(\mathcal{L})$ is isomorphic to $T(\mathcal{L}^{\vee})$ as an $S$-scheme, thus $T(\mathcal{L})$ is quasi-affine over $S$.
\end{proof}

To conclude, we apply the above lemma and the fact that the determinant of the Hodge bundle on a fine moduli space of principally polarized abelian varieties with level structure is ample (see \cite{FC90} or \cite{lan13}):

\begin{proof}[Proof of Theorem \ref{repr}, part 2] 
 Let $T(\det \mathcal{F}_g)$ be the category fibered in groupoids over $\Spec \ZZ$ whose objects over a scheme $U$ are triples $(X,\lambda,t)$, where $(X,\lambda)$ is a principally polarized abelian scheme over $U$ of relative dimension $g$, and $t$ is a trivialization of the line bundle $\det F^1(X/U)$ over $U$ --- in other words, $T(\det \mathcal{F}_g)$ is the $\GG_m$-torsor associated to the determinant of the universal Hodge bundle $\mathcal{F}_g$ over $\mathcal{A}_g$. Then $\pi_g:\mathcal{B}_g\to \mathcal{A}_g$ factors through the forgetful functor $T(\det\mathcal{F}_g)\to \mathcal{A}_g$ via
  $$
  f:\mathcal{B}_g \to T(\det\mathcal{F}_g)\text{,}
  $$
  given by $(X,\lambda,(\omega_1,\ldots,\omega_g,\eta_1,\ldots,\eta_g))_{/U}\mapsto (X,\lambda,\omega_1\wedge \cdots \wedge \omega_g)_{/U}$.
  
  We keep the notation of the first part of this proof. Let $(X,\lambda)$ the principally polarized abelian scheme over $A$ corresponding to the finite étale covering $A\to \mathcal{A}_{g,\ZZ[1,2]} \cong [A/\Gamma]$, then it follows from \cite{FC90} Theorem V.2.5 (cf. \cite{lan13} Theorem 7.2.4.1 (2)) that  $\det F^1(X/A)$ is an ample line bundle over $A$. By the above lemma, $T(\det F^1(X/A))$ is a quasi-affine $\ZZ[1/2]$-scheme.
  
  Consider now the following commutative diagram
  $$
  \begin{tikzcd}
   B \arrow{rr}\arrow{d} & & B_g\arrow{d}{f} \\
   T(\det F^1(X/A))\arrow{rr}\arrow{d} & & T(\det \mathcal{F}_g)_{\ZZ[1/2]}\arrow{d}\\
   A\arrow{rr} &&\mathcal{A}_{g,\ZZ[1/2]}
  \end{tikzcd}
  $$
  in which every square is Cartesian. As $f$ is relatively representable by affine schemes, $B$ is affine over $T(\det F^1(X/A))$, thus quasi-affine over $\ZZ[1/2]$. Since $B_g\cong B/\Gamma$, we conclude by the part (1) of Lemma \ref{finiteaction}.
\end{proof}

\begin{obs} \label{rem-reprbk}
 By considering level structures on principally polarized abelian schemes with $R$-multiplication and the ampleness of the determinant of the Hodge bundle (\cite{lan13} Theorem 7.2.4.1 (2)), virtually the same proof can be applied to the case of $\mathcal{B}_F$. 
\end{obs}

\section{The case of elliptic curves: explicit equations}

When $g=1$ (or, equivalently, $F=\QQ$), we can compute explicit equations for $B_1 = B_{\QQ}$, for the Ramanujan vector field, and for the integral solution $\hat{\varphi}_1$ of the Ramanujan equation.

\subsection{Explicit equation for the universal elliptic curve $X_1$ over $B_1$ and its universal symplectic-Hodge basis}

Fix a scheme $U$. Let us recall that every \emph{elliptic curve} $E$ over $U$ (namely, an abelian scheme of relative dimension 1) has a canonical unique principal polarization $\lambda_{E}:E \to E^t$ given, for any $U$-scheme $V$ and any point $P\in E(V)$, by 
\begin{align*}
\lambda_E(P)= \mathcal{O}_E([P]-[O])
\end{align*}
where $O\in E(V)$ denotes the identity section and $\mathcal{O}_E([P]-[O])$ denotes the class in $E^t(V)$ of the inverse of the ideal sheaf defined by the relative Cartier divisor $[P]-[O]$.

Therefore, the functor 
\begin{align*}
E \mapsto (E,\lambda_{E})
\end{align*}
defines an equivalence between the category of elliptic curves over $U$ and that of principally polarized elliptic curves over $U$. We can thus ``forget'' the principal polarization: an elliptic curve $E$ will always be assumed to be endowed with its canonical principal polarization $\lambda_{E}$. In particular, an object of $\mathcal{B}_1$ will be denoted simply by a ``couple'' $(E,b)_{/U}$.

\begin{obs}
The symplectic form induced by $\lambda_E$ coincides with the composition of the cup product in de Rham cohomology $H^1_{\dR}(E/U) \times H^1_{\dR}(E/U) \to H^2_{\dR}(E/U)$ with the trace map $H^2_{\dR}(E/U) \to \mathcal{O}_U$.  
\end{obs}

\begin{theorem} \label{casg=1}
Let 
\begin{align*}
B_1 \defeq \Spec \ZZ[1/2,b_2,b_4,b_6,\Delta^{-1}]
\end{align*}
where
\begin{align*}
\Delta \defeq \frac{b_2^2(b_4^2 - b_2b_6)}{4} - 8 b_4^3 - 27b_6^2 + 9b_2b_4b_6 = 16\, {\rm disc}\left(x^3 + \frac{b_2}{4}x^2 + \frac{b_4}{2}x + \frac{b_6}{4}\right)\text{,}
\end{align*}
and let $X_1$ be the elliptic curve over $B_1$ given by the equation
\begin{align*}
y^2 = x^3 + \frac{b_2}{4}x^2 + \frac{b_4}{2}x + \frac{b_6}{4}\text{.}
\end{align*}
Then $b_1 = (\omega_1,\eta_1)$ defined by 
\begin{align*}
\omega_1 \defeq \frac{dx}{2y}\text{, } \ \ \ \eta_1 \defeq x\frac{dx}{2y}
\end{align*}
is a symplectic-Hodge basis of ${X_1}_{/B_1}$ and the morphism $B_1 \to \mathcal{B}_1$ corresponding to $(X_1,b_1)_{/B_1}$ induces an isomorphism of $B_1$ with the $\ZZ[1/2]$-stack $\mathcal{B}_{1,\ZZ[1/2]}$.
\end{theorem}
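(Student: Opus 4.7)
The plan is to verify the three assertions in succession.

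First, $X_1$ is an elliptic curve over $B_1$: since both $2$ and $\Delta$ are invertible on $B_1$ and $\Delta = 16\cdot\mathrm{disc}(x^3 + (b_2/4)x^2 + (b_4/2)x + b_6/4)$, the affine scheme $X_1 \setminus \{O\}$ is smooth over $B_1$, and its projective completion in $\PP^2_{B_1}$ is a proper smooth family of genus-one curves with marked section $O$ at infinity; this is an abelian scheme of relative dimension $1$ equipped with its canonical principal polarization.

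Second, $(\omega_1, \eta_1)$ is a symplectic-Hodge basis of $(X_1)_{/B_1}$. That $\omega_1 = dx/(2y)$ trivializes $F^1(X_1/B_1) \cong H^0(X_1,\Omega^1_{X_1/B_1})$ is the standard property of a Weierstrass equation with invertible discriminant. Since the symplectic form $\langle \ , \ \rangle_{\lambda}$ is alternating by Lemma \ref{alternating}, it suffices to verify $\langle \omega_1, \eta_1 \rangle_{\lambda} = 1$. Using the identification of $\langle \ , \ \rangle_{\lambda}$ on an elliptic curve with the cup product in de Rham cohomology composed with the trace, this reduces to a residue computation: in the uniformizer $t = -x/y$ at $O$, one has $\omega_1 = (1 + O(t))dt$ and $\eta_1 = (t^{-2} + O(1))dt$, so picking a local primitive $F = t + O(t^2)$ of $\omega_1$ yields $\langle \omega_1, \eta_1\rangle_{\lambda} = \mathrm{Res}_O(F\cdot \eta_1) = 1$.

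The substantive step is the isomorphism statement: I construct an inverse to the classifying morphism $\Phi: B_1 \to \mathcal{B}_{1, \ZZ[1/2]}$ attached to $(X_1, b_1)_{/B_1}$. Given a $\ZZ[1/2]$-scheme $U$ and an object $(E, (\omega, \eta))_{/U}$ of $\mathcal{B}_{1, \ZZ[1/2]}$, relative Weierstrass theory (cf.\ \cite{KM85} Chapter 2) yields global sections $x$ of $\mathcal{O}_E(2[O])$ and $y$ of $\mathcal{O}_E(3[O])$, unique up to the affine substitution $x \mapsto x + c$ with $c \in \Gamma(U, \mathcal{O}_U)$, such that $E\setminus \{O\}$ is cut out by $y^2 = x^3 + a_2 x^2 + a_4 x + a_6$ and $\omega = dx/(2y)$. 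The substitution $x \mapsto x + c$ shifts $[x\,dx/(2y)]$ by $c[\omega]$ in $H^1_{\dR}(E/U)$ while fixing $[\omega]$; the compatibility $\langle \omega, \eta\rangle_{\lambda} = 1 = \langle \omega, x\,dx/(2y)\rangle_{\lambda}$ together with $\langle \omega, \omega\rangle_{\lambda} = 0$ forces $\eta - [x\,dx/(2y)] \in \mathcal{O}_U \cdot [\omega]$, pinning down $c$ uniquely. Setting $(b_2, b_4, b_6) \defeq (4a_2, 2a_4, 4a_6)$, with $\Delta$ automatically invertible by smoothness of $E$, yields the unique morphism $U \to B_1$ pulling $(X_1, b_1)$ back to $(E, (\omega, \eta))$, which defines the required inverse to $\Phi$.

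The main obstacle will be a rigorous formulation of the relative Weierstrass normalization invoked above, especially the canonical construction of $(x, y)$ from $\omega$ up to the one-parameter ambiguity; this is most naturally done via the filtration $\{H^0(E, \mathcal{O}_E(n[O]))\}_{n\geq 0}$, locally free of rank $n$ by cohomology and base change, together with the formal parameter at $O$ determined by $\omega$. The remainder of the argument is linear algebra in the rank-$2$ symplectic bundle $H^1_{\dR}(E/U)$, consistent with both $B_1$ and $\mathcal{B}_{1, \ZZ[1/2]}$ being smooth of relative dimension $2g^2+g=3$ over $\ZZ[1/2]$ by Theorem \ref{repr}.
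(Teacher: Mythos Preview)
Your proposal is correct and follows essentially the same route as the paper: use the Weierstrass normal form (with $2$ inverted to kill $a_1,a_3$) to rigidify the data up to a translation $x\mapsto x+c$, and then use the second element $\eta$ of the symplectic-Hodge basis to pin down $c$. The only cosmetic differences are that the paper carries out the residue computation by base change to $\CC$ rather than by your direct algebraic calculation at $O$, and that the paper works explicitly Zariski-locally on $U$ (first choosing a formal parameter $t$ with $\omega=(1+O(t))dt$, then $x,y$ with prescribed $t$-expansions) whereas you package this into a single invocation of relative Weierstrass theory; your own closing paragraph correctly identifies this normalization step as the point requiring care, and your observation that the $c$ determined by $\eta$ is canonical is exactly what makes the local constructions glue.
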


In other words, if $(X_1,b_1)_{/B_1}$ is defined as above, then for any $\ZZ[1/2]$-scheme $U$, and any elliptic curve $E$ over $U$ endowed with a symplectic-Hodge basis $b$, there exists a unique morphism $F_{/f}:E_{/U} \to {X_1}_{/B_1}$  in $\mathcal{A}_{1,\ZZ[1/2]}$ such that $F^*b_1=b$.

\begin{proof}
It is classical that $\omega_1$ so defined is in $F^1(X_1/B_1)$. To prove that $\langle \omega_1,\eta_1\rangle_{\lambda_E} = 1$ one can, for instance,  use the compatibility with base change to reduce this statement to an analogous statement concerning an elliptic curve over $\CC$, and then apply the classical residue formula (cf. \cite{DMOS82} pp. 23-25). 

Let $U$ be a $\ZZ[1/2]$-scheme and $(E,b)_{/U}$ be an object of $\mathcal{B}_{1}(U)$, with $b=(\omega,\eta)$. It is sufficient to prove that, locally for the Zariski topology over $U$, there exists a unique morphism $(E,b)_{/U} \to (X_1,b_1)_{/B_1}$ in $\mathcal{B}_{1,\ZZ[1/2]}$.

 We follow essentially the same steps in \cite{KM85} 2.2 to find a Weierstrass equation for an elliptic curve. Let us denote by $O: U \to E$ the identity section of the elliptic curve $E$ over $U$ and by $p:E \to U$ its structural morphism. Locally for the Zariski topology on $U$ we can find a formal parameter $t$ in the neighborhood of $O$ such that $\omega$ has a formal expansion in $t$ of the form
\begin{align*}
\omega = (1+ O(t))dt\text{,}
\end{align*} 
where $O(t)$ stands for a formal power series in $t$ of order $\ge 1$. Up to replacing $U$ by an open subscheme, we can and shall assume from now on that $t$ exists globally over $U$. 

There exist bases $(1,x)$ of $p_*\mathcal{O}_E(2 [O])$, and $(1,x,y)$ of $p_*\mathcal{O}_E(3[ O])$, such that
\begin{align} %\tag{$*$}
x = \frac{1}{t^2}(1+ O(t))\ \  \text{ and }\ \ y=\frac{1}{t^3}(1+O(t))\text{.}
\end{align}
Then the rational functions $x$ and $y$ necessarily satisfy an equation of the form
\begin{align*}
y^2 + a_1xy +a_3y= x^3 + a_2x^2 + a_4x+a_6\text{,}
\end{align*}
where $a_i$ are uniquely defined global sections of $\mathcal{O}_U$. Since 2 is invertible in $U$, the above equation is equivalent to
\begin{align*}
\left(y+\frac{a_1}{2}x + \frac{a_3}{2}\right)^2 = x^3 + \left(\frac{a_1^2 + 4 a_2}{4}\right)x^2 + \left(\frac{a_1a_3+2a_4}{2} \right)x + \frac{a_3^2 + 4 a_6}{4}\text{.}
\end{align*}
Therefore, after the change of coordinates $(x,y)\mapsto (x,y + \frac{a_1}{2}x + \frac{a_3}{2})$, we can assume that $x$ and $y$ satisfy
\begin{align*}
y^2 = x^3 + \frac{b_2}{4}x^2 + \frac{b_4}{2}x + \frac{b_6}{4}\text{,}
\end{align*}
where $b_i$ are global sections of $\mathcal{O}_U$. Put differently, we obtain a morphism $F_{/f} : E_{/U} \to {X_1}_{/B_1}$ in $\mathcal{A}_{1,\ZZ[1/2]}$.

By considering formal expansions in $t$, we see that $F^*\omega_1 = \omega$. In particular, 
\begin{align*}
(\omega, F^*\eta_1) = F^*b_1
\end{align*}
is a symplectic-Hodge basis of $E_{/U}$, and there exists a section $s$ of $\mathcal{O}_U$ such that $\eta = F^*\eta_1 + s\omega$. Thus, after the change of coordinates $(x,y)\mapsto (x+s,y)$, we have $F^*b_1 = b$. Therefore, we have constructed a morphism $F_{/f}:(E,b)_{/U} \to (X_1,b_1)_{/B_1}$ in $\mathcal{B}_{1,\ZZ[1/2]}$. 

We now prove that the morphism $F_{/f}$ is unique. Let $F'_{/f'} : (E,b)_{/U} \to (X_1,b_1)_{/B_1}$ be any morphism in $\mathcal{B}_{1,\ZZ[1/2]}$. If $f'=(b_2',b_4',b_6')$ are the coordinates of $f'$, then $F'$ is given by a basis $(1,x',y')$ of $p_*\mathcal{O}_E(3[ O])$ satisfying
\begin{align*} \tag{$*$}
(y')^2 = (x')^3 + \frac{b_2'}{4}(x')^2 + \frac{b_4'}{2}x' + \frac{b_6'}{4}\text{.}
\end{align*} 
As both $(1,x,y)$ and $(1,x',y')$ (resp. $(1,x)$ and $(1,x')$) are a basis of $p_*\mathcal{O}_E(3[ O])$ (resp. $p_*\mathcal{O}_E(2[O])$), then there exists global sections $c_1,c_2,c_3$ of $\mathcal{O}_{U}$ (resp. $u,v$ of $\mathcal{O}_U^{\times}$) such that
\begin{align*}
x' &= u(x+c_1)\\
y' &= v(y + c_2x+c_3)\text{.} 
\end{align*}
Note that equation $(*)$ implies that $u^3 = v^2$.

Now, as $(F')^*\omega_1 = F^*\omega_1$, we obtain
\begin{align*}
\frac{dx'}{2y'} = \frac{dx}{2y} \iff \frac{u}{v}\frac{dx}{2(y+c_2x+c_3)} = \frac{dx}{2y}\text{,}
\end{align*}
thus $c_2x+c_3=0$ and $u = v$. Since $u^3=v^2$, we obtain $u=v=1$ and $(x',y')=(x+c_1,y)$. Finally, as $(F')^*\eta_1=F^*\eta_1$, we have
\begin{align*}
x'\frac{dx'}{2y'} = \frac{dx}{2y} \iff  x\frac{dx}{2y} + c_1\frac{dx}{2y}= x\frac{dx}{2y}\text{,}
\end{align*}
hence $c_1=0$. Thus $(x',y')=(x,y)$ and this also implies that $f=f'$.
\end{proof}

\begin{obs} \label{classique1}
By considering the change of variables
\begin{align*}
\left\{\begin{array}{l}
         b_2 = e_2 \\
         b_4 = (e_2^2-e_4)/24\\
         b_6 = (4e_2^3 - 12 e_2e_4 + 8e_6)/1728
         \end{array}\right.\iff
\left\{\begin{array}{l}
         e_2 = b_2 \\
         e_4= b_2^2-24b_4\\
         e_6 = b_2^3 -36b_2b_4 + 216b_6
         \end{array}\right.
\end{align*}
we see that $B_1 \tensor_{\ZZ[1/2]}\ZZ[1/6]$ is isomorphic to
\begin{align*}
\Spec \ZZ[1/6, e_2,e_4,e_6,(e_4^3-e_6^2)^{-1}]\text{.}
\end{align*}
Under this identification, the universal elliptic curve $X_1$ is given by the equation
\begin{align*}
y^2 = 4\left(x+\frac{e_2}{12} \right)^3 -\frac{e_4}{12}\left(x+\frac{e_2}{12} \right) + \frac{e_6}{216}\text{,}
\end{align*}
and the universal symplectic-Hodge basis $b_1$ by $(dx/y,xdx/y)$. 
\end{obs}

%\warn{remark action of $P_1$}

\subsection{Explicit formulas for the Ramanujan vector field} 

It is also possible to give an explicit formula for the Ramanujan vector field $v_{11}$ over $B_1$. Indeed, consider the global section of $T_{B_1/\ZZ[1/2]}$ given by\begin{align*}
v \defeq 2b_4\frac{\partial}{\partial b_2} + 3b_6\frac{\partial}{\partial b_4} + (b_2b_6 - b_4^2)\frac{\partial}{\partial b_6}\text{.}
\end{align*}
One may easily verify using the expression for the Gauss-Manin connection on $H^1_{\dR}(X_1/B_1)$ given in \ref{GMunivell} that
\begin{align*}
\nabla_v\left(\begin{array}{cc}
       \omega_1 & \eta_1
       \end{array}\right) =
       \left(\begin{array}{cc}
       \omega_1 & \eta_1
       \end{array}\right)\left(\begin{array}{cc}
       0 & 0\\
       1 & 0
       \end{array}\right)
\end{align*}
By Proposition \ref{caracchamps}, $v$ is the Ramanujan vector field $v_{11}$ over $B_1$.

\begin{obs} \label{explanation}
Under the isomorphism $B_1\tensor_{\ZZ[1/2]} \ZZ[1/6] \cong \ZZ[1/6,e_2,e_4,e_6,(e_4^3 - e_6^2)^{-1}]$ of Remark \ref{classique1}, $v$ gets identified with the vector field associated to the classical Ramanujan equations:
\begin{align*}
v = \frac{e_2^2-e_4}{12} \frac{\partial}{\partial e_2} + \frac{e_2e_4-e_6}{3}\frac{\partial}{\partial e_4} + \frac{e_2e_6-e_4^2}{2}\frac{\partial}{\partial e_6}\text{.}
\end{align*}
\end{obs}

\subsection{Explicit formulas for $\hat{\varphi}_1$}

We now explicitly describe the integral solution
$$
\hat{\varphi}_1 : \Spec \ZZ(\!(q)\!) \to \mathcal{B}_1
$$
constructed in Section \ref{sec-intsol}.

Recall that we denote $\theta \defeq \theta_{11} = q\frac{d}{dq}$, and 
\begin{align*}
E_{2}(q) = 1 - 24 \sum_{n=1}^{\infty}\frac{n q^n}{1-q^n}\text{, }\ E_{4}(q) = 1+240 \sum_{n=1}^{\infty}\frac{n^3 q^n}{1-q^n}\text{, } \ 
E_6(q) = 1-504 \sum_{n=1}^{\infty}\frac{n^5 q^n}{1-q^n} \in \ZZ[\![q]\!]\text{.}
\end{align*}

\begin{prop} \label{prop-solg=1}
 We have:
 \begin{enumerate}
    \item under the identification $B_1 \cong \Spec \ZZ[1/2,b_2,b_4,b_6,\Delta^{-1}]$ of Theorem \ref{casg=1},
 \begin{align*}
 \hat{\varphi}^*_{1,\ZZ[1/2]}(b_2,b_4,b_6) = \left(E_2(q),\frac{1}{2}\theta E_2(q),\frac{1}{6}\theta^2 E_2(q) \right) \in (\ZZ(\!(q)\!)\tensor \ZZ[1/2])^3\text{;}
 \end{align*}
    \item under the identification  $B_{1,\ZZ[1/6]} \cong \Spec \ZZ[1/6, e_2,e_4,e_6,(e_4^3-e_6^2)^{-1}]$ of Remark \ref{classique1}, we have
 \begin{align*}
 \hat{\varphi}^*_{1,\ZZ[1/6]}(e_2,e_4,e_6) = (E_2(q),E_4(q),E_6(q))\in (\ZZ(\!(q)\!) \tensor \ZZ[1/6])^3\text{.}
 \end{align*} 
 \end{enumerate}
 \end{prop}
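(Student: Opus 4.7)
I would organize the proof around the analytic comparison developed in Section \ref{sec-analhre}. By construction (Paragraph \ref{subsec-mumfordsiegel}), the formal morphism $\hat{\varphi}_1 : \Spec \ZZ(\!(q)\!) \to \mathcal{B}_1$, obtained from Mumford's degenerating family, is the ``$q$-expansion'' of the analytic map $\varphi_1 : \mathbf{H} \to B_1(\CC)$ at the cusp $i\infty$, under the substitution $q = e^{2\pi i \tau}$ (this is the content of Paragraph \ref{subsec-compatsiegel}, parallel to the higher-dimensional Theorem \ref{thm-intsolsiegel}). Consequently, identities about $\hat{\varphi}_1$ can be checked analytically on $\mathbf{H}$.

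The second statement (2) then follows from its analytic counterpart: under the identification of Remark \ref{classique1}, one has $\varphi_1^*(e_2, e_4, e_6) = (E_2(\tau), E_4(\tau), E_6(\tau))$ as holomorphic functions on $\mathbf{H}$. This is the classical identification of Eisenstein series with the Weierstrass coefficients of the complex elliptic curve $X_\tau = \CC/(\ZZ + \tau \ZZ)$ equipped with the symplectic-Hodge basis obtained from the canonical differential $2\pi i\, dz$ and its Gauss-Manin derivative under $(2\pi i)^{-1} d/d\tau$; passing to $q$-expansions gives (2).

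To derive (1) from (2), I would use the change of variables $b_2 = e_2$, $b_4 = (e_2^2 - e_4)/24$, $b_6 = (4 e_2^3 - 12 e_2 e_4 + 8 e_6)/1728$ in Remark \ref{classique1}, together with Ramanujan's classical relations (R). The first Ramanujan relation $\theta E_2 = (E_2^2 - E_4)/12$ immediately yields $\hat{\varphi}_1^*(b_4) = (E_2^2 - E_4)/24 = \tfrac{1}{2}\theta E_2$. Differentiating this identity once more and substituting $\theta E_4 = (E_2 E_4 - E_6)/3$ produces $\theta^2 E_2 = (E_2^3 - 3 E_2 E_4 + 2 E_6)/72$, which upon substitution gives $\hat{\varphi}_1^*(b_6) = \tfrac{1}{6}\theta^2 E_2$. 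These identities hold \emph{a priori} in $\ZZ[1/6](\!(q)\!)$, but all series involved actually lie in $\ZZ[1/2](\!(q)\!)$: this follows for $\tfrac{1}{2}\theta E_2$ and $\tfrac{1}{6}\theta^2 E_2$ from the explicit $q$-expansion $\theta E_2 = -24 \sum_{n\ge 1} n\sigma_1(n) q^n$, whose coefficients are divisible by $24$, and for the $\hat{\varphi}_1^*(b_i)$ by construction since $B_1$ is defined over $\ZZ[1/2]$. Thus the identity descends to $\ZZ[1/2](\!(q)\!)$.

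The main obstacle lies in the analytic step: identifying, for the complex elliptic curve $X_\tau$, the Gauss-Manin derivative $\hat{\eta}_1 = \nabla_{\theta}\hat{\omega}_1$ with the algebraic differential $x\, dx/2y$ in a suitably normalized Weierstrass model. The fact that $E_2$ (a merely \emph{quasi}-modular form, which does not define a global section over $\mathcal{A}_{1,\CC}$) appears as the coefficient $b_2$ is precisely the geometric shadow of this identification, and reflects the non-modularity built into the Ramanujan vector field. Once this classical analytic computation is in place, the remainder of the proof is purely formal manipulation with Ramanujan's relations.
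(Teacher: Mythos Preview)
Your approach is correct and essentially coincides with the paper's. Both reduce (1) to (2) via the change of coordinates of Remark \ref{classique1}, and both establish (2) by invoking the analytic compatibility of $\hat{\varphi}_1$ with $\varphi_1$ (Paragraph \ref{subsec-compatsiegel}) together with the classical identification of the Tate curve with a Weierstrass model having coefficients $E_4$, $E_6$.

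The only difference is one of emphasis. The paper is more explicit about (2): it writes the Tate curve over $\ZZ[1/6](\!(q)\!)$ as $y^2 = 4x^3 - \tfrac{E_4}{12}x + \tfrac{E_6}{216}$, uses the Gauss-Manin formula (\ref{eq-gmtate}) to identify $\hat\eta_1$, and then exhibits the explicit morphism $\Phi^*(x,y) = (x - E_2/12,\,y)$ to the universal curve over $B_1$, checking that $\Phi^*b_1 = \hat b_1$; the appearance of $E_2$ is made concrete precisely through this translation in $x$. This is exactly the ``main obstacle'' you flag in your last paragraph, and the paper resolves it by this explicit change of variables rather than leaving it as a classical fact. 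Conversely, you give more detail than the paper on deriving (1) from (2): the paper simply says ``by the change-of-coordinates formulas it is sufficient to prove (2)'', whereas you spell out how the Ramanujan relations produce $\tfrac12\theta E_2$ and $\tfrac16\theta^2E_2$, and you address the descent from $\ZZ[1/6]$ to $\ZZ[1/2]$ (which is indeed automatic once both sides are known to lie in $\ZZ[1/2](\!(q)\!)$, since $\ZZ[1/2](\!(q)\!)\hookrightarrow\ZZ[1/6](\!(q)\!)$).
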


 \begin{proof}
   By the change-of-coordinates formulas in Remark \ref{classique1}, it is sufficient to prove (2).

   It is classical that the Tate curve $\hat{X}_{1,\ZZ[1/6]}$ over $\ZZ(\!(q)\!)\tensor \ZZ[1/6]$ is given by the equation
 $$
y^2 =4x^3 -\frac{E_4(q)}{12}x + \frac{E_6(q)}{216}\text{,}
$$
with canonical differential $\hat{\omega}_1 = \frac{dx}{y}$. This can be deduced from its analytic counterpart (see Paragraph \ref{subsec-compatsiegel}), which implies moreover that
$$
\hat{\eta_1}\defeq \nabla_{\theta}\omega_1 = \hat{\omega}_1 - \frac{E_2(q)}{12}\hat{\omega}_1\text{;}
$$
cf. equation (\ref{eq-gmtate}) in Appendix \ref{app-gm}.

Let $\varphi : \ZZ(\!(q)\!) \tensor \ZZ[1/6] \to B_{1,\ZZ[1/6]}$ be defined by
$$
\varphi^*(e_2,e_4,e_6) = (E_2(q),E_4(q),E_6(q))\text{.}
$$
Observe that we have a morphism in $\mathcal{A}_{1,\ZZ[1/6]}$
$$
\begin{tikzcd}
  \hat{X}_{1,\ZZ[1/6]} \arrow{r}{\Phi}\arrow{d} & X_{1,\ZZ[1/6]}\arrow{d}\\
  \Spec \ZZ(\!(q)\!)\tensor \ZZ[1/6] \arrow{r}[swap]{\varphi} & B_{1,\ZZ[1/6]}
\end{tikzcd}
$$
where the top arrow is defined by
$$
\Phi^*(x,y) = \left(x-\frac{E_2(q)}{12},y \right)\text{.}
$$
By the universal property of $B_{1,\ZZ[1/6]}$, to prove that $\hat{\varphi}_{1,\ZZ[1/6]} = \varphi$, it is sufficient to prove that $\Phi^*b_1 = \hat{b}_1$, i.e., that
$$
\Phi^*\omega_1 = \hat{\omega}_1\ \ \ \text{ and } \Phi^*\eta_1 = \hat{\eta}_1\text{.}
$$
This, in turn, is a simple computation using the explicit formulas for $\Phi$ and $\hat{\omega}_1$, $\hat{\eta}_1$ above, and the formulas for $\omega_1$ and $\eta_1$ in Remark \ref{classique1}.
 \end{proof}
 
% \begin{obs}
% It follows from this proposition that, under the identification $B_{1}\tensor_{\ZZ[1/2]} \ZZ[1/6] =\linebreak \Spec  \ZZ[1/6,e_2,e_4,e_6,(e_2^3-e_3^2)^{-1}]$ from Remark \ref{classique1}, the composition of the isomorphism $\mathbf{H}\times H_1(\CC) \stackrel{\sim}{\to} \bfb_1$ of Remark \ref{unif2} with the canonical projection $\bfb_1 \to B_1(\CC)$ is given by
% \begin{align*}
% \left(\tau, \left(\begin{array}{cc}
%              x & y \\
%              0 & x^{-1}
%              \end{array}\right)\right) \mapsto (x^{-2}E_2(\tau)-12x^{-1}y,x^{-4}E_4(\tau),x^{-6}E_6(\tau))\text{.}
% \end{align*} 
% \end{obs}

  Note that, by the explicit formulas given at the beginning of this paragraph, we know beforehand that the coefficients of $E_2$, $E_4$, $E_6$ (and of $\frac{1}{2}\theta E_2$ and $\frac{1}{6}\theta^2E_2$ as well) are integral, but our explicit expression for $\hat{\varphi}$ in terms of Eisenstein series relies on a base change to $\ZZ[1/2]$ or $\ZZ[1/6]$ (so that $\mathcal{B}_1$ becomes representable).

  As hinted in Paragraph \ref{subsec-introhresiegel} of our introductory section, in order to remain in a purely integral situation, we should consider the ring of global sections $\Gamma(\mathcal{B}_1,\mathcal{O}_{\mathcal{B}_{1,\et}})$. Let $E_{/U}$ be an elliptic curve endowed with a symplectic-Hodge basis $b=(\omega,\eta)$. Arguing as in the proof of Theorem \ref{casg=1}, we see that locally over $U$ the elliptic curve $E$ admits a Weierstrass equation
  $$
y^2 + a_1xy + a_3y = x^3 + a_2x^2 + a_4x + a_6
$$
with $\omega = \frac{dx}{y}$ and $\eta = x\frac{dx}{y}$. If we set, as in Tate's classical formulas (cf. \cite{deligne75} 1.4),
$$
b_2\defeq a_1^2 + 4a_2\text{, }\  b_4\defeq a_1a_3 + 2a_4\text{, }\  b_6\defeq a_3^2 + 4a_6\text{, }\ b_8\defeq -a_1a_3a_4 - a_4^2 +a_1^2a_6 + a_2a_3^2 +4a_2a_6\text{,}
$$
then we check that $b_4^2-b_2b_6+4b_8=0$, and that $b_2$, $b_4$, $b_6$, and $b_8$ do not depend on the choice of the particular Weierstrass equation for which $\omega = \frac{dx}{y}$ and $\eta=x\frac{dx}{y}$. In particular, they define global sections of $\mathcal{O}_{\mathcal{B}_{1,\et}}$. In this sense, Theorem \ref{casg=1} simply says that the morphism
$$
(b_2,b_4,b_6):\mathcal{B}_1 \to \AA_{\ZZ}^3
$$
induces, after base change to $\ZZ[1/2]$, an isomorphism of $\mathcal{B}_{1,\ZZ[1/2]}$ with the open affine subscheme of $\AA^3_{\ZZ[1/2]}$ defined by $\Delta \neq 0$, and it follows from Proposition \ref{prop-solg=1} (1) that
$$
\hat{\varphi}_1^*(b_2,b_4,b_6) = \left(E_2(q),\frac{1}{2}\theta E_2(q),\frac{1}{6}\theta^2 E_2(q) \right)\in \ZZ(\!(q)\!)^3\text{.}
$$
Analogously, the formulas for $e_2$, $e_4$, and $e_6$ in Remark \ref{classique1} also define global sections of $\mathcal{O}_{\mathcal{B}_{1,\et}}$, so that the components of $\hat{\varphi}_1^*$ in the ``coordinates'' $(e_2,e_4,e_6)$, namely $E_2(q)$, $E_4(q)$, and $E_6(q)$, are in $\ZZ(\!(q)\!)$. 

\begin{obs}
  The ring $\Gamma(\mathcal{B}_1,\mathcal{O}_{\mathcal{B}_{1,\et}})$, which can be shown to be isomorphic to
  $$
  \ZZ[b_2,b_4,b_6,b_8,\Delta^{-1}]/(b_4^2-b_2b_6+4b_8)
  $$
  by arguments similar to \cite{deligne75}, Paragraph 6, can be thought as the ring of ``integral weakly holomorphic quasimodular forms'', i.e., integral quasimodular forms which are only meromorphic at infinity (cf. \ref{subsubsec-intronearlyhol}). 
\end{obs}

\begin{appendices}

  \section{Gauss-Manin connection on some elliptic curves}\label{app-gm}

\subsection{The Weierstrass elliptic curve}

Let
\begin{align*}
W \defeq \Spec \CC[g_2,g_3,\Delta^{-1}]
\end{align*}
where
\begin{align*}
\Delta \defeq g_2^3 - 27g_3^2\text{.}
\end{align*}
Then we can define an elliptic curve $E$ over $W$ by the classical Weierstrass equation
\begin{align*}
y^2 = 4x^3 -g_2x-g_3\text{.}
\end{align*}
Further, we define a symplectic-Hodge basis $(\omega,\eta)$ of $E_{/W}$ by the formulas
\begin{align*}
\omega \defeq \frac{dx}{y}\text{, } \ \ \ \eta \defeq x\frac{dx}{y}\text{.} 
\end{align*}

\begin{lemma} \label{GMconW}
With the above notations, the Gauss-Manin connection $\nabla$ on $H^1_{\dR}(E/W)$ is given by
\begin{align*}
\nabla \left(\begin{array}{cc}
       \omega & \eta
       \end{array}\right) = \left(\begin{array}{cc}
       \omega & \eta\end{array}\right)  \tensor \frac{1}{\Delta}
       \left(\begin{array}{cc}
       \Omega_{11} & \Omega_{12}\\
       \Omega_{21} & \Omega_{22}
       \end{array}\right)
\end{align*}
where
\begin{align*}
\Omega_{11} &= -\frac{1}{4}g_2^2\, dg_2 + \frac{9}{2}g_3\,dg_3\\
\Omega_{12} &= \frac{3}{8}g_2g_3\,dg_2 -\frac{1}{4}g_2^2\, dg_3\\
\Omega_{21} &= -\frac{9}{2}g_3 \, dg_2 + 3g_2\, dg_3\\
\Omega_{22} &= -\Omega_{11}\text{.}
\end{align*}
\end{lemma}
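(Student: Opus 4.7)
The plan is to compute the Gauss-Manin connection directly from its Katz-Oda definition as the connecting map in the spectral sequence of the filtered complex $(\Omega^\bullet_{E/\CC}, F^\bullet)$ with $F^p = \im(p^*\Omega^p_{W/\CC} \otimes \Omega^{\bullet-p}_{E/W} \to \Omega^\bullet_{E/\CC})$. Concretely: the relative forms $\omega$ and $\eta$ already make sense as absolute $1$-forms on the affine open $\{y \neq 0\}$ of the total space of $E/\CC$, so I will take those as my lifts, compute the \emph{absolute} exterior derivatives $d\omega$ and $d\eta$, identify the components in $p^*\Omega^1_{W/\CC} \otimes \Omega^1_{E/W}$, and finally reduce the resulting relative $1$-forms modulo exact ones.

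The first step is to differentiate the Weierstrass relation $y^2 = 4x^3 - g_2 x - g_3$ on the total space, which yields $dy = \frac{12x^2 - g_2}{2y}\, dx - \frac{x}{2y}\, dg_2 - \frac{1}{2y}\, dg_3$. Plugging this into $d\omega = -y^{-2}\, dy\wedge dx$ and $d\eta = -xy^{-2}\, dy\wedge dx$ and using $dx\wedge dx = 0$ produces
\[
d\omega = dg_2 \wedge \frac{x\, dx}{2y^3} + dg_3 \wedge \frac{dx}{2y^3}, \qquad d\eta = dg_2 \wedge \frac{x^2\, dx}{2y^3} + dg_3 \wedge \frac{x\, dx}{2y^3},
\]
so the problem reduces to expressing the classes of the three relative forms $dx/y^3$, $x\, dx/y^3$, $x^2\, dx/y^3$ in $H^1_{\dR}(E/W)$ in terms of $[\omega]$ and $[\eta]$. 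Taking relative $d$ of $1/y$, $x/y$, $x^2/y$ provides three exact forms; reducing them modulo $d_{\rm rel}$ and simplifying powers of $x$ via $4x^3 = y^2 + g_2 x + g_3$ yields three linear relations among the four classes $[\omega], [\eta], [dx/y^3], [x\, dx/y^3]$ (the class $[x^2\, dx/y^3]$ being already a multiple of $[dx/y^3]$).

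Solving this linear system (this is where the discriminant $\Delta = g_2^3 - 27 g_3^2$ appears as the determinant controlling invertibility) gives
\[
\left[\tfrac{dx}{y^3}\right] = \tfrac{1}{\Delta}(9 g_3\, [\omega] + 6 g_2\, [\eta]), \quad \left[\tfrac{x\, dx}{y^3}\right] = -\tfrac{1}{2\Delta}(g_2^2\, [\omega] + 18 g_3\, [\eta]),
\]
and then $[x^2 dx/y^3] = (g_2/12)\,[dx/y^3]$. Substituting back and rearranging $dg_i \wedge \gamma$ as $\gamma \otimes dg_i$ in $H^1_{\dR}(E/W) \otimes \Omega^1_{W/\CC}$ yields precisely the claimed matrix $(\Omega_{ij})$. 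As a consistency check, the identity $\Omega_{22} = -\Omega_{11}$ follows conceptually from the compatibility of $\nabla$ with the symplectic form (equation (\ref{C}) applied to $\langle \omega, \eta\rangle_\lambda = 1$), and indeed drops out of the explicit formulas. The main obstacle is purely computational—keeping track of the algebraic simplifications and signs in the $3\times 3$ reduction—but no conceptual difficulty arises beyond the standard Griffiths-Dwork-style pole reduction method applied to this very simple hypersurface family.
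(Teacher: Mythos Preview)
Your proposal is correct and your computation checks out end to end: the formulas $[dx/y^3]=\tfrac{1}{\Delta}(9g_3\,[\omega]+6g_2\,[\eta])$ and $[x\,dx/y^3]=-\tfrac{1}{2\Delta}(g_2^2\,[\omega]+18g_3\,[\eta])$, together with $[x^2\,dx/y^3]=\tfrac{g_2}{12}[dx/y^3]$, substituted into $d\omega$ and $d\eta$ reproduce exactly the four $\Omega_{ij}$.

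Your route, however, is genuinely different from the paper's. The paper does not carry out the Katz--Oda connecting-map computation at all. Instead it argues in two steps: first, the $\CC^\times$-action $(x,y,g_2,g_3)\mapsto (u^{-2}x,u^{-3}y,u^{-4}g_2,u^{-6}g_3)$ forces each $\Omega_{ij}$ to be a specific homogeneous expression in $g_2,g_3,dg_2,dg_3$, leaving only eight undetermined constants $c_1,\dots,c_8$; second, these constants are pinned down by pulling back along the analytic uniformization $\psi(\tau)=(g_2(\tau),g_3(\tau))$ and comparing with the known formula \eqref{eq-gmtate} for $\nabla$ on $\mathcal{H}^1_{\dR}(\mathbf{E}/\mathbf{H})$ in terms of $E_2,E_4$ taken from Katz \cite{katz73}. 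Your approach is purely algebraic (indeed it works verbatim over $\ZZ[1/6]$), self-contained, and is exactly the ``direct algebraic approach'' the paper alludes to in its footnote citing \cite{KO68}, \cite{kedlaya07}, and \cite{movasati12}; the paper's approach trades those pole-reduction manipulations for an appeal to the classical analytic formula, which is quicker if one has \eqref{eq-gmtate} in hand but imports an external input.
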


Let us briefly explain how these expressions follow from the description given in \cite{katz73} A1.3 of the Gauss-Manin connection on the relative first de Rham cohomology of the universal elliptic curve $\mathbf{E}$ over the Poincaré half-plane $\mathbf{H}$ (whose fiber at each $\tau\in \mathbf{H}$ is given by the complex torus $\mathbf{E}_{\tau}=\CC/(\ZZ + \ZZ\tau)$; in the notation of Example \ref{torus}, we have $\mathbf{E}=\mathbf{X}_1$).\footnote{A direct algebraic approach is also possible. See for instance \cite{KO68} 3, \cite{kedlaya07} 3.4, and \cite{movasati12} 3.4.}

 We first remark that for any $u\in \CC^{\times}$ we can define an automorphism ${M_u}_{/\mu_u}: E_{/W} \to E_{/W}$ in the category $\mathcal{A}_{1,\CC}$ by
\begin{align*}
\mu_u(g_2,g_3) = (u^{-4}g_2,u^{-6}g_3)\text{, } \ \ \ M_u(x,y)=(u^{-2}x,u^{-3}y)\text{.}
\end{align*}
Using that the Gauss-Manin connection commutes with base change and admits regular singularities, we deduce by homogeneity that there exists constants $c_1,\ldots,c_8$ in $\CC$ such that
\begin{align*}
\Omega_{11} = c_1g_2^2\, dg_2 + c_2g_3\, dg_3\text{, }\ \ &\Omega_{12} = c_3g_2g_3\, dg_2 +c_4g_2^2\, dg_3\text{, }\\
 \Omega_{21} = c_5g_3\, dg_2 + c_6g_2\, dg_3\text{, } \ \ & \Omega_{22} = c_7g_2^2\, dg_2 + c_8g_3\, dg_3 \text{.}
\end{align*}
To determine these constants, we consider the Cartesian diagram in the category of complex analytic spaces
%$$
%\raisebox{-0.5\height}{\includegraphics{rameq1-d7.pdf}}
%$$
 $$
 \begin{tikzcd}
 \mathbf{E} \arrow{d} \arrow{r}{\Psi} & E(\CC)\arrow{d} \\
 \mathbf{H} \arrow{r}[swap]{\psi} & W(\CC)\arrow[draw=none]{ul}[description]{\square}
 \end{tikzcd}
 $$
given by the classical Weierstrass theory:
\begin{align*}
\psi(\tau)=(g_2(\tau),g_3(\tau))\text{, }\ \ \ \Psi_{\tau}(z)=(\wp_{\tau}(z),\wp_{\tau}'(z)) 
\end{align*}
Finally, we apply once again that that the formation of the Gauss-Manin connection (now in the complex analytic category) commutes with base change, and we use the formulas in \cite{katz73} A1.3:
\begin{align}\label{eq-gmtate}
\nabla \left(\begin{array}{cc}
       dz & \wp_{\tau}(z)dz
       \end{array}\right) = \left(\begin{array}{cc}
       dz & \wp_{\tau}(z)dz\end{array}\right)  \tensor \frac{1}{2\pi i}
       \left(\begin{array}{cc}
       -(2\pi i)^2E_{2}(\tau)/12 & -(2\pi i)^4E_4(\tau)/144\\
       1 & (2\pi i)^2E_2(\tau)/12
       \end{array}\right)d\tau
\end{align}

\subsection{The elliptic curve ${X}_{/B}$ over $\ZZ[1/6]$} \label{GMconB}

Let
\begin{align*}
B \defeq \Spec \ZZ[1/6, e_2,e_4,e_6, \Delta^{-1}]
\end{align*}
where
\begin{align*}
\Delta \defeq e_4^3 - e_6^2\text{.}
\end{align*}
We define an elliptic curve $X$ over $B$ by
\begin{align*}
y^2 = 4\left(x+ \frac{e_2}{12}\right)^3 - \frac{e_4}{12}\left(x+ \frac{e_2}{12}\right) + \frac{e_6}{216}\text{.}
\end{align*}
We define a symplectic-Hodge basis $(\omega, \eta)$ of $X_{/B}$ by the formulas
\begin{align*}
\omega \defeq \frac{dx}{y}\text{, } \ \ \ \eta \defeq x\frac{dx}{y}\text{.} 
\end{align*}

Note that there is a morphism $F_{/f} : (X_{\CC})_{/B_{\CC}} \to E_{/W}$ in $\mathcal{A}_{1,\CC}$ given by
\begin{align*}
f(e_2,e_4,e_6) = \left(\frac{e_4}{12},-\frac{e_6}{216}\right)\text{, } \ \ \ F(x,y) = \left(x + \frac{e_2}{12},y \right)\text{.}
\end{align*}
By pulling back the Gauss-Manin connection on $H^1_{\dR}(E/W)$ described in Lemma \ref{GMconW} by the morphism $F_{/f}$, we obtain that the Gauss-Manin connection $\nabla$ on $H^1_{\dR}(X/B)$ over $\ZZ[1/6]$ is given by
\begin{align*}
\nabla \left(\begin{array}{cc}
       \omega & \eta
       \end{array}\right) = \left(\begin{array}{cc}
       \omega & \eta\end{array}\right)  \tensor \frac{1}{\Delta}
       \left(\begin{array}{cc}
       \Omega_{11} & \Omega_{12}\\
       \Omega_{21} & \Omega_{22}
       \end{array}\right)
\end{align*}
where
\begin{align*}
\Omega_{11} &= \left(\frac{e_2e_6 - e_4^2}{4} \right)de_4 + \left(\frac{e_6-e_2e_4}{6} \right)de_6\\
\Omega_{12} &=-\frac{\Delta}{12}de_2 - \left(\frac{e_4e_6-2e_2e_4^2+e_2^2e_6}{48} \right) de_4 + \left(\frac{e_4^2-2e_2e_6 + e_2^2e_4}{72}\right)de_6\\
\Omega_{21} &= 3e_6de_4 -2e_4de_6\\
\Omega_{22} &= -\Omega_{11}\text{.}
\end{align*}

\subsection{The universal elliptic curve ${X_1}_{/B_1}$ over $\ZZ[1/2]$} \label{GMunivell}

Consider the elliptic curve $X_{1}$ over $B_1$ defined in Theorem \ref{casg=1} and let $\Phi_{/\varphi} : (X_{1,\ZZ[1/6]})_{/B_{1,\ZZ[1/6]}} \to X_{/B}$ be the isomorphism in $\mathcal{A}_{1,\ZZ[1/6]}$ given by
\begin{align*}
\varphi(b_2,b_4,b_6) = (b_2,b_2^2-24b_4,b_2^3 -36b_2b_4 + 216b_6)\text{, } \ \ \ \Phi(x,y)=(x,2y)\text{.} 
\end{align*}

If $(\omega_1,\eta_1)$ denotes de symplectic-Hodge basis of ${X_{1}}_{/B_1}$ defined in Theorem \ref{casg=1}, then by pulling back the Gauss-Manin connection on $H^1_{\dR}(X/B)$ described in \ref{GMconB} by the isomorphism $\Phi_{/\varphi}$, we obtain that the Gauss-Manin connection $\nabla$ on $H^1_{\dR}(X_{1}/B_1)$ over $\ZZ[1/2]$ is given by

\begin{align*}
\nabla \left(\begin{array}{cc}
       \omega_1 & \eta_1
       \end{array}\right) = \left(\begin{array}{cc}
       \omega_1 & \eta_1\end{array}\right)  \tensor \frac{1}{\Delta}
       \left(\begin{array}{cc}
       \Omega_{11} & \Omega_{12}\\
       \Omega_{21} & \Omega_{22}
       \end{array}\right)
\end{align*}
where
\begin{align*}
\Omega_{11} &= \frac{b_2^2b_6-6b_4b_6-b_2b_4^2}{8} db_2 + \frac{4b_4^2-3b_2b_6}{2}db_6 + \frac{18b_6 - b_2b_4}{4}db_6\\
\Omega_{12} &= \frac{2b_4^3 +9b_6^2 - 2b_2b_4b_6}{4}db_2 + \frac{b_2^2b_6-b_2b_4^2 -6b_4b_6}{4}db_6 + \frac{4b_4^2-3b_2b_6}{4}db_6\\
\Omega_{21} &= \frac{3b_2b_6 - 4b_4^2}{4}db_2 + \frac{b_2b_4-18b_6}{2}db_4 + \frac{24b_4-b_2^2}{4}db_6\\
\Omega_{22} &= -\Omega_{11}\text{.}
\end{align*}
  
\end{appendices}

\newpage

\part{The analytic higher Ramanujan equations and periods of abelian varieties}

\section{Analytic families of complex tori, abelian varieties, and their uniformization}

In this section we briefly transpose some of the standard theory of complex tori to a relative situation, that is, we shall consider analytic families of complex tori. To both simplify and shorten our exposition, we shall assume that the parameter space is smooth (i.e., a complex manifold); this largely suffices for our needs.

Most of the material included in here, and in the following section, is well known to experts --- and may be even considered as ``classical'' --- but we could not find a convenient reference in the literature.

\subsection{Relative complex tori} \label{rct}

Let $M$ be a complex manifold.

\begin{defi}
A \emph{(relative) complex torus over $M$} is a relative complex Lie group $\pi:X\to M$ over $M$ such that $\pi$ is proper with connected fibers. A morphism of complex tori over $M$ is a morphism of relative complex Lie groups over $M$.
\end{defi}

As any compact connected complex Lie group is a complex torus, every fiber of $\pi$ in the above definition is a complex torus.

In general, for any relative complex Lie group $\pi: X \to M$ over $M$, we may consider its \emph{relative Lie algebra} $\Lie_M X$; this is a holomorphic vector bundle over $M$ whose fiber at each $p\in M$ is the Lie algebra $\Lie X_p$ of the Lie group $X_p \defeq \pi^{-1}(p)$. Moreover, there exists a canonical morphism of complex manifolds over $M$
\begin{align*}
\exp : {\Lie}_MX \to X
\end{align*}
restricting to the usual exponential map of complex Lie groups at each fiber. \label{symb:R1ZX}

\begin{lemma} \label{uniftorus}
Let $\pi: X \to M$ be a complex torus over $M$. Then $\exp : \Lie_MX \to X$ is a surjective and submersive morphism of relative complex Lie groups over $M$. Moreover, the sheaf of sections of the relative complex Lie group $\ker (\exp)$ over $M$ is canonically isomorphic to 
\begin{align*}
R_1\pi_*\ZZ_X  \defeq (R^1\pi_* \ZZ_X)^{\vee}\text{.}
\end{align*}
\hfill $\blacksquare$
\end{lemma}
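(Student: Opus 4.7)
The plan is to establish the statement by combining the classical theory of single complex tori with Ehresmann's fibration theorem applied to the proper submersion $\pi: X \to M$.

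First I will verify that $\exp$ is a morphism of relative complex Lie groups. This is essentially formal, following from the naturality of the exponential map: fiberwise, each $\exp_p : \Lie X_p \to X_p$ is a complex Lie group morphism, and these assemble over $M$ into a morphism compatible with the relative group laws.

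Next, I will prove surjectivity and submersivity of $\exp$ via fiberwise analysis. For each $p \in M$, the fiber $X_p = \pi^{-1}(p)$ is a compact connected complex Lie group of complex dimension $g \defeq \rank_{\mathcal{O}_M} \Lie_M X$, hence a complex torus by the classical structure theorem for compact complex Lie groups. The fiberwise exponential $\exp_p : \Lie X_p \to X_p$ is then a surjective holomorphic group morphism whose differential at $0$ is the identity; being a Lie group homomorphism, it has constant rank $g$, so it is a local biholomorphism at every point, with discrete cocompact kernel $\Lambda_p \subset \Lie X_p$. Globally, both $\Lie_M X \to M$ and $X \to M$ are submersions of the same relative dimension $g$, and the $M$-relative differential of $\exp$ is a fiberwise isomorphism. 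The short exact sequence of relative tangent bundles then forces $d\exp$ to be an isomorphism at every point, so $\exp$ is a local biholomorphism, hence a submersion; surjectivity follows at once from fiberwise surjectivity.

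Finally, I will identify the sheaf of sections $K$ of $\ker(\exp)$ with $R_1\pi_*\ZZ_X$. Since $\exp$ is a local biholomorphism, $\ker(\exp) \to M$ is étale, so $K$ is a local system of abelian groups on $M$ whose stalk at $p$ is the lattice $\Lambda_p$. The standard uniformization of a complex torus gives canonical identifications $\Lambda_p \cong \pi_1(X_p,0) \cong H_1(X_p,\ZZ)$, and the universal coefficient theorem (together with the torsion-freeness of $H^*(X_p,\ZZ)$) identifies the latter with $\Hom(H^1(X_p,\ZZ),\ZZ)$. By Ehresmann's theorem, the proper submersion $\pi$ is a locally trivial $C^{\infty}$ fiber bundle, so $R^1\pi_*\ZZ_X$ is a local system with stalk $H^1(X_p,\ZZ)$ at $p$, whose dual $R_1\pi_*\ZZ_X$ thus has stalk $\Hom(H^1(X_p,\ZZ),\ZZ)$. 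The fiberwise identifications are natural in $p$, and I will show they patch into a global isomorphism of local systems on $M$.

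The main obstacle will be rigorously establishing this patching in the last step, which amounts to verifying that the fiberwise identifications are compatible with the monodromy action of $\pi_1(M,p)$ on both sides. Concretely, over a contractible open $U \subset M$, Ehresmann's theorem yields a $C^{\infty}$-trivialization $\pi^{-1}(U) \cong X_p \times U$ under which the identification $\Lambda_q \cong H_1(X_q,\ZZ)$ is induced by the topological trivialization; the same trivialization computes the restriction of $R^1\pi_*\ZZ_X$, so both local systems admit compatible local trivializations, which suffices to glue them.
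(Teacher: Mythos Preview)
Your proposal is correct and follows essentially the same approach as the paper: the paper does not give a detailed proof at all, but simply states that the lemma ``follows from the classical case where $M$ is a point via a fiber-by-fiber consideration'' with a reference to Mumford, and your argument is precisely a careful fleshing-out of this fiber-by-fiber reduction, including the use of Ehresmann's theorem to identify the local systems.
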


This follows from the classical case where $M$ is a point via a fiber-by-fiber consideration (cf. \cite{mumford70} I.1). Note that $R_1\pi_*\ZZ_X$ is a local system of free abelian groups over $M$ whose fiber at $p\in M$ is given by the first singular homology group $H_1(X_p,\ZZ)$.

\begin{defi}
Let $V$ be a holomorphic vector bundle of rank $g$ over $M$. By a \emph{lattice} in $V$, we mean a subsheaf of abelian groups $L$ of $\mathcal{O}_M(V)$ such that
\begin{enumerate}
   \item $L$ is a local system of free abelian groups of rank $2g$,
   \item for each $p\in M$, the quotient $V_p/L_p$ is compact.
\end{enumerate}
\end{defi}

It follows from Lemma \ref{uniftorus} that, for any complex torus $\pi: X \to M$ of relative dimension $g$, $R_1\pi_*\ZZ_X$ may be canonically identified to a lattice in $\Lie_M X$.

Conversely, if $V$ is a holomorphic vector bundle of rank $g$ over $M$ and $L$ is a lattice in $V$, then the étalé space $E(L)$ of $L$ is a relative complex Lie subgroup of $V$ over $M$ and $X\defeq V/E(L)$ is a complex torus over $M$ of relative dimension $g$. Furthermore, the relative Lie algebra $\Lie_MX$ gets canonically identified with $V$ and, under this identification, $E(L)$ is the kernel of the exponential map $\exp: \Lie_MX \to X$.

\begin{obs} \label{remarktorus}
The above reasoning actually proves that the category of complex tori over $M$ of relative dimension $g$ is equivalent to the category of couples $(V,L)$ where $V$ is a holomorphic vector bundle of rank $g$ over $M$ and $L$ is a lattice in $V$; a morphism $(V,L) \to (V',L')$ in this category is given by a morphism of holomorphic vector bundles $\varphi:V \to V'$ such that $\varphi(E(L))\subset E(L')$.
\end{obs}

In what follows, we shall drop the notation $E(L)$ and identify a local system with its étalé space.

\subsection{Riemann forms and principally polarized complex tori}

Let $M$ be a complex manifold and $\pi:X \to M$ be a complex torus over $M$.

\begin{defi} \label{defiriemannform}
A \emph{Riemann form} over $X$ is a $C^{\infty}$ Hermitian metric\footnote{Our convention is that Hermitian forms are anti-linear on the first coordinate and linear on the second.} $H$ on the vector bundle  $\Lie_M X$ over $M$ such that
\begin{align*}
E \defeq \Im H
\end{align*}
takes integral values on $R_1\pi_*\ZZ_X$.
\end{defi}

Observe that $E$ is an alternating $\RR$-bilinear form. We also remark that the Hermitian metric $H$ is completely determined by $E$: for any sections $v$ and $w$ of $\Lie_M X$ we have $H(v,w)= E(v,iw) + iE(v,w)$. In particular, by abuse, we may also say that $E$ is Riemann form over $X$.

\begin{defi}
With the above notation, we say that the Riemann form $E$ is \emph{principal} if the induced morphism of local systems
\begin{align*}
   R_1\pi_*\ZZ_X &\longrightarrow (R_1\pi_*\ZZ_X)^{\vee} \cong R^1\pi_*\ZZ_X\\
           \gamma &\mapsto E( \gamma , \ )
\end{align*}
is an isomorphism.
\end{defi}

\begin{defi}
Let $M$ be a complex manifold. A \emph{principally polarized complex torus} over $M$ of relative dimension $g$ is a couple $(X,E)$, where $X$ is a complex torus over $M$ of relative dimension $g$ and $E$ is a principal Riemann form over $X$. 
\end{defi}

\label{symb:siegelspace}
\begin{ex} \label{torus}
Let $g\ge 1$ and consider the Siegel upper half-space
\begin{align*}
\mathbf{H}_{g} \defeq \{\tau \in M_{g\times g}(\CC) \mid \tau = \tau^{\textsf{T}}\text{, } \Im \tau >0\}\text{.}
\end{align*}
If $g=1$, we denote $\mathbf{H} \defeq \mathbf{H}_1$; this is the Poincaré upper half-plane. Let us consider the trivial vector bundle $V\defeq \CC^g \times \mathbf{H}_g$ over $\mathbf{H}_g$ and let $L$ be the subsheaf of $\mathcal{O}_{\mathbf{H}_g}(V)$ given by the image of the morphism of sheaves of abelian groups
\begin{align*}
(\ZZ^g \oplus \ZZ^g)_{\mathbf{H}_g} &\to \mathcal{O}_{\mathbf{H}_g}(V) = \mathcal{O}_{\mathbf{H}_g}^{\oplus g}\\
            (m,n) &\mapsto m+\tau n
\end{align*}
where $m$ and $n$ are considered as column vectors of order $g$. Then $L$ is a lattice in $V$ and we denote by
\begin{align*}
\bfp_g: \mathbf{X}_g  \to \mathbf{H}_g
\end{align*}
the corresponding  complex torus over $\mathbf{H}_g$ of relative dimension $g$ (cf. Remark \ref{remarktorus}). Let $E_g$ be imaginary part of the Hermitian metric over $V$ given by
\begin{align*}
(v,w) \mapsto \overline{v}\transp (\Im \tau)^{-1}w\text{.}
\end{align*}
One may easily verify that $E_g$ takes integral values on $L$ and that $\gamma \mapsto E_g( \gamma ,  \ )$ induces an isomorphism $L \stackrel{\sim}{\to} L^{\vee}$. We thus obtain a principally polarized complex torus $(\mathbf{X}_g,E_g)$ over $\mathbf{H}_g$ of relative dimension $g$.\label{symb:Xg}
\end{ex}

\subsection{The category $\mathcal{A}^{\an}_g$ of principally polarized complex tori of relative dimension $g$} \label{defitg}

 Let $\textsf{Man}_{/\CC}$ denote the category of complex manifolds. We define a category  $\mathcal{A}^{\an}_g$ fibered in groupoids over $\Man_{/\CC}$ as follows.

\begin{enumerate}
     \item An object of the category $\mathcal{A}^{\an}_g$ consists in a complex manifold $M$ and a principally polarized complex torus $(X,E)$ over $M$ of relative dimension $g$; we denote such an object by $(X,E)_{/M}$.
     \item Let $(X,E)_{/M}$ and $(X',E')_{/M'}$ be objects of $\mathcal{A}^{\an}_g$. A morphism
\begin{align*}
\varphi_{/f}:(X',E')_{/M'} \to (X,E)_{/M}
\end{align*}
in $\mathcal{A}^{\an}_g$ is a Cartesian diagram of complex manifolds
%$$
%  \raisebox{-0.5\height}{\includegraphics{rameq2-d1.pdf}}
%$$
  $$
 \begin{tikzcd}
   X' \arrow{r}{\varphi}\arrow{d} & X \arrow{d}\\
   M' \arrow{r}[swap]{f} & M \arrow[draw=none]{ul}[description]{\square}
 \end{tikzcd}
 $$
preserving the identity sections of the complex tori and such that $E' = f^*E$ under the isomorphism of holomorphic vector bundles $\Lie_{M'}X' \stackrel{\sim}{\to} f^*\Lie_{M}X$ induced by $\varphi$. We may also denote $(X',E') = (X,E)\times_{M}M'$.
   \item The structural functor $\mathcal{A}^{\an}_g \to \Man_{/\CC}$ sends an object $(X,E)_{/M}$ of $\mathcal{A}^{\an}_g$ to the complex manifold $M$, and a morphism $\varphi_{/f}$ as above to $f$.
\end{enumerate}

\begin{ex}\label{exactionsp}
We define an action of $\Sp_{2g}(\ZZ)$ on the object $(\mathbf{X}_g,E_g)_{/\mathbf{H}_g}$ of $\mathcal{A}_g^{\an}$
\begin{align*}
{\Sp}_{2g}(\ZZ) &\longrightarrow {\Aut}_{\mathcal{A}_g^{\an}}\left((\mathbf{X}_g,E_g)_{/\mathbf{H}_g}\right)\\
        \gamma &\mapsto {\varphi_\gamma}_{/f_{\gamma}}
\end{align*} 
as follows. Recall that an element $\gamma = (A \ B \ ; \ C \ D) \in \Sp_{2g}(\RR)$ acts on $\mathbf{H}_g$ by
\begin{align*}
f_{\gamma} : \mathbf{H}_g &\longrightarrow \mathbf{H}_g\\
          \tau &\mapsto \gamma\cdot \tau \defeq (A\tau + B)(C\tau+D)^{-1}\text{.}
\end{align*}
For $\gamma$ as above, consider the holomorphic map
\begin{align*}
\widetilde{\varphi}_{\gamma}:\CC^g\times \mathbf{H}_g &\to  \CC^g\times \mathbf{H}_g\\
(z,\tau) &\mapsto  ((j(\gamma,\tau)\transp)^{-1}z,\gamma \cdot \tau)           
\end{align*}   
where 
$$
j(\gamma,\tau)\defeq C\tau+D \in {\GL}_g(\CC)\text{.}
$$
 If $\gamma \in \Sp_{2g}(\ZZ)$, then for every $\tau \in \mathbf{H}_g$ we have
\begin{align*}
\widetilde{\varphi}_{\gamma,\tau}(\ZZ^g + \tau \ZZ^g) = \ZZ^g + (\gamma \cdot\tau) \ZZ^g\text{,}
\end{align*}
so that $\widetilde{\varphi}_{\gamma}$ induces a holomorphic map $\varphi_{\gamma} : \mathbf{X}_g \to \mathbf{X}_g$. One easily verifies that
%$$
%  \raisebox{-0.5\height}{\includegraphics{rameq2-d2.pdf}}
%$$
 $$
 \begin{tikzcd}
   \mathbf{X}_g \arrow{r}{\varphi_{\gamma}}\arrow{d}[swap]{\bfp_g} & \mathbf{X}_g \arrow{d}{\bfp_g}\\
   \mathbf{H}_g \arrow{r}[swap]{f_{\gamma}} & \mathbf{H}_g 
 \end{tikzcd}
 $$
is a Cartesian diagram of complex manifolds preserving the identity sections and the Riemann forms $E_g$, i.e., it defines a morphism ${\varphi_{\gamma}}_{/f_{\gamma}} : (\mathbf{X}_g,E_g)_{/\mathbf{H}_g} \to (\mathbf{X}_g,E_g)_{/\mathbf{H}_g}$ in $\mathcal{A}_g^{\an}$. Finally, the formula 
$$
j(\gamma_1\gamma_2,\tau) = j(\gamma_1,\gamma_2\cdot \tau)j(\gamma_2,\tau)
$$
implies that ${\varphi_{\gamma}}_{/f_{\gamma}}$ is in fact an automorphism of $(\mathbf{X}_g,E_g)_{/\mathbf{H}_g}$ in $\mathcal{A}_g^{\an}$ and that $\gamma \mapsto {\varphi_{\gamma}}_{/f_{\gamma}}$ is a morphism of groups.\footnote{Actually, it follows from Proposition \ref{reprsintsympl} below (see also Remark \ref{actionsp}) that $\gamma \mapsto {\varphi_{\gamma}}_{/f_{\gamma}}$ is an \emph{isomorphism} of groups.}% c'est un iso!
\end{ex}

\subsection{De Rham cohomology of complex tori} 

Let $M$ be a complex manifold and $\pi : X \to M$ be a complex torus over $M$ of relative dimension $g$.

\subsubsection{}  For any integer $i\ge 0$, we define the $i$th \emph{analytic de Rham cohomology} sheaf of $\mathcal{O}_M$-modules by \label{symb:analyticderham}
\begin{align*}
\mathcal{H}^i_{\dR}(X/M) \defeq \mathbf{R}^i\pi_* \Omega^{\bullet}_{X/M}\text{,}
\end{align*}
where $\Omega^{\bullet}_{X/M}$ is the complex of relative holomorphic differential forms. If $M$ is a point, we denote $\mathcal{H}^i_{\dR}(X) \defeq \mathcal{H}^i_{\dR}(X/M)$.

\begin{obs} \label{cinftyderham}
If $M$ is a point, then the analytic de Rham cohomology $\mathcal{H}^i_{\dR}(X)$ is canonically isomorphic to the quotient of the complex vector space of $C^{\infty}$ closed $i$-forms over $X$ with values in $\CC$  by the subspace of exact $i$-forms (cf. \cite{DMOS82} I.1 p. 16). 
\end{obs}

The arguments in \cite{BBM82} 2.5 prove, \emph{mutatis mutandis}, that there is a canonical isomorphism of $\mathcal{O}_M$-modules given by cup product
\begin{align*}
\bigwedge^i \mathcal{H}^1_{\dR}(X/M) \stackrel{\sim}{\to} \mathcal{H}^i_{\dR}(X/M)\text{,}
\end{align*}
and that $\mathcal{H}^1_{\dR}(X/M)$ is (the sheaf of sections of) a holomorphic vector bundle over $M$ of rank $2g$. Moreover, the canonical $\mathcal{O}_M$-morphism $\pi_*\Omega^1_{X/M} \to \mathcal{H}^1_{\dR}(X/M)$ induces an isomorphism of $\pi_*\Omega^1_{X/M}$ onto a rank $g$ subbundle of $\mathcal{H}^1_{\dR}(X/M)$ that we denote by $\mathcal{F}^1(X/M)$.

Analogously, it follows from the arguments of \cite{KO68} that $\mathcal{H}^1_{\dR}(X/M)$ is equipped with a canonical integrable holomorphic connection
\begin{align*}
\nabla : \mathcal{H}^1_{\dR}(X/M) \to \mathcal{H}^1_{\dR}(X/M) \tensor_{\mathcal{O}_{M}}\Omega^1_M \text{,} 
\end{align*}
the \emph{Gauss-Manin connection}.

Furthermore, the formation of $\mathcal{H}^1_{\dR}(X/M)$ (resp. $\mathcal{F}^1(X/M)$, resp. $\nabla$) is compatible with every base change in $M$.

% The arguments in \cite{BBM82} 2.5 and \cite{KO68} concerning abelian schemes prove, \emph{mutatis mutandis},  that $\mathcal{H}^1_{\dR}(X/M)$ is (the sheaf of holomorphic sections of) a holomorphic vector bundle over $M$ of rank $2g$, endowed with a subbundle $\mathcal{F}^1(X/M)$ of rank $g$ canonically isomorphic to $\pi_*\Omega^1_{X/M}$, and a canonical integrable holomorphic connection
% \begin{align*}
% \nabla : \mathcal{H}^1_{\dR}(X/M) \to \mathcal{H}^1_{\dR}(X/M) \tensor_{\mathcal{O}_{M}}\Omega^1_M \text{,} 
% \end{align*} 
% the \emph{Gauss-Manin connection}.

%  Furthermore, the formation of $\mathcal{H}^1_{\dR}(X/M)$, $\mathcal{F}^1(X/M)$, and $\nabla$, are compatible with every base change in $M$.

 \subsubsection{}\label{sectioncompisom}

 There is a canonical \emph{comparison isomorphism} of holomorphic vector bundles\label{symb:comp}
\begin{align} \label{compisom}
\comp: \mathcal{H}^1_{\dR}(X/M)   \stackrel{\sim}{\longrightarrow}    \mathcal{H}om_{\ZZ}(R_1\pi_*\ZZ_X,\mathcal{O}_M) \cong \mathcal{O}_M\tensor_{\ZZ} R^1\pi_*\ZZ_X  
\end{align}
identifying the subsheaf of $\mathcal{H}^1_{\dR}(X/M)$ consisting of horizontal sections for the Gauss-Manin connection with the local system of $\CC$-vector spaces $\mathcal{H}om_{\ZZ}(R_1\pi_*\ZZ_X,\CC_M) \cong R^1\pi_*\CC_X$   (\cite{deligne70} I Proposition 2.28 and II 7.6-7.7). The induced pairing
\begin{align*}
\mathcal{H}^1_{\dR}(X/M) \otimes_{\ZZ}R_1\pi_*\ZZ_X  &\longrightarrow \mathcal{O}_M\\
       \alpha\tensor \gamma &\mapsto \comp(\alpha)(\gamma)  \eqdef \int_{\gamma}\alpha
\end{align*}
is given at each fiber by ``integration of differential forms'' (cf. Remark \ref{cinftyderham}).
 
\begin{obs}\label{derivint}
In particular, for any section $\gamma$ of $R_1\pi_*\ZZ_X$, any $C^{\infty}$ section $\alpha$ of the vector bundle $\mathcal{H}^1_{\dR}(X/M)$, and any holomorphic vector field $\theta$ on $M$, we have
\begin{align*}
\theta \left( \int_{\gamma}\alpha \right) = \int_{\gamma}\nabla_{\theta}\alpha\text{.}
\end{align*}
\end{obs}

\begin{obs}\label{obs:comp}
  In the absolute case (where $M$ is a point), the comparison isomorphism can be written
  $$
\comp: \mathcal{H}^1_{\dR}(X) \stackrel{\sim}{\longrightarrow} \CC\tensor_{\ZZ}H^1(X,\ZZ)
$$
where $X$ is a complex torus. If $X$ is now an abelian variety over a subfield $k$ of $\CC$, then the associated analytic space $X_{\CC}^{\an}$ is a complex torus and we have a canonical isomorphism $\CC\tensor_k H^1_{\dR}(X/k) \cong \mathcal{H}^1_{\dR}(X_{\CC}^{\an})$. In this case, we also write 
$$
\comp: \CC\tensor_kH^1_{\dR}(X/k) \stackrel{\sim}{\longrightarrow} \CC\tensor_{\ZZ}H^1(X,\ZZ)
$$
for the composition of $\comp$ with the above canonical identification.
\end{obs}

Recall that $R_1\pi_*\ZZ_X$ may be naturally identified with a lattice in the holomorphic vector bundle $\Lie_{M}X$. Accordingly, the dual bundle $(\Lie_MX)^{\vee}$ gets naturally identified with a holomorphic subbundle of $\mathcal{H}om_{\ZZ}(R_1\pi_*\ZZ_X,\mathcal{O}_M)$.

\begin{lemma} \label{identcomp}
With notation as above, the comparison isomorphism (\ref{compisom}) induces an isomorphism of the holomorphic vector bundle $\mathcal{F}^1(X/M)$ onto $(\Lie_MX)^{\vee}$. \hfill $\blacksquare$
\end{lemma}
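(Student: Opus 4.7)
The plan is to reduce to fibers and then compute directly. Since $\mathcal{F}^1(X/M) \subset \mathcal{H}^1_{\dR}(X/M)$ and $(\Lie_M X)^{\vee} \subset \mathcal{H}om_{\ZZ}(R_1\pi_*\ZZ_X,\mathcal{O}_M)$ are holomorphic subbundles whose formation (along with the lattice $R_1\pi_*\ZZ_X$ and the comparison isomorphism $\comp$) commutes with base change, I first reduce the lemma to the case where $M$ is a point. It then suffices to show that for a single complex torus $X = V/L$, with $V = \Lie X$ and $L \cong H_1(X,\ZZ)$ a lattice in $V$, the comparison isomorphism
$$
\comp : \mathcal{H}^1_{\dR}(X) \stackrel{\sim}{\longrightarrow} \Hom_{\ZZ}(L,\CC)
$$
sends the Hodge subspace $\mathcal{F}^1(X) = H^0(X,\Omega^1_X)$ onto $V^{\vee}$, regarded as the subspace of $\CC$-linear functionals inside $\Hom_{\ZZ}(L,\CC) = \Hom_{\RR}(V,\CC)$ via the $\RR$-linear isomorphism $L \tensor_{\ZZ} \RR \cong V$.

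Next, I would invoke the classical identification $H^0(X,\Omega^1_X) \cong V^{\vee}$: every global holomorphic $1$-form on the compact complex manifold $X$ pulls back along $V \to V/L = X$ to a translation-invariant holomorphic $1$-form on $V$, hence corresponds to a unique $\CC$-linear functional on $V$. This is the single non-trivial ingredient, and it is standard (e.g.\ by averaging, by Hodge theory, or by applying the maximum principle to the coefficients of $\pi^*\omega$ in a constant basis of $\Omega^1_V$).

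With this identification in hand, the effect of $\comp$ on $\mathcal{F}^1(X)$ is straightforward to compute: for $\omega \in V^{\vee}$ and $\gamma \in L$, choose as a representative of the homology class of $\gamma$ the straight-line loop $t \mapsto t\gamma$ in $V$ modulo $L$; since $\omega$ is constant on $V$,
$$
\int_{\gamma} \omega = \int_0^1 \omega(\gamma)\, dt = \omega(\gamma).
$$
Therefore $\comp$ restricted to $\mathcal{F}^1(X) = V^{\vee}$ is nothing but the canonical inclusion $V^{\vee} \hookrightarrow \Hom_{\ZZ}(L,\CC)$ given by restriction of $\CC$-linear functionals from $V$ to $L$, whose image is by definition $(\Lie X)^{\vee}$. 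This concludes the argument.

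The only real obstacle is the translation-invariance of holomorphic $1$-forms on $X$; once this classical input is granted, the rest is a one-line period computation and base-change bookkeeping.
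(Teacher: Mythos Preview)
Your proof is correct and follows essentially the same approach as the paper: both reduce to the case $M$ a point via base change, then invoke the classical identification $H^0(X,\Omega^1_X)\cong (\Lie X)^{\vee}$ for a complex torus. The paper phrases this identification via the $C^{\infty}$ de Rham cohomology and the $(1,0)$-part of the Hodge decomposition (citing \cite{BL04} Theorem 1.4.1), whereas you invoke translation-invariance of holomorphic $1$-forms and then make the correspondence under $\comp$ explicit by the period computation $\int_{\gamma}\omega=\omega(\gamma)$; this is a minor presentational difference, not a genuinely different route.
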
 

This also follows from a fiber-by-fiber argument: if $M$ is a point, by identifying $\mathcal{H}^1_{\dR}(X)$ with the $C^{\infty}$ de Rham cohomology with values in $\CC$ (Remark \ref{cinftyderham}), the subspace $\mathcal{F}^1(X)$ gets identified with the space of $(1,0)$-forms in $\mathcal{H}^1_{\dR}(X)$, and these correspond to $\Hom_{\CC}(\Lie X, \CC)$ under the de Rham isomorphism (cf. \cite{BL04} Theorem 1.4.1). 

\subsubsection{} \label{defi-sfrf}

If $X$ admits a principal Riemann form  $E$, then, by linearity, we may define a holomorphic symplectic form $\langle \ , \ \rangle_E$ on the holomorphic vector bundle $\mathcal{H}^1_{\dR}(X/M)$ over $M$  by
\begin{align*}
\langle E(\gamma, \ ) , E(\delta, \ ) \rangle_E \defeq \frac{1}{2\pi i}E(\gamma,\delta) 
\end{align*}
for any sections $\gamma$ and $\delta$ of $R_1\pi_*\ZZ_X$, where $E(\gamma , \ )$ and $E(\delta , \ )$ are regarded as sections of $\mathcal{H}^1_{\dR}(X/M)$ via the comparison isomorphism (\ref{compisom}).

Since every section of $R^1\pi_*\ZZ_X$ is horizontal for the Gauss-Manin connection $\nabla$ on $\mathcal{H}^1_{\dR}(X/M)$ under the comparison isomorphism (\ref{compisom}), the symplectic form $\langle \ , \ \rangle_{E}$ is compatible with $\nabla$: for every sections $\alpha,\beta$ of $\mathcal{H}^1_{\dR}(X/M)$, and every holomorphic vector field $\theta$ on $M$, we have
\begin{align} \label{compconn}
\theta \langle \alpha,\beta\rangle_E = \langle \nabla_{\theta}\alpha,\beta \rangle_{E} + \langle \alpha,\nabla_{\theta}\beta\rangle_E\text{.}
\end{align}
  
\subsection{Relative uniformization of complex abelian schemes} \label{unifabsch}

Let $U$ be a smooth separated $\CC$-scheme of finite type and $(X,\lambda)$ be a principally polarized abelian scheme over $U$ of relative dimension $g$. Denote by $p: X \to U$ its structural morphism. Then the associated analytic space $U^{\an}$ is a complex manifold, and the analytification $p^{\an}:X^{\an} \to U^{\an}$ of $p$ is a complex torus over $U^{\an}$ of relative dimension $g$.

Since the analytification of the coherent $\mathcal{O}_U$-module $H^1_{\dR}(X/U)$ is canonically isomorphic to $\mathcal{H}^1_{\dR}(X^{\an}/U^{\an})$, the symplectic form $\langle \ , \ \rangle_{\lambda}$ on $H^1_{\dR}(X/U)$  induces a symplectic form $\langle \ , \ \rangle_{\lambda}^{\an}$ on the holomorphic vector bundle $\mathcal{H}^1_{\dR}(X^{\an}/U^{\an})$ over $U^{\an}$.

\begin{lemma} \label{compsympl}
Let $\gamma$ and $\delta$ be sections of $R_1p^{\an}_*\ZZ_{X^{\an}}$, and let $\alpha$ and $\beta$ be sections of $\mathcal{H}^1_{\dR}(X^{\an}/U^{\an})$ such that $\gamma = \langle \ \ , \alpha\rangle^{\an}_{\lambda}$ and $\delta = \langle \ \ , \beta \rangle^{\an}_{\lambda}$ under (the dual of) the comparison isomorphism (\ref{compisom}). Then
\begin{enumerate}
   \item The formula
\begin{align*}
E_{\lambda}(\gamma,\delta) \defeq \frac{1}{2\pi i}\langle \alpha,\beta \rangle^{\an}_{\lambda}
\end{align*}
defines a Riemann form over $X^{\an}$.
    \item The holomorphic symplectic forms $\langle \ , \ \rangle_{E_{\lambda}}$ and $\langle \ , \ \rangle_{\lambda}^{\an}$ over $\mathcal{H}^1_{\dR}(X^{\an}/U^{\an})$ coincide.
\end{enumerate} 
\end{lemma}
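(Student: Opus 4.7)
My plan is to verify both assertions fiberwise, reducing to a single complex principally polarized abelian variety $(X,\lambda)$. I will then invoke the explicit description of $\langle\,,\rangle_\lambda$ in terms of the first Chern class of a polarizing line bundle (as in Lemma \ref{alternating}), together with the classical comparison between the algebraic $c_{1,\dR}$ (constructed via $d\log$) and the topological $c_1^{\mathrm{top}}$.

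\emph{Reduction and proof of (1).} Since the formation of $R_1p^{\an}_*\ZZ$, of $\mathcal{H}^1_{\dR}$ with $\comp$, of $\langle\,,\rangle^{\an}_\lambda$ (Remark \ref{remarkbasechange}), and of the Riemann form condition (Definition \ref{defiriemannform}) are all compatible with base change, it suffices to treat the case $U=\Spec\CC$, where $\lambda$ is induced by an ample line bundle $\mathcal{L}$. By the proof of Lemma \ref{alternating}, $E^{\dR,\an}_\lambda(\hat{\gamma},\hat{\delta})=\hat{\gamma}\wedge\hat{\delta}(c_{1,\dR}(\mathcal{L}))$ for every $\hat{\gamma},\hat{\delta}\in\mathcal{H}^1_{\dR}(X)^{\vee}$. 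The commutative square relating $d\log:\mathcal{O}_X^{\times}\to\Omega_X^{\bullet}[1]$ to the exponential exact sequence $0\to\ZZ_X\to\mathcal{O}_X\to\mathcal{O}_X^{\times}\to 0$ yields $\comp(c_{1,\dR}(\mathcal{L}))=2\pi i\cdot c_1^{\mathrm{top}}(\mathcal{L})\in H^2(X,\CC)$. Using the non-degeneracy of $E^{\dR,\an}_\lambda$, the hypotheses $\gamma=\langle\cdot,\alpha\rangle^{\an}_\lambda$ and $\delta=\langle\cdot,\beta\rangle^{\an}_\lambda$ force $\alpha=E^{\dR,\an}_\lambda(\gamma,\cdot)$ and $\beta=E^{\dR,\an}_\lambda(\delta,\cdot)$, where $\gamma,\delta\in H_1(X,\ZZ)$ are regarded as elements of $\mathcal{H}^1_{\dR}(X)^{\vee}$ via the dual of $\comp$. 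Hence
$$E_\lambda(\gamma,\delta)=\tfrac{1}{2\pi i}\langle\alpha,\beta\rangle^{\an}_\lambda=\tfrac{1}{2\pi i}E^{\dR,\an}_\lambda(\gamma,\delta)=\gamma\wedge\delta(c_1^{\mathrm{top}}(\mathcal{L})),$$
which is integer-valued on $H_1(X,\ZZ)$ and, by the classical theory of complex abelian varieties, equals the imaginary part of the positive-definite Hermitian metric $H_{\mathcal{L}}$ on $\Lie X$ attached to $\mathcal{L}$; principality of $\lambda$ (unimodularity of $\mathcal{L}$) ensures that the induced map $R_1p_*\ZZ\to(R_1p_*\ZZ)^{\vee}$ is an isomorphism, so $E_\lambda$ is a principal Riemann form.

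\emph{Proof of (2).} Once (1) is established, the key point is that under $\comp$ the section $E_\lambda(\gamma,\cdot)\in\mathcal{H}^1_{\dR}(X^{\an}/U^{\an})$ corresponds to $(2\pi i)^{-1}\alpha$: for every $\delta\in R_1p_*\ZZ$ with $\delta=\langle\cdot,\beta\rangle^{\an}_\lambda$,
$$\int_\delta E_\lambda(\gamma,\cdot)=E_\lambda(\gamma,\delta)=\tfrac{1}{2\pi i}\langle\alpha,\beta\rangle^{\an}_\lambda=\tfrac{1}{2\pi i}\int_\delta\alpha.$$
Therefore
$$\langle E_\lambda(\gamma,\cdot),E_\lambda(\delta,\cdot)\rangle^{\an}_\lambda=\tfrac{1}{(2\pi i)^2}\langle\alpha,\beta\rangle^{\an}_\lambda=\tfrac{1}{2\pi i}E_\lambda(\gamma,\delta),$$
which is the defining relation (Paragraph \ref{defi-sfrf}) of $\langle\,,\rangle_{E_\lambda}$. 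Since the elements $E_\lambda(\gamma,\cdot)$ for $\gamma\in R_1p_*\ZZ$ generate $\mathcal{H}^1_{\dR}(X^{\an}/U^{\an})$ as an $\mathcal{O}_{U^{\an}}$-module, the two holomorphic symplectic forms coincide.

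\emph{Expected obstacle.} The main difficulty is the careful tracking of the $2\pi i$ factors and sign conventions in the identifications $\comp(c_{1,\dR}(\mathcal{L}))=2\pi i\cdot c_1^{\mathrm{top}}(\mathcal{L})$ and $\gamma\wedge\delta(c_1^{\mathrm{top}}(\mathcal{L}))=\Im H_{\mathcal{L}}(\gamma,\delta)$; both are classical, but they must be made consistent with the normalization $\langle E(\gamma,\cdot),E(\delta,\cdot)\rangle_E=(2\pi i)^{-1}E(\gamma,\delta)$ adopted in Paragraph \ref{defi-sfrf}.
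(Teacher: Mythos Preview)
Your proposal is correct and follows essentially the same approach as the paper: reduce to $U=\Spec\CC$, relate $E_\lambda^{\dR}$ to the topological first Chern class via the exponential sequence (the paper's diagram gives $-2\pi i$ rather than $2\pi i$, yielding $E_\lambda=-c_{1,\text{top}}(\mathcal{L})$, which is exactly the sign ambiguity you flagged), deduce integrality and unimodularity, and verify (2) by the same direct $(2\pi i)^{-2}$ computation you wrote. The paper presents the computation for (2) before the Chern-class argument for (1), but the content is the same.
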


\begin{proof}
We can assume $U=\Spec \CC$, so that $(X,\lambda)$ is a principally polarized complex abelian variety.  

Recall from Paragraph \ref{shb2} that we have constructed  an alternating bilinear form $E_{\lambda}^{\dR}$ on $H^1_{\dR}(X/\CC)^{\vee}$, and that the bilinear form $\langle \ , \ \rangle_{\lambda}$ over $H^1_{\dR}(X/\CC)$ is obtained from $E_{\lambda}^{\dR}$ by duality. Therefore, to prove (1), it is sufficient to prove that, under the identification of $H_1(X^{\an},\ZZ)$ with an abelian subgroup of $H^1_{\dR}(X/\CC)^{\vee}$ via (the dual of) the comparison isomorphism (\ref{compisom}), for any elements $\gamma$ and $\delta$ of $H_1(X^{\an},\ZZ)$, 
\begin{align*}
E_{\lambda}(\gamma,\delta) \defeq \frac{1}{2\pi i}  E_{\lambda}^{\dR}(\gamma,\delta)
\end{align*}
is in $\ZZ$, and that the induced morphism
\begin{align*}\tag{$*$}
H_1(X^{\an},\ZZ) &\to  \Hom(H_1(X^{\an},\ZZ),\ZZ)\\
\gamma &\mapsto E_{\lambda}( \gamma , \ )
\end{align*}
is an isomorphism of abelian groups.

Note that, with this definition, (2) is automatic, since for any $\gamma,\delta \in H_1(X^{\an},\ZZ)$ we have
\begin{align*}
\langle E_{\lambda}(\gamma, \ ), E_{\lambda}(\delta, \ )\rangle_{E_{\lambda}} &= \frac{1}{2\pi i}E_{\lambda}(\gamma,\delta) = \frac{1}{(2\pi i)^2}E_{\lambda}^{\dR}(\gamma,\delta) = \frac{1}{(2\pi i)^2}\langle E_{\lambda}^{\dR}(\gamma, \ ),E_{\lambda}^{\dR}(\delta, \ ) \rangle^{\an}_{\lambda}\\
             &= \langle \frac{1}{2\pi i}E_{\lambda}^{\dR}(\gamma, \ ), \frac{1}{2\pi i}E_{\lambda}^{\dR}(\delta, \ )\rangle^{\an}_{\lambda} = \langle E_{\lambda}(\gamma, \ ),E_{\lambda}(\delta, \ )\rangle^{\an}_{\lambda}\text{.}
\end{align*}
where we identified the vector space $H^1_{\dR}(X/\CC)$ with $\mathcal{H}^1_{\dR}(X^{\an})$ via the canonical analytification isomorphism. 

Now, the topological Chern class $c_{1,\text{top}}:\Pic(X) \longrightarrow H^2(X^{\an},\ZZ)$, defined via the exponential sequence
\begin{align*}
0 \longrightarrow \ZZ_{X^{\an}} \longrightarrow \mathcal{O}_{X^{\an}} &\longrightarrow \mathcal{O}_{X^{\an}}^{\times} \longrightarrow 0\\
 f&\mapsto \exp(2\pi i f)
\end{align*}
and the de Rham Chern class $c_{1,\text{dR}} : \Pic(X) \to H^2_{\dR}(X/\CC)$ (cf. Paragraph \ref{shb2}) are related by the following commutative diagram (cf. \cite{deligne71} 2.2.5.2)
%$$
%  \raisebox{-0.5\height}{\includegraphics{rameq2-d3.pdf}}
%$$
 $$
 \begin{tikzcd}
 \Pic(X) \arrow{d}[swap]{c_{1,\text{top}}} \arrow{r}{c_{1,\text{dR}}} & H^2_{\dR}(X/\CC)\arrow{d} \\
 H^2(X^{\an},\ZZ) \arrow{r}[swap]{-2\pi i} & H^2(X^{\an},\CC)
 \end{tikzcd}
 $$
where the arrow $H^2_{\dR}(X/\CC) \longrightarrow H^2(X^{\an},\CC) \cong \Hom(H_2(X^{\an},\ZZ),\CC)$ is given by the comparison isomorphism.

If $\mathcal{L}$ is an ample line bundle on $X$ inducing $\lambda$, then $E_{\lambda}^{\dR} = c_{1,\dR}(\mathcal{L})$ under the identification $H^2_{\dR}(X/\CC)$ with the vector space of alternating bilinear forms on $H^1_{\dR}(X/\CC)^{\vee}$ (cf. proof of Lemma \ref{alternating}). By the commutativity of the above diagram, we see that $E_{\lambda} = -c_{1,\text{top}}(\mathcal{L})$ under the identification of $H^2(X^{\an},\ZZ)$ with the module of alternating (integral) bilinear forms on $H_1(X^{\an},\ZZ)$. This proves that $E_{\lambda}$ takes integral values.

To prove that  $(*)$ is an isomorphism, we simply use the fact that $\lambda^{\an}$ is an isomorphism of $X^{\an}$ onto its dual torus, hence the determinant of the bilinear form on $H_1(X^{\an},\ZZ)$ induced by $c_{1,\text{top}}(\mathcal{L})$ is 1 (cf. \cite{BL04} 2.4.9).
\end{proof}

Thus, for any smooth separated $\CC$-scheme of finite type $U$ and any principally polarized abelian scheme $(X,\lambda)$ over $U$ of relative dimension $g$, the above construction gives a principally polarized complex torus $(X^{\an},E_{\lambda})$ over $U^{\an}$ of relative dimension $g$.

Let $\SmVar_{/\CC}$ be the full subcategory of $\Sch_{/\CC}$ consisting of smooth separated $\CC$-schemes of finite type, and $\mathcal{A}_{g,\CC}^{\text{sm}}$ be the full subcategory of $\mathcal{A}_{g,\CC}$ consisting of objects $(X,\lambda)_{/U}$ of $\mathcal{A}_{g,\CC}$ such that $U$ is an object of $\SmVar_{/\CC}$. 

We can summarize this paragraph by remarking that we have constructed a ``relative uniformization functor'' $\mathcal{A}_{g,\CC}^{\text{sm}} \to \mathcal{A}_g^{\an}$ making the diagram
%$$
%  \raisebox{-0.5\height}{\includegraphics{rameq2-d4.pdf}}
%$$
 $$
 \begin{tikzcd}
 \mathcal{A}_{g,\CC}^{\text{sm}}\arrow{r}\arrow{d} & \mathcal{A}_g^{\an}\arrow{d}\\
 \SmVar_{/\CC} \arrow{r} & \Man_{/\CC}
 \end{tikzcd}
 $$
(strictly) commutative, where  $\SmVar_{/\CC}  \to \Man_{/\CC}$ is the classical analytification functor $U\mapsto U^{\an}$.

\begin{obs} \label{algebraization}
One can prove that the above diagram is ``Cartesian'' in the sense that it induces an equivalence of categories between $\mathcal{A}_{g,\CC}^{\text{sm}}$ and the full subcategory of $\mathcal{A}_g^{\an}$ formed by the objects lying above the essential image of the analytification functor $\SmVar_{/\CC} \to \Man_{/\CC}$  (cf. \cite{deligne71} Rappel 4.4.3 and \cite{borel72} Theorem 3.10). In particular, for any object $U$ of $\SmVar_{/\CC}$ and any principally polarized complex torus $(X',E)$ over $U^{\an}$ of relative dimension $g$, there exists up to isomorphism a unique principally polarized abelian scheme $(X,\lambda)$ over $U$ of relative dimension $g$ such that $(X',E)_{/U^{\an}}$ is isomorphic to $(X^{\an},E_{\lambda})_{/U^{\an}}$ in $\mathcal{A}_g^{\an}(U^{\an})$. In this paper, we shall only need this algebraization result when  $U=\Spec \CC$, which is classical (cf. \cite{mumford70} Corollary p. 35).
\end{obs}

\subsection{Principally polarized complex tori with real multiplication}\label{ssec-ppctrm}

Recall that $F$ denotes a totally real number field of degree $g$ with ring of integers $R$ and inverse different ideal $D^{-1}$.

For a complex manifold $M$, we may also consider \emph{principally polarized complex tori with $R$-multiplication over $M$}. By this we mean a triple $(X,E,m)_{/M}$, where $(X,E)$ is a principally polarized complex torus of relative dimension $g$ over $M$, and $m: R \to \End_M(X)$ is a ring morphism such that, for every $r\in R$, and every sections $v,w$ of $\Lie_M X$,
$$
E(\Lie m(r) (v),w) = E(v,\Lie m(r) (w))\text{.}
$$

\label{symb:powerH}
\begin{ex}\label{ex-ppctrm}
 Consider the complex manifold
 $$
 \mathbf{H}^g = \{\tau=(\tau_1,\ldots,\tau_g) \in \CC^n \mid \Im \tau_j>0 \text{, }1\le j\le g\}\text{.}
 $$
 Let $V \defeq \CC^g\times \mathbf{H}^g$ be the trivial vector bundle over $\mathbf{H}^g$, and $L$ be the subsheaf of $\mathcal{O}_{\mathbf{H}^g}(V)$ given by the image of the morphism of sheaves of abelian groups
 \begin{align*}
  (D^{-1}\oplus R)_{\mathbf{H}^g}&\to \mathcal{O}_{\mathbf{H}^g}(V) =\mathcal{O}_{\mathbf{H}^g}^{\oplus g}\\
                           (x,y)&\mapsto x + \tau y  \defeq (\sigma_j(x) + \tau_j\sigma_j(y))_{1\le j \le g}
 \end{align*}
 where $\sigma_1,\ldots,\sigma_g$ are the field embeddings of $F$ into $\CC$. Then $L$ is a lattice in $V$ and we denote by
 $$
 \bfp_F: \mathbf{X}_F \to \mathbf{H}^g
 $$
 the corresponding complex torus over $\mathbf{H}^g$ of relative dimension $g$. Let $E_F$ be the imaginary part of the Hermitian metric over $V$ given by
 $$
 (v,w)\mapsto \sum_{j=1}^g\frac{\overline{v}_jw_j}{\Im \tau_j}\text{.}
 $$
 Then $E_F$ defines a principal Riemann form on $\mathbf{X}_F$. The action of $R$ on $L$ given by its natural action on $D^{-1}\oplus R$ via the above isomorphism induces an $R$-multiplication $m_F$ on the principally polarized complex torus $(\mathbf{X}_F,E_F)$. We thus obtain a principally polarized complex torus with $R$-multiplication $(\mathbf{X}_F,E_F,m_F)_{/\mathbf{H}^g}$.
\end{ex}\label{symb:XFEFmF}

Let $(X,E,m)_{/M}$ be a principally polarized complex torus with $R$-multiplication, with structural morphism $\pi:X \to M$. Then $m$ induces an action of $R$ on the holomorphic vector bundle $\Lie_M X$ making its sheaf of holomorphic sections a locally free $\mathcal{O}_M\tensor R$-module of rank 1 (that is, Rapoport's condition is automatically satisfied; see the remark following Definition \ref{defi-ppasrm}).  We denote by
$$
\Phi_E : {\Lie}_M X \times {\Lie}_M X \to \RR_M\tensor D^{-1}
$$
the unique $R_M$-bilinear form such that $\Tr \Phi_E = E$ (cf. Remark \ref{rem-dualityrelation}). We also have a compatible action of $R$ on the lattice $R_1\pi_*\ZZ_X$, making it a locally free $R_M$-module of rank $2g$; the restriction of $\Phi_E$ to $R_1\pi_*\ZZ_X$ is a $D^{-1}_M$-valued integral $R_M$-bilinear symplectic form.

Let $\Psi_E$ be the unique $D^{-1}$-valued $\mathcal{O}_M\tensor R$-bilinear symplectic form on $\mathcal{H}^1_{\dR}(X/M)$ satisfying $\Tr \Psi_E = \langle \ , \ \rangle_E$. By unicity, $\Psi_E$ satisfies
$$
\Psi_E(\Phi_E(\gamma, \ ),\Phi_E(\delta,\ )) = \frac{1}{2\pi i}\Phi_E(\gamma,\delta)
$$
for every sections $\gamma,\delta$ of $R_1\pi_*\ZZ_X$; here, we use that the comparison isomorphism (\ref{compisom}) is $R$-linear, and we regard $\Phi_E(\gamma, \ ), \Phi_E(\delta, \ )$ as sections of $\mathcal{H}^1_{\dR}(X/M)$.

The category fibered in groupoids over $\Man_{/\CC}$ of principally polarized complex tori with $R$-multiplication, defined in an obvious way, is denoted by $\mathcal{A}_F^{\an}$.

\begin{ex}\label{ex-actionsl}
 Let
 $$
 \SL(D^{-1}\oplus R) \defeq \left.\left\{\left(\begin{array}{cc}
                               a & b \\
                               c & d
                              \end{array}\right)\in {\SL}_2(F)\right| a,d\in R\text{, }b\in D^{-1}\text{, }c\in D \right\}\text{.}
 $$
 Alternatively, $\SL(D^{-1}\oplus R)$ can be defined as $\Res_{R/\ZZ}\Aut_{(D^{-1}\oplus R, \Phi)}(\ZZ)$, where $\Phi$ denotes the standard $D^{-1}$-valued $R$-bilinear symplectic form on $D^{-1}\oplus R$. As in Example \ref{exactionsp}, we may define a group action
 \begin{align*}
  \SL(D^{-1}\oplus R) &\to {\Aut}_{\mathcal{A}_F^{\an}}((\mathbf{X}_F,E_F,m_F)_{/\mathbf{H}^g})\\
                \gamma &\mapsto {\varphi_{\gamma}}_{/\gamma}
 \end{align*}
by the following explicit formulas: the left action of $\SL(D^{-1}\oplus R)$ on $\mathbf{H}^g$ is given by
$$
\left(\begin{array}{cc}
                               a & b \\
                               c & d
                              \end{array}\right)\cdot \tau  = \left(\frac{\sigma_1(a)\tau_1 + \sigma_1(b)}{\sigma_1(c)\tau_1 + \sigma_1(d)},\ldots,\frac{\sigma_g(a)\tau_g + \sigma_g(b)}{\sigma_g(c)\tau_g + \sigma_g(d)} \right)
$$
where $\sigma_1,\ldots,\sigma_g$ denote the field embeddings of $F$ into $\CC$, and, for $\tau\in \mathbf{H}^g$, the isomorphism
$$
\varphi_{\gamma,\tau} : \mathbf{X}_{F,\tau}\stackrel{\sim}{\to}\mathbf{X}_{F,\gamma\cdot\tau}
$$
is induced by
\begin{align*}
\tilde{\varphi}_{\gamma,\tau} : \CC^g &\to \CC^g\\
                      z&\mapsto \frac{z}{c\tau+d} \defeq \left(\frac{z_1}{\sigma_1(c)\tau_1+\sigma_1(d)},\ldots,\frac{z_g}{\sigma_g(c)\tau_g+\sigma_g(d)} \right)\text{.}
\end{align*}
\end{ex}

\vspace{10pt}
 
Finally, to a principally polarized abelian scheme with $R$-multiplication $(X,\lambda,m)_{/U}$, with $U$ a smooth separated $\CC$-scheme of finite type, we may functorially associate the object $(X^{\an},E_{\lambda},m^{\an})_{/U^{\an}}$ of $\mathcal{A}_F^{\an}$. We remark that by Lemma \ref{compsympl}, and by the unicity of the $D^{-1}$-valued $\mathcal{O}_{U^{\an}}\tensor R$-bilinear symplectic forms, we have
$$
\Psi_{E_{\lambda}} = \Psi_{\lambda}^{\an}\text{.}
$$

\section{Analytic moduli spaces of complex abelian varieties with a symplectic-Hodge basis}

In this section we consider some moduli problems of principally polarized complex tori, regarded as functors
\begin{align*}
(\mathcal{A}_g^{\an})^{\opp} \to \Sets \text{ (resp. }(\mathcal{A}_F^{\an})^{\opp} \to \Sets\text{)}
\end{align*}
where $\mathcal{A}_g^{\an}$ (resp. $\mathcal{A}_F^{\an}$) is the category fibered in groupoids over the category of complex manifolds $\Man_{/\CC}$ defined in Paragraph \ref{defitg} (resp. \ref{ssec-ppctrm}). As usual, we provide a detailed account for the Siegel case $\mathcal{A}_g^{\an}$, and merely indicate the necessary modifications to treat the Hilbert-Blumenthal case $\mathcal{A}_F^{\an}$.

%Recall that we denote by $B_g$ the smooth quasi-projective scheme over $\ZZ[1/2]$ representing the stack $\mathcal{B}_g\tensor_{\ZZ} \ZZ[1/2]$ (see \cite{fonseca16} Theorem 4.1). We shall prove in particular that the complex manifold $B_g(\CC) = B_{g,\CC}^{\an}$ is a fine moduli space in the analytic category for principally polarized complex abelian varieties of dimension $g$ endowed with a symplectic-Hodge basis.

\subsection{Descent of principally polarized complex tori} 

Let $M$ be a complex manifold and $(X,E)$ be a principally polarized complex torus over $M$ of relative dimension $g$. 

If $M_0$ is another complex manifold and $M \to M_0$ is a holomorphic map, we say that $(X,E)$ \emph{descends} to $M_0$ if there exists a principally polarized complex torus $(X_0,E_0)$ over $M_0$ and a morphism $(X,E)_{/M} \to (X_0,E_0)_{/M_0}$ in $\mathcal{A}_g^{\an}$.

% We shall be mainly concerned with the case 
% Let $M$ be a complex manifold and $\Gamma$ be an abstract group acting on $M$. Let us recall that an action of $\Gamma$ on $M$ is said to be \emph{properly discontinuous} if, for every $p \in M$, there exists an open neighborhood $U$ of $p$ in $M$ such that $\{\gamma \in \Gamma \mid \gamma \cdot U \cap U \neq \emptyset\}$ is a finite set. 

% If a group $\Gamma$ acts freely and properly discontinuously on a complex manifold $M$, then the topological quotient $M/\Gamma$ has a unique structure of a complex manifold such that the canonical map $M \to M/\Gamma$ is biholomorphic. Let us also observe that $M \to M/\Gamma$ is a topological covering with Galois group $\Gamma$.

\begin{lemma} \label{descentlemma}
With the above notation, suppose that there exists a proper and free left action of a discrete group $\Gamma$ on $M$. If the action of $\Gamma$ on $M$ lifts to an action of $\Gamma$ on $(X,E)_{/M}$ in the category $\mathcal{A}_g^{\an}$, then $(X,E)_{/M}$ descends to a principally polarized complex torus over the quotient $\Gamma \backslash M$. 
\end{lemma}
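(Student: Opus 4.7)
The plan is to descend both the base $M$ and the total space $X$ by quotienting by $\Gamma$, and then verify that the resulting holomorphic map inherits the structure of a principally polarized complex torus. First I would check that the hypotheses of properness and freeness transfer from $M$ to $X$. The action of $\Gamma$ on $(X,E)_{/M}$ consists, for each $\gamma \in \Gamma$, of an isomorphism ${\varphi_\gamma}_{/f_\gamma}:(X,E)_{/M}\stackrel{\sim}{\to}(X,E)_{/M}$ in $\mathcal{A}_g^{\an}$; in particular, $\varphi_\gamma$ covers $f_\gamma$ along $\pi:X \to M$ and preserves the identity section. Freeness on $X$ then follows from freeness on $M$: if $\gamma \cdot x = x$, then $f_\gamma(\pi(x)) = \pi(x)$, whence $\gamma = e$. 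Properness is a consequence of properness of the action on $M$ together with properness of $\pi$: for any compact subset $K \subset X$, one has
$$
\{\gamma \in \Gamma \mid \gamma \cdot K \cap K \neq \emptyset\} \subset \{\gamma \in \Gamma \mid f_\gamma(\pi(K)) \cap \pi(K)\neq \emptyset\}\text{,}
$$
and the right-hand side is finite by properness of the action on $M$.

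Second, since $\Gamma$ acts properly, freely and holomorphically on $M$ and $X$, the quotients $M_0 \defeq \Gamma\backslash M$ and $X_0 \defeq \Gamma\backslash X$ are complex manifolds, and the projections $q:M\to M_0$ and $p:X\to X_0$ are holomorphic covering maps. The $\Gamma$-equivariance of $\pi$ yields a holomorphic map $\pi_0:X_0 \to M_0$ fitting into a Cartesian square with $q$ and $p$. Properness of $\pi_0$ follows from properness of $\pi$ and the universal property of the quotients; the fibers of $\pi_0$ are identified via $p$ with the fibers of $\pi$ and hence are connected compact complex Lie groups. The group law $X\times_M X \to X$, the identity section $M\to X$, and the inversion $X\to X$ are all $\Gamma$-equivariant (the first because each $\varphi_\gamma$ is a morphism of Lie groups on fibers; the second because morphisms in $\mathcal{A}_g^{\an}$ preserve identity sections by definition), so they descend to endow $\pi_0: X_0 \to M_0$ with the structure of a relative complex Lie group, hence of a complex torus over $M_0$.

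Finally, it remains to descend the principal Riemann form. The Hermitian metric $H$ on the holomorphic vector bundle $\Lie_M X$ associated with $E$ is $\Gamma$-invariant under the natural lift of the $\Gamma$-action to $\Lie_M X$ (this is exactly the condition $\varphi_\gamma^*E = E$ encoded in the fact that $\varphi_{\gamma/f_\gamma}$ is a morphism in $\mathcal{A}_g^{\an}$). Consequently $H$ descends to a $C^\infty$ Hermitian metric $H_0$ on $\Lie_{M_0}X_0$, and its imaginary part $E_0\defeq \Im H_0$ takes integral values on the local system $R_1\pi_{0,*}\ZZ_{X_0}$, since this property can be checked after pullback along the étale cover $q$ where it reduces to integrality of $E$ on $R_1\pi_*\ZZ_X$. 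Principality of $E_0$ is a fiberwise condition and is therefore inherited from principality of $E$. Altogether $(X_0,E_0)$ is a principally polarized complex torus over $M_0$, and the Cartesian square $(p,q)$ provides a morphism $(X,E)_{/M}\to (X_0,E_0)_{/M_0}$ in $\mathcal{A}_g^{\an}$, which is the desired descent.

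The argument is essentially bookkeeping; the only delicate point is to extract from the definition of a morphism in $\mathcal{A}_g^{\an}$ the precise $\Gamma$-equivariance properties (of the identity section, group law, and Hermitian metric) that make each piece of structure descend. Once these invariance properties are correctly formulated, smoothness of the quotients and descent of sheaves follow from standard results on proper free group actions by biholomorphisms.
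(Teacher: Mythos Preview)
Your proof is correct and complete, but it takes a different route from the paper. The paper invokes the equivalence of Remark~\ref{remarktorus} between complex tori over $M$ and pairs $(V,L)$ consisting of a holomorphic vector bundle with a lattice; it then transports the $\Gamma$-action to $V$, verifies properness and freeness there, and descends $V$, $L$, and $E$ separately to $\Gamma\backslash M$. Your approach instead quotients the total space $X$ directly and checks that the relative Lie group structure and Riemann form descend. The paper's route has the virtue that the complex-torus structure on the quotient is automatic from the description $V_0/L_0$, so no separate verification of descent of the group law is needed; on the other hand, your argument avoids the passage through $(V,L)$ and the need to descend the lattice as a separate object, at the modest cost of arguing that the multiplication and identity section are $\Gamma$-equivariant. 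Both are short and essentially equivalent in difficulty.
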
 

\begin{proof}[Sketch of the proof]
  Consider $X$ as a pair $(V,L)$, where $V$ is a holomorphic vector bundle over $M$ of rank $g$, and $L$ is a lattice in $V$ (cf. Remark \ref{remarktorus}). Then, to every $\gamma \in \Gamma$ there is associated a holomorphic map $\varphi_{\gamma} :V \to V$ making the diagram
%$$
%  \raisebox{-0.5\height}{\includegraphics{rameq2-d5.pdf}}
%$$
 $$
 \begin{tikzcd}
 V \arrow{r}{\varphi_{\gamma}} \arrow{d} & V \arrow{d} \\
 M \arrow{r}[swap]{\gamma} & M
 \end{tikzcd}
 $$
commute, and compatible with the vector bundle structures. It follows from the commutativity of this diagram that the action of $\Gamma$ on $V$ is also proper and free. Thus, there exists a unique holomorphic vector bundle structure on the complex manifold $\Gamma \backslash V$ over $\Gamma \backslash M$ such that the canonical holomorphic map $V \to \Gamma \backslash V$ induces a vector bundle isomorphism of $V$ onto the pullback to $M$ of the vector bundle $\Gamma \backslash V$ over $\Gamma \backslash M$.

Analogously, one descends the lattice $L$ to a lattice in $\Gamma \backslash V$ (consider the étalé space, for instance), and the bilinear form $E$ on $V$ to a bilinear form on $\Gamma \backslash V$, which is seen to be a principal polarization \emph{a posteriori}.
\end{proof}

\begin{obs}\label{rem-descentlemma}
It is not difficult to check that an analogous statement holds for principally polarized complex tori with $R$-multiplication: if a proper and free action of a discrete group $\Gamma$ on a complex manifold $M$ lifts to an action on a principally polarized complex torus with $R$-multiplication $(X,E,m)$ over $M$, then $(X,E,m)_{/M}$ descends to a principally polarized complex torus with $R$-multiplication over $\Gamma\backslash M$.
\end{obs}

\subsection{Integral symplectic bases over principally polarized complex tori} 

Let $M$ be a complex manifold and $(X,E)$ be a principally polarized complex torus over $M$ of relative dimension $g$. We denote by $\pi : X \to M$ its structural morphism. 

\begin{defi}
An \emph{integral symplectic basis} of $(X,E)_{/M}$ is a trivializing $2g$-uple $(\gamma_1,\ldots,\gamma_g,\delta_1,\ldots,\delta_g)$ of global sections of $R_1\pi_*\ZZ_X$ which is symplectic with respect to the Riemann form $E$, that is,
\begin{align*}
E(\gamma_i,\gamma_j) = E(\delta_i,\delta_j)=0\ \ \text{  and } \ \ E(\gamma_i,\delta_j)=\delta_{ij}
\end{align*}
for any $1\le i, j\le g$.
\end{defi}

\begin{ex} \label{intsymplbasis}
Consider the principally polarized complex torus $(\mathbf{X}_g,E_g)$ over $\mathbf{H}_g$ of Example \ref{torus} and recall that a section of $R_1{\bfp_g}_*\ZZ_{\mathbf{X}_g}$ is given by a column vector of holomorphic functions on $\mathbf{H}_g$ of the form $\tau \mapsto m + \tau n$, for some sections $(m,n)$ of $(\ZZ^{g}\oplus \ZZ^g)_{\mathbf{H}_g}$.  We can thus define an integral symplectic basis 
\begin{align*}
\beta_g = (\gamma_1,\ldots,\gamma_g,\delta_1,\ldots,\delta_g)
\end{align*}
of $(\mathbf{X}_g,E_g)_{/\mathbf{H}_g}$ by 
\begin{align*}
\gamma_i(\tau) \defeq \textbf{e}_i\ \ \text{ and }\ \  \delta_i(\tau)\defeq \tau \textbf{e}_i
\end{align*}
for any $\tau \in \mathbf{H}_g$.
\end{ex}

Let $(X',E')_{/M'}$ and $(X,E)_{/M}$ be objects of $\mathcal{A}_g^{\an}$ with structural morphisms $\pi': X' \to M'$ and $\pi : X \to M$. If $\varphi_{/f} : (X',E')_{/M'} \to (X,E)_{/M}$ is a morphism in $\mathcal{A}_g^{\an}$, then the isomorphism of vector bundles 
\begin{align} \label{isomorphism}
{\Lie}_{M'}X' \stackrel{\sim}{\to} f^*{\Lie}_MX
\end{align}
induced by $\varphi$ identifies the lattice $R_1\pi'_*\ZZ_{X'}$ with $f^*R_1\pi_*\ZZ_X$. If $\gamma$ is a section of $R_1\pi_*\ZZ_X$, we denote by $\varphi^*\gamma$ the section of $R_1\pi'_*\ZZ_{X'}$ mapping to $f^*\gamma$ under (\ref{isomorphism}). As the isomorphism (\ref{isomorphism}) also preserves the corresponding Riemann forms, for any integral symplectic basis $(\gamma_1,\ldots,\gamma_g,\delta_1,\ldots,\delta_g)$ of $(X,E)_{/M}$, the $2g$-uple of global sections of $R_1\pi'_*\ZZ_{X'}$ given by
\begin{align*}
\varphi^*\beta \defeq (\varphi^{*}\gamma_1,\ldots,\varphi^*\gamma_g,\varphi^*\delta_1,\ldots,\varphi^*\delta_g)
\end{align*}
is an integral symplectic basis of $(X',E')_{/M'}$. 

\begin{prop}[cf. \cite{BL04} Proposition 8.1.2] \label{reprsintsympl}
  The functor $(\mathcal{A}_g^{\an})^{\opp} \longrightarrow \mathsf{Set}$ sending an object $(X,E)_{/M}$ of $\mathcal{A}_g^{\an}$ to the set of integral symplectic bases of $(X,E)_{/M}$ is representable by $(\mathbf{X}_g,E_g)_{/\mathbf{H}_g}$, with universal integral symplectic basis $\beta_g$ defined in Example \ref{intsymplbasis}.
\end{prop}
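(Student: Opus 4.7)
The goal is to construct, for every object $(X,E)_{/M}$ of $\mathcal{A}_g^{\an}$ equipped with an integral symplectic basis $\beta=(\gamma_1,\ldots,\gamma_g,\delta_1,\ldots,\delta_g)$, a unique morphism $\varphi_{/f}:(X,E)_{/M}\to(\mathbf{X}_g,E_g)_{/\mathbf{H}_g}$ in $\mathcal{A}_g^{\an}$ such that $\varphi^*\beta_g=\beta$. The key input is the positive-definiteness of the Hermitian metric $H$ underlying the Riemann form $E$ (built into Definition \ref{defiriemannform}), which encodes the classical Riemann bilinear relations.

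First, I would exploit the fact that $R_1\pi_*\ZZ_X$, with $\pi:X\to M$ the structural morphism, is canonically identified with a lattice inside the rank $g$ holomorphic vector bundle $\Lie_MX$ (Lemma \ref{uniftorus}). The sections $\gamma_1,\ldots,\gamma_g$ then define global holomorphic sections of $\Lie_MX$, and a pointwise Riemann bilinear relation argument shows that, at every $p\in M$, the vectors $\gamma_1(p),\ldots,\gamma_g(p)$ form a $\CC$-basis of $\Lie X_p$: indeed, if they were $\CC$-linearly dependent at some $p$, the alternating form $E_p$ restricted to the real span of $\gamma_1(p),\ldots,\gamma_g(p)\subset \Lie X_p$ would vanish, contradicting positive-definiteness of $H_p$ together with the relation $H_p(v,w)=E_p(v,iw)+iE_p(v,w)$. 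Hence $(\gamma_1,\ldots,\gamma_g)$ is a holomorphic frame of $\Lie_MX$, and there exist unique holomorphic functions $\tau_{ij}:M\to\CC$ with $\delta_j=\sum_i\tau_{ij}\gamma_i$; set $f(p)\defeq(\tau_{ij}(p))$.

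Next, I would check that $f$ lands in $\mathbf{H}_g$. The symmetry $\tau=\tau\transp$ follows from $E(\delta_i,\delta_j)=0$ by expanding each $\delta_k$ in the frame $(\gamma_i)$ and using $E(\gamma_i,\delta_j)=\delta_{ij}$; positivity of $\Im\tau$ follows from positive-definiteness of $H$ applied on the real span of $\delta_1,\ldots,\delta_g$, again using the symplectic normalization of $\beta$. Once $f:M\to\mathbf{H}_g$ is in hand, the frame $(\gamma_1,\ldots,\gamma_g)$ gives a holomorphic trivialization $\Lie_MX\stackrel{\sim}{\to} f^*\Lie_{\mathbf{H}_g}\mathbf{X}_g=\mathcal{O}_M^{\oplus g}$ sending the lattice $R_1\pi_*\ZZ_X$ to $f^{-1}(\ZZ^g+\tau\ZZ^g)$ by the definition of $\tau$; passing to the quotient by the lattices yields the desired Cartesian square and a holomorphic map $\varphi:X\to\mathbf{X}_g$ with $\varphi_{/f}$ a morphism of complex tori. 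The identity $\varphi^*\beta_g=\beta$ is tautological from the construction, and the compatibility $E=f^*E_g$ under $\Lie\varphi$ is a direct matrix computation using $\tau\in\mathbf{H}_g$ (the Hermitian metric $(\Im\tau)^{-1}$ on $\CC^g$ is determined by its imaginary part on the $\ZZ$-lattice $\ZZ^g+\tau\ZZ^g$, and the latter agrees with $E$ pulled back along $\varphi$ by design).

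Finally, for uniqueness: if $\varphi'_{/f'}$ is another such morphism, then $\varphi'$ identifies $\gamma_i$ with the corresponding section of $R_1{\bfp_g}_*\ZZ_{\mathbf{X}_g}$ over $f'$, i.e.\ $f'(p)\mapsto\mathbf{e}_i$ under the isomorphism $\Lie_MX\cong(f')^*\Lie_{\mathbf{H}_g}\mathbf{X}_g$, and likewise $\delta_j\mapsto f'(p)\mathbf{e}_j$; this forces $f'=f$ and then $\varphi'=\varphi$ since a morphism of complex tori is determined by its effect on Lie algebras. I expect the main work to be concentrated in Step 1–Step 3, i.e.\ verifying that positive-definiteness of $H$ together with the $\ZZ$-symplectic normalization of $\beta$ produces a holomorphic frame and a period map taking values in $\mathbf{H}_g$ rather than merely in $M_{g\times g}(\CC)$; this is the classical Riemann bilinear relations in family form. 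The remaining steps (construction of $\varphi$, compatibility with $E$, uniqueness) are formal once $f$ has been built.
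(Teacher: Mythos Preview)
Your plan is correct and matches the paper's proof: show $(\gamma_1,\ldots,\gamma_g)$ is a holomorphic frame via positive-definiteness of $H$ (the paper phrases this as $W\cap iW=0$ for $W$ the real span of the $\gamma_i$, since $E(\gamma,i\gamma)=\Re H(\gamma,\gamma)>0$ for $0\neq\gamma\in W$), define $\tau$ by $\delta=\gamma\tau$, verify $\tau\in\mathbf{H}_g$, and lift to a morphism $\varphi_{/\tau}$. One caution when you write out the details: $E$ is only $\RR$-bilinear, so to expand $E(\gamma_i,\delta_j)$ and $E(\delta_i,\delta_j)$ in the frame you must split $\tau=\Re\tau+i\Im\tau$; the paper organizes this via the symmetric positive-definite matrix $A=(E(\gamma_k,i\gamma_l))_{k,l}$, from which $A\Im\tau=\mathbf{1}_g$ (hence $\Im\tau>0$) and the symmetry of $\Re\tau$ drop out directly.
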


\begin{proof}
Let $(X,E)_{/M}$ be an object of $\mathcal{A}_g^{\an}$ with structural morphism $\pi:X \to M$, and let $\beta = (\gamma_1,\ldots,\gamma_g,\delta_1,\ldots,\delta_g)$ be an integral symplectic basis of $(X,E)_{/M}$. Let $W$ be the real subbundle of $\Lie_MX$ generated by $\gamma_1,\ldots,\gamma_g$. Since $E$ is the imaginary part of a Hermitian metric, for any nontrivial section $\gamma$ of $W$, we have $E(\gamma,i\gamma)\neq 0$. As $W$ is isotropic with respect to $E$, it follows that $\Lie_M X = W \oplus iW$ as a real vector bundle. In particular, $\gamma \defeq (\gamma_1,\ldots,\gamma_g)$ trivializes $\Lie_M X$ as a holomorphic vector bundle. Hence, if $\delta \defeq  (\delta_1,\ldots,\delta_g)$, then there exists a unique holomorphic map $\tau : M \to \GL_g(\CC)$ such that $\delta = \gamma \tau$, where $\gamma$ and $\delta$ are regarded as row vectors of global holomorphic sections of $\Lie_M X$. 

Let $A \defeq (E(\gamma_k,i\gamma_l))_{1\le k ,l\le g} \in M_{g\times g}(\CC)$. Since 
$$
\delta = \gamma \Re \tau + i \gamma \Im \tau\text{,}
$$
the matrix of $E$ in the basis $\beta$ is given by
\begin{align*}
\left(\begin{array}{cc}
      0 & A\Im \tau\\
      -(A \Im\tau)\transp & (\Re \tau)\transp A \Im \tau - (\Im \tau)\transp A\transp \Re\tau
      \end{array} \right)\text{.}
\end{align*}
Using that $\beta$ is symplectic with respect to $E$, and that $A$ is symmetric and positive-definite (recall that $E$ is the imaginary part of a Hermitian metric), we conclude that $\tau$ factors through $\mathbf{H}_g\subset \GL_g(\CC)$.

Finally, writing $X$ as the quotient of $\Lie_M X$ by $R_1\pi_*\ZZ_X$, we see that $\tau$ lifts to a unique morphism in $\mathcal{A}_g^{\an}$
\begin{align*}
\varphi_{/\tau} : (X,E)_{/M} \to (\mathbf{X}_g,E_g)_{/\mathbf{H}_g}
\end{align*}
satisfying $\varphi^*\beta_g = \beta$. %cf debarre VI 1.3
\end{proof}

\begin{obs}\label{actionsp}
We may define a \emph{left} action of the group $\Sp_{2g}(\ZZ)$ on the functor $(\mathcal{A}_g^{\an})^{\opp} \to \Sets$ of integral symplectic bases, considered in the above proposition, as follows. Let $(X,E)_{/U}$ be an object of $\mathcal{A}_g^{\an}$ and $\beta$ be an integral symplectic basis of $(X,E)_{/U}$. Let $\gamma =( A \ \ B \ ; \ C \ D) \in \Sp_{2g}(\ZZ)$, and consider $\beta = (\gamma_1, \ldots,\gamma_g,\delta_1,\ldots,\delta_g)$ as a row vector of order $2g$; then we define
\begin{align*}
\gamma \cdot \beta \defeq (\begin{array}{cccccc}\gamma_1 & \cdots & \gamma_g & \delta_1 & \cdots & \delta_g\end{array})
\left(\begin{array}{cc}
D\transp & B\transp \\
C\transp & A\transp
\end{array}\right)
\end{align*}
The morphism
\begin{align*}
{\varphi_{\gamma}}_{/f_{\gamma}}: (\mathbf{X}_g,E_g)_{/\mathbf{H}_g} \to (\mathbf{X}_g,E_g)_{/\mathbf{H}_g}
\end{align*}
defined in Example \ref{exactionsp} is the unique morphism in $\mathcal{A}_g^{\an}$ satisfying
\begin{align*}
\varphi_{\gamma}^*\beta_g = \gamma \cdot \beta_g\text{.}
\end{align*}
\end{obs}

\subsection{Principal (symplectic) level structures} \label{level}

\subsubsection{} \label{pls}

Let $U$ be a scheme, and $X$ be an abelian scheme over $U$. Recall that, for any integer $n\ge 1$, we may define a natural pairing, the so-called \emph{Weil pairing},
\begin{align*}
X[n] \times X^t[n] \to \mu_{n,U}\text{,}
\end{align*}
where $\mu_{n,U}$ denotes the $U$-group scheme of $n$th roots of unity (cf. \cite{mumford70} IV.20).

Fix an integer $n\ge 1$, and let $\zeta_n\in \CC$ be the $n$th root of unity $e^{\frac{2\pi i}{n}}$. For any scheme $U$ over $\ZZ[1/n,\zeta_n]$, and any principally polarized abelian scheme $(X,\lambda)$ over $U$ of relative dimension $g$, by identifying $X^t[n]$ with $X[n]$ via $\lambda$, and $\mu_{n,U}$ with $(\ZZ/n\ZZ)_U$ via $\zeta_n$, we obtain a pairing
\begin{align*}
e^{\lambda}_n: X[n] \times X[n] \to (\ZZ/n\ZZ)_U\text{.}
\end{align*} 
The formation of $e^{\lambda}_n$ is compatible with every base change in $U$. Moreover, $e^{\lambda}_n$ is skew-symmetric and non-degenerate (cf. \cite{mumford70} IV.23). %By a \emph{symplectic basis} of $X[n]$ we mean a $2g$-uple \linebreak $(P_1,\ldots,P_g,Q_g,\ldots,Q_g)$ of global sections of the $U$-scheme $X[n]$ satisfying $e_n^{\lambda}(P_i,P_j)=e_{n}^{\lambda}(Q_i,Q_j)=0$ and $e_n^{\lambda}(P_i,Q_j)=\delta_{ij}$ for every $1\le i \le j \le g$.

Since, for any integer $n\ge 3$, there exists a fine moduli space $A_{g,1,n}$ over $\ZZ[1/n]$ for principally polarized abelian varieties of dimension $g$ endowed with a full level $n$-structure (see \cite{GIT94} Theorem 7.9, and the following remark; see also \cite{moret-bailly85} Théorème VII.3.2), there also exists a fine moduli space $A_{g,n}$ over $\ZZ[1/n,\zeta_n]$ for principally polarized abelian varieties $(X,\lambda)$ of dimension $g$ endowed with a symplectic basis of $X[n]$ for the pairing $e^{\lambda}_n$ (cf. \cite{FC90} IV.6).  The scheme $A_{g,n}$ is quasi-projective and smooth over $\ZZ[1/n,\zeta_n]$, with connected fibers. 

In the sequel, we denote the universal principally polarized abelian scheme over $A_{g,n}$ by $(X_{g,n},\lambda_{g,n})$, and the universal symplectic basis of $X_{g,n}[n]$ by $\alpha_{g,n}$.

\subsubsection{}

Let $(X,E)_{/M}$ be an object of $\mathcal{A}_g^{\an}$ with structural morphism $\pi:X \to M$. For any integer $n\ge 1$, by an \emph{integral symplectic basis modulo $n$} of $(X,E)_{/M}$, we mean a $2g$-uple of global sections of the local system of $\ZZ/n\ZZ$-modules 
\begin{align*}
R_1\pi_*(\ZZ/n\ZZ)_X = R_1\pi_*\ZZ_{X}/nR_1\pi_*\ZZ_{X}
\end{align*}
which is symplectic with respect to the alternating $\ZZ/n\ZZ$-linear form on $R_1\pi_*(\ZZ/n\ZZ)_X$ induced by $E$.

\begin{obs} \label{localift}
Every integral symplectic basis of $(X,E)_{/M}$ induces an integral symplectic basis modulo $n$ of $(X,E)_{/M}$. Conversely, since the natural map ${\Sp}_{2g}(\ZZ) \to {\Sp}_{2g}(\ZZ/n\ZZ)$ is surjective, locally on $M$, every integral symplectic basis modulo $n$ of $(X,E)_{/M}$ can be lifted to an integral symplectic basis of $(X,E)_{/M}$. 
\end{obs}

The notion of integral symplectic bases modulo $n$ is compatible with the notion of principal level $n$ structures of \ref{pls} in the following sense. Let $(X,\lambda)_{/U}$ be an object of $\mathcal{A}_{g,\CC}^{\text{sm}}$ (see Paragraph \ref{unifabsch}) with structural morphism $p:X \to U$. The étalé space of the local system $R_1 p^{\an}_*(\ZZ/n\ZZ)_{X^{\an}}$ is canonically isomorphic to the $n$-torsion Lie subgroup $X^{\an}[n]$ of $X^{\an}$. Under this identification, the pairing $e_n^{\lambda}$ on $X[n]$ coincides, up to a sign, with the reduction modulo $n$ of the Riemann form $E_{\lambda}$ (cf. \cite{mumford70} IV.23 and IV.24), and thus an integral symplectic basis modulo $n$ of $(X^{\an},E_{\lambda})_{/U^{\an}}$ canonically corresponds  to a symplectic trivialization of $X^{\an}[n]$ with respect to $e_n^{\lambda}$.

\subsubsection{} Let $\Gamma(n)$ the kernel of the natural map ${\Sp}_{2g}(\ZZ) \to {\Sp}_{2g}(\ZZ/n\ZZ)$. Recall that for any $n\ge 3$ the induced action of $\Gamma(n)$ on $\mathbf{H}_g$ is free (\cite{mumford70} IV.21 Theorem 5) and proper.

\begin{prop}[cf. \cite{BL04} Theorem 8.3.2] \label{levelstruc}
For any integer $n\ge 3$, the complex manifold $A_{g,n}(\CC)=A_{g,n,\CC}^{\an}$  is canonically biholomorphic to the quotient of $\mathbf{H}_g$ by $\Gamma(n)$, and the functor $(\mathcal{A}_g^{\an})^{\opp} \longrightarrow \mathsf{Set}$ sending an object $(X,E)_{/M}$ of $\mathcal{A}_g^{\an}$ to the set of integral symplectic bases modulo $n$ of $(X,E)_{/M}$ is representable by $(X_{g,n,\CC}^{\an},E_{\lambda_{g,n}})_{/A_{g,n,\CC}^{\an}}$. 
\end{prop}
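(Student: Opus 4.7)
The plan is to construct, by descending $(\mathbf{X}_g, E_g)_{/\mathbf{H}_g}$ along the free proper action of $\Gamma(n)$, a principally polarized complex torus $(Y, E_Y, \overline{\beta}_Y)_{/Y_n}$ over $Y_n \defeq \Gamma(n)\backslash \mathbf{H}_g$ that represents the functor of integral symplectic bases modulo $n$, and then to identify this representing object with the analytification of the universal algebraic family via Remark \ref{algebraization} and the universal property of $A_{g,n,\CC}$.

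First, since $\Gamma(n)$ acts freely and properly on $\mathbf{H}_g$ for $n\ge 3$, the quotient $Y_n$ is a complex manifold. The explicit formula $\gamma\cdot \beta_g = \beta_g\,(D\transp\ B\transp\,;\,C\transp\ A\transp)$ from Remark \ref{actionsp} shows that $\Gamma(n)$ is precisely the subgroup of $\Sp_{2g}(\ZZ)$ fixing the reduction $\beta_g \bmod n$. Lemma \ref{descentlemma} therefore descends $(\mathbf{X}_g, E_g)$ to a principally polarized complex torus $(Y, E_Y)_{/Y_n}$, and $\beta_g \bmod n$ descends to a canonical integral symplectic basis modulo $n$, $\overline{\beta}_Y$, of $(Y, E_Y)_{/Y_n}$. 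To check that $(Y, E_Y, \overline{\beta}_Y)_{/Y_n}$ represents the functor, let $(X,E)_{/M}$ be an object of $\mathcal{A}_g^{\an}$ with an integral symplectic basis modulo $n$, $\overline{\beta}$. By Remark \ref{localift} there is an open cover $\{M_i\}$ of $M$ and integral symplectic bases $\beta_i$ of $(X,E)_{/M_i}$ lifting $\overline{\beta}|_{M_i}$; Proposition \ref{reprsintsympl} then yields unique classifying maps $f_i : M_i \to \mathbf{H}_g$. Any two local lifts differ, after passing to a further refinement, by a locally constant section of $\Sp_{2g}(\ZZ)$ that reduces to the identity modulo $n$, hence by an element of $\Gamma(n)$, so the compositions $M_i \to \mathbf{H}_g \to Y_n$ agree on overlaps and glue to a unique holomorphic map $M \to Y_n$ realizing $(X,E,\overline{\beta})_{/M}$ as a pullback of the universal object.

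Finally, I would identify $(Y, E_Y, \overline{\beta}_Y)_{/Y_n}$ with the analytification of the universal family over $A_{g,n,\CC}$. Under the correspondence between integral symplectic bases modulo $n$ and symplectic bases of the $n$-torsion for the Weil pairing recalled in Paragraph \ref{level}, the analytification $(X_{g,n,\CC}^{\an}, E_{\lambda_{g,n}}, \alpha_{g,n})_{/A_{g,n,\CC}^{\an}}$ is itself an instance of our functor, and thus determines a unique classifying morphism $j' : A_{g,n,\CC}^{\an} \to Y_n$. Conversely, Remark \ref{algebraization} applied to $(Y, E_Y)_{/Y_n}$ produces an algebraic family over a complex variety isomorphic to $Y_n$; by the universal property of the fine moduli scheme $A_{g,n,\CC}$, this gives a morphism $j : Y_n \to A_{g,n,\CC}^{\an}$. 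A Yoneda argument then shows that $j$ and $j'$ are mutually inverse biholomorphisms carrying the universal families to each other.

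The principal obstacle is the last step: invoking Remark \ref{algebraization} for $(Y, E_Y)_{/Y_n}$ globally requires that $Y_n$ be the analytification of a complex algebraic variety (classical Baily--Borel quasi-projectivity of $\Gamma(n)\backslash \mathbf{H}_g$, not proved in the excerpt). This can be circumvented by first constructing $j'$ as above and then verifying directly that $j'$ is a local biholomorphism by comparing the respective moduli-theoretic tangent spaces --- both $Y_n$ and $A_{g,n,\CC}^{\an}$ carry a principally polarized family whose Kodaira--Spencer map is an isomorphism by the preceding sections --- and bijective on points via the classical analytic uniformization of complex abelian varieties; this forces $j'$ to be a biholomorphism, bypassing the need to algebraize $(Y, E_Y)$ independently.
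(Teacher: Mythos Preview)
Your construction of $(Y,E_Y,\overline{\beta}_Y)_{/Y_n}$ and the representability argument are exactly what the paper does. The only divergence is in the final identification step. The paper constructs your map $j':A_{g,n,\CC}^{\an}\to Y_n$ from the universal property, observes that it is \emph{bijective on points} because every principally polarized complex torus over a point is algebraizable (Remark \ref{algebraization} in the trivial case $U=\Spec\CC$, which is classical), and then concludes immediately: a bijective holomorphic map between complex manifolds of the same dimension is a biholomorphism (\cite{GH78} p.~19). There is no need to invoke Kodaira--Spencer to check local biholomorphicity, nor to algebraize the family $(Y,E_Y)_{/Y_n}$ or to know in advance that $Y_n$ is quasi-projective. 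Your workaround is valid but unnecessarily elaborate; the paper's shortcut buys you the result with one line once bijectivity is in hand.
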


\begin{proof}
As the action of $\Gamma(n)$ on $\mathbf{H}_g$ is proper and free, the quotient
\begin{align*}
\mathbf{A}_{g,n} \defeq \Gamma(n)\backslash \mathbf{H}_g
\end{align*}
is a complex manifold, and the canonical holomorphic map $\mathbf{H}_g \to \mathbf{A}_{g,n}$ is a covering map with Galois group $\Gamma(n)$. Moreover, since the action of $\Gamma(n)$ on $\mathbf{H}_g$ lifts to an action of $\Gamma(n)$ on $(\mathbf{X}_g,E_g)_{/\mathbf{H}_g}$ in the category $\mathcal{A}_g^{\an}$, the principally polarized complex torus $(\mathbf{X}_g,E_g)$ over $\mathbf{H}_g$ descends to a principally polarized complex torus $(\mathbf{X}_{g,n},E_{g,n})$ over $\mathbf{A}_{g,n}$ (Lemma \ref{descentlemma}).

 Let $\overline{\beta}_{g}$ be the integral symplectic basis modulo $n$ of $(\mathbf{X}_{g},E_g)_{/\mathbf{H}_g}$ obtained from $\beta_g$ by reduction modulo $n$. Then $\overline{\beta}_g$ is invariant under the action of $\Gamma(n)$, and thus it descends to an integral symplectic basis modulo $n$ of $(\mathbf{X}_{g,n},E_{g,n})_{/\mathbf{A}_{g,n}}$, say $\beta_{g,n}$. 

The object $(\mathbf{X}_{g,n},E_{g,n})_{/\mathbf{A}_{g,n}}$ of $\mathcal{A}_g^{\an}$ so constructed represents the functor in the statement with $\beta_{g,n}$ serving as universal symplectic basis modulo $n$. Indeed, let $(X,E)_{/M}$ be an object of $\mathcal{A}_g^{\an}$, and $\beta$ be an integral symplectic basis modulo $n$ of $(X,E)_{/M}$. By Remark \ref{localift}, there exists an open covering $M= \bigcup_{i\in I}U^i$ and, for each $i\in I$, an integral symplectic basis $\beta^i$ of $(X,E)_{/U^i}$ lifting $\beta$. By Proposition \ref{reprsintsympl}, we obtain for each $i\in I$ a morphism $\varphi^i_{/f^i} : (X,E)_{/U^i} \to (\mathbf{X}_g,E_g)_{/\mathbf{H}_g}$ in $\mathcal{A}_g^{\an}$ satisfying $(\varphi^i)^*\beta_g=\beta^i$. Finally, by construction, for any $i,j \in I$, the compositions of $\varphi^i_{/f^i}$ and $\varphi^j_{/f^j}$ with the projection $(\mathbf{X}_g,E_g)_{/\mathbf{H}_g} \to (\mathbf{X}_{g,n},E_{g,n})_{/\mathbf{A}_{g,n}}$ agree over the intersection $U^i\cap U^j$; hence they glue to a morphism 
$$
\varphi_{/f} : (X,E)_{/M} \to (\mathbf{X}_{g,n},E_{g,n})_{/\mathbf{A}_{g,n}}
$$ 
satisfying $\varphi^*\beta_{g,n}=\beta$, and uniquely determined by this property.

To finish the proof, it is sufficient to show that $(X_{g,n,\CC}^{\an},E_{\lambda_{g,n}})_{/A_{g,n,\CC}^{\an}}$ is isomorphic to $(\mathbf{X}_{g,n},E_{g,n})_{/\mathbf{A}_{g,n}}$ in the category $\mathcal{A}_g^{\an}$. By the compatibility of principal level $n$ structures with integral symplectic bases modulo $n$, there exists a unique morphism in $\mathcal{A}_g^{\an}$
\begin{align*}
\varphi_{/f} : (X_{g,n,\CC}^{\an},E_{\lambda_{g,n}})_{/A_{g,n,\CC}^{\an}} \to (\mathbf{X}_{g,n},E_{g,n})_{/\mathbf{A}_{g,n}}
\end{align*}
such that $\varphi^*\beta_{g,n}$ is the integral symplectic basis modulo $n$ of $(X_{g,n,\CC}^{\an},E_{\lambda_{g,n}})_{/A_{g,n,\CC}^{\an}}$ associated to $\alpha_{g,n}$ (the universal principal level $n$ structure of $(X_{g,n},\lambda_{g,n})_{/A_{g,n}}$). Since complex tori (over a point) endowed with a principal Riemann form are algebraizable (cf. Remark \ref{algebraization}), the holomorphic map
\begin{align*}
f: A_{g,n}(\CC) = A_{g,n,\CC}^{\an} \to \mathbf{A}_{g,n}
\end{align*}
is bijective. As the complex manifolds $\mathbf{A}_{g,n}$ and $A_{g,n}(\CC)$ have same dimension, $f$ is necessarily a biholomorphism (\cite{GH78} p. 19).
\end{proof}

\subsection{Symplectic-Hodge bases over complex tori} 

\subsubsection{} \label{defi-shbppct}

Let $M$ be a complex manifold and $(X,E)$ be a principally polarized complex torus over $M$ of relative dimension $g$. As in Definition \ref{defi-shb}, by a \emph{symplectic-Hodge basis} of $(X,E)_{/M}$, we mean a $2g$-uple $b=(\omega_1,\ldots,\omega_g,\eta_1,\ldots,\eta_g)$ of global sections of the holomorphic vector bundle $\mathcal{H}^1_{\dR}(X/M)$ such that $\omega_1,\ldots,\omega_g$ are sections of the subbundle $\mathcal{F}^1(X/M)$, and $b$ is symplectic with respect to the holomorphic symplectic form $\langle \ , \ \rangle_E$.

It follows from Lemma \ref{compsympl} that this notion of symplectic-Hodge basis is compatible with its algebraic counterpart via the ``relative uniformization functor'' in Paragraph \ref{unifabsch}. 

\subsubsection{} \label{principalbundle}

Consider Siegel parabolic subgroup of $\Sp_{2g}(\CC)$
\begin{align*}
P_g(\CC) = 
 \left.\left\{\left(\begin{array}{cc}
                           A & B \\
                           0 & (A\transp)^{-1}
                          \end{array} \right) \in M_{2g\times 2g}(\CC) \ \right|\  A \in {\GL}_g(\CC)\text{ and } B\in M_{g\times g}(\CC)\text{ satisfy }AB\transp=BA\transp \right\}\text{.}
\end{align*}
Note that $P_g(\CC)$ is a complex Lie group of dimension $g(3g+1)/2$.

Let $(X,E)$ be a principally polarized complex torus of dimension $g$. If $b = (  \omega \  \eta )$ is a symplectic-Hodge basis of $(X,E)$, seen as a row vector of order $2g$ with coefficients in $\mathcal{H}^1_{\dR}(X)$, and $p = (A \ B \ ; \ 0 \ (A\transp)^{-1}) \in P_g(\CC)$, then we put
\begin{align*}
  b \cdot p \defeq (\begin{array}{cc}\omega A & \omega B + \eta (A\transp)^{-1}
                   \end{array})\text{.}
\end{align*}
It is easy to check that $b\cdot p$ is a symplectic-Hodge basis of $(X,E)$, and that the above formula defines a free and transitive action of $P_g(\CC)$ on the set of symplectic-Hodge bases of $(X,E)$ (cf. Lemma \ref{torsor}).

\subsubsection{}
For a complex manifold $M$, let us denote by $\Man_{/M}$ the category of complex manifolds endowed with a holomorphic map to $M$.

\begin{lemma}[cf. Corollary \ref{relrepr0}] \label{relrepr}
Let $M$ be a complex manifold and $(X,E)$ be a principally polarized complex torus over $M$ of relative dimension $g$. The functor
\begin{align*}
\mathsf{Man}_{/M}^{\opp} &\to \mathsf{Set}\\
           M' & \mapsto \{\text{symplectic-Hodge bases of }(X,E)\times_M M'\}
\end{align*}
is representable by a principal $P_g(\CC)$-bundle $B(X,E)$ over $M$.
\end{lemma}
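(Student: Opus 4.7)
My plan is to mimic the algebraic proof of Corollary \ref{relrepr0} in the analytic setting, the key input being that principal bundles for complex Lie groups are representable by complex manifolds.

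First, I would define a presheaf $\underline{B}_{(X,E)}$ on $\Man_{/M}$ by sending $f:M' \to M$ to the set of symplectic-Hodge bases of $(X,E)\times_M M'$. That this is in fact a sheaf (for the ordinary topology on $M$) is immediate: a symplectic-Hodge basis is a tuple of global sections of the holomorphic vector bundle $\mathcal{H}^1_{\dR}(X/M)$ satisfying Zariski-local (hence open-local) conditions, and global sections of holomorphic vector bundles form a sheaf. Next, I would define the right action of $P_g(\CC)$ on $\underline{B}_{(X,E)}$ by exactly the formula of Paragraph \ref{principalbundle}, now carried out over each open $M' \subset M$; the block-matrix computation verifying that $b\cdot p$ remains a symplectic-Hodge basis is purely formal and identical to the algebraic case (cf. the action defined before Lemma \ref{torsor}).

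The main work is to show that this action makes $\underline{B}_{(X,E)}$ into a principal $P_g(\CC)$-bundle, i.e., that locally on $M$ symplectic-Hodge bases exist, and that the action is free and transitive on fibers. Freeness and fiberwise transitivity follow from the pointwise statement in Paragraph \ref{principalbundle} (together with perfectness of $\langle \ ,\ \rangle_E$). For local existence, I would argue as in Proposition \ref{exisunic}: $\mathcal{F}^1(X/M)$ is a rank-$g$ subbundle of the rank-$2g$ bundle $\mathcal{H}^1_{\dR}(X/M)$, and it is isotropic for $\langle \ ,\ \rangle_E$ (by the analytic version of Lemma \ref{f1lagrangian}, which one deduces either directly from the description of $\mathcal{F}^1(X/M) \cong (\Lie_M X)^\vee$ via Lemma \ref{identcomp} together with the fact that $E(\gamma,\gamma) = 0$ on $\Lie_M X$, or by dimension count since $\mathcal{F}^1$ has maximal rank among isotropic subbundles). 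Hence $\mathcal{F}^1(X/M)$ is Lagrangian. Working on a sufficiently small open $U \subset M$ over which both $\mathcal{F}^1(X/U)$ and a complementary Lagrangian subbundle trivialize (e.g., a Stein open, or any contractible open), the algebraic construction of Proposition \ref{exisunic}(1)--(2) goes through verbatim in the holomorphic category: one picks a trivialization $(\omega_1,\ldots,\omega_g)$ of $\mathcal{F}^1(X/U)$, lifts the dual basis via the surjection of Lemma \ref{exactseq}, and corrects by a symmetric matrix of holomorphic functions to achieve the symplectic conditions.

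Finally, having established that $\underline{B}_{(X,E)}$ is an étale-locally (in fact ordinary-locally) trivial sheaf of right $P_g(\CC)$-sets, I would invoke the standard fact that any principal bundle for a complex Lie group $G$ over a complex manifold $M$ that is locally trivial in the analytic topology is representable in $\Man_{/M}$ by a complex manifold: one glues the local trivializations $U_i \times P_g(\CC)$ via holomorphic cocycles $U_i\cap U_j \to P_g(\CC)$ using the transitivity and freeness of the action, producing the desired $B(X,E)$. I expect the only mildly subtle step to be verifying the isotropy of $\mathcal{F}^1(X/M)$ for $\langle\ ,\ \rangle_E$ in the analytic setting; once this is in hand, the remainder of the argument is entirely parallel to the scheme-theoretic case.
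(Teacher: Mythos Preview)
Your proposal is correct and closely mirrors the algebraic argument of Lemma~\ref{torsor} and Corollary~\ref{relrepr0}: exhibit $\underline{B}_{(X,E)}$ as a locally trivial $P_g(\CC)$-torsor and then glue. The paper, however, does not proceed this way in the analytic setting. Instead, it constructs $B(X,E)$ directly as a locally closed analytic subspace of the total space of the vector bundle $\mathcal{H}^1_{\dR}(X/M)^{\oplus g}$, cut out by the conditions that $(\alpha_1,\ldots,\alpha_g)$ span a Lagrangian complementary to $\mathcal{F}^1$; the key observation (Proposition~\ref{exisunic}(2)) is that a symplectic-Hodge basis is uniquely determined by its $\eta$-part, so this subspace represents the functor fiberwise, and the $P_g(\CC)$-action of \ref{principalbundle} makes it a principal bundle. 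Your approach has the virtue of being a transparent transcription of the algebraic proof and makes the torsor structure manifest from the start; the paper's approach is more concrete, giving an explicit embedding of $B(X,E)$ into an ambient complex manifold without appealing to an abstract gluing-by-cocycles step, and in particular sidesteps the need to separately verify local existence of symplectic-Hodge bases (this is absorbed into the nonemptiness of fibers, which follows pointwise from \ref{principalbundle}).
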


\begin{proof}
Let us denote by $\pi: V \to M$ the holomorphic vector bundle $\mathcal{H}^1_{\dR}(X/M)^{\oplus g}$ over $M$. For any $p\in M$, the fiber $\pi^{-1}(p)=V_p$ is the vector space of $g$-uples $(\alpha_1,\ldots,\alpha_g)$, with each $\alpha_i \in \mathcal{H}^1_{\dR}(X_p)$. Let $B$ be the locally closed analytic subspace of $V$ consisting of points $v=(\alpha_1,\ldots,\alpha_g)$ of $V$ such that 
\begin{align*}
L \defeq \CC \alpha_1 + \cdots + \CC \alpha_g
\end{align*}
is a Lagrangian subspace of $\mathcal{H}^1_{\dR}(X_{\pi(v)})$ with respect to $\langle \ , \ \rangle_{E_{\pi(v)}}$ satisfying
\begin{align*}
\mathcal{F}^1(X_{\pi(v)}) \oplus L = \mathcal{H}^1_{\dR}(X_{\pi(v)})\text{.}
\end{align*}

By Proposition \ref{exisunic} (2), a symplectic-Hodge basis $(\omega_1,\ldots,\omega_g,\eta_1,\ldots,\eta_g)$ of a principally polarized complex torus is uniquely determined by $(\eta_1,\ldots,\eta_g)$. In particular, for each $p\in M$, the fiber $B_p=B\cap V_p$ may be naturally identified with the set of symplectic-Hodge bases of $(X_p,E_p)$.

Thus, it follows from \ref{principalbundle} that $B$ is a principal $P_g(\CC)$-bundle over $M$; in particular, it is a complex manifold. We also conclude from the above paragraph that $B$ represents the functor in the statement.
\end{proof}

\begin{obs}\label{relreprcomp}
The above construction is compatible, under analytification, with its algebraic counterpart. Namely, let $U$ be a smooth separated $\CC$-scheme of finite type, and $(X,\lambda)$ be a principally polarized abelian scheme over $U$. The complex manifold $B(X^{\an},E_{\lambda})$ over $U^{\an}$ constructed in Lemma \ref{relrepr} is canonically isomorphic to the analytification of the scheme $B(X,\lambda)$ over $U$ constructed in Corollary \ref{relrepr0}.
\end{obs}

Recall that we denote by $(X_g,\lambda_g)$ the universal principally polarized abelian scheme over $B_g$, and by $b_g$ the universal symplectic-Hodge basis of $(X_g,\lambda_g)_{/B_g}$.

\begin{prop} \label{reprsympl}
 The functor $(\mathcal{A}_g^{\an})^{\opp} \longrightarrow \mathsf{Set}$ sending an object $(X,E)_{/M}$ of $\mathcal{A}_g^{\an}$ to the set of symplectic-Hodge bases of $(X,E)_{/M}$ is representable by $(X_{g,\CC}^{\an},E_{\lambda_g})_{/B_{g,\CC}^{\an}}$, with universal symplectic-Hodge basis $b_g$.
 \end{prop}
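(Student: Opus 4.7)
The strategy is to combine Proposition \ref{reprsintsympl}, which represents the functor of integral symplectic bases by $(\mathbf{X}_g,E_g)_{/\mathbf{H}_g}$, with Lemma \ref{relrepr}, which provides relative representability of symplectic-Hodge bases over any fixed principally polarized complex torus as a principal $P_g(\CC)$-bundle, and then to descend the resulting complex manifold by the action of $\Sp_{2g}(\ZZ)$. Concretely, applying Lemma \ref{relrepr} to $(\mathbf{X}_g,E_g)_{/\mathbf{H}_g}$ yields a principal $P_g(\CC)$-bundle $\mathbf{B}_g \to \mathbf{H}_g$ which represents, over $\Man_{/\mathbf{H}_g}$, the functor of symplectic-Hodge bases of the pullback of $(\mathbf{X}_g,E_g)$. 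Combined with Proposition \ref{reprsintsympl}, this identifies $\mathbf{B}_g$ as the complex manifold representing the functor on $(\mathcal{A}_g^{\an})^{\opp}$ sending $(X,E)_{/M}$ to the set of pairs $(\beta,b)$ with $\beta$ an integral symplectic basis and $b$ a symplectic-Hodge basis.

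The next step is to show that the action of $\Sp_{2g}(\ZZ)$ on $(\mathbf{X}_g,E_g)_{/\mathbf{H}_g}$ (Example \ref{exactionsp}, Remark \ref{actionsp}) lifts to a free and properly discontinuous action on $\mathbf{B}_g$, where the lift acts on the integral symplectic basis component only, leaving the symplectic-Hodge basis component untouched (since $\varphi_\gamma$ transports symplectic-Hodge structures compatibly). Proper discontinuity is inherited from $\mathbf{H}_g$. Freeness is the key point: by the argument of Lemma \ref{rig0} (rigidity in characteristic zero), no non-trivial automorphism of a principally polarized complex torus can fix a symplectic-Hodge basis, so any $\gamma \neq 1$ in $\Sp_{2g}(\ZZ)$ fixing a point of $\mathbf{B}_g$ would yield such an automorphism of $(\mathbf{X}_{g,\tau},E_{g,\tau})$ for some $\tau \in \mathbf{H}_g$. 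Consequently, $\mathbf{B}_g/\Sp_{2g}(\ZZ)$ is a complex manifold, and by Lemma \ref{descentlemma} together with its natural extension for symplectic-Hodge bases, the pullback of $(\mathbf{X}_g,E_g)$ to $\mathbf{B}_g$ equipped with its tautological symplectic-Hodge basis descends to a principally polarized complex torus endowed with a symplectic-Hodge basis over $\mathbf{B}_g/\Sp_{2g}(\ZZ)$.

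A local argument then shows that $\mathbf{B}_g/\Sp_{2g}(\ZZ)$ endowed with this descended data represents the functor of symplectic-Hodge bases: given $(X,E,b)$ over $M$, one locally chooses an integral symplectic basis $\beta$ and obtains via $\mathbf{B}_g$ a holomorphic map to $\mathbf{B}_g$, whose composition with the quotient $\mathbf{B}_g \to \mathbf{B}_g/\Sp_{2g}(\ZZ)$ is independent of the choice of $\beta$ (two choices differ by some $\gamma \in \Sp_{2g}(\ZZ)$), and hence glues globally, with uniqueness following from the uniqueness in Proposition \ref{reprsintsympl} and Lemma \ref{relrepr}. It remains to canonically identify $\mathbf{B}_g/\Sp_{2g}(\ZZ)$ with $B_{g,\CC}^{\an}$: since $B_{g,\CC}$ represents $\mathcal{B}_{g,\CC}$ by Theorem \ref{repr}, its analytification also represents the functor of symplectic-Hodge bases on objects of $\mathcal{A}_g^{\an}$ arising from smooth $\CC$-varieties (by Remark \ref{relreprcomp}), and the algebraization of complex tori (Remark \ref{algebraization}) combined with the universal property of $\mathbf{B}_g/\Sp_{2g}(\ZZ)$ furnishes a canonical biholomorphism sending $b_g$ to the descended universal symplectic-Hodge basis. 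The main technical obstacle lies precisely in this final identification; a useful intermediate device is to factor through a congruence subgroup $\Gamma(n) \subset \Sp_{2g}(\ZZ)$ for some $n \geq 3$, where Proposition \ref{levelstruc} and the existence of $A_{g,n}$ as a scheme make the analytic-algebraic comparison at the level of $B_{g,n}$ immediate, and one then quotients by $\Sp_{2g}(\ZZ)/\Gamma(n)$ on both sides to conclude.
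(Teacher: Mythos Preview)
Your proposal is essentially the same as the paper's proof: construct $\mathbf{B}_g = B(\mathbf{X}_g,E_g)$ via Lemma \ref{relrepr}, interpret it as the moduli of pairs (integral symplectic basis, symplectic-Hodge basis) using Proposition \ref{reprsintsympl}, lift the $\Sp_{2g}(\ZZ)$-action, use rigidity (Lemma \ref{rig0}) for freeness, descend via Lemma \ref{descentlemma}, and verify representability by the local gluing argument you describe.

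The only divergence is in the final identification of $\Sp_{2g}(\ZZ)\backslash\mathbf{B}_g$ with $B_{g,\CC}^{\an}$. You propose passing through a level-$n$ quotient and invoking Proposition \ref{levelstruc}; the paper instead argues directly: the universal property of the analytic moduli space yields a holomorphic map $f:B_{g,\CC}^{\an}\to \Sp_{2g}(\ZZ)\backslash\mathbf{B}_g$, algebraization of principally polarized complex tori over a point (Remark \ref{algebraization}) shows $f$ is bijective, and since both sides are complex manifolds of the same dimension, $f$ is automatically a biholomorphism (\cite{GH78} p.~19). This is shorter and avoids the extra descent by $\Sp_{2g}(\ZZ)/\Gamma(n)$; your route would work too but is unnecessarily elaborate here.
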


\begin{proof}
By Lemma \ref{relrepr}, there exists a complex manifold $\mathbf{B}_g\defeq B(\mathbf{X}_g,E_g)$ over $\mathbf{H}_g$ representing the functor
\begin{align*}
\Man_{/\mathbf{H}_g}^{\opp} &\to \Sets\\
                    M &\mapsto \{\text{symplectic-Hodge bases of }(\mathbf{X}_g,E_{g})\times_{\mathbf{H}_g}M\}
\end{align*}
Let $(\mathbf{X}_{\mathbf{B}_g}, E_{\mathbf{B}_g}) = (\mathbf{X}_g,E_g)\times_{\mathbf{H}_g}\mathbf{B}_g$. Note that the principally polarized complex torus $(\mathbf{X}_{\mathbf{B}_g}, E_{\mathbf{B}_g})$ over $\mathbf{B}_g$ is equipped with a universal symplectic-Hodge basis $b_{\mathbf{B}_g}$, and with an integral symplectic basis $\beta_{\mathbf{B}_g}$ obtained by pullback from $\beta_g$ via the canonical morphism $(\mathbf{X}_{\mathbf{B}_g}, E_{\mathbf{B}_g})_{/\mathbf{B}_g} \to (\mathbf{X}_g,E_g)_{/\mathbf{H}_g}$ in $\mathcal{A}_g^{\an}$.

We now remark that $(\mathbf{X}_{\mathbf{B}_g}, E_{\mathbf{B}_g})_{/\mathbf{B}_g}$ represents the functor $(\mathcal{A}_g^{\an})^{\opp} \to \Sets$ sending an object $(X,E)_{/M}$ of $\mathcal{A}_g^{\an}$ to the Cartesian product of the set of symplectic-Hodge bases of $(X,E)_{/M}$ with the set of integral symplectic bases of $(X,E)_{/M}$, with $(b_{\mathbf{B}_g},\beta_{\mathbf{B}_g})$ serving as a universal object. Thus, for any element $\gamma\in \Sp_{2g}(\ZZ)$, there exists a unique automorphism ${\Psi_{\gamma}}_{/\psi_{\gamma}}$ of $(\mathbf{X}_{\mathbf{B}_g}, E_{\mathbf{B}_g})_{/\mathbf{B}_g}$ in $\mathcal{A}_g^{\an}$ such that $\Psi_{\gamma}^*b_{\mathbf{B}_g} =b_{\mathbf{B}_g}$ and $\Psi_{\gamma}^*\beta_{\mathbf{B}_g}=\gamma\cdot \beta_{\mathbf{B}_g}$ (where the left action of $\Sp_{2g}(\ZZ)$ on integral symplectic bases is defined as in Remark \ref{actionsp}). 

As the functor $\underline{B}_g:\mathcal{A}_g^{\opp} \to \Sets$ is rigid over $\CC$ (Lemma \ref{rig0}), we see that
\begin{enumerate}
  \item $\gamma \mapsto {\Psi_{\gamma}}_{/\psi_{\gamma}}$ is in fact an action of $\Sp_{2g}(\ZZ)$ on $(\mathbf{X}_{\mathbf{B}_g}, E_{\mathbf{B}_g})_{/\mathbf{B}_g}$ in the category $\mathcal{A}_g^{\an}$, and
  \item the action $\gamma \mapsto \psi_{\gamma}$ of $\Sp_{2g}(\ZZ)$ on the complex manifold $\mathbf{B}_g$ is free; it is also proper since it lifts the action on $\mathbf{H}_g$.
\end{enumerate} 

Let $M$ be the quotient manifold $\Sp_{2g}(\ZZ)\backslash \mathbf{B}_g$ and descend $(\mathbf{X}_{\mathbf{B}_g},E_{\mathbf{B}_g})$ to a principally polarized complex torus $(X,E)$ over $M$. Since $b_{\mathbf{B}_g}$ is invariant under the action of $\Sp_{2g}(\ZZ)$, we can descend it to a symplectic-Hodge basis $b$ of $(X,E)_{/M}$. As in the proof of Proposition \ref{levelstruc}, we may check that $(X,E)_{/M}$ represents the functor in the statement, with $b$ serving as universal symplectic-Hodge basis. 

To finish the proof, we must prove that $(X,E)_{/M}$ is isomorphic to $(X_{g,\CC}^{\an},E_{\lambda_g})_{/B_{g,\CC}^{\an}}$ in $\mathcal{A}_g^{\an}$. For this, it is sufficient to remark that, by the universal property of $(X,E)_{/M}$, there exists a unique morphism in $\mathcal{A}_g^{\an}$
$$
\varphi_{/f}:(X_{g,\CC}^{\an},E_{\lambda_g})_{/B_{g,\CC}^{\an}} \to (X,E)_{/M}
$$
satisfying $\varphi^*b = b_g$, and that the holomorphic map
\begin{align*}
f: B_g(\CC)= B_{g,\CC}^{\an} \to M
\end{align*}
is bijective since principally polarized complex tori (over a point) are algebraizable (cf. Remark \ref{algebraization}); then $f$ is necessarily a biholomorphism (\cite{GH78} p. 19).
\end{proof}

% \begin{obs} \label{remaction}
% Let $\mathbf{B}_g$ be as in the above proof. For later reference, let us make more explicit the left action of $\Sp_{2g}(\ZZ)$ on $\mathbf{B}_g$. A point $p$ of $\mathbf{B}_g$ lying over $\tau \in \mathbf{H}_g$ is given by a symplectic-Hodge basis $b$ of $(\mathbf{X}_{g,\tau},E_{g,\tau})$. Then, for any $\gamma\in \Sp_{2g}(\ZZ)$, $\gamma\cdot p$ is the point in $\mathbf{B}_g$ lying over $\gamma \cdot \tau$ given by the symplectic-Hodge basis $(F_{\gamma,\tau}^{-1})^*b$ of $(\mathbf{X}_{g,\gamma\cdot \tau},E_{g,\gamma\cdot \tau})$, where $F_{\gamma,\tau}: \mathbf{X}_{g,\tau} \to \mathbf{X}_{g,\gamma\cdot \tau}$ is the isomorphism defined in Example \ref{exactionsp}.
% \end{obs}

\subsection{The Hilbert-Blumenthal case}

In this paragraph we state without proof the $R$-multiplication counterparts of the above results.

\subsubsection{}

Let $M$ be a complex manifold, $(X,E,m)_{/M}$ be a principally polarized complex torus with $R$-multiplication over $M$, and denote by $\pi:X \to M$ the structural morphism.

Consider the local system of abelian groups $(D^{-1}\oplus R)_M \defeq \ZZ_M \tensor (D^{-1}\oplus R)$ over $M$, endowed with its natural $R$-multiplication, and with the standard $D^{-1}$-valued $R$-bilinear symplectic form $\Phi$.

\begin{defi}
 An \emph{integral symplectic basis} of $(X,E,m)_{/M}$ is an $R$-linear isomorphism 
 $$
 \beta : ((D^{-1}\oplus R)_M,\Phi) \stackrel{\sim}{\to} (R_1\pi_*\ZZ_X,\Phi_E)\text{.}
 $$ 
\end{defi}

Equivalently, we may think of an integral symplectic basis as a couple $\beta=(\gamma,\delta)$, where $\gamma$ (resp. $\delta$) is a global section of $R_1\pi_*\ZZ_X\tensor D$ (resp. $R_1\pi_*\ZZ_X$), satisfying $\Phi_E(\gamma,\delta)=1$. Here, we see $\Phi_E$ as an $R$-bilinear map
$$
\Phi_E : (R_1\pi_*\ZZ_X\tensor D) \times R_1\pi_*\ZZ_X \to R_M\text{.}
$$

\begin{ex}
 The principally polarized complex torus with $R$-multiplication $(\mathbf{X}_F,E_F,m_F)_{/\mathbf{H}^g}$ constructed in Example \ref{ex-ppctrm} is equipped with a canonical integral symplectic basis $\beta_F$ given by the defining isomorphism $(D^{-1}\oplus R)_{\mathbf{H}^g} \stackrel{\sim}{\to} L$ and the natural identification $L \cong R_1{\bfp_F}_{*}\ZZ_{\mathbf{X}_F}$. 
\end{ex}

We then have the analogous of Proposition \ref{reprsintsympl}.

\begin{prop}\label{reprsintsymplhb}
  The functor $(\mathcal{A}_F^{\an})^{\opp} \longrightarrow \mathsf{Set}$ sending an object $(X,E,m)_{/M}$ of $\mathcal{A}_F^{\an}$ to the set of integral symplectic bases of $(X,E,m)_{/M}$ is representable by $(\mathbf{X}_F,E_F,m_F)_{/\mathbf{H}^g}$, with universal integral symplectic basis $\beta_F$. \hfill $\blacksquare$
\end{prop}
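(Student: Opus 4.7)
My plan is to adapt the proof of Proposition \ref{reprsintsympl} to the $R$-multiplication setting, where the extra $R$-linearity forces the parameter matrix to split diagonally under the embeddings $\sigma_1,\ldots,\sigma_g:F\to \CC$, so that $\mathbf{H}_g$ gets replaced by $\mathbf{H}^g$.

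Given an object $(X,E,m)_{/M}$ of $\mathcal{A}_F^{\an}$ with structural morphism $\pi:X\to M$ and an integral symplectic basis $\beta=(\gamma,\delta)$, I first want to show that $\gamma$ is an $\mathcal{O}_M\tensor R$-generator of $\Lie_M X \tensor_R D$. Since $\beta^*\Phi_E = \Phi$ vanishes on the Lagrangian $D^{-1}\oplus 0$, the $R$-submodule $\beta((D^{-1}\oplus 0)_M)\subset R_1\pi_*\ZZ_X$ is isotropic for $\Phi_E$, hence for $E=\Tr\Phi_E$, and its real span $W\subset \Lie_M X$ has real rank $g$. The positivity of the Hermitian form $H$ on $\Lie_M X$ with $\Im H = E$ then forces $W\cap iW = 0$, whence $\Lie_M X = W\oplus iW$ as $C^\infty$ real bundles, and the generating property of $\gamma$ (viewed in $\Lie_M X \tensor_R D$) follows. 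Writing $\delta = \tau\gamma$ for a unique holomorphic section $\tau$ of $\mathcal{O}_M\tensor_\ZZ D^{-1}$ and identifying $\CC\tensor_\ZZ D^{-1}\cong \CC\tensor_\QQ F \stackrel{\sim}{\to} \CC^g$ via the $\sigma_j$'s then gives a holomorphic map $\tau=(\tau_1,\ldots,\tau_g):M\to \CC^g$.

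The heart of the proof is to verify $\Im \tau_j > 0$ for every $j$. Since $R$ acts $\CC$-linearly on $\Lie_M X$ (as the differential of holomorphic endomorphisms) and $F$ is totally real, the compatibility $H(m(r)v,w)=H(v,m(r)w)$ forces the $\sigma_j$-isotypic decomposition $\Lie_M X = \bigoplus_{j=1}^gV_j$ into holomorphic line bundles to be $H$-orthogonal, giving $H = \bigoplus H_j$ with each $H_j$ positive. On $V_j$ one has $\gamma_j \neq 0$ and $\delta_j = \tau_j\gamma_j$, and a direct computation yields $E(\gamma_j,\tau_j\gamma_j) = \Im\tau_j\cdot H_j(\gamma_j,\gamma_j)$. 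Combining this with the normalization $\Phi_E(\gamma,r\delta)=r$ for every $r\in R$ (so $E(\gamma,r\delta)=\Tr(r)$) and decomposing componentwise gives
$$
\sum_{j=1}^g \sigma_j(r)\, \Im\tau_j\, H_j(\gamma_j,\gamma_j) = \sum_{j=1}^g \sigma_j(r)
$$
for every $r\in R$; the $\RR$-linear independence of the $\sigma_j$ on $R\otimes_\ZZ\RR$ then forces $\Im\tau_j\cdot H_j(\gamma_j,\gamma_j) = 1$, and in particular $\Im\tau_j > 0$. Once $\tau:M\to\mathbf{H}^g$ is in hand, writing $X$ as the quotient $\Lie_M X / R_1\pi_*\ZZ_X$ and unwinding the identifications produces the unique morphism $\varphi_{/\tau}$ in $\mathcal{A}_F^{\an}$ with $\varphi^*\beta_F=\beta$. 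The only genuinely new ingredient over the Siegel case is the bookkeeping around the $D$-twist on $\gamma$ and the $\sigma_j$-isotypic decomposition, which is made canonical by the trace duality $\Hom_R(D^{-1},R)\cong D$ pervasive in Paragraph \ref{subsec-shbrm}.
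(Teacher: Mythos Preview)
Your proof is correct and follows exactly the approach the paper intends: it is the natural adaptation of the proof of Proposition \ref{reprsintsympl} to the $R$-multiplication setting, and the paper itself gives no proof (the $\blacksquare$ indicates the argument is omitted as analogous to the Siegel case). The key new ingredient you correctly identify---the $\sigma_j$-isotypic decomposition $\Lie_M X = \bigoplus_j V_j$, its $H$-orthogonality (from self-adjointness of $m(r)$ and reality of $\sigma_j(r)$), and the componentwise extraction of $\Im\tau_j>0$ via the trace relation---is precisely what replaces the single matrix computation in the Siegel proof.

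Two minor points of notation you may wish to tighten: first, when you write ``$\gamma_j\neq 0$ and $\delta_j=\tau_j\gamma_j$'', the symbol $\gamma_j$ is being used for an element of $V_j$ rather than a map, so it is worth fixing a generator (say the image of $1$ under the extension of $\gamma$ to $\mathcal{O}_M\tensor R\cong \mathcal{O}_M\tensor D^{-1}\otimes_R D$) to make this literal. Second, when you write ``$\Phi_E(\gamma,r\delta)=r$'' you are implicitly evaluating $\gamma$ and $\delta$ at generators; spelling this out as $\Phi_E(\gamma(x),\delta(y))=xy\in D^{-1}$ for $x\in D^{-1}$, $y\in R$, and then taking traces, makes the passage to $\sum_j\sigma_j(r)\Im\tau_j\,H_j(\gamma_j,\gamma_j)=\sum_j\sigma_j(r)$ transparent. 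Neither affects correctness.
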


\begin{obs}\label{rem-leftactrm}
  As in Remark \ref{actionsp}, we define a \emph{left} action of $\SL(D^{-1}\oplus R)$ (cf. Example \ref{ex-actionsl}) on the functor $(\mathcal{A}_F^{\an})^{\opp} \longrightarrow \mathsf{Set}$ considered in the above proposition: if $\gamma = (a \ b \ ; \ c\ d)\in \SL(D^{-1}\oplus R)$, and $\beta = (\ \gamma_1 \ \ \delta_1 \ )$ is an integral symplectic basis, then
 $$
 \gamma \cdot \beta \defeq \left(\begin{array}{c c}
  \gamma_1 & \delta_1
 \end{array}\right) \cdot \left(\begin{array}{cc}
                                 d & b \\
                                 c & a
                                \end{array}\right) = \left(\begin{array}{c c}
  d\gamma + c\delta & b\gamma + a \delta
 \end{array}\right)\text{.}
 $$
 The morphism
 $$
 {\varphi_{\gamma}}_{/\gamma} : (\mathbf{X}_{F},E_F,m_F)_{/\mathbf{H}^g}\to (\mathbf{X}_{F},E_F,m_F)_{/\mathbf{H}^g}
 $$
 defined in Example \ref{ex-actionsl} is the unique morphism in $\mathcal{A}_F^{\an}$ satisfying
 $$
 \varphi_{\gamma}^*\beta_F = \gamma\cdot \beta_F\text{.}
 $$
\end{obs}

\begin{obs}\label{rem-relhalfspace}
  Let $t: (\ZZ^{2g}, \langle \ , \ \rangle_{\text{std}}) \stackrel{\sim}{\to} (R\oplus D^{-1}, \Tr \Psi)$ be the trivialization of the symplectic $\ZZ$-module $(R\oplus D^{-1}, \Tr \Psi)$ as defined in Remark \ref{rem-relationbfbg}, so that $(t^{\vee})^{-1}$ is a trivialization of $(D^{-1}\oplus R, \Tr \Phi)$. Then we can use Propositions \ref{reprsintsympl} and \ref{reprsintsymplhb} to see that $t$ induces a holomorphic map
  $$
h_t: \mathbf{H}^g \to \mathbf{H}_g
$$
given, under the moduli theoretic interpretation, by
$$
(X,E,m,\beta)\mapsto (X,E,\beta \circ (t^{\vee})^{-1})\text{.}
$$
It follows from the construction in the proof of Proposition \ref{reprsintsympl} that $h_t$ is given in coordinates by
$$
(\tau_1,\ldots,\tau_g)\mapsto (\sigma_i(x_j))_{1\le i,j\le g}^{-1}\text{diag}(\tau_1,\ldots,\tau_g)(\sigma_i(r_j))_{1\le i,j\le g}
$$
Note that $h_t$ actually lifts to a morphism in $\mathcal{A}_g^{\an}$
$$
 (\mathbf{X}_F,E_F)_{/\mathbf{H}^g} \to (\mathbf{X}_g,E_g)_{/\mathbf{H}_g}
 $$
 given on the fiber of $\tau \in \mathbf{H}^g$ by
 \begin{align*}
   \mathbf{X}_{F,\tau}&\to \mathbf{X}_{g,h_t(\tau)}\\\
                 z & \mapsto (\sigma_i(x_j))_{1\le i,j\le g}^{-1}\cdot z\text{.}
 \end{align*}
 Finally, let us remark that, by definition of $r_i$ and $x_i$, we have
 $$
(\sigma_i(x_j))_{1\le i,j\le g}^{-1} = (\sigma_j(r_i))_{1\le i,j\le g} = (\sigma_i(r_j))\transp_{1\le i,j\le g}\text{.}
 $$
\end{obs}

\subsubsection{}

Let $n\ge 1$ be an integer, and $(X,\lambda,m)_{/U}$ be a principally polarized abelian scheme with $R$-multiplication. Clearly, the action of $R$ on $X$ preserves the $n$-torsion subscheme $X[n]$. If $U$ is a $\ZZ[1/n,\zeta_n]$-scheme,  then  there exists a perfect alternating $R$-bilinear pairing (see Remark \ref{rem-dualityrelation}) 
$$
\epsilon^{\lambda}_n : X[n]\times X[n] \to (D^{-1}/nD^{-1})_U
$$
such that
$$
\Tr \epsilon^{\lambda}_n = e^{\lambda}_n\text{.}
$$

If $n\ge 3$, then there exists a fine moduli scheme $A_{F,n}$ over $\ZZ[1/n,\zeta_n]$ classifying principally polarized abelian schemes with $R$-multiplication $(X,\lambda,m)_{/U}$ equipped with an $R$-trivialization of  $(X[n],\epsilon^{\lambda}_n)$, i.e., an $R$-isomorphism
$$
(((D^{-1}/nD^{-1})\oplus (R/nR))_U,\Phi_n) \stackrel{\sim}{\to} (X[n],\epsilon^{\lambda}_n)\text{,}
$$
where $\Phi_n$ denotes the standard symplectic form modulo $n$. We denote the universal principally polarized abelian scheme with $R$-multiplication over $A_{F,n}$ by $(X_{F,n},\lambda_{F,n}, m_{F,n})$, and its universal symplectic $R$-trivialization by $\alpha_{F,n}$.

In the analytic category $\mathcal{A}_F^{\an}$, we may consider the notion of an ``integral symplectic basis modulo $n$" of a principally polarized complex torus with $R$-multiplication $(X,E,m)_{/M}$; namely, an $R$-linear isomorphism
$$
(((D^{-1}/nD^{-1})\oplus (R/nR))_U,\Phi_n) \stackrel{\sim}{\to} (R_1\pi_*\ZZ_X/nR_1\pi_*\ZZ_X,\Phi_{E,n})\text{,}
$$
where $\Phi_{E,n}$ denotes the reduction modulo $n$ of $R$-bilinear symplectic form $\Phi_E$. This notion coincides with its algebraic counterpart, since for a principally polarized abelian scheme with $R$-multiplication $(X,\lambda,m)_{/U}$, with $U$ a smooth separated $\CC$-scheme of finite type, the $R$-symplectic modules $(R_1p^{\an}_*\ZZ_{X^{\an}}/nR_1p^{\an}_*\ZZ_{X^{\an}},\Phi_{E_{\lambda},n})$ and $(X^{\an}[n], \epsilon_n^{\lambda})$ are naturally isomorphic.

For any integer $n\ge 1$, let $\Gamma_F(n)$ be the kernel of the ``reduction modulo $n$" map $\SL(D^{-1}\oplus R) \to \SL((D^{-1}/nD^{-1})\oplus (R/nR))$. If $n\ge 3$, then $\Gamma_F(n)$ acts properly and freely on $\mathbf{H}^g$.

\begin{prop}[cf. Proposition \ref{levelstruc}]\label{prop-modulilevelrm}
 For any integer $n\ge 3$, the complex manifold $A_{F,n}(\CC)=A_{F,n,\CC}^{\an}$ is canonically biholomorphic to the quotient of $\mathbf{H}^g$ by $\Gamma_F(n)$, and the functor $(\mathcal{A}_F^{\an})^{\opp} \to \Sets$ sending an object $(X,E,m)_{/M}$ of $\mathcal{A}_F^{\an}$ to the set of integral symplectic bases modulo $n$ of $(X,E,m)_{/M}$ is representable by $(X_{F,n,\CC}^{\an}, E_{F,n},m_{F,n})_{/A_{F,n,\CC}^{\an}}$. \hfill $\blacksquare$
\end{prop}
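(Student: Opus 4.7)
The plan is to follow verbatim the argument of Proposition \ref{levelstruc}, replacing $(\mathbf{X}_g,E_g)$, $\mathbf{H}_g$, $\Sp_{2g}(\ZZ)$, and integral symplectic bases by their Hilbert–Blumenthal counterparts $(\mathbf{X}_F,E_F,m_F)$, $\mathbf{H}^g$, $\SL(D^{-1}\oplus R)$, and integral symplectic bases in the sense of Paragraph \ref{ssec-ppctrm}. First, since it is recalled that $\Gamma_F(n)$ acts freely and properly on $\mathbf{H}^g$ for $n\ge 3$, the quotient $\mathbf{A}_{F,n}\defeq \Gamma_F(n)\backslash \mathbf{H}^g$ is a complex manifold, and the map $\mathbf{H}^g\to \mathbf{A}_{F,n}$ is a Galois covering with group $\Gamma_F(n)$. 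By the action of $\SL(D^{-1}\oplus R)$ on $(\mathbf{X}_F,E_F,m_F)_{/\mathbf{H}^g}$ constructed in Example \ref{ex-actionsl} (combined with Remark \ref{rem-leftactrm}), the subgroup $\Gamma_F(n)$ acts on $(\mathbf{X}_F,E_F,m_F)_{/\mathbf{H}^g}$ in the category $\mathcal{A}_F^{\an}$, so by Lemma \ref{descentlemma} and Remark \ref{rem-descentlemma} this descends to a principally polarized complex torus with $R$-multiplication $(\mathbf{X}_{F,n},E_{F,n},m_{F,n})$ over $\mathbf{A}_{F,n}$.

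Next, let $\overline{\beta}_F$ denote the integral symplectic basis modulo $n$ obtained from $\beta_F$ by reducing mod $n$. By construction of the $\Gamma_F(n)$-action through the formula of Remark \ref{rem-leftactrm}, $\overline{\beta}_F$ is $\Gamma_F(n)$-invariant, hence descends to a canonical integral symplectic basis modulo $n$, $\beta_{F,n}$, of $(\mathbf{X}_{F,n},E_{F,n},m_{F,n})_{/\mathbf{A}_{F,n}}$. To see that the triple $((\mathbf{X}_{F,n},E_{F,n},m_{F,n})_{/\mathbf{A}_{F,n}},\beta_{F,n})$ is universal, let $(X,E,m,\beta)_{/M}$ be an object of $\mathcal{A}_F^{\an}$ equipped with an integral symplectic basis modulo $n$. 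The key technical input is the analog of Remark \ref{localift}: the reduction map
\[
\SL(D^{-1}\oplus R) \longrightarrow \SL\bigl((D^{-1}/nD^{-1})\oplus(R/nR)\bigr)
\]
is surjective (this follows from strong approximation for $\SL_2$ applied to the inner form ${\rm Res}_{R/\ZZ}{\rm SL}_2$, but can also be checked directly using elementary matrices), so locally on $M$ we may lift $\beta$ to an honest integral symplectic basis $\beta^i$ on an open cover $\{U^i\}$. Applying Proposition \ref{reprsintsymplhb} yields morphisms $\varphi^i_{/f^i}:(X,E,m)_{/U^i}\to (\mathbf{X}_F,E_F,m_F)_{/\mathbf{H}^g}$ with $(\varphi^i)^*\beta_F=\beta^i$; composing with $\mathbf{H}^g\to \mathbf{A}_{F,n}$ and observing that the ambiguity in the lift is exactly the $\Gamma_F(n)$-action, these glue to a unique morphism $\varphi_{/f}:(X,E,m)_{/M}\to(\mathbf{X}_{F,n},E_{F,n},m_{F,n})_{/\mathbf{A}_{F,n}}$ with $\varphi^*\beta_{F,n}=\beta$.

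Finally, to identify $\mathbf{A}_{F,n}$ with $A_{F,n}(\CC)$, one uses the compatibility between integral symplectic bases modulo $n$ and symplectic $R$-trivializations of $n$-torsion (via the isomorphism $R_1\pi_*^{\an}\ZZ/nR_1\pi_*^{\an}\ZZ\cong X^{\an}[n]$ as symplectic $R$-modules, which is analogous to the Siegel case discussed in Paragraph \ref{level} and follows from $\Tr\epsilon_n^{\lambda}=e_n^{\lambda}$). The universal property of $(X_{F,n},\lambda_{F,n},m_{F,n},\alpha_{F,n})$ combined with the universal property of $(\mathbf{X}_{F,n},E_{F,n},m_{F,n},\beta_{F,n})$ produces a holomorphic map $A_{F,n}(\CC)\to \mathbf{A}_{F,n}$; this map is bijective because principally polarized complex tori with $R$-multiplication and full level $n$ structure are algebraizable (Remark \ref{algebraization} in the $R$-multiplication version), and it is a biholomorphism since both sides are complex manifolds of the same dimension $g$.

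The main obstacle, both conceptually and in execution, is the local lifting step: one must know that every integral symplectic basis modulo $n$ of a principally polarized complex torus with $R$-multiplication lifts locally to an integral symplectic basis. Unlike the Siegel case, where surjectivity of $\Sp_{2g}(\ZZ)\to \Sp_{2g}(\ZZ/n\ZZ)$ is a classical elementary fact, here one needs surjectivity of the reduction map for the group $\SL(D^{-1}\oplus R)$, which requires either invoking strong approximation for the simply connected semisimple group ${\rm Res}_{R/\ZZ}\SL_2$ or performing an explicit elementary-matrix argument adapted to the $R$-lattice $D^{-1}\oplus R$. Once this lifting is secured, the rest of the argument is a mechanical transcription of the proof of Proposition \ref{levelstruc}.
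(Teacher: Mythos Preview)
Your proposal is correct and follows precisely the approach the paper intends: the proposition is stated without proof (the paper explicitly says ``In this paragraph we state without proof the $R$-multiplication counterparts of the above results''), and your argument is the direct transcription of the proof of Proposition~\ref{levelstruc} to the Hilbert--Blumenthal setting. You have also correctly isolated the one genuinely new ingredient, namely the surjectivity of the reduction map $\SL(D^{-1}\oplus R)\to \SL((D^{-1}/nD^{-1})\oplus (R/nR))$ needed for the analog of Remark~\ref{localift}.
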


\subsubsection{}

Finally, we define \emph{symplectic-Hodge bases} of principally polarized complex tori as in Paragraph \ref{subsec-shbrm} (cf. \ref{defi-shbppct}).

Let $(X_F,\lambda_F,m_F)$ be the universal principally polarized abelian scheme with $R$-multiplication over $B_F$, and let $b_F$ be its universal symplectic-Hodge basis.

\begin{prop}
 The functor $(\mathcal{A}_F^{\an})^{\opp}\to \Sets$ sending an object $(X,E,m)_{/M}$ of $\mathcal{A}_F^{\an}$ to the set of symplectic-Hodge bases of $(X,E,m)_{/M}$ is representable by $(X_{F,\CC}^{\an}, E_{\lambda_F},m^{\an}_{F})_{/B_{F,\CC}^{\an}}$, with universal symplectic-Hodge basis $b_F$. \hfill $\blacksquare$
\end{prop}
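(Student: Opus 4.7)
The plan is to mimic the proof of Proposition \ref{reprsympl}, replacing the Siegel parabolic $P_g$ and the symplectic group $\Sp_{2g}(\ZZ)$ by their Hilbert-Blumenthal analogues $P_F(\CC)$ and $\SL(D^{-1}\oplus R)$. First, I would establish an $R$-multiplication version of Lemma \ref{relrepr}: given a complex manifold $M$ and a principally polarized complex torus with $R$-multiplication $(X,E,m)_{/M}$, the functor on $\Man_{/M}$ classifying symplectic-Hodge bases of $(X,E,m)\times_M M'$ is representable by a principal $P_F(\CC)$-bundle $B(X,E,m)$ over $M$. This follows by the same argument as in Lemma \ref{relrepr}, since a symplectic-Hodge basis $(\omega,\eta)$ is determined by its component $\eta$ in $\mathcal{H}^1_{\dR}(X/M)\tensor_R D$ whose image in the quotient by $\mathcal{F}^1(X/M)\tensor_R D$ generates it as an $\mathcal{O}_M\tensor R$-module (Remark \ref{remshb2}), and one checks that the action of $P_F(\CC)$ on symplectic-Hodge bases (cf. Paragraph \ref{subsec-proofsmothdmstackrm}) is free and transitive on each fiber. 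As in Remark \ref{relreprcomp}, this construction is compatible under analytification with the algebraic construction of $B(X,\lambda,m)$.

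Next, I would apply this to $(\mathbf{X}_F,E_F,m_F)_{/\mathbf{H}^g}$ to obtain a complex manifold $\mathbf{B}_F \defeq B(\mathbf{X}_F,E_F,m_F)$ over $\mathbf{H}^g$, carrying the universal symplectic-Hodge basis $b_{\mathbf{B}_F}$ and the pullback integral symplectic basis $\beta_{\mathbf{B}_F}$ of $(\mathbf{X}_{\mathbf{B}_F},E_{\mathbf{B}_F},m_{\mathbf{B}_F}) \defeq (\mathbf{X}_F,E_F,m_F)\times_{\mathbf{H}^g}\mathbf{B}_F$. Proposition \ref{reprsintsymplhb}, together with this universal property, shows that $(\mathbf{X}_{\mathbf{B}_F},E_{\mathbf{B}_F},m_{\mathbf{B}_F})_{/\mathbf{B}_F}$ represents the product functor assigning to $(X,E,m)_{/M}$ the set of couples (symplectic-Hodge basis, integral symplectic basis). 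Consequently, for each $\gamma\in \SL(D^{-1}\oplus R)$, there is a unique automorphism $\Psi_{\gamma/\psi_\gamma}$ of $(\mathbf{X}_{\mathbf{B}_F},E_{\mathbf{B}_F},m_{\mathbf{B}_F})_{/\mathbf{B}_F}$ in $\mathcal{A}_F^{\an}$ fixing $b_{\mathbf{B}_F}$ and sending $\beta_{\mathbf{B}_F}$ to $\gamma\cdot\beta_{\mathbf{B}_F}$ (Remark \ref{rem-leftactrm}).

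The rigidity of $\underline{B}_F$ over $\CC$ --- the Hilbert-Blumenthal analogue of Lemma \ref{rig0}, proved verbatim by considering the induced action on $H^0(X,\Omega^1_{X/\CC})$ --- then guarantees that $\gamma\mapsto \Psi_{\gamma/\psi_\gamma}$ is an action of $\SL(D^{-1}\oplus R)$ on $(\mathbf{X}_{\mathbf{B}_F},E_{\mathbf{B}_F},m_{\mathbf{B}_F})_{/\mathbf{B}_F}$ in $\mathcal{A}_F^{\an}$, and that the induced action on $\mathbf{B}_F$ is free. It is also proper, since it lifts the properly discontinuous action of $\SL(D^{-1}\oplus R)$ on $\mathbf{H}^g$. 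Applying Lemma \ref{descentlemma} and Remark \ref{rem-descentlemma}, I descend $(\mathbf{X}_{\mathbf{B}_F},E_{\mathbf{B}_F},m_{\mathbf{B}_F})$ and $b_{\mathbf{B}_F}$ to a principally polarized complex torus with $R$-multiplication $(X,E,m)$ over $M\defeq \SL(D^{-1}\oplus R)\backslash \mathbf{B}_F$ equipped with a symplectic-Hodge basis $b$, and verify as in the proof of Proposition \ref{levelstruc} that $(X,E,m)_{/M}$ represents the functor in the statement with $b$ as universal object.

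Finally, to conclude the proof, I identify $(X,E,m)_{/M}$ with $(X_{F,\CC}^{\an},E_{\lambda_F},m_F^{\an})_{/B_{F,\CC}^{\an}}$ in $\mathcal{A}_F^{\an}$. The universal property of $(X,E,m)_{/M}$ yields a unique morphism $\varphi_{/f}:(X_{F,\CC}^{\an},E_{\lambda_F},m_F^{\an})_{/B_{F,\CC}^{\an}}\to (X,E,m)_{/M}$ in $\mathcal{A}_F^{\an}$ such that $\varphi^*b=b_F$, and the holomorphic map $f:B_F(\CC)\to M$ is a bijection since principally polarized complex tori (in particular those with $R$-multiplication) are algebraizable (Remark \ref{algebraization}); as in the Siegel case, this suffices to conclude that $f$ is a biholomorphism by equality of dimensions. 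The main potential obstacle I anticipate is purely notational: keeping track of the $R$-module structures and the dualities $\Tr$ between $\eta$-type data (taking values in $D$) and the $\omega$-type data (taking values in $R$), so that the action of $\SL(D^{-1}\oplus R)$ and the trivializations land in the right places. All other steps proceed in direct parallel with the Siegel argument.
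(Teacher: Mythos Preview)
Your proposal is correct and follows exactly the approach the paper intends: the statement is marked with $\blacksquare$ and the paper explicitly says that proofs in the Hilbert-Blumenthal subsection are omitted due to their similarity with the Siegel case, so your proof is precisely the expected adaptation of the argument for Proposition \ref{reprsympl}, with $P_g(\CC)$, $\Sp_{2g}(\ZZ)$, and Lemma \ref{relrepr} replaced by $P_F(\CC)$, $\SL(D^{-1}\oplus R)$, and its $R$-multiplication analogue. The steps you outline---relative representability as a $P_F(\CC)$-torsor, the product-functor interpretation of $\mathbf{B}_F$ via Proposition \ref{reprsintsymplhb}, rigidity over $\CC$, descent along the free proper $\SL(D^{-1}\oplus R)$-action, and the final bijectivity-plus-dimension argument---are all faithful transcriptions of the Siegel proof and require no new ideas.
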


\section{The analytic higher Ramanujan equations}\label{sec-analhre}

In this section we consider the complex analytic avatars of the higher Ramanujan equations introduced in Section \ref{sec-intsol}.

We shall then construct particular solutions $\varphi_g$ and $\varphi_F$ of these differential equations, defined on $\mathbf{H}_g$ in the Siegel case, and on $\mathbf{H}^g$ in the Hilbert-Blumenthal case. The ``$q$-expansions'' of these solutions coincide with the previously defined integral solutions $\hat{\varphi}_g$ and $\hat{\varphi}_F$.

\subsection{Definition of $\varphi_g$ and statement of our main theorem in the Siegel case} \label{defphig}

Let us first define the analytic higher Ramanujan equations. Consider the holomorphic coordinate system $(\tau_{kl})_{1\le k \le l \le g}$ on the complex manifold $\mathbf{H}_g$, where $\tau_{kl}: \mathbf{H}_g \to \CC$ associates to any $\tau \in \mathbf{H}_g$ its entry in the $k$th row and $l$th column. To this system of coordinates is attached a family $(\theta_{kl})_{1\le k \le l \le g}$ of holomorphic vector fields on $\mathbf{H}_g$, defined by \label{symb:thetaijanal}
\begin{align*}
\theta_{kl} \defeq \frac{1}{2\pi i}\frac{\partial}{\partial\tau_{kl}}\text{.} 
\end{align*}

Let $(v_{kl})_{1\le k \le l \le g}$ be the family of holomorphic vector fields on $B_g(\CC)$ induced by the higher Ramanujan vector fields on $\mathcal{B}_g$ defined in Section \ref{ramvecfields}.

\begin{defi} \label{defihreq}
Let $U$ be an open subset of $\mathbf{H}_g$. We say that a holomorphic map $u: U \to B_g(\CC)$ is an \emph{analytic solution of the higher Ramanujan equations} over $\mathcal{B}_g$ if
\begin{align*}
Tu(\theta_{kl}) = u^*v_{kl}
\end{align*}
for every $1\le k \le l \le g$.
\end{defi}

We now construct a global holomorphic solution
\begin{align*}
\varphi_g: \mathbf{H}_g \to B_g(\CC)
\end{align*}
of the higher Ramanujan equations. In view of the universal property of the moduli space $B_g(\CC)$ (Proposition \ref{reprsympl}), the holomorphic map $\varphi_g$ will be induced by a certain symplectic-Hodge basis of the principally polarized complex torus $(\mathbf{X}_g,E_g)$ over $\mathbf{H}_g$.

Recall that the comparison isomorphism (\ref{compisom}) identifies the holomorphic vector bundle $(\Lie_{\mathbf{H}_g} \mathbf{X}_g)^{\vee}$ over $\mathbf{H}_g$ with $\mathcal{F}^1(\mathbf{X}_g/\mathbf{H}_g)$ (Lemma \ref{identcomp}). Moreover, it follows from the construction of $\mathbf{X}_g$ in Example \ref{torus} that $\Lie_{\mathbf{H}_g}\mathbf{X}_g$ is canonically isomorphic to the trivial vector bundle $\CC^g\times \mathbf{H}_g$ over $\mathbf{H}_g$. Under this isomorphism, we define the holomorphic frame
\begin{align*}
(dz_1,\ldots,dz_g)
\end{align*}
of $\mathcal{F}^1(\mathbf{X}_g/\mathbf{H}_g)$ as the dual of the canonical holomorphic frame of $\CC^g\times \mathbf{H}_g$.
\label{symb:phig}
\begin{theorem}\label{theoremsolution}
For each $1\le k \le g$, consider the global sections of $\mathcal{H}^1_{\dR}(\mathbf{X}_g/\mathbf{H}_g)$
\begin{align*}
\bfomega_k \defeq 2\pi i\, dz_k\text{, } \ \ \ \bfeta_k \defeq \nabla_{\theta_{kk}}\bfomega_k\text{,}
\end{align*}
where $\nabla$ denotes the Gauss-Manin connection on $\mathcal{H}^1_{\dR}(\mathbf{X}_g/\mathbf{H}_g)$. Then,
\begin{enumerate}
   \item The $2g$-uple 
\begin{align*}
\bfb_g \defeq (\bfomega_1,\ldots,\bfomega_g,\bfeta_1,\ldots,\bfeta_g)
\end{align*}
of holomorphic global sections of $\mathcal{H}^1_{\dR}(\mathbf{X}_g/\mathbf{H}_g)$ is a symplectic-Hodge basis of the principally polarized complex torus $(\mathbf{X}_g,E_g)_{/\mathbf{H}_g}$.
   \item The holomorphic map  
\begin{align*}
\varphi_g: \mathbf{H}_g\to B_g(\CC)
\end{align*}
   corresponding to $\bfb_g$ by the universal property of $B_g(\CC)$ is a solution of the higher Ramanujan equations (Definition \ref{defihreq}).
\end{enumerate}
\end{theorem}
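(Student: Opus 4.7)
The plan is to do everything in the flat frame of $\mathcal{H}^1_{\dR}(\mathbf{X}_g/\mathbf{H}_g)$ coming from the integral symplectic basis $\beta_g = (\gamma_1,\ldots,\gamma_g,\delta_1,\ldots,\delta_g)$ of Example~\ref{intsymplbasis}, where $\gamma_i(\tau)=\mathbf{e}_i$ and $\delta_i(\tau)=\tau\mathbf{e}_i$. Let $(\gamma_1^*,\ldots,\gamma_g^*,\delta_1^*,\ldots,\delta_g^*)$ be the corresponding dual basis, regarded as global sections of $R^1{\bfp_g}_*\ZZ_{\mathbf{X}_g}\subset \mathcal{H}^1_{\dR}(\mathbf{X}_g/\mathbf{H}_g)$ via the comparison isomorphism~(\ref{compisom}). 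These sections are horizontal for the Gauss-Manin connection. A direct integration of $dz_k$ along $\gamma_i=\mathbf{e}_i$ and $\delta_i=\tau\mathbf{e}_i$ (using $\tau$ symmetric) yields
\[
\bfomega_k = 2\pi i\,\gamma_k^* + 2\pi i \sum_{i=1}^{g}\tau_{ki}\,\delta_i^*,
\]
and since the $\delta_i^*$ are horizontal and $\theta_{kk}(\tau_{ki}) = \frac{1}{2\pi i}\delta_{ki}$, the definition $\bfeta_k \defeq \nabla_{\theta_{kk}}\bfomega_k$ collapses to the very simple formula
\[
\bfeta_k = \delta_k^*.
\]

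For part~(1), it is obvious that each $\bfomega_k$ lies in $\mathcal{F}^1(\mathbf{X}_g/\mathbf{H}_g)$. To compute the symplectic pairings, I would use that $E_g(\gamma_i,\cdot)=\delta_i^*$ and $E_g(\delta_i,\cdot)=-\gamma_i^*$ as sections of $R^1{\bfp_g}_*\ZZ_{\mathbf{X}_g}$, together with the defining relation $\langle E_g(\gamma,\cdot),E_g(\delta,\cdot)\rangle_{E_g}=\frac{1}{2\pi i}E_g(\gamma,\delta)$ of Paragraph~\ref{defi-sfrf}; this gives the matrix of $\langle\,,\,\rangle_{E_g}$ in the flat frame as
\[
\langle \gamma_i^*,\delta_j^*\rangle_{E_g}=\tfrac{1}{2\pi i}\delta_{ij},\quad
\langle \gamma_i^*,\gamma_j^*\rangle_{E_g}=\langle \delta_i^*,\delta_j^*\rangle_{E_g}=0.
\]
Plugging in the explicit expressions of $\bfomega_k$ and $\bfeta_k$, the symmetry of $\tau$ then yields $\langle \bfomega_k,\bfomega_l\rangle_{E_g}=\tau_{lk}-\tau_{kl}=0$, $\langle \bfeta_k,\bfeta_l\rangle_{E_g}=0$, and $\langle \bfomega_k,\bfeta_l\rangle_{E_g}=\delta_{kl}$, which is exactly the symplectic-Hodge condition.

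For part~(2), the formula $\bfomega_k=2\pi i\,\gamma_k^*+2\pi i\sum_i\tau_{ki}\delta_i^*$ combined with horizontality of $\gamma_i^*$ and $\delta_i^*$ gives
\[
\nabla_{\theta_{kl}}\bfomega_i
= \sum_{j}\frac{\partial\tau_{ij}}{\partial\tau_{kl}}\,\delta_j^*
= \begin{cases}
\bfeta_l & \text{if }i=k,\\
\bfeta_k & \text{if }i=l,\\
0 & \text{otherwise},
\end{cases}
\qquad
\nabla_{\theta_{kl}}\bfeta_i = 0,
\]
which is the defining system of Proposition~\ref{caracchamps} (see Remark~\ref{rem-rephrase}). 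By the compatibility of the Gauss-Manin connection with base change, the pullback by $\varphi_g$ of the universal symplectic-Hodge basis $b_g$ on $\mathcal{B}_{g,\CC}$ is $\bfb_g$, and the pullback by $\varphi_g$ of the Gauss-Manin connection on $\pi_g^*\mathcal{H}_{g,\CC}$ is the Gauss-Manin connection on $\mathcal{H}^1_{\dR}(\mathbf{X}_g/\mathbf{H}_g)$. The uniqueness statement in Proposition~\ref{caracchamps} (valid in the analytic category as well, the argument being purely formal) then forces $T\varphi_g(\theta_{kl})=\varphi_g^*v_{kl}$.

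The only step requiring some care is the pairing computation in part~(1); once one has identified $\bfeta_k$ with $\delta_k^*$ and written $\bfomega_k$ in the flat frame, the rest is linear algebra over $\mathcal{O}_{\mathbf{H}_g}$ and the symmetry of $\tau$. Part~(2) is then essentially a bookkeeping of partial derivatives of the coordinates $\tau_{kl}$.
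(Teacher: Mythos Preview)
Your proof is correct and, in fact, more direct than the paper's. The paper computes $\bfeta_k$ by passing through the $C^{\infty}$ trivialization $(dz_1,\ldots,dz_g,d\bar z_1,\ldots,d\bar z_g)$ of $\mathcal{H}^1_{\dR}(\mathbf{X}_g/\mathbf{H}_g)$: Proposition~\ref{lemme1} gives $\bfeta_k = \sum_l ((\Im\tau)^{-1})_{kl}\,\Im dz_l$, and only afterwards (Corollary~\ref{caraceta}) is $\bfeta_k$ identified with $E_g(\gamma_k,\,\cdot\,)=\delta_k^*$; Lemma~\ref{lemme2} then expresses $dz_k$ back in terms of $E_g$ to carry out the pairing computation. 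You bypass this entirely by staying in the flat holomorphic frame $(\gamma_i^*,\delta_i^*)$ from the outset, so that $\bfeta_k=\delta_k^*$ drops out in one line and the symplectic relations reduce to the symmetry of $\tau$. The paper's detour does buy something it uses later: the explicit $C^{\infty}$ formula for $\bfeta_k$ involving $(\Im\tau)^{-1}$ is the link to nearly holomorphic modular forms mentioned in \ref{subsubsec-intronearlyhol}. For part~(2) the two arguments coincide; the cleanest reference for your last step is Proposition~\ref{equivalences} (the analytic analog of Proposition~\ref{prop-equivrameq}), which packages exactly the inference ``$\nabla_{\theta_{kl}}\bfb_g = \bfb_g\bigl(\begin{smallmatrix}0&0\\ \mathbf{E}^{kl}&0\end{smallmatrix}\bigr)$ for all $k\le l$ $\Rightarrow$ $T\varphi_g(\theta_{kl})=\varphi_g^*v_{kl}$'' that you obtain by pulling back Proposition~\ref{caracchamps}.
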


The main idea in our proof is to compute with a $C^{\infty}$ trivialization of the vector bundle $\mathcal{H}^1_{\dR}(\mathbf{X}_g/\mathbf{H}_g)$; in the next subsection we develop some preliminary background.

\subsection{Preliminary results}\label{ssec-prelres}

Consider the \emph{complex conjugation}, seen as a $C^{\infty}$ morphism of real vector bundles over $\mathbf{H}_g$,
\begin{align*}
\mathcal{H}^1_{\dR}(\mathbf{X}_g/\mathbf{H}_g) &\to \mathcal{H}^1_{\dR}(\mathbf{X}_g/\mathbf{H}_g)\\
                               \alpha &\mapsto \overline{\alpha}
\end{align*}
induced by the comparison isomorphism (\ref{compisom}), and denote $d\bar{z}_k \defeq \overline{dz_k}$ for every $1\le k \le g$. We may check fiber by fiber that the $2g$-uple of $C^{\infty}$ global sections of $\mathcal{H}^1_{\dR}(\mathbf{X}_g/\mathbf{H}_g)$
\begin{align*}
(dz_1,\ldots,dz_g,d\bar{z}_1,\ldots,d\bar{z}_g)
\end{align*}
trivializes $\mathcal{H}^1_{\dR}(\mathbf{X}_g/\mathbf{H}_g)$ as a $C^{\infty}$ complex vector bundle over $\mathbf{H}_g$.

For $1\le i \le j \le g$ and $1\le k \le g$, let us define
\begin{align*}
\bfeta^{ij}_k \defeq \nabla_{\theta_{ij}} \bfomega_k\text{,}
\end{align*}
so that
\begin{align*}
\bfeta_k = \bfeta_k^{kk}\text{.}
\end{align*}

% In the next proposition, we express $\bfeta_k^{ij}$ in the frame $(dz_1,\ldots,dz_g,d\bar{z}_1,\ldots,d\bar{z}_g)$.

\begin{prop} \label{lemme1}
Consider the notations in \ref{notationmatrices}. For every $1\le i \le j \le g$ and $1\le k \le g$, we have
\begin{align*}
\bfeta^{ij}_k = \sum_{l=1}^g \mathbf{e}^{\mathsf{T}}_k  \mathbf{E}^{ij}(\Im \tau)^{-1}\mathbf{e}_l \Im dz_l
\end{align*} 
as a $C^{\infty}$ section of $\mathcal{H}^1_{\dR}(\mathbf{X}_g/\mathbf{H}_g)$, where $\Im dz_l \defeq (dz_l -d\bar{z}_l)/2i$. %In particular, $\bfeta^{ij}_k=0$ for any $k\notin\{i,j\}$.
\end{prop}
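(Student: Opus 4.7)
The plan is to compute $\nabla_{\theta_{ij}}\bfomega_k$ by exploiting the fact that the Gauss--Manin connection is characterized by the horizontality of the flat sections coming from the comparison isomorphism (\ref{compisom}). The idea is thus to express the holomorphic form $\bfomega_k = 2\pi i\, dz_k$ in the horizontal basis dual to the integral symplectic basis $\beta_g = (\gamma_1,\ldots,\gamma_g,\delta_1,\ldots,\delta_g)$ of Example~\ref{intsymplbasis}, differentiate the (now scalar-valued) coefficients with respect to $\theta_{ij}$, and then re-express the result back in the $C^\infty$ frame $(dz_1,\ldots,dz_g,d\bar z_1,\ldots,d\bar z_g)$.

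Concretely, I will first compute the periods
\[
\int_{\gamma_i}dz_k = \mathbf{e}_i^{\mathsf T}\mathbf{e}_k=\delta_{ik},\qquad \int_{\delta_i}dz_k = \mathbf{e}_k^{\mathsf T}\tau\mathbf{e}_i = \tau_{ki},
\]
and similarly $\int_{\gamma_i}d\bar z_k=\delta_{ik}$, $\int_{\delta_i}d\bar z_k=\bar\tau_{ki}$. Denoting by $(\gamma_1^\vee,\ldots,\gamma_g^\vee,\delta_1^\vee,\ldots,\delta_g^\vee)$ the basis of $\mathcal{H}^1_{\dR}(\mathbf{X}_g/\mathbf{H}_g)$ dual to $\beta_g$ via (\ref{compisom}), this yields the identities
\[
dz_k = \gamma_k^\vee + \sum_{l=1}^g \tau_{kl}\,\delta_l^\vee,\qquad d\bar z_k = \gamma_k^\vee + \sum_{l=1}^g \bar\tau_{kl}\,\delta_l^\vee,
\]
of $C^\infty$ global sections. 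Subtracting gives $\Im dz_k = \sum_l (\Im\tau)_{kl}\,\delta_l^\vee$, hence, after inverting the matrix $\Im\tau$,
\[
\delta_l^\vee = \sum_{m=1}^g (\Im\tau)^{-1}_{lm}\,\Im dz_m.
\]

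Since the sections $\gamma_i^\vee$ and $\delta_i^\vee$ are horizontal for the Gauss--Manin connection (they lie in the constant subsheaf $R^1{\bfp_g}_*\CC_{\mathbf{X}_g}\subset \mathcal{H}^1_{\dR}(\mathbf{X}_g/\mathbf{H}_g)$), only the scalar coefficients get differentiated. Writing $\tau = \sum_{i\le j}\tau_{ij}\mathbf{E}^{ij}$, we find $\partial\tau_{kl}/\partial\tau_{ij}=\mathbf{E}^{ij}_{kl}=\mathbf{e}_k^{\mathsf T}\mathbf{E}^{ij}\mathbf{e}_l$, so
\[
\bfeta^{ij}_k = \nabla_{\theta_{ij}}(2\pi i\,dz_k) = \sum_{l=1}^g \mathbf{e}_k^{\mathsf T}\mathbf{E}^{ij}\mathbf{e}_l\,\delta_l^\vee.
\]
Substituting the expression for $\delta_l^\vee$ derived above yields the claimed formula.

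The only mildly delicate point is bookkeeping with the symmetry convention for $\tau$ and the matrices $\mathbf{E}^{ij}$ (in particular making sure the off-diagonal factor of $2$ is absorbed correctly in $\partial\tau_{kl}/\partial\tau_{ij}$), together with the justification that the sections $\gamma_i^\vee,\delta_i^\vee$ are horizontal---this last point follows from the characterization in Paragraph~\ref{sectioncompisom} of the horizontal subsheaf of $\mathcal{H}^1_{\dR}(\mathbf{X}_g/\mathbf{H}_g)$ as $R^1{\bfp_g}_*\CC_{\mathbf{X}_g}$, together with Remark~\ref{derivint}. No other serious obstacle is expected; the computation is essentially linear-algebraic once the basis change is set up.
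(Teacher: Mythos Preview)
Your proposal is correct and follows essentially the same approach as the paper. The paper computes the periods $\int_{\gamma_l}\bfeta^{ij}_k$ and $\int_{\delta_l}\bfeta^{ij}_k$ via Remark~\ref{derivint} (differentiation under the integral) and then solves for the coefficients in the $(dz_l,d\bar z_l)$ frame, whereas you express $dz_k$ in the horizontal dual basis $(\gamma_l^\vee,\delta_l^\vee)$ first and differentiate the scalar coefficients; these are the same computation read in opposite directions, relying on the same period calculations $\int_{\gamma_l}dz_k=\delta_{kl}$, $\int_{\delta_l}dz_k=\tau_{kl}$ and the same identification $\delta_l^\vee=\sum_m(\Im\tau)^{-1}_{lm}\Im dz_m$.
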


\begin{proof}
For $1\le i \le j \le g$ and $1\le k,l \le g$, let $\lambda_{kl}^{ij}$ and $\mu_{kl}^{ij}$ be the $C^{\infty}$ functions on $\mathbf{H}_g$ with values in $\CC$  defined by the equation 
\begin{align*}
\bfeta_{k}^{ij} = \sum_{l=1}^g (\lambda_{kl}^{ij} dz_l + \mu_{kl}^{ij}d\bar{z}_l)\text{.}
\end{align*}
 We must prove that $\lambda_{kl}^{ij} + \mu_{kl}^{ij}=0$ and that $\lambda^{ij}_{kl} = \frac{1}{2i}\textbf{e}\transp_k\textbf{E}^{ij}(\Im \tau)^{-1}\textbf{e}_l$.

Let us consider the integral symplectic basis $\beta_g=(\gamma_1,\ldots,\gamma_g,\delta_1,\ldots,\delta_g)$ of $R_1{\bfp_g}_*\ZZ_{\mathbf{X}_g}$ defined in Example \ref{intsymplbasis}. For every $1\le i \le j \le g$ and $1\le k,l\le g$, we have (cf. Remark \ref{derivint})
\begin{align*}
\int_{\gamma_l}\bfeta^{ij}_k=\int_{\gamma_l}\nabla_{\frac{\partial}{\partial \tau_{ij}}}dz_k = \frac{\partial}{\partial \tau_{ij}}\int_{\gamma_l}dz_k = \frac{\partial}{\partial \tau_{ij}}\delta_{kl} = 0
\end{align*}
and
\begin{align*}
\int_{\delta_l} \bfeta_{k}^{ij} = \int_{\delta_l}\nabla_{\frac{\partial}{\partial \tau_{ij}}}dz_k =\frac{\partial}{\partial \tau_{ij}}\int_{\delta_l}dz_k = \frac{\partial}{\partial \tau_{ij}}\tau_{kl} = \textbf{E}^{ij}_{kl}\text{.}
\end{align*}
Thus, by definition of $\lambda^{ij}_{kl}$ and $\mu^{ij}_{kl}$, we obtain
\begin{align*}
0=\int_{\gamma_l}\bfeta^{ij}_k =\sum_{m=1}^g\left( \lambda^{ij}_{km}\int_{\gamma_l}dz_m + \mu^{ij}_{km}\int_{\gamma_l}d\bar{z}_m\right) = \lambda^{ij}_{kl} + \mu^{ij}_{kl}
\end{align*}
and
\begin{align*}
\textbf{E}^{ij}_{kl} = \int_{\delta_l}\bfeta^{ij}_k = \sum_{m=1}^g\left( \lambda^{ij}_{km}\int_{\delta_l}dz_m + \mu^{ij}_{km}\int_{\delta_l}d\bar{z}_m\right) = \sum_{m=1}^g \lambda^{ij}_{km}(\tau_{ml}-\overline{\tau_{ml}}) = 2i\sum_{m=1}^g \lambda^{ij}_{km}(\Im\tau)_{ml}\text{.}
\end{align*}
In matricial notation, if we put $\lambda^{ij} \defeq (\lambda^{ij}_{kl})_{1\le k,l\le g} \in M_{g\times g}(\CC)$, then we have shown that
\begin{align*}
2i\lambda^{ij}\Im \tau = \textbf{E}^{ij}
\end{align*}
The assertion follows.
\end{proof}

Specializing to the case $i=j=k$ in the above proposition, we obtain the following formulas.

\begin{coro} \label{caraceta}
For any $1\le k \le g$, we have
\begin{align*}
\bfeta_k = \sum_{l=1}^g ((\Im \tau)^{-1})_{kl} \Im dz_l\text{.}
\end{align*}
In particular, $\bfeta_k$ is the unique global section of $\mathcal{H}^1_{\dR}(\mathbf{X}_g/\mathbf{H}_g)$ satisfying
\begin{align*}
\int_{\gamma_l}\bfeta_k =0\ \ \ \text{ and } \ \ \ \int_{\delta_l}\bfeta_k = \delta_{kl}
\end{align*}
for every $1\le l \le g$. In other words, $\bfeta_k$ may be identified with $E_g(\gamma_k, \ )$ under the comparison isomorphism (\ref{compisom}).
\end{coro}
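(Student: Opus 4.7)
The plan is to derive the explicit formula for $\bfeta_k$ as a direct specialization of Proposition \ref{lemme1}, and then to extract the uniqueness and the identification with $E_g(\gamma_k,\ \cdot\ )$ from the perfectness of the period pairing. The main input is already available; the proof amounts to an organized simplification followed by a standard comparison-theoretic argument.

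First I would specialize Proposition \ref{lemme1} to the case $i=j=k$. Since $\mathbf{E}^{kk}=\mathbf{e}_k\mathbf{e}_k\transp$, the scalar coefficient simplifies as
\[
\mathbf{e}_k\transp \mathbf{E}^{kk}(\Im\tau)^{-1}\mathbf{e}_l \;=\; \mathbf{e}_k\transp\mathbf{e}_k\,\mathbf{e}_k\transp(\Im\tau)^{-1}\mathbf{e}_l \;=\; ((\Im\tau)^{-1})_{kl},
\]
which yields the claimed formula for $\bfeta_k$ immediately.

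Next, I would establish the uniqueness statement. By the comparison isomorphism (\ref{compisom}), the map
\[
\mathcal{H}^1_{\dR}(\mathbf{X}_g/\mathbf{H}_g) \;\stackrel{\sim}{\longrightarrow}\; \mathcal{H}om_{\ZZ}(R_1{\bfp_g}_*\ZZ_{\mathbf{X}_g},\mathcal{O}_{\mathbf{H}_g})
\]
sending a section $\alpha$ to $\gamma\mapsto \int_\gamma \alpha$ is an isomorphism of holomorphic vector bundles. Since $(\gamma_1,\dots,\gamma_g,\delta_1,\dots,\delta_g)$ is an $\mathcal{O}_{\mathbf{H}_g}$-basis of the target after tensoring with $\mathcal{O}_{\mathbf{H}_g}$, a global section of $\mathcal{H}^1_{\dR}(\mathbf{X}_g/\mathbf{H}_g)$ is uniquely determined by its periods against $\gamma_l$ and $\delta_l$, $1\le l\le g$. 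It therefore suffices to check that $\bfeta_k$ as computed above satisfies $\int_{\gamma_l}\bfeta_k=0$ and $\int_{\delta_l}\bfeta_k=\delta_{kl}$; but these two identities were already derived in the proof of Proposition \ref{lemme1} (they form its input), so nothing new is needed.

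Finally, I would identify $\bfeta_k$ with $E_g(\gamma_k,\ \cdot\ )$. Viewed as a section of $\mathcal{H}^1_{\dR}(\mathbf{X}_g/\mathbf{H}_g)$ via the comparison isomorphism, the element $E_g(\gamma_k,\ \cdot\ )$ corresponds to the functional $\gamma\mapsto E_g(\gamma_k,\gamma)$ on $R_1{\bfp_g}_*\ZZ_{\mathbf{X}_g}$. Evaluating on the integral symplectic basis $\beta_g$ gives $E_g(\gamma_k,\gamma_l)=0$ and $E_g(\gamma_k,\delta_l)=\delta_{kl}$, matching precisely the characterization above. By the uniqueness just established, $\bfeta_k=E_g(\gamma_k,\ \cdot\ )$ under the comparison isomorphism. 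There is no serious obstacle here; the only care required is bookkeeping of the sign and the factor $2\pi i$ built into the definition of $\bfomega_k=2\pi i\,dz_k$ and into the comparison pairing (Paragraph \ref{sectioncompisom}), which are what make the identification land on $E_g(\gamma_k,\ \cdot\ )$ exactly rather than a scalar multiple thereof.
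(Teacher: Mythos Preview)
Your proposal is correct and follows essentially the same approach as the paper, which presents the corollary as an immediate specialization of Proposition \ref{lemme1} to $i=j=k$ without further proof. Your final caveat about $2\pi i$ bookkeeping is unnecessary here: the period identities $\int_{\gamma_l}\bfeta_k=0$ and $\int_{\delta_l}\bfeta_k=\delta_{kl}$ come straight from the computation in the proof of Proposition \ref{lemme1} (with $\mathbf{E}^{kk}_{kl}=\delta_{kl}$), and matching them against $E_g(\gamma_k,\gamma_l)=0$, $E_g(\gamma_k,\delta_l)=\delta_{kl}$ requires no rescaling.
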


Since every section of $R^1{\bfp_g}_*\ZZ_{\mathbf{X}_g} = (R_1{\bfp_g}_*\ZZ_{\mathbf{X}_g})^{\vee}$, seen as a section of $\mathcal{H}^1_{\dR}(\mathbf{X}_g/\mathbf{H}_g)$ via the comparison isomorphism (\ref{compisom}), is horizontal for the Gauss-Manin connection, we obtain the next corollary.

% Since $\int_{\gamma_l}\bfeta_k$ and $\int_{\delta_l}\bfeta_k$ are constant functions on $\mathbf{H}_g$, and since  $\beta_g =(\gamma_1,\ldots,\gamma_g,\delta_1,\ldots,\delta_g)$ trivializes $R_1{\bfp_g}_*\ZZ_{\mathbf{X}_g}$, we obtain
% \begin{align*}
% \int_{\gamma}\nabla_{\theta}\bfeta_k = \theta\left( \int_{\gamma}\bfeta_k\right) = 0\text{.}
% \end{align*}
% for any holomorphic vector field $\theta$ on $\mathbf{H}_g$, and any section $\gamma$ of $R_1{\bfp_g}_*\ZZ_{\mathbf{X}_g}$.

\begin{coro}\label{corohor}
For any $1\le k \le g$, the global section $\bfeta_k$ of $\mathcal{H}^1_{\dR}(\mathbf{X}_g/\mathbf{H}_g)$ is horizontal for the Gauss-Manin connection:
\begin{align*}
\nabla \bfeta_k = 0\text{.}
\end{align*}
\end{coro}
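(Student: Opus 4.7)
The plan is to deduce this immediately from Corollary \ref{caraceta} combined with the fundamental property of the comparison isomorphism recalled in Paragraph \ref{sectioncompisom}: namely, that $\comp$ identifies the subsheaf of $\mathcal{H}^1_{\dR}(\mathbf{X}_g/\mathbf{H}_g)$ consisting of sections horizontal for the Gauss-Manin connection $\nabla$ with the local system $R^1{\bfp_g}_*\CC_{\mathbf{X}_g} \cong \mathcal{H}om_{\ZZ}(R_1{\bfp_g}_*\ZZ_{\mathbf{X}_g},\CC_{\mathbf{H}_g})$.

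First I would invoke Corollary \ref{caraceta}, which expresses $\bfeta_k$ as the image under the comparison isomorphism of the section $E_g(\gamma_k,\,\cdot\,)$ of $\mathcal{H}om_{\ZZ}(R_1{\bfp_g}_*\ZZ_{\mathbf{X}_g},\mathcal{O}_{\mathbf{H}_g})$. Since $\gamma_k$ is a global section of the local system $R_1{\bfp_g}_*\ZZ_{\mathbf{X}_g}$ (cf. Example \ref{intsymplbasis}) and $E_g$ is a locally constant $\ZZ$-bilinear pairing on this local system (it takes integer values by Definition \ref{defiriemannform}), the map $E_g(\gamma_k,\,\cdot\,)$ is in fact a global section of the local system $\mathcal{H}om_{\ZZ}(R_1{\bfp_g}_*\ZZ_{\mathbf{X}_g},\ZZ_{\mathbf{H}_g}) \subset \mathcal{H}om_{\ZZ}(R_1{\bfp_g}_*\ZZ_{\mathbf{X}_g},\mathcal{O}_{\mathbf{H}_g})$ — a fortiori a horizontal section of $R^1{\bfp_g}_*\CC_{\mathbf{X}_g}$ under the comparison isomorphism.

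Applying the characterization from Paragraph \ref{sectioncompisom} now concludes the argument: $\bfeta_k = \comp^{-1}(E_g(\gamma_k,\,\cdot\,))$ lies in the subsheaf of $\nabla$-horizontal sections, so $\nabla \bfeta_k = 0$. There is no real obstacle here; the only thing to verify carefully is that Corollary \ref{caraceta} genuinely asserts the identification of $\bfeta_k$ with $E_g(\gamma_k,\,\cdot\,)$ under $\comp$, which follows from the pairing formula $\comp(\alpha)(\gamma) = \int_\gamma \alpha$ together with the integrals computed via Proposition \ref{lemme1}.
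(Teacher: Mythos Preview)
Your proposal is correct and follows essentially the same approach as the paper: the paper simply observes, immediately before stating the corollary, that every section of $R^1{\bfp_g}_*\ZZ_{\mathbf{X}_g}$ viewed in $\mathcal{H}^1_{\dR}(\mathbf{X}_g/\mathbf{H}_g)$ via the comparison isomorphism is horizontal, and then invokes Corollary~\ref{caraceta} to identify $\bfeta_k$ with $E_g(\gamma_k,\ \cdot\ )$. Your argument is a slightly more detailed version of the same one-line deduction.
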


Our next goal is to use the duality given by the Riemann form $E_g$ to express $dz_l$ in terms of $C^{\infty}$ sections of $\Lie_{\mathbf{H}_g}\mathbf{X}_g$.

\begin{lemma} \label{lemme2}
 Let $1\le k \le g$, and denote by $\tau_k$ the $k$-th column of $\tau \in \mathbf{H}_g$. Then
\begin{align*}
dz_k = -E_g(i\Im\tau_k, \ ) + iE_g(\Im \tau_k, \ )
\end{align*}
as a $C^{\infty}$ section of $\mathcal{H}^1_{\dR}(\mathbf{X}_g/\mathbf{H}_g)$ under the comparison isomorphism (\ref{compisom}).
\end{lemma}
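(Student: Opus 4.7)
The plan is to verify the identity by pairing both sides with the universal integral symplectic basis $\beta_g = (\gamma_1,\ldots,\gamma_g,\delta_1,\ldots,\delta_g)$ of $R_1{\bfp_g}_*\ZZ_{\mathbf{X}_g}$ introduced in Example \ref{intsymplbasis}. Via the comparison isomorphism (\ref{compisom}), a $C^{\infty}$ section of $\mathcal{H}^1_{\dR}(\mathbf{X}_g/\mathbf{H}_g)$ is determined by its pairing with this local frame of $R_1{\bfp_g}_*\ZZ_{\mathbf{X}_g}$, so it will suffice to verify that the LHS and RHS of the claimed equality agree on each $\gamma_l$ and each $\delta_l$.

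For the LHS, recalling that $\gamma_l(\tau) = \mathbf{e}_l$ and $\delta_l(\tau) = \tau\mathbf{e}_l$ inside $\Lie_{\mathbf{H}_g}\mathbf{X}_g \cong \CC^g\times \mathbf{H}_g$, and that $dz_k$ is the holomorphic coframe dual to the canonical holomorphic frame of $\CC^g\times \mathbf{H}_g$, one reads off directly
\[
  \int_{\gamma_l} dz_k = \delta_{kl}, \qquad \int_{\delta_l} dz_k = \tau_{kl}.
\]
For the RHS, I would unwind the definition of $E_g$ as the imaginary part of the Hermitian metric $(v,w)\mapsto \bar v\transp(\Im\tau)^{-1}w$ (Example \ref{torus}), and use that $\Im\tau$ is symmetric to obtain the identity $(\Im\tau_k)\transp(\Im\tau)^{-1} = \mathbf{e}_k\transp$. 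A short computation then gives
\[
  E_g(i\,\Im\tau_k, \mathbf{e}_l) = -\delta_{kl}, \qquad E_g(\Im\tau_k, \mathbf{e}_l) = 0,
\]
\[
  E_g(i\,\Im\tau_k, \tau\mathbf{e}_l) = -\Re\tau_{kl}, \qquad E_g(\Im\tau_k, \tau\mathbf{e}_l) = \Im\tau_{kl},
\]
so that the RHS evaluates to $\delta_{kl}$ on $\gamma_l$ and to $\Re\tau_{kl}+i\,\Im\tau_{kl} = \tau_{kl}$ on $\delta_l$, matching the LHS.

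There is no real obstacle here — the statement is essentially a bookkeeping identity. The only point that deserves care is the interpretation of $E_g(v,\cdot)$ as a section of $\mathcal{H}^1_{\dR}(\mathbf{X}_g/\mathbf{H}_g)$ under (\ref{compisom}): one must regard it as the $\CC$-valued $\RR$-linear functional on $R_1{\bfp_g}_*\ZZ_{\mathbf{X}_g}$ extended $\CC$-linearly from its restriction to the lattice, and it is precisely the particular $\CC$-linear combination $-E_g(i\,\Im\tau_k,\cdot) + i\,E_g(\Im\tau_k,\cdot)$ appearing in the statement that reproduces the holomorphic (i.e., $(1,0)$) component $dz_k$ under the Hodge decomposition implicit in Lemma \ref{identcomp}.
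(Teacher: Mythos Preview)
Your proof is correct and takes essentially the same approach as the paper: both unwind the explicit formula $E_g(v,w) = \Im(\bar v\transp(\Im\tau)^{-1}w)$ and reduce to the identity $(\Im\tau_k)\transp(\Im\tau)^{-1} = \mathbf{e}_k\transp$ coming from the symmetry of $\Im\tau$. The only cosmetic difference is that the paper carries out the computation for a general section $\gamma$ in one stroke (writing $\gamma = \Re\gamma + i\Im\gamma$ and obtaining $\mathbf{e}_k\transp\gamma = dz_k(\gamma)$ directly), whereas you check separately on the basis elements $\gamma_l$ and $\delta_l$.
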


%i.e. dz_k = H(\Im \tau_k, \ )

\begin{proof}
Note that $\Im\tau_k = (\Im\tau) \textbf{e}_k$. Let $\gamma$ be a section of $R_1{\bfp_g}_*\ZZ_{\mathbf{X}_g}$. As $\Im \tau$ is symmetric and $\gamma = \Re \gamma + i \Im \gamma$, we have
\begin{align*}
-E_g(i\Im\tau_k, \gamma ) + iE_g(\Im\tau_k ,\gamma) &= -\Im (\overline{i\Im \tau_k}\transp (\Im \tau)^{-1} \gamma) + i\Im(\overline{\Im \tau_k}\transp (\Im \tau)^{-1}\gamma)\\
&= \Im (i\textbf{e}_k\transp (\Im \tau) (\Im\tau)^{-1}\gamma) + i\Im (\textbf{e}_k\transp (\Im \tau) (\Im\tau)^{-1}\gamma) \\ 
             &=\Re (\textbf{e}_k\transp \gamma) + i\Im (\textbf{e}\transp_k \gamma) \\
             &= \textbf{e}_k\transp \gamma = dz_k(\gamma) \text{.}
\end{align*}
\end{proof}

\subsection{Proof of Theorem \ref{theoremsolution}}

We prove parts (1) and (2) separately.

\begin{proof}[Proof of Theorem \ref{theoremsolution} (1)]
As each $\bfomega_k$ is by definition a section of $\mathcal{F}^1(\mathbf{X}_g/\mathbf{H}_g)$, to prove that $\bfb_g$ is a symplectic-Hodge basis of $(\mathbf{X}_g,E_g)_{/\mathbf{H}_g}$ it is sufficient to show that it is a symplectic trivialization of $\mathcal{H}^1_{\dR}(\mathbf{X}_g/\mathbf{H}_g)$ with respect to the holomorphic symplectic form $\langle \ , \ \rangle_{E_g}$. For this, we claim that it is enough to prove that
\begin{align} \label{equation1} \tag{$*$}
\langle \bfomega_i,\bfeta_j \rangle_{E_g} = \delta_{ij}
\end{align}
for every $1\le i \le j\le g$. Indeed, by Corollary \ref{corohor} and by the compatibility (\ref{compconn}), equation (\ref{equation1}) implies that $\langle \bfeta_i,\bfeta_j\rangle_{E_g}=0$ (apply $\nabla_{\theta_{ii}}$). Since we already know that $\mathcal{F}^1(\mathbf{X}_g/\mathbf{H}_g)$ is Lagrangian, this proves indeed that $\bfb_g$ is a symplectic trivialization of $\mathcal{H}^1_{\dR}(\mathbf{X}_g/\mathbf{H}_g)$.

Fix $1\le i \le j \le g$. By Corollary \ref{caraceta}, we have 
\begin{align*}
 \bfeta_j = \sum_{l=1}^g ((\Im \tau)^{-1})_{jl}\Im dz_l \text{,}
 \end{align*}
thus
 \begin{align*}
 \langle \bfomega_i,\bfeta_j \rangle_{E_g} = 2\pi i\sum_{l=1}^g ((\Im \tau)^{-1})_{jl} \langle dz_i,\Im dz_l \rangle_{E_g}\text{.} 
 \end{align*}
Now, using Lemma \ref{lemme2}, we obtain
 \begin{align*}
 \langle dz_i,\Im dz_l \rangle_{E_g} &= \langle -E_g(i\Im\tau_i, \ ) + iE_g(\Im \tau_i, \ ), E_g( \Im\tau_l, \ )\rangle_{E_g}\\
         &=-\langle E_g(i\Im \tau_i, \ ), E_g( \Im \tau_l,\ )\rangle_{E_g} + i\langle E_g(\Im \tau_i , \ ),E_g( \Im\tau_l , \ )\rangle_{E_g}\\
&=\frac{1}{2\pi i}\left(-E_g(i\Im \tau_i,\Im\tau_l) + iE_g(\Im \tau_i,\Im \tau_l) \right)\\
&=\frac{1}{2\pi i}\Im (i\Im \tau_i\transp(\Im\tau)^{-1} \Im\tau_l)\\
&=\frac{1}{2\pi i}\textbf{e}_i\transp (\Im \tau)\textbf{e}_l = \frac{1}{2\pi i}(\Im \tau)_{il}\text{.}
\end{align*}
Therefore, since $\Im \tau$ is symmetric,
\begin{align*}
 \langle \bfomega_i,\bfeta_j \rangle_{E_g} = \sum_{l=1}^g((\Im \tau)^{-1})_{jl}(\Im \tau)_{li}  = \delta_{ij}\text{.}
\end{align*}
\end{proof}

Part (2) in Theorem \ref{theoremsolution} will be an easy consequence of the following analytic analog of Proposition \ref{prop-equivrameq}.

\begin{prop} \label{equivalences}
Let $U\subset \mathbf{H}_g$ be an open subset and $u: U \to B_g(\CC)$ be the holomorphic map corresponding to a principally polarized complex torus $(X,E)$ over $U$ endowed with some symplectic-Hodge basis $b=(\omega_1,\ldots,\omega_g,\eta_1,\ldots,\eta_g)$. Then the following are equivalent:
\begin{enumerate}
    \item $u$ is a solution of the higher Ramanujan equations.
%     \item For every $1\le i \le j \le g$, we have
% \begin{align*}
% c(\theta_{ij}) = u^*c_{g,\CC}^{\an}(v_{ij})
% \end{align*}
% where $c : T_U \longrightarrow \Gamma^2(\mathcal{F}^1(X/U)^{\vee})\oplus \mathcal{H}^1_{\dR}(X/U)^{\oplus g}$ is the morphism defined above for the symplectic-Hodge basis $b$ of $(X,E)_{/U}$, and $v_{ij}$ are the higher Ramanujan vector fields on $B_g(\CC)$.
    \item For every $1\le i \le j \le g$, we have
    $$
    \nabla_{\theta_{ij}}b = b\left(\begin{array}{cc}
                              0 & 0 \\
                              \mathbf{E}^{ij} & 0
                             \end{array}\right)
    $$
    that is,
      \begin{itemize}
    \item[($i$)] $\nabla_{\theta_{ij}} \omega_i = \eta_j$, $\nabla_{\theta_{ij}} \omega_j = \eta_i$, and $\nabla_{\theta_{ij}}\omega_k =0$, for $k\notin \{i,j\}$
    \item[($ii$)] $\nabla_{\theta_{ij}}\eta_k =0$, for $1 \le k \le g$.
 \end{itemize}
\end{enumerate} 
\hfill $\blacksquare$
\end{prop}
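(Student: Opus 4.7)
The strategy is to adapt the proof of Proposition~\ref{prop-equivrameq} to the analytic setting. The core idea is that both conditions can be reformulated as equations involving a single natural morphism, which was shown to be injective in Theorem~\ref{thm-vertbg}.

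First, I will define, on any open $U\subset \mathbf{H}_g$ equipped with $(X,E,b)$ as in the statement, the holomorphic morphism
$$
\rho: T_{U/\CC} \longrightarrow \Gamma^2(\mathcal{F}^1(X/U)^{\vee}) \oplus \mathcal{H}^1_{\dR}(X/U)^{\oplus g},\qquad \theta \mapsto (\kappa(\theta),\nabla_\theta\eta_1,\ldots,\nabla_\theta\eta_g),
$$
where $\kappa$ is the Kodaira-Spencer morphism associated to $(X,E)/U$. By naturality of $\kappa$ and of the Gauss-Manin connection with respect to base change, this morphism is precisely the pullback by $u$ of the universal morphism $(\kappa_{\pi_g},P): T_{B_g(\CC)/\CC} \to \Gamma^2(\pi_g^*\mathcal{F}_g^{\vee}) \oplus \pi_g^*\mathcal{H}_g^{\oplus g}$, composed with $Tu$. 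The analytic analog of Theorem~\ref{thm-vertbg} (which transfers verbatim, since both the Gauss-Manin connection and the Kodaira-Spencer isomorphism exist in the complex analytic category) asserts that $(\kappa_{\pi_g},P)$ is injective. Hence $Tu(\theta_{ij}) = u^*v_{ij}$ for every $1\le i\le j \le g$ if and only if $\rho(\theta_{ij}) = u^*(\kappa_{\pi_g}(v_{ij}),P(v_{ij}))$ for every such pair.

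Second, I will compute the right-hand side using Proposition~\ref{caracchamps}. Since $v_{ij}$ is a section of the Ramanujan subbundle $\mathcal{R}_g = \ker P$, we have $P(v_{ij}) = 0$. Thus the second component of the equation $\rho(\theta_{ij}) = u^*(\kappa_{\pi_g}(v_{ij}),0)$ is precisely condition (2)(ii), namely $\nabla_{\theta_{ij}}\eta_k = 0$ for every $k$. Hence it remains to show that, assuming (2)(ii), the Kodaira-Spencer equation $\kappa(\theta_{ij}) = u^*\kappa_{\pi_g}(v_{ij})$ is equivalent to (2)(i).

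Third, granting $\nabla_{\theta_{ij}}\eta_k = 0$, I will apply the compatibility of the Gauss-Manin connection with the symplectic form (cf.\ equation~(\ref{C})) to $\langle\omega_k,\eta_l\rangle_E = \delta_{kl}$ to obtain $\langle\nabla_{\theta_{ij}}\omega_k,\eta_l\rangle_E = 0$ for every $k,l$. Since $b$ is symplectic, this forces $\nabla_{\theta_{ij}}\omega_k$ to lie in the span of $\eta_1,\ldots,\eta_g$, and we may write $\nabla_{\theta_{ij}}\omega_k = \sum_l c^{ij}_{kl}\,\eta_l$. Using the explicit formula from Remark~\ref{explicite}, namely $\kappa(\theta_{ij}) = \sum_k \omega_k^{\vee}\otimes \langle\ \cdot\ ,\nabla_{\theta_{ij}}\omega_k\rangle_E$, together with the identification $\langle\ \cdot\ ,\eta_l\rangle_E|_{F^1} = \omega_l^{\vee}$, the Kodaira-Spencer tensor becomes $\kappa(\theta_{ij}) = \sum_{k,l} c^{ij}_{kl}\,\omega_k^{\vee}\otimes \omega_l^{\vee}$. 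Comparing this with $u^*\kappa_{\pi_g}(v_{ij}) = \omega_i^{\vee}\otimes\omega_j^{\vee} + \omega_j^{\vee}\otimes\omega_i^{\vee}$ (or $\omega_i^{\vee}\otimes\omega_i^{\vee}$ when $i=j$), which follows from Proposition~\ref{caracchamps}, identifies $c^{ij}_{kl}$ and yields exactly the equations in (2)(i).

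The main obstacle is essentially bookkeeping: one must carefully track how the Kodaira-Spencer tensor gets expressed in the dual basis, and verify that the symplectic duality $H^1_{\dR}(X/U)/F^1(X/U)\cong F^1(X/U)^{\vee}$ correctly identifies $\eta_l\bmod F^1$ with $\omega_l^{\vee}$ — a verification that is routine but requires attentiveness. Once these identifications are in place, the equivalence reduces to the injectivity of $(\kappa_{\pi_g},P)$, which is available.
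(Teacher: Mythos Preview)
Your proposal is correct and follows the same approach the paper intends. The paper does not supply a separate proof of Proposition~\ref{equivalences} (note the $\blacksquare$), tacitly referring the reader to the algebraic version, Proposition~\ref{prop-equivrameq}; your argument is precisely the analytic transcription of that proof --- defining $\rho = (\kappa,\nabla\eta_1,\ldots,\nabla\eta_g)$, invoking the injectivity of $(\kappa_{\pi_g},P)$ from Theorem~\ref{thm-vertbg}, and reducing to Proposition~\ref{caracchamps} --- with your third step spelling out in more detail what the paper compresses into ``our statement now follows from Proposition~\ref{caracchamps}''.
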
 

\begin{proof}[Proof of Theorem \ref{theoremsolution} (2)]
By Proposition \ref{equivalences}, it is sufficient to prove that, for every $1\le i \le j \le g$, we have
\begin{itemize}
     \item[($i$)] $\nabla_{\theta_{ij}} \bfomega_i = \bfeta_j$, $\nabla_{\theta_{ij}} \bfomega_j = \bfeta_i$, and $\nabla_{\theta_{ij}}\bfomega_k =0$, for $k\notin \{i,j\}$
     \item[($ii$)] $\nabla_{\theta_{ij}}\bfeta_k =0$, for $1 \le k \le g$.
\end{itemize}
  Now, ($i$) follows directly from Proposition \ref{lemme1}, and ($ii$) is the content of Corollary \ref{corohor}.
\end{proof}

% \begin{obs} \label{nablaomega}
% In matricial notation (see \ref{matricialnotation}), equations ($i$) and ($ii$) in the above proof read as:
% \begin{align*}
% \nabla \bfomega  = \bfeta \tensor 2\pi i\, d\tau \ \ \ \text{ and }\ \ \  \nabla \bfeta = 0\text{,}
% \end{align*}
% where $\bfomega = ( \bfomega_1, \ldots, \bfomega_g)$ and $\bfeta = (\bfeta_1,\ldots,\bfeta_g)$ are regarded as row vectors of global sections of $\mathcal{H}^1_{\dR}(\mathbf{X}_g/\mathbf{H}_g)$. 
% \end{obs}

\subsection{Compatibility of $\varphi_g$ with $\hat{\varphi}_g$}\label{subsec-compatsiegel}

Recall that we have constructed in Section \ref{sec-intsol} a morphism of stacks $\hat{\varphi}_g: \Spec \ZZ(\!(q_{ij})\!) \to \mathcal{B}_g$. Let us briefly explain how Theorem \ref{thm-intsolsiegel}, which claims that $\hat{\varphi}_g$ is an integral solution of the higher Ramanujan equations on $\mathcal{B}_g$, follows from Theorem \ref{theoremsolution} above.

  Recall that the group of $g\times g$ integral symmetric matrices $\Sym_g(\ZZ)$ is isomorphic to the subgroup
  $$
   \left.\left\{\left(\begin{array}{cc}
                           \mathbf{1}_g & N \\
                           0  &\mathbf{1}_g
                           \end{array}\right)  \in {M}_{g\times g}(\ZZ) \, \right| N \in {\Sym}_g(\ZZ)\right\}
  $$
                     of $\Sp_{2g}(\ZZ)$, so that it acts on the object $(\mathbf{X}_g,E_g)_{/\mathbf{H}_g}$ of $\mathcal{A}_g^{\an}$ by Example \ref{exactionsp}; its action on the base manifold $\mathbf{H}_g$ is given by translations:
  $$
N \cdot \tau = \tau + N\text{,}
  $$
  hence it is proper and free.

  By Lemma \ref{descentlemma}, the principally polarized complex torus $(\mathbf{X}_g,E_g)_{/\mathbf{H}_g}$ descends to a principally polarized complex torus $(X,E)$ over the quotient $\Sym_g(\ZZ)\backslash \mathbf{H}_g$. Moreover, since the symplectic-Hodge basis $\bfb_g$ is easily checked to be invariant under the action of $\Sym_g(\ZZ)$, it also descends to a symplectic-Hodge basis $b$ on $(X,E)_{/\Sym_g(\ZZ)\backslash \mathbf{H}_g}$.   It follows that the holomorphic map $\varphi_g:\mathbf{H}_g\to B_g(\CC)$ defined in Theorem \ref{theoremsolution} factors through a  map
  $$
\psi: {\Sym}_g(\ZZ)\backslash\mathbf{H}_g \to B_g(\CC)
  $$
  associated to the principally polarized complex torus with symplectic-Hodge basis $(X,E,b)$ over $\Sym_g(\ZZ)\backslash \mathbf{H}_g$.
  
 Observe that
  \begin{align*}
    \mathbf{H}_g &\to {\Sym}_{g}(\CC)\\
            \tau &\mapsto q(\tau)=(q_{kl}(\tau))_{1\le k,l\le g}\defeq (e^{2\pi i \tau_{kl}})_{1\le k,l\le g}
  \end{align*}
  induces a biholomorphism of the quotient $\Sym_g(\ZZ)\backslash \mathbf{H}_g$ onto an open submanifold $\mathbf{D}_g$ of ${\Sym}_{g}(\CC)$. Under this identification, we have $ \frac{1}{2\pi i}\frac{\partial}{\partial \tau_{kl}}= q_{kl}\frac{\partial}{\partial q_{kl}}$, and one may check that $(X,E,b)$ corresponds formally to $(\hat{X}_{g},\hat{\lambda}_{g},\hat{b}_g)$ defined in Paragraph \ref{subsec-mumfordsiegel} (that is, $(X,E)$ is obtained by the Mumford construction performed in the analytic category, and $b$ is defined as $b_g$). For instance, for $q= (q_{ij})_{1\le i,j\le g}\in \mathbf{D}_g$, we have
  $$
  X_q = (\CC^{\times})^{g}/\langle (q_{1j},\ldots,q_{g,j})\mid 1\le j \le g \rangle \text{,}
  $$
  and the isomorphism $\mathbf{X}_{g,\tau} \stackrel{\sim}{\to} X_{q(\tau)}$ is induced by
  $$
 z=(z_1,\ldots,z_g)\mapsto (t_1(z),\ldots,t_g(z))\defeq (e^{2\pi i z_1},\ldots,e^{2\pi i z_g})
 $$
 so that
 $$
\bfomega_k = \frac{dt_k}{t_k}\text{.} 
 $$
  
It follows that $\hat{\varphi}_g$ is the ``Taylor expansion'' of $\psi$ in the variables $q_{kl}$. In particular, Theorem \ref{thm-intsolsiegel} is an immediate corollary of Theorem \ref{theoremsolution}.

\begin{obs}\label{rem-rigorous}
A rigorous construction of such correspondence requires the theory of toroidal compactification and completion at components at infinity; we refer to \cite{FC90} p. 141-142 for further details.
\end{obs}

\subsection{Analytic Higher Ramanujan equations over $\mathcal{B}_F$}\label{subsec-ahrerm}

Let $(\mathbf{X}_F,E_{F},m_F)$ be the principally polarized complex torus with $R$-multiplication over $\mathbf{H}^g$ constructed in Example \ref{ex-ppctrm}. As $\Lie_{\mathbf{H}^g}\mathbf{X}_F$ is canonically isomorphic to the trivial vector bundle $\CC^g\times \mathbf{H}^g$ over $\mathbf{H}^g$, we may define a global section of $\mathcal{F}^1(\mathbf{X}_F/\mathbf{H}^g) = (\Lie_{\mathbf{H}^g}\mathbf{X}_F)^{\vee} $ by the formula
$$
\bfomega_F \defeq 2\pi i\sum_{j=1}^gdz_j\text{.}
$$
It is easy to check that $\bfomega_F$ trivializes $\mathcal{F}^1(\mathbf{X}_F/\mathbf{H}^g)$ as a $\mathcal{O}_{\mathbf{H}^g}\tensor R$-module.

\begin{prop}\label{prop-dualksrm}
 The dual of the Kodaira-Spencer morphism
 $$
 \kappa^{\vee} : S^{2}_{\mathcal{O}_{\mathbf{H}^g}\tensor R}(\mathcal{F}^1(\mathbf{X}_F/\mathbf{H}^g)) \to \Omega^1_{\mathbf{H}^g}
 $$
 is an isomorphism of $\mathcal{O}_{\mathbf{H}^g}$-modules satisfying
 \begin{align}\label{eq-dualksomega}
  \kappa^{\vee}(\bfomega_F) = 2\pi i \sum_{j=1}^gd\tau_j\text{.}
 \end{align}
\end{prop}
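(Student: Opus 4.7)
The proof will proceed by explicit computation, entirely parallel to the Siegel case of Theorem \ref{theoremsolution}. The starting point is the formula in Remark \ref{rem-formulaksrmdual}, applied in the analytic category with the holomorphic symplectic form $\langle\,,\,\rangle_{E_F}$ of Paragraph \ref{defi-sfrf}:
$$
\kappa^{\vee}(\bfomega_F\otimes\bfomega_F)=\langle\bfomega_F,\nabla\bfomega_F\rangle_{E_F}.
$$
Thus everything reduces to computing $\nabla\bfomega_F$ and the pairings $\langle dz_k,\Im dz_l\rangle_{E_F}$.

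First, I would compute $\nabla_{\partial/\partial\tau_l}dz_k$ as a $C^{\infty}$ section of $\mathcal{H}^1_{\dR}(\mathbf{X}_F/\mathbf{H}^g)$ in the frame $(dz_1,\ldots,dz_g,d\bar z_1,\ldots,d\bar z_g)$, following verbatim the strategy of Proposition \ref{lemme1}. The integral symplectic basis of $(\mathbf{X}_F,E_F,m_F)$ induced by the bases $(x_1,\ldots,x_g)$ of $D^{-1}$ and $(r_1,\ldots,r_g)$ of $R$ is given by sections $\gamma_i$ and $\delta_i$ whose integrals of $dz_k$ are $\sigma_k(x_i)\in\RR$ and $\tau_k\sigma_k(r_i)$, respectively. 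Differentiating under the integral sign (Remark \ref{derivint}), comparing with the expansion of $\nabla_{\partial/\partial\tau_l}dz_k$ in the $C^{\infty}$ frame, and using the invertibility of the matrix $(\sigma_j(r_i))_{i,j}$ (non-degeneracy of the trace pairing), one obtains the diagonal formula
$$
\nabla_{\partial/\partial\tau_l}dz_k=\frac{\delta_{kl}}{\Im\tau_k}\,\Im dz_k, \qquad \text{hence}\qquad \nabla\bfomega_F=2\pi i\sum_{j=1}^g\frac{\Im dz_j}{\Im\tau_j}\,d\tau_j.
$$

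Next, I would establish the Hilbert-Blumenthal analog of Lemma \ref{lemme2}: since the Hermitian metric underlying $E_F$ is $H(v,w)=\sum_j\bar v_j w_j/\Im\tau_j$, a direct calculation gives $dz_k=-E_F(i\Im\tau_k\mathbf{e}_k,\ )+iE_F(\Im\tau_k\mathbf{e}_k,\ )$, from which a routine application of the definition of $\langle\,,\,\rangle_{E_F}$ yields $\langle dz_k,\Im dz_l\rangle_{E_F}=\delta_{kl}\Im\tau_k/(2\pi i)$. Substituting into $\langle\bfomega_F,\nabla\bfomega_F\rangle_{E_F}$ gives
$$
\kappa^{\vee}(\bfomega_F\otimes\bfomega_F)=(2\pi i)^2\sum_{j,k}\frac{1}{\Im\tau_j}\cdot\frac{\delta_{kj}\Im\tau_k}{2\pi i}\,d\tau_j=2\pi i\sum_{j=1}^gd\tau_j,
$$
establishing (\ref{eq-dualksomega}).

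Finally, the isomorphism assertion follows from the Kodaira-Spencer isomorphism recalled in Paragraph \ref{ksiso}: the universal morphism $\kappa$ on $\mathcal{A}_F$ is an isomorphism, and by the naturality in Paragraph \ref{kodairaspencer} our $\kappa$ on $\mathbf{H}^g$ is its pullback along the classifying morphism of $(\mathbf{X}_F,E_F,m_F)_{/\mathbf{H}^g}$, hence an isomorphism too. Alternatively, one can check this by hand: by $\mathcal{O}_{\mathbf{H}^g}\otimes R$-linearity, $\kappa^{\vee}((r\bfomega_F)\otimes(r\bfomega_F))=r^2\cdot 2\pi i\sum_j d\tau_j$, where $r\in R$ acts on $dz_j$ and on $d\tau_j$ by the scalar $\sigma_j(r)$; since $(\sigma_j(r_i))$ is invertible, the $R$-translates of $\sum_j d\tau_j$ generate $\Omega^1_{\mathbf{H}^g}$ over $\mathcal{O}_{\mathbf{H}^g}$, matching the rank-$g$ source. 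The main obstacle, if any, is the careful bookkeeping of the $R$-equivariance and of the distinction between the diagonal ``$\Im\tau$''-vector here and the full matrix $\Im\tau$ of the Siegel case; once this is disentangled, the computations are strict analogs of those in Paragraph \ref{ssec-prelres}.
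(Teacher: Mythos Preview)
Your proposal is correct and follows essentially the same approach as the paper: both reduce to the formula $\kappa^{\vee}(\bfomega_F)=\langle\bfomega_F,\nabla\bfomega_F\rangle_{E_F}$ from Remark \ref{rem-formulaksrmdual}, compute $\nabla_{\partial/\partial\tau_j}dz_i=\delta_{ij}\,\Im dz_j/\Im\tau_j$ and $\langle dz_j,\Im dz_j\rangle_{E_F}=\Im\tau_j/(2\pi i)$ via the analogs of Proposition \ref{lemme1} and Lemma \ref{lemme2}, and deduce the isomorphism either from the universal Kodaira--Spencer isomorphism over $\mathcal{A}_F$ (which the paper relegates to a footnote) or from the fact that the $R$-translates of $\sum_j d\tau_j$ generate $\Omega^1_{\mathbf{H}^g}$. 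One cosmetic remark: in your by-hand argument it is cleaner to use directly that $S^2_{\mathcal{O}\otimes R}(\mathcal{F}^1)$ is generated over $\mathcal{O}\otimes R$ by $\bfomega_F\otimes\bfomega_F$, so the image of $\kappa^{\vee}$ is the $\mathcal{O}\otimes R$-span of $2\pi i\sum_j d\tau_j$, rather than passing through the squares $(r\bfomega_F)\otimes(r\bfomega_F)$.
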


\begin{proof}
 Recall from Remark \ref{rem-formulaksrmdual} that
 $$
 \kappa^{\vee}(\bfomega_F) = \langle \bfomega_F, \nabla \bfomega_F\rangle_{E_{F}}
 $$
 where $\nabla$ denotes the Gauss-Manin connection on $\mathcal{H}^1_{\dR}(\mathbf{X}_F/\mathbf{H}^g)$. Thus, (\ref{eq-dualksomega}) is equivalent to
 \begin{align}\label{eq-dualksomega2}
\langle \bfomega_F,\nabla_{\frac{1}{2\pi i}\frac{\partial}{\partial \tau_j}}\bfomega_F\rangle_{E_F} = 1\text{,  }\ \ \ 1\le j \le g\text{.}
 \end{align}
 
 To prove this, we may argue as in Paragraph \ref{ssec-prelres}, to which we refer for further details on the computations:
 \begin{enumerate}
    \item we have, for any $1\le i,j\le g$,
 $$
 \nabla_{\frac{\partial}{\partial \tau_j}}dz_i = \begin{cases}
                                                  0 & i\neq j\\
                                                  \frac{\Im dz_j}{\Im \tau_j} & i=j
                                                 \end{cases}
 $$
 as $C^{\infty}$ global sections of $\mathcal{H}^1_{\dR}(\mathbf{X}_F/\mathbf{H}^g)$;
    \item under the comparison isomorphism (\ref{compisom}), we may write
 $$
 dz_j = - E_F(i\Im \tau_j \mathbf{e}_j, \ ) + i E_F(\Im \tau_j \mathbf{e}_j, \ )\text{,}
 $$
 and we deduce from the definition of $\langle \  , \ \rangle_{E_F}$ (\ref{defi-sfrf}) that
 $$
 \langle dz_j, \Im dz_j \rangle_{E_F} = \frac{\Im \tau_j}{2\pi i}\text{.}
 $$
 \end{enumerate}
 The equation (\ref{eq-dualksomega2}) now easily follows from (1) and (2) above.
 
 If we endow $\Omega^1_{\mathbf{H}^g}$ with the unique $R$-multiplication satisfying $r\cdot d\tau_j = \sigma_j(r)d\tau_j$ for every $1\le j \le g$, then $\kappa^{\vee}$ becomes $\mathcal{O}_{\mathbf{H}^g}\tensor R$-linear. Since $2\pi i \sum_{j=1}^gd\tau_{j}$ trivializes $\Omega^1_{\mathbf{H}^g}$ as an $\mathcal{O}_{\mathbf{H}^g}\tensor R$-module, we conclude from (\ref{eq-dualksomega}) that $\kappa^{\vee}$ is an isomorphism.\footnote{Alternatively, we might deduce that $\kappa^{\vee}$ is an isomorphism from the corresponding fact on the universal Kodaira-Spencer morphism over $\mathcal{A}_F$ (cf. Paragraph \ref{ksiso} and Proposition \ref{prop-modulilevelrm}).}
\end{proof}

By composing the Kodaira-Spencer isomorphism
$$
\kappa : T_{\mathbf{H}^g}\stackrel{\sim}{\to} \Gamma^2_{\mathcal{O}_{\mathbf{H}^g}\tensor R}(\mathcal{F}^1(\mathbf{X}_F/\mathbf{H}^g))\tensor_R D^{-1}
$$
with the trivialization of $\Gamma^2_{\mathcal{O}_{\mathbf{H}^g}\tensor R}(\mathcal{F}^1(\mathbf{X}_F/\mathbf{H}^g))$ induced by $\bfomega_F$, we obtain an isomorphism
$$
 T_{\mathbf{H}^g}\stackrel{\sim}{\to} \mathcal{O}_{\mathbf{H}^g}\tensor D^{-1}\text{.}
$$
We denote the inverse of this isomorphism by
$$
\theta_F : \mathcal{O}_{\mathbf{H}^g}\tensor D^{-1} \stackrel{\sim}{\to}T_{\mathbf{H}^g}\text{.}
$$

\begin{obs}\label{rem-explicitthetak}
 Explicitly, we deduce from Proposition \ref{prop-dualksrm} that, for any $x\in D^{-1}$,
 $$
 \theta_F(1\tensor x) =\frac{1}{2\pi i}\sum_{j=1}^g\sigma_j(x)\frac{\partial}{\partial \tau_j}\text{.}
 $$
\end{obs}

\begin{defi}\label{defi-ahrerm}
 Let $U\subset \mathbf{H}^g$ be an open subset, and $u : U \to B_F(\CC)$ be a holomorphic map. We say that $u$ is an \emph{analytic solution of the higher Ramanujan equations over $\mathcal{B}_F$} if
 \begin{align}
  Tu \circ \theta_F = u^*v_F\text{,}
 \end{align}
 that is, if the diagram
 $$
 \begin{tikzcd}
  \mathcal{O}_{\mathbf{H}^g}\tensor D^{-1} \arrow{r}{\theta_F}\arrow{d}[swap]{\cong} & T_{\mathbf{H}^g}\arrow{d}{Tu}\\\
  u^*(\mathcal{O}_{B_F(\CC)}\tensor D^{-1})\arrow{r}[swap]{v_F} & u^*T_{B_F(\CC)}
 \end{tikzcd}
 $$
 commutes.
\end{defi}

Let $(x_1,\ldots,x_g)$ be a $\ZZ$-basis of $D^{-1}$, and let $(r_1,\ldots,r_g)$ be the dual $\ZZ$-basis of $R$. If we denote $\theta^{r_j} = \theta_{F}(1\tensor x_j)$ (resp. $v^{r_j} = v_F(1\tensor x_j)$), then the higher Ramanujan equations acquire the more concrete form \label{symb:thetarianal} \label{symb:vri}
$$
Tu(\theta^{r_j}) = u^*v^{r_j}\text{, }\ \ \ 1\le j \le g\text{.} 
$$

To construct an analytic solution of the higher Ramanujan equations over $\mathcal{B}_F$ defined on $\mathbf{H}^g$, we proceed as in the Siegel case. The proof of the next result is analogous to its Siegel counterpart.

\begin{prop}[cf. Proposition \ref{equivalences}]\label{prop-characsolhrerm}
Let $U\subset \mathbf{H}^g$ be an open subset and $u: U \to B_F(\CC)$ be the holomorphic map corresponding to a principally polarized complex torus with $R$-multiplication $(X,E,m)$ over $U$ endowed with some symplectic-Hodge basis $b=(\omega,\eta)$. Then the following are equivalent:
\begin{enumerate}
    \item $u$ is an analytic solution of the higher Ramanujan equations over $\mathcal{B}_F$.
%     \item For every $1\le i \le j \le g$, we have
% \begin{align*}
% c(\theta_{ij}) = u^*c_{g,\CC}^{\an}(v_{ij})
% \end{align*}
% where $c : T_U \longrightarrow \Gamma^2(\mathcal{F}^1(X/U)^{\vee})\oplus \mathcal{H}^1_{\dR}(X/U)^{\oplus g}$ is the morphism defined above for the symplectic-Hodge basis $b$ of $(X,E)_{/U}$, and $v_{ij}$ are the higher Ramanujan vector fields on $B_g(\CC)$.
    \item We have
    $$
    \nabla_{\theta_F}b = b\left(\begin{array}{cc}
                              0 & 0 \\
                              1 & 0
                             \end{array}\right)\text{.}
    $$
    
\end{enumerate} 
\hfill $\blacksquare$
\end{prop}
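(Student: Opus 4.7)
The plan is to transpose the algebraic proof of Proposition \ref{prop-equivrameqrm} to the analytic setting. The two key inputs transfer verbatim by GAGA-type compatibility (analytification preserves the Hodge filtration, the Gauss--Manin connection, and the Kodaira--Spencer morphism): (i) the analytic counterpart of Theorem \ref{thm-isomvertbk}, which exhibits the splitting $T_{B_F(\CC)/\ZZ} = T_{B_F(\CC)/A_F(\CC)}\oplus \mathcal{R}_F$ together with the isomorphism $P:T_{B_F(\CC)/A_F(\CC)}\stackrel{\sim}{\to} \pi_F^*\mathcal{H}_F\tensor_R D$ given by $\theta\mapsto \nabla_\theta\eta_{\mathrm{univ}}$; and (ii) the analytic counterpart of Proposition \ref{caracchampshb}, which characterizes $v_F$ as the unique holomorphic section of $T_{B_F(\CC)/\ZZ}\tensor_R D$ satisfying $\nabla_{v_F}\omega_{\mathrm{univ}}=\eta_{\mathrm{univ}}$ and $\nabla_{v_F}\eta_{\mathrm{univ}}=0$.

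I would first prove (1)$\Rightarrow$(2). Given $Tu\circ\theta_F=u^*v_F$, the compatibility of the Gauss--Manin connection with base change gives, for every section $\alpha$ of $\mathcal{H}^1_{\dR}(X_{\mathrm{univ}}/B_F(\CC))$, the identity $\nabla_{\theta_F}(u^*\alpha)=u^*(\nabla_{Tu(\theta_F)}\alpha)$. Specializing to $\alpha=\omega_{\mathrm{univ}}$ and $\alpha=\eta_{\mathrm{univ}}$ and invoking the analytic form of Proposition \ref{caracchampshb}, one obtains $\nabla_{\theta_F}\omega=u^*\eta_{\mathrm{univ}}=\eta$ and $\nabla_{\theta_F}\eta=0$, which is precisely the matrix equation in (2).

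For (2)$\Rightarrow$(1), I would set $\xi \defeq Tu(\theta_F)-u^*v_F$, a holomorphic section of $u^*T_{B_F(\CC)/\ZZ}\tensor_R D$, and use the same base-change formula in reverse to deduce from the hypothesis that $\nabla_\xi\omega_{\mathrm{univ}}|_U=0$ and $\nabla_\xi\eta_{\mathrm{univ}}|_U=0$. It then remains to argue that these two conditions force $\xi=0$; this is the analytic analog, in the Hilbert--Blumenthal setting, of Lemma \ref{corocommute1}. The argument proceeds by splitting $\xi$ according to $T_{B_F(\CC)/\ZZ}=T_{B_F(\CC)/A_F(\CC)}\oplus\mathcal{R}_F$: the vanishing $\nabla_\xi\eta_{\mathrm{univ}}=0$ kills the vertical component by the $P$-isomorphism of Theorem \ref{thm-isomvertbk}, so $\xi$ lies in $u^*\mathcal{R}_F\tensor_R D$; the trivialization $\mathcal{R}_F\cong \mathcal{O}_{B_F(\CC)}\tensor D^{-1}$ by $v_F$ then writes $\xi=f\cdot u^*v_F$ for some $f\in \Gamma(U,\mathcal{O}_U\tensor R)$, and the remaining condition $\nabla_\xi\omega_{\mathrm{univ}}=f\eta=0$ forces $f=0$.

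The main obstacle is not conceptual but notational: one must be careful throughout with the interplay between the $R$-multiplication and the tensoring with the different ideals $D$ and $D^{-1}$ so that the pairing $\Psi_\lambda$, the Kodaira--Spencer morphism, and the "coefficient rings" of $v_F$ and $\theta_F$ all typecheck in the various displayed identities. Once this bookkeeping is set up, following the template of the Siegel case (Proposition \ref{equivalences}) and invoking Proposition \ref{caracchampshb} exactly as in the proof of Proposition \ref{prop-equivrameqrm}, the argument is entirely formal.
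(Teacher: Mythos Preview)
Your proposal is correct and follows essentially the same route as the paper. The paper's (omitted) proof, modeled on Proposition~\ref{prop-equivrameq} and its Hilbert--Blumenthal counterpart Proposition~\ref{prop-equivrameqrm}, packages both implications into the single observation that the $\mathcal{O}$-linear map $(\kappa_{\pi_F},P):T_{\mathcal{B}_{F,\CC}}\to (\Gamma^2_{\mathcal{O}\tensor R}(\pi_F^*\mathcal{F}_F^*)\tensor_R D^{-1})\oplus(\pi_F^*\mathcal{H}_F\tensor_R D)$ is an isomorphism compatible with base change, so that $Tu\circ\theta_F=u^*v_F$ is equivalent to equality after applying $(\kappa_{\pi_F},P)$; unwinding via Proposition~\ref{caracchampshb} yields condition~(2). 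Your argument does the same thing in two steps: your direction (1)$\Rightarrow$(2) is the ``apply $(\kappa_{\pi_F},P)$'' half, and your (2)$\Rightarrow$(1) argument---killing the vertical component of $\xi$ via $P$ and then the $\mathcal{R}_F$-component via $v_F$---is precisely the injectivity of $(\kappa_{\pi_F},P)$ spelled out concretely, in the spirit of Lemma~\ref{corocommute1}.
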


\label{symb:phiF}
\begin{theorem}\label{thm-defiphik}
 Let
 $$
 \bfeta_F \defeq \nabla_{\theta_F}\bfomega_F \in \Gamma(\mathbf{H}^g,\mathcal{H}^1_{\dR}(\mathbf{X}_F/\mathbf{H}^g)\tensor_RD)\text{.}
 $$
 Then:
 \begin{enumerate}
  \item The couple $\bfb_F\defeq (\bfomega_F,\bfeta_F)$ is a symplectic-Hodge basis of $(\mathbf{X}_F,E_F,m_F)_{/\mathbf{H}^g}$.
  \item The holomorphic map
  $$
  \varphi_F: \mathbf{H}^g\to B_F(\CC)
  $$
  induced by $\bfb_F$ is an analytic solution of the higher Ramanujan equations over $\mathcal{B}_F$.
 \end{enumerate}
\end{theorem}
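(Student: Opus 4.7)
The plan is to mimic the proof of Theorem \ref{theoremsolution} in the Hilbert--Blumenthal context, replacing the $2g$-uple of flat cycles $(\gamma_k,\delta_k)$ used in Paragraph \ref{ssec-prelres} by the integral symplectic basis of $(\mathbf{X}_F,E_F,m_F)_{/\mathbf{H}^g}$ coming from the defining isomorphism $(D^{-1}\oplus R)_{\mathbf{H}^g}\stackrel{\sim}{\to} L$ (Example \ref{ex-ppctrm}). Concretely, for each $x\in D^{-1}$ set $\gamma_x(\tau)\defeq(\sigma_j(x))_{1\le j\le g}\in \CC^g$, and for each $y\in R$ set $\delta_y(\tau)\defeq(\tau_j\sigma_j(y))_{1\le j\le g}$; these give global sections of $R_1{\bfp_F}_*\ZZ_{\mathbf{X}_F}$. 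Using the trivialization $\Lie_{\mathbf{H}^g}\mathbf{X}_F\cong \CC^g\times \mathbf{H}^g$ and the dual frame $(dz_1,\ldots,dz_g)$ of $(\Lie_{\mathbf{H}^g}\mathbf{X}_F)^{\vee}\cong \mathcal{F}^1(\mathbf{X}_F/\mathbf{H}^g)$, all the relevant ``integration pairings'' become transparent computations.

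For part (1), it is already noted in Paragraph \ref{subsec-ahrerm} that $\bfomega_F = 2\pi i\sum_j dz_j$ is an $\mathcal{O}_{\mathbf{H}^g}\tensor R$-trivialization of $\mathcal{F}^1(\mathbf{X}_F/\mathbf{H}^g)$. By Remark \ref{remshb}, it then suffices to verify $\Psi_{E_F}(\bfomega_F,\bfeta_F)=1$. Using Remark \ref{rem-formulaksrmdual} together with Proposition \ref{prop-dualksrm}, we get
\begin{align*}
\langle \bfomega_F,\nabla_{\theta_F(1\tensor x)}\bfomega_F\rangle_{E_F}
= \kappa^{\vee}(\bfomega_F)(\theta_F(1\tensor x))
= 2\pi i\sum_{j=1}^g d\tau_j\Bigl(\tfrac{1}{2\pi i}\sum_{k=1}^g\sigma_k(x)\tfrac{\partial}{\partial\tau_k}\Bigr)
= \Tr(x)
\end{align*}
for every $x\in D^{-1}$ (using Remark \ref{rem-explicitthetak}). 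Since $\Tr\Psi_{E_F}=\langle\ ,\ \rangle_{E_F}$ and this identity is $R$-linear in $x$, it forces $\Psi_{E_F}(\bfomega_F,\bfeta_F)=1$ in $\mathcal{O}_{\mathbf{H}^g}\tensor R$, which is exactly what Remark \ref{remshb} requires.

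For part (2), by Proposition \ref{prop-characsolhrerm}, it is enough to show $\nabla_{\theta_F}\bfomega_F=\bfeta_F$ and $\nabla_{\theta_F}\bfeta_F=0$. The first equality holds by definition of $\bfeta_F$. For the second, I shall prove the stronger statement that $\bfeta_F$ is horizontal for the Gauss--Manin connection, by showing that it lies in the subsheaf of $\mathcal{H}^1_{\dR}(\mathbf{X}_F/\mathbf{H}^g)\tensor_RD$ consisting of horizontal sections for $\nabla$. By the discussion in Paragraph \ref{sectioncompisom}, it is enough to exhibit integral periods. Using Remark \ref{derivint} and the explicit formulas for $\gamma_x$, $\delta_y$, and $\theta_F(1\tensor z)$, one computes, for every $x,z\in D^{-1}$ and $y\in R$,
\begin{align*}
\int_{\gamma_x}\bfeta_F(1\tensor z) = \theta_F(1\tensor z)\!\!\int_{\gamma_x}\bfomega_F
 = \theta_F(1\tensor z)\bigl(2\pi i\,\Tr(x)\bigr) = 0,
\end{align*}
and
\begin{align*}
\int_{\delta_y}\bfeta_F(1\tensor z) = \theta_F(1\tensor z)\!\!\int_{\delta_y}\bfomega_F
 = \tfrac{1}{2\pi i}\sum_{k=1}^g\sigma_k(z)\tfrac{\partial}{\partial\tau_k}\Bigl(2\pi i\sum_j\tau_j\sigma_j(y)\Bigr)
 = \Tr(zy)\in\ZZ.
\end{align*}
Hence $\bfeta_F(1\tensor x_i)$ corresponds under the comparison isomorphism \eqref{compisom} to an integral section of $R^1\bfp_{F\,*}\ZZ_{\mathbf{X}_F}$ (the dual basis vector to $\delta_{r_i}$), which is horizontal for $\nabla$. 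By $R$-linearity, this yields $\nabla\bfeta_F=0$, in particular $\nabla_{\theta_F}\bfeta_F=0$, concluding the proof.

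The main obstacle is bookkeeping the various $R$-module structures: $\theta_F$ lives naturally in $T_{\mathbf{H}^g}\tensor_R D$, $\bfeta_F$ in $\mathcal{H}^1_{\dR}\tensor_R D$, and $\Psi_{E_F}$ is valued in $\mathcal{O}\tensor D^{-1}$; the identity $\Psi_{E_F}(\bfomega_F,\bfeta_F)=1\in\mathcal{O}_{\mathbf{H}^g}\tensor R$ must be interpreted in the sense of Paragraph \ref{subsec-shbrm}. Once these identifications are fixed, the computations reduce to the two short trace identities above, which play the roles of Proposition \ref{lemme1}, Corollary \ref{caraceta}, and Corollary \ref{corohor} in the Siegel case.
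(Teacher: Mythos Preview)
Your proof is correct and follows essentially the same approach as the paper. Part (1) is identical: reduce to $\Psi_{E_F}(\bfomega_F,\bfeta_F)=1$ via Remark \ref{remshb}, and verify $\langle \bfomega_F,\nabla_{\theta_F(1\tensor x)}\bfomega_F\rangle_{E_F}=\Tr(x)$ using Proposition \ref{prop-dualksrm} and Remark \ref{rem-explicitthetak}. For part (2), the paper argues slightly more tersely, reducing $\nabla_{\theta_F}\bfeta_F=0$ directly to the vanishing of the second derivatives $\theta_F(1\tensor x)\theta_F(1\tensor y)\int_\gamma\bfomega_F$, whereas you compute the first derivatives explicitly and observe they are constants (indeed integers), thereby identifying $\bfeta_F$ with an integral, hence horizontal, class --- this is the exact Hilbert--Blumenthal analog of Corollaries \ref{caraceta} and \ref{corohor}, and is a mild strengthening of what the paper writes down.
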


\begin{proof}
 In view of Remark \ref{remshb}, to prove (1) it suffices to prove that $\Psi_{E_F}(\bfomega_F,\bfeta_F) =1$, i.e., that the $\mathcal{O}_{\mathbf{H}^g}\tensor R$-linear morphism
 \begin{align*}
 \mathcal{O}_{\mathbf{H}^g}\tensor D^{-1} &\to  \mathcal{O}_{\mathbf{H}^g}\tensor D^{-1} \\
 1\tensor x &\mapsto \Psi_{E_F}(\bfomega_F,\nabla_{\theta_F(1\tensor x)}\bfomega_F)
 \end{align*}
 is the identity. By Remark \ref{rem-dualityrelation}, this is yet equivalent to proving that, for every $x\in D^{-1}$,
 $$
  \langle \bfomega_F,\nabla_{\theta_F(1\tensor x)}\bfomega_F\rangle_{E_F} = \Tr(x)\text{.}
 $$
 This follows immediately from Remark \ref{rem-explicitthetak} and from formula (\ref{eq-dualksomega2}) in the proof of Proposition \ref{prop-dualksrm}:
 \begin{align*}
  \langle \bfomega_F,\nabla_{\theta_F(1\tensor x)}\bfomega_F\rangle_{E_F} = \sum_{j=1}^g\sigma_j(x)\langle \bfomega_F,\nabla_{\frac{1}{2\pi i}\frac{\partial}{\partial \tau_j}}\bfomega_F \rangle_{E_F} = \sum_{j=1}^g\sigma_j(x)= \Tr(x)\text{.}
 \end{align*}
 
 To prove (2), we apply Proposition \ref{prop-characsolhrerm}: the equation $\nabla_{\theta_F}\bfomega_F = \bfeta_F$ holds by definition, whereas $\nabla_{\theta_F}\bfeta_F = 0$ is equivalent to asserting that
 $$
 \theta_F(1\tensor x)\theta_F(1\tensor y)\int_{\gamma}\bfomega_F = 0
 $$
 for every $x,y\in D^{-1}$ and $\gamma$ local section of $R_1{\bfp_F}_*\ZZ_{\mathbf{X}_F}$; this, in turn, is an easy consequence of Remark \ref{rem-explicitthetak} and of the explicit definition of $\bfomega_F$.
\end{proof}

\begin{obs}\label{rem-relationphianal}
  Consider the morphism of stacks $f_t: \mathcal{B}_F \to \mathcal{B}_g$ of Remark \ref{rem-relationbfbg}, and the holomorphic map $h_t: \mathbf{H}^g \to \mathbf{H}_g$ of Remark \ref{rem-relhalfspace}. One may check using the characterization in Corollary \ref{caraceta} that the following diagram is commutative:
  $$
  \begin{tikzcd}
    \mathbf{H}^g \arrow{r}{\varphi_F}\arrow{d}[swap]{h_t}& B_F(\CC)\arrow{d}{f_t}\\
    \mathbf{H}_g \arrow{r}[swap]{\varphi_g} & B_g(\CC)
  \end{tikzcd}
  $$
\end{obs}

\subsection{Compatibility of $\varphi_F$ and $\hat{\varphi}_F$}\label{subsec-compatibilityhb}

Analogously to the Siegel case, $\varphi_F$ and $\hat{\varphi}_F$ are compatible.

To see this, we first recall that the abelian group $D^{-1}$ can be seen as a subgroup of $\SL(D^{-1}\oplus R)$ via $x\mapsto ( 1 \ x \ ; \ 0 \ 1 )$, so that it acts on the object $(\mathbf{X}_F,E_F,m_F)_{/\mathbf{H}^g}$ of $\mathcal{A}_F^{\an}$ by Example \ref{ex-actionsl}. The action of $D^{-1}$ on the base manifold $\mathbf{H}^g$ is given by translations:
  \begin{align*}
   x\cdot \tau = \tau + (\sigma_{j}(x))_{1\le j\le g}\text{,}
  \end{align*}
  so that it is proper and free.

  Therefore, by Lemma \ref{descentlemma} and Remark \ref{rem-descentlemma}, $(\mathbf{X}_F,E_F,m_F)_{/\mathbf{H}^g}$ descends to a principally polarized complex torus with $R$-multiplication $(X,E,m)$ over the quotient $D^{-1}\backslash\mathbf{H}^g$. Since $\bfb_F$ is invariant under the action of $D^{-1}$, it also descends to a symplectic-Hodge basis $b$ of $(X,E,m)_{/D^{-1}\backslash\mathbf{H}^g}$, so that $\varphi_F : \mathbf{H}^g \to B_F(\CC)$ factors through an analytic map
  $$
\psi : D^{-1}\backslash\mathbf{H}^g \to B_F(\CC)\text{.}
$$

To check that $\hat{\varphi}_F$ is the formal version of $\psi$, we let $(x_1,\ldots,x_g)$ be the same $\ZZ$-basis of $D^{-1}$ considered in Paragraph \ref{subsec-hrebf}, and we observe that $D^{-1}\backslash \mathbf{H}^g$ can be identified with an open submanifold $\mathbf{D}_F$ of $\CC^g$ via
$$
\tau \mapsto q(\tau) = (q^{r_1}(\tau),\ldots,q^{r_g}(\tau))\defeq (e^{2\pi i \Tr(r_1\tau)},\ldots,e^{2\pi i \Tr(r_g\tau)}) \in \CC^g\text{,}
$$
where, for $r \in R$, we denote $\Tr(r\tau) \defeq \sum_{j=1}^g\sigma_j(r)\tau_j$.

If we identify $\CC^g$ with $\CC\tensor D^{-1}$ via the field embeddings $\sigma_i: F \to \CC$, then
$$
\mathbf{X}_{F,\tau} = \CC\tensor D^{-1}/(D^{-1}+ \tau R)
$$
and the natural isomorphism
$$
\mathbf{X}_{F,\tau} \stackrel{\sim}{\to} X_{q(\tau)}
$$
is induced by $z\tensor x \mapsto e^{2\pi i z}\tensor x$. We deduce from this that, for $q=(q^{r_1},\ldots,q^{r_g}) \in \mathbf{D}_F$, we have
$$
X_q = \CC^{\times}\tensor D^{-1}/ Y_q\text{,}
$$
where $Y_q$ is the image of the unique $R$-linear map $R \to \CC^{\times}\tensor D^{-1}$ whose trace $R \to \CC^{\times}$ is given by $r\mapsto q^{r}(\tau)\defeq e^{2\pi i \Tr(r\tau)}$ (cf. Remark \ref{rem-dualityrelation}). This shows that $\hat{X}_F$ is the formal analog of $X$, and we may argue similarly for the principal polarization and the $R$-multiplication.

To see that $\hat{b}_F$ coincides with $b$, we consider the identification of $\CC^{\times} \tensor D^{-1}$ with $\CC^g$ given by $(x_1,\ldots,x_g)$, so that $\mathbf{X}_{F,\tau} \stackrel{\sim}{\to} Y_{q(\tau)}$ is induced by
$$
\tau \mapsto (t^{r_1}(z),\ldots,t^{r_g}(z))\defeq  (e^{2\pi i \Tr(r_1z)},\ldots,e^{2\pi i \Tr(r_gz)})
$$
where, for $r \in R$, we define $\Tr(rz)\defeq \sum_{j=1}^gr_jz_j$. Thus (cf. Remark \ref{rem-formulaomegaf})
$$
\bfomega_F = 2\pi i \sum_{j=1}^gdz_j = \sum_{i=1}^g\Tr(x_i)\frac{dt^{r_i}}{t^{r_i}}\text{.}
$$
Also, if $q^{r_i} : \mathbf{H}^g \to \CC$ is defined as above, a computation shows that, for $x \in D^{-1}$, the vector field $\theta_F(1\tensor x)$ defined in Paragraph \ref{subsec-ahrerm} (cf. Remark \ref{rem-explicitthetak}) is given by
$$
\theta_F(1\tensor x) = \sum^g_{i=1} \Tr(r_ix)q^{r_i}\frac{\partial}{\partial q^{r_i}}\text{.}
$$

It follows from these formulas that Theorem \ref{thm-intsolrm} is an immediate corollary of Theorem \ref{thm-defiphik} (see also Remark \ref{rem-rigorous}).

\section{Values of $\varphi_g$ and $\varphi_F$;  periods of abelian varieties} \label{sectionvaluesphi}

In this section we show that the values of the analytic maps $\varphi_g : \mathbf{H}_g \to B_g(\CC)$ (resp. $\varphi_F: \mathbf{H}^g \to B_F(\CC)$) defined in Theorem \ref{theoremsolution} (resp. Theorem \ref{thm-defiphik}) can be used to ``compute'', up to a finite extension, the fields generated by the periods of principally polarized abelian varieties (resp. principally polarized abelian varieties with real multiplication). In particular, the transcendence degree of such fields of periods can be read from the analytic maps $\varphi_g$ and $\varphi_F$.

\subsection{Fields of periods of abelian varieties and statement of our main theorems}

Let $X$ be a complex abelian variety (resp. a complex torus). A \emph{field of definition} of $X$ is a subfield $k$ of $\CC$ for which there exists an abelian variety $X_0$ over $k$ such that $X$ is isomorphic to $X_0\tensor_k \CC$ as a complex abelian variety (resp. isomorphic to $X_0(\CC)$ as a complex torus); we say that $X_0$ is a $k$-model of $X$. 

\label{symb:PXk}
\begin{defi}
Let $X$ be a complex abelian variety, $k$ be a field of definition of $X$, and fix a $k$-model $X_0$ of $X$. The \emph{field of periods} $\mathcal{P}(X/k)$ of $X$ over $k$ is defined as the smallest subfield of $\CC$ containing $k$ and the image of pairing
\begin{align*}
H^1_{\dR}(X_0/k)\tensor H_1(X_0(\CC),\ZZ)   &\to \CC\\
           \alpha \tensor \gamma &\mapsto \int_{\gamma}\alpha
\end{align*} 
given by ``integration of differential forms'' (cf. \ref{sectioncompisom}).
\end{defi}

The field $\mathcal{P}(X/k)$ does not depend on the choice of $X_0$.

\begin{obs}\label{rem-fieldofrationality}
Alternatively, the field of periods $\mathcal{P}(X/k)$ can be regarded as the ``field of rationality" of the comparison isomorphism (see Remark \ref{obs:comp})
$$
\comp: \CC\tensor_{k} H^1_{\dR}(X/k) \stackrel{\sim}{\to} \CC\tensor_{\QQ}H^1(X(\CC),\QQ)\text{,}
$$
that is, the field of definition (cf. \ref{notationresidue}) of the complex point $\comp$ of the $k$-variety
$$
\text{Isom}(H^1_{\dR}(X/k),k\tensor_{\QQ}H^1(X(\CC),\QQ))\text{.}
$$
\end{obs}

Let $A_g$ be the coarse moduli space associated to the Deligne-Mumford stack $\mathcal{A}_g \to \Spec \ZZ$ (which exists as an algebraic space by the Keel-Mori theorem, cf. \cite{olsson16} Theorem 11.1.2). \label{symb:coarseAg} We recall that $A_g$ is a quasi-projective scheme over $\Spec \ZZ$ (cf. \cite{moret-bailly85} VII Théorème 4.2) endowed with a canonical morphism $\mathcal{A}_g \to A_g$ inducing, for every algebraically closed field $k$, a bijection of $A_g(k)$ with the set of isomorphism classes of principally polarized abelian varieties over $k$.

Since any principally polarized complex torus $(X,E)$ of dimension $g$ is algebraizable, $(X,E)$ defines an isomorphism class in the category $\mathcal{A}_g(\CC)$ that we shall denote $[(X,E)]$. Let \label{symb:jg} 
\begin{align*}
j_g : \mathbf{H}_g &\to A_g(\CC)\\
               \tau &\mapsto [(\mathbf{X}_{g,\tau},E_{g,\tau})]\text{.}
\end{align*}
Observe that, for any $\tau\in \mathbf{H}_g$, the field $\QQ(j_g(\tau))\subset \CC$ (see \ref{notationresidue}) is a field of definition of $\mathbf{X}_{g,\tau}$.

This section is devoted to the proof of the following theorem.

\begin{theorem} \label{trdeg}
With notation as in Example \ref{torus} and Theorem \ref{theoremsolution}, for every $\tau \in \mathbf{H}_g$ the field of periods $\mathcal{P}(\mathbf{X}_{g,\tau}/\QQ(j_g(\tau)))$ is a finite field extension of $\QQ(2\pi i,\tau,\varphi_g(\tau))$. In particular,
\begin{align*}
\trdeg_{\QQ}\QQ(2\pi i,\tau,\varphi_g(\tau)) = \trdeg_{\QQ} \mathcal{P}(\mathbf{X}_{g,\tau}/\QQ(j_g(\tau)))\text{.}
\end{align*}
\end{theorem}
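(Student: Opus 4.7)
The plan is to compare two symplectic-Hodge bases of $(\mathbf{X}_{g,\tau},E_{g,\tau})$: the analytic basis $\bfb_g(\tau)=(\bfomega_k(\tau),\bfeta_k(\tau))$, whose periods against the integral basis $(\gamma_l(\tau),\delta_l(\tau))$ of Example~\ref{intsymplbasis} are computed in closed form by Theorem~\ref{theoremsolution} and Corollary~\ref{caraceta} (namely $\int_{\gamma_l}\bfomega_k=2\pi i\,\delta_{kl}$, $\int_{\delta_l}\bfomega_k=2\pi i\,\tau_{kl}$, $\int_{\gamma_l}\bfeta_k=0$, $\int_{\delta_l}\bfeta_k=\delta_{kl}$), and an algebraic basis $b^{\mathrm{alg}}$ of a model over $k_j:=\QQ(j_g(\tau))$, whose periods by definition generate $\mathcal{P}:=\mathcal{P}(\mathbf{X}_{g,\tau}/k_j)$. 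Set $k_B=\QQ(\varphi_g(\tau))$; then $k_j\subset k_B$ via the forgetful morphism $B_g\to A_g$.

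\textbf{Upper bound.} By representability of $\mathcal{B}_g\tensor\ZZ[1/2]$ (Theorem~\ref{repr}), the $k_B$-point $\varphi_g(\tau)\in B_g(\CC)$ corresponds to a triple $(Y,\lambda,b_0)/k_B$ with $b_0=(\omega_k^0,\eta_k^0)$ a symplectic-Hodge basis of $H^1_{\dR}(Y/k_B)$ whose image under the comparison isomorphism --- compatible with polarizations by Lemma~\ref{compsympl} --- is $\bfb_g(\tau)$. The periods listed above therefore all lie in $k_B(2\pi i,\tau)$; conversely $2\pi i=\int_{\gamma_1}\omega_1^0$ is itself a period and each $\tau_{kl}=(2\pi i\,\tau_{kl})/(2\pi i)$ is a ratio of periods. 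Hence $\mathcal{P}(\mathbf{X}_{g,\tau}/k_B)=\QQ(2\pi i,\tau,\varphi_g(\tau))$.

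\textbf{Descent to $k_j$.} After a harmless finite extension of $k_j$, I use Lemma~\ref{f1lagrangian} together with Proposition~\ref{exisunic} to pick a $k_j$-model $(Y',\lambda')$ of $(\mathbf{X}_{g,\tau},E_{g,\tau})$ equipped with a symplectic-Hodge basis $b^{\mathrm{alg}}=(\omega'_k,\eta'_k)$; by definition all entries of its period matrix lie in $\mathcal{P}$. Both $b^{\mathrm{alg}}\tensor\CC$ and $\bfb_g(\tau)$ are symplectic-Hodge bases of $(\mathbf{X}_{g,\tau},E_{g,\tau})$, so $\bfb_g(\tau)=b^{\mathrm{alg}}\cdot N$ for a unique $N=(A\ B\,;\,0\ (A\transp)^{-1})\in P_g(\CC)$. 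Denoting by $P_{\gamma\omega'},P_{\delta\omega'},P_{\gamma\eta'},P_{\delta\eta'}\in M_g(\mathcal{P})$ the four blocks of the period matrix of $b^{\mathrm{alg}}$, this change of basis translates into the matricial system $P_{\gamma\omega'}A=2\pi i\,I_g$, $P_{\delta\omega'}A=2\pi i\,\tau$, $P_{\gamma\omega'}B+P_{\gamma\eta'}(A\transp)^{-1}=0$, $P_{\delta\omega'}B+P_{\delta\eta'}(A\transp)^{-1}=I_g$. Solving in order: $\tau=P_{\delta\omega'}P_{\gamma\omega'}^{-1}\in M_g(\mathcal{P})$; $(A\transp)^{-1}=(P_{\delta\eta'}-P_{\delta\omega'}P_{\gamma\omega'}^{-1}P_{\gamma\eta'})^{-1}\in M_g(\mathcal{P})$, whence $A\in M_g(\mathcal{P})$; comparing any nonzero entry of the identity $A=2\pi i\cdot P_{\gamma\omega'}^{-1}$ yields $2\pi i\in\mathcal{P}$; and finally $B\in M_g(\mathcal{P})$. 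Thus $N\in P_g(\mathcal{P})$, so $b^{\mathrm{alg}}\cdot N$ defines a $\mathcal{P}$-rational symplectic-Hodge basis of $(Y'\tensor_{k_j}\mathcal{P},\lambda')$, which by the universal property of $B_g$ produces a $\mathcal{P}$-point of $B_g$ mapping to $\varphi_g(\tau)\in B_g(\CC)$. This forces $\QQ(\varphi_g(\tau))\subset\mathcal{P}$, hence $\QQ(2\pi i,\tau,\varphi_g(\tau))\subset\mathcal{P}$.

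\textbf{Finite extension and main obstacle.} Inverting the change of basis gives the reverse direction up to finite degree: on a finite extension $K$ of $k_B$ over which $\mathrm{Isom}((Y,\lambda)_K,(Y',\lambda')_K)$ acquires a $K$-point one has $N\in P_g(K)$, so the period matrix of $b^{\mathrm{alg}}$, equal to $(N^{-1})\transp\cdot P(\bfb_g(\tau))$, has entries in $K(2\pi i,\tau)$; thus $\mathcal{P}\subset K(2\pi i,\tau)$, a finite extension of $\QQ(2\pi i,\tau,\varphi_g(\tau))$ since $K/k_B$ is finite. The main obstacle is the matricial system in the previous paragraph: isolating $2\pi i$ and the entries $\tau_{kl}$ \emph{separately} --- rather than only products like $2\pi i\cdot\tau_{kl}$ --- relies crucially on the observation from Corollary~\ref{caraceta} that $\bfeta_k(\tau)$ has integral periods against $(\gamma_l,\delta_l)$, so that the lower-right $g\times g$ block of the period matrix of $\bfb_g(\tau)$ equals $I_g$ rather than a multiple of $2\pi i$. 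It is this normalization that makes $(A\transp)^{-1}$ solvable over $\mathcal{P}$, and thereby yields the full conclusion $N\in P_g(\mathcal{P})$ in place of the weaker $N\in P_g(\mathcal{P}(2\pi i,\tau))$ that the system would \emph{a priori} provide.
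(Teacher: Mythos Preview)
Your proof is correct and follows essentially the same route as the paper's: both compare an algebraic symplectic-Hodge basis over $k_j=\QQ(j_g(\tau))$ with $\bfb_g(\tau)$ via an element of $P_g(\CC)$, using the explicit period matrix of $\bfb_g(\tau)$ against $\beta_{g,\tau}$ (Corollary~\ref{caraceta}) together with the finiteness of $\Aut(X,\lambda)$. The paper simply packages your hand matrix manipulations into the isomorphism $\GSp_{2g}^*\stackrel{\sim}{\to}\GG_m\times\Sym_g\times P_g$ of Lemma~\ref{isomorphismgsp} (your element $N$ is the paper's $p(s)$) and replaces your ``Isom acquires a $K$-point'' step by the equivalent statement that the orbit map $P_{g,k_j}\to\varpi_g^{-1}(j_g(\tau))$ is finite and surjective (Lemma~\ref{lemma-finsurj}), which yields both inclusions at once via $\mathcal{P}=k_j(2\pi i,\tau,p(s))$ with $k_j(p(s))$ finite over $\QQ(\varphi_g(\tau))$.
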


Here, we see $(2\pi i,\tau,\varphi_g(\tau))$ as a complex point of the $\QQ$-variety $\AA^1_{\QQ}\times_{\QQ}\Sym_{g,\QQ}\times B_{g,\QQ}$, and $\QQ(2\pi i,\tau,\varphi_g(\tau))$ denotes its field of definition; see \ref{notationresidue}.

The above result also admits a Hilbert-Blumenthal analog, and we indicate at the end of this section, without proofs, how to obtain it. As above, we denote by $A_F$ the coarse moduli space associated to $\mathcal{A}_F$,\label{symb:coarseAF} and we consider a map \label{symb:jF}
\begin{align*}
j_F : \mathbf{H}^g &\to A_F(\CC)\\
               \tau &\mapsto [(\mathbf{X}_{F,\tau},E_{F,\tau},m_{F,\tau})]\text{.}
\end{align*}

\begin{theorem}\label{trdegrm}
 With notation as in Example \ref{ex-ppctrm} and Theorem \ref{thm-defiphik}, for every $\tau \in \mathbf{H}^g$ the field of periods $\mathcal{P}(\mathbf{X}_{F,\tau}/\QQ(j_{F}(\tau)))$ is a finite field extension of $\QQ(2\pi i, \tau, \varphi_F(\tau))$. In particular,
 \begin{align*}
\trdeg_{\QQ}\QQ(2\pi i,\tau,\varphi_F(\tau)) = \trdeg_{\QQ} \mathcal{P}(\mathbf{X}_{F,\tau}/\QQ(j_{F}(\tau)))\text{.}
\end{align*}
\end{theorem}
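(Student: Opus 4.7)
The proof will follow the template of Theorem \ref{trdeg}, adapted to the Hilbert-Blumenthal setting. The strategy is to algebraize $\varphi_F(\tau)$ via the moduli interpretation of $B_F$, then compute directly the period matrix of the resulting algebraic symplectic-Hodge basis, which will turn out to have entries in $\QQ(2\pi i,\tau)$.

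First I would algebraize: after possibly passing to a finite extension $k'\supseteq k_0\defeq \QQ(j_F(\tau),\varphi_F(\tau))$ over which $\mathcal{B}_F$ is fine-representable (via an auxiliary level-$n$ structure with $n\ge 3$; cf.\ Proposition \ref{prop-modulilevelrm} and Theorem \ref{repr}), Remark \ref{algebraization} in its $R$-multiplication form yields a quadruple $(X_0,\lambda_0,m_0,b_0)$ over $k'$ with $(X_0(\CC),E_{\lambda_0},m_0^{\an},b_0^{\an})\cong (\mathbf{X}_{F,\tau},E_{F,\tau},m_{F,\tau},\bfb_{F,\tau})$. Then $b_0$ is a $k'$-rational symplectic-Hodge basis of $H^1_{\dR}(X_0/k')$ whose image under the comparison isomorphism is $\bfb_{F,\tau}$.

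The core computation will be that the period matrix of $\bfb_{F,\tau}$ against the universal integral symplectic basis $\beta_{F,\tau}$ of Proposition \ref{reprsintsymplhb} has entries in $\QQ(2\pi i,\tau)$. From the explicit formula $\bfomega_F=2\pi i\sum_{j=1}^g dz_j$ and the description of the lattice $L_\tau=\{x+\tau y:(x,y)\in D^{-1}\oplus R\}$ in Example \ref{ex-ppctrm},
\[
\int_{x+\tau y}\bfomega_{F,\tau}\;=\;2\pi i\Bigl(\Tr(x)+\sum_{j=1}^g\sigma_j(y)\tau_j\Bigr)\;\in\;\QQ(2\pi i,\tau).
\]
For $\bfeta_F=\nabla_{\theta_F}\bfomega_F$: integral cycles are horizontal for the Gauss-Manin connection (Remark \ref{derivint}), and $\theta_F(1\tensor z)=(2\pi i)^{-1}\sum_j\sigma_j(z)\partial/\partial\tau_j$ (Remark \ref{rem-explicitthetak}), hence
\[
\int_{x+\tau y}\nabla_{\theta_F(1\tensor z)}\bfomega_F\;=\;\theta_F(1\tensor z)\!\int_{x+\tau y}\bfomega_F\;=\;\Tr(zy)\;\in\;\QQ
\]
for every $z\in D^{-1}$. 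Via $b_0^{\an}=\bfb_{F,\tau}$, these values are precisely the entries of the comparison isomorphism expressed in the basis $b_0$, giving $\mathcal{P}(X_0/k')\subseteq k'(2\pi i,\tau)=\QQ(2\pi i,\tau,\varphi_F(\tau))\cdot k'$.

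For the converse inclusion, $2\pi i$ is itself such a period (take $x=1$, $y=0$), each $\tau_j$ is an $\overline{\QQ}$-linear combination of the periods $\sum_k\sigma_k(y)\tau_k$ as $y$ ranges over a $\ZZ$-basis of $R$ (using invertibility of $(\sigma_j(r_i))_{i,j}$), and the coordinates of $\varphi_F(\tau)\in B_F$ express the algebraic basis $b_0$ in any fixed basis of $H^1_{\dR}(X_0/k_0)$, hence lie in the field of periods up to a finite extension. Combining both directions yields the stated finite-extension assertion. The main technical obstacle will be the descent from the fine-moduli field $k'$ back to the coarse-moduli field $\QQ(j_F(\tau))$: since $A_F$ carries no universal family, controlling how the auxiliary level structure and finite extension interact with the field of periods requires careful bookkeeping to ensure only a finite-index discrepancy is introduced between $\mathcal{P}(X_0/k')$ and $\mathcal{P}(\mathbf{X}_{F,\tau}/\QQ(j_F(\tau)))$.
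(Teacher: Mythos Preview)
Your route differs from the paper's template. Following its proof of Theorem \ref{trdeg}, the paper works directly over $k=\QQ(j_F(\tau))$: choose any $k$-model $(X,\lambda,m)$ with any $k$-rational symplectic-Hodge basis $b$, form the normalized period matrix $s=\frac{1}{2\pi i}P(X(\CC),E_\lambda,m^{\an},b,\beta)$, decompose $s$ via the group isomorphism $G_F^*\cong \GG_m\times_{\ZZ}\Res_{R/\ZZ}\AA^1_R\times_{\ZZ} P_F$, and apply the Hilbert-Blumenthal analogs of Lemmas \ref{lemma-finsurj} and \ref{computationallemma} to see that the $P_F$-component $p(s)$ satisfies $[\,k(p(s)):\QQ(\varphi_F(\tau))\,]<\infty$. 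This gives both inclusions at once, with no descent issue. Your alternative instead realizes $\bfb_{F,\tau}$ as an algebraic basis over a finite extension $k'\supseteq\QQ(\varphi_F(\tau))$ and computes its period matrix directly; the computation is correct and recovers exactly the matrix in the Hilbert-Blumenthal analog of Lemma \ref{psi}.

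The gap is in your converse step. The sentence ``the coordinates of $\varphi_F(\tau)$ express $b_0$ in any fixed basis of $H^1_{\dR}(X_0/k_0)$, hence lie in the field of periods'' is not a valid argument: $X_0$ is defined only over $k'$, and $\varphi_F(\tau)$ is tautologically a $k_0$-point, so nothing here places $\QQ(\varphi_F(\tau))$ inside $\mathcal{P}(\mathbf{X}_{F,\tau}/\QQ(j_F(\tau)))$. What is actually needed is to introduce a $\QQ(j_F(\tau))$-model $X$ with a $\QQ(j_F(\tau))$-rational symplectic-Hodge basis $b$ (which exists by Proposition \ref{exisunic}), enlarge $k'$ so that $X_0\cong X\otimes k'$, write $b_0=b\cdot M$ with $M\in P_F(k')$, and then deduce $M\in P_F(\mathcal{P})$ from the relation $P_{b_0}=P_b\cdot M$ (since $P_b$ has entries in $\mathcal{P}$ by definition and $P_{b_0}$ has entries in $\QQ(2\pi i,\tau)\subset\mathcal{P}$). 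This yields $\varphi_F(\tau)=[(X,\lambda,m,b\cdot M)]\in B_F(\mathcal{P})$, hence $\QQ(\varphi_F(\tau))\subseteq\mathcal{P}$; the same relation read as $P_b=P_{b_0}M^{-1}$ with $M^{-1}\in P_F(k')$ also handles your flagged descent. In other words, once you are forced to bring in the $\QQ(j_F(\tau))$-model and basis $b$, you are essentially reproducing the paper's argument by hand, and the paper's organization (over $\QQ(j_F(\tau))$ from the outset, with the $P_F$-component isolated group-theoretically) is shorter.
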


\subsection{Period matrices}

Let us consider the \emph{general symplectic group} (or the group of ``symplectic similitudes"); namely, the subgroup scheme  $\GSp_{2g}$ of $\GL_{2g}$ over $\Spec \ZZ$ such that, for every affine scheme $V=\Spec \Lambda$, we have
\begin{align*}
{\GSp}_{2g}(V) = \left.\left\{\left(\begin{array}{cc}
                           A & B \\
                           C & D
                          \end{array} \right) \in M_{2g\times 2g}(\Lambda) \ \right| \begin{aligned} &\ \ \ \ \ \ \ \ \ \ \ \   A,B,C,D \in M_{g\times g}(\Lambda) \text{ satisfy  } \\ &AB\transp = BA\transp\text{, }  CD\transp= DC\transp\text{, and } AD\transp -BC\transp \in \Lambda^{\times} \mathbf{1}_g \end{aligned}\right\}\text{.}
\end{align*}
Then we have the canonical character
$$
\nu: {\GSp}_{2g} \to \mathbf{G}_m
$$ 
defined as follows: if $s=( A \ B \ ; \ C \ D  ) \in \GSp_{2g}(V)$, then $\nu(s) \in R^{\times}$ satisfies $AD\transp - BC\transp = \nu(s) \textbf{1}_g$. Note that $\Sp_{2g}$ is the kernel of $\nu$. 

We denote by $\GSp_{2g}^*$ the open subscheme of $\GSp_{2g}$ defined by the condition $A \in \GL_g(\Lambda)$ in the above notation.

Let $(X,E)$ be a principally polarized complex torus of dimension $g$, and $b=(\omega_1,\ldots,\omega_g,\eta_1,\ldots,\eta_g)$ (resp. $\beta =  (\gamma_1,\ldots,\gamma_g,\delta_1,\ldots,\delta_g)$) be a symplectic-Hodge basis (resp. an integral symplectic basis) of $(X,E)$.

\begin{defi} \label{defiperiodmatrix}
The \emph{period matrix} of $(X,E)$ with respect to $b$ and $\beta$ is defined by
\begin{align*}
P(X,E,b,\beta) \defeq \left(\begin{array}{cc}
                               \Omega_1 & N_1 \\
                               \Omega_2 & N_2
                               \end{array}\right) \in M_{2g\times 2g}(\CC)\text{,}
\end{align*}
where
\begin{align*}
(\Omega_1)_{ij} \defeq \int_{\gamma_i}\omega_j \ \ & \ \ (N_1)_{ij} \defeq \int_{\gamma_i}\eta_j\\
(\Omega_2)_{ij} \defeq \int_{\delta_i}\omega_j \ \ & \ \ (N_2)_{ij} \defeq \int_{\delta_i}\eta_j\text{.}
\end{align*}
\end{defi} 

 Note that $P(X,E,b,\beta)$ is simply the matrix of the comparison isomorphism (\ref{compisom}) with respect to the bases $b$ of $\mathcal{H}^1_{\dR}(X)$ and $(E( \ \ ,\delta_1),\ldots,E(\ \ ,\delta_g),E( \gamma_1 , \ \ ),\ldots, E(\gamma_g, \ \ ))$ of $\Hom(H_1(X,\ZZ),\CC)$. 

\begin{obs}\label{remarkperiodmatrix}
In particular, let $(X,\lambda)$ be a principally polarized complex abelian variety, $k$ be a field of definition of $X$, and $X_0$ be a $k$-model of $X$. Assume moreover that $\lambda$ descends to a principal polarization $\lambda_0$ on $X_0$. Then, if $b$ is any symplectic-Hodge basis of $(X_0,\lambda_0)$, and $\beta$ is any integral symplectic basis of $(X^{\an},E_{\lambda})$, the field of periods $\mathcal{P}(X/k)$ of $X$ is generated over $k$ by the coefficients of the period matrix $P(X^{\an},E_{\lambda},b,\beta)$ (cf. Remark \ref{rem-fieldofrationality}). 
\end{obs} 

\begin{lemma}\label{lemmaperiodmatrix}
For any $(X,E,b,\beta)$ as above, we have
\begin{enumerate}
    \item $P(X,E,b,\beta) \in \GSp_{2g}(\CC)$ and $\nu(P(X,E,b,\beta)) =2\pi i$, 
    \item $\Omega^1 \in \GL_g(\CC)$ (i.e., $P(X,E,b,\beta) \in \GSp_{2g}^*(\CC)$) and $\Omega_2\Omega_1^{-1} \in \mathbf{H}_g$.
\end{enumerate}
\end{lemma}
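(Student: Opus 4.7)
The plan is to interpret $P=P(X,E,b,\beta)$ abstractly as the matrix of the comparison isomorphism $\comp\colon \mathcal{H}^1_{\dR}(X)\stackrel{\sim}{\to}\Hom(H_1(X,\ZZ),\CC)$ in the basis $b$ of the source and in the basis of the target dual to $\beta$. For part (1), I would compare the two natural symplectic forms at play: the symplectic form $\langle\,,\,\rangle_E$ on the source has matrix $J$ in the basis $b$ since $b$ is symplectic-Hodge, and the dual symplectic form $E^{\vee}$ on $\Hom(H_1(X,\ZZ),\CC)$ induced by $E$ also has matrix $J$ in the dual of $\beta$, because $\beta$ is symplectic for $E$. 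The defining identity $\langle E(\gamma,\cdot),E(\delta,\cdot)\rangle_E=\frac{1}{2\pi i}E(\gamma,\delta)$ (Paragraph \ref{defi-sfrf}) amounts precisely to the assertion that $\comp$ intertwines $\langle\,,\,\rangle_E$ with $\tfrac{1}{2\pi i}E^{\vee}$. Translating this to matrices gives $J=\tfrac{1}{2\pi i}P\transp JP$, i.e., $P\transp JP=2\pi i\,J$; by Remark \ref{eqsympl}, this is exactly the condition that $P\in\GSp_{2g}(\CC)$ with $\nu(P)=2\pi i$.

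For part (2), the plan is to reduce to an explicit computation on the ``standard'' principally polarized complex torus. By Proposition \ref{reprsintsympl}, there exist a unique $\tau\in\mathbf{H}_g$ and a unique isomorphism $(X,E)\stackrel{\sim}{\to}(\mathbf{X}_{g,\tau},E_{g,\tau})$ in $\mathcal{A}_g^{\an}$ carrying $\beta$ to $\beta_{g,\tau}$. Since period matrices are invariant under pullback by such isomorphisms (both $b$ and $\beta$ being transported along), I may assume $(X,E,\beta)=(\mathbf{X}_{g,\tau},E_{g,\tau},\beta_{g,\tau})$. I would then use the specific symplectic-Hodge basis $\bfb_g(\tau)=(\bfomega_1,\ldots,\bfomega_g,\bfeta_1,\ldots,\bfeta_g)$ constructed in Theorem \ref{theoremsolution} as a reference: from $\bfomega_k=2\pi i\,dz_k$ and the explicit description of $\beta_g$ in Example \ref{intsymplbasis}, together with Corollary \ref{caraceta}, a direct computation yields
\[
P_0 \defeq P(\mathbf{X}_{g,\tau},E_{g,\tau},\bfb_g(\tau),\beta_{g,\tau})=\begin{pmatrix} 2\pi i\,\mathbf{1}_g & 0 \\ 2\pi i\,\tau & \mathbf{1}_g \end{pmatrix}.
\]

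Finally, any other symplectic-Hodge basis $b$ of $(\mathbf{X}_{g,\tau},E_{g,\tau})$ is obtained from $\bfb_g(\tau)$ by the right action of a unique $p=\bigl(\begin{smallmatrix}A & B \\ 0 & (A\transp)^{-1}\end{smallmatrix}\bigr)\in P_g(\CC)$ (cf. Paragraph \ref{principalbundle} and Proposition \ref{exisunic}); this translates into $P=P_0\cdot p$ on period matrices, and a straightforward block multiplication gives $\Omega_1=2\pi i\,A\in\GL_g(\CC)$ and $\Omega_2=2\pi i\,\tau A$, so that $\Omega_2\Omega_1^{-1}=\tau\in\mathbf{H}_g$. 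I do not anticipate any serious obstacle: both parts reduce, via the universal properties established in the previous sections, to routine matrix computations in the symplectic group and in the Siegel upper half-space; the only mildly delicate point is the correct bookkeeping of the factor $2\pi i$ arising from the definition of $\langle\,,\,\rangle_E$.
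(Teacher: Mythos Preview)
Your proposal is correct. For part (1), your argument is essentially the paper's: the paper simply says that since $P$ is a change-of-basis matrix between symplectic bases, the claim is a reformulation of Lemma \ref{compsympl}, which is exactly the relation $\langle E(\gamma,\cdot),E(\delta,\cdot)\rangle_E=\frac{1}{2\pi i}E(\gamma,\delta)$ you invoke from Paragraph \ref{defi-sfrf}.

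For part (2), your route is valid but more elaborate than necessary. The paper observes directly that (2) is the classical Riemann relations, pointing to the proof of Proposition \ref{reprsintsympl}. The point is that once you have reduced (as you do) to $(X,E,\beta)=(\mathbf{X}_{g,\tau},E_{g,\tau},\beta_{g,\tau})$, the identity $\Omega_2\Omega_1^{-1}=\tau$ follows immediately from $\delta_i=\tau\gamma_i$ and the fact that the $\omega_j\in\mathcal{F}^1(X)\cong(\Lie X)^{\vee}$ act as linear functionals on $\Lie X$: one gets $\Omega_2=\tau\Omega_1$ for \emph{any} symplectic-Hodge basis $b$, without needing to pass through the particular basis $\bfb_g(\tau)$, Corollary \ref{caraceta}, or the $P_g(\CC)$-torsor structure. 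Your detour through the explicit period matrix $P_0$ of $\bfb_g(\tau)$ and right multiplication by $p\in P_g(\CC)$ works fine and has the minor pedagogical advantage of exhibiting $P_0$ concretely (this computation reappears later as Lemma \ref{psi}), but it relies on the machinery of Section \ref{sec-analhre}, whereas the paper's argument needs only the elementary Riemann-relations computation already carried out in the proof of Proposition \ref{reprsintsympl}.
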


Observe that $\Omega_2\Omega_1^{-1}$ is the point of $\mathbf{H}_g$ corresponding to $(X,E,\beta)$ via Proposition \ref{reprsintsympl}.

\begin{proof}
Knowing that $P(X,E,b,\beta)$ is a base change matrix with respect to symplectic bases, (1) is simply a reformulation of Lemma \ref{compsympl}; (2) is a particular case of the classical \emph{Riemann relations} (cf. proof of Proposition \ref{reprsintsympl}). 
\end{proof}

% Note that, if $X/\CC$ is an abelian variety and $(X_0/k_X,\lambda,b,\beta)$ is as above, then
% \begin{align*}
% \mathcal{P}_X = k_X(P(X_0/k_X,\lambda,b,\beta)) = k_X\left( \frac{1}{2\pi \sqrt{-1}} P(X_0/k_X,\lambda,b,\beta)\right)\text{.}
% \end{align*}

\subsection{Auxiliary lemmas}

We shall need the following auxiliary results. 

\begin{lemma}\label{isomorphismgsp}
The morphism of schemes
\begin{align*}
{\GSp}_{2g}^* &\to \mathbf{G}_m \times_{\ZZ} {\Sym}_g \times_{\ZZ} P_g\\
         s &\mapsto (\nu(s),\tau(s),p(s))
\end{align*}
where
\begin{align*}
\tau\left(\begin{array}{cc}
          A & B \\
          C & D \end{array}\right) \defeq CA^{-1}\ \ \ \text{ and }\ \ \  p\left(\begin{array}{cc}
          A & B \\
          C & D \end{array}\right) \defeq \left(\begin{array}{cc}
          A^{-1} & -B\transp \\
          0 & A\transp \end{array}\right)
\end{align*}
is an isomorphism.
\end{lemma}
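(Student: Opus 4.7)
The plan is to construct an explicit inverse morphism $\Phi : \mathbf{G}_m \times_\ZZ \Sym_g \times_\ZZ P_g \to \GSp_{2g}^*$, and verify that both compositions are the identity. Since both the given map and $\Phi$ will be defined by polynomial formulas in the matrix entries (and in $(\det A)^{-1}$, resp.\ $(\det A')^{-1}$, which are invertible on the relevant open subschemes), this will suffice to conclude.

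First, I would verify that the map lands in the stated target. For $s = (A \ B \ ; \ C \ D) \in \GSp_{2g}^*(V)$, the symmetry of $\tau(s) = CA^{-1}$ follows from the alternative form of the symplectic similitude conditions (the $\GSp_{2g}$-analogue of Remark \ref{eqsympl}), which gives $A\transp C = C\transp A$; multiplying by $(A\transp)^{-1}$ on the left and $A^{-1}$ on the right yields $(CA^{-1})\transp = CA^{-1}$. For $p(s) = (A^{-1} \ -B\transp \ ; \ 0 \ A\transp)$, the only nontrivial condition to check is that $A^{-1}B$ is symmetric, which follows from $AB\transp = BA\transp$ by pre-multiplying by $A^{-1}$ and post-multiplying by $(A\transp)^{-1}$.

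The key computational step is the block decomposition
\begin{equation*}
  s = \begin{pmatrix} \mathbf{1}_g & 0 \\ \tau(s) & \mathbf{1}_g \end{pmatrix} \begin{pmatrix} A & B \\ 0 & \nu(s) (A\transp)^{-1} \end{pmatrix},
\end{equation*}
which rests on the identity $D - CA^{-1}B = \nu(s)(A\transp)^{-1}$. To prove it, I would solve $D\transp = \nu(s) A^{-1} + A^{-1}B C\transp$ from the relation $AD\transp - BC\transp = \nu(s)\mathbf{1}_g$, take transpose, and substitute $A^{-1}B = B\transp (A\transp)^{-1}$ (established in the previous paragraph). This decomposition shows that $s$ is recovered from $(\nu(s), \tau(s), p(s))$, since $A = (A')^{-1}$ and $B = -(B')\transp$ are read off $p(s) = (A' \ B' \ ; \ 0 \ (A'\transp)^{-1})$.

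Guided by this, I define $\Phi$ by
\begin{equation*}
  \Phi\bigl(\nu, \tau, (A' \ B' \ ; \ 0 \ (A'\transp)^{-1})\bigr) \defeq \begin{pmatrix} (A')^{-1} & -(B')\transp \\ \tau(A')^{-1} & \nu(A')\transp - \tau(B')\transp \end{pmatrix}.
\end{equation*}
The remaining verifications are routine matrix calculations: $\Phi(\nu,\tau,p)$ lies in $\GSp_{2g}^*$ because $(A')^{-1}$ is invertible and the symplectic similitude conditions reduce, via the symmetry of $\tau$, the invertibility of $\nu$, and the defining relation $A'B'\transp = B'A'\transp$ of $P_g$, to identities among the blocks of $\Phi(\nu,\tau,p)$; the composition $\Phi \circ (\nu, \tau, p)$ is the identity on $\GSp_{2g}^*$ by the block decomposition above; and the composition $(\nu, \tau, p) \circ \Phi$ is the identity on the product by direct computation of each component on the explicit form of $\Phi(\nu, \tau, p)$. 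The only real obstacle is the bookkeeping of transposes and inverses; no conceptual difficulty arises.
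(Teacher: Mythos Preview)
Your proposal is correct and takes essentially the same approach as the paper: both produce the explicit inverse morphism and note that the verifications are routine. Your formula for $\Phi$ agrees with the paper's (once one uses the $P_g$-relation $A'(B')\transp = B'(A')\transp$ to rewrite the lower-right block), and you add the helpful block decomposition motivating it, together with the well-definedness checks that the paper omits.
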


\begin{proof}
We simply remark that
\begin{align*}
\left(\lambda, Z,\left(\begin{array}{cc}
          X & Y \\
          0 & (X\transp)^{-1} \end{array}\right)  \right)\mapsto \left(\begin{array}{cc}
          X^{-1} & -Y\transp \\
          ZX^{-1} & (\lambda \mathbf{1}_g  - ZX^{-1}Y)X\transp \end{array}\right) 
\end{align*}
is an inverse to the morphism defined in the statement.
\end{proof}

A straightforward computation yields the following result. 

\begin{lemma} \label{computationallemma}
Let $\varphi: (X,E) \to (X',E')$ be an isomorphism of principally polarized complex tori of dimension $g$, $\beta= (\gamma_1,\ldots,\gamma_g,\delta_1,\ldots,\delta_g)$  be an integral symplectic basis of $(X,E)$ and $b'$ be a symplectic-Hodge basis of $(X',E')$. We denote by $\varphi_*\beta$ the integral symplectic basis of $(X',E')$ given by pushforward in singular homology. Then the symplectic-Hodge basis
\begin{align*}
b = (\omega_1,\ldots,\omega_g,\eta_1,\ldots,\eta_g) \defeq \varphi^*b' \cdot p\left( \frac{1}{2\pi i}P(X',E',b',\varphi_*\beta)\right)
\end{align*} 
of $(X,E)$ satisfies
\begin{align*}
\int_{\gamma_i}\eta_j = 0 \text{, }\int_{\delta_i}\eta_j = \delta_{ij}
\end{align*}
for every $1\le i,j\le g$.\hfill $\blacksquare$
\end{lemma}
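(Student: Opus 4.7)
The proof is a direct unwinding of the definitions, and my plan is to carry it out in four steps.

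First, write $P \defeq P(X',E',b',\varphi_*\beta)$ in block form $(\Omega_1 \ N_1 \ ; \ \Omega_2 \ N_2)$, as in Definition \ref{defiperiodmatrix}. By Lemma \ref{lemmaperiodmatrix}, $P \in \GSp_{2g}^*(\CC)$ with $\nu(P) = 2\pi i$, so $\Omega_1$ is invertible and the formula for $p$ from Lemma \ref{isomorphismgsp} gives
\[
p\!\left(\frac{P}{2\pi i}\right) = \left(\begin{array}{cc} 2\pi i\,\Omega_1^{-1} & -\frac{1}{2\pi i}N_1\transp \\[2pt] 0 & \frac{1}{2\pi i}\Omega_1\transp \end{array}\right).
\]

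Second, writing $b' = (\,\omega' \ \ \eta'\,)$ as a row vector and matrix-multiplying, the components of $b = \varphi^* b' \cdot p(P/(2\pi i))$ are
\[
\omega_j = \sum_{k=1}^g 2\pi i\,(\Omega_1^{-1})_{kj}\,\varphi^*\omega'_k, \qquad \eta_j = \frac{1}{2\pi i}\sum_{k=1}^g\left(-(N_1)_{jk}\,\varphi^*\omega'_k + (\Omega_1)_{jk}\,\varphi^*\eta'_k\right).
\]

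Third, I would compute $\int_{\gamma_i}\eta_j$ and $\int_{\delta_i}\eta_j$ using the naturality of integration: for any closed form $\alpha$ on $X'$ and any $1$-cycle $\sigma$ on $X$, $\int_\sigma \varphi^*\alpha = \int_{\varphi_*\sigma}\alpha$. Since $\varphi_*\beta = (\varphi_*\gamma_1,\ldots,\varphi_*\gamma_g,\varphi_*\delta_1,\ldots,\varphi_*\delta_g)$ is precisely the integral symplectic basis used in forming $P$, this yields
\[
\int_{\gamma_i}\eta_j = \frac{1}{2\pi i}\bigl[(\Omega_1 N_1\transp)_{ji} - (N_1\Omega_1\transp)_{ji}\bigr], \qquad \int_{\delta_i}\eta_j = \frac{1}{2\pi i}\bigl[(\Omega_1 N_2\transp)_{ji} - (N_1\Omega_2\transp)_{ji}\bigr].
\]

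Fourth and finally, the relation $PJP\transp = 2\pi i\,J$ from Lemma \ref{lemmaperiodmatrix} unpacks (cf. Remark \ref{eqsympl}) to $\Omega_1 N_1\transp = N_1 \Omega_1\transp$ and $\Omega_1 N_2\transp - N_1\Omega_2\transp = 2\pi i\,\mathbf{1}_g$. Substituting gives $\int_{\gamma_i}\eta_j = 0$ and $\int_{\delta_i}\eta_j = \delta_{ij}$, as claimed.

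There is no real obstacle here; the only care required is consistent bookkeeping with row/column conventions in the block matrix product, so that the indices in the final period integrals line up with the symplectic relations.
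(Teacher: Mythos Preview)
Your proof is correct and is precisely the ``straightforward computation'' that the paper alludes to but omits (the lemma is stated with a $\blacksquare$ and no argument). The only thing to note is that the symplectic relations you invoke in the last step are exactly those in the definition of $\GSp_{2g}$ with similitude factor $2\pi i$, i.e., $\Omega_1 N_1\transp = N_1\Omega_1\transp$ and $\Omega_1 N_2\transp - N_1\Omega_2\transp = 2\pi i\,\mathbf{1}_g$, so there is nothing further to check.
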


\subsection{Proof of Theorem \ref{trdeg}} \label{proofthmtrdeg}

% Let $A_g$ be the coarse moduli space associated to the Deligne-Mumford stack $\mathcal{A}_g \to \Spec \ZZ$ (which exists as an algebraic space by the Keel-Mori theorem, cf. \cite{olsson16} Theorem 11.1.2). We recall that $A_g$ is a quasi-projective scheme over $\Spec \ZZ$ (cf. \cite{moret-bailly85} VII Théorème 4.2) endowed with a canonical morphism $\mathcal{A}_g \to A_g$ inducing, for every algebraically closed field $k$, a bijection of $A_g(k)$ with the set of isomorphism classes of principally polarized abelian varieties over $k$.
% 
% Since any principally polarized complex torus $(X,E)$ of dimension $g$ is algebraizable, $(X,E)$ defines an isomorphism class in the category $\mathcal{A}_g(\CC)$ that we shall denote $[(X,E)]$. Let  
% \begin{align*}
% j_g : \mathbf{H}_g &\to A_g(\CC)\\
%                \tau &\mapsto [(\mathbf{X}_{g,\tau},E_{g,\tau})]\text{.}
% \end{align*}

% The next result follows immediately from our proof of the Lemma \ref{fielddef}.
% 
% \begin{lemma}\label{remarkfielddef}
% For any $\tau \in \mathbf{H}_g$, the smallest algebraically closed subfield of $\CC$ over which $\mathbf{X}_{g,\tau}$ is definable is given by the algebraic closure in $\CC$ of the residue field $\mathbf{Q}(j_g(\tau))$. \hfill$\blacksquare$
% \end{lemma}

Let $\varpi_g: B_{g,\QQ}\to A_{g,\QQ}$ be the map obtained by composition of $\pi_g: B_{g,\QQ}\cong \mathcal{B}_{g,\QQ}\to\mathcal{A}_{g,\QQ}$ with the natural map $\mathcal{A}_{g,\QQ}\to A_{g,\QQ}$; for a field $k\supset \QQ$, it acts on $k$-points by sending the isomorphism class $[(X,\lambda,b)]$ of a principally polarized abelian variety with symplectic-Hodge basis $(X,\lambda,b)_{/k}$ to the isomorphism class $[(X,\lambda)]$.

Note that $\varpi_g$ is invariant under the right action of $P_{g,\QQ}$ on $B_{g,\QQ}$ and that each fiber of $\varpi_g$ is a $P_{g,\QQ}$-homogeneous space.

\begin{lemma}\label{lemma-finsurj}
 Let $k\supset \QQ$ be a field, $y\in B_{g,\QQ}(k)$, and denote $x=\varpi_g(y) \in A_{g,\QQ}(k)$. Then the orbit map $P_{g,k}\to \varpi_g^{-1}(x) = B_{g,\QQ}\times_{\QQ}x$ associated to $y$ is a finite and surjective morphism of $k$-schemes.
\end{lemma}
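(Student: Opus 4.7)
The plan is to identify the fiber $\varpi_g^{-1}(x)$ with a quotient of a $P_{g,k}$-torsor by a finite group, from which the result will follow immediately.

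Since $B_{g,\QQ}$ represents $\mathcal{B}_{g,\QQ}$ (Theorem \ref{repr}), the $k$-point $y$ corresponds to a triple $(X,\lambda,b)_{/k}$, whose underlying principally polarized abelian variety has isomorphism class $x$. By Lemma \ref{torsor} the functor $\underline{B}_{(X,\lambda)}$ is representable by a $P_{g,k}$-torsor $\underline{B}((X,\lambda))$ over $\Spec k$, and the distinguished section $b$ induces a $P_{g,k}$-equivariant isomorphism $P_{g,k}\stackrel{\sim}{\to}\underline{B}((X,\lambda))$. There is a natural morphism $\iota:\underline{B}((X,\lambda))\to B_{g,\QQ}$ sending a symplectic-Hodge basis $b'$ to the class $[(X,\lambda,b')]$; since its composition with $\varpi_g$ is the constant map $x$, it factors through a $P_{g,k}$-equivariant morphism $\iota:\underline{B}((X,\lambda))\to \varpi_g^{-1}(x)$. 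By construction, the composition $P_{g,k}\stackrel{\sim}{\to}\underline{B}((X,\lambda))\stackrel{\iota}{\to}\varpi_g^{-1}(x)$ is the orbit map associated to $y$, so it suffices to prove that $\iota$ is finite and surjective.

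The key identification, which becomes transparent after base change to $\overline{k}$, is
$$
\varpi_g^{-1}(x)_{\overline{k}}\;\cong\;\underline{B}((X,\lambda))_{\overline{k}}\,\big/\,\Aut_{\overline{k}}(X,\lambda),
$$
where $\Aut_{\overline{k}}(X,\lambda)$ acts on the torsor by pullback of symplectic-Hodge bases, $b'\mapsto \varphi^*b'$. On geometric points this is immediate: an element of $\varpi_g^{-1}(x)_{\overline{k}}(\overline{k})$ is the isomorphism class of a triple $(X',\lambda',b')_{/\overline{k}}$ with $(X',\lambda')\cong(X,\lambda)_{\overline{k}}$, and choosing such an isomorphism reduces to the choice of a symplectic-Hodge basis on $(X,\lambda)_{\overline{k}}$ modulo the residual ambiguity by $\Aut_{\overline{k}}(X,\lambda)$. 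Scheme-theoretically, this identification comes from the fact that the residual gerbe $\mathcal{A}_{g,\overline{k}}\times_{A_{g,\overline{k}}}x_{\overline{k}}$ is $[\Spec \overline{k}/\Aut_{\overline{k}}(X,\lambda)]$, combined with the fact that $\mathcal{B}_g\to\mathcal{A}_g$ is a $P_g$-torsor (so pulling back produces the quotient presentation above) and with the isomorphism $B_{g,\overline{k}}\cong\mathcal{B}_{g,\overline{k}}$.

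Granting this identification, the proof concludes quickly. The group $\Aut_{\overline{k}}(X,\lambda)$ is finite, as the automorphism group of any polarized abelian variety is finite. Therefore the quotient map $\underline{B}((X,\lambda))_{\overline{k}}\to\underline{B}((X,\lambda))_{\overline{k}}/\Aut_{\overline{k}}(X,\lambda)$ is finite and surjective by Lemma \ref{finiteaction}; that is, $\iota_{\overline{k}}$ is finite and surjective. By faithfully flat descent along $\Spec \overline{k}\to \Spec k$, the morphism $\iota$ itself is finite and surjective, which concludes the argument.

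The main delicate point is the scheme-theoretic identification of $\varpi_g^{-1}(x)_{\overline{k}}$ with the quotient $\underline{B}((X,\lambda))_{\overline{k}}/\Aut_{\overline{k}}(X,\lambda)$: the set-theoretic version is immediate, but promoting it to an isomorphism of schemes requires invoking the description of the coarse moduli map in terms of residual gerbes, together with the compatibility of coarse moduli with representable flat base change (here, pulling back the $P_g$-torsor $\mathcal{B}_g\to\mathcal{A}_g$). Every other ingredient is routine.
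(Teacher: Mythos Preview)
Your key identification
\[
\varpi_g^{-1}(x)_{\overline{k}}\;\cong\;\underline{B}((X,\lambda))_{\overline{k}}\big/\Aut_{\overline{k}}(X,\lambda)
\]
is false in general, so the argument has a genuine gap. Already for $g=1$ (using the coordinates of Remark~\ref{classique1}), the fiber $\varpi_1^{-1}(0)$ over $j=0$ is cut out in $B_{1,\overline{k}}$ by $e_4^3=0$ and is therefore non-reduced, whereas the right-hand side, being the free quotient of a smooth scheme by a finite group in characteristic~$0$, is smooth. The source of the error is your assertion that $\mathcal{A}_{g,\overline{k}}\times_{A_{g,\overline{k}}}x_{\overline{k}}$ equals the residual gerbe $[\Spec\overline{k}/\Aut_{\overline{k}}(X,\lambda)]$: formation of coarse moduli does not commute with the non-flat base change $\Spec\overline{k}\to A_{g,\overline{k}}$, and the fiber is typically a non-reduced thickening of the residual gerbe. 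Thus $\iota_{\overline{k}}$ is not the quotient map, and Lemma~\ref{finiteaction} does not directly apply.

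The paper's proof avoids this by never attempting to describe $\varpi_g^{-1}(x)$. Instead it computes the \emph{stabilizer} $G\le P_{g,k}$ of $y$: the rule $\sigma\mapsto p$, where $\sigma^*b=b\cdot p$, defines an antihomomorphism $\Aut((X,\lambda)\otimes_k\Lambda)\to P_{g,k}(\Lambda)$ whose image is exactly $G(\Lambda)$, so finiteness of automorphism groups of polarized abelian varieties gives $G$ finite. Surjectivity of the orbit map is then immediate from the transitivity noted just before the lemma, and finiteness follows since the orbit map factors as the finite map $P_{g,k}\to P_{g,k}/G$ followed by a closed immersion onto $(\varpi_g^{-1}(x))_{\text{red}}$.

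Your approach can be repaired without the faulty identification: observe that $\iota$ is the fiber over $y$ of $B_{g}\times_{\mathcal{A}_{g}}B_{g}\to B_{g}\times_{A_{g}}B_{g}$, which is itself a pullback of the relative diagonal $\mathcal{A}_{g}\to\mathcal{A}_{g}\times_{A_{g}}\mathcal{A}_{g}$. The latter is finite (separated Deligne--Mumford stack) and surjective, and both properties are stable under base change.
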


\begin{proof}
 Let $G$ be the stabilizer of $y$, seen as a $k$-subgroup scheme of $P_{g,k}$; it is sufficient to prove that $G$ is a finite $k$-group scheme.
 
 Let $(X,\lambda,b)$ be a principally polarized abelian variety with symplectic-Hodge basis over $k$ for which $y=[(X,\lambda,b)]$.  For any $k$-algebra $\Lambda$, we may define a antihomomorphism of groups
 $$
 h:\Aut ((X,\lambda)\tensor_k\Lambda) \to P_{g,k}(\Lambda)
 $$
 by sending $\sigma$ to the unique element $p\in  P_{g,k}(\Lambda)$ such that $\sigma^*b=b\cdot p$. By definition of $G$, the image of $h$ is precisely $G(\Lambda)$.
 
 Now, if $\Lambda$ is a field, then $\Aut ((X,\lambda)\tensor_k\Lambda)$ is finite (\cite{mumford70} IV.21 Theorem 5). Since $G$ is an (affine) algebraic group over $k$, this implies that $G$ is finite.
\end{proof}

\begin{proof}[Proof of Theorem \ref{trdeg}]
Fix $\tau\in \mathbf{H}_g$, let $k=\QQ(j_g(\tau))$, and let $(X,\lambda)_{/k}$ be a $k$-model of $(\mathbf{X}_{g,\tau},E_{g,\tau})$. Fix an isomorphism
$$
F: (\mathbf{X}_{g,\tau},E_{g,\tau})\stackrel{\sim}{\to} (X(\CC),E_{\lambda})\text{,}
$$
and a symplectic-Hodge basis $b$ of $(X,\lambda)_{/k}$.

We set
$$
s \defeq\frac{1}{2\pi i}P(X(\CC),E_{\lambda},b,F_*\beta_{g,\tau})\in {\GSp}^*_{2g}(\CC)\text{.}
$$
If $f: P_{g,k}\to \varpi_g^{-1}([(X,\lambda)])$ denotes the orbit map associated to $[(X,\lambda,b)]\in B_{g,\QQ}(k)$, then it follows from Lemma \ref{computationallemma} and Corollary \ref{caraceta} that
\begin{align*}
f(p(s))= [(X_{\CC}, \lambda_{\CC},b\cdot p(s))]=[(\mathbf{X}_{g,\tau},E_{g,\tau}, F^*b\cdot p(s))]= [(\mathbf{X}_{g,\tau},E_{g,\tau}, \bfb_{g,\tau})] = \varphi_g(\tau)\text{.}
\end{align*}
Thus, by Lemma \ref{lemma-finsurj}, $k(p(s))$ is a finite field extension of $k(\varphi_g(\tau))$. But $k(\varphi_g(\tau)) = \QQ(\varphi_g(\tau))$, since $\QQ(\varphi_g(\tau))$ is the field of definition of $\varphi_g(\tau)$ in $B_{g,\QQ}$, which maps to $j_{g}(\tau)$ via $\varpi_g$.

By Lemma \ref{lemmaperiodmatrix}, we have $\nu(s) = \frac{1}{2\pi i}$, and $\tau(s)=\tau$. Thus, it follows from Remark \ref{remarkperiodmatrix} and Lemma \ref{isomorphismgsp} that 
$$
\mathcal{P}(\mathbf{X}_{g,\tau}/k) = k(s) = k(2\pi i,\tau,p(s))\text{.}
$$ 
Finally, we conclude from the last paragraph that $\mathcal{P}(\mathbf{X}_{g,\tau}/k)$ is a finite field extension of 
$$
k(2\pi i,\tau,\varphi_g(\tau)) = \QQ(2\pi i,\tau,\varphi_g(\tau))\text{.}
$$
\end{proof}

\begin{obs}\label{rem-valuesphig}
 For latter use, let us remark that with notation as in the above proof, if we denote
 $$
 s= \left(\begin{array}{cc}
                                                                              \Omega_1 & N_1\\
                                                                              \Omega_2 & N_2
                                                                             \end{array}\right)\text{,}
 $$
 then we have actually showed that
 $$
 \QQ(j_g(\tau),\Omega_1, N_1)\supset \QQ(\varphi_g(\tau))
 $$
 is a finite field extension.
\end{obs}

\subsection{Periods of abelian varieties with real multiplication}\label{subsec-periodsrm}

As in Paragraph \ref{subsec-shbrm}, consider the $R$-module $M\defeq R\oplus D^{-1}$ endowed with its standard $D^{-1}$-valued $R$-bilinear symplectic form $\Psi$. The $\ZZ$-dual of $M$ is given by $M^{\vee} = D^{-1}\oplus R$, and we denote by $\Phi$ its standard $D^{-1}$-valued $R$-bilinear symplectic form (cf. Example \ref{ex-actionsl}).

Let $(X,E,m)$ be a principally polarized complex torus with $R$-multiplication (over a point). In order to define period matrices for $(X,E,m)$, it is convenient to adopt the following slightly more abstract approach.

Recall that a symplectic-Hodge basis $b$ of $(X,E,m)$ is a $\CC\tensor R$-linear isomorphism
$$
b: \CC\tensor M \stackrel{\sim}{\to}\mathcal{H}^1_{\dR}(X)
$$
such that $b^*\Psi_E = 1\tensor \Psi$ and $b(\CC\tensor (R\oplus 0)) = \mathcal{F}^1(X)$; an integral symplectic basis of $(X,E,m)$ is an $R$-linear isomorphism
$$
\beta : M^{\vee} \stackrel{\sim}{\to}H_1(X,\ZZ)
$$
satisfying $\beta^*\Phi_E = \Phi$, so that $\beta$ induces a $\CC\tensor R$-linear isomorphism
$$
(\beta_{\CC}^{\vee})^{-1} : \CC\tensor M \stackrel{\sim}{\to} {\Hom}_{\ZZ}(H_1(X,\ZZ),\CC)\text{.}
$$
Since the comparison isomorphism
$$
\comp : \mathcal{H}^1_{\dR}(X) \stackrel{\sim}{\to} {\Hom}_{\ZZ}(H_1(X,\ZZ),\CC)
$$
is $\CC\tensor R$-bilinear,  we obtain a $\CC\tensor R$-linear isomorphism
$$
\comp^{-1}\circ (\beta_{\CC}^{\vee})^{-1} : \CC\tensor M \stackrel{\sim}{\to} \mathcal{H}^1_{\dR}(X)\text{.}
$$

\begin{defi}\label{def-periodmatrixrm}
 The \emph{period matrix} of $(X,E,m)$ with respect to $b$ and $\beta$ is defined as the unique element $P(X,E,m,b,\beta)$ of $\Aut_{\CC\tensor R}(\CC\tensor M) = (\Res_{R/\ZZ}\Aut_M) (\CC)$ such that
 $$
 \comp^{-1}\circ (\beta_{\CC}^{\vee})^{-1} \circ P(X,E,m,b,\beta) = b\text{.}
 $$
\end{defi}

\begin{obs}\label{rem-coeffpmrm0}
  It follows from Remark \ref{rem-fieldofrationality} that, if $k\subset \CC$ is a subfield, $(X,\lambda,m)_{/k}$ is a principally polarized abelian variety with $R$-multiplication over $k$, $b$ is a symplectic-Hodge basis of $(X,\lambda,m)_{/k}$, and $\beta$ is an integral symplectic basis of $(X(\CC),E_{\lambda},m^{\an})$, then
  $$
  \mathcal{P}(X/k) = k(P(X(\CC),E_{\lambda},m^{\an},b,\beta))\text{,}
  $$
  where $k(P(X(\CC),E_{\lambda},m^{\an},b,\beta))$ is the field of definition of the complex point $P(X(\CC),E_{\lambda},m_{\CC},b,\beta)$ of the $k$-variety $k \tensor \Res_{R/\ZZ}\Aut_M$ (cf. \ref{notationresidue}).
\end{obs}

In order to realize $P(X,E,m,b,\beta)$ as an actual matrix we remark that, for every commutative ring $\Lambda$, if $V= \Spec \Lambda$, then we have the natural identification
$$
({\Res}_{R/\ZZ}{\Aut}_M) (V) = \left.\left\{\left(\begin{array}{cc}
                                                   a & b \\
                                                   c & d
                                                  \end{array}
\right)\in {\GL}_2(\Lambda \tensor R)\right| a,d\in \Lambda\tensor R\text{, }b\in \Lambda\tensor D\text{, }c\in \Lambda \tensor D^{-1} \right\}\text{,}
$$
so that we can write
$$
P(X,E,m,b,\beta) = \left(\begin{array}{cc}
                           \omega_1 & \eta_1 \\
                           \omega_2 & \eta_2
                   \end{array}\right) \in ({\Res}_{R/\ZZ}{\Aut}_M) (\CC)\text{.}
$$

\begin{obs}\label{rem-coeffpmrm}
 The coefficients of $P(X,E,m,b,\beta)$ in the above presentation can be understood as follows. With the above notation, since the comparison isomorphism is $\CC\tensor R$-linear, and since the trace form induces a natural identification $(\CC\tensor H_1(X,\ZZ))^*\tensor_R D^{-1}\cong \Hom_{\ZZ}(H_1(X,\ZZ),\CC)$, we obtain an $R$-bilinear pairing
\begin{align*}
\mathcal{H}^1_{\dR}(X)\times  H_1(X,\ZZ) &\to \CC\tensor D^{-1}\\
(\alpha,\gamma)&\mapsto I_{\gamma}\alpha
\end{align*}
satisfying
$$
\Tr I_{\gamma}\alpha = \int_{\gamma}\alpha\text{.}
$$
Then, if we write $b = (\omega,\eta)$, and $\beta = (\gamma,\delta)$, we have
$$
P(X,E,m,b,\beta) = \left(\begin{array}{cc}
                                                   I_{\gamma}\omega & I_{\gamma}\eta \\
                                                   I_{\delta}\omega & I_{\delta}\eta
                                                  \end{array}
\right)\in ({\Res}_{R/\ZZ}{\Aut}_M) (\CC)\text{.}
$$
\end{obs}

\vspace{10pt}

Consider the subgroup scheme $G_F$ of ${\Res}_{R/\ZZ}{\Aut}_M$ defined, for every affine scheme $V=\Spec \Lambda$, by
$$
G_F(V) = \left.\left\{s = \left(\begin{array}{cc}
                                                   a & b \\
                                                   c & d
                                                  \end{array}
\right)\in ({\Res}_{R/\ZZ}{\Aut}_M)(V)\right| \det(s) = ad-bc \in \Lambda^{\times}\subset (\Lambda \tensor R)^{\times} \right\}\text{.}
$$
We denote by $G^*_F$ the open subscheme of $G_F$ given by the condition $a\in (\Lambda\tensor R)^{\times}$.

In the next lemma we see $\mathbf{H}^g$ inside the $\CC$-vector space $\CC\tensor D^{-1}$ via the identification $\CC\tensor D^{-1}\stackrel{\sim}{\to}\CC^g$ given by $1\tensor x \mapsto (\sigma_1(x),\ldots,\sigma_g(x))$. 

\begin{lemma}[cf. Lemma \ref{lemmaperiodmatrix}]\label{lemma-periodmatrixrm}
For any $(X,E,m,b,\beta)$ as above, we have
\begin{enumerate}
 \item $P(X,E,m,b,\beta)\in G_F(\CC)$ and $\det P(X,E,m,b,\beta) = 2\pi i$,
 \item $I_{\gamma}\omega \in (\CC\tensor R)^{\times}$ (i.e., $P(X,E,m,b,\beta)\in G^*_F(\CC)$) and $(I_{\delta}\omega)(I_{\gamma}\omega)^{-1}\in \mathbf{H}^g$.\hfill $\blacksquare$
\end{enumerate}
\end{lemma}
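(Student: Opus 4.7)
The plan is to follow the proof of the Siegel counterpart (Lemma \ref{lemmaperiodmatrix}) while carefully tracking the $R$-linearity at every step. The $\CC \tensor R$-linearity of $P$ is automatic: the comparison isomorphism $\comp$ is $R$-equivariant because $R$ acts on $X$ by morphisms of complex tori, compatibly with both the de Rham and the Betti structures, and $b$, $\beta$ are $R$-linear by definition.

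For part (1), view $P$ as a $2\times 2$ matrix over $\CC \tensor F$ acting on $\CC \tensor M \cong (\CC \tensor R) \oplus (\CC \tensor D^{-1})$. Since $b^*\Psi_E = 1 \tensor \Psi$ and $\beta^*\Phi_E = \Phi$, and since the comparison isomorphism relates $\Psi_E$ and $\Phi_E$ through the identity
\[
\Psi_E\bigl(\comp^{-1}\Phi_E(\gamma,\cdot),\,\comp^{-1}\Phi_E(\delta,\cdot)\bigr) = \tfrac{1}{2\pi i}\,\Phi_E(\gamma,\delta)
\]
recorded at the end of Paragraph \ref{ssec-ppctrm}, a direct computation yields $P^{\mathsf{T}} J P = 2\pi i \cdot J$, where $J = \bigl(\begin{smallmatrix} 0 & 1 \\ -1 & 0 \end{smallmatrix}\bigr)$ viewed over $\CC \tensor F$. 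The classical identity $M J M^{\mathsf{T}} = \det(M) \cdot J$, valid for any $M \in \GL_2$ over a commutative ring, then forces $\det P = 2\pi i \in \CC \subset (\CC \tensor R)^\times$, which gives $P \in G_F(\CC)$.

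For part (2), the strategy is to reduce to the Siegel case via a trivialization of $M$. Fix a $\ZZ$-basis $(r_1,\ldots,r_g)$ of $R$ together with its dual $\ZZ$-basis $(x_1,\ldots,x_g)$ of $D^{-1}$ under the trace form, as in Remark \ref{rem-relhalfspace}. The induced isomorphism of symplectic $\ZZ$-modules $t: (\ZZ^{2g}, \langle\,,\,\rangle_\std) \stackrel{\sim}{\to} (M, \Tr\Psi)$ and its dual convert $(b,\beta)$ into a Siegel symplectic-Hodge basis $\tilde b$ and a Siegel integral symplectic basis $\tilde\beta$ of the underlying principally polarized complex torus $(X,E)$ of dimension $g$; the associated Siegel period matrix $\tilde P = P(X,E,\tilde b, \tilde\beta)$ is the $\CC$-linear realization of $P$ with respect to the chosen basis. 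By Lemma \ref{lemmaperiodmatrix}, $\tilde\Omega_1 \in \GL_g(\CC)$ and $\tilde\Omega_2 \tilde\Omega_1^{-1} \in \mathbf{H}_g$. The first assertion translates into $I_\gamma \omega \in (\CC \tensor R)^\times$, since this element is precisely the $\CC \tensor R$-linear automorphism with matrix $\tilde\Omega_1$ in the basis $(r_1,\ldots,r_g)$. The $R$-equivariance of $P$ further forces $\tilde\Omega_2 \tilde\Omega_1^{-1}$ to lie in the image of the embedding $h_t: \mathbf{H}^g \to \mathbf{H}_g$ of Remark \ref{rem-relhalfspace}, and its preimage is exactly $\tau \defeq (I_\delta\omega)(I_\gamma\omega)^{-1} \in \mathbf{H}^g$.

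The main obstacle will be the careful bookkeeping needed to identify the entries $I_\gamma\omega, I_\gamma\eta, I_\delta\omega, I_\delta\eta$ of $P$ with the corresponding $\CC$-linear blocks of the Siegel period matrix $\tilde P$, and in particular to verify that the $R$-equivariance condition cuts out exactly the image of the embedding $h_t: \mathbf{H}^g \to \mathbf{H}_g$, so that the Hilbert Riemann relation follows from its Siegel counterpart without loss.
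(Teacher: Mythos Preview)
Your proposal is correct. The paper omits the proof entirely (the $\blacksquare$ signals that it is left as analogous to Lemma \ref{lemmaperiodmatrix}), so there is no detailed argument to compare against, but your approach agrees with what the paper implicitly intends for part (1): the identity $\Psi_E(\Phi_E(\gamma,\cdot),\Phi_E(\delta,\cdot)) = \tfrac{1}{2\pi i}\Phi_E(\gamma,\delta)$ from Paragraph \ref{ssec-ppctrm} is precisely the $R$-linear refinement of Lemma \ref{compsympl}, and it yields $P^*\Psi = 2\pi i\,\Psi$, hence $\det P = 2\pi i$ (note that for $2\times 2$ matrices the relation $P^{\mathsf T}JP = \det(P)\,J$ is automatic, so invoking it separately is harmless but redundant).

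For part (2) your route differs from the one the paper signals. By analogy with the Siegel case, the intended argument is to cite Proposition \ref{reprsintsymplhb} directly: the element $(I_\delta\omega)(I_\gamma\omega)^{-1}$ is nothing but the point of $\mathbf{H}^g$ classifying $(X,E,m,\beta)$, so both assertions in (2) are immediate consequences of that representability statement (this is exactly how Lemma \ref{lemmaperiodmatrix}(2) defers to Proposition \ref{reprsintsympl}). You instead pass through the trivialization $t$ and the embedding $h_t:\mathbf{H}^g\hookrightarrow\mathbf{H}_g$ of Remark \ref{rem-relhalfspace}, reducing to the already-proved Siegel Riemann relations. This works: the key check, which you correctly flag, is that $h_t$ extends to the $\RR$-linear map $\tau\mapsto A^{\mathsf T}\mathrm{diag}(\tau)A$ with $A=(\sigma_i(r_j))\in\GL_g(\RR)$, so $h_t(\tau)\in\mathbf{H}_g$ if and only if $\tau\in\mathbf{H}^g$. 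Your approach trades a direct appeal to the RM moduli description for a bit of basis bookkeeping, but it has the merit of making the compatibility between the Siegel and Hilbert--Blumenthal period matrices completely explicit.
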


% Observe that
% $$
% S_K\defeq \ker (\det: G_K \to \GG_m) = {\Res}_{R/\ZZ}{\Aut}_{(M,\Psi)}\text{,}
% $$
% and that $P_K$ defined in Paragraph \ref{} is a parabolic subgroup of $S_K$.

Next, we state the analogous auxiliary lemmas.

\begin{lemma}[cf. Lemma \ref{isomorphismgsp}]
 The morphism of schemes
 \begin{align*}
  G_F^* &\to \GG_m \times_{\ZZ}{\Res}_{R/\ZZ}\AA_R^1 \times_{\ZZ}P_F\\
    s&\mapsto (\det(s),\tau(s),p(s))
 \end{align*}
where
\begin{align*}
 \tau \left(\begin{array}{cc}
       a & b \\
       c & d
      \end{array}\right) \defeq ca^{-1} \ \ \ \text{ and }\ \ \ p\left(\begin{array}{cc}
       a & b \\
       c & d
      \end{array}\right) \defeq \left(\begin{array}{cc}
       a^{-1} & -b \\
       0 & a
      \end{array}\right)
\end{align*}
is an isomorphism. \hfill $\blacksquare$
\end{lemma}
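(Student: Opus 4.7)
The plan is to prove the statement by exhibiting an explicit inverse morphism, exactly as in the Siegel case of Lemma \ref{isomorphismgsp}. Since both source and target are representable by affine schemes of finite type over $\Spec \ZZ$ and the proposed isomorphism is given by polynomial formulas in the coordinates, it suffices to produce a polynomial two-sided inverse on the level of functors of points; one then concludes that one has an isomorphism of schemes.

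Concretely, I would define the candidate inverse
\begin{align*}
  \GG_m \times_{\ZZ}{\Res}_{R/\ZZ}\AA_R^1 \times_{\ZZ}P_F &\longrightarrow G_F^*\\
  \left(\lambda,Z,\left(\begin{array}{cc} X & Y \\ 0 & X^{-1} \end{array}\right)\right) &\longmapsto \left(\begin{array}{cc} X^{-1} & -Y \\ ZX^{-1} & (\lambda - ZX^{-1}Y)X \end{array}\right),
\end{align*}
where, over an affine base $V=\Spec \Lambda$, one has $\lambda \in \Lambda^{\times}$, $X\in (\Lambda\tensor R)^{\times}$, $Y\in \Lambda\tensor D$, and $Z$ is the coordinate of $\tau(s)=ca^{-1}$ (valued in $\Lambda\tensor D^{-1}$). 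The key verifications, all of which boil down to manipulations in the commutative ring $\Lambda \tensor F$, are: (i) the proposed matrix genuinely lands in $G_F^*(V)$, with its $(1,1)$ entry being invertible; (ii) its determinant equals $\lambda$, using the computation $X^{-1}(\lambda - ZX^{-1}Y)X - (-Y)(ZX^{-1}) = \lambda - ZX^{-1}Y + YZX^{-1} = \lambda$ by commutativity; (iii) applying $\tau$ returns $(ZX^{-1})\cdot X = Z$; and (iv) applying $p$ returns the original matrix in $P_F$.

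Conversely, starting from an arbitrary $s=\bigl(\begin{smallmatrix} a & b \\ c & d\end{smallmatrix}\bigr)\in G_F^*(V)$ and applying first $(\det,\tau,p)$ then the candidate inverse above, one recovers $s$: the $(1,1)$ and $(1,2)$ entries are immediate from the formula for $p$, the $(2,1)$ entry is $\tau(s)\cdot a = ca^{-1}\cdot a = c$, and the $(2,2)$ entry recovers $d$ via the relation $\det(s) = ad - bc \in \Lambda^{\times}$, which when solved for $d$ gives $d = (\det(s) + bc)a^{-1} = (\det(s) - (-b)(ca^{-1}))a^{-1}$, matching the formula $(\lambda - ZX^{-1}Y)X$ after the substitution $X=a^{-1}$, $Y=-b$, $Z=ca^{-1}$, $\lambda = \det(s)$.

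There is no real obstacle here; the statement is a purely formal computation, and the only subtlety is bookkeeping with the ideals $D$ and $D^{-1}$ in order to check that the entries lie in the correct factors (so that the proposed inverse does land in $G_F^*$). The commutativity of the ambient ring $\Lambda\tensor F$, in contrast with the noncommutativity of $M_{g\times g}(\Lambda)$ that appeared in the Siegel case, actually simplifies the verification: no transposes are needed, and the identity $ZX^{-1}Y = YZX^{-1}$ used in computing the determinant holds on the nose.
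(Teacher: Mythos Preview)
Your proposal is correct and follows exactly the approach the paper intends: the paper omits the proof entirely (the $\blacksquare$ appears right after the statement), referring back to Lemma \ref{isomorphismgsp} for the Siegel case, whose proof consists precisely of writing down the explicit inverse. Your candidate inverse is the direct commutative-ring analog of that formula (with the transposes dropped, as you correctly note), and your verifications are the expected ones.
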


\begin{lemma}[cf. Lemma \ref{computationallemma}]
Let $\varphi : (X,E,m) \to (X',E',m')$ be an isomorphism of principally polarized complex tori with $R$-multiplication, $\beta= (\gamma, \delta)$  be an integral symplectic basis of $(X,E,m)$ and $b'$ be a symplectic-Hodge basis of $(X',E',m')$. We denote by $\varphi_*\beta$ the integral symplectic basis of $(X',E',m')$ given by pushforward in singular homology. Then the symplectic-Hodge basis
\begin{align*}
b = (\omega,\eta) \defeq \varphi^*b' \cdot p\left( \frac{1}{2\pi i}P(X',E',b',m',\varphi_*\beta)\right)
\end{align*} 
of $(X,E,m)$ satisfies (cf. Remark \ref{rem-coeffpmrm})
\begin{align*}
I_{\gamma}\eta = 0 \text{, }I_{\delta}\eta = 1\text{.}
\end{align*}
\hfill $\blacksquare$
\end{lemma}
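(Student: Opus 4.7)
The plan is to reduce the assertion to a direct matricial verification, using the functoriality of period matrices under isomorphism together with the $R$-linearity of the pairing $I$ introduced in Remark \ref{rem-coeffpmrm}. First, by the very definition of period matrices (Definition \ref{def-periodmatrixrm}) and the functoriality of the comparison isomorphism, pushforward by $\varphi$ preserves period matrices, that is,
$$
P(X',E',m',b',\varphi_*\beta)=P(X,E,m,\varphi^*b',\beta).
$$
So, writing $\varphi^*b'=(\omega_0,\eta_0)$ and setting $s\defeq P(X,E,m,\varphi^*b',\beta)\in G_F^*(\CC)$, we may forget $\varphi$ and work directly with the symplectic-Hodge basis $\varphi^*b'$ on $(X,E,m)$. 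By Remark \ref{rem-coeffpmrm}, the entries of
$$
s=\begin{pmatrix}a&b\\ c&d\end{pmatrix}\in G_F^*(\CC)
$$
are $a=I_\gamma\omega_0$, $b=I_\gamma\eta_0$, $c=I_\delta\omega_0$, $d=I_\delta\eta_0$, and Lemma \ref{lemma-periodmatrixrm} gives $\det s=ad-bc=2\pi i$.

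Next I would spell out the definition of $b=(\omega,\eta)$. By the formula for $p$ in the preceding lemma applied to $s/(2\pi i)$, one computes
$$
p\!\left(\tfrac{1}{2\pi i}s\right)=\begin{pmatrix}2\pi i\,a^{-1} & -b/(2\pi i)\\ 0 & a/(2\pi i)\end{pmatrix},
$$
so that the definition $b=\varphi^*b'\cdot p(s/(2\pi i))$ unwinds to
$$
\omega=2\pi i\,a^{-1}\omega_0,\qquad \eta=-\tfrac{b}{2\pi i}\,\omega_0+\tfrac{a}{2\pi i}\,\eta_0.
$$
Here the arithmetic takes place in $\CC\tensor R$ and $\CC\tensor D^{\pm 1}$, but no subtlety arises because all operations are $\CC\tensor R$-linear.

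Finally, applying $I_\gamma$ and $I_\delta$ to $\eta$ and using the $R$-bilinearity of the pairing $(\alpha,\gamma')\mapsto I_{\gamma'}\alpha$ (Remark \ref{rem-coeffpmrm}), together with the identities $I_\gamma\omega_0=a$, $I_\gamma\eta_0=b$, $I_\delta\omega_0=c$, $I_\delta\eta_0=d$, yields
$$
I_\gamma\eta=-\tfrac{b}{2\pi i}\,a+\tfrac{a}{2\pi i}\,b=0,\qquad I_\delta\eta=-\tfrac{b}{2\pi i}\,c+\tfrac{a}{2\pi i}\,d=\tfrac{ad-bc}{2\pi i}=1,
$$
which is precisely the claim. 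There is no real obstacle here; the only point that requires care is keeping track of which operations live in $\CC\tensor R$ versus $\CC\tensor D^{\pm 1}$, and verifying that the $R$-bilinear pairing $I_{\gamma'}$ extends $\CC\tensor R$-linearly so that the computation above is legitimate, which is immediate from the construction in Remark \ref{rem-coeffpmrm}.
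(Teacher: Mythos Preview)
Your proof is correct and is precisely the ``straightforward computation'' that the paper omits (both this lemma and its Siegel analogue, Lemma \ref{computationallemma}, are stated with $\blacksquare$ and no argument). The functoriality reduction $P(X',E',m',b',\varphi_*\beta)=P(X,E,m,\varphi^*b',\beta)$, the explicit form of $p(s/(2\pi i))$, and the final evaluation using $ad-bc=2\pi i$ from Lemma \ref{lemma-periodmatrixrm} are all exactly as intended.
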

% 
% The last ingredient necessary to elaborate a proof of Theorem \ref{trdeg} analogous to that of Theorem \ref{trdegrm} is the following lemma concerning ``uniformization" of the coarse moduli scheme of $\mathcal{A}_K$:
% \begin{align*}
%  j_K: \mathbf{H}^g&\to A_K(\CC)\\
%                 \tau &\mapsto [(\mathbf{X}_{K,\tau},E_{K,\tau},m_{K,\tau})]
% \end{align*}
% 
% \begin{lemma}[cf. Lemma \ref{remarkfielddef}]
%  For any $\tau\in \mathbf{H}^g$, the smallest algebraically closed subfield of $\CC$ over which $\mathbf{X}_{K,\tau}$ is definable is given by the algebraic closure in $\CC$ of the residue field $\QQ(j_K(\tau))$.\hfill $\blacksquare$
% \end{lemma}

Using the above preliminary results, the proof of Theorem \ref{trdegrm} is completely analogous to that of Theorem \ref{trdeg}.

\section{An algebraic independence conjecture on the values of $\varphi_F$}

In this paragraph, we use the analytic maps $\varphi_F$, for $F$ real quadratic, to formulate a transcendence conjecture containing Grothendieck's Period Conjecture (GPC) for abelian surfaces with complex multiplication, much like Nesterenko's theorem on $ \varphi_{\QQ} = (E_2,E_4,E_6)$ allows to recover GPC for complex multiplication elliptic curves.

% In this paragraph, we state a conjecture on the transcendence of the values of $\varphi_F$, for $F$ a quadratic totally real number field field, close in spirit to the theorem of Nesterenko on the values of $\varphi_1 = \varphi_{\QQ} = (E_2,E_4,E_6)$, and we explain the relation of our statement with Grothendieck's Period Conjecture.

In such higher dimensional versions of Nesterenko-type statements, it is necessary to take into account the presence of ``special subvarieties" of positive dimension of the corresponding moduli problem of abelian varieties. In the case of $A_F$, for $F$ quadratic, these are given by the \emph{Hirzebruch-Zagier} divisors.

\subsection{Hirzebruch-Zagier divisors and statement of the conjecture}\label{subsec-hirzebruchzagier}

Let $F$ be a real quadratic number field, and let $\sigma$ the non-trivial element of $\Gal(F/\QQ)$. The next definition is due to Kudla and Rapoport \cite{KR99} (cf. \cite{HY12} Chapter 3).

\begin{defi}
A \emph{special endomorphism} of a principally polarized abelian scheme with $R$-multiplication $(X,\lambda,m)_{/U}$ is an element $j\in \End_U(X)^{\lambda}$ such that
\begin{align}\label{eq-condspecialendo}
j\circ m(r) = m(r^{\sigma})\circ j
\end{align}
for every $r\in R$.
\end{defi}

For every integer $N\ge 1$, let $\mathcal{T}_F(N)$ be the moduli stack classifying principally polarized abelian schemes with $R$-multiplication endowed with a special endomorphism $j$ satisfying $j^2=N$. These are Deligne-Mumford stacks over $\Spec \ZZ$; moreover, as shown in \cite{HY12} Paragraph 3.3, the forgetful functor $\mathcal{T}_F(N)\to \mathcal{A}_F$ is finite and unramified, and its image defines an effective Cartier divisor in the stack $\mathcal{A}_F$.

For every $N\ge 1$, we denote by $T_F(N)$ the divisor on the $\CC$-scheme $A_{F,\CC}$ induced by $\mathcal{T}_F(N)_{\CC} \to \mathcal{A}_{F,\CC}$. These are known as \emph{Hirzebruch-Zagier divisors}, or ``modular curves'' (cf. \cite{geer88} Chapter V), on the Hilbert modular surface $A_{F,\CC}$. 

% \begin{obs}
%   The holomorphic map $j_F: \mathbf{H}^2 \to A_{F}(\CC)$ identifies $A_{F}(\CC)$ with the left quotient $ \SL(D^{-1}\oplus R)\setminus \mathbf{H}^2$. \warn{linear equation}
% \end{obs}

Recall that Nesterenko's theorem \cite{nesterenko96} states that, for every $\tau\in \mathbf{H}$, we have
$$
\trdeg_{\QQ}\QQ(e^{2\pi i \tau},E_2(\tau),E_4(\tau),E_6(\tau))\ge 3\text{.}
$$
As a corollary, we get
$$
\trdeg_{\QQ}\QQ(\varphi_{\QQ}(\tau))\ge 2\text{.}
$$
We next state the conjectural analog of the above lower bound for a real quadratic number field $F$.

\begin{conj}\label{conj-Fquadratic}
 Let $F$ be a real quadratic number field. Then, for every $\tau\in \mathbf{H}^2\minus \bigcup_{N=1}^{\infty}j_F^{-1}(T_{F}(N))$, we have
 $$
 \trdeg_{\QQ}\QQ(\varphi_F(\tau))\stackrel{?}{\ge} 3\text{.}
 $$
\end{conj}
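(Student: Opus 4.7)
The final statement is a conjecture; what follows is a research programme, not a proof.

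The strategy is to transpose Nesterenko's transcendence method \cite{nesterenko96} to our geometric setting. Three ingredients of that method are already in place here. First, $\varphi_F$ is an analytic solution of the algebraic system of differential equations defined over $\QQ$ given by the higher Ramanujan vector field $v_F$ on $B_F$. Second, the formal solution $\hat{\varphi}_F$ has coefficients in $\ZZ(\!(q^{r_i})\!)$ (Theorem \ref{thm-intsolrm}), which provides the required integrality. Third, the Hilbert--Blumenthal analogue of Theorem \ref{intro-thmzdensity}, obtained by the same group-theoretic arguments as in the Siegel case, yields the Zariski density of every leaf of the higher Ramanujan foliation on $B_{F,\CC}$; this is Nesterenko's \emph{D-property}.

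The plan is then the following. Fix $\tau \in \mathbf{H}^2 \setminus \bigcup_N j_F^{-1}(T_F(N))$ and set $b_0 \defeq \varphi_F(\tau)$. Assuming by contradiction that $\trdeg_\QQ \QQ(b_0) \le 2$, a Thue--Siegel construction applied to the Taylor coefficients of $\hat{\varphi}_F$, which lie in $\ZZ$, should produce a nonzero polynomial $P \in \ZZ[B_{F,\QQ}]$ of controlled bidegree that is very small at $b_0$. Differentiating $P$ iteratively along the two commuting vector fields $v^{r_1}, v^{r_2}$ on $B_F$ produces a sequence of algebraic numbers $(v^{r_1})^a(v^{r_2})^b P(b_0)$ whose arithmetic size is controlled by the integrality of $\hat{\varphi}_F$ and by the fact that the $v^{r_i}$ have polynomial coefficients with controlled integral denominators on a suitable affine open of $B_F$ containing $b_0$. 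A Liouville-type inequality applied to the first such quantity that is nonzero would contradict the smallness coming from the Thue--Siegel step, provided one bounds the order of vanishing of $P$ at $b_0$ along the leaf: this is the content of a \emph{multiplicity estimate} for the foliation. The geometric reformulation of Nesterenko's one-variable proof carried out in \cite{fonseca18} should serve as a template to run this argument without ever referring to explicit $q$-expansions of modular-type functions.

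The main obstacle is the multiplicity estimate itself. In the present situation the leaves of the foliation defined by $\mathcal{R}_F \subset T_{B_F/\ZZ}$ are two-dimensional inside the three-dimensional variety $B_{F,\CC}$, so what is required is a genuine multivariable multiplicity lemma, closer in spirit to \cite{binyamini14} than to \cite{nesterenko89}. The Hirzebruch--Zagier locus enters precisely here: if $\tau$ lies on some $j_F^{-1}(T_F(N))$, the abelian surface $\mathbf{X}_{F,\tau}$ acquires a special endomorphism in the sense of Paragraph \ref{subsec-hirzebruchzagier}, its Mumford--Tate group drops, and new positive-dimensional algebraic subvarieties of $B_{F,\CC}$ stable under $v^{r_1}$ and $v^{r_2}$ pass through $\varphi_F(\tau)$; any workable multiplicity bound is forced to degenerate along these loci, which is why the conjecture excludes them. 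A second, more technical obstacle is to design the correct notion of weight replacing Nesterenko's total degree, adapted to the natural $R$-bigrading on functions on $B_F$ inherited from the $R$-module structure $\mathcal{R}_F \cong \mathcal{O}_{\mathcal{B}_F}\tensor D^{-1}$, so that the arithmetic and analytic sides of the estimate balance in the two-variable setting.
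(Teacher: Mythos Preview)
The statement is a conjecture; the paper does not prove it. What the paper offers is a brief remark in the introduction (Paragraph \ref{subsec-introconjecture}) that a natural strategy would be to adapt Nesterenko's method to this geometric context and generalize it to two variables, with \cite{fonseca18} cited as a first step. Your proposal is entirely consistent with this and considerably more detailed than anything in the paper: you spell out the role of integrality, the $D$-property via Zariski density of leaves, the Thue--Siegel/Liouville mechanism, the need for a genuinely multivariable multiplicity estimate, and the reason the Hirzebruch--Zagier locus must be excluded. All of this is sound as a programme and matches the paper's intent.

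One factual slip: you describe the leaves as ``two-dimensional inside the three-dimensional variety $B_{F,\CC}$''. For $F$ real quadratic, $g=2$ and $\dim B_{F,\CC} = 3g = 6$ (Theorem \ref{smoothdmstack}), while $\mathcal{R}_F \cong \mathcal{O}_{\mathcal{B}_F}\otimes D^{-1}$ has rank $g=2$. So the leaves are $2$-dimensional inside a $6$-dimensional ambient variety. This does not affect the shape of your programme, but the codimension-$4$ situation is what makes the multiplicity estimate substantially harder than in the classical $g=1$ case (codimension $2$ in dimension $3$).
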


In the following paragraphs, we explain the precise relation between the above conjecture and Grothendieck's Period Conjecture for abelian surfaces.

\subsection{Periods in the presence of complex multiplication}

In this paragraph, we let $F$ be a totally real number field of any degree $g\ge 1$. Recall that we denote by $\sigma_1,\ldots,\sigma_g$ the field embeddings of $F$ into $\CC$.

Let $k$ be an algebraically closed subfield of $\CC$, and $(X,\lambda,m)$ be a principally polarized abelian variety with $R$-multiplication over $k$. We have already remarked that $m: R\to \End(X)^{\lambda}$ is injective, and that each element in its image is an isogeny (Remark \ref{rem-isogeny}); we thus obtain an embedding of $\QQ$-algebras $m:F \to \End^0(X)\defeq \QQ\tensor_{\ZZ}\End(X)$.

\begin{defi}
 We say that $(X,\lambda,m)$ has \emph{complex multiplication}, or that it is \emph{CM}, if there exists a totally imaginary quadratic extension $E$ of $F$, and an embedding of $\QQ$-algebras $E \to \End^0(X)$  extending $m$.
\end{defi}

If $X$ is a \emph{simple} abelian variety, then $\End^0(X)$ is a division algebra acting faithfully on the $\QQ$-vector space $H_1(X(\CC),\QQ)$, so that $\dim_{\QQ}\End^0(X)$ divides $2g$; in particular, the map $E\to \End^0(X)$ in the above definition is necessarily an isomorphism of $\QQ$-algebras.

We say that a point $\tau\in \mathbf{H}^g$ is CM if $(\mathbf{X}_{F,\tau},E_{F,\tau},m_{F,\tau})$ is CM. We shall need the following well known fact.

\begin{lemma}\label{lemma-bialg}
 If $\tau\in\mathbf{H}^g$ is CM, then $\tau \in (\overline{\mathbf{Q}}\cap \mathbf{H})^g$ and $j_F(\tau)\in A_F(\overline{\mathbf{Q}})$. \hfill $\blacksquare$
\end{lemma}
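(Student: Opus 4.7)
The plan is to treat the two claims separately using the rigidity that a CM structure forces on both the period point and the moduli point. Algebraicity of $\tau$ will follow from an elementary computation with any element of $E\setminus F$, while algebraicity of $j_F(\tau)$ will follow by descending the abelian variety to $\overline{\mathbf{Q}}$ using the classical theory of complex multiplication.

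First I would prove $\tau\in(\overline{\mathbf{Q}}\cap\mathbf{H})^g$. Pick any $\alpha\in E\setminus F$; since $E$ is commutative and extends $F$, the action of $\alpha$ on $H_1(\mathbf{X}_{F,\tau},\QQ)=(D^{-1}\oplus R)\otimes\QQ\cong F\oplus F$ is $F$-linear, so it is given by a matrix $\bigl(\begin{smallmatrix}a&b\\c&d\end{smallmatrix}\bigr)\in M_2(F)$. Note that $c\neq 0$: otherwise the subspace $D^{-1}\otimes\QQ=F$ would be $\alpha$-stable, forcing $\alpha\in F$. Under the identification $F\oplus F\hookrightarrow\CC^g$ sending $(x,y)\mapsto(\sigma_j(x)+\tau_j\sigma_j(y))_j$ (cf.\ Example~\ref{ex-ppctrm}), the holomorphicity of the induced action on $\Lie\mathbf{X}_{F,\tau}=\CC^g$ means that, for each $j$, there is a scalar $\lambda_j\in\CC$ with
\[
\sigma_j(a)+\tau_j\sigma_j(c)=\lambda_j,\qquad \sigma_j(b)+\tau_j\sigma_j(d)=\lambda_j\tau_j.
\]
Eliminating $\lambda_j$ gives the quadratic relation
\[
\sigma_j(c)\,\tau_j^2+(\sigma_j(a)-\sigma_j(d))\,\tau_j-\sigma_j(b)=0,
\]
with coefficients in the number field $\sigma_j(F)\subset\overline{\mathbf{Q}}$ and leading coefficient $\sigma_j(c)\neq 0$. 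Hence $\tau_j\in\overline{\mathbf{Q}}$; indeed, its two roots are the eigenvalues $\Phi(\alpha)$ and $\overline{\Phi(\alpha)}$ coming from the two extensions of $\sigma_j$ to $E$, and the condition $\tau_j\in\mathbf{H}$ selects one of them unambiguously.

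Next I would prove $j_F(\tau)\in A_F(\overline{\mathbf{Q}})$. Since $[E:\QQ]=2g=\dim_{\QQ}H_1(\mathbf{X}_{F,\tau},\QQ)$, the embedding $E\hookrightarrow\End^0(\mathbf{X}_{F,\tau})$ makes $\mathbf{X}_{F,\tau}$ a CM abelian variety in the classical Shimura--Taniyama sense. The Main Theorem of Complex Multiplication (see e.g.\ Shimura--Taniyama, or \cite{mumford70}, IV.22) then guarantees that $\mathbf{X}_{F,\tau}$ admits a model $X_0$ over $\overline{\mathbf{Q}}$. Because the automorphism group of a polarized CM abelian variety is finite (and in fact contained in the units of the CM order acting on $H_1$), the additional data of the principal polarization $E_{F,\tau}$ and of the $R$-multiplication $m_{F,\tau}$ are rigid and may be descended to $\overline{\mathbf{Q}}$ as well, yielding a triple $(X_0,\lambda_0,m_0)$ defined over $\overline{\mathbf{Q}}$ whose associated complex point in the coarse moduli scheme $A_F$ is precisely $j_F(\tau)$. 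Hence $j_F(\tau)\in A_F(\overline{\mathbf{Q}})$.

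The main obstacle is really just locating and invoking the descent statement of the previous paragraph in a form adapted to our polarized/real-multiplication setting; the quadratic computation giving algebraicity of $\tau_j$, and the elementary verification that $c\neq 0$, are straightforward and self-contained. In particular, no input from the analytic theory of $\varphi_F$ or from the higher Ramanujan equations is needed for this lemma.
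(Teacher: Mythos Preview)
Your proposal is correct and follows exactly the approach the paper has in mind. The paper does not actually prove this lemma---it is marked with $\blacksquare$ and immediately followed by the remark that ``the classical proof for the case $F=\QQ$ generalizes'' and that $\tau$ ``satisfies a \emph{quadratic} equation with coefficients in $F$''; your quadratic computation with an element of $E\setminus F$ is precisely that generalization, and your appeal to the classical descent of CM abelian varieties to $\overline{\QQ}$ is what the reference to Shimura's book is meant to cover. One small point: your justification that $c\neq 0$ (``otherwise $D^{-1}\otimes\QQ$ would be $\alpha$-stable, forcing $\alpha\in F$'') is a bit compressed---the cleanest way to finish is to note that if $c=0$ then $\lambda_j=\sigma_j(a)$, so $\alpha$ and $m(a)$ induce the same endomorphism of $\Lie\mathbf{X}_{F,\tau}$, whence $\alpha=m(a)\in m(F)$ by faithfulness of the action of $\End^0$ on the Lie algebra.
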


The classical proof for the case $F=\QQ$ (see, for instance, \cite{shimura71} 4.4-4.6) generalizes to any totally real $F$. Here, as in the case of elliptic curves, if $F$ is seen as a subring of $\CC^g$ via $(\sigma_1,\ldots,\sigma_g)$, then $\tau\in \mathbf{H}^g\subset \CC^g$ satisfies a \emph{quadratic} equation with coefficients in $F$. 

Although not necessary for the sequel, let us mention that the converse of the above result is also true, thus providing a characterization of CM points by a ``bi-algebraicity" property. This characterization actually holds in a much broader framework (see \cite{SW95} and \cite{cohen96}).

\begin{prop}\label{prop-periodscm}
 Let $(X,\lambda,m)$ be a simple CM principally polarized abelian variety with $R$-multiplication over $\overline{\mathbf{Q}}$, $b=(\omega,\eta)$ be a symplectic-Hodge basis of $(X,\lambda,m)_{/\overline{\QQ}}$, and $\beta=(\gamma,\delta)$ be an integral symplectic basis of $(X(\CC), E_{\lambda}, m^{\an})$. Then
 $$
 \mathcal{P}(X/\overline{\QQ}) = \overline{\mathbf{Q}}(I_{\gamma}\omega,I_{\gamma}\eta)\text{.}
 $$
\end{prop}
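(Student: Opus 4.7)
The plan is to use the CM condition to produce an algebraic relation between $\gamma$ and $\delta$ on the Betti side, and then transfer it to de Rham cohomology via the algebraicity of endomorphisms on $X_{/\overline{\QQ}}$.

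First, I would invoke Remark \ref{rem-coeffpmrm0} combined with the explicit description in Remark \ref{rem-coeffpmrm} to write
$$\mathcal{P}(X/\overline{\QQ}) = \overline{\QQ}(I_{\gamma}\omega, I_{\gamma}\eta, I_{\delta}\omega, I_{\delta}\eta),$$
reducing the claim to the statement that $I_{\delta}\omega$ and $I_{\delta}\eta$ both lie in $\overline{\QQ}(I_{\gamma}\omega, I_{\gamma}\eta)$.

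Next, since $X$ is simple, the endomorphism algebra $E \defeq \End^0(X)$ is a division algebra, and the CM hypothesis forces it to be a CM field of degree $2g$ over $\QQ$ containing $F$ via $m$. Standard facts then make $H_{1}(X(\CC),\QQ)$ into a one-dimensional $E$-vector space. Consequently, there exist an integer $N \ge 1$ and an endomorphism $T \in \End(X)$ commuting with $m$ (since the $E$-action commutes with the $F$-action) such that $T_{*}\gamma$ and $N\delta$ coincide in $H_{1}(X(\CC),\QQ)\tensor_R D$. Since $X$ is defined over $\overline{\QQ}$, the classical identification $\End(X_{\overline{\QQ}}) = \End(X_{\CC})$ ensures that $T$ is itself defined over $\overline{\QQ}$.

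The third step is to transfer this relation to cohomology. The endomorphism $T$ induces a $\overline{\QQ}\tensor R$-linear map $T^{*} : H^{1}_{\dR}(X/\overline{\QQ}) \to H^{1}_{\dR}(X/\overline{\QQ})$, which preserves the Hodge filtration and is compatible with the comparison isomorphism. Expanding $T^{*}\omega$ and $T^{*}\eta$ on the $\overline{\QQ}\tensor R$-module generators provided by the symplectic-Hodge basis, and combining the naturality relation $I_{T_{*}\gamma}(\alpha) = I_{\gamma}(T^{*}\alpha)$ with the $R$-linearity of $I_{\gamma}$, I would obtain
\begin{align*}
N \cdot I_{\delta}\omega &= I_{\gamma}(T^{*}\omega) = a_{11}\, I_{\gamma}\omega + a_{12}\, I_{\gamma}\eta,\\
N \cdot I_{\delta}\eta &= I_{\gamma}(T^{*}\eta) = a_{21}\, I_{\gamma}\omega + a_{22}\, I_{\gamma}\eta,
\end{align*}
with coefficients $a_{ij}$ algebraic over $\QQ$. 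This yields the desired inclusion $I_{\delta}\omega, I_{\delta}\eta \in \overline{\QQ}(I_{\gamma}\omega, I_{\gamma}\eta)$, which combined with the obvious reverse inclusion proves the proposition.

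The main obstacle will be the careful bookkeeping of the various tensor twists by $D$ and $D^{-1}$ in the Hilbert-Blumenthal setting: recall that $\gamma$ is a section of $H_1\tensor_R D$ and $\eta$ a section of $H^{1}_{\dR}\tensor_R D$, so the four entries of the period matrix live in different tensor powers of $D$, which must be tracked carefully before the formal manipulations above make literal sense. A related minor point is to verify that the element of $E$ relating $\delta$ to $\gamma$ can be cleared of denominators so as to come from a genuine element of $\End(X)$ rather than merely of $\End^0(X)$.
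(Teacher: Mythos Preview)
Your proposal is correct and takes a genuinely different, more direct route than the paper's.

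The paper chooses an \emph{arbitrary} $\varphi \in \End^0(X) \setminus m(F)$, uses the Rosati relation $\varphi^t\lambda\varphi = \lambda m(u)$ to write $\varphi^*b = b\cdot A$ (upper triangular) and $\varphi_*\beta = \beta\cdot B$, and then compares entries of the matrix equation $B\transp P = PA$. A key step is proving the lower-left entry $c$ of $B$ is nonzero, which requires a small argument by contradiction using simplicity. This yields $I_\delta\eta \in \overline{\QQ}(I_\gamma\omega,I_\gamma\eta)$ directly, but for $I_\delta\omega$ the paper appeals separately to Lemma~\ref{lemma-bialg} (algebraicity of CM points) to conclude $\tau = (I_\delta\omega)(I_\gamma\omega)^{-1} \in \overline{\QQ}\tensor F$.

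Your approach instead exploits that $H_1(X(\CC),\QQ)$ is one-dimensional over the CM field $E$, so you can choose the endomorphism $T$ \emph{specifically} to send $\gamma$ to $N\delta$. This handles $I_\delta\omega$ and $I_\delta\eta$ uniformly via the naturality $I_{T_*\gamma}(\alpha) = I_\gamma(T^*\alpha)$, and you never need to invoke Lemma~\ref{lemma-bialg} separately (indeed, your computation $N I_\delta\omega = a_{11} I_\gamma\omega$ with $a_{11}\in\overline{\QQ}\tensor R$ re-derives the algebraicity of $\tau$ as a byproduct). Note that since $T^*$ preserves $\mathcal{F}^1$ and $\omega$ generates $\mathcal{F}^1$ as an $\overline{\QQ}\tensor R$-module, you will in fact have $a_{12}=0$, so your first displayed line simplifies. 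The twist bookkeeping you flag is indeed only a matter of care: after tensoring with $\QQ$ the modules $H_1\tensor_R D$ and $H_1$ coincide, so the relation $T_*\gamma = N\delta$ makes sense in $H_1(X(\CC),\QQ)$.
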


The notation $I_{\gamma}(\cdot)$ was introduced in Remark \ref{rem-coeffpmrm}. Concretely, by identifying $\CC\tensor F$ with $\CC^g$ via $(\sigma_1,\ldots,\sigma_g)$, the element $I_{\gamma}\omega \in \CC\tensor R$ (resp. $I_{\gamma}\eta\in \CC\tensor D$) defines $g$ complex numbers; the field $\overline{\mathbf{Q}}(I_{\gamma}\omega,I_{\gamma}\eta)$ is obtained from $\overline{\mathbf{Q}}$ by adjoining these $2g$ numbers.

\begin{proof}
 Let $\varphi$ be any element of $\End^0(X)\setminus m(F)$. Since the right $R$-module of symmetric morphisms $\mu:X\to X^t$ satisfying $m(r)^t\circ \mu =\mu \circ m(r)$ for every $r\in R$ is projective of rank 1 (see, for instance, \cite{rapoport78} Proposition 1.17), and since $\varphi$ commutes with every element of $m(F)$, there exists $u\in F^{\times}$ such that
 $$
 \varphi^t\circ \lambda \circ \varphi = \lambda \circ m(u)\text{.}
 $$
 It follows that $\varphi$ induces an automorphism of $\overline{\QQ}\tensor F$-modules
 $$
 \varphi^*: H^1_{\dR}(X/\overline{\QQ}) \to H^1_{\dR}(X/\overline{\QQ})
 $$
 preserving $F^1(X/\overline{\QQ})$, and satisfying
 $$
 \Psi_{\lambda}(\varphi^*\alpha,\varphi^*\beta) = u\Psi_{\lambda}(\alpha,\beta)
 $$
 for every $\alpha,\beta\in H^1_{\dR}(X/\overline{\QQ})$. In particular, there exists a $\overline{\QQ}\tensor F$-automorphism of $\overline{\QQ}\tensor (R\oplus D^{-1}) = (\overline{\QQ}\tensor F)^{\oplus 2}$ of the form
 $$
 A =\left(\begin{array}{cc}
     r & s \\
     0 & t
    \end{array}\right)\in M_{2\times 2}(\overline{\QQ}\tensor F)
 $$
 with $rt = u$ such that
 $$
 \varphi^*b = b\cdot A\text{.}
 $$
 
 Analogously, $\varphi$ induces an automorphism of $F$-vector spaces
 $$
 \varphi_*:H_1(X(\CC),\QQ) \to H_1(X(\CC),\QQ)
 $$
 such that
 $$
 \Phi_{E_{\lambda}}(\varphi_*\gamma,\varphi_*\delta) = u\Phi_{E_{\lambda}}(\gamma,\delta)
 $$
 for every $\gamma,\delta \in H_1(X(\CC),\QQ)$. Thus, there exists a $F$-automorphism of $\QQ\tensor (D^{-1}\oplus R) = F^{\oplus 2}$ of the form
 $$
 B=\left(\begin{array}{cc}
     a & b \\
     c & d
    \end{array}\right)\in M_{2\times 2}(F)
 $$
 with $ad-bc = u$ such that
 $$
 \varphi_*\beta = \beta\cdot B\text{.}
 $$
 
 It follows from the commutativity of the diagram of $\CC\tensor F$-isomorphisms (given by the naturality of the comparison isomorphism)
 $$
 \begin{tikzcd}
  \CC\tensor H^1_{\dR}(X/\overline{\QQ})  \arrow{r}{\comp}\arrow{d}[swap]{\varphi^*} & \Hom_{\QQ}(H_1(X(\CC),\QQ),\CC)\arrow{d}{\varphi_*^{\vee}}\\
  \CC\tensor H^1_{\dR}(X/\overline{\QQ})  \arrow{r}[swap]{\comp} & \Hom_{\QQ}(H_1(X(\CC),\QQ),\CC)
 \end{tikzcd}
 $$
 and from the definition of the period matrix $P = P(X(\CC),E_{\lambda},m^{\an},b,\beta)$ (Definition \ref{def-periodmatrixrm}) that
 $$
 B\transp P = P A\text{,}
 $$
 that is,
 $$
 \left(\begin{array}{cc}
     aI_{\gamma}\omega +c I_{\delta}\omega & aI_{\gamma}\eta +c I_{\delta}\eta \\
     bI_{\gamma}\omega +d I_{\delta}\omega & bI_{\gamma}\eta +d I_{\delta}\eta
    \end{array}\right) = \left(\begin{array}{cc}
     rI_{\gamma}\omega  & sI_{\gamma}\omega +t I_{\gamma}\eta \\
     rI_{\delta}\omega  & sI_{\delta}\omega +t I_{\delta}\eta
    \end{array}\right) \text{.} 
 $$
 We claim that $c\neq 0$. By contradiction, if $c=0$, then by comparing the $(1,1)$ entries, we obtain $a=r$ (recall that $I_{\gamma}\omega\in (\CC\tensor R)^{\times}$ by Lemma \ref{lemma-periodmatrixrm}). This also implies that $d=t$, since $ad=u=rt$. Now, by comparing $(2,1)$ entries, we obtain $bI_{\gamma}\omega = (a-d)I_{\delta}\omega$; since $(I_{\delta}\omega)(I_{\gamma}\omega)^{-1}\in \mathbf{H}^g\subset \CC\tensor D^{-1} = \CC\tensor F$ (cf. Lemma \ref{lemma-periodmatrixrm}), this is only possible if $a-d=b=0$. In particular, $\varphi_* = m(a)_*$, but since $X$ is simple, the action of $\End^0(X)$ on $H_1(X(\CC),\QQ)$ is faithful, so that $\varphi=m(a)$. This contradicts the fact that $\varphi \notin m(F)$.
 
 By comparing $(1,2)$ entries, we obtain the linear equation in $\CC\tensor F$
 $$
 I_{\delta}\eta = sc^{-1}I_{\gamma}\omega + (t-a)c^{-1}I_{\gamma}\eta\text{,}
 $$
 so that $I_{\delta}\eta \in \overline{\QQ}(I_{\gamma}\omega,I_{\gamma}\eta)$. As $(X,\lambda,m)$ is CM, we have $\tau\defeq(I_{\delta}\omega)(I_{\gamma}\omega)^{-1}\in \overline{\QQ}\tensor F$ by Lemma \ref{lemma-bialg}\footnote{Actually, the quadratic equation satisfied by $\tau$ (see the remark following Lemma \ref{lemma-bialg}) is obtained by dividing the $(2,1)$ entries by the $(1,1)$ entries.}; thus $I_{\delta}\omega \in \overline{\QQ}(I_{\gamma}\omega,I_{\gamma}\eta)$. To conclude, it is enough to recall that $\mathcal{P}(X/\overline{\QQ}) = \overline{\QQ}(I_{\gamma}\omega,I_{\delta}\omega,I_{\gamma}\eta,I_{\delta}\eta)$ (Remarks \ref{rem-coeffpmrm0} and \ref{rem-coeffpmrm}). 
\end{proof}

\begin{coro}\label{coro-periodcm}
 Let $\tau\in\mathbf{H}^g$ be a CM point, and assume that $\mathbf{X}_{F,\tau}$ is simple. Then
 $$
 \trdeg_{\QQ}\QQ(2\pi i,\tau,\varphi_F(\tau)) = \trdeg_{\QQ}\QQ(\varphi_F(\tau))\text{.}
 $$
\end{coro}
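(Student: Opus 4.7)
The plan is to reduce the statement to showing that $2\pi i$ is algebraic over $\QQ(\varphi_F(\tau))$, and then to exploit the CM structure to produce an explicit algebraic relation expressing $(2\pi i)^{-1}$ as a polynomial in the ``normalized periods'' $I_\gamma\omega/(2\pi i)$ and $I_\gamma\eta/(2\pi i)$, which are finite over $\varphi_F(\tau)$ by the orbit-map argument already used in Section \ref{sectionvaluesphi}. Indeed, by Lemma \ref{lemma-bialg}, $\tau\in (\overline{\QQ}\cap\mathbf{H})^g$ and $j_F(\tau)\in A_F(\overline{\QQ})$, so $\tau$ is algebraic over $\QQ$; thus
\[
\trdeg_{\QQ}\QQ(2\pi i,\tau,\varphi_F(\tau)) = \trdeg_{\QQ}\QQ(2\pi i,\varphi_F(\tau)),
\]
and it remains to show that $2\pi i$ is algebraic over $\QQ(\varphi_F(\tau))$.

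Next, I would choose a $\overline{\QQ}$-model $(X,\lambda,m)$ of $(\mathbf{X}_{F,\tau},E_{F,\tau},m_{F,\tau})$ (possible since $j_F(\tau)\in A_F(\overline{\QQ})$), a symplectic-Hodge basis $b=(\omega,\eta)$ of $(X,\lambda,m)_{/\overline{\QQ}}$, and an integral symplectic basis $\beta=(\gamma,\delta)$ of $(X(\CC),E_\lambda,m^{\an})$ obtained (via pullback along the fixed analytic isomorphism) from the canonical $\beta_{F,\tau}$. Setting $s\defeq\frac{1}{2\pi i}P(X(\CC),E_\lambda,m^{\an},b,\beta)\in G_F^*(\CC)$ and transposing to the Hilbert--Blumenthal setting the computation in Paragraph \ref{proofthmtrdeg} (together with the analogue of Lemma \ref{lemma-finsurj} for $\varpi_F:B_{F,\QQ}\to A_{F,\QQ}$), one obtains that $\varphi_F(\tau)=f(p(s))$, where $f$ is a finite orbit map. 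Therefore $\overline{\QQ}(p(s))$ is a finite extension of $\overline{\QQ}(\varphi_F(\tau))$, and the explicit shape of $p(s)\in P_F(\CC)$ (cf.\ Paragraph \ref{subsec-periodsrm}) yields
\[
\overline{\QQ}(\varphi_F(\tau))^{\mathrm{alg}} = \overline{\QQ}(p(s))^{\mathrm{alg}} = \overline{\QQ}\bigl(I_\gamma\omega/(2\pi i),\,I_\gamma\eta/(2\pi i)\bigr)^{\mathrm{alg}}.
\]
Hence the problem reduces to showing $2\pi i\in\overline{\QQ}(I_\gamma\omega/(2\pi i),I_\gamma\eta/(2\pi i))^{\mathrm{alg}}$.

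For the final step I would revisit the proof of Proposition \ref{prop-periodscm}: since $X$ is simple CM, one picks $\varphi\in\End^0(X)\setminus m(F)$, produces matrices $A\in M_{2\times 2}(\overline{\QQ}\tensor F)$ and $B\in M_{2\times 2}(F)$ with $\varphi^*b=b\cdot A$, $\varphi_*\beta=\beta\cdot B$, and the naturality of the comparison isomorphism gives the matrix identity $B^{\mathsf T}P=PA$. Combined with $\tau=(I_\delta\omega)(I_\gamma\omega)^{-1}\in\overline{\QQ}\tensor F$ (Lemma \ref{lemma-bialg}), this identity provides explicit algebraic expressions
\[
I_\delta\omega = \tau\,I_\gamma\omega,\qquad I_\delta\eta = \alpha\,I_\gamma\omega + \beta\,I_\gamma\eta,
\]
with $\alpha,\beta\in\overline{\QQ}\tensor F$. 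Substituting into the determinant relation $2\pi i=I_\gamma\omega\cdot I_\delta\eta - I_\gamma\eta\cdot I_\delta\omega$ from Lemma \ref{lemma-periodmatrixrm}(1), and dividing both sides by $(2\pi i)^2$, yields
\[
\frac{1}{2\pi i} = \alpha\left(\frac{I_\gamma\omega}{2\pi i}\right)^{\!2} + (\beta-\tau)\left(\frac{I_\gamma\omega}{2\pi i}\right)\!\left(\frac{I_\gamma\eta}{2\pi i}\right)
\]
as an identity in $\CC\tensor F$, with coefficients in $\overline{\QQ}\tensor F$. Taking traces (or projecting to each embedding $\sigma_j$) shows that $2\pi i$ is algebraic over $\overline{\QQ}(I_\gamma\omega/(2\pi i),I_\gamma\eta/(2\pi i))$, hence over $\QQ(\varphi_F(\tau))$, which closes the argument.

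The main obstacle is the third step: one needs not merely Proposition \ref{prop-periodscm}'s statement (which would only show $2\pi i\in\overline{\QQ}(I_\gamma\omega,I_\gamma\eta)$ via Legendre) but rather its proof, since the key point is that the coefficients expressing $I_\delta\omega,I_\delta\eta$ as algebraic combinations of $I_\gamma\omega,I_\gamma\eta$ belong to $\overline{\QQ}\tensor F$ \emph{without any $2\pi i$ factors}. It is precisely the homogeneity of these algebraic relations that forces $(2\pi i)^{-1}$ into the ring generated by the normalized periods, thus absorbing the ``$2\pi i$-ambiguity'' between $\overline{\QQ}(p(s))$ and $\overline{\QQ}(I_\gamma\omega,I_\gamma\eta)$ that is otherwise present in the non-CM case.
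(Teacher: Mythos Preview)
Your argument is correct. The paper's own proof is a shorter diagram chase: it combines Proposition \ref{prop-periodscm}, which gives $\overline{\QQ}(I_\gamma\omega,I_\gamma\eta)=\mathcal{P}(\mathbf{X}_{F,\tau}/\overline{\QQ})$, with the Hilbert--Blumenthal analogue of Remark \ref{rem-valuesphig}, asserting that $\overline{\QQ}(I_\gamma\omega,I_\gamma\eta)$ is finite over $\overline{\QQ}(\varphi_F(\tau))$; since $\mathcal{P}$ obviously contains $\overline{\QQ}(2\pi i,\tau,\varphi_F(\tau))$, the equality of transcendence degrees follows immediately.

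Your route differs in that you work throughout with the \emph{normalized} periods $I_\gamma\omega/(2\pi i)$ and $I_\gamma\eta/(2\pi i)$ --- which is indeed what the orbit-map computation underlying Theorem \ref{trdegrm} most directly produces, since $p(s)$ has entries $(I_\gamma\omega/(2\pi i))^{\pm 1}$ and $I_\gamma\eta/(2\pi i)$ --- and then open up the proof of Proposition \ref{prop-periodscm} to extract the explicit quadratic relation $\frac{1}{2\pi i}=\alpha\bigl(\tfrac{I_\gamma\omega}{2\pi i}\bigr)^2+(\beta-\tau)\bigl(\tfrac{I_\gamma\omega}{2\pi i}\bigr)\bigl(\tfrac{I_\gamma\eta}{2\pi i}\bigr)$. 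This is a legitimate and arguably more transparent approach: your concern that Remark \ref{rem-valuesphig} literally yields only the normalized field $\overline{\QQ}(I_\gamma\omega/(2\pi i),I_\gamma\eta/(2\pi i))$ is well founded, and your explicit relation is precisely what upgrades this to the unnormalized field used in the paper's diagram. What you gain is a self-contained verification of the $2\pi i$-step; what the paper gains is brevity, by absorbing that step into the single citation of Remark \ref{rem-valuesphig}.
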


\begin{proof}
  Let $(X,\lambda,m)$ be a model of $(\mathbf{X}_{F,\tau},E_{F,\tau},m_{F,\tau})$ over $\overline{\QQ}$, $b$ be a symplectic-Hodge basis of $(X,\lambda,m)_{/\overline{\QQ}}$, and $\beta$ be an integral symplectic basis  of $(\mathbf{X}_{F,\tau},E_{F,\tau},m_{F,\tau})$. Our statement follows immediately from the diagram of field extensions
  $$
\begin{tikzcd}[row sep = 0.3cm, column sep = 1cm]
  & \overline{\QQ}(2\pi i,\tau,I_{\gamma}\omega,I_{\gamma}\eta)=\mathcal{P}(\mathbf{X}_{F,\tau}/\overline{\QQ})\\  
  \overline{\QQ}(I_{\gamma}\omega,I_{\gamma}\eta)\arrow[-,swap]{dd}{<\infty}\arrow[equal]{ru}{\text{Prop. \ref{prop-periodscm}}}  &\\
  & \overline{\QQ}(2\pi i,\tau,\varphi_F(\tau))\arrow[-]{uu}\\
  \overline{\QQ}(\varphi_F(\tau))\arrow[-]{ru} & 
\end{tikzcd}
$$
where the finiteness of $\overline{\QQ}(I_{\gamma}\omega,I_{\gamma}\eta)\supset \overline{\QQ}(\varphi_F(\tau))$ derives from a Hilbert-Blumenthal analog of Remark \ref{rem-valuesphig}.
\end{proof}

% \begin{proof}
%   Let $(X,\lambda,m)$ be a model of $(\mathbf{X}_{F,\tau},E_{F,\tau},m_{F,\tau})$ over $\QQ(j_F(\tau))$, $b$ be a symplectic-Hodge basis of $(X,\lambda,m)$, and $\beta$ be an integral symplectic basis  of $(\mathbf{X}_{F,\tau},E_{F,\tau},m_{F,\tau})$. By an analog of Remark \ref{rem-valuesphig} for the Hilbert modular case, we have a finite field extension
%   $$
% \QQ(j_F(\tau),I_{\gamma}\omega,I_{\gamma}\eta)\supset \QQ(\varphi_F(\tau))\text{.}
% $$
% Since $\tau = (I_{\delta}\omega)(I_{\gamma}\omega)^{-1}$, $(I_{\gamma}\omega)(I_{\delta}\eta) - (I_{\delta}\omega)(I_{\gamma}\eta) = 2\pi i$, and
% $$
% \mathcal{P}(\mathbf{X}_{F,\tau}/\QQ(j_F(\tau))) = \QQ(j_F(\tau), I_{\gamma}\omega, I_{\delta}\omega,I_{\gamma}\eta,I_{\delta}\eta)\text{,}
% $$
% the statement follows directly from Proposition \ref{prop-periodscm}.
% \end{proof}

\subsection{Grothendieck's Period Conjecture for abelian surfaces with real multiplication}\label{subsec-gpccm}

In this paragraph we assume that $F$ is a real \emph{quadratic} number field.

\begin{lemma}[cf. \cite{chai95} Lemma 6, \cite{geer88} Proposition IX.1.2]\label{lemma-endoalgrm}
  Let $k$ be an algebraically closed field of characteristic 0, and $(X,\lambda,m)$ be a principally polarized abelian variety with $R$-multiplication over $k$. If $X$ is simple, then $\End^0(X)$ is a division algebra over $\QQ$ isomorphic to one of the following:
  \begin{enumerate}
    \item[(S1)] $F$,
    \item[(S2)] $E\supset F$ totally imaginary quadratic extension (CM case),
    \item[(S3)] $B\supset F$ indefinite quaternion algebra over $\QQ$.
    \end{enumerate}
    If $X$ is not simple, then $X$ is necessarily isogenous to $Y\times_k Y$ for some elliptic curve $Y$ over $k$, and $\End^0(X) = M_{2\times 2}(\End^0(Y))$; in particular, $\End^0(X)$ is a $\QQ$-algebra isomorphic to
    \begin{enumerate}
    \item[(N1)] $M_{2\times 2}(\QQ)$, if $Y$ is not CM,
    \item[(N2)] $M_{2\times 2}(K)$, where $K=\End^0(Y)$ is an imaginary quadratic field if $Y$ is CM.  \hfill $\blacksquare$
    \end{enumerate}
  \end{lemma}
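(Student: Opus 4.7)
The plan is to treat the simple and non-simple cases separately, leveraging Poincaré's complete reducibility theorem (valid in characteristic $0$) together with the Albert classification of endomorphism algebras.

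In the simple case, Poincaré reducibility gives that $D \defeq \End^0(X)$ is a division algebra over $\QQ$. The real multiplication yields an embedding $F \hookrightarrow D$, and the faithful action of $D$ on $H_1(X(\CC),\QQ)$ (of dimension $2g=4$) gives $\dim_\QQ D \mid 4$ and $\dim_\QQ D \geq [F:\QQ] = 2$. The case $\dim_\QQ D = 2$ immediately gives $D = F$ (case S1). If $\dim_\QQ D = 4$ and $D$ is a field, then $D$ is a quadratic extension of $F$; the Albert classification of endomorphism fields of simple abelian varieties over $\CC$ forces $D$ to be either totally real or a CM field, and the totally real case is excluded because the action of a totally real quartic field on $\Lie X$ cannot be compatible with the Hodge decomposition of rank $2$ over $\CC$ (equivalently, Shimura's positivity of the Rosati involution combined with the signature constraint fails). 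Hence $D$ is a CM quadratic extension $E$ of $F$ (case S2).

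If $\dim_\QQ D = 4$ and $D$ is not a field, then $D$ is a quaternion algebra over its center $Z(D)$; since $F \subset D$ and $\dim_\QQ D = 4$, $Z(D)$ cannot be $F$ (which would force $\dim_\QQ D = 8$), so $Z(D) = \QQ$ and $F$ is a maximal subfield of $D$. To conclude case S3, I would observe that $F \tensor_\QQ \RR \cong \RR \times \RR$ embeds in $D \tensor_\QQ \RR$, producing a nontrivial idempotent in $D \tensor_\QQ \RR$; this rules out the possibility $D \tensor_\QQ \RR \cong \mathbb{H}$ (which is a division ring), forcing $D \tensor_\QQ \RR \cong M_2(\RR)$, i.e., $D$ is an indefinite quaternion algebra over $\QQ$.

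For the non-simple case, Poincaré reducibility gives $X \sim \prod_i Y_i^{n_i}$ with the $Y_i$ pairwise non-isogenous simple abelian varieties and $\End^0(X) \cong \prod_i M_{n_i \times n_i}(\End^0(Y_i))$. Since $\dim X = 2$ and $X$ is non-simple, the only options are $X \sim Y_1 \times Y_2$ with $Y_1,Y_2$ non-isogenous elliptic curves, or $X \sim Y \times Y$ with $Y$ an elliptic curve. In the former case, $\End^0(X) = \End^0(Y_1) \times \End^0(Y_2)$, and each factor is either $\QQ$ or an imaginary quadratic field; but any ring homomorphism from the field $F$ to such a product projects as either zero or an embedding into each factor, and the real quadratic field $F$ embeds into neither $\QQ$ nor an imaginary quadratic field, a contradiction. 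Hence $X$ is isogenous to $Y \times Y$ for an elliptic curve $Y$, yielding $\End^0(X) \cong M_{2\times 2}(\End^0(Y))$, and the two possibilities (N1) and (N2) correspond to $Y$ being non-CM or CM.

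The main obstacle is the exclusion of the totally real quartic option for $D$ in the simple case: it requires invoking the Hodge-theoretic constraints from the Albert classification (equivalently, the requirement that $D \tensor_\QQ \RR$ act compatibly with the complex structure on $\Lie X$), which is the one place where generic arguments about division algebras are not enough and analytic input about abelian varieties over $\CC$ is essential.
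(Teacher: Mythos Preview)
The paper does not supply its own proof of this lemma: it is stated with the terminal $\blacksquare$ and only a citation to Chai and van der Geer, so there is nothing in the paper to compare your argument against directly.

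Your outline is essentially correct and follows the standard route via Poincar\'e reducibility and the Albert classification. A few places could be tightened. First, the exclusion of the totally real quartic field in the simple case is most cleanly obtained from the dimension constraint for Type~I in the Albert--Shimura table: if $D$ is totally real of degree $e$ then $e\mid g$, and here $e=4$, $g=2$, contradiction. Your Hodge-theoretic phrasing gestures at this but does not quite pin it down. Second, in the non-commutative case your deduction that $Z(D)=\QQ$ should really use that $\dim_{Z(D)}D$ is a perfect square (since $D$ is central simple over its center): with $\dim_\QQ D=4$ and $D$ non-commutative this forces $\dim_{Z(D)}D=4$, hence $Z(D)=\QQ$. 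Third, in the non-simple product case $Y_1\times Y_2$, the map $F\to \End^0(Y_1)\times\End^0(Y_2)$ is unital, so both projections are automatically nonzero (hence embeddings); the phrase ``either zero or an embedding'' is slightly misleading. Your idempotent argument excluding the definite quaternion case is correct and is exactly the standard splitting criterion: a real quadratic field cannot embed in a quaternion algebra ramified at infinity.
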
 
  
\begin{prop}
Let $k$ be an algebraically closed field of characteristic 0, and $(X,\lambda,m)$ be a principally polarized abelian variety with $R$-multiplication over $k$. The $\QQ$-algebra $\End^0(X)$ is isomorphic to a (commutative) field if and only if $X$ does not admit a non-trivial special endomorphism.
\end{prop}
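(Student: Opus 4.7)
The forward implication is essentially formal: if $\End^0(X)$ is a field and $j \in \End(X)^\lambda$ is a non-zero special endomorphism, then commutativity gives $j\, m(r) = m(r)\, j$ while the defining relation gives $j\, m(r) = m(r^\sigma)\, j$, so $(m(r) - m(r^\sigma))\, j = 0$. Invertibility of $j$ in the field $\End^0(X)$ yields $m(r) = m(r^\sigma)$ for every $r \in R$, contradicting the injectivity of $m$ (Remark \ref{rem-isogeny}) together with $\sigma \ne \id_F$.

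For the converse, assume $\End^0(X)$ is not a field. By Lemma \ref{lemma-endoalgrm} we are then in one of the three non-commutative cases (S3), (N1), (N2). The goal is to exhibit a non-zero element of
\[
E_\sigma \defeq \{\,x \in \End^0(X) \mid x\, m(r) = m(r^\sigma)\, x \text{ for all } r \in R\,\}
\]
fixed by the Rosati involution; scaling by a suitable positive integer then produces a non-trivial special endomorphism in $\End(X)^\lambda$. The commuting left and right actions of $F$ on $\End^0(X)$ by $(r, s) \cdot x = m(r)\, x\, m(s)$ turn $\End^0(X)$ into a module over the étale $\QQ$-algebra $F \tensor_\QQ F \cong F \times F$ (the two projections being $r \tensor s \mapsto rs$ and $r \tensor s \mapsto r^\sigma s$), yielding an eigenspace decomposition $\End^0(X) = E_1 \oplus E_\sigma$, where $E_1$ is the centralizer of $m(F)$. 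Direct inspection gives $E_1 \subsetneq \End^0(X)$ in each of the three cases: $F$ is a maximal subfield of the quaternion algebra $B$ in (S3) and of $M_2(\QQ)$ in (N1), while in (N2) the centralizer of $F$ in $M_2(K)$ is the biquadratic field $FK$, which is strictly smaller than $M_2(K)$ since $F \cap K = \QQ$. Hence $E_\sigma \ne 0$.

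View $E_\sigma$ as a left $F$-module via $r \cdot x = m(r)\, x$. Since $m(r)' = m(r)$ for every $r \in R$, the Rosati involution $(-)'$ preserves $E_\sigma$ and acts on it as a $\sigma$-semilinear involution: $(m(r)\, x)' = x'\, m(r) = m(r^\sigma)\, x' = r^\sigma \cdot x'$. Denote the fixed subspace by $E_\sigma^+$. The crux is the descent of $E_\sigma$ to $E_\sigma^+$ via Hilbert 90. In cases (S3) and (N1), where $\dim_F E_\sigma = 1$, pick any $0 \ne j_0 \in E_\sigma$ and write $j_0' = \alpha \cdot j_0$ with $\alpha \in F$; the identity $(j_0')' = j_0$ forces $\alpha\, \alpha^\sigma = 1$, and classical Hilbert 90 yields $\beta \in F^\times$ with $\alpha = \beta^\sigma/\beta$, whence $(\beta \cdot j_0)' = \beta \cdot j_0$. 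Case (N2), where $\dim_F E_\sigma = 2$, follows from the general version $H^1(\Gal(F/\QQ), \GL_n(F)) = 0$: the natural map $F \tensor_\QQ E_\sigma^+ \to E_\sigma$ is an isomorphism, so $\dim_\QQ E_\sigma^+ = \dim_F E_\sigma = 2$. In all cases $E_\sigma^+ \ne 0$, and a non-zero element of $E_\sigma^+$, multiplied by a positive integer to clear denominators, lands in $\End(X)^\lambda$ as the required non-trivial special endomorphism.

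The main obstacle is this last step, the Hilbert-90 descent. Everything else (the eigenspace decomposition of $\End^0(X)$, the identification of the centralizer in each case of Lemma \ref{lemma-endoalgrm}, and the semilinearity of the Rosati involution on $E_\sigma$) is a short and direct calculation relying only on the fact that $F$ is pointwise fixed by the Rosati involution. The case with $\dim_F E_\sigma = 1$ admits the transparent concrete argument outlined above, while the two-dimensional case is settled by the vanishing of the non-abelian $H^1$.
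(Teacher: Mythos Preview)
Your proof is correct and takes a genuinely different route from the paper's. The paper argues case by case using the structure theory of involutions on central simple algebras: writing $F=\QQ(\rho)$ with $\rho^2\in\QQ_{>0}$, it invokes \cite{involutions} Propositions 2.21 and 2.22 to compare the Rosati involution with the canonical involution, and in each of (S3)/(N1) and (N2) it produces an explicit element $j$ (namely $j=\rho u$ or $j=d\otimes\theta$) satisfying $j^\dagger=j$ and $j\rho=-\rho j$.

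Your approach is more uniform and conceptual: the $F\otimes_\QQ F\cong F\times F$ decomposition immediately isolates the $\sigma$-twisted eigenspace $E_\sigma$, the non-commutativity hypothesis forces $E_\sigma\neq 0$ by a centralizer dimension count, and Galois descent for the $\sigma$-semilinear Rosati involution on $E_\sigma$ produces the required fixed vector. This avoids the external input on classification of involutions and treats all three cases by the same mechanism; the only case-dependence is the easy check that the centralizer $E_1$ is proper. The paper's argument, on the other hand, yields an explicit formula for the special endomorphism, which your descent argument does not directly provide.
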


\begin{proof}
  If $\End^0(X)$ is commutative, then the condition (\ref{eq-condspecialendo}) in the definition of special endomorphisms is clearly only satisfied by $j=0$.   Conversely, let us prove that, if $\End^0(X)$ is not commutative, i.e., cases (S3), (N1), and (N2) in Lemma \ref{lemma-endoalgrm}, then $X$ admits a non-trivial special endomorphism.

  Let us identify $F$ with a subalgebra of $\End^0(X)$ via $m$, and denote by $\varphi \mapsto \varphi^{\dagger}$ the Rosatti involution on $\End^0(X)$ defined by $\lambda$. By hypothesis, for every $x\in F$, we have $x^{\dagger}=x$.   Up to multiplication by a convenient integer, it is sufficient to prove the existence of $j \in \End^0(X)\setminus\{0\}$ such that $j^{\dagger}=j$, and
  $$
  jx = x^{\sigma}j
  $$
  for every $x \in F$. If we write $F=\QQ(\rho)$, for some $\rho \in F$ satisfying $\rho^2 \in \QQ_{>0}$, then it is enough to check that $j^{\dagger} = j$ and $j\rho = -\rho j$.

  \begin{enumerate}
  \item Let us assume that $\End^0(X)=B$ is a quaternion algebra over $\QQ$ (cases (S3) and (N1)). Since $B$ is indefinite, the positive involution $b\mapsto b^\dagger$ cannot coincide with the canonical involution $b\mapsto \overline{b}$ of $B$. By \cite{involutions} Proposition 2.21, there exists $u\in B^{\times}$ such that $\overline{u}=-u$ and
    $$
      b^{\dagger} = u^{-1}\overline{b}u
      $$
      for every $b\in B$. Note that $b\mapsto \overline{b}$ restricts to $\sigma$ on $F$, so that $\overline{\rho}=-\rho$ and the condition $\rho^{\dagger} = \rho$ means that $\rho u = -u\rho$. Thus, we can take $j=\rho u$.

    \item Suppose that $\End^0(X) = M_{2\times 2}(K)$, where $K=\QQ(\theta)$, with $\theta^2 \in \QQ_{<0}$ (case (N2)). Since $\dagger$ is positive, it must restrict to the unique non-trivial automorphism of $K$ (embedded diagonally in $M_{2\times 2}(K)$), i.e., $\theta^{\dagger}=-\theta$. By \cite{involutions} Proposition 2.22, there exists a unique quaternion $\QQ$-subalgebra $B\subset M_{2\times 2}(K)$ such that $B\tensor_{\QQ}K = M_{2\times 2}(K)$ and
      $$
           (b\tensor (s + t\theta ))^{\dagger}=\overline{b}\tensor (s-t\theta)
           $$
           for every $b\in B$, $s,t\in \QQ$, where $b\mapsto \overline{b}$ denotes the canonical involution of $B$. Write $\rho = b\tensor 1 + c\tensor \theta$. Using that $\rho^{\dagger} = \rho$ and $\rho^2\in \QQ$, we get $b=0$ and $\overline{c}=-c$. By the Skolem-Noether theorem (cf. \cite{involutions} Theorem 1.4), there exists $d\in B^{\times}$ such that $dc=-cd$; in particular, the reduced trace of $d$ is zero, so that $d^2\in \QQ$. Thus, we can take $j=d\tensor \theta$.%consequence of skolem-noether, cf.  vigneras corollaire 2.2
  \end{enumerate}
\end{proof}

Since any special endomorphism $j$ of an abelian surface with $R$-multiplication necessarily satisfies $j^2=N$ for some integer $N>0$ (see \cite{HY12} Corollary 3.1.4), we obtain the following corollary. 

\begin{coro}
  Let $\tau \in \mathbf{H}^2$. If $j_F(\tau)\notin \bigcup_{N=1}^{\infty}T_F(N)$, then $\mathbf{X}_{F,\tau}$ is simple and the $\QQ$-division algebra $\End^0(\mathbf{X}_{F,\tau})$ is isomorphic to
  \begin{enumerate}
  \item $E\supset F$ a totally imaginary quadratic extension of $F$, if $\tau$ is CM;
    \item $F$ otherwise. \hfill $\blacksquare$
  \end{enumerate}
\end{coro}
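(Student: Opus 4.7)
The plan is to deduce the corollary directly from the preceding proposition, the remark about special endomorphisms, and Lemma \ref{lemma-endoalgrm}, by a short argument of eliminating cases.

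First, I would translate the geometric hypothesis $j_F(\tau)\notin \bigcup_{N\ge 1}T_F(N)$ into an algebraic statement about $\mathbf{X}_{F,\tau}$. By definition of $T_F(N)$ as the image in $A_{F,\CC}$ of the moduli stack $\mathcal{T}_F(N)$ classifying principally polarized abelian schemes with $R$-multiplication equipped with a special endomorphism $j$ satisfying $j^2=N$, the hypothesis means that $(\mathbf{X}_{F,\tau},E_{F,\tau},m_{F,\tau})$ admits no nonzero special endomorphism $j$ with $j^2\in \mathbf{Z}_{>0}$. Invoking the remark immediately preceding the corollary --- which asserts that \emph{any} nonzero special endomorphism of an abelian surface with $R$-multiplication satisfies $j^2=N$ for some integer $N>0$ --- we conclude that $(\mathbf{X}_{F,\tau},E_{F,\tau},m_{F,\tau})$ admits no nontrivial special endomorphism at all.

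Next, by the preceding proposition, this vanishing is equivalent to $\End^0(\mathbf{X}_{F,\tau})$ being commutative, i.e., (isomorphic to) a field. Now I would apply Lemma \ref{lemma-endoalgrm}, which classifies the possibilities for $\End^0(X)$ when $X$ is a principally polarized abelian surface with $R$-multiplication over an algebraically closed field of characteristic $0$. The non-simple cases (N1) and (N2) give $\End^0(X)\cong M_{2\times 2}(\mathbf{Q})$ or $M_{2\times 2}(K)$, which are non-commutative; similarly, case (S3) yields a non-commutative quaternion algebra $B$. These three cases are therefore excluded, leaving only the simple cases (S1) $\End^0(\mathbf{X}_{F,\tau})\cong F$ or (S2) $\End^0(\mathbf{X}_{F,\tau})\cong E$ with $E\supset F$ a totally imaginary quadratic extension. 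In particular, $\mathbf{X}_{F,\tau}$ is simple.

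Finally, I would identify the dichotomy in terms of complex multiplication. By the definition given in Paragraph \ref{subsec-periodsrm}, $(\mathbf{X}_{F,\tau},E_{F,\tau},m_{F,\tau})$ is CM precisely when there exists an embedding of $\mathbf{Q}$-algebras $E \to \End^0(\mathbf{X}_{F,\tau})$ extending $m_{F,\tau}$, where $E$ is a totally imaginary quadratic extension of $F$. This condition holds in case (S2) and fails in case (S1), establishing the claimed correspondence. No step should present serious difficulty here, as the proof is essentially a bookkeeping of the classification once the translation to commutativity of $\End^0$ has been made; the substantive content lies entirely in the preceding proposition and Lemma \ref{lemma-endoalgrm}.
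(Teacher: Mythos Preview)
Your proposal is correct and follows exactly the approach the paper intends: the paper marks the corollary with $\blacksquare$ immediately after the sentence about $j^2=N$, signaling that the proof is precisely the case-elimination you describe using the preceding proposition and Lemma \ref{lemma-endoalgrm}. There is nothing to add.
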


We are now in position to relate Conjecture \ref{conj-Fquadratic} with Grothendieck's Period Conjecture.

Let $\tau\in \mathbf{H}^2\setminus \bigcup_{N=1}^{\infty}j_F^{-1}(T_F(N))$. Set $d \defeq \dim \MT(\mathbf{X}_{F,\tau})$, and $t \defeq \trdeg_{\QQ}\mathcal{P}(\mathbf{X}_{F,\tau}/\QQ(j_F(\tau)))$. It follows from the above corollary, and from the list of possible Mumford-Tate groups of abelian surfaces (see \cite{MZ99} 2.2), that $d=3$ if $\tau$ is CM and $d=7$ otherwise.

Recall from the introduction that the generalized Grothendieck's Period Conjecture asserts that $t\ge d$, i.e., that $t\ge 3$ if $\tau$ is CM and $t\ge 7$ otherwise. By Theorem \ref{trdegrm}, we have $t= \trdeg_{\QQ}\QQ(2\pi i, \tau, \varphi_F(\tau))$. Thus, when $\tau$ is CM, it follows from Corollary \ref{coro-periodcm} that Conjecture \ref{conj-Fquadratic} is \emph{equivalent} to Grothendieck's Period Conjecture for the abelian variety $\mathbf{X}_{F,\tau}$. If $\tau$ is not CM, then, since
$$
\trdeg_{\QQ(\varphi_F(\tau))}\QQ(2\pi i, \tau, \varphi_F(\tau))\le 3\text{,}
$$
Corollary \ref{coro-periodcm} is simply a weaker (but still non-trivial) statement than Grothendieck's Period conjecture for $\mathbf{X}_{F,\tau}$.

Despite being generally weaker than Grothendieck's Period Conjecture, our statement in Conjecture \ref{conj-Fquadratic} already contains some classical transcendence problems, such as the algebraic independence of $\pi$, $\Gamma(1/5)$, and $\Gamma(2/5)$, if $F=\QQ(\sqrt{5})$. Indeed, it is classical (see \cite{vasilev96} Paragraph 4, and references therein) that $\pi$, $\Gamma(1/5)$, and $\Gamma(2/5)$, are generators of the field of periods over $\overline{\QQ}$ of the Jacobian $J(C)$ of the hyperelliptic curve $C$ over $\QQ$ given by the affine equation
$$
C: y^2 = 1-x^5\text{;}
$$
observe that $\mu_5 = \{\zeta \in \overline{\CC} \mid \zeta^5=1\}$ acts on $C$ via
$$
\zeta \cdot (x,y) = (\zeta x ,y)\text{,}
$$
so that $J(C)$, with its canonical principal polarization, admits a real multiplication by $R = \ZZ[(1+\sqrt{5})/2]$ and is actually CM, with CM field $\QQ(\mu_5)$.

\section{Group-theoretic description of the higher Ramanujan vector fields}\label{gpinterpret}

This section is devoted to an alternative description of the complex manifold $B_g(\CC)$ (resp. $B_F(\CC)$) as a domain in the quotient of some Lie group by a discrete subgroup. Under this analytic description, we also give explicit formulas for the higher Ramanujan vector fields and for the solution $\varphi_g: \mathbf{H}_g \to B_g(\CC)$ (resp. $\varphi_F: \mathbf{H}^g \to B_F(\CC)$) of the higher Ramanujan equations.

These results will be applied in Section \ref{sec-zariskidensity} to obtain explicit parametrization of every analytic leaf of the Ramanujan foliation $\mathcal{R}_g^{\an}$ on $B_g(\CC)$ (resp. $\mathcal{R}_F^{\an}$ on $B_F(\CC)$).

\subsection{Realization of $B_g(\CC)$ as an open submanifold of $\Sp_{2g}(\ZZ)\backslash\Sp_{2g}(\CC)$} \label{parareal}

Let $\mathbf{B}_g = B(\mathbf{X}_g,E_g)$ be the principal $P_g(\CC)$-bundle over $\mathbf{H}_g$ associated to the principally polarized complex torus $(\mathbf{X}_g,E_g)_{/\mathbf{H}_g}$ as defined in Lemma \ref{relrepr}, so that the fiber of $\mathbf{B}_g \to \mathbf{H}_g$ over $\tau \in \mathbf{H}_g$ is given by the set of symplectic-Hodge bases of $(\mathbf{X}_{g,\tau},E_{g,\tau})$. 

We shall first realize $\mathbf{B}_g$ as a ``period domain'' in $\Sp_{2g}(\CC)$. For this, let us introduce the following convenient modification of period matrices (Definition \ref{defiperiodmatrix}).

\begin{defi}
  Let $(X,E)$ be a principally polarized complex torus of dimension $g$, and $b$ (resp. $\beta$) be a symplectic-Hodge basis (resp. an integral symplectic basis) of $(X,E)$. Let
  \begin{align*}
P(X,E,b,\beta) = \left(\begin{array}{cc}
                               \Omega_1 & N_1 \\
                               \Omega_2 & N_2
                               \end{array}\right) \in {\GSp}_{2g}(\CC)
\end{align*}
be the period matrix of $(X,E)$ with respect to $b$ and $\beta$. We define
\begin{align*}
\Pi(X,E,b,\beta) \defeq \left(\begin{array}{cc}
                               N_2 & \frac{1}{2\pi i}\Omega_2 \\[0.5em]
                               N_1 & \frac{1}{2\pi i}\Omega_1
                               \end{array}\right) \in {\Sp}_{2g}(\CC)
\end{align*}
Observe that this matrix is indeed symplectic by Lemma \ref{lemmaperiodmatrix}.
\end{defi}

We define a holomorphic map $\Pi : \mathbf{B}_g \to {\Sp}_{2g}(\CC)$ as follows. Let $q$ be a point in $\mathbf{B}_g$ lying above $\tau \in \mathbf{H}_g$, and corresponding to a symplectic-Hodge basis $b$ of $(\mathbf{X}_{g,\tau},E_{g,\tau})$, then
\begin{align*}
\Pi(q) \defeq \Pi (\mathbf{X}_{g,\tau},E_{g,\tau},b,\beta_{g,\tau})
\end{align*}
where $\beta_g$ is the integral symplectic basis of $(\mathbf{X}_g,E_{g})_{/\mathbf{H}_g}$ defined in Example \ref{intsymplbasis}. 

\begin{obs} \label{moduliinterpret}
Alternatively, recall that $\mathbf{H}_g$ may be regarded as the moduli space for principally polarized complex tori of dimension $g$ endowed with an integral symplectic basis (Proposition \ref{reprsintsympl}). In particular, as already remarked in the proof of Proposition \ref{reprsympl}, points in $\mathbf{B}_g$ correspond to isomorphism classes $[(X,E,b,\beta)]$ of quadruples $(X,E,b,\beta)$, where $(X,E)$ is a principally polarized complex torus of dimension $g$, and $b$ (resp. $\beta$) is a symplectic-Hodge basis (resp. integral symplectic basis) of $(X,E)$. Under this identification, the map $\Pi : \mathbf{B}_g \to \Sp_{2g}(\CC)$ is given by $[(X,E,b,\beta)] \mapsto \Pi (X,E,b,\beta)$.
\end{obs}
 
Let us consider the moduli-theoretic interpretation of $\mathbf{B}_g$ of the above remark, and recall that $\mathbf{B}_g$ is endowed with a natural left action of the discrete group $\Sp_{2g}(\ZZ)$ given by
\begin{align*}
\left(\begin{array}{cc}A & B \\ C & D \end{array}\right) \cdot [(X,E,b,\beta)] = \left[\left(X,E,b, \beta\cdot \left(\begin{array}{cc}
D\transp & B\transp \\
C\transp & A\transp
\end{array}\right)
\right)\right]
\end{align*} (cf. Remark \ref{actionsp}), and a right action of the Siegel parabolic subgroup $P_g(\CC)\le \Sp_{2g}(\CC)$ given by
\begin{align*}
[(X,E,b,\beta)]\cdot p = [(X,E,b\cdot p,\beta)]\text{,}
\end{align*}
where both $\beta$ and $b$ are regarded as row vectors of order $2g$.

 Let us denote by $P'_g$ the subgroup scheme of $\Sp_{2g}$ consisting of matrices $(A \ B \ ; C \ D )$ such that $B=0$. A simple computation proves the following equivariance properties of $\Pi : \mathbf{B}_g \to \Sp_{2g}(\CC)$.

\begin{lemma}\label{equivariancepi}
Consider the isomorphism of groups
\begin{align*}
P_g(\CC) &\stackrel{\sim}{\to} P'_g(\CC) \\
 p=\left( \begin{array}{cc}
    A & B \\
    0 & (A\transp)^{-1}
    \end{array}\right) &\mapsto p' \defeq \left( \begin{array}{cc}
    (A\transp)^{-1} & 0 \\
    2\pi i B & A
    \end{array}\right)\text{.}
\end{align*} 
Then, for any $q \in \mathbf{B}_g$, $\gamma \in \Sp_{2g}(\ZZ)$, and $p \in P_g(\CC)$, we have
\begin{align*}
\Pi(\gamma\cdot q) = \gamma\Pi(q)\ \ \ \text{ and }\ \ \ \Pi(q\cdot p) = \Pi(q)p'
\end{align*}
in $\Sp_{2g}(\CC)$.
\end{lemma}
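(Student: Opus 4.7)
The proof will proceed by direct matrix computation, unpacking the defining formulas for $\Pi$ in terms of period integrals. Writing $b = (\omega_1,\ldots,\omega_g,\eta_1,\ldots,\eta_g)$ and $\beta = (\gamma_1,\ldots,\gamma_g,\delta_1,\ldots,\delta_g)$, I will keep track of how the four blocks $\Omega_1, N_1, \Omega_2, N_2$ of $P(X,E,b,\beta)$ transform under the two actions, and then assemble these into the block matrix $\Pi$.

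For the left action, take $\gamma=(A\ B\ ; \ C\ D)\in \Sp_{2g}(\ZZ)$. Unwinding the definition of Remark~\ref{actionsp}, the new integral symplectic basis $\beta'=\beta\cdot(D^{\transp}\ B^{\transp}\ ;\ C^{\transp}\ A^{\transp})$ is obtained from $\beta$ by $\gamma'_i=\sum_j(D_{ij}\gamma_j+C_{ij}\delta_j)$ and $\delta'_i=\sum_j(B_{ij}\gamma_j+A_{ij}\delta_j)$. Linearity of integration then gives
\[
\Omega'_1=D\Omega_1+C\Omega_2,\ \ N'_1=DN_1+CN_2,\ \ \Omega'_2=B\Omega_1+A\Omega_2,\ \ N'_2=BN_1+AN_2,
\]
and inserting these into $\Pi(X,E,b,\beta')$ yields precisely $\gamma\cdot\Pi(X,E,b,\beta)$ by a block multiplication.

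For the right action of $p=(A\ B\ ; \ 0\ (A^{\transp})^{-1})\in P_g(\CC)$, the symplectic-Hodge basis $b\cdot p=(\omega A,\ \omega B+\eta(A^{\transp})^{-1})$ gives, again by linearity of $\int_{\gamma_i}$ and $\int_{\delta_i}$,
\[
\Omega'_1=\Omega_1 A,\ \ N'_1=\Omega_1 B+N_1(A^{\transp})^{-1},\ \ \Omega'_2=\Omega_2 A,\ \ N'_2=\Omega_2 B+N_2(A^{\transp})^{-1}.
\]
Substituting in $\Pi$ and comparing with the product $\Pi(X,E,b,\beta)\cdot p'$ (which involves the block $(A^{\transp})^{-1}$ in the upper-left, $A$ in the lower-right, and $2\pi i B$ in the lower-left), the $2\pi i$ factors in $\Pi$ and in $p'$ cancel in exactly the right way to produce the identity $\Pi(q\cdot p)=\Pi(q)\, p'$. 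This cancellation is precisely what motivates the definition of $p\mapsto p'$; in particular the presence of $2\pi i$ in the off-diagonal block is forced by the $\frac{1}{2\pi i}$ normalization in $\Pi$.

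The main (and only) non-triviality is really the bookkeeping of row-versus-column conventions between the action of $\Sp_{2g}(\ZZ)$ on $\beta$ and the block structure of $\Pi$ (note the transposition of blocks $\Omega_1\leftrightarrow\Omega_2$, $N_1\leftrightarrow N_2$ in passing from $P$ to $\Pi$), and I will set up the conventions once at the start to avoid confusion. Finally, that $p\mapsto p'$ is a group homomorphism with image in $P'_g(\CC)\subset \Sp_{2g}(\CC)$ is a short independent check: the symplectic relations $BA^{\transp}=AB^{\transp}$ for $p$ translate into those for $p'$, and a direct computation verifies $(p_1p_2)'=p_1'p_2'$ using only the rule $((A^{\transp})^{-1})'=((A^{\transp})^{-1})^{\transp})^{-1}=A^{-1\transp}$; being a bijection with inverse of the same form, it is an isomorphism of groups.
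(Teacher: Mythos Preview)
Your proposal is correct and is exactly the ``simple computation'' that the paper invokes without writing out: the paper gives no proof beyond the sentence ``A simple computation proves the following equivariance properties of $\Pi$,'' and your block-by-block verification of how $\Omega_1,\Omega_2,N_1,N_2$ transform under the left $\Sp_{2g}(\ZZ)$-action on $\beta$ and the right $P_g(\CC)$-action on $b$ is precisely that computation. The only cosmetic issue is the garbled expression in your final paragraph (unbalanced parentheses around $((A\transp)^{-1})^{\transp})^{-1}$); otherwise the argument is complete.
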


Let us now consider the \emph{Lagrangian Grassmannian}, namely the smooth and quasi-projective $\CC$-scheme of dimension $g(g+1)/2$ obtained as the quotient of complex affine algebraic groups
\begin{align*}
L_g \defeq {\Sp}_{2g,\CC}/P'_{g,\CC}\text{.}
\end{align*}
The complex manifold $L_g(\CC)=\Sp_{2g}(\CC)/P'_g(\CC)$ may be naturally identified with the quotient of
\begin{align*}
  M\defeq \{(Z_1,Z_2) \in M_{g\times g}(\CC)\times M_{g\times g}(\CC) \mid Z_1\transp Z_2 = Z_2\transp Z_1\text{, } \text{rank} (Z_1 \ Z_2) = g\}
\end{align*}
by the right action of $\GL_g(\CC)$ defined by matrix multiplication:
\begin{align*}
(Z_1,Z_2) \cdot S \defeq (Z_1S,Z_2S)\text{.}
\end{align*}
We denote the class in $L_g(\CC)$ of a point $(Z_1,Z_2)\in M$ by $(Z_1:Z_2)$. The canonical map
\begin{align*}
\pi: {\Sp}_{2g,\CC} \to L_g
\end{align*}
is then given on complex points by
\begin{align*}
\pi \left(\begin{array}{cc}A & B \\ C & D \end{array} \right) = (B:D)\text{.}
\end{align*}

\begin{prop}\label{prop1}
  Let $\iota: \mathbf{H}_g \to L_g(\CC)$ be the open embedding given by $\iota(\tau)=(\tau: \mathbf{1}_g)$. Then the diagram of complex manifolds
%$$
%  \raisebox{-0.5\height}{\includegraphics{rameq2-d9.pdf}}
%$$ 
$$
\begin{tikzcd}
\mathbf{B}_g \arrow{r}{\Pi}\arrow{d} & \Sp_{2g}(\CC) \arrow{d}{\pi} \\
\mathbf{H}_g \arrow{r}[swap]{\iota} & L_g(\CC)
\end{tikzcd}
$$
is Cartesian. That is, $\Pi: \mathbf{B}_g \to \Sp_{2g}(\CC)$ induces a biholomorphism of $\mathbf{B}_g$ onto the open submanifold
\begin{align*}
\pi^{-1}(\iota(\mathbf{H}_g))=\left\{\left.\left(\begin{array}{cc}A & B \\ C & D \end{array} \right) \in {\Sp}_{2g}(\CC) \right| D \in {\GL}_{g}(\CC)\text{, }BD^{-1} \in \mathbf{H}_g \right\}
\end{align*}
of $\Sp_{2g}(\CC)$, and makes the above diagram commute.
\end{prop}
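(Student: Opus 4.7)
The plan splits into three steps: first, the explicit description of the open subset $\pi^{-1}(\iota(\mathbf{H}_g))$; second, the commutativity of the diagram; and third, the upgrade of the resulting equivariant map to a biholomorphism. The first step is immediate: an element $s = \left(\begin{smallmatrix} A & B \\ C & D \end{smallmatrix}\right)\in\Sp_{2g}(\CC)$ has $\pi(s) = (B:D)$, and $(B:D) = (\tau:\mathbf{1}_g)$ for some $\tau\in\mathbf{H}_g$ if and only if $D\in\GL_g(\CC)$ and $BD^{-1} = \tau\in\mathbf{H}_g$, which yields the advertised explicit description.

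For the second step, I would fix $q\in\mathbf{B}_g$ lying over $\tau\in\mathbf{H}_g$ and corresponding to a symplectic-Hodge basis $b = (\omega_1,\ldots,\omega_g,\eta_1,\ldots,\eta_g)$ of $(\mathbf{X}_{g,\tau},E_{g,\tau})$. Under the canonical identifications $\Lie\mathbf{X}_{g,\tau}\cong\CC^g$ from Example~\ref{torus} and $\mathcal{F}^1(\mathbf{X}_{g,\tau})\cong(\Lie\mathbf{X}_{g,\tau})^\vee$ from Lemma~\ref{identcomp}, the integral symplectic basis $\beta_{g,\tau}$ of Example~\ref{intsymplbasis} takes the form $\gamma_i = \mathbf{e}_i$, $\delta_i = \tau\mathbf{e}_i$, and the pairing $\int_{(-)}(-)$ is simply evaluation of linear forms on $\CC^g$. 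Using the symmetry of $\tau$, one obtains $(\Omega_1)_{ij} = \omega_j(\mathbf{e}_i)$ and
$$
(\Omega_2)_{ij} \;=\; \omega_j(\tau\mathbf{e}_i) \;=\; \sum_k\tau_{ki}\,\omega_j(\mathbf{e}_k) \;=\; (\tau\Omega_1)_{ij},
$$
so $\Omega_2 = \tau\Omega_1$. Since $\Omega_1\in\GL_g(\CC)$ by Lemma~\ref{lemmaperiodmatrix}, the common $(2\pi i)^{-1}$ factor in the right block of $\Pi(q)$ scales out under $\pi$, giving $\pi(\Pi(q)) = (\Omega_2:\Omega_1) = (\tau:\mathbf{1}_g) = \iota(\tau)$ and establishing commutativity.

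For the third step, $\pi:\Sp_{2g,\CC}\to L_g$ is tautologically a principal $P'_{g,\CC}$-bundle, so its restriction $\pi^{-1}(\iota(\mathbf{H}_g))\to\iota(\mathbf{H}_g)\cong\mathbf{H}_g$ is a principal $P'_g(\CC)$-bundle, while Lemma~\ref{relrepr} gives $\mathbf{B}_g\to\mathbf{H}_g$ a principal $P_g(\CC)$-bundle structure. The map $\Pi$ is holomorphic (assembled from period integrals depending holomorphically on $b$ and $\tau$), lands in $\pi^{-1}(\iota(\mathbf{H}_g))$ by step two, and by Lemma~\ref{equivariancepi} intertwines the two free transitive fiberwise structure group actions via the complex Lie group isomorphism $P_g(\CC)\xrightarrow{\sim}P'_g(\CC)$, $p\mapsto p'$. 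Any holomorphic morphism of principal bundles covering a biholomorphism of bases and equivariant under an isomorphism of structure groups is automatically a biholomorphism: injectivity follows because $\Pi(q_1)=\Pi(q_2)$ forces $q_1,q_2$ to lie in the same fiber and then to differ by an element of $P_g(\CC)$ mapping to the identity in $P'_g(\CC)$, surjectivity onto $\pi^{-1}(\iota(\mathbf{H}_g))$ follows from transitivity of $P'_g(\CC)$ on fibers, and $\mathbf{B}_g$ and $\Sp_{2g}(\CC)$ both have complex dimension $2g^2+g$, so a holomorphic bijection between them is a biholomorphism. I expect the main bookkeeping obstacle to be step two: one must track the identifications between $\mathcal{F}^1(\mathbf{X}_{g,\tau})$ and $(\Lie\mathbf{X}_{g,\tau})^\vee$, between sections of the lattice and vectors in $\CC^g$, and the $(2\pi i)$ factors appearing in the definitions of $\Pi$ and $\langle\,,\,\rangle_{E_g}$, so that the identity $\Omega_2 = \tau\Omega_1$ emerges cleanly without hidden sign or normalization twists.
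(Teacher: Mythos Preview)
Your proposal is correct and follows essentially the same approach as the paper: commutativity via $\Omega_2=\tau\Omega_1$, then injectivity and surjectivity via the $P_g(\CC)\cong P'_g(\CC)$-equivariance of Lemma~\ref{equivariancepi}, then biholomorphism from the equality of dimensions. The paper leaves step two implicit (pointing to the proof of Proposition~\ref{reprsintsympl}) and argues injectivity slightly more directly by noting that period matrices are base-change matrices for the comparison isomorphism, but the substance is the same.
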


\begin{proof}
The commutativity of the diagram in the statement is easy (cf. proof of Proposition \ref{reprsintsympl}). In particular, if $q,q'\in \mathbf{B}_g$ satisfy $\Pi(q)=\Pi(q')$, then they lie above the same point $\tau \in \mathbf{H}_g$. Let $b$ (resp. $b'$) be the symplectic-Hodge basis of $(\mathbf{X}_{g,\tau},E_{g,\tau})$ corresponding to $q$ (resp. $q'$). Since period matrices are base change matrices for the comparison isomorphism,  and
\begin{align*}
\Pi(\mathbf{X}_{g,\tau}, E_{g,\tau},b, \beta_{g,\tau})=\Pi(\mathbf{X}_{g,\tau}, E_{g,\tau},b', \beta_{g,\tau})\text{,}
\end{align*}
it is clear that $b=b'$. This proves that $\Pi$ is injective.

Observe that $\mathbf{B}_g$ and $\Sp_{2g}(\CC)$ are complex manifolds of same dimension. Thus, to finish our proof, it suffices to check that  $\Pi(\mathbf{B}_g)=\pi^{-1}(\iota(\mathbf{H}_g))$ (\cite{GH78} p. 19). Let $s \in\pi^{-1}(\iota(\mathbf{H}_g))$, and let $\tau \in \mathbf{H}_g$ be such that $\iota(\tau)=\pi(s)$. Fix any $q \in \mathbf{B}_g$ lying above $\tau \in \mathbf{H}_g$. Then, there exists a unique $p' \in P_g'(\CC)$ such that $s=\Pi(q)p'$. Hence, by Lemma \ref{equivariancepi}, $s= \Pi(q\cdot p) \in \Pi(\mathbf{B}_g)$.
\end{proof}

\begin{obs}
In other words, through period matrices, one can realize the moduli space $\mathbf{B}_g$ as an open submanifold of $\Sp_{2g}(\CC)$ given by some positivity condition. For a more direct Hodge-theoretic approach, we refer to \cite{movasati13} Section 4.1.
\end{obs}

Recall from Proposition \ref{reprsympl} that the canonical map 
\begin{align}\label{canmap}
\begin{split}
\mathbf{B}_g &\to B_g(\CC)\\
       [(X,E,b,\beta)] &\mapsto [(X,E,b)]
\end{split}
\end{align}
induces a biholomorphism
\begin{align*}
{\Sp}_{2g}(\ZZ) \backslash \mathbf{B}_g \stackrel{\sim}{\to} B_g(\CC)\text{.}
\end{align*}
Furthermore, note that Lemma \ref{equivariancepi} implies that the action of $\Sp_{2g}(\ZZ)$ on $\Sp_{2g}(\CC)$ by left multiplication preserves the open subset $\Pi(\mathbf{B}_g)$.

\begin{coro} \label{realization}
The map $\Pi : \mathbf{B}_g \to \Sp_{2g}(\CC)$ induces a biholomorphism of $B_g(\CC)$ onto the open submanifold of $\Sp_{2g}(\ZZ) \backslash \Sp_{2g}(\CC)$
\begin{align*}
{\Sp}_{2g}(\ZZ)\setminus \Pi(\mathbf{B}_g) = \{{\Sp}_{2g}(\ZZ) s  \in {\Sp}_{2g}(\ZZ) \backslash {\Sp}_{2g}(\CC) \mid \pi(s) \in \iota(\mathbf{H}_g) \}\text{.}
\end{align*}\hfill $\blacksquare$
\end{coro}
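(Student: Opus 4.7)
The strategy is simply to pass to the $\Sp_{2g}(\ZZ)$-quotient of the biholomorphism $\Pi: \mathbf{B}_g \stackrel{\sim}{\to} \pi^{-1}(\iota(\mathbf{H}_g))$ provided by Proposition \ref{prop1}, using the equivariance property of Lemma \ref{equivariancepi} and the identification $B_g(\CC)\cong \Sp_{2g}(\ZZ)\backslash \mathbf{B}_g$ from Proposition \ref{reprsympl}.

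First, I would record that the open subset $\pi^{-1}(\iota(\mathbf{H}_g))\subset \Sp_{2g}(\CC)$ is stable under left multiplication by $\Sp_{2g}(\ZZ)$: indeed, Lemma \ref{equivariancepi} gives $\Pi(\gamma\cdot q) = \gamma\,\Pi(q)$ for every $q\in \mathbf{B}_g$ and $\gamma\in \Sp_{2g}(\ZZ)$, so $\gamma\cdot \Pi(\mathbf{B}_g)=\Pi(\mathbf{B}_g)=\pi^{-1}(\iota(\mathbf{H}_g))$. Consequently, $\Pi$ is an equivariant biholomorphism between two $\Sp_{2g}(\ZZ)$-spaces.

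Next, I would verify that the left action of $\Sp_{2g}(\ZZ)$ on $\pi^{-1}(\iota(\mathbf{H}_g))$ is proper and free, so that the quotient is a complex manifold and $\pi^{-1}(\iota(\mathbf{H}_g))\to \Sp_{2g}(\ZZ)\backslash \pi^{-1}(\iota(\mathbf{H}_g))$ is a holomorphic covering. Freeness is automatic from left multiplication in the group $\Sp_{2g}(\CC)$; properness follows from the fact that $\Sp_{2g}(\ZZ)$ is a discrete subgroup of the real Lie group $\Sp_{2g}(\RR)$, which is closed in $\Sp_{2g}(\CC)$, so its action on $\Sp_{2g}(\CC)$ by left translation is properly discontinuous. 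Combined with the analogous facts for $\mathbf{B}_g$ (established in the course of the proof of Proposition \ref{reprsympl}), one sees that $\Pi$ descends to a biholomorphism
\[
\overline{\Pi}:\ {\Sp}_{2g}(\ZZ)\backslash \mathbf{B}_g\ \stackrel{\sim}{\to}\ {\Sp}_{2g}(\ZZ)\backslash \pi^{-1}(\iota(\mathbf{H}_g)).
\]
Composing $\overline{\Pi}$ with the inverse of the biholomorphism ${\Sp}_{2g}(\ZZ)\backslash \mathbf{B}_g\stackrel{\sim}{\to} B_g(\CC)$ furnished by Proposition \ref{reprsympl} yields the desired biholomorphism of $B_g(\CC)$ with ${\Sp}_{2g}(\ZZ)\backslash \pi^{-1}(\iota(\mathbf{H}_g))$.

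Finally, the identity
\[
{\Sp}_{2g}(\ZZ)\backslash \pi^{-1}(\iota(\mathbf{H}_g)) = \{{\Sp}_{2g}(\ZZ) s \in {\Sp}_{2g}(\ZZ)\backslash {\Sp}_{2g}(\CC) \mid \pi(s)\in \iota(\mathbf{H}_g)\}
\]
is a tautological consequence of the $\Sp_{2g}(\ZZ)$-stability of $\pi^{-1}(\iota(\mathbf{H}_g))$: the condition ``$\pi(s)\in \iota(\mathbf{H}_g)$'' depends only on the orbit $\Sp_{2g}(\ZZ)s$, so the right-hand side is an open submanifold of $\Sp_{2g}(\ZZ)\backslash\Sp_{2g}(\CC)$ that coincides with the image of $\pi^{-1}(\iota(\mathbf{H}_g))$ in the quotient. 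The only non-routine point is checking the properly discontinuous action on $\Sp_{2g}(\CC)$, but that is standard; every other step is a direct combination of the preceding proposition and lemma.
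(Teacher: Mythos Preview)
Your proof is correct and follows exactly the approach the paper intends: the corollary is stated with a $\blacksquare$ and no proof, because the three ingredients you invoke---the equivariance of $\Pi$ (Lemma \ref{equivariancepi}), the biholomorphism $\Pi:\mathbf{B}_g\stackrel{\sim}{\to}\pi^{-1}(\iota(\mathbf{H}_g))$ (Proposition \ref{prop1}), and the identification $B_g(\CC)\cong \Sp_{2g}(\ZZ)\backslash\mathbf{B}_g$ (Proposition \ref{reprsympl})---are precisely those recalled in the paragraph immediately preceding the corollary. Your extra verification that the $\Sp_{2g}(\ZZ)$-action on $\pi^{-1}(\iota(\mathbf{H}_g))$ is proper and free is harmless but not strictly needed, since this is transported from $\mathbf{B}_g$ via the equivariant biholomorphism $\Pi$.
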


\subsection{Explicit analytic description of the higher Ramanujan vector fields $v_{ij}$ and of $\varphi_g$}

Recall that the Lie algebra of $\Sp_{2g}(\CC)$ is given by
\begin{align*}
\Lie {\Sp}_{2g} (\CC) = \left\{\left.\left(\begin{array}{cc}A & B \\ C & D \end{array}\right) \in M_{2g\times 2g}(\CC)\right|B\transp=B\text{, }C\transp = C\text{, }D = -A\transp\right\}\text{.}
\end{align*}
For $1\le k \le l \le g$, let us consider the left invariant holomorphic vector field $\tilde{V}_{kl}$ on $\Sp_{2g}(\CC)$ corresponding to
\begin{align*}
\frac{1}{2\pi i}\left(\begin{array}{cc}0 & \mathbf{E}^{kl} \\ 0 & 0 \end{array}\right) \in \Lie {\Sp}_{2g}(\CC)\text{;}
\end{align*}
it descends to a holomorphic vector field $V_{kl}$ on the quotient $\Sp_{2g}(\ZZ)\backslash \Sp_{2g}(\CC)$.

\begin{theorem}\label{unifhrvf}
  Let $(v_{kl})_{1\le k \le l \le g}$ be the higher Ramanujan vector fields on $B_g(\CC)$. Under the identification of $B_g(\CC)$ with an open submanifold of $\Sp_{2g}(\ZZ)\backslash \Sp_{2g}(\CC)$ of Corollary \ref{realization}, we have:
  \begin{enumerate}
     \item For every $1\le k \le l \le g$,
\begin{align*}
v_{kl} = V_{kl}|_{B_g(\CC)}\text{.}
\end{align*}
\item The analytic solution of the higher Ramanujan equations $\varphi_g: \mathbf{H}_g \to B_g(\CC)$ is given by
  \begin{align*}
    \varphi_g(\tau) = {\Sp}_{2g}(\ZZ)\left(\begin{array}{cc}\mathbf{1}_g & \tau \\ 0 &\mathbf{1}_g\end{array} \right) \in {\Sp}_{2g}(\ZZ)\backslash {\Sp}_{2g}(\CC)\text{.}
    \end{align*}
   \end{enumerate}
 \end{theorem}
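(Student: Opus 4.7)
The plan is to work on the covering space $\mathbf{B}_g$ of $B_g(\CC)$, on which the realization of Paragraph \ref{parareal} provides an explicit embedding $\Pi : \mathbf{B}_g \hookrightarrow \Sp_{2g}(\CC)$ as an open submanifold (Proposition \ref{prop1}), whose image descends to $B_g(\CC)$ inside $\Sp_{2g}(\ZZ)\backslash \Sp_{2g}(\CC)$ (Corollary \ref{realization}). Both parts of the theorem are then purely matrix computations: part (2) is a direct evaluation of a period matrix, and part (1) reduces, via the explicit relation between the Gauss-Manin connection and the derivative of the universal period matrix, to a block identity on $\Sp_{2g}(\CC)$.

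For part (2), I would directly compute the period matrix of $\bfb_{g,\tau}=(\bfomega_1,\ldots,\bfomega_g,\bfeta_1,\ldots,\bfeta_g)$ with respect to the integral symplectic basis $\beta_{g,\tau}=(\gamma_1,\ldots,\gamma_g,\delta_1,\ldots,\delta_g)$ of Example \ref{intsymplbasis}. From $\bfomega_k = 2\pi i\, dz_k$ together with $\gamma_l(\tau)=\mathbf{e}_l$, $\delta_l(\tau)=\tau\mathbf{e}_l$, and the integration relations $\int_{\gamma_l}\bfeta_k = 0$, $\int_{\delta_l}\bfeta_k = \delta_{kl}$ given by Corollary \ref{caraceta}, the four blocks are $\Omega_1 = 2\pi i\,\mathbf{1}_g$, $N_1 = 0$, $\Omega_2 = 2\pi i\,\tau$, $N_2 = \mathbf{1}_g$. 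Substituting into the definition of $\Pi$ yields
$\Pi(\mathbf{X}_{g,\tau},E_{g,\tau},\bfb_{g,\tau},\beta_{g,\tau})=\left(\begin{smallmatrix} \mathbf{1}_g & \tau \\ 0 & \mathbf{1}_g\end{smallmatrix}\right)$, and passing to the $\Sp_{2g}(\ZZ)$-quotient gives the desired formula for $\varphi_g(\tau)$.

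For part (1), since $\mathbf{B}_g\to B_g(\CC)$ is an étale covering, $v_{kl}$ lifts uniquely to a holomorphic vector field $\tilde v_{kl}$ on $\mathbf{B}_g$, and it suffices to prove $T\Pi(\tilde v_{kl}) = \tilde V_{kl}|_{\Pi(\mathbf{B}_g)}$. On $\mathbf{B}_g$, let $e_\beta$ denote the horizontal frame of $\mathcal{H}^1_{\dR}$ dual to the universal integral symplectic basis via the comparison isomorphism (so that the sections of $e_\beta$ are in $R^1p_*\ZZ\otimes\CC$ and hence horizontal for the Gauss-Manin connection), and let $P$ be the tautological period matrix, so that the universal symplectic-Hodge basis satisfies $\bfb = e_\beta\cdot P$. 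Then $\nabla \bfb = e_\beta\cdot dP = \bfb\cdot P^{-1}dP$, and Proposition \ref{caracchamps}, rephrased as $\nabla_{v_{kl}}\bfb = \bfb\cdot N^{kl}$ with $N^{kl}=\left(\begin{smallmatrix} 0 & 0 \\ \mathbf{E}^{kl} & 0\end{smallmatrix}\right)$, yields the key identity $dP(\tilde v_{kl}) = P\cdot N^{kl}$.

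To finish, it remains to transport this identity from $P$ to $\Pi$. Since $\Pi$ is obtained from $P$ by swapping the two block-rows, swapping the two block-columns, and dividing the new second block-column by $2\pi i$, the identity $dP(\tilde v_{kl}) = P\cdot N^{kl}$ unfolds, block by block, into
\begin{align*}
d\Pi(\tilde v_{kl}) = \begin{pmatrix} 0 & N_2\mathbf{E}^{kl}/(2\pi i) \\ 0 & N_1\mathbf{E}^{kl}/(2\pi i)\end{pmatrix} = \Pi\cdot\frac{1}{2\pi i}\begin{pmatrix} 0 & \mathbf{E}^{kl} \\ 0 & 0\end{pmatrix},
\end{align*}
which is exactly the value of the left-invariant vector field $\tilde V_{kl}$ at the point $\Pi$. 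The only substantive step in the entire argument is the identity $dP(\tilde v_{kl}) = P\cdot N^{kl}$, which packages the definition of the higher Ramanujan vector fields into a compact matrix form; I expect the main obstacle to be purely notational — carefully navigating the block-reshuffle and rescaling distinguishing $P$ from $\Pi$, and keeping the conventions for left-invariant vector fields on $\Sp_{2g}(\CC)$ straight — rather than conceptual.
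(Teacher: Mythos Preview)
Your proof of part (2) is identical to the paper's (Lemma \ref{psi} followed by descent to the quotient).

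For part (1), your approach is correct but genuinely different from the paper's. The paper argues indirectly through integral curves: it takes an arbitrary local solution $u:U\to B_g(\CC)$ of the higher Ramanujan equations on a simply connected $U\subset\mathbf{H}_g$, lifts it to $\tilde u:U\to\mathbf{B}_g$, and shows that $h=\Pi\circ\tilde u$ satisfies $\theta_{kl}h=\tilde V_{kl}\circ h$ by computing the derivatives of the period blocks via Proposition \ref{equivalences} and recognizing $h(\tau)=s\left(\begin{smallmatrix}\mathbf{1}_g&\tau\\0&\mathbf{1}_g\end{smallmatrix}\right)$ for some fixed $s$. Since solutions exist through every point, this identifies the two vector fields. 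You instead work pointwise on $\mathbf{B}_g$: using the horizontal frame $e_\beta$ and the identity $\nabla\bfb=\bfb\cdot P^{-1}dP$, Proposition \ref{caracchamps} gives $dP(\tilde v_{kl})=P\cdot N^{kl}$ directly, and conjugating by the constant block reshuffle $\Pi=J_1PJ_2$ yields $d\Pi(\tilde v_{kl})=\Pi\cdot\frac{1}{2\pi i}\left(\begin{smallmatrix}0&\mathbf{E}^{kl}\\0&0\end{smallmatrix}\right)$, which is $\tilde V_{kl}$ at $\Pi$. Your route is shorter and avoids the detour through solutions of the higher Ramanujan equations; the paper's route has the advantage of making explicit, as a byproduct, the structure $h(\tau)=s\left(\begin{smallmatrix}\mathbf{1}_g&\tau\\0&\mathbf{1}_g\end{smallmatrix}\right)$ of lifted solutions, which is reused later (e.g.\ in the description of leaves in Proposition \ref{charactleaves}).
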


 As an example of application, we can prove the following easy consequence of the above theorem.
 
 \begin{coro}\label{ramleafclosed}
The image of $\varphi_g: \mathbf{H}_g \to B_g(\CC)$ is closed for the analytic topology.
\end{coro}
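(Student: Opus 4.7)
The plan is to exploit the explicit description of $\varphi_g$ from Theorem \ref{unifhrvf}(2): under the identification of $B_g(\CC)$ with an open submanifold of $\Sp_{2g}(\ZZ)\backslash \Sp_{2g}(\CC)$ coming from Corollary \ref{realization}, we have $\varphi_g(\tau) = \Sp_{2g}(\ZZ)\cdot n(\tau)$ with $n(\tau) \defeq \left(\begin{smallmatrix}\mathbf{1}_g & \tau \\ 0 & \mathbf{1}_g\end{smallmatrix}\right)$. Set $U \defeq \{n(\tau):\tau \in \mathbf{H}_g\} \subset \Pi(\mathbf{B}_g)$. Since the properly discontinuous free action of $\Sp_{2g}(\ZZ)$ makes $\Pi(\mathbf{B}_g) \to B_g(\CC)$ a covering map, it suffices to prove that the saturation $\Sp_{2g}(\ZZ)\cdot U$ is closed in $\Pi(\mathbf{B}_g)$.

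So take a sequence $\gamma_n n(\tau_n) \to s$ in $\Pi(\mathbf{B}_g)$, where $\gamma_n = (A_n \ B_n \ ; \ C_n \ D_n) \in \Sp_{2g}(\ZZ)$ and $\tau_n \in \mathbf{H}_g$. The top-left and bottom-left blocks of $\gamma_n n(\tau_n)$ are precisely $A_n$ and $C_n$, which must converge to the corresponding blocks $A^*$ and $C^*$ of $s$. Because $M_{g\times g}(\ZZ)$ is discrete in $M_{g\times g}(\CC)$, convergent sequences of integer matrices are eventually constant, so $A_n = A^*$ and $C_n = C^*$ for all sufficiently large $n$, and in particular $A^*, C^* \in M_{g \times g}(\ZZ)$ are the first $g$ columns of an integer symplectic matrix.

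Fix any such large $n_0$ and compute $\gamma_{n_0}^{-1}s$ directly. Using $\gamma_{n_0}^{-1} = \left(\begin{smallmatrix} D_{n_0}^{\transp} & -B_{n_0}^{\transp} \\ -C^{*\transp} & A^{*\transp}\end{smallmatrix}\right)$, the top-left block of $\gamma_{n_0}^{-1}s$ equals $D_{n_0}^{\transp}A^* - B_{n_0}^{\transp}C^* = \mathbf{1}_g$ (as $\gamma_{n_0} \in \Sp_{2g}(\ZZ)$), while the bottom-left block equals $A^{*\transp}C^* - C^{*\transp}A^* = 0$, using the isotropy relation satisfied by $s \in \Sp_{2g}(\CC)$. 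The remaining $\Sp_{2g}(\CC)$-relations then force $\gamma_{n_0}^{-1}s = n(\tau')$ for some symmetric matrix $\tau' \in \Sym_g(\CC)$. By Lemma \ref{equivariancepi} the set $\Pi(\mathbf{B}_g) = \pi^{-1}(\iota(\mathbf{H}_g))$ is $\Sp_{2g}(\ZZ)$-invariant, so $\gamma_{n_0}^{-1}s \in \Pi(\mathbf{B}_g)$; applying Proposition \ref{prop1} yields $\tau' = \pi(\gamma_{n_0}^{-1}s) \in \mathbf{H}_g$, so $\gamma_{n_0}^{-1}s \in U$. Therefore $s \in \gamma_{n_0} U \subset \Sp_{2g}(\ZZ)\cdot U$, and $[s] = \varphi_g(\tau')$, concluding the proof.

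There is no serious obstacle: the argument is essentially formal once the decisive observation is made, namely that the integer entries of $\gamma_n$ in the columns matched to $s$ cannot drift continuously and must therefore stabilize. The only subtle point is checking that the resulting symmetric matrix $\tau'$ still has positive-definite imaginary part, which is handled by the $\Sp_{2g}(\ZZ)$-invariance of the period domain $\Pi(\mathbf{B}_g)$ together with the characterization of $\Pi(\mathbf{B}_g)$ as $\pi^{-1}(\iota(\mathbf{H}_g))$.
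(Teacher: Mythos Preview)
Your proof is correct and rests on the same key observation as the paper's: the left column blocks of an element of $\Sp_{2g}(\ZZ)\cdot U_g(\CC)$ coincide with those of its $\Sp_{2g}(\ZZ)$-factor, hence lie in the discrete set $M_{g\times g}(\ZZ)^2$. The paper packages this via the continuous map $f:\Sp_{2g}(\CC)\to M_{g\times g}(\CC)^2$, $(A\ B;\ C\ D)\mapsto (A,C)$, and the one-line identity $\Sp_{2g}(\ZZ)\cdot U_g(\CC)=f^{-1}(f(\Sp_{2g}(\ZZ)))$, deducing closedness as a preimage of a discrete set; you instead run the same idea through a convergent sequence and carry out explicitly the matrix computation that is implicit in that identity. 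Your version also handles the positivity condition $\tau'\in\mathbf{H}_g$ directly, whereas the paper absorbs it into the claimed equivalence with closedness of the image of all of $U_g(\CC)$ in $\Sp_{2g}(\ZZ)\backslash\Sp_{2g}(\CC)$; both treatments are fine.
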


\begin{proof}
  Consider the subgroup
  $$
  U_g(\CC) \defeq \left.\left\{\left(\begin{array}{cc} \mathbf{1}_g & Z \\ 0 & \mathbf{1}_g\end{array}\right) \in M_{2g\times 2g}(\CC) \right| Z\transp =Z\right\} \le {\Sp}_{2g}(\CC)\text{.}
  $$
  The statement is equivalent to asserting that the image of $U_g(\CC)\subset \Sp_{2g}(\CC)$ in the quotient $\Sp_{2g}(\ZZ)\backslash \Sp_{2g}(\CC)$ is closed, or, equivalently, that $\Sp_{2g}(\ZZ)\cdot U_g(\CC) \subset \Sp_{2g}(\CC)$ is closed. Let us consider the (holomorphic) map
  \begin{align*}
    f: {\Sp}_{2g}(\CC) &\to M_{g\times g}(\CC)\times M_{g\times g}(\CC)\\
                \left(\begin{array}{cc}A & B\\ C & D \end{array} \right)&\mapsto (A,C)\text{.}
  \end{align*}
  Now, one simply remarks that
\begin{align*}
{\Sp}_{2g}(\ZZ)\cdot U_g(\CC) = f^{-1}(f({\Sp}_{2g}(\ZZ)))\text{.}
\end{align*}
Since $f(\Sp_{2g}(\ZZ))\subset M_{g\times g}(\ZZ) \times M_{g\times g}(\ZZ)$, and $M_g(\ZZ)\times M_g(\ZZ)$ is a closed discrete subset of $M_{g\times g}(\CC)\times M_{g\times g}(\CC)$ for the analytic topology, we conclude that  ${\Sp}_{2g}(\ZZ)\cdot U_g(\CC)$ is closed in $\Sp_{2g}(\CC)$.
\end{proof}

We prove parts (1) and (2) of Theorem \ref{unifhrvf} separately.

\begin{proof}[Proof of Theorem \ref{unifhrvf} (1)]
  It is sufficient to prove that the solutions of the differential equations defined by $v_{kl}$ and by $V_{kl}$ coincide. More precisely, let $U$ be a simply connected open subset of $\mathbf{H}_g$, and $u:U \to B_g(\CC)$ be a solution of the higher Ramanujan equations (Definition \ref{defihreq}); we shall prove that, for any lifting
% $$
%  \raisebox{-0.5\height}{\includegraphics{rameq2-d10.pdf}}
%$$
 $$
 \begin{tikzcd}
   & \mathbf{B}_g\arrow{d}\\
   U \arrow[bend left]{ur}{\tilde{u}}\arrow{r}[swap]{u} & B_g(\CC)
 \end{tikzcd}
 $$
 of $u$, the holomorphic map $h \defeq \Pi \circ \tilde{u} : U \to \Sp_{2g}(\CC)$ is a solution of the differential equations
\begin{align} \label{rameqdisg}
\theta_{kl}h = \tilde{V}_{kl}\circ h  \text{, }\ \ \ 1\le k \le l \le g \text{.}
\end{align}
where $\theta_{kl} = \frac{1}{2\pi i}\frac{\partial}{\partial \tau_{kl}}$.

By the universal property of $\mathbf{B}_g$, the holomorphic map $\tilde{u}$ corresponds to a principally polarized complex torus $(X,E)$ over $U$, of relative dimension $g$, endowed with a symplectic-Hodge basis $b = (\omega_1,\ldots,\omega_g,\eta_1,\ldots,\eta_g)$ and an integral symplectic basis $\beta = (\gamma_1,\ldots,\gamma_g,\delta_1,\ldots,\delta_g)$. For $\tau\in U$, let us write
\begin{align*}
h(\tau) = \left(\begin{array}{cc} N_2(\tau) & \frac{1}{2\pi i}\Omega_2(\tau)\\[0.5em]
                               N_1(\tau) & \frac{1}{2\pi i}\Omega_1(\tau) \end{array} \right) \in {\Sp}_{2g}(\CC)
\end{align*}
where $\Omega_1,\Omega_2,N_1,N_2: U \to M_{g\times g}(\CC)$ are holomorphic.

Now, since $u$ is a solution of the higher Ramanujan equations, it follows from Proposition \ref{equivalences} (3) that, for every $1\le i \le j \le g$,
\begin{enumerate}
   \item[(i)] $\theta_{ij}\Omega_1 = N_1\mathbf{E}^{ij}$, $\theta_{ij}\Omega_2 = N_2\mathbf{E}^{ij}$
   \item[(ii)] $\theta_{ij}N_1 = 0$, $\theta_{ij}N_2=0$.
\end{enumerate}
As $U$ is connected, (ii) implies that $N_1$ and $N_2$ are constant. Thus, (i) implies that $\frac{1}{2\pi i}\Omega_1 -  N_1\tau$ and $\frac{1}{2\pi i}\Omega_2 -  N_2\tau$ are also constant. In other words, there exists a unique element $s \in \Sp_{2g}(\CC)$ such that
\begin{align*}
h(\tau) = s\left(\begin{array}{cc}\mathbf{1}_g & \tau \\ 0 &\mathbf{1}_g\end{array} \right)
\end{align*}
for every $\tau \in U$. Finally, since each $\tilde{V}_{kl}$ is left invariant, it is easy to see that $h$ is a solution of the differential equations (\ref{rameqdisg}).
\end{proof}

\begin{lemma} \label{psi}
For any $\tau \in \mathbf{H}_g$, we have
  \begin{align*}
    \Pi (\mathbf{X}_{g,\tau},E_{g,\tau}, \bfb_{g,\tau},\beta_{g,\tau}) = \left(\begin{array}{cc} \mathbf{1}_g & \tau \\ 0 & \mathbf{1}_g\end{array} \right)\text{.}
  \end{align*}
\end{lemma}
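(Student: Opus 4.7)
The plan is to prove the identity by directly computing each of the four $g\times g$ blocks of the period matrix from its definition, using the very explicit description of all the ingredients: the sections $\gamma_i(\tau)=\mathbf{e}_i$ and $\delta_i(\tau)=\tau\mathbf{e}_i$ of the integral symplectic basis $\beta_{g,\tau}$ (Example \ref{intsymplbasis}), together with the formulas $\bfomega_k=2\pi i\,dz_k$ and $\bfeta_k=\nabla_{\theta_{kk}}\bfomega_k$ defining $\bfb_{g,\tau}$ (Theorem \ref{theoremsolution}). The comparison isomorphism identifies $dz_k$ with the $\CC$-linear functional on $\Lie\mathbf{X}_{g,\tau}\supset H_1(\mathbf{X}_{g,\tau},\ZZ)$ that reads off the $k$th coordinate, so the evaluations against $\gamma_i$ and $\delta_i$ reduce to elementary matrix entries.

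Concretely, I would first compute the bottom two blocks of $\Pi$. Since $dz_j(\gamma_i(\tau))=\mathbf{e}_j\transp\mathbf{e}_i=\delta_{ij}$ and $dz_j(\delta_i(\tau))=\mathbf{e}_j\transp\tau\mathbf{e}_i=\tau_{ji}=\tau_{ij}$ (using that $\tau$ is symmetric), integration along $\gamma_i$, $\delta_i$ (which in this trivialisation just means evaluation) gives
\[
(\Omega_1)_{ij}=\int_{\gamma_i}\bfomega_j=2\pi i\,\delta_{ij},\qquad (\Omega_2)_{ij}=\int_{\delta_i}\bfomega_j=2\pi i\,\tau_{ij},
\]
so $\tfrac{1}{2\pi i}\Omega_1=\mathbf{1}_g$ and $\tfrac{1}{2\pi i}\Omega_2=\tau$. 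This gives exactly the right-hand column of the asserted matrix.

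For the top two blocks, I would invoke Corollary \ref{caraceta}, which characterises $\bfeta_k$ by the vanishing of $\int_{\gamma_l}\bfeta_k$ and by $\int_{\delta_l}\bfeta_k=\delta_{kl}$ for every $l$; equivalently, $\bfeta_k$ corresponds to $E_g(\gamma_k,\,\cdot\,)$ under the comparison isomorphism. Consequently,
\[
(N_1)_{ij}=\int_{\gamma_i}\bfeta_j=0,\qquad (N_2)_{ij}=\int_{\delta_i}\bfeta_j=\delta_{ij},
\]
so $N_1=0$ and $N_2=\mathbf{1}_g$. Assembling the four blocks according to the definition of $\Pi$ yields the claimed matrix. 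There is no genuine obstacle here: the only substantive input is Corollary \ref{caraceta} (which itself is a consequence of the $C^\infty$-computation of Proposition \ref{lemme1}); the rest is bookkeeping of the trivialisations, with care taken to keep track of the transposition when evaluating $dz_j$ on $\delta_i(\tau)=\tau\mathbf{e}_i$.
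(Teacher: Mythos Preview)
Your proposal is correct and follows essentially the same approach as the paper: compute $\Omega_1=2\pi i\,\mathbf{1}_g$ and $\Omega_2=2\pi i\,\tau$ directly from the definitions of $\bfomega_k$ and $\beta_g$, and read off $N_1=0$, $N_2=\mathbf{1}_g$ from Corollary~\ref{caraceta}. The only (harmless) slip is the labelling ``bottom two blocks'' / ``top two blocks''---in the definition of $\Pi$ the $\Omega$-blocks form the right column and the $N$-blocks the left column, which you in fact acknowledge when assembling the matrix.
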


\begin{proof}
 Let us write
  $$
  \Pi (\mathbf{X}_{g,\tau},E_{g,\tau}, \bfb_{g,\tau},\beta_{g,\tau}) = \left(\begin{array}{cc}
                               N_2(\tau) & \frac{1}{2\pi i}\Omega_2(\tau)  \\[0.5em]
                               N_1(\tau) & \frac{1}{2\pi i}\Omega_1(\tau)
                               \end{array}\right)\text{.}
                             $$
By definition of $\beta_g$ and of $\bfb_g$, it is clear that $\Omega_1(\tau) = 2\pi i\mathbf{1}_g$ and that $\Omega_{2}(\tau) = 2\pi i \tau$. That $N_1(\tau) = 0$ and $N_2(\tau)=\mathbf{1}_g$ is a reformulation of Corollary \ref{caraceta}.
\end{proof}

\begin{proof}[Proof of Theorem \ref{unifhrvf} (2)]
By definition, $\varphi_g$ is given by the composition of
\begin{align*}
\mathbf{H}_g &\to \mathbf{B}_g\\
         \tau &\mapsto [(\mathbf{X}_{g,\tau},E_{g,\tau}, \bfb_{g,\tau},\beta_{g,\tau})]
\end{align*}
with the canonical map $\mathbf{B}_g \to B_g(\CC)$. The result now follows from Lemma \ref{psi}.
\end{proof}

\subsection{Group-theoretic description of $\mathcal{B}_F$, $v_F$, and $\varphi_F$}\label{subsec-gtdescrrm}

In this paragraph, we consider the Hilbert-Blumenthal analogs of the above results. As usual, most proofs here are omitted due to their similarity to those concerning the Siegel case.

Recall that we have defined in Paragraph \ref{subsec-periodsrm} a subgroup scheme $G_F$ of ${\Res}_{R/\ZZ}{\Aut}_M$, where $M = R\oplus D^{-1}$. We set
$$
S_F\defeq \ker (\det: G_F \to \GG_m) = {\Res}_{R/\ZZ}{\Aut}_{(M,\Psi)}\text{,}
$$
where $\Psi$ is the standard $D^{-1}$-valued $R$-bilinear symplectic form on $M$.
Observe that
$$
P_F = {\Res}_{R/\ZZ} {\Aut}_{(M,\Psi,R\oplus 0)}
$$
defined in Paragraph \ref{subsec-proofsmothdmstackrm} is a parabolic subgroup of $S_F$.

We shall also need the dual counterparts of $S_F$ and $P_F$. Namely, consider the $\ZZ$-dual $M^{\vee} = D^{-1}\oplus R$, with its standard $D^{-1}$-valued $R$-bilinear symplectic form $\Phi$, and set
$$
S'_F \defeq {\Res}_{R/\ZZ}{\Aut}_{(M^{\vee},\Phi)}\text{, } \ \ \ P_F' \defeq {\Res}_{R/\ZZ} {\Aut}_{(M,\Phi,0\oplus R)}\text{.}
$$
For a commutative ring $\Lambda$, if $V=\Spec \Lambda$, and $S_F(V),P_F(V),S_F'(V),P_F'(V)$ are regarded as subgroups of $\GL_2(\Lambda \tensor F)$, then $S_F'(V)$ (resp. $P_F'(V)$) is simply the image of $S_F(V)$ (resp. $P_F(V)$) under the operation of matrix transposition $s\mapsto s\transp$.

Also, observe that $S_F'(\ZZ)$ is the group $\SL(D^{-1}\oplus R)$ considered in Example \ref{ex-actionsl} and  Remark \ref{rem-leftactrm}.

\begin{defi}
  Let $(X,E,m)$ be a principally polarized complex torus with $R$-multiplication, and $b$ (resp. $\beta$) be a symplectic-Hodge basis (resp. an integral symplectic basis) of $(X,E,m)$. Let
  \begin{align*}
P(X,E,m,b,\beta) = \left(\begin{array}{cc}
                               \omega_1 & \eta_1 \\
                               \omega_2 & \eta_2
                               \end{array}\right) \in G_F(\CC)
\end{align*}
be the period matrix of $(X,E,m)$ with respect to $b$ and $\beta$, as defined in Paragraph \ref{subsec-periodsrm}. We set
\begin{align*}
\Pi(X,E,m,b,\beta) \defeq \left(\begin{array}{cc}
                               \eta_2 & \frac{1}{2\pi i}\cdot \omega_2 \\[0.5em]
                               \eta_1 & \frac{1}{2\pi i}\cdot \omega_1
                               \end{array}\right) \in S_F'(\CC)
\end{align*}
Observe that $\Pi(X,E,m,b,\beta)$ indeed belongs to $S_F'(\CC)$ by Lemma \ref{lemma-periodmatrixrm}.
\end{defi}

Let $\mathbf{B}_F = B(\mathbf{X}_F,E_F,m_F)$ be the principal $P_F$-bundle over $\mathbf{H}^g$ associated to the principally polarized torus with $R$-multiplication $(\mathbf{X}_F,E_F,m_F)_{/\mathbf{H}^g}$. The manifold $\mathbf{B}_F$ can also be regarded as the moduli space of principally polarized complex tori with $R$-multiplication equipped with a symplectic-Hodge basis and an integral symplectic basis, so that we have a holomorphic map
\begin{align*}
  \Pi : \mathbf{B}_F &\to S_F'(\CC)\\
            [(X,E,m,b,\beta)]&\mapsto \Pi(X,E,m,b,\beta)\text{.}
\end{align*}

The space $\mathbf{B}_F$ is endowed with a left action of $S'_F(\ZZ)$ given by
$$
\left(\begin{array}{cc}
  a & b \\
  c & d
\end{array}\right)\cdot [(X,E,m,b,\beta)] = \left[\left(X,E,m,b,\beta\cdot \left(\begin{array}{cc}
  d & b \\
  c & a
\end{array}\right)\right)\right] 
$$
and a right action of $P_F(\CC)$ given by
$$
[(X,E,m,b,\beta)]\cdot p = [(X,E,m,b\cdot p,\beta)]\text{.}
$$

\begin{lemma}[cf. Lemma \ref{equivariancepi}]
  Consider the isomorphism of groups
  \begin{align*}
    P_F(\CC) &\stackrel{\sim}{\to}P'_F(\CC)\\
    p=\left(\begin{array}{cc}
              a & b \\
              0 & a^{-1}
              \end{array}\right)&\mapsto p' \defeq \left(\begin{array}{cc}
              a^{-1} & 0 \\
              2\pi i\cdot  b & a
              \end{array}\right)\text{.}
  \end{align*}
  Then, for any $q\in \mathbf{B}_F$, $\gamma \in S_F'(\ZZ)$, and $p\in P_F(\CC)$, we have
  $$
\Pi(\gamma\cdot q) = \gamma \Pi(q) \ \ \text{ and }\ \ \Pi(q\cdot p) = \Pi(q)p'
$$
in $S_F'(\CC)$.\hfill $\blacksquare$
\end{lemma}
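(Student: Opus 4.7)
The plan is to establish both equivariance formulas by direct matrix computation, in exact parallel with the proof of Lemma \ref{equivariancepi}. The key preliminary observation is that the passage from the period matrix $P(X,E,m,b,\beta) = \left(\begin{smallmatrix} \omega_1 & \eta_1 \\ \omega_2 & \eta_2 \end{smallmatrix}\right)$ to $\Pi(X,E,m,b,\beta)$ can be written as a pair of elementary matrix operations:
\[
\Pi(X,E,m,b,\beta) = J \cdot P(X,E,m,b,\beta) \cdot K, \quad \text{where } J = \begin{pmatrix} 0 & 1 \\ 1 & 0 \end{pmatrix},\ K = \begin{pmatrix} 0 & \tfrac{1}{2\pi i} \\ 1 & 0 \end{pmatrix}.
\]
(Multiplying by $J$ on the left swaps the two rows, and multiplying by $K$ on the right swaps the columns while rescaling the right-hand one by $1/(2\pi i)$.)

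For the right action $q \mapsto q \cdot p$, the symplectic-Hodge basis $b$ gets replaced by $b \cdot p$ while $\beta$ is untouched. Since period matrices are change-of-basis matrices and $b$ appears in the ``domain'' of the comparison, one checks from Definition \ref{def-periodmatrixrm} that $P(q \cdot p) = P(q) \cdot p$. Combined with the factorization above, this yields $\Pi(q \cdot p) = J P(q) p K = \Pi(q) \cdot (K^{-1} p K)$. A direct computation with $p = \left(\begin{smallmatrix} a & b \\ 0 & a^{-1}\end{smallmatrix}\right)$ shows $K^{-1} p K = p'$ (indeed, $K^{-1} = \left(\begin{smallmatrix} 0 & 1 \\ 2\pi i & 0\end{smallmatrix}\right)$, and a two-line computation gives the desired form), proving the second formula.

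For the left action $q \mapsto \gamma \cdot q$, only the integral symplectic basis $\beta$ changes, according to the convention of Remark \ref{rem-leftactrm}: if $\gamma = \left(\begin{smallmatrix} a & b \\ c & d\end{smallmatrix}\right)$, then $\gamma \cdot \beta = \beta \cdot \left(\begin{smallmatrix} d & b \\ c & a\end{smallmatrix}\right)$, viewed as an $R$-linear isomorphism $M^\vee \xrightarrow{\sim} H_1(X,\mathbb{Z})$. Dualizing and tracing through Definition \ref{def-periodmatrixrm}, one finds that the period matrix transforms as $P(\gamma \cdot q) = M_\gamma \cdot P(q)$, where $M_\gamma$ is the transpose-inverse of $\left(\begin{smallmatrix} d & b \\ c & a\end{smallmatrix}\right)$. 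A brief computation, using $ad - bc = 1$, identifies $J M_\gamma J^{-1}$ with $\gamma$ itself, so that $\Pi(\gamma \cdot q) = J M_\gamma P(q) K = \gamma \cdot J P(q) K = \gamma \cdot \Pi(q)$.

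The only real book-keeping obstacle is managing the several conventional choices in play simultaneously (the identification of $\beta$ as an $R$-isomorphism, the trace-duality between $M$ and $M^\vee$, and the row-vs.-column conventions in the action of $S_F'(\mathbb{Z})$ on integral symplectic bases). Once one fixes and carefully unwinds these identifications, the result reduces to the two explicit $2 \times 2$ matrix verifications sketched above; no new geometric input beyond Definition \ref{def-periodmatrixrm} and Remark \ref{rem-leftactrm} is needed.
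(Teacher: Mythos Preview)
Your approach is precisely the one the paper intends: both Lemma \ref{equivariancepi} and its Hilbert--Blumenthal analog are stated without proof, the paper noting only that ``a simple computation proves the following equivariance properties''. Your factorization $\Pi = J \cdot P \cdot K$ is an efficient way to organize this computation, and the verification $K^{-1} p K = p'$ for the right action is correct.

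There is one small slip in the left-action step. Tracing through Definition \ref{def-periodmatrixrm} and Remark \ref{rem-coeffpmrm}, replacing $\beta = (\gamma_1,\delta_1)$ by $\gamma\cdot\beta = (d\gamma_1 + c\delta_1,\ b\gamma_1 + a\delta_1)$ gives
\[
P(\gamma\cdot q) \;=\; \begin{pmatrix} d & c \\ b & a \end{pmatrix} P(q),
\]
so $M_\gamma$ is the \emph{transpose} of $\left(\begin{smallmatrix} d & b \\ c & a\end{smallmatrix}\right)$, not its transpose-inverse. With the correct $M_\gamma$ one indeed gets $J M_\gamma J^{-1} = \gamma$; with the transpose-inverse you would obtain $\gamma^{-1}$ instead. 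This is exactly the sort of bookkeeping error you warn about in your final paragraph, and carrying out the ``brief computation'' you allude to would have caught it. With this correction, your argument is complete.
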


Consider the smooth quasi-projective $\CC$-scheme of dimension $g$ obtained as the quotient of complex affine algebraic groups
$$
L_F \defeq S_{F,\CC}'/P_{F,\CC}'\text{.}
$$
Observe that for any fractional ideal $I$ of $F$ we have $\CC\tensor I =\CC\tensor R = \CC\tensor_{\QQ}F$. In particular, $S'_{F}(\CC) = \SL_{2}(\CC\tensor R)$, and $L_F(\CC)$ may be identified with $\PP^1(\CC\tensor R)$; the quotient map 
$$
\pi : S_{F,\CC}' \to L_F
$$
is then given at complex points by
$$
\pi\left(\begin{array}{cc}
        a & b \\
        c & d
      \end{array}  
\right)=(b:d)\text{.}
$$

In the next proposition, we identify $\mathbf{H}^g$ with an open submanifold of $\CC\tensor D^{-1}=\CC\tensor R$ via $(\sigma_1,\ldots,\sigma_g): \CC\tensor R \stackrel{\sim}{\to} \CC^g$.

\begin{prop}[cf. Proposition \ref{prop1}]
  Let $\iota: \mathbf{H}^g\to L_F(\CC)$ be the open embedding given by $\iota(\tau) =(\tau:1)$. Then the diagram of complex manifolds
$$
\begin{tikzcd}
\mathbf{B}_F \arrow{r}{\Pi}\arrow{d} & S_F'(\CC) \arrow{d}{\pi} \\
\mathbf{H}^g \arrow{r}[swap]{\iota} & L_F(\CC)
\end{tikzcd}
$$
is Cartesian. That is, $\Pi: \mathbf{B}_F \to S'_F(\CC)$ induces a biholomorphism of $\mathbf{B}_F$ onto the open submanifold
\begin{align*}
\pi^{-1}(\iota(\mathbf{H}^g))=\left\{\left.\left(\begin{array}{cc}a & b \\ c & d \end{array} \right) \in S_F'(\CC) \right| d \in (\CC\tensor R)^{\times}\text{, }bd^{-1} \in \mathbf{H}^g \right\}
\end{align*}
of $S'_F(\CC)$, and makes the above diagram commute. \hfill $\blacksquare$
\end{prop}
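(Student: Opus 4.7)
The plan is to mirror, step by step, the argument for Proposition~\ref{prop1}, keeping track of the $R$-linearity throughout. First I would verify the commutativity of the square: given a point $q\in\mathbf{B}_F$ corresponding to a quadruple $[(X,E,m,b,\beta)]$ over $\tau\in\mathbf{H}^g$, the second column of $\Pi(q)$ is the image of $\omega\in F^1(X/\CC)$ under the comparison isomorphism, normalized by $(2\pi i)^{-1}$, while the first column records the corresponding period of $\eta$. By the very construction of $\mathbf{H}^g$ as moduli space of integral symplectic bases (Proposition~\ref{reprsintsymplhb}), together with the Hilbert--Blumenthal analog of Lemma~\ref{psi} (obtained as in Corollary~\ref{caraceta}, replacing $\textbf{e}_i$ by $1\in R$ and $\tau\in\mathbf{H}_g$ by $\tau\in\mathbf{H}^g\subset\CC\tensor D^{-1}$), the ratio of the two columns is exactly $(\tau:1)$, proving $\pi\circ\Pi=\iota\circ(\mathbf{B}_F\to\mathbf{H}^g)$.

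Next I would prove injectivity of $\Pi$. If $\Pi(q)=\Pi(q')$, then $q,q'$ lie over the same $\tau\in\mathbf{H}^g$, and the symplectic-Hodge bases $b,b'$ of $(\mathbf{X}_{F,\tau},E_{F,\tau},m_{F,\tau})$ that they represent have the same period matrix with respect to the reference integral symplectic basis $\beta_{F,\tau}$. Since the period matrix is nothing but the matrix of the comparison isomorphism in these bases, $b=b'$, hence $q=q'$.

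The third step is to compare dimensions: $\mathbf{B}_F$ is a complex manifold of dimension $3g$ (Theorem~\ref{smoothdmstack}), and $S'_F(\CC)=\mathrm{SL}_2(\CC\tensor R)$ also has complex dimension $3g$. Therefore it suffices to check that $\Pi(\mathbf{B}_F)=\pi^{-1}(\iota(\mathbf{H}^g))$, and this will force $\Pi$ to be an open embedding with the required image (cf.\ \cite{GH78} p.~19). The inclusion $\Pi(\mathbf{B}_F)\subset\pi^{-1}(\iota(\mathbf{H}^g))$ follows from the commutativity already established. For the reverse inclusion, given $s\in\pi^{-1}(\iota(\mathbf{H}^g))$, pick $\tau\in\mathbf{H}^g$ with $\pi(s)=\iota(\tau)$ and any lift $q\in\mathbf{B}_F$ above $\tau$; since both $s$ and $\Pi(q)$ lie in the same fiber of $\pi:S'_F(\CC)\to L_F(\CC)$, which is a $P'_F(\CC)$-torsor, there is a unique $p'\in P'_F(\CC)$ with $s=\Pi(q)p'$. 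Pulling $p'$ back to $p\in P_F(\CC)$ via the isomorphism of the preceding lemma, the equivariance $\Pi(q\cdot p)=\Pi(q)p'$ gives $s=\Pi(q\cdot p)\in\Pi(\mathbf{B}_F)$.

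The main technical point to keep honest is the Hilbert--Blumenthal analog of Lemma~\ref{psi}: one must check that, for the explicit models of Example~\ref{ex-ppctrm} and the canonical symplectic-Hodge basis $\bfb_F$ of Theorem~\ref{thm-defiphik}, the period matrix $\Pi(\mathbf{X}_{F,\tau},E_{F,\tau},m_{F,\tau},\bfb_{F,\tau},\beta_{F,\tau})$ equals $\bigl(\begin{smallmatrix}1&\tau\\0&1\end{smallmatrix}\bigr)\in S'_F(\CC)$. This is an immediate $R$-linear reformulation of Corollary~\ref{caraceta}: the $(1,1)$ and $(2,1)$ entries record $I_{\gamma}\bfeta_F$ and $I_{\delta}\bfeta_F$ (which equal $0$ and $1$ by construction of $\bfeta_F$ as the dual to $\delta$), while the second column is $(\tau:1)$ by the definition of $\bfomega_F$ in coordinates on $\CC\tensor D^{-1}$. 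Once this explicit formula is in hand, every other step transfers verbatim from the Siegel case.
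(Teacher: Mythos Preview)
Your proposal is correct and follows essentially the same route as the paper: the paper omits the proof entirely, indicating it is analogous to that of Proposition~\ref{prop1}, and your four steps (commutativity, injectivity, equality of dimensions, surjectivity onto $\pi^{-1}(\iota(\mathbf{H}^g))$ via the $P'_F(\CC)$-equivariance) reproduce that argument faithfully in the Hilbert--Blumenthal setting. One small remark: your final paragraph on the analog of Lemma~\ref{psi} is not actually needed here---commutativity for an \emph{arbitrary} $q\in\mathbf{B}_F$ follows directly from Lemma~\ref{lemma-periodmatrixrm}~(2) and the moduli interpretation in Proposition~\ref{reprsintsymplhb} (this is the ``cf.\ proof of Proposition~\ref{reprsintsympl}'' in the Siegel proof), whereas the explicit computation for the specific basis $\bfb_F$ is what is used later for the Hilbert--Blumenthal analog of Theorem~\ref{unifhrvf}~(2).
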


Since the canonical map
\begin{align*}
  \mathbf{B}_F &\to B_F(\CC)\\
     [(X,E,m,b,\beta)]&\mapsto [(X,E,m,b)]
\end{align*}
induces a biholomorphism
$$
S_F'(\ZZ)\backslash\mathbf{B}_F \stackrel{\sim}{\to}B_F(\CC)\text{,}
$$
we obtain the next corollary.

\begin{coro}[cf. Corollary \ref{realization}]\label{coro-realrm}
The map $\Pi : \mathbf{B}_F \to S'_F(\CC)$ induces a biholomorphism of $B_F(\CC)$ onto the open submanifold of $S_F'(\ZZ) \backslash S_F'(\CC)$
\begin{align*}
S'_F(\ZZ)\setminus \Pi(\mathbf{B}_F) = \{S_F'(\ZZ) s  \in S_F'(\ZZ) \backslash S_F'(\CC) \mid \pi(s) \in \iota(\mathbf{H}^g) \}\text{.}
\end{align*}\hfill $\blacksquare$
\end{coro}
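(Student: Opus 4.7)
The plan is to mimic verbatim the Siegel-case argument (Corollary~\ref{realization}), leveraging the three preceding results: the Hilbert--Blumenthal version of the equivariance lemma, the preceding Proposition identifying $\mathbf{B}_F$ with $\pi^{-1}(\iota(\mathbf{H}^g))\subset S_F'(\CC)$, and the representability statement furnishing the canonical biholomorphism $S_F'(\ZZ)\backslash \mathbf{B}_F \stackrel{\sim}{\to} B_F(\CC)$.

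First, by the preceding Proposition, $\Pi$ restricts to a biholomorphism
\[
\Pi:\mathbf{B}_F \stackrel{\sim}{\longrightarrow} \pi^{-1}(\iota(\mathbf{H}^g)) \subset S_F'(\CC).
\]
Next, I would check that $\Pi$ intertwines the left actions, i.e.\ that the image $\pi^{-1}(\iota(\mathbf{H}^g))\subset S_F'(\CC)$ is stable under left multiplication by $S_F'(\ZZ)$ and that $\Pi(\gamma\cdot q) = \gamma\,\Pi(q)$ for $\gamma \in S_F'(\ZZ)$ and $q\in\mathbf{B}_F$. Both facts follow immediately from the Hilbert--Blumenthal equivariance lemma recorded just above (the analog of Lemma~\ref{equivariancepi}); stability is automatic because left multiplication by any $\gamma\in S_F'(\CC)$ acts on $L_F(\CC)$ as an automorphism of the flag variety, and the lemma shows it lifts the action on $\mathbf{B}_F$ covering the action on $\mathbf{H}^g$.

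Then, passing to quotients by $S_F'(\ZZ)$, the biholomorphism descends to a biholomorphism
\[
S_F'(\ZZ)\backslash \mathbf{B}_F \stackrel{\sim}{\longrightarrow} S_F'(\ZZ)\backslash \pi^{-1}(\iota(\mathbf{H}^g)),
\]
and composition with the canonical identification $B_F(\CC) \stackrel{\sim}{\to} S_F'(\ZZ)\backslash \mathbf{B}_F$ (the Hilbert--Blumenthal counterpart of Proposition~\ref{reprsympl}) yields the claimed biholomorphism onto
\[
\{ S_F'(\ZZ)\, s \in S_F'(\ZZ)\backslash S_F'(\CC) \mid \pi(s)\in \iota(\mathbf{H}^g)\}.
\]
There is essentially no obstacle: all the heavy lifting has been done in the preceding three statements, and this corollary is a formal bookkeeping consequence. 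The only point that deserves a line of verification is that the subset $\pi^{-1}(\iota(\mathbf{H}^g))$ is open in $S_F'(\CC)$ (because $\iota(\mathbf{H}^g)$ is open in $L_F(\CC)$ and $\pi$ is continuous), so that the quotient is indeed an open submanifold of $S_F'(\ZZ)\backslash S_F'(\CC)$.
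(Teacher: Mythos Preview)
Your proposal is correct and follows exactly the approach the paper takes: the corollary is stated without proof (marked $\blacksquare$), the justification being the sentence immediately preceding it, which invokes the biholomorphism $S_F'(\ZZ)\backslash \mathbf{B}_F \stackrel{\sim}{\to} B_F(\CC)$ together with the preceding Proposition and equivariance lemma. Your write-up simply makes these steps explicit, and there is nothing to add.
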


The Lie algebra of $S'_F(\CC) = \SL_2(\CC\tensor R)$ is given by
\begin{align*}
\Lie S_F'(\CC) = \left\{\left.\left(\begin{array}{cc}a & b \\ c & d \end{array}\right) \in M_{2\times 2}(\CC\tensor R)\right|a+d =0\right\}\text{.}
\end{align*}
Let 
$$
\tilde{V}_F:\mathcal{O}_{S_F'(\CC)}\tensor D^{-1} \to T_{S_F'(\CC)} 
$$
be the unique $\mathcal{O}_{S_F'(\CC)}$-morphism such that, for every $x \in D^{-1}$, $\tilde{V}_F(1\tensor x)$ is the left invariant holomorphic vector field over $T_{S_F'(\CC)}$ corresponding to
\begin{align*}
\frac{1}{2\pi i}\left(\begin{array}{cc}0 & 1\tensor x \\ 0 & 0 \end{array}\right) \in S_F'(\CC)\text{.}
\end{align*}
Note that $\tilde{V}_F$ descends to a $\mathcal{O}_{S_F'(\ZZ)\backslash S'_F(\CC)}$-morphism
$$
V_F : \mathcal{O}_{S_F'(\ZZ)\backslash S'_F(\CC)} \tensor D^{-1}\to T_{S_F'(\ZZ)\backslash S'_F(\CC)}\text{.}
$$ 

\begin{theorem}[cf. Theorem \ref{unifhrvf}]
  Let $v_F: \mathcal{O}_{B_F(\CC)}\tensor D^{-1}\to T_{B_F(\CC)}$ be the higher Ramanujan vector field on $B_F(\CC)$. Under the identification of $B_F(\CC)$ with an open submanifold of $S_F'(\ZZ)\backslash S_F'(\CC)$ of Corollary \ref{coro-realrm}:
  \begin{enumerate}
  \item We have
\begin{align*}
v_F = V_F|_{B_F(\CC)}\text{.}
\end{align*}
\item The analytic solution of the higher Ramanujan equations $\varphi_F: \mathbf{H}^g \to B_F(\CC)$ is given by
  \begin{align*}
    \varphi_F(\tau) = S_F'(\ZZ)\left(\begin{array}{cc}1 & \tau \\ 0 &1\end{array} \right) \in S_F'(\ZZ)\backslash S_F'(\CC)\text{.}
    \end{align*}
   \end{enumerate}\hfill $\blacksquare$
 \end{theorem}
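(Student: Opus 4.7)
The plan is to adapt the proof of the Siegel case (Theorem \ref{unifhrvf}) to the Hilbert-Blumenthal setting. Part (1) will follow from a uniqueness argument for integral curves of the left invariant vector fields $\tilde{V}_F(1\tensor x)$, and part (2) will follow from an explicit computation of the modified period matrix $\Pi$ at the universal symplectic-Hodge and integral symplectic bases.

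For part (1), I would take a simply connected open $U \subset \mathbf{H}^g$ and an analytic solution $u: U \to B_F(\CC)$ of the HRE. Choosing a lift $\tilde u: U \to \mathbf{B}_F$, I would set $h := \Pi \circ \tilde u$; by Corollary \ref{coro-realrm} it suffices to show that $h$ is an integral curve of the vector fields $\tilde V_F(1\tensor x)$ on $S_F'(\CC)$. The universal property of $\mathbf{B}_F$ supplies a principally polarized complex torus with $R$-multiplication $(X,E,m)$ over $U$ together with a symplectic-Hodge basis $b=(\omega,\eta)$ and an integral symplectic basis $\beta=(\gamma,\delta)$. Expressing the entries of $h$ via the $R$-bilinear pairing $I$ of Remark \ref{rem-coeffpmrm}, Proposition \ref{prop-characsolhrerm} combined with the Hilbert-Blumenthal analogue of Remark \ref{derivint} yields, for every $x\in D^{-1}$, the system
$$\theta_F(1\tensor x)(I_\gamma \omega) = x \cdot I_\gamma \eta, \quad \theta_F(1\tensor x)(I_\delta\omega) = x \cdot I_\delta \eta, \quad \theta_F(1\tensor x)(I_\gamma\eta) = \theta_F(1\tensor x)(I_\delta \eta) = 0.$$
Connectedness of $U$ then forces $I_\gamma \eta$ and $I_\delta \eta$ to be constant, and integrating the first two equations yields $h(\tau) = s \cdot \bigl(\begin{smallmatrix} 1 & \tau \\ 0 & 1\end{smallmatrix}\bigr)$ for a unique $s \in S_F'(\CC)$. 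A direct computation using the explicit formula for $\theta_F$ from Remark \ref{rem-explicitthetak} shows that any curve of this form is indeed an integral curve of $\tilde V_F(1\tensor x)$, thereby establishing part (1).

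For part (2), I would prove the Hilbert-Blumenthal analogue of Lemma \ref{psi}, namely
$$\Pi(\mathbf{X}_{F,\tau}, E_{F,\tau}, m_{F,\tau}, \bfb_{F,\tau}, \beta_{F,\tau}) = \begin{pmatrix} 1 & \tau \\ 0 & 1\end{pmatrix}.$$
Writing $\beta_F = (\gamma,\delta)$ for the integral symplectic basis coming from the defining isomorphism $(D^{-1}\oplus R)_{\mathbf{H}^g} \stackrel{\sim}{\to} L$, the identifications $\frac{1}{2\pi i}I_\gamma \bfomega_F = 1$ and $\frac{1}{2\pi i}I_\delta \bfomega_F = \tau$ follow immediately from the explicit formula $\bfomega_F = 2\pi i \sum_j dz_j$ together with the description of the lattice $L$ in Example \ref{ex-ppctrm}. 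The remaining identifications $I_\gamma \bfeta_F = 0$ and $I_\delta \bfeta_F = 1$ constitute the Hilbert-Blumenthal analogue of Corollary \ref{caraceta}: since $\nabla \bfeta_F = 0$ was established in the proof of Theorem \ref{thm-defiphik}(2), the comparison isomorphism realizes $\bfeta_F$ as a global section of the trivial local system $R^1\bfp_{F,*}\CC_{\mathbf{X}_F}\tensor D$, forcing $I_\gamma \bfeta_F$ and $I_\delta \bfeta_F$ to be constants on $\mathbf{H}^g$. These two constants can then be pinned down by an explicit $C^\infty$ computation paralleling Proposition \ref{lemme1}: expand $\nabla_{\theta_F(1\tensor x)}\bfomega_F$ in the trivialization $(dz_j, d\bar z_j)_j$ of $\mathcal{H}^1_{\dR}(\mathbf{X}_F/\mathbf{H}^g)$ and exploit the normalization $\Psi_{E_F}(\bfomega_F, \bfeta_F) = 1$ from Theorem \ref{thm-defiphik}(1).

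The main obstacle I anticipate is the bookkeeping for part (2): the Hilbert-Blumenthal computation requires carefully juggling tensor products with $D$ and $D^{-1}$ and verifying that the various $R$-linear extensions of the integration pairings behave compatibly with the $C^\infty$ decompositions. As a possible shortcut, one could instead deduce part (2) from Theorem \ref{unifhrvf}(2) using the compatibility $f_t \circ \varphi_F = \varphi_g \circ h_t$ of Remark \ref{rem-relationphianal}, combined with explicit descriptions of the induced embedding $S_F'(\CC) \hookrightarrow \Sp_{2g}(\CC)$ and of the map $h_t: \mathbf{H}^g \to \mathbf{H}_g$ from Remark \ref{rem-relhalfspace}; this would sidestep the $C^\infty$ computation but would still require careful tracking of the period-matrix conventions on both sides.
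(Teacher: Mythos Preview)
Your proposal is correct and follows essentially the same route as the paper: lift a solution, write out the entries of $h=\Pi\circ\tilde u$, use Proposition~\ref{prop-characsolhrerm} together with the compatibility $\theta(I_\gamma\alpha)=I_\gamma(\nabla_\theta\alpha)$ to show $h(\tau)=s\bigl(\begin{smallmatrix}1&\tau\\0&1\end{smallmatrix}\bigr)$, and then compute the period matrix of $(\bfb_{F,\tau},\beta_{F,\tau})$ directly. One simplification the paper exploits for part~(2): the anticipated $C^\infty$ computation is unnecessary, since $I_\gamma\bfeta_F$ and $I_\delta\bfeta_F$ follow immediately from $\bfeta_F=\nabla_{\theta_F}\bfomega_F$, the derivative--integral lemma, and the formula $\theta_F(1\tensor x)(\tau)=\frac{1}{2\pi i}\tensor x$ (Lemma~\ref{lemma-dertaurm}).
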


 The proof of this theorem is, as usual, similar to that of the analogous Theorem \ref{unifhrvf}, but it deserves some comments. To prove (1), it is enough to show that, for any analytic solution of the higher Ramanujan equations over $\mathcal{B}_F$ (Definition \ref{defi-ahrerm}) defined on a connected open subset $U\subset \mathbf{H}^g$, say $u:U\to B_F(\CC)$, and any lifting $\tilde{u}: U \to \mathbf{B}_F$ of $u$, the composition $h\defeq \Pi \circ \tilde{u}: U \to S_K'(\CC)$ satisfies
\begin{align}\label{eq-diffequnif}
Th \circ \theta_F|_{U} = h^*\tilde{V}_F\text{,}
\end{align}
where $\theta_F : \mathcal{O}_{\mathbf{H}^g}\tensor D^{-1}\to T_{\mathbf{H}^g}$ is the $\mathcal{O}_{\mathbf{H}^g}$-morphism defined in Paragraph \ref{subsec-ahrerm}.

For any $x\in D^{-1}$, we may extend the derivation $\theta_F(1\tensor x)$ of $\mathcal{O}_{\mathbf{H}^g}$ to a derivation of $\mathcal{O}_{\mathbf{H}^g}\tensor D^{-1} = \mathcal{O}_{\mathbf{H}^g}\tensor R$ by requiring that $\theta_F(1\tensor x)(1\tensor r)=0$ for every $r\in R$.

\begin{lemma}\label{lemma-dertaurm}
  Let us regard the standard coordinate $\tau = (\tau_1,\ldots,\tau_g)$ of $\mathbf{H}^g$ as a global section of $\mathcal{O}_{\mathbf{H}^g}\tensor D^{-1}$ via the identification $(\sigma_1,\ldots,\sigma_g): \mathcal{O}_{\mathbf{H}^g}\tensor D^{-1} \stackrel{\sim}{\to} \mathcal{O}_{\mathbf{H}^g}^{\oplus g}$. Then, for every $x\in D^{-1}$,
  $$
\theta_F(1\tensor x) (\tau) = \frac{1}{2\pi i}\tensor x\text{.}
  $$
\end{lemma}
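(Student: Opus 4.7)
The plan is to reduce the lemma to a direct computation using the explicit formula $\theta_F(1\tensor x) = \frac{1}{2\pi i}\sum_{j=1}^g \sigma_j(x)\frac{\partial}{\partial \tau_j}$ provided by Remark \ref{rem-explicitthetak}. The key observation is that, once we pass to the trivialization $(\sigma_1,\ldots,\sigma_g): \mathcal{O}_{\mathbf{H}^g}\tensor D^{-1} \stackrel{\sim}{\to} \mathcal{O}_{\mathbf{H}^g}^{\oplus g}$, both sides of the claimed identity become $g$-tuples of holomorphic functions on $\mathbf{H}^g$, and the extension-by-zero convention for $\theta_F(1\tensor x)$ on the $D^{-1}$-factor translates into componentwise differentiation. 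The image of $\tau$ under the identification is, by construction, the tuple $(\tau_1,\ldots,\tau_g)$, while the image of $\frac{1}{2\pi i}\tensor x$ is the constant tuple $\bigl(\frac{\sigma_1(x)}{2\pi i},\ldots,\frac{\sigma_g(x)}{2\pi i}\bigr)$.

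First, I would make the identification explicit. Fix a $\ZZ$-basis $(x_1,\ldots,x_g)$ of $D^{-1}$ and set $M_{jk}\defeq \sigma_j(x_k)\in\CC$, so that $M=(\sigma_j(x_k))_{j,k}\in\GL_g(\CC)$ (invertible since $(\sigma_1,\ldots,\sigma_g)$ is the desired trivialization). Writing $\tau = \sum_k f_k \tensor x_k$, the defining condition $\tau_j = \sum_k f_k\,\sigma_j(x_k)$ gives $f_k = \sum_j (M^{-1})_{kj}\tau_j$, hence $\partial f_k/\partial \tau_j = (M^{-1})_{kj}$.

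Next, I would apply the derivation. Using that $\theta_F(1\tensor x)$ acts on $\mathcal{O}_{\mathbf{H}^g}\tensor D^{-1}$ by $f\tensor y \mapsto \theta_F(1\tensor x)(f)\tensor y$, a direct computation yields
\begin{align*}
  \theta_F(1\tensor x)(\tau)
  = \sum_k \frac{1}{2\pi i}\sum_j \sigma_j(x)\,(M^{-1})_{kj}\;\tensor\; x_k.
\end{align*}
Applying the trivialization $(\sigma_1,\ldots,\sigma_g)$ to the right-hand side, the $\ell$-th component is
\begin{align*}
  \frac{1}{2\pi i}\sum_j \sigma_j(x)\sum_k M_{\ell k}(M^{-1})_{kj}
  = \frac{1}{2\pi i}\sum_j \sigma_j(x)\,\delta_{\ell j}
  = \frac{\sigma_\ell(x)}{2\pi i},
\end{align*}
which coincides with the $\ell$-th component of $\frac{1}{2\pi i}\tensor x$. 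Since this holds for every $\ell$, the identity $\theta_F(1\tensor x)(\tau) = \frac{1}{2\pi i}\tensor x$ follows.

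Since everything reduces to a matrix manipulation that cancels out neatly, I do not expect any real obstacle; the only thing to be careful with is bookkeeping the two sides of the identification and the convention that $\theta_F(1\tensor x)$ acts trivially on the $D^{-1}$-factor (so that the computation genuinely reduces to componentwise differentiation in $\mathcal{O}_{\mathbf{H}^g}^{\oplus g}$). One could also phrase the argument coordinate-free by observing that the identification is $R$-linear and that $\theta_F(1\tensor x)$ corresponds under the trivialization to componentwise application of the scalar derivation $\frac{1}{2\pi i}\sum_j \sigma_j(x)\partial/\partial\tau_j$; then the $\ell$-th component of $\tau$ is simply $\tau_\ell$, and the result is immediate.
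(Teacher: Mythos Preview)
Your proof is correct and takes essentially the same approach as the paper, which simply states that the lemma ``follows immediately from Remark \ref{rem-explicitthetak}.'' Your computation unpacks this one-liner in full detail via a choice of $\ZZ$-basis of $D^{-1}$; the coordinate-free version you mention at the end is exactly what the paper intends the reader to see.
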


\begin{proof}
Follows immediately from Remark \ref{rem-explicitthetak}.
\end{proof}

We deduce from the above lemma and from the left invariance of $\tilde{V}_F(1\tensor x)$ that equation (\ref{eq-diffequnif}) is equivalent to asserting the existence of $s \in S_F'(\CC)$ such that
$$
h = s\left(\begin{array}{cc}
             1 & \tau \\
             0 & 0
           \end{array}\right) \in S_F'(\mathcal{O}_U(U))\text{.}
 $$
         
 For this, we write
 $$
h = \left(\begin{array}{cc}
             \eta_2 & \frac{1}{2\pi i}\cdot \omega_2 \\
             \eta_1 & \frac{1}{2\pi i}\cdot \omega_1
           \end{array}\right) \in S_F'(\mathcal{O}_U(U))
         $$
         and we remark that, as in the proof of Theorem \ref{unifhrvf}, it suffices to prove that $\eta_j$ and $\frac{1}{2\pi i}\omega_j - \eta_j\tau$ are constant for $j=1,2$; equivalently, we must prove that, for any $x \in D^{-1}$, $\theta_F(1\tensor x)(\eta_j) = 0$ and (by Lemma \ref{lemma-dertaurm}) $\theta_F(1\tensor x)(\omega_j) = \eta_j1\tensor x$. This, in turn, is a simple consequence of Proposition \ref{prop-characsolhrerm} and of the next lemma.

\begin{lemma}
  Let $M$ be a complex manifold, and $(\pi:X\to M,E,m)$ be a principally polarized complex torus with $R$-multiplication over $M$. Consider the $F$-linear pairing
\begin{align*}
  \mathcal{H}^1_{\dR}(X/M)\times R_1\pi_*\QQ_X &\to \mathcal{O}_M\tensor_{\QQ}F\\
  (\alpha,\gamma) &\mapsto I_\gamma \alpha
\end{align*}
defined as in Remark \ref{rem-coeffpmrm}. Then, for any section $\gamma$ of $R_1\pi_*\QQ_X$, $\alpha$ of $\mathcal{H}^1_{\dR}(X/M)$, and any holomorphic vector field $\theta$ on $M$, we have
  $$
\theta(I_{\gamma}\alpha) = I_\gamma(\nabla_{\theta}\alpha)\text{.}
  $$
\end{lemma}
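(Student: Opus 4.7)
The plan is to reduce the desired identity to the analogous statement for the trace pairing (Remark \ref{derivint}) via the defining relation $\Tr(I_\gamma\alpha)=\int_\gamma\alpha$ and the fact that the Gauss--Manin connection is $R$-linear.

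First, I would record three basic compatibilities. (i) The $R$-action on $\mathcal{H}^1_{\dR}(X/M)$ coming from $m:R\to\End_M(X)$ is horizontal for $\nabla$, as recalled in Paragraph \ref{subsec-ramsubbundrm}; dually, $R$ acts on the local system $R_1\pi_*\QQ_X$, and the comparison isomorphism is $R$-linear. Hence the pairing $(\alpha,\gamma)\mapsto I_\gamma\alpha$ is $R$-bilinear: for every $r\in R$,
$$I_{r\gamma}\alpha \;=\; r\cdot I_\gamma\alpha \;=\; I_\gamma\bigl(m(r)^*\alpha\bigr).$$
(ii) The defining property of $I_\gamma\alpha$ (see Remark \ref{rem-coeffpmrm}) together with this $R$-bilinearity gives, for every $r\in R$,
$$\Tr\bigl(r\cdot I_\gamma\alpha\bigr) \;=\; \Tr\bigl(I_{r\gamma}\alpha\bigr) \;=\; \int_{r\gamma}\alpha.$$
(iii) Because $r\in R$ is locally constant and $\theta$ is an $\mathcal{O}_M$-derivation, it commutes with multiplication by $r$ on sections of $\mathcal{O}_M\tensor_{\QQ}F$.

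Next, I apply $\theta$ to both sides of (ii) and use Remark \ref{derivint} (the Gauss--Manin compatibility for the trace pairing): for every $r\in R$,
$$\Tr\bigl(r\cdot\theta(I_\gamma\alpha)\bigr) \;=\; \theta\!\int_{r\gamma}\alpha \;=\; \int_{r\gamma}\nabla_\theta\alpha \;=\; \Tr\bigl(r\cdot I_\gamma(\nabla_\theta\alpha)\bigr).$$
Thus the two sections $a\defeq\theta(I_\gamma\alpha)$ and $b\defeq I_\gamma(\nabla_\theta\alpha)$ of $\mathcal{O}_M\tensor_{\QQ}F$ satisfy $\Tr(r(a-b))=0$ for every $r\in R$.

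Finally, I conclude by non-degeneracy. The $\QQ$-bilinear trace form $F\times F\to\QQ$, $(x,y)\mapsto\Tr(xy)$, is non-degenerate, so the induced map $F\to\Hom_\QQ(F,\QQ)$ is an isomorphism; tensoring with the flat $\QQ$-module $\mathcal{O}_M$ (locally) shows that a section $c$ of $\mathcal{O}_M\tensor_{\QQ}F$ with $\Tr(rc)=0$ for all $r\in R$ (equivalently, for a $\QQ$-basis of $F$ contained in $R$ after clearing denominators) must vanish. Applied to $c=a-b$, this yields $\theta(I_\gamma\alpha)=I_\gamma(\nabla_\theta\alpha)$. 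The only real content beyond formalities is the $R$-linearity of $\nabla$ and of the comparison isomorphism; both are already available, so no step should present a serious obstacle.
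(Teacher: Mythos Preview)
Your proof is correct and is essentially a detailed unpacking of the paper's one-line argument (``Use the corresponding result for $\int$ and apply Remark \ref{rem-dualityrelation}''). Both reduce to Remark \ref{derivint} via the defining relation $\Tr(I_\gamma\alpha)=\int_\gamma\alpha$; your explicit use of $R$-bilinearity together with non-degeneracy of the trace form is exactly what the invocation of Remark \ref{rem-dualityrelation} amounts to.
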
 

\begin{proof}
Use the corresponding result result for $\int$ and apply Remark \ref{rem-dualityrelation}.
\end{proof}

This concludes the proof of (1). The proof of (2) is a simple computation using Lemma \ref{lemma-dertaurm}. The proof of the next corollary is completely analogous to that of Corollary \ref{ramleafclosed}.

\begin{coro}[cf. Corollary \ref{ramleafclosed}]
The image of $\varphi_F: \mathbf{H}^g \to B_F(\CC)$ is closed for the analytic topology.\hfill $\blacksquare$
\end{coro}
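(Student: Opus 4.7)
The strategy is to mimic the proof of Corollary \ref{ramleafclosed} verbatim, translating each ingredient from the Siegel group $\Sp_{2g}$ to the Hilbert--Blumenthal group $S_F' = \Res_{R/\ZZ}\Aut_{(M^{\vee},\Phi)}$. First, by part (2) of the last theorem of Paragraph \ref{subsec-gtdescrrm}, under the biholomorphism of $B_F(\CC)$ with the open submanifold of $S_F'(\ZZ)\backslash S_F'(\CC)$ given by Corollary \ref{coro-realrm}, the image $\varphi_F(\mathbf{H}^g)$ is the image of the set
\[
V \defeq \left\{\left(\begin{array}{cc} 1 & \tau \\ 0 & 1\end{array}\right) : \tau \in \mathbf{H}^g\right\}\subset S_F'(\CC).
\]
This set is contained in the closed subgroup
\[
U_F(\CC) \defeq \left\{\left(\begin{array}{cc} 1 & z \\ 0 & 1\end{array}\right) : z \in \CC\tensor D^{-1}\right\}\le S_F'(\CC),
\]
so it will suffice to prove that the image of $U_F(\CC)$ in $S_F'(\ZZ)\backslash S_F'(\CC)$ is closed, and then to identify its intersection with the open submanifold $B_F(\CC)$ with $\varphi_F(\mathbf{H}^g)$.

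The first point reduces to showing that $S_F'(\ZZ)\cdot U_F(\CC)$ is closed in $S_F'(\CC)$. To that end, I would introduce the holomorphic (in fact polynomial) map
\[
f: S_F'(\CC) \to (\CC\tensor R)\times (\CC\tensor D),\qquad \left(\begin{array}{cc} a & b \\ c & d\end{array}\right) \mapsto (a,c),
\]
and check by a direct matrix computation, using the formula for inverses in $SL_2$ and the typing of the entries dictated by $M^{\vee}=D^{-1}\oplus R$, that
\[
S_F'(\ZZ)\cdot U_F(\CC) = f^{-1}(f(S_F'(\ZZ))).
\]
Since $f(S_F'(\ZZ))\subset R\times D$ and the lattice $R\times D$ is closed and discrete in $(\CC\tensor R)\times(\CC\tensor D)\cong\CC^g\times\CC^g$, the set $f(S_F'(\ZZ))$ is closed; hence so is its preimage under the continuous map $f$.

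It remains to verify that the intersection of the image of $U_F(\CC)$ in $S_F'(\ZZ)\backslash S_F'(\CC)$ with $B_F(\CC)$ is exactly $\varphi_F(\mathbf{H}^g)$, i.e.\ that an element $u = (1\ z\ ;\ 0\ 1) \in U_F(\CC)$ represents a point of $B_F(\CC)=S_F'(\ZZ)\backslash\pi^{-1}(\iota(\mathbf{H}^g))$ if and only if $z\in\mathbf{H}^g$. This is the main (though mild) obstacle: one must rule out the possibility that some $\gamma\in S_F'(\ZZ)$ moves $u$ into $\pi^{-1}(\iota(\mathbf{H}^g))$ for a $z$ with some coordinate not in the upper half plane. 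The key observation is that $S_F'(\ZZ)\subset S_F'(\RR) = SL_2(\RR\tensor F) \cong SL_2(\RR)^g$ acts componentwise by Möbius transformations on $\CC\tensor D^{-1}\cong\CC^g$; because each factor $SL_2(\RR)$ preserves the decomposition of $\CC$ into upper half-plane, real line and lower half-plane, the condition ``$\gamma\cdot z\in \mathbf{H}^g$ for some $\gamma\in S_F'(\ZZ)$'' is equivalent to $z\in\mathbf{H}^g$ itself. Combining this identification with the closedness established above yields the desired result.
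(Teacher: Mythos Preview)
Your proposal is correct and follows exactly the route the paper indicates: the paper gives no proof here, only the remark that it is ``completely analogous to that of Corollary \ref{ramleafclosed}'', and your argument is precisely the Hilbert--Blumenthal translation of that proof (the map $f$ picking out the first column, the preimage identity $S_F'(\ZZ)\cdot U_F(\CC)=f^{-1}(f(S_F'(\ZZ)))$, and discreteness of the lattice $R\times D$). If anything, you are more careful than the paper: your final paragraph, using the componentwise $\SL_2(\RR)$-action to show that $(\text{image of }U_F(\CC))\cap B_F(\CC)=\varphi_F(\mathbf{H}^g)$, makes explicit a step that the Siegel proof in Corollary \ref{ramleafclosed} absorbs without comment into the phrase ``The statement is equivalent to\ldots''.
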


\section{Zariski-density of leaves of the higher Ramanujan foliation}
\label{sec-zariskidensity}

 Let $\mathcal{R}^{\an}_g$ be the integrable subbundle of the holomorphic tangent bundle $T_{B_g(\CC)}$ induced by the Ramanujan subbundle $\mathcal{R}_g\subset T_{\mathcal{B}_g/\ZZ}$ introduced in Section \ref{ramvecfields}. By the holomorphic Frobenius Theorem, $\mathcal{R}^{\an}_g$ induces a holomorphic foliation on $B_g(\CC)$; we call it the \emph{higher Ramanujan foliation}.

% \begin{obs} \label{equivalence}
% Let $M$ be a complex manifold and $u: M \to B_g^{\an}$ be a holomorphic map. If $b_g=(\omega_1,\ldots,\omega_g,\eta_1,\ldots,\eta_g)$ denotes the universal symplectic-Hodge basis of $(X_g,\lambda_g)_{/B_g}$, then it follows from \cite{fonseca16} Remark 5.8 that $u$ is \emph{tangent} to $\mathcal{R}_g$ (i.e. $D_pu(T_{M,p})\subset \mathcal{R}_{g,p}$ for every $p\in M$)  if and only if $u^*(\nabla \eta_i)=0$ for every $1\le i \le g$, where $\nabla$ denotes the Gauss-Manin connection on $H^1_{\dR}(X^{\an}_g/B_g^{\an})$.
% \end{obs}

 In this section, we prove that every leaf of the higher Ramanujan foliation is Zariski-dense in $B_{g,\CC}$. In particular, we obtain that the image of the solution of the higher Ramanujan equations $\varphi_g: \mathbf{H}_g \to B_g(\CC)$ defined in Section \ref{sec-analhre} is Zariski-dense in $B_{g,\CC}$. We can actually derive from this the \emph{a priori} stronger result that the graph $\{(\tau,\varphi_g(\tau)) \in {\Sym}_g(\CC)\times B_g(\CC) \mid \tau \in \mathbf{H}_g\}$ is  Zariski-dense in $\Sym_{g,\CC}\times_{\CC} B_{g,\CC}$.

 We apply our Zariski-density results to relate our work to that of Bertrand and Zudilin \cite{BZ03}. Namely, using $\varphi_g$, we prove that the function field of $\mathcal{B}_{g,\QQ}$ is a finite extension of the field generated by derivatives of Siegel modular functions defined over $\QQ$.

Using what has been developed so far in paragraphs \ref{subsec-periodsrm} and \ref{subsec-gtdescrrm}, the above results can be easily carried over to the Hilbert-Blumenthal case. We provide precise statements below, but we omit proofs.

\subsection{Characterization of the leaves of the higher Ramanujan foliation}

\subsubsection{}\label{unipotentfoliation} Let $U_g$ be the unipotent subgroup scheme of $\Sp_{2g}$ defined by
\begin{align*}
U_g(R) = \left\{\left. \left(\begin{array}{cc} \mathbf{1}_g & Z \\ 0 & \mathbf{1}_g \end{array}\right) \in M_{2g\times 2g}(R)\right| Z\transp = Z \right\}
\end{align*}
for any ring $R$.

The Lie algebra of $U_g(\CC)$ is given by
\begin{align*}
\Lie U_g(\CC) = \left\{\left. \left(\begin{array}{cc} 0 & Z \\ 0 & 0 \end{array}\right) \in M_{2g\times 2g}(\CC)\right| Z\transp = Z \right\}\text{,}
\end{align*}
and admit as a basis the vectors
\begin{align*}
\frac{1}{2\pi i}\left(\begin{array}{cc}0 & \mathbf{E}^{kl} \\ 0 & 0 \end{array}\right) \in \Lie {U}_{g}(\CC)\text{, } \ \ \ 1\le k \le l \le g\text{,}
\end{align*}
inducing the higher Ramanujan vector fields on the quotient $\Sp_{2g}(\ZZ)\backslash \Sp_{2g}(\CC)$ (Section \ref{gpinterpret}). In particular, under the realization of $B_g(\CC)$ as an open submanifold of $\Sp_{2g}(\ZZ)\backslash \Sp_{2g}(\CC)$ of Corollary \ref{realization}, the higher Ramanujan foliation on $B_g(\CC)$ is induced by the foliation on $\Sp_{2g}(\CC)$ defined by $U_g(\CC)$, i.e. the foliation whose leaves are left cosets of $U_g(\CC)$ in $\Sp_{2g}(\CC)$.

It follows from the above discussion that, under the identification of $\mathbf{B}_g$ (resp. $B_g(\CC)$) with an open submanifold of $\Sp_{2g}(\CC)$ (resp. $\Sp_{2g}(\ZZ)\backslash \Sp_{2g}(\CC)$) via $\Pi$ (cf. Proposition \ref{prop1} and Corollary \ref{realization}), for any leaf $L$ of the higher Ramanujan foliation on $B_g(\CC)$, there exists $\delta \in \Sp_{2g}(\CC)$ such that $L$ is a connected component of the image of $\delta U_g(\CC)\cap \mathbf{B}_g$ in $B_g(\CC)$ under the quotient map $\Sp_{2g}(\CC)\to \Sp_{2g}(\ZZ)\backslash \Sp_{2g}(\CC)$. We shall provide a more precise result in Proposition \ref{charactleaves}.

\subsubsection{} We may also obtain an explicit \emph{parametrization} of every leaf. For this, let us consider $\Sym_g(\CC) = \{Z \in M_{g\times g}(\CC) \mid Z\transp = Z\}$ as an open subset of the Lagrangian Grassmannian $L_g(\CC)$ (cf. discussion preceding Proposition \ref{prop1}) via
\begin{align*}
  {\Sym}_g(\CC) &\to L_g(\CC)\\
          Z &\mapsto (Z:\mathbf{1}_g)\text{,}
\end{align*}
so that the embedding $\iota : \mathbf{H}_g \to L_g(\CC)$ defined in Proposition \ref{prop1} is given by the restriction of $\Sym_g(\CC) \to L_g(\CC)$ to $\mathbf{H}_g$. Furthermore, let
\begin{align*}
  \psi : {\Sym}_g(\CC) &\to {\Sp}_{2g}(\CC)\\
                 Z &\mapsto \left(\begin{array}{cc}\mathbf{1}_g & Z \\ 0 & \mathbf{1}_g \end{array}\right)\text{.}
 \end{align*}

\begin{obs}
Under the obvious identification of $\Sym_g(\CC)$ with $\Lie U_g(\CC)$, the map $\psi$ is simply the exponential $\exp : \Lie U_g(\CC) \to U_g(\CC)\subset \Sp_{2g}(\CC)$.
\end{obs}

Now, the action of $\Sp_{2g}(\CC)$ on itself by left multiplication descends to a left action of $\Sp_{2g}(\CC)$ on $L_g(\CC)$ given explicitly by
\begin{align*}
  \left( \begin{array}{cc}
           A & B \\
           C & D
 \end{array}\right)\cdot (Z_1:Z_2) = (AZ_1+BZ_2:CZ_1+DZ_2)\text{.}
\end{align*}
For any $\delta \in \Sp_{2g}(\CC)$, let us define
\begin{align*}
  \psi_{\delta} : \delta^{-1}\cdot {\Sym}_g(\CC)\subset L_g(\CC) &\to {\Sp}_{2g}(\CC)\\
                          p &\mapsto \delta^{-1}\psi(\delta\cdot p)\text{.}
\end{align*}
Then $\psi_{\delta}$ induces a biholomorphism of $\delta^{-1}\cdot {\Sym}_g(\CC)$ onto the closed submanifold $\delta^{-1}U_g(\CC)\subset \Sp_{2g}(\CC)$.

We put
\begin{align*}
U_{\delta} \defeq \{\tau \in \mathbf{H}_g \mid \delta\cdot (\tau:1) \in {\Sym}_g(\CC)\subset L_g(\CC)\} = (\delta^{-1}\cdot {\Sym}_g(\CC))\cap \mathbf{H}_g  \text{.}
\end{align*}
Equivalently, if $\delta = (A \ B \ ; \ C \ D)$, then
\begin{align*}
U_{\delta} = \{\tau \in \mathbf{H}_g \mid C\tau +D \in {\GL}_g(\CC)\}\text{.}
\end{align*}

\begin{defi}
  For any $\delta \in \Sp_{2g}(\CC)$, we define a holomorphic map $\varphi_{\delta} : U_{\delta} \to B_g(\CC) \subset \Sp_{2g}(\ZZ)\backslash \Sp_{2g}(\CC)$ by
 \begin{align*}
\varphi_{\delta}(\tau) \defeq {\Sp}_{2g}(\ZZ)\psi_{\delta}(\tau)
\end{align*}
for any $\tau \in U_{\delta}$.
\end{defi}

Note that $\psi_{\delta}(U_{\delta}) = \delta^{-1}U_g(\CC) \cap \mathbf{B}_g \subset \Sp_{2g}(\CC)$ by Proposition \ref{prop1}. In particular, the image of $\varphi_{\delta}$ is indeed in $B_g(\CC)$ . Moreover, if $\delta \in U_g(\CC)$, then $U_{\delta} = \mathbf{H}_g$ and $\varphi_{\delta}=\varphi_g$ (cf. Theorem \ref{unifhrvf} (2)).

\begin{lemma}\label{Uconnected}
For any $\delta \in \Sp_{2g}(\CC)$, $U_{\delta}$ is a dense connected open subset of $\mathbf{H}_g$.
\end{lemma}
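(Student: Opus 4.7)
The plan is as follows. Writing $\delta=\left(\begin{smallmatrix}A & B \\ C & D\end{smallmatrix}\right) \in \Sp_{2g}(\CC)$, the set $U_\delta$ is, by its very definition, the non-vanishing locus on $\mathbf{H}_g$ of the holomorphic function $\tau \mapsto \det(C\tau+D)$; in particular $U_\delta$ is open in $\mathbf{H}_g$. The proof then divides naturally into two steps.

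The first step is to show that $U_\delta$ is non-empty. Recall from the discussion preceding Proposition \ref{prop1} that $\Sym_g(\CC)$ embeds into the Lagrangian Grassmannian $L_g(\CC)$ via $Z \mapsto (Z:\mathbf{1}_g)$ as a Zariski open and dense subset (its complement is the proper algebraic subset $\{(Z_1:Z_2) \mid \det Z_2 = 0\}$), and that $\iota : \mathbf{H}_g \hookrightarrow L_g(\CC)$, $\tau \mapsto (\tau:\mathbf{1}_g)$, takes values in $\Sym_g(\CC)$. Using the algebraic left action of $\Sp_{2g,\CC}$ on $L_g$, I would observe that
\[
U_\delta \;=\; \iota^{-1}\bigl(\delta^{-1}\cdot \Sym_g(\CC)\bigr).
\]
Since $\delta^{-1}$ acts as an automorphism of $L_g(\CC)$, the translate $\delta^{-1}\cdot \Sym_g(\CC)$ remains a Zariski open dense subset of $L_g(\CC)$. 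As $\iota(\mathbf{H}_g)$ is a non-empty Euclidean open subset of the irreducible complex manifold $L_g(\CC)$, it meets every dense subset; hence $U_\delta\neq\emptyset$.

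The second step is to deduce density and connectedness. By Step~1, the holomorphic function $\tau\mapsto\det(C\tau+D)$ is not identically zero on $\mathbf{H}_g$, so its zero locus $Z$ is a proper analytic hypersurface in $\mathbf{H}_g$, and hence has real codimension $2$. The complement of such a set in the connected complex manifold $\mathbf{H}_g$ is both dense (since $Z$ is nowhere dense) and connected (a standard topological consequence of real codimension $\ge 2$: any two points in $\mathbf{H}_g\setminus Z$ can be joined by a path in $\mathbf{H}_g$, which can be perturbed to avoid $Z$). This finishes the argument.

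The only nontrivial step is showing $U_\delta\neq\emptyset$; everything else is formal. The key observation enabling Step~1 is that $\Sym_g(\CC)$, as the standard affine chart of $L_g(\CC)$, is not merely open in the Euclidean topology but \emph{Zariski} dense, so that the same is automatic of its $\Sp_{2g}(\CC)$-translates.
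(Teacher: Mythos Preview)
Your proof is correct and follows essentially the same approach as the paper: both identify $U_\delta$ as the complement in $\mathbf{H}_g$ of the zero locus of the holomorphic function $\tau\mapsto\det(C\tau+D)$ and conclude density and connectedness from standard facts about complements of analytic hypersurfaces. The paper is terser---it does not spell out the non-emptiness step (your Step~1) and invokes Riemann's extension theorem for connectedness rather than the real-codimension-$2$ path argument---so your version is in fact more careful on the one point that genuinely requires justification.
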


\begin{proof}
Let $\delta = (A \ B \ ; \ C \ D) \in \Sp_{2g}(\CC)$. By definition, $U_{\delta}$ is the complement in $\mathbf{H}_g$ of the codimension 1 analytic subset $\{\tau \in \mathbf{H}_g \mid \det(C\tau+D)=0\}$. It is thus a dense open subset of $\mathbf{H}_g$. Since $\mathbf{H}_g$ is a connected open subset of an affine space, it follows from Riemann's extension theorem (cf. \cite{huybrechts05} Proposition 1.1.7)  that $U_{\delta}$ is connected. 
\end{proof}

\begin{prop}\label{charactleaves}
For every $\delta \in \Sp_{2g}(\CC)$, the image of the map $\varphi_{\delta} : U_{\delta} \to B_g(\CC)$ is a leaf of the higher Ramanujan foliation on $B_g(\CC)$, and  coincides with the image of $\delta^{-1}U_g(\CC)\cap \mathbf{B}_g$ in  $B_g(\CC)$ under the quotient map $\Sp_{2g}(\CC) \to \Sp_{2g}(\ZZ)\backslash \Sp_{2g}(\CC)$. Moreover, every leaf is of this form.
\end{prop}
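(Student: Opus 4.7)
The plan is to transfer the problem to the uniformization $\Sp_{2g}(\ZZ)\backslash\Sp_{2g}(\CC)$ provided by Corollary \ref{realization} and Theorem \ref{unifhrvf}~(1). Under this identification, the higher Ramanujan foliation on $B_g(\CC)$ is the restriction of the foliation on the quotient $\Sp_{2g}(\ZZ)\backslash\Sp_{2g}(\CC)$ induced by the foliation on $\Sp_{2g}(\CC)$ whose leaves are the cosets of the form $\delta^{-1}U_g(\CC)$, as explained in Paragraph \ref{unipotentfoliation}. Writing $p:\Sp_{2g}(\CC) \to \Sp_{2g}(\ZZ)\backslash\Sp_{2g}(\CC)$ for the quotient map, which is a topological covering, any leaf of the higher Ramanujan foliation on $B_g(\CC)$ will be the image under $p$ of some connected component of $\delta^{-1}U_g(\CC)\cap \Pi(\mathbf{B}_g)$; my strategy is thus to identify these components explicitly with the images of the holomorphic maps $\psi_\delta$.

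The crux of the argument will be the set-theoretic identity
$$
\psi_\delta(U_\delta) = \delta^{-1}U_g(\CC)\cap \Pi(\mathbf{B}_g)\text{.}
$$
Since $\psi_\delta$ is already known to induce a biholomorphism of $\delta^{-1}\cdot \Sym_g(\CC)\subset L_g(\CC)$ onto $\delta^{-1}U_g(\CC)\subset \Sp_{2g}(\CC)$, this equality will automatically upgrade to a biholomorphism between its two sides. The inclusion $\subset$ will follow by a direct computation with the projection $\pi:\Sp_{2g,\CC} \to L_g$ from Proposition \ref{prop1}: using the $\Sp_{2g}(\CC)$-equivariance of $\pi$ together with the identity $\pi(\psi(Z))=(Z:\mathbf{1}_g)$, I will obtain $\pi(\psi_\delta(\tau)) = \tau$ for every $\tau \in U_\delta$, hence $\psi_\delta(\tau) \in \pi^{-1}(\iota(\mathbf{H}_g)) = \Pi(\mathbf{B}_g)$. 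For the opposite inclusion, given $s = \delta^{-1}u \in \delta^{-1}U_g(\CC)\cap \Pi(\mathbf{B}_g)$, the point $\tau \defeq \pi(s) \in \mathbf{H}_g$ will satisfy $\delta\cdot \tau = \pi(u)\in \Sym_g(\CC)$, so that $\tau \in U_\delta$; and a direct unraveling of the definitions will show $\psi_\delta(\tau)=s$. I expect this to be the main technical step, though it amounts only to a careful and routine calculation in the $\Sp_{2g}(\CC)$-homogeneous space $L_g(\CC)$.

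Once this identification is secured, the conclusion follows at once. Since $U_\delta$ is connected by Lemma \ref{Uconnected}, so is $\psi_\delta(U_\delta) = \delta^{-1}U_g(\CC) \cap \Pi(\mathbf{B}_g)$; this set is therefore a leaf of the restriction to $\Pi(\mathbf{B}_g)$ of the foliation on $\Sp_{2g}(\CC)$, and its image $\varphi_\delta(U_\delta) = p(\psi_\delta(U_\delta))$ is a leaf of the higher Ramanujan foliation on $B_g(\CC)$. For the converse, given any leaf $L'$ of the higher Ramanujan foliation, I will pick any $x\in L'$, lift it to $y \in \Pi(\mathbf{B}_g)$, and write $y = \delta^{-1}$; then the leaf of the lifted foliation through $y$ is precisely $\delta^{-1}U_g(\CC) \cap \Pi(\mathbf{B}_g) = \psi_\delta(U_\delta)$, whence $L' = \varphi_\delta(U_\delta)$.
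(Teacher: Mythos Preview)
Your proposal is correct and follows essentially the same route as the paper's proof: both rely on the identity $\psi_\delta(U_\delta)=\delta^{-1}U_g(\CC)\cap \Pi(\mathbf{B}_g)$ (which the paper records just before the proposition, citing Proposition~\ref{prop1}), together with the connectedness of $U_\delta$ from Lemma~\ref{Uconnected}. Your write-up simply makes the verification of that identity and the converse step more explicit than the paper does.
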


\begin{proof}
  Let $\delta \in \Sp_{2g}(\CC)$. It was already remarked above that $\psi_{\delta}(U_{\delta}) = \delta^{-1}U_g(\CC)\cap \mathbf{B}_g$; by definition,  $\varphi_{\delta}(U_{\delta})$ is the image of $\psi_{\delta}(U_{\delta})$ under the quotient map $\Sp_{2g}(\CC) \to \Sp_{2g}(\ZZ)\backslash \Sp_{2g}(\CC)$. In particular, since the higher Ramanujan foliation on $B_g(\CC)$ is induced by the foliation on $\Sp_{2g}(\CC)$ defined by $U_g(\CC)$ (cf. \ref{unipotentfoliation}), to prove that $\varphi_{\delta}(U_{\delta})$ is a leaf of the higher Ramanujan foliation it is sufficient to prove that it is connected. This is an immediate consequence Lemma \ref{Uconnected}.

Conversely, if $L\subset B_g(\CC)$ is a leaf of the higher Ramanujan foliation, then it follows from \ref{unipotentfoliation} that there exists $\delta \in \Sp_{2g}(\CC)$ such that $L$ is a connected component of the image of $\delta^{-1}U_g(\CC)\cap \mathbf{B}_g$ in $B_g(\CC)$ under the quotient map $\Sp_{2g}(\CC) \to \Sp_{2g}(\ZZ)\backslash \Sp_{2g}(\CC)$. By the last paragraph, $\delta^{-1}U_g(\CC)\cap \mathbf{B}_g = \psi_{\delta}(U_{\delta})$ is connected, and we conclude that $L=\varphi_{\delta}(U_{\delta})$.
\end{proof}

\begin{obs} \label{remarkphi}
The holomorphic maps $\varphi_{\delta} : U_{\delta} \to B_g(\CC)$ are immersive but not injective in general. For instance, if $\delta = \mathbf{1}_{2g}$, then one easily verifies that $\varphi_{g}(\tau)=\varphi_{g}(\tau')$ if and only if $\tau' \in U_g(\ZZ)\cdot \tau$. Thus $\varphi_g$ induces a biholomorphism of the quotient $U_g(\ZZ)\backslash \mathbf{H}_g$ onto the closed submanifold $\varphi_g(\mathbf{H}_g)$ of $B_g(\CC)$. 
\end{obs}

\begin{obs}
 There exist non-closed leaves of the higher Ramanujan foliation on $B_g(\CC)$. Take for instance
  $$
  \delta = \left(\begin{array}{cc}
                   x\mathbf{1}_g & -\mathbf{1}_g\\
                   \mathbf{1}_g & 0
                 \end{array}\right)
  $$
  where $x\in \mathbf{R}\smallsetminus \QQ$. Using the classical fact that the orbit of $(x,1)$ in $\RR^2$ under the obvious left action of $\SL_{2}(\ZZ)$ is dense in $\RR^2$, one may easily deduce that the leaf $L\subset B_g(\CC)$ given by the image of $\delta U_g(\CC)\cap \mathbf{B}_g$ under the quotient map $\Sp_{2g}(\CC) \to \Sp_{2g}(\ZZ)\backslash \Sp_{2g}(\CC)$ has a limit point in $B_g(\CC)\smallsetminus L$. In particular, the ``space of leaves'' of the higher Ramanujan foliation on $B_g(\CC)$, which may be identified with $\Sp_{2g}(\ZZ)\backslash \Sp_{2g}(\CC) /U_g(\CC)$ by Proposition \ref{charactleaves}, is not a Hausdorff topological space.

The dynamics of the higher Ramanujan foliation in the case $g=1$ was thoroughly studied by Movasati in \cite{movasati08}.
\end{obs}

\subsubsection{}

In the sequel, it will be useful to obtain a description of $\varphi_{\delta}$ purely in terms of the universal property of $B_g(\CC)$. Let $\delta  = ( A \ B \ ; \ C \ D ) \in \Sp_{2g}(\CC)$ and define a holomorphic map $p_{\delta} : U_{\delta} \to P_g(\CC)$ by
\begin{align*}
p_{\delta}(\tau) = p_{\delta, \tau}\defeq \left(\begin{array}{cc}
                       (C \tau +D)^{-1} & -\frac{1}{2\pi i}C\transp\\[0.5em]
                         0             & (C\tau + D)\transp
                       \end{array} \right) \in P_g(\CC)\text{.}
\end{align*}

The proof of the next lemma is a straightforward computation using the equations defining the symplectic group (cf. Remark \ref{eqsympl}).

\begin{lemma}\label{psidelta}
  For every $\tau \in U_{\delta}\subset \mathbf{H}_g$, we have
  \begin{align*}
    \psi_{\delta}(\tau) = \psi(\tau)p'_{\delta, \tau}
  \end{align*}
  in $\Sp_{2g}(\CC)$, where $p'_{\delta,\tau}$ denotes the image of $p_{\delta,\tau}$ in $P_g'(\CC)$ under the isomorphism defined in Lemma \ref{equivariancepi}.\hfill $\blacksquare$
\end{lemma}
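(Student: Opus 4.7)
The plan is to verify the identity $\psi_\delta(\tau) = \psi(\tau)\, p'_{\delta,\tau}$ by direct computation of both sides as explicit $2g\times 2g$ block matrices. The symplectic relations of Remark \ref{eqsympl} will be used throughout, both to invert $\delta$ and to collapse cross-terms.

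First I would compute the right-hand side. Unwinding the isomorphism $P_g(\CC)\stackrel{\sim}{\to} P'_g(\CC)$ of Lemma \ref{equivariancepi} applied to $p_{\delta,\tau}$ gives
$$
p'_{\delta,\tau} = \begin{pmatrix} (C\tau+D)\transp & 0 \\ -C\transp & (C\tau+D)^{-1}\end{pmatrix}.
$$
Multiplying on the left by $\psi(\tau)=\bigl(\mathbf{1}_g\ \tau\ ;\ 0\ \mathbf{1}_g\bigr)$ and using $\tau\transp=\tau$ so that $(C\tau+D)\transp - \tau C\transp = D\transp$, the $(1,1)$-block simplifies and one obtains
$$
\psi(\tau)\,p'_{\delta,\tau} = \begin{pmatrix} D\transp & \tau(C\tau+D)^{-1} \\ -C\transp & (C\tau+D)^{-1}\end{pmatrix}.
$$

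Next I would compute $\psi_\delta(\tau)=\delta^{-1}\psi(\delta\cdot\tau)$. From $\delta^{-1}=-J\delta\transp J$ (Remark \ref{eqsympl}) one has
$$
\delta^{-1} = \begin{pmatrix} D\transp & -B\transp \\ -C\transp & A\transp\end{pmatrix},
$$
and by definition of the $\Sp_{2g}(\CC)$-action on $L_g(\CC)$, the condition $\tau\in U_\delta$ ensures that $\delta\cdot\tau=(A\tau+B)(C\tau+D)^{-1}$ lies in $\Sym_g(\CC)\subset L_g(\CC)$. A direct block multiplication then yields
$$
\psi_\delta(\tau) = \begin{pmatrix} D\transp & D\transp(A\tau+B)(C\tau+D)^{-1} - B\transp \\ -C\transp & -C\transp(A\tau+B)(C\tau+D)^{-1} + A\transp\end{pmatrix}.
$$

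Finally, the proof reduces to checking that the right columns of the two expressions coincide. Multiplying the claimed identities on the right by $C\tau+D$, one is left with the two equalities
$$
(D\transp A - B\transp C)\tau + (D\transp B - B\transp D) = \tau, \qquad (A\transp C - C\transp A)\tau + (A\transp D - C\transp B) = \mathbf{1}_g,
$$
which follow immediately from the symplectic identities $A\transp D - C\transp B = \mathbf{1}_g$, $A\transp C = C\transp A$, and $B\transp D = D\transp B$ in Remark \ref{eqsympl}(2). No step is a real obstacle here: the lemma amounts to a bookkeeping calculation with $2\times 2$ block matrices, and the only subtlety is to remember that the symmetry of $\tau$ is what makes $(C\tau+D)\transp-\tau C\transp$ collapse to $D\transp$.
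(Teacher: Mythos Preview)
Your proof is correct and follows exactly the approach the paper indicates: the paper states only that the lemma ``is a straightforward computation using the equations defining the symplectic group (cf. Remark \ref{eqsympl})'' and omits the details, and you have carried out precisely that block-matrix computation, using the symplectic identities of Remark \ref{eqsympl}(2) and the symmetry of $\tau$ in the expected places.
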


In particular, by Lemma \ref{equivariancepi} and Lemma \ref{psi}, if  $\mathbf{B}_g$ is regarded as the moduli space of principally polarized complex tori of dimension $g$ equipped with a symplectic-Hodge basis and an integral symplectic basis, we have
  \begin{align}\label{psimoduli}
\psi_{\delta}(\tau) = [(\mathbf{X}_{g,\tau},E_{g,\tau},\bfb_{g,\tau}\cdot p_{\delta,\tau},\beta_{g,\tau})] \in \mathbf{B}_g
    \end{align}
    for every $\tau \in U_{\delta}$. Composing with the canonical map $\mathbf{B}_g \to B_g(\CC)$, we obtain
    \begin{align}\label{phimoduli}
      \varphi_{\delta}(\tau) = [(\mathbf{X}_{g,\tau},E_{g,\tau},\bfb_{g,\tau}\cdot p_{\delta,\tau})] \in B_g(\CC)
    \end{align}
for every $\tau \in U_{\delta}$.

 \subsection{Auxiliary results}

 Our next objective is to prove that the leaves of the higher Ramanujan foliation on $B_g(\CC)$ are Zariski-dense in $B_{g,\CC}$. We collect in this subsection some auxiliary results. In the last analysis, our proof is a reduction to the fact that $\Sp_{2g}(\ZZ)$ is Zariski-dense in $\Sp_{2g,\CC}$ (Lemma \ref{spzdense}).
 
Recall that for every $\tau \in \mathbf{H}_g$ and
\begin{align*}
\delta = \left(\begin{array}{cc}
                A & B \\
                C & D
               \end{array} \right) \in {\Sp}_{2g}(\CC)
\end{align*}
we put
\begin{align*}
j(\delta,\tau) \defeq C\tau+D \in M_{g\times g}(\CC)\text{,}
\end{align*}
so that $U_{\delta} = \{\tau \in \mathbf{H}_g \mid j(\delta,\tau)\in \GL_g(\CC)\}$.

The proof of the next lemma is a simple computation. 

\begin{lemma} \label{lemmej}
For $\delta_1,\delta_2 \in \Sp_{2g}(\CC)$, we have $j(\delta_1\delta_2,\tau) = j(\delta_1,\delta_2\cdot \tau) j(\delta_2,\tau)$. In particular, if $\tau \in U_{\delta_2}$ and $\delta_2\cdot \tau \in U_{\delta_1}$, then $\tau \in U_{\delta_1\delta_2}$.\hfill $\blacksquare$
\end{lemma}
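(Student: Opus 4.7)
The statement is a standard cocycle identity for the factor of automorphy $j(\delta,\tau)=C\tau+D$ under the action $\delta\cdot \tau=(A\tau+B)(C\tau+D)^{-1}$ of $\Sp_{2g}(\CC)$ on (an open subset of) $M_{g\times g}(\CC)$ containing $\mathbf{H}_g$. The proof is a direct matrix computation, so the plan is essentially to carry out the block multiplication and rearrange.

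First, I would write $\delta_i=\left(\begin{smallmatrix} A_i & B_i \\ C_i & D_i\end{smallmatrix}\right)$ for $i=1,2$ and expand the product
\[
\delta_1\delta_2 = \begin{pmatrix} A_1A_2+B_1C_2 & A_1B_2+B_1D_2 \\ C_1A_2+D_1C_2 & C_1B_2+D_1D_2 \end{pmatrix}.
\]
Reading off the bottom block and regrouping gives
\[
j(\delta_1\delta_2,\tau) = (C_1A_2+D_1C_2)\tau+(C_1B_2+D_1D_2) = C_1(A_2\tau+B_2)+D_1(C_2\tau+D_2).
\]

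Next, assuming $\tau\in U_{\delta_2}$ (which is the implicit hypothesis for $\delta_2\cdot\tau$ to be defined), the matrix $j(\delta_2,\tau)=C_2\tau+D_2$ is invertible, so I can factor it out on the right:
\[
j(\delta_1\delta_2,\tau) = \bigl[C_1(A_2\tau+B_2)(C_2\tau+D_2)^{-1}+D_1\bigr](C_2\tau+D_2) = \bigl[C_1(\delta_2\cdot\tau)+D_1\bigr]\,j(\delta_2,\tau),
\]
and the bracketed factor is precisely $j(\delta_1,\delta_2\cdot\tau)$. This yields the cocycle identity.

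For the ``in particular'' clause, if $\tau\in U_{\delta_2}$ and $\delta_2\cdot\tau\in U_{\delta_1}$, then both $j(\delta_2,\tau)$ and $j(\delta_1,\delta_2\cdot\tau)$ lie in $\GL_g(\CC)$; by the cocycle identity their product $j(\delta_1\delta_2,\tau)$ is also in $\GL_g(\CC)$, whence $\tau\in U_{\delta_1\delta_2}$. There is no real obstacle here: the only minor subtlety is noting that the cocycle identity already presupposes $\tau\in U_{\delta_2}$ so that $\delta_2\cdot\tau$ makes sense, after which the second assertion is an immediate consequence of the multiplicativity of $\det$.
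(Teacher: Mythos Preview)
Your proof is correct and is precisely the ``simple computation'' the paper alludes to; the paper omits the details entirely, so your explicit block-matrix expansion and factorization is exactly what is intended. Your observation that the cocycle identity tacitly requires $\tau\in U_{\delta_2}$ for $\delta_2\cdot\tau$ to make sense is also appropriate.
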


\begin{lemma}\label{lemmeutile}
Let $\delta\in \Sp_{2g}(\CC)$, $\gamma \in \Sp_{2g}(\ZZ)$, and $\tau \in U_{\delta\gamma}\subset \mathbf{H}_g$. Then $\gamma\cdot \tau \in U_{\delta}$ and $\varphi_{\delta\gamma}(\tau) = \varphi_{\delta}(\gamma\cdot \tau)$.  
\end{lemma}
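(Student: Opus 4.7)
The plan is to deduce both statements from the cocycle relation for $j$ proved in Lemma \ref{lemmej} together with the definition $\psi_{\eta}(\tau) = \eta^{-1}\psi(\eta\cdot\tau)$ of the parametrization. For the first assertion, I would apply Lemma \ref{lemmej} to the factorization
\[
j(\delta\gamma,\tau) = j(\delta,\gamma\cdot\tau)\,j(\gamma,\tau).
\]
Since $\gamma\in\Sp_{2g}(\ZZ)\subset\Sp_{2g}(\RR)$ acts on the whole of $\mathbf{H}_g$, the factor $j(\gamma,\tau)=C\tau+D$ is always invertible (this is the very condition making the action $\gamma\cdot\tau$ defined on $\mathbf{H}_g$). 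Hence, under the hypothesis $\tau\in U_{\delta\gamma}$, the left-hand side is in $\GL_g(\CC)$, so the other factor $j(\delta,\gamma\cdot\tau)$ lies in $\GL_g(\CC)$ as well, which is precisely the statement that $\gamma\cdot\tau\in U_{\delta}$.

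For the equality $\varphi_{\delta\gamma}(\tau)=\varphi_{\delta}(\gamma\cdot\tau)$, I would compute directly with $\psi_\bullet$. From the definition,
\[
\psi_{\delta\gamma}(\tau) = (\delta\gamma)^{-1}\psi\bigl((\delta\gamma)\cdot\tau\bigr)
= \gamma^{-1}\delta^{-1}\psi\bigl(\delta\cdot(\gamma\cdot\tau)\bigr)
= \gamma^{-1}\,\psi_{\delta}(\gamma\cdot\tau)
\]
in $\Sp_{2g}(\CC)$. Passing to the quotient $\Sp_{2g}(\ZZ)\backslash \Sp_{2g}(\CC)$ and using $\gamma^{-1}\in\Sp_{2g}(\ZZ)$, the two representatives $\psi_{\delta\gamma}(\tau)$ and $\psi_{\delta}(\gamma\cdot\tau)$ determine the same coset, which by definition of $\varphi_{\bullet}$ gives the required identity in $B_g(\CC)$.

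There is no real obstacle here: once the cocycle identity of Lemma \ref{lemmej} is in hand, the whole argument is a two-line manipulation using only the definition of $\psi_{\delta}$ and the $\Sp_{2g}(\ZZ)$-invariance of the target quotient. If one prefers a moduli-theoretic justification instead (via \eqref{phimoduli}), the same calculation is encoded in the equivariance $\Pi(\gamma\cdot q)=\gamma\,\Pi(q)$ of Lemma \ref{equivariancepi} together with the description of $\psi_\delta(\tau)$ in Lemma \ref{psidelta}; both routes produce the same result with equal ease.
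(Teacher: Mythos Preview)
Your proof is correct and essentially identical to the paper's: both deduce $\gamma\cdot\tau\in U_\delta$ from the cocycle relation of Lemma~\ref{lemmej} together with the invertibility of $j(\gamma,\tau)$ for $\gamma\in\Sp_{2g}(\RR)$, and both obtain the second assertion by the same direct computation $\psi_{\delta\gamma}(\tau)=(\delta\gamma)^{-1}\psi((\delta\gamma)\cdot\tau)=\gamma^{-1}\psi_\delta(\gamma\cdot\tau)$ followed by passage to the $\Sp_{2g}(\ZZ)$-quotient.
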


\begin{proof}
  That $\gamma\cdot \tau \in U_{\delta}$ is a direct consequence of Lemma \ref{lemmej} and the fact that $j(\gamma,\tau) \in \GL_g(\CC)$ (this is true for any $\gamma \in \Sp_{2g}(\RR)$ and $\tau \in \mathbf{H}_g$). Under the group-theoretic interpretation, we have
  \begin{align*}
    \varphi_{\delta\gamma}(\tau) &= {\Sp}_{2g}(\ZZ)\psi_{\delta\gamma}(\tau) = {\Sp}_{2g}(\ZZ) (\delta\gamma)^{-1}\psi((\delta\gamma)\cdot \tau) \\                                                                               &= {\Sp}_{2g}(\ZZ) \delta^{-1}\psi(\delta\cdot (\gamma\cdot \tau)) = {\Sp}_{2g}(\ZZ)\psi_{\delta}(\gamma\cdot \tau) = \varphi_{\delta}(\gamma \cdot \tau)\text{.}
  \end{align*}
\end{proof}

\begin{lemma}\label{spzdense}
The set $\Sp_{2g}(\ZZ)\subset \Sp_{2g}(\CC)$ is Zariski-dense in $\Sp_{2g,\CC}$.
\end{lemma}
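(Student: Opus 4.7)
The plan is to show that the Zariski closure $H$ of $\Sp_{2g}(\ZZ)$ in $\Sp_{2g,\CC}$ has full Lie algebra $\mathfrak{h} \defeq \Lie H = \mathfrak{sp}_{2g}(\CC)$, whence $H = \Sp_{2g,\CC}$ by connectedness of the target.

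First I would consider the two opposite ``symmetric'' unipotent subgroup schemes of $\Sp_{2g}$ defined in block form by $U^+ = \{(\mathbf{1}_g \ Z \, ; \, 0 \ \mathbf{1}_g) : Z \in \Sym_g\}$ and $U^- = \{(\mathbf{1}_g \ 0 \, ; \, Z \ \mathbf{1}_g) : Z \in \Sym_g\}$. Each is isomorphic as a $\ZZ$-scheme to the affine space $\AA^{g(g+1)/2}_{\ZZ}$ via the independent entries of $Z$. Since $\ZZ$ is Zariski-dense in $\AA^1_{\CC}$ (a non-zero polynomial in one variable has only finitely many complex roots), it follows by a product argument that $U^{\pm}(\ZZ)$ is Zariski-dense in $U^{\pm}(\CC)$. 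Consequently $H$, being a closed subgroup of $\Sp_{2g,\CC}$ containing $U^+(\ZZ) \cup U^-(\ZZ) \subset \Sp_{2g}(\ZZ)$, must contain both $U^+(\CC)$ and $U^-(\CC)$.

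Next, since we are in characteristic zero, $H$ is smooth and $\mathfrak{h}$ is a Lie subalgebra of $\mathfrak{sp}_{2g}(\CC)$ containing $\Lie U^+ = \{(0 \ N \, ; \, 0 \ 0) : N \in \Sym_g(\CC)\}$ and $\Lie U^- = \{(0 \ 0 \, ; \, N \ 0) : N \in \Sym_g(\CC)\}$. A direct computation yields the bracket formula $[(0 \ N_1 \, ; \, 0 \ 0),(0 \ 0 \, ; \, N_2 \ 0)] = (N_1 N_2 \ 0 \, ; \, 0 \ -N_2 N_1)$ for $N_1,N_2 \in \Sym_g(\CC)$, and the classical theorem of Frobenius that every square complex matrix is a product of two symmetric matrices shows that these brackets fill the entire ``Cartan-type'' subspace $\{(A \ 0 \, ; \, 0 \ -A\transp) : A \in M_{g \times g}(\CC)\}$ of $\mathfrak{sp}_{2g}(\CC)$. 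A dimension count $g(g+1) + g^2 = 2g^2 + g = \dim \mathfrak{sp}_{2g}$ then gives $\mathfrak{h} = \mathfrak{sp}_{2g}(\CC)$, and the desired conclusion follows.

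There is no serious obstacle in this strategy; the only non-formal input is Frobenius' factorization, which is standard linear algebra. A softer alternative would be to invoke Borel's density theorem applied to the lattice $\Sp_{2g}(\ZZ) \subset \Sp_{2g}(\RR)$ furnished by Borel--Harish-Chandra, yielding Zariski-density in $\Sp_{2g,\RR}$ and hence in $\Sp_{2g,\CC}$ after extension of scalars.
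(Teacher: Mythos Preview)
Your argument is correct and takes a genuinely different route from the paper. The paper proceeds via an explicit big-cell parametrization: it writes down an isomorphism of $\ZZ$-schemes $\Sp_{2g}^* \stackrel{\sim}{\to} \Sym_g \times_{\ZZ} \Sym_g \times_{\ZZ} \GL_g$, where $\Sp_{2g}^* \subset \Sp_{2g}$ is the open locus on which the upper-left $g\times g$ block is invertible, and then argues density of the integer points of this product in its complex points. Your approach is Lie-theoretic: you show that the closure $H$ contains the two opposite unipotent radicals $U^{\pm}(\CC)$ and then generate $\Lie \Sp_{2g}(\CC)$ from $\Lie U^+ \oplus \Lie U^-$ via the bracket. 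This is more conceptual and generalizes at once to arbitrary $\QQ$-split semisimple groups through the root decomposition, whereas the paper's coordinate argument is tailored to $\Sp_{2g}$. Incidentally, the paper's proof as literally written contains a slip: it infers that $\GL_g(\ZZ)$ is Zariski-dense in $\GL_{g,\CC}$ from $\GL_g$ being open in affine space, but of course $\GL_g(\ZZ)\subset\{\det=\pm 1\}$; so an extra step (density of $\SL_g(\ZZ)$ in $\SL_{g,\CC}$, or your argument, or the Borel density alternative you mention) is in any case needed. One minor simplification of your own proof: since $\mathfrak{h}$ is a linear subspace you only need the brackets to \emph{span} the Levi block $\{( A \ 0 \ ; \ 0 \ -A\transp )\}$, and this already follows from computing products such as $\mathbf{E}^{ii}\mathbf{E}^{ij}$, so Frobenius' factorization theorem can be dispensed with.
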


\begin{proof}
  Let $\Sp_{2g}^*$ be the open subscheme of $\Sp_{2g}$ defined by $\Sp_{2g}^*(R)=\{(A \ B \ ; \ C \ D )\in \Sp_{2g}(R) \mid A\in \GL_g(R)\}$ for any ring $R$. We may define an isomorphism of schemes $\Sp_{2g}^* \stackrel{\sim}{\to} \Sym_g\times_{\ZZ} \Sym_g \times_{\ZZ} \GL_g$ by
\begin{align*}
    \left(\begin{array}{cc}
      A & B \\
      C & D
     \end{array}\right) \mapsto (CA^{-1},AB\transp, A)\text{.}         
\end{align*}
Since $\Sym_g\times_{\ZZ} \Sym_g \times_{\ZZ} \GL_g$ may be identified to an open subscheme of the affine space $\AA_{\ZZ}^{2g^2+g}$, we see that $\Sym_g(\ZZ)\times \Sym_g(\ZZ) \times \GL_g(\ZZ)$ is Zariski-dense in $\Sym_{g,\CC}\times_{\CC} \Sym_{g,\CC}\times_{\CC}\GL_{g,\CC}$. Thus $\Sp_{2g}^*(\ZZ)$ is Zariski-dense in $\Sp_{2g,\CC}^*$. Finally, since $\Sp_{2g,\CC}$ is an irreducible scheme, we conclude that $\Sp_{2g}(\ZZ)$ is Zariski-dense in $\Sp_{2g,\CC}$. 
\end{proof}

%Sp irreducible iff connected (generated by symplectic transvections v -> v + \lambda \langle v, w \rangle w

\begin{lemma} \label{surjective}
Let $\tau \in \mathbf{H}_g$ and $p\in P_g(\CC)$. Then there exists $\delta \in \Sp_{2g}(\CC)$ such that $\tau \in U_{\delta}$ and $p=p_{\delta,\tau}$.
\end{lemma}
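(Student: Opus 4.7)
The plan is to read off $\delta = (A\ B\ ;\ C\ D)$ directly from the prescribed $p$, solving the equation $p_{\delta,\tau}=p$ block by block. Writing $p = (X\ Y\ ;\ 0\ (X^T)^{-1}) \in P_g(\CC)$ with $X \in \GL_g(\CC)$ and $XY^T = YX^T$, and comparing with the formula for $p_{\delta,\tau}$, the top-left and bottom-right blocks force
$$C\tau + D = X^{-1}, \qquad C = -2\pi i\, Y^T,$$
so that $C$ and $D = X^{-1} + 2\pi i\, Y^T\tau$ are uniquely determined. The condition $\tau \in U_\delta$ then holds automatically, since $C\tau+D = X^{-1} \in \GL_g(\CC)$, and the bottom-right block of $p_{\delta,\tau}$ agrees with $(X^T)^{-1}$ by transposition.

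Next, I would verify that this choice of $(C,D)$ is compatible with the existence of a symplectic completion. Among the three Sp-relations in Remark \ref{eqsympl}, the only one depending on $(C,D)$ alone is $CD^T = DC^T$. A direct computation gives
$$CD^T - DC^T = -2\pi i\bigl(Y^T(X^T)^{-1} - X^{-1}Y\bigr),$$
which vanishes because $XY^T = YX^T$ is equivalent to $Y^T(X^T)^{-1} = X^{-1}Y$.

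Finally, I would produce suitable $A$ and $B$. Reading the relation $MJM^T = J$ row-wise, the Sp-conditions on $\delta$ amount to the statement that its rows form a symplectic basis of $(\CC^{2g}, \langle \ ,\ \rangle_{\std})$. The rows of $(C\ D)$ are linearly independent: right-multiplication by the symplectic matrix $\psi(\tau)^{-1} = (\mathbf{1}_g\ -\tau\ ;\ 0\ \mathbf{1}_g)$ sends $(C\ D)$ to $(-2\pi i\, Y^T\ X^{-1})$, and $X^{-1}$ is invertible. Together with the isotropy verified above, this means the rows of $(C\ D)$ span a Lagrangian subspace of $(\CC^{2g}, \langle \ ,\ \rangle_{\std})$, so Proposition \ref{exisunic}(1) yields $(A,B)$ completing them to a symplectic basis; the resulting $\delta$ then satisfies $p_{\delta,\tau} = p$ by construction. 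No step is really an obstacle: the lemma is essentially a direct unpacking of the defining formula for $p_{\delta,\tau}$ against the Sp-relations, with the hypothesis $XY^T = YX^T$ on $p$ playing exactly the role needed to ensure isotropy.
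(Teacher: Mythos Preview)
Your argument is correct and follows essentially the same route as the paper: you solve for the bottom block $(C\ D)$ of $\delta$ from the formula for $p_{\delta,\tau}$, verify the isotropy condition $CD\transp = DC\transp$ using $XY\transp = YX\transp$, and then complete to a symplectic matrix. The only difference is that the paper writes down an explicit top block $(A\transp\ -A\transp\tau)$ and checks directly that the resulting $\delta$ lies in $\Sp_{2g}(\CC)$, whereas you invoke Proposition~\ref{exisunic}(1) abstractly; both are perfectly fine.

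One small slip: to transform $(C\ D)$ into $(-2\pi i\,Y\transp\ \ X^{-1})$ you should right-multiply by $\psi(\tau)$, not $\psi(\tau)^{-1}$, since $(C\ D)\psi(\tau) = (C\ \ C\tau+D)$. This does not affect the conclusion (either product has full rank), but it is worth correcting.
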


\begin{proof}
Let $A \in \GL_g(\CC)$ and $B\in M_{g\times g}(\CC)$ such that
\begin{align*}
p = \left(\begin{array}{cc}
          A & B\\
          0 & (A\transp)^{-1}
\end{array} \right)\text{.}
\end{align*}
One easily verifies, using the equation $A B\transp = BA\transp$, that
\begin{align*}
\delta \defeq \left(\begin{array}{cc}
          A\transp & -A\transp\tau\\
          -2\pi i\, B\transp & A^{-1} + 2\pi i\, B\transp \tau
\end{array} \right) \in M_{2g\times 2g}(\CC)
\end{align*}
is in $\Sp_{2g}(\CC)$ and satisfies the required conditions in the statement.
\end{proof}

\begin{lemma}\label{principallemma}
For every $\delta \in \Sp_{2g}(\CC)$ and $\tau \in \mathbf{H}_g$, the subset
\begin{align*}
S_{\delta,\tau} \defeq \{p_{\delta\gamma,\tau} \in P_g(\CC) \mid \gamma \in {\Sp}_{2g}(\ZZ)\text{ such that }j(\delta\gamma,\tau)\in {\GL}_g(\CC)\}
\end{align*}
of $P_{g}(\CC)$ is Zariski-dense in $P_{g,\CC}$.
\end{lemma}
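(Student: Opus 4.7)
The plan is to realize $S_{\delta,\tau}$ as the image of a Zariski-dense subset of $\Sp_{2g,\CC}$ under a surjective morphism of $\CC$-varieties with target $P_{g,\CC}$, and then invoke a general irreducibility argument.

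First I would package the assignment $\gamma \mapsto p_{\delta\gamma,\tau}$ into a morphism of algebraic varieties. Inspection of the formula
$$
p_\delta(\tau) = \begin{pmatrix} (C\tau+D)^{-1} & -\frac{1}{2\pi i}C\transp \\[0.3em] 0 & (C\tau+D)\transp \end{pmatrix}
$$
shows that, for fixed $\tau\in\mathbf{H}_g$, the rule $\eta\mapsto p_{\eta,\tau}$ is given by rational functions in the entries of $\eta$, well-defined precisely on the open subscheme $W_\tau\subset\Sp_{2g,\CC}$ cut out by $\det j(\eta,\tau)\neq 0$. Composing with left multiplication by $\delta$ (an automorphism of $\Sp_{2g,\CC}$) yields a morphism of $\CC$-varieties
$$
g_\delta : V_\delta \longrightarrow P_{g,\CC}, \qquad \gamma\mapsto p_{\delta\gamma,\tau},
$$
where $V_\delta\subset \Sp_{2g,\CC}$ is the non-empty open subscheme defined by $\det j(\delta\gamma,\tau)\neq 0$.

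Next I would check that $g_\delta$ is surjective on $\CC$-points. Given $p\in P_g(\CC)$, Lemma \ref{surjective} furnishes some $\delta'\in \Sp_{2g}(\CC)$ with $\tau\in U_{\delta'}$ and $p_{\delta',\tau}=p$; setting $\gamma=\delta^{-1}\delta'$ gives $\gamma\in V_\delta(\CC)$ with $g_\delta(\gamma)=p$.

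The main remaining step is to verify that $\Sp_{2g}(\ZZ)\cap V_\delta(\CC)$ is Zariski-dense in $V_\delta$, equivalently in $\Sp_{2g,\CC}$ (the latter being irreducible). If $W\subset \Sp_{2g,\CC}$ is a closed subset containing $\Sp_{2g}(\ZZ)\cap V_\delta(\CC)$, then $W\cup (\Sp_{2g,\CC}\setminus V_\delta)$ is a closed subset of $\Sp_{2g,\CC}$ containing all of $\Sp_{2g}(\ZZ)$, hence equals $\Sp_{2g,\CC}$ by Lemma \ref{spzdense}; irreducibility of $\Sp_{2g,\CC}$ and properness of $\Sp_{2g,\CC}\setminus V_\delta$ force $W=\Sp_{2g,\CC}$.

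Finally, suppose $S_{\delta,\tau}=g_\delta(\Sp_{2g}(\ZZ)\cap V_\delta(\CC))$ were contained in a proper closed subset $Z\subsetneq P_{g,\CC}$. Then $\Sp_{2g}(\ZZ)\cap V_\delta(\CC)\subset g_\delta^{-1}(Z)$, and the latter is a proper closed subset of $V_\delta$ (since $g_\delta$ is surjective on $\CC$-points), contradicting the density established in the previous paragraph. This forces $S_{\delta,\tau}$ to be Zariski-dense in $P_{g,\CC}$.

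I do not anticipate a serious technical obstacle here: the argument is essentially the transfer of the Zariski-density of $\Sp_{2g}(\ZZ)$ in $\Sp_{2g,\CC}$ through a dominant algebraic morphism, the construction of which is given directly by the formula for $p_{\delta,\tau}$, with surjectivity provided by Lemma \ref{surjective}.
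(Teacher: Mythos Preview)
Your proposal is correct and follows essentially the same approach as the paper: define the algebraic morphism $\gamma\mapsto p_{\delta\gamma,\tau}$ on the open locus where $j(\delta\gamma,\tau)$ is invertible, use Lemma~\ref{surjective} for surjectivity (hence dominance), and transfer the Zariski-density of $\Sp_{2g}(\ZZ)$ from Lemma~\ref{spzdense} through this morphism. The only cosmetic difference is that the paper phrases the final step as ``$h$ dominant and continuous implies the image of a dense set is dense'' rather than as a contradiction argument.
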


\begin{proof}
Let $V$ be the unique open subscheme of $\Sp_{2g,\CC}$ such that 
\begin{align*}
V(\CC)= \{\gamma \in {\Sp}_{2g}(\CC) \mid j(\delta \gamma,\tau)\in {\GL}_g(\CC)\}
\end{align*}
and let $h : V \to P_{g,\CC}$ be the morphism of $\CC$-schemes given on complex points by $h(\gamma)= p_{\delta\gamma,\tau}$ (note that $V$ and $P_{g,\CC}$ are reduced separated $\CC$-schemes of finite type). It follows from Lemma \ref{surjective} that $h$ is surjective on complex points, thus a dominant morphism of schemes.

Now, we remark that $S_{\delta,\tau} = h({\Sp}_{2g}(\ZZ)\cap V)$. Since $\Sp_{2g,\CC}$ is irreducible and $\Sp_{2g}(\ZZ)$ is Zariski-dense in $\Sp_{2g,\CC}$ by Lemma \ref{spzdense}, $\Sp_{2g}(\ZZ)\cap V$ is also Zariski-dense in $\Sp_{2g,\CC}$. Hence, as $h$ is dominant and continuous for the Zariski topology, $S_{\delta,\tau}$ is Zariski-dense in $P_{g,\CC}$.
\end{proof}

\subsection{Statement and proof of our Zariski-density results} \label{proofdensite}

Recall  that we denote the coarse moduli scheme of $\mathcal{A}_g$ by $A_g$, and that we have a canonical map $j_g: \mathbf{H}_g \to A_g(\CC)$ associating to each $\tau \in \mathbf{H}_g$ the isomorphism class of the principally polarized complex torus $(\mathbf{X}_{g,\tau},E_{g,\tau})$. 

The proof of our Zariski-density results will rely on the following elementary lemma.

\begin{lemma}[Fibration method] \label{metfib}
Let $p:X \to S$ be a morphism of separated $\CC$-schemes of finite type and let $E\subset X(\CC)$ be a subset. If, for every $s \in p(E)$, the set $E\cap X_s$ is Zariski-dense in $X_s\defeq p^{-1}(s)$, and one of the following conditions is satisfied, 
\begin{enumerate}[(i)]
    \item $p(E) = S(\CC)$,
    \item $p$ is open (in the Zariski topology) and $p(E)$ is Zariski-dense in $S$,
\end{enumerate}
then $E$ is Zariski-dense in $X$.
\end{lemma}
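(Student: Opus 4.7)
Let $Z \subset X$ denote the Zariski closure of $E$ in $X$, taken as a reduced closed subscheme; the goal is to show $Z = X_{\text{red}}$, which is precisely the Zariski-density of $E$ in $X$. The argument naturally splits into a fibrewise step common to both cases and a base-wise step that differs between (i) and (ii). For the fibrewise step, I would fix any $s \in p(E)$ and observe that the intersection $Z \cap X_s$ is a closed subscheme of the fibre $X_s$ containing $E \cap X_s$; since the latter is Zariski-dense in $X_s$ by hypothesis, one obtains $(Z \cap X_s)_{\text{red}} = X_{s,\text{red}}$, and hence $X_s(\CC) \subset Z(\CC)$. Letting $s$ range over $p(E)$, this upgrades to the inclusion $p^{-1}(p(E))(\CC) \subset Z(\CC)$.

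Case (i) is then immediate: the hypothesis $p(E) = S(\CC)$ yields $X(\CC) = p^{-1}(S(\CC)) \subset Z(\CC)$, and since $X$ is of finite type over $\CC$, a reduced closed subscheme whose $\CC$-points exhaust those of $X$ must equal $X_{\text{red}}$. For case (ii), I would introduce the Zariski-open complement $W \defeq X \setminus Z \subset X$. Since $p$ is Zariski-open by hypothesis, $p(W)$ is open in $S$; the inclusion from the fibrewise step gives $X_s \cap W = \emptyset$ for every $s \in p(E)$, hence $p(E) \cap p(W) = \emptyset$. By the Zariski-density of $p(E)$ in $S$, every nonempty Zariski-open subset of $S$ must meet $p(E)$, so $p(W) = \emptyset$; because $W$ is a $\CC$-scheme of finite type, this forces $W = \emptyset$, whence $Z = X$.

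The argument is essentially formal and no step presents a serious obstacle; the only subtlety — reconciling ``Zariski-density of subsets of $X(\CC)$'' with ``equality of closed subschemes of $X$'' — is handled by quietly replacing $X$, $S$, and the fibres $X_s$ by their reductions throughout, and by invoking the Nullstellensatz to pass between $\CC$-points and closed subschemes of a finite-type $\CC$-scheme.
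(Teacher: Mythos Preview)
Your proof is correct and follows essentially the same approach as the paper's. The paper's proof is the topological dual of yours: rather than taking the closure $Z$ of $E$ and showing its open complement $W$ is empty, the paper directly takes an arbitrary nonempty open $U \subset X$, produces a closed point $s \in p(E) \cap p(U)$ (using (i) or (ii)), and then uses fibrewise density to find a point of $E$ in $U \cap X_s$; your $W$ plays exactly the role of the paper's $U$ in case (ii).
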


\begin{proof}
Let $U$ be a non-empty Zariski open subset of $X$; we must show that $E\cap U$ is non-empty. In both cases (i) and (ii) above, there exists a closed point $s \in p(E)\cap p(U)$. Since $E\cap X_s$ is Zariski-dense in $ X_s$ and $U\cap X_s$ is a non-empty open subset of $X_s$, there exists a closed point $x\in E\cap U\cap X_s\subset E\cap U$. 
\end{proof}

\begin{theorem} \label{densite}
 Every leaf $L\subset B_g(\CC)$ of the higher Ramanujan foliation is Zariski-dense in $B_{g,\CC}$, that is, for every closed subscheme $Y$ of $B_{g,\CC}$, if $Y(\CC)$ contains $L$, then $Y(\CC)=B_g(\CC)$. 
\end{theorem}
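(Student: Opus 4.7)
The plan is to apply the fibration method of Lemma~\ref{metfib} to the forgetful morphism $\varpi_g \colon B_{g,\CC} \to A_{g,\CC}$, taking $E = L$ itself. By Proposition~\ref{charactleaves}, I may assume $L = \varphi_\delta(U_\delta)$ for some $\delta \in \Sp_{2g}(\CC)$. The key geometric input will be this: for any $\tau_0 \in \mathbf{H}_g$ and any $\gamma \in \Sp_{2g}(\ZZ)$ with $j(\delta\gamma, \tau_0) \in \GL_g(\CC)$, Lemma~\ref{lemmeutile} ensures $\varphi_{\delta\gamma}(\tau_0) = \varphi_\delta(\gamma \cdot \tau_0) \in L$, while the moduli-theoretic formula~(\ref{phimoduli}) identifies this point with $[(\mathbf{X}_{g,\tau_0}, E_{g,\tau_0}, \bfb_{g,\tau_0} \cdot p_{\delta\gamma, \tau_0})]$. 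In particular, all such points lie in the single fiber $\varpi_g^{-1}(j_g(\tau_0))$.

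The next step will be to show that, for each fixed $\tau_0 \in \mathbf{H}_g$, the intersection $L \cap \varpi_g^{-1}(j_g(\tau_0))$ is Zariski-dense in this fiber. To do so, I would consider the orbit map
$$\mu_{\tau_0} \colon P_{g,\CC} \longrightarrow \varpi_g^{-1}(j_g(\tau_0)),\qquad p \longmapsto [(\mathbf{X}_{g,\tau_0}, E_{g,\tau_0}, \bfb_{g,\tau_0} \cdot p)]$$
attached to the base point $\varphi_g(\tau_0) = [(\mathbf{X}_{g,\tau_0}, E_{g,\tau_0}, \bfb_{g,\tau_0})]$, which is finite surjective by Lemma~\ref{lemma-finsurj} and hence in particular dominant for the Zariski topology. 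Since Lemma~\ref{principallemma} tells us that the set $S_{\delta, \tau_0} = \{p_{\delta\gamma, \tau_0} \mid \gamma \in \Sp_{2g}(\ZZ),\, j(\delta\gamma, \tau_0) \in \GL_g(\CC)\}$ is Zariski-dense in $P_{g,\CC}$, its image $\mu_{\tau_0}(S_{\delta, \tau_0})$ is Zariski-dense in the fiber $\varpi_g^{-1}(j_g(\tau_0))$; by the first paragraph this image is contained in $L$.

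Finally, nonemptiness of $S_{\delta, \tau_0}$ for every $\tau_0$ gives $\varpi_g(L) \supset j_g(\mathbf{H}_g)$; as $j_g$ is surjective onto $A_g(\CC)$ by Proposition~\ref{reprsintsympl} combined with the algebraizability statement of Remark~\ref{algebraization}, we obtain $\varpi_g(L) = A_g(\CC)$, so hypothesis~(i) of Lemma~\ref{metfib} applies and concludes the proof. The main substantive ingredient will be Lemma~\ref{principallemma}, which itself rests on the Zariski-density of $\Sp_{2g}(\ZZ)$ in $\Sp_{2g,\CC}$ (Lemma~\ref{spzdense}): this is where the arithmeticity of the symplectic group really enters, and everything else amounts to formal bookkeeping combining the moduli-theoretic description of $B_g(\CC)$ with the analytic/group-theoretic parametrization of leaves developed in Section~\ref{gpinterpret}.
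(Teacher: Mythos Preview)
Your argument is correct and follows essentially the same route as the paper: reduce via the fibration method (Lemma~\ref{metfib}) along $\varpi_g$, use the orbit map $P_{g,\CC}\to\varpi_g^{-1}(j_g(\tau_0))$ at a chosen base point, and conclude fiberwise density from Lemma~\ref{principallemma} together with Lemma~\ref{lemmeutile} and formula~(\ref{phimoduli}). The only cosmetic differences are your notation $\mu_{\tau_0}$ for the paper's $f_\tau$ and your separate verification that $\varpi_g(L)=A_g(\CC)$ via nonemptiness of $S_{\delta,\tau_0}$, whereas the paper extracts this from the fiber-density statement itself.
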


\begin{proof}
By Proposition \ref{charactleaves}, we must prove that, for every $\delta \in \Sp_{2g}(\CC)$, the image of $\varphi_{\delta}: U_{\delta} \to B_g(\CC)$ is Zariski-dense in $B_{g,\CC}$.

Let $\varpi_g: B_{g,\CC} \to A_{g,\CC}$ be as in Paragraph \ref{proofthmtrdeg}. By Lemma \ref{metfib}, we are reduced to proving that, for every $x \in A_g(\CC)$, the set
\begin{align*}
\varphi_{\delta}(U_{\delta})\cap \varpi_g^{-1}(x)
\end{align*}
is Zariski-dense in $\varpi_g^{-1}(x) \subset B_{g,\CC}$. Indeed, by surjectivity of $\varpi_g$ on the level of complex points, this proves in particular that $\varpi_g(\varphi_{\delta}(U_{\delta}))=A_g(\CC)$ (cf. condition (i) in Lemma \ref{metfib}).

% Let 
% \begin{align*}
% \varpi_g: B_{g,\CC} \to A_{g,\CC}
% \end{align*}
% be the composition of the forgetful functor $\pi_g:B_{g,\CC} \cong \mathcal{B}_{g,\CC} \to \mathcal{A}_{g,\CC}$ with the canonical morphism $\mathcal{A}_{g,\CC}\to A_{g,\CC}$. Note that $\varpi_g$ acts on complex points by sending an isomorphism class in $\mathcal{B}_{g}(\CC)$ of a principally polarized complex abelian variety endowed with a symplectic-Hodge basis to the isomorphism class in $\mathcal{A}_{g}(\CC)$ of the same principally polarized complex abelian variety.

Let $(X,\lambda)$ be a representative of the isomorphism class $x$. Recall that the set of complex points of the $\CC$-scheme $\varpi_g^{-1}(x)$ can be identified with the set of isomorphism classes of objects of the category $\mathcal{B}_{g}(\CC)$ lying over $(X,\lambda)$ --- we denote these isomorphism classes by $[(X,\lambda,b)]$ ---, and that the $\CC$-group scheme $P_{g,\CC}$ acts transitively on $\varpi_g^{-1}(x)$ by 
\begin{align*}
[(X,\lambda,b)]\cdot p \defeq [(X,\lambda,b\cdot p)]\text{.}
\end{align*}
 Thus, if $\tau \in \mathbf{H}_g$ satisfies $j_g(\tau)=x$, we can define a surjective morphism of $\CC$-schemes (cf. Lemma \ref{lemma-finsurj})
\begin{align*}
f_{\tau} : P_{g,\CC} &\longrightarrow \varpi_g^{-1}(x)\\
              p &\mapsto \varphi_g(\tau)\cdot p\text{.}
\end{align*}

Now, let $\gamma \in \Sp_{2g}(\ZZ)$ be such that $j(\delta\gamma,\tau) \in \GL_g(\CC)$. By Lemma \ref{lemmeutile}, we have $\gamma\cdot \tau \in U_{\delta}$ and $\varphi_{\delta\gamma}(\tau) = \varphi_{\delta}(\gamma\cdot \tau)$. Thus, by formula (\ref{phimoduli}), we obtain
\begin{align*}
f_{\tau}(p_{\delta\gamma,\tau}) = \varphi_{g}(\tau) \cdot p_{\delta\gamma,\tau} = \varphi_{\delta\gamma}(\tau) = \varphi_{\delta}(\gamma\cdot \tau)\text{.}
\end{align*}
This proves that
\begin{align*}
  S_{\delta,\tau} = \{p_{\delta\gamma,\tau} \in P_g(\CC) \mid \gamma \in {\Sp}_{2g}(\ZZ)\text{ such that }j(\delta\gamma,\tau)\in {\GL}_g(\CC)\} \subset f_{\tau}^{-1}(\varphi_{\delta}(U_{\delta})\cap \varpi_g^{-1}(x))\text{.}
\end{align*}
By Lemma \ref{principallemma}, $S_{\delta,\tau}$ is Zariski-dense in $P_{g,\CC}$. Hence, as $f_{\tau}$ is surjective and continuous for the Zariski topology, we conclude that $\varphi_{\delta}(U_{\delta})\cap \varpi_g^{-1}(x)$ is Zariski-dense in $\varpi_g^{-1}(x)$.
\end{proof}

\begin{coro} \label{graphe}
 The set $ \{(\tau,\varphi_g(\tau)) \in {\Sym}_g(\CC)\times B_g(\CC) \mid \tau \in \mathbf{H}_g\}$ is  Zariski-dense in $\Sym_{g,\CC}\times_{\CC} B_{g,\CC}$.
\end{coro}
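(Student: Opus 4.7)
\medskip

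The plan is to deduce Corollary \ref{graphe} from Theorem \ref{densite} by the fibration method (Lemma \ref{metfib}), applied to the second projection
$$
p_2: {\Sym}_{g,\CC}\times_{\CC} B_{g,\CC} \to B_{g,\CC}.
$$
Write $E \defeq \{(\tau,\varphi_g(\tau)) \mid \tau \in \mathbf{H}_g\}$. First I would note that $p_2$ is open for the Zariski topology: it is a projection from a product of finite type $\CC$-schemes, hence flat (since $\Sym_{g,\CC}$ is flat over $\CC$), and flat morphisms of finite type between Noetherian schemes are open. Second, the image $p_2(E) = \varphi_g(\mathbf{H}_g)$ is Zariski-dense in $B_{g,\CC}$: this is exactly Theorem \ref{densite} applied to the leaf $\varphi_g(\mathbf{H}_g)$ (corresponding to $\delta = \mathbf{1}_{2g}$ in Proposition \ref{charactleaves}).

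It remains to verify that for every $y \in p_2(E)$, the fiber $E \cap p_2^{-1}(y)$ is Zariski-dense in $p_2^{-1}(y) \cong \Sym_{g,\CC}$. Fix $\tau_0 \in \mathbf{H}_g$ with $\varphi_g(\tau_0) = y$; by Remark \ref{remarkphi}, we have
$$
\varphi_g^{-1}(y) = U_g(\ZZ)\cdot \tau_0 = \tau_0 + {\Sym}_g(\ZZ),
$$
since the action of $U_g(\ZZ) \subset \Sp_{2g}(\ZZ)$ on $\mathbf{H}_g$ is by translation by $\Sym_g(\ZZ)$. Hence $E \cap p_2^{-1}(y)$ may be identified with the coset $\tau_0 + \Sym_g(\ZZ) \subset \Sym_g(\CC)$. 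Now $\Sym_g(\ZZ)$ is a finitely generated abelian group of rank $g(g+1)/2 = \dim_\CC \Sym_{g,\CC}$ which spans $\Sym_g(\RR)$ as an $\RR$-vector space, so it is Zariski-dense in the affine space $\Sym_{g,\CC}$; consequently any translate $\tau_0 + \Sym_g(\ZZ)$ is also Zariski-dense in $\Sym_{g,\CC}$. Applying Lemma \ref{metfib}(ii) completes the argument.

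There is no real obstacle here: once Theorem \ref{densite} is in hand, the corollary reduces to the essentially trivial observation that the fibers of $\varphi_g$ are translates of the lattice $\Sym_g(\ZZ)$. The only minor point requiring care is the openness of $p_2$, needed to invoke condition (ii) of Lemma \ref{metfib} (since $p_2(E)$ is typically a proper subset of $B_g(\CC)$, condition (i) does not apply).
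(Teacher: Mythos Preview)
Your proof is correct and follows essentially the same approach as the paper: apply Lemma \ref{metfib}(ii) to the second projection $p_2$, using Theorem \ref{densite} for the Zariski-density of $p_2(E)=\varphi_g(\mathbf{H}_g)$ and the periodicity $\varphi_g(\tau+N)=\varphi_g(\tau)$ for $N\in\Sym_g(\ZZ)$ (Remark \ref{remarkphi}) to show each fiber $E\cap p_2^{-1}(y)$ contains a translate of $\Sym_g(\ZZ)$. Your version is slightly more detailed in that you explicitly verify the openness of $p_2$ (which the paper leaves implicit) and invoke the full description of the fibers from Remark \ref{remarkphi} rather than just the inclusion $\tau_0+\Sym_g(\ZZ)\subset\varphi_g^{-1}(y)$, but these are minor expository differences.
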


\begin{proof}
It is clear that $\Sym_g(\ZZ)$ is Zariski-dense in $\Sym_{g,\CC}$. Thus, by Theorem \ref{densite} and Lemma \ref{metfib} (ii) applied to the projection on the second factor
\begin{align*}
{\Sym}_{g,\CC}\times_{\CC}B_{g,\CC} \to B_{g,\CC}\text{,}
\end{align*} 
it suffices to prove that for every $N \in \Sym_g(\ZZ)$ and $\tau \in \mathbf{H}_g$ we have $\varphi_g(\tau+N)=\varphi_g(\tau)$. This was already observed in Remark \ref{remarkphi}.
\end{proof}

We now state the analogous results for the Hilbert-Blumenthal case, which can be proved \emph{mutatis mutandis} by the same method.

 \begin{theorem}
 Every leaf $L\subset B_F(\CC)$ of the higher Ramanujan foliation (i.e., the holomorphic foliation given by the integrable subbundle $\mathcal{R}_F^{\an}$ of $T_{B_F(\CC)}$ generated by the image of $v_F : \mathcal{O}_{B_F(\CC)}\tensor D^{-1}\to T_{B_{F}(\CC)}$) is Zariski-dense in $B_{F,\CC}$. \hfill $\blacksquare$
 \end{theorem}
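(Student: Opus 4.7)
The plan is to run the entire argument of Theorem \ref{densite} in parallel, using the group-theoretic description of $B_F(\CC)$ obtained in Paragraph \ref{subsec-gtdescrrm}. Under the identification of $B_F(\CC)$ with an open submanifold of $S_F'(\ZZ)\backslash S_F'(\CC)$, the higher Ramanujan vector field $v_F$ is, up to the factor $1/(2\pi i)$, the restriction to $B_F(\CC)$ of the family of left-invariant holomorphic vector fields on $S_F'(\CC)$ attached to the Lie subalgebra
\begin{align*}
\mathfrak{u}_F \defeq \left\{\left(\begin{array}{cc} 0 & z \\ 0 & 0 \end{array}\right) \;\Big|\; z \in \CC\tensor D^{-1}\right\} \subset \Lie S_F'(\CC)
\end{align*}
of the unipotent radical $U_F(\CC)$ of $P_F'(\CC)$. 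Consequently, the higher Ramanujan foliation on $B_F(\CC)$ is induced by the foliation on $S_F'(\CC)$ whose leaves are the left cosets of $U_F(\CC)$, and every leaf of the higher Ramanujan foliation is a connected component of the image of $\delta^{-1}U_F(\CC) \cap \mathbf{B}_F$ in $B_F(\CC)$ under the quotient map $S_F'(\CC) \to S_F'(\ZZ)\backslash S_F'(\CC)$, for some $\delta \in S_F'(\CC)$.

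Next, I would transport the construction of the ``twisted'' parametrizations $\varphi_\delta$ to the HB setting. Writing $\delta = \left(\begin{smallmatrix} a & b \\ c & d \end{smallmatrix}\right) \in S_F'(\CC)$ and setting $U_\delta^F \defeq \{\tau \in \mathbf{H}^g \mid c\tau+d \in (\CC\tensor R)^\times\}$, one defines $p_{\delta,\tau} \in P_F(\CC)$ by the exact same formula used in the Siegel case, and then a holomorphic map $\varphi_\delta^F : U_\delta^F \to B_F(\CC)$ satisfying $\varphi_\delta^F(\tau) = [(\mathbf{X}_{F,\tau}, E_{F,\tau}, m_{F,\tau}, \boldsymbol{b}_{F,\tau}\cdot p_{\delta,\tau})]$. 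The analog of Lemma \ref{Uconnected} holds since $U_\delta^F$ is the complement of a proper analytic subset in the connected manifold $\mathbf{H}^g$, and one concludes that every leaf of the higher Ramanujan foliation is of the form $\varphi_\delta^F(U_\delta^F)$ for some $\delta \in S_F'(\CC)$ (HB analog of Proposition \ref{charactleaves}).

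With these ingredients in place, I would apply the fibration method (Lemma \ref{metfib}) to the coarse moduli map $\varpi_F : B_{F,\CC} \to A_{F,\CC}$. Each fiber $\varpi_F^{-1}(x)$ is a $P_{F,\CC}$-homogeneous space, so for any $\tau \in \mathbf{H}^g$ with $j_F(\tau) = x$, the orbit map $f_\tau : P_{F,\CC} \to \varpi_F^{-1}(x)$, $p \mapsto \varphi_F(\tau)\cdot p$, is surjective (HB version of Lemma \ref{lemma-finsurj}). For a fixed $\delta$, Lemma \ref{lemmeutile} transposes verbatim (the cocycle identity for $j$ holds in $\CC\tensor R$), yielding $f_\tau(p_{\delta\gamma,\tau}) = \varphi_\delta^F(\gamma \cdot \tau)$ for every $\gamma \in S_F'(\ZZ)$ with $c\tau+d \in (\CC\tensor R)^\times$. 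Thus the density of $\varphi_\delta^F(U_\delta^F) \cap \varpi_F^{-1}(x)$ in $\varpi_F^{-1}(x)$ reduces to showing that the set $S_{\delta,\tau}^F \defeq \{p_{\delta\gamma,\tau} \mid \gamma \in S_F'(\ZZ),\ c\tau+d \in (\CC\tensor R)^\times\}$ is Zariski-dense in $P_{F,\CC}$.

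The main obstacle is therefore the HB analog of Lemma \ref{principallemma}, which splits into two parts: (i) a direct computation, mirroring Lemma \ref{surjective}, exhibiting for every $(p,\tau) \in P_F(\CC)\times \mathbf{H}^g$ an explicit $\delta \in S_F'(\CC)$ with $\tau \in U_\delta^F$ and $p_{\delta,\tau}=p$, which shows that the orbit map $h : \gamma \mapsto p_{\delta\gamma,\tau}$ defines a dominant morphism of $\CC$-schemes from an open subscheme of $S_{F,\CC}'$ to $P_{F,\CC}$; and (ii) the Zariski-density of $S_F'(\ZZ)$ in $S_{F,\CC}'$. The latter is the key point: since $S_F' = \Res_{R/\ZZ}\SL_{2,R}$, one has $S_F'(\ZZ) = \SL(D^{-1}\oplus R)$ acting as an arithmetic lattice in the semisimple Lie group $S_F'(\RR) \cong \SL_2(\RR)^g$, which has no compact factors, so Borel's density theorem yields the required Zariski-density (alternatively, one may argue directly by Weil restriction, observing that $S_{F,\CC}'$ is irreducible and that the set of matrices with integer entries, when pushed into $S_{F,\CC}'$ via the field embeddings $\sigma_1,\ldots,\sigma_g$, already hits a Zariski-dense subset). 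Once this is established, combining (i), (ii) and the irreducibility of $S_{F,\CC}'$ shows $S_{\delta,\tau}^F$ is Zariski-dense in $P_{F,\CC}$, and the proof concludes exactly as in the Siegel case.
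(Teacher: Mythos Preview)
Your proposal is correct and follows precisely the approach the paper intends: the paper itself gives no explicit proof of this statement, merely noting that it ``can be proved \emph{mutatis mutandis} by the same method'' as Theorem \ref{densite}, and you have faithfully carried out this transposition step by step using the group-theoretic description of Paragraph \ref{subsec-gtdescrrm}. One minor terminological slip: $U_F(\CC)$ as you define it (upper triangular unipotent) is the unipotent radical of $P_F(\CC)$, not of $P_F'(\CC)$ (which is lower triangular); but your Lie algebra $\mathfrak{u}_F$ and the rest of the argument are correct regardless.
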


 \begin{coro} 
 The set $\{(\tau,\varphi_F(\tau)) \in (\Res_{R/\ZZ}\AA^1_R)(\CC)\times B_F(\CC) \mid \tau \in \mathbf{H}^g\}$ is  Zariski-dense in $(\Res_{R/\ZZ}\AA^1_R)_{\CC}\times_{\CC} B_{F,\CC}$.\hfill $\blacksquare$
 \end{coro}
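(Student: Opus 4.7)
The plan is to mimic the proof of Corollary \ref{graphe} in the Siegel case by applying the fibration method (Lemma \ref{metfib} (ii)) to the second projection
\[
p: ({\Res}_{R/\ZZ}\AA^1_R)_{\CC} \times_{\CC} B_{F,\CC} \to B_{F,\CC},
\]
which is open for the Zariski topology. Writing $G \defeq \{(\tau,\varphi_F(\tau)) \mid \tau \in \mathbf{H}^g\}$, it suffices to verify that (a) $p(G) = \varphi_F(\mathbf{H}^g)$ is Zariski-dense in $B_{F,\CC}$, and that (b) for every $\tau_0 \in \mathbf{H}^g$, the fiber $G \cap p^{-1}(\varphi_F(\tau_0))$ is Zariski-dense in the $\CC$-scheme $({\Res}_{R/\ZZ}\AA^1_R)_{\CC}$.

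For (a), I would invoke the group-theoretic description of Paragraph \ref{subsec-gtdescrrm}: under the identification of $B_F(\CC)$ with an open submanifold of $S_F'(\ZZ) \backslash S_F'(\CC)$, we have $\varphi_F(\tau) = S_F'(\ZZ)\bigl(\begin{smallmatrix} 1 & \tau \\ 0 & 1\end{smallmatrix}\bigr)$, and the higher Ramanujan foliation is induced by the unipotent subgroup $\{\bigl(\begin{smallmatrix} 1 & z \\ 0 & 1\end{smallmatrix}\bigr) \mid z \in \CC \tensor R\}$ of $S_F'(\CC)$; hence $\varphi_F(\mathbf{H}^g)$ is (a connected component of the image of) a single leaf of this foliation, which is Zariski-dense in $B_{F,\CC}$ by the theorem stated just before the corollary. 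For (b), I would use the factorization $\psi : D^{-1} \backslash \mathbf{H}^g \to B_F(\CC)$ of $\varphi_F$ established in Paragraph \ref{subsec-compatibilityhb}: concretely, $\varphi_F(\tau_0 + x) = \varphi_F(\tau_0)$ for every $x \in D^{-1}$ (where $D^{-1}$ acts by translations, $x \cdot \tau = \tau + (\sigma_j(x))_{1 \le j \le g}$). Thus the fiber $G \cap p^{-1}(\varphi_F(\tau_0))$ contains $\tau_0 + D^{-1}$, and it is enough to show that $D^{-1}$ is Zariski-dense in $({\Res}_{R/\ZZ}\AA^1_R)_{\CC}$, viewed as a $\CC$-vector space of dimension $g$ through the natural isomorphism $({\Res}_{R/\ZZ}\AA^1_R)(\CC) = \CC \tensor_{\ZZ} R = \CC \tensor_{\QQ} F$.

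This last Zariski-density is straightforward: any $\ZZ$-basis of the fractional ideal $D^{-1}$ is a $\QQ$-basis of $F$, hence a $\CC$-basis of $\CC \tensor_{\QQ} F$; a polynomial on this $\CC$-vector space vanishing on the corresponding $\ZZ$-lattice must vanish identically. I do not anticipate a real obstacle here, since the entire argument is a formal transcription of the Siegel case, with $\Sym_g(\ZZ) \subset \Sym_{g,\CC}$ replaced by $D^{-1} \subset ({\Res}_{R/\ZZ}\AA^1_R)_{\CC}$ and the Zariski-density of leaves replaced by its Hilbert-Blumenthal counterpart. The only substantive ingredient that must be in place is the identification, via the explicit group-theoretic realization, of $\varphi_F(\mathbf{H}^g)$ with a leaf of $\mathcal{R}_F^{\an}$, so that Theorem's conclusion on leaves applies to it directly.
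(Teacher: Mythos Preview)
Your proposal is correct and follows exactly the approach the paper intends: the paper does not write out a proof for this corollary but states that the Hilbert-Blumenthal analogs ``can be proved \emph{mutatis mutandis} by the same method,'' and your argument is precisely the transcription of the proof of Corollary~\ref{graphe} with $\Sym_g(\ZZ)\subset \Sym_{g,\CC}$ replaced by $D^{-1}\subset (\Res_{R/\ZZ}\AA^1_R)_{\CC}$ and Theorem~\ref{densite} replaced by its Hilbert-Blumenthal counterpart. The only cosmetic remark is that your hedging ``(a connected component of the image of) a single leaf'' is unnecessary: by the group-theoretic description in Paragraph~\ref{subsec-gtdescrrm}, $\varphi_F(\mathbf{H}^g)$ is the leaf corresponding to $\delta=1$, exactly as $\varphi_g(\mathbf{H}_g)$ is in the Siegel case via Proposition~\ref{charactleaves}.
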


\subsection{Derivatives of modular functions and $B_g$}\label{subsec-bertrandzudilin}

We next explain how the moduli space $B_g$ and the holomorphic map $\varphi_g: \mathbf{H}_g \to B_g(\CC)$ relate with derivatives of Siegel modular functions and the work of Bertrand-Zudilin \cite{BZ03}.

Recall that a (level 1) \emph{Siegel modular function} of genus $g$ is a meromorphic function on $\mathbf{H}_g$ which is invariant under the action of $\Sp_{2g}(\ZZ)$ on $\mathbf{H}_g$. In particular, a Siegel modular function $f$ is invariant under $U_g(\ZZ)$, so that it admits a Laurent expansion
$$
f(\tau) = \sum_{\alpha}c_{\alpha}\prod_{1\le i \le j \le g}q_{ij}(\tau)^{\alpha_{ij}}\text{,}
$$
where $q_{ij}(\tau) =  e^{2\pi i \tau_{ij}}$ (cf. Paragraph \ref{subsec-compatsiegel}). Here, we denote $\alpha=(\alpha_{ij})_{1\le i \le j \le g}$ with $\alpha_{ij}\in \ZZ$ for every $1\le i \le j \le g$. We say that $f$ is \emph{defined over a subfield $k$ of $\CC$} if each $c_{\alpha}$ is in $k$.

From now on, let us fix a subfield $k$ of $\CC$, and let us denote by $K_g$ the field of modular functions of genus $g$ defined over $k$. It is classical that $j_g: \mathbf{H}_g \to A_g(\CC)$ identifies the $K_g$ with $k(A_{g,k})$, the function field  of $A_{g,k}$ (see, for instance, \cite{shimura98} VI.25). 

Since the image of $\varphi_g: \mathbf{H}_g \to B_g(\CC)$ is Zariski-dense by Theorem \ref{densite}, the function field $k(B_{g,k})$ can be identified with a subfield, say $L_g$, of the field  of meromorphic functions on $\mathbf{H}_g$. From the commutativity of the diagram
$$
\begin{tikzcd}
  & B_g(\CC)\arrow{d}{\pi_g} \\
\mathbf{H}_g \arrow{r}[swap]{j_g} \arrow[bend left]{ru}{\varphi_g}& A_g(\CC)
\end{tikzcd}
$$
it follows that $K_g$ is a subfield of $L_g$.

\begin{lemma}
The field $L_g$ is stable under the derivations $\theta_{ij}=\frac{1}{2\pi i}\frac{\partial}{\partial \tau_{ij}}$, $1\le i \le j \le g$.
\end{lemma}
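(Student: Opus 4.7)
The plan is to deduce the stability of $L_g$ under $\theta_{ij}$ directly from the fact that $\varphi_g$ solves the higher Ramanujan equations (Theorem \ref{theoremsolution}(2)). First, I would fix the identification of $L_g$ with $k(B_{g,k})$: the Zariski-density of the image of $\varphi_g$ (Theorem \ref{densite}), together with the fact that $\varphi_g$ is non-constant on every irreducible component one meets (equivalently, that $\varphi_g(\mathbf{H}_g)$ meets every non-empty Zariski-open subset), implies that pullback of rational functions by $\varphi_g$ yields a well-defined injective $k$-algebra homomorphism $\varphi_g^\ast : k(B_{g,k}) \hookrightarrow \mathrm{Mer}(\mathbf{H}_g)$, and by definition $L_g$ is its image.

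Next, I would promote each higher Ramanujan vector field $v_{ij} \in \Gamma(\mathcal{B}_g, T_{\mathcal{B}_g/\ZZ})$ to a $k$-derivation $\partial_{ij}$ of $k(B_{g,k})$. Since $\mathcal{B}_g \otimes \ZZ[1/2]$ is representable by the smooth quasi-affine scheme $B_g$ over $\ZZ[1/2]$ (Theorem \ref{repr}), and $k \supset \QQ \supset \ZZ[1/2]$, the vector field $v_{ij}$ base-changes to a global section of $T_{B_{g,k}/k}$, i.e.\ to a $k$-derivation of $\mathcal{O}_{B_{g,k}}$. Because $B_{g,k}$ is smooth (hence the local rings are integral domains) and $v_{ij}$ is globally defined, this derivation extends uniquely to a $k$-derivation $\partial_{ij}$ of the field of rational functions $k(B_{g,k})$ (apply the Leibniz rule to quotients of regular sections).

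The key computational step is then the identity
\begin{equation*}
\theta_{ij}(\varphi_g^\ast F) \;=\; \varphi_g^\ast(\partial_{ij} F) \qquad \text{for every } F \in k(B_{g,k}).
\end{equation*}
By Theorem \ref{theoremsolution}(2), we have $T\varphi_g(\theta_{ij}) = \varphi_g^\ast v_{ij}$ as sections of $\varphi_g^\ast T_{B_g(\CC)}$ over the open set where $\varphi_g^\ast F$ is holomorphic; applying both sides as derivations to a local holomorphic representative of $F$ yields the displayed identity where defined, and it extends meromorphically to all of $\mathbf{H}_g$. Consequently $\theta_{ij}(L_g) = \theta_{ij}(\varphi_g^\ast k(B_{g,k})) = \varphi_g^\ast(\partial_{ij}\, k(B_{g,k})) \subset \varphi_g^\ast k(B_{g,k}) = L_g$.

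I do not expect any serious obstacle: the only points requiring a bit of care are (i) verifying that $\varphi_g^\ast$ is well-defined as a map of fields, which follows from Zariski-density via Theorem \ref{densite}, and (ii) the passage from a vector field on a smooth scheme to a derivation of its function field, which is standard. Everything else is a tautological consequence of the differential equation $T\varphi_g(\theta_{ij}) = \varphi_g^\ast v_{ij}$.
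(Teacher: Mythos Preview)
Your proof is correct and follows exactly the same approach as the paper: the paper's proof is a one-liner invoking the identity $\theta_{ij}(\varphi_g^*f) = \varphi_g^*(v_{ij}(f))$ from Theorem \ref{theoremsolution}, and you have simply spelled out in more detail the routine points (well-definedness of $\varphi_g^*$ via Zariski-density, extension of $v_{ij}$ to a derivation of the function field) that the paper leaves implicit.
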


\begin{proof}
  This follows from the fact that $\varphi_g$ is a solution of the higher Ramanujan equations (Theorem \ref{theoremsolution}): if $f$ is a rational function on $B_{g,k}$, then
  $$
\theta_{ij}(\varphi_g^*f) = \varphi_g^*(v_{ij}(f))\text{.}
  $$
\end{proof}

It follows from the above lemma that, if $M_g$ denotes the differential field generated by $K_g$ and $\theta_{ij}$, $1\le i \le j \le g$, then $L_g$ contains $M_g$.

\begin{theorem}[Bertrand-Zudilin, \cite{BZ03}]
The field $M_g$ has transcendence degree $2g^2+g$ over $k$. 
\end{theorem}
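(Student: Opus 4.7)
My plan is to prove both inequalities $\trdeg_k M_g \le 2g^2+g$ and $\trdeg_k M_g \ge 2g^2+g$.

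For the upper bound, I first observe that $\trdeg_k L_g = 2g^2+g$. Indeed, $B_{g,k}$ is smooth over $k$ of relative dimension $2g^2+g$ (Theorem \ref{smoothdmstack}) and irreducible, since $A_{g,k}$ is so and $\pi_g: B_{g,k}\to A_{g,k}$ is a Zariski-locally trivial torsor under the connected smooth group $P_{g,k}$. Theorem \ref{densite} yields that $\varphi_g(\mathbf{H}_g)$ is Zariski-dense in $B_{g,\CC}$, so the pullback $\varphi_g^*$ from $k(B_{g,k})$ to the field of meromorphic functions on $\mathbf{H}_g$ is injective with image $L_g$; hence $\trdeg_k L_g = \dim B_{g,k} = 2g^2+g$. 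Combined with $M_g \subseteq L_g$, the upper bound follows.

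For the lower bound, I would transport the problem to $k(B_{g,k})$: by Theorem \ref{theoremsolution}, $\varphi_g$ solves the higher Ramanujan equations, so under $\varphi_g^*$ the derivations $\theta_{ij}$ on $L_g$ correspond to the higher Ramanujan vector fields $v_{ij}$ on $k(B_{g,k})$, and $M_g$ is identified with the smallest differential subfield $\tilde{M} \subseteq k(B_{g,k})$ containing $\pi_g^*(K_g) = k(A_{g,k})$ and closed under $\{v_{ij}\}$. The goal reduces to $\trdeg_{K_g}(\tilde{M}) \ge g(3g+1)/2 = \dim P_g$. To exploit the geometry, I factor $\pi_g$ as $B_{g,k} \to B^{\mathrm{part}}_{g,k} \to A_{g,k}$, where $B^{\mathrm{part}}_{g,k}$ classifies triples $(X,\lambda,\omega)$ with $\omega$ a trivialization of the Hodge bundle alone; the second map is a $\GL_g$-torsor of relative dimension $g^2$ and the first a $\Sym_g$-torsor of relative dimension $g(g+1)/2$.

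The key computation has two stages. At first order, by Definition \ref{deframvf} and Kodaira-Spencer, for $f \in K_g$ the function $v_{ij}(\pi_g^* f)$ equals the coefficient of $\omega_i \cdot \omega_j$ in the expansion of $\pi_g^* df$ in the symmetric-square trivialization of $\pi_g^* \Omega^1_{A_{g,k}/k}$ induced by the universal $\omega$; these are pulled back from $B^{\mathrm{part}}_{g,k}$. Choosing a local frame $\omega^0$ of $\mathcal{F}_g$ on $A_{g,k}$ so the universal $\omega = S\omega^0$ for the tautological $S \in \GL_g$ on $B^{\mathrm{part}}_{g,k}$, each $v_{ij}(\pi_g^* f)$ is a quadratic polynomial in the entries of $S^{-1}$, whose coefficients span $K_g \otimes_k \Omega^1_{A_{g,k}/k}$ as $f$ varies. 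Since the representation $\GL_g \to \GL(S^2 V)$, $S \mapsto S^{\otimes 2}|_{S^2 V}$, has kernel $\{\pm\mathbf{1}_g\}$, these first-order derivatives generate over $K_g$ a subfield of $k(B^{\mathrm{part}}_{g,k})$ of transcendence degree $g^2$. At second order, applying $v_{kl}$ to $v_{ij}(\pi_g^* f)$ brings in, via Leibniz, the Gauss-Manin derivative $\nabla_{v_{kl}}\omega_i$, which by Proposition \ref{caracchamps} is $\eta_j$, $\eta_k$, or zero. This injects the $\eta$-coordinates, parameterizing the $\Sym_g$-torsor fiber, and a rank count shows we gain the missing $g(g+1)/2$ transcendence degrees, yielding $\trdeg_{K_g}(\tilde{M}) \ge g(3g+1)/2$ and hence $\trdeg_k M_g = 2g^2+g$.

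The hard part is the rigorous non-degeneracy underlying the two rank counts. At first order, I must verify that the quadratic polynomials $v_{ij}(\pi_g^* f)$, as $(f,i,j)$ varies, genuinely span the space of $\{\pm\mathbf{1}_g\}$-invariant degree-$2$ polynomials in the entries of $S^{-1}$ — an invariant-theoretic statement about the symmetric-square representation of $\GL_g$. At second order, I must show that the $\eta$-dependence introduced by Proposition \ref{caracchamps} truly produces $g(g+1)/2$ algebraically independent new transcendentals modulo the first-order field. Both checks are finite but combinatorially delicate, involving universal Kodaira-Spencer pairings. A conceptually cleaner alternative, which I would develop in parallel, exploits the group-theoretic description of Section \ref{gpinterpret} together with the Zariski-density of $\Sp_{2g}(\ZZ)$ in $\Sp_{2g,\CC}$ (Lemma \ref{spzdense}) to argue directly that any proper differential subfield of $k(B_{g,k})$ containing $k(A_{g,k})$ and stable under the left-invariant vector fields $v_{ij}$ must already exhaust $k(B_{g,k})$ up to a finite extension.
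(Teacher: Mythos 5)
The paper does not prove this theorem --- it cites it directly from Bertrand--Zudilin \cite{BZ03} and then uses it (together with the inclusion $M_g \subseteq L_g$ and the computation $\trdeg_k L_g = \dim B_{g,k} = 2g^2+g$) to deduce that $L_g$ is a \emph{finite} extension of $M_g$. So there is no in-paper proof to compare against, and your task was to supply one.

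Your upper bound is correct and coincides with the argument the paper makes around the theorem statement. But the lower bound --- which carries all the content of the Bertrand--Zudilin result --- is a strategy, not a proof. You correctly isolate the two rank-count verifications that are needed: (a) that the first-order derivatives $v_{ij}(\pi_g^*f)$, as $f$ ranges over $K_g$, generate a transcendence-degree-$g^2$ extension of $K_g$ inside $k(B^{\mathrm{part}}_{g,k})$; and (b) that the second-order derivatives bring in $g(g+1)/2$ further algebraically independent $\eta$-coordinates. You explicitly flag both as "combinatorially delicate" and leave them unproved, yet they are precisely where the theorem lives. For (a), the remark about the $\{\pm\mathbf{1}_g\}$ kernel of $S\mapsto S^{\otimes 2}|_{S^2V}$ is the right infinitesimal ingredient, but you still need the Kodaira--Spencer non-degeneracy statement that the symmetric matrices $A(f)$ with $\kappa(df)=\sum a_{kl}(f)\,\omega^{0,\vee}_k\otimes\omega^{0,\vee}_l$ span $\Sym_g(K_g)$ as $f$ varies. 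For (b), you must rule out accidental algebraic dependence of the $\eta$-linear second-order terms on the field already generated at first order, and you do not carry this out. Finally, the "conceptually cleaner alternative" you gesture at is not a proof either: Zariski-density of $\Sp_{2g}(\ZZ)$ (Lemma \ref{spzdense}) is what underlies the upper bound $\trdeg_k L_g = 2g^2+g$; the lower bound requires a separate argument, essentially that the left-invariant vector fields coming from $\Lie U_g(\CC)$, applied iteratively to right-$P'_g(\CC)$-invariant functions, generate a field of full transcendence degree, exploiting $\Lie U_g + \Lie P'_g = \Lie\Sp_{2g}$, and you have not supplied it. In short: sound upper bound, a plausible but unexecuted plan for the lower bound.
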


Now, $L_g$ being isomorphic to the function field of the $k$-variety $B_{g,k}$, it is a finitely generated extension of $k$ of transcendence degree $\dim B_{g,k}=2g^2+g$. We conclude that $L_g$ is a \emph{finite} field extension of $M_g$.

\begin{obs}
  When $g=1$, we have $K_1=k(j)$ and $L_1= k(E_2,E_4,E_6)$ (cf. Proposition \ref{prop-solg=1}). The explicit formulas
  $$
E_2 = 6\frac{\theta^2j}{\theta j}-4\frac{\theta j }{j} - 3\frac{\theta j }{j-1728}\text{, } \ \ E_4=\frac{(\theta j)^2}{j(j-1728)}\text{, }\ \ E_6=-\frac{(\theta j)^3}{j^2(j-1728)}
$$
actually show that $M_1=L_1$. We do not know whether $M_g$ should be equal to $L_g$ for $g\ge 2$.
\end{obs}

\begin{obs}
Note that the methods of Bertrand and Zudilin can be adapted to deal with the case of Hilbert-Blumenthal modular functions (see \cite{BZ01} Remark 3; see also \cite{pellarin05} 6.5). Working as above, we can prove that $k(B_{F,k})$ is a finite extension of the differential field generated by the Hilbert-Blumenthal modular functions defined over $k$ for the group $\SL(D^{-1}\oplus R)$. 
\end{obs}

% Recall that a (level 1) \emph{Siegel modular form} of genus $g$ and degree $k\ge 0$ is a holomorphic function
% $$
% f: \mathbf{H}_g \to \CC
% $$
% such that, for every $\gamma \in \Sp_{2g}(\ZZ)$, we have
% \begin{align}\label{eq-defsiegelmf}
% f(\gamma \cdot \tau) = (\det j(\gamma,\tau))^kf(\tau)
% \end{align}
% for every $\tau \in \mathbf{H}_g$. When $g=1$, we also require $f$ to be ``holomorphic at $\infty$'' (cf. \warn{ref}).

% Equation (\ref{eq-defsiegelmf}) implies that $f(\gamma\cdot \tau)=f(\tau)$ for $\gamma \in U_g(\ZZ)$, so that $f$ admits a $q$-expansion: there exist $c_{\alpha}\in \CC$ such that
% $$
% f(\tau) = \sum_{\alpha}c_{\alpha}\prod_{1\le i \le j \le g}q_{ij}(\tau)^{\alpha_{ij}}\text{.}
% $$
% Here, $\alpha=(\alpha_{ij})_{1\le i \le j \le g}$ with $\alpha_{ij}\in \NN$ for every $1\le i \le j \le g$. When $g\ge 2$, the non-negativity of $\alpha_{ij}$ is known as the Koecher principle (see \warn{ref}).

% We say that $f$ is \emph{defined over $\QQ$} if each $c_{\alpha}$ above is a rational number. We denote the $\QQ$-vector space of Siegel modular forms defined over $\QQ$ of genus $g$ and weight $k$ by $R_{g,k}$. We thus obtain the graded $\QQ$-algebra of Siegel modular forms of genus $g$ defined over $\QQ$:
% $$
% R_g \defeq \bigoplus_{k\ge 0}R_{g,k}\text{.}
% $$

\end{document}